	\newcommand{\one}{\mathds{1}}
\newcommand{\eq}[1]{\begin{align*} #1 \end{align*}}
\newcommand{\eeq}[1]{\begin{align} \begin{split} #1 \end{split} \end{align}}
\newcommand{\stackref}[2]{\stackrel{\mbox{\footnotesize{\eqref{#1}}}}{#2}}
\newcommand{\stackrefp}[2]{\stackrel{\phantom{\mbox{\footnotesize{\eqref{#1}}}}}{#2}}
\def\eps{\varepsilon}
\def\vphi{\varphi}
\newcommand{\E}{\mathbb{E}}
\newcommand{\N}{\mathbb{N}}
\renewcommand{\P}{\mathbb{P}}
\newcommand{\R}{\mathbb{R}}
\renewcommand{\S}{\mathbb{S}}
\newcommand{\Z}{\mathbb{Z}}
\renewcommand{\AA}{\mathcal{A}}
\newcommand{\BB}{\mathcal{B}}
\newcommand{\CC}{\mathcal{C}}
\newcommand{\DD}{\mathcal{D}}
\newcommand{\FF}{\mathcal{F}}
\newcommand{\GG}{\mathcal{G}}
\newcommand{\II}{\mathcal{I}}
\newcommand{\KK}{\mathcal{K}}
\newcommand{\LL}{\mathcal{L}}
\newcommand{\MM}{\mathcal{M}}
\newcommand{\NN}{\mathcal{N}}
\newcommand{\PP}{\mathcal{P}}
\newcommand{\RR}{\mathcal{R}}
\renewcommand{\SS}{\mathcal{S}}
\newcommand{\TT}{\mathcal{T}}
\newcommand{\UU}{\mathcal{U}}
\newcommand{\VV}{\mathcal{V}}
\newcommand{\WW}{\mathcal{W}}
\newcommand{\XX}{\mathcal{X}}
\newcommand{\YY}{\mathcal{Y}}
\newcommand{\EEE}{\mathscr{E}}
\newcommand{\FFF}{\mathscr{F}}
\newcommand{\ff}{\mathfrak{f}}
\newcommand{\iprod}[2]{\langle #1,\, #2\rangle}
\newcommand{\vc}[1]{{\boldsymbol #1}}
\newcommand{\wt}[1]{\widetilde{#1}}
\newcommand{\wh}[1]{\widehat{#1}}
\DeclareMathOperator{\Var}{Var}
\DeclareMathOperator{\Cov}{Cov}
\DeclareMathOperator{\diam}{diam}
\DeclareMathOperator{\sech}{sech}
\DeclareMathOperator*{\esssup}{ess\,sup}
\DeclareMathOperator*{\essinf}{ess\,inf}
\DeclareMathOperator*{\argmin}{arg\,min} 
\newcommand{\givenk}[3][]{#1[ #2 \: #1| \: #3 #1]} % with brackets
\newcommand{\givenp}[3][]{#1( #2 \: #1| \: #3 #1)} % with parentheses
\newcommand{\givena}[3][]{#1\langle #2 \: #1| \: #3 #1\rangle} % with angle brackets
\DeclareMathOperator{\e}{e} % exponential
\newcommand{\cc}{\mathrm{c}} % for set complements and "critical" subscripts
\newcommand{\dd}{\mathrm{d}} % for differentials
\DeclareMathOperator{\tv}{TV} % for "total variation"
\DeclarePairedDelimiter\ceil{\lceil}{\rceil}
\DeclarePairedDelimiter\floor{\lfloor}{\rfloor}
            \DeclareFontFamily{OMX}{MnSymbolE}{}
            \DeclareSymbolFont{MnLargeSymbols}{OMX}{MnSymbolE}{m}{n}
            \DeclareFontShape{OMX}{MnSymbolE}{m}{n}{
                <-6>  MnSymbolE5
               <6-7>  MnSymbolE6
               <7-8>  MnSymbolE7
               <8-9>  MnSymbolE8
               <9-10> MnSymbolE9
              <10-12> MnSymbolE10
              <12->   MnSymbolE12
            }{}
            \DeclareFontShape{OMX}{MnSymbolE}{b}{n}{
                <-6>  MnSymbolE-Bold5
               <6-7>  MnSymbolE-Bold6
               <7-8>  MnSymbolE-Bold7
               <8-9>  MnSymbolE-Bold8
               <9-10> MnSymbolE-Bold9
              <10-12> MnSymbolE-Bold10
              <12->   MnSymbolE-Bold12
            }{}
            \let\llangle\@undefined
            \let\rrangle\@undefined
            \DeclareMathDelimiter{\llangle}{\mathopen}%
                                 {MnLargeSymbols}{'164}{MnLargeSymbols}{'164}
            \DeclareMathDelimiter{\rrangle}{\mathclose}%
                                 {MnLargeSymbols}{'171}{MnLargeSymbols}{'171}
\DeclareRobustCommand*{\ASref}{\ref{averages_squared}}
\DeclareRobustCommand*{\EOref}{\ref{expected_overlap_thm}}
\DeclareRobustCommand*{\ECref}{\ref{easy_cor}}
\DeclareRobustCommand*{\RefVLemma}{\ref{V_calculations}}
\newtheorem{thm}{Theorem}[section]
\newtheorem{prop}[thm]{Proposition}
\newtheorem{cor}[thm]{Corollary}
\newtheorem{lemma}[thm]{Lemma}
\newtheorem{claim}[thm]{Claim}
\newtheorem{theirthm}{Theorem}[chapter] % special environment for referencing theorems of others
\theoremstyle{definition}
\newtheorem{defn}[thm]{Definition}
\newtheorem{remark}[thm]{Remark}
    \author{Erik Bates}
\begin{document}
    \title{Localization and free energy asymptotics in disordered statistical mechanics and random growth models}
    \author{Erik Bates}
    \principaladvisor{Sourav Chatterjee}
    \firstreader{Amir Dembo}
    \secondreader{Persi Diaconis}
    %\thirdreader{} %if needed
    %\fourthreader{} %if needed
    
%: ABSTRACT
 
    \beforepreface
    \prefacesection{Abstract}
    
    This dissertation develops, for several families of statistical mechanical and random growth models, techniques for analyzing infinite-volume asymptotics.
    In the statistical mechanical setting, we focus on the low-temperature phases of spin glasses and directed polymers, wherein the ensembles exhibit localization which is physically phenomenological.
    We quantify this behavior in several ways and establish connections to properties of the limiting free energy.
    We also consider two popular zero-temperature polymer models, namely first- and last-passage percolation.
    For these random growth models, we investigate the order of fluctuations in their growth rates, which are analogous to free energy.

%: ACKNOWLEDGMENTS
        
    \prefacesection{Acknowledgments}

Of the many intellectual opportunities I have enjoyed as a student, working with Sourav Chatterjee has been by far the most rewarding.
I am profoundly grateful for our collaborations and his kind and thoughtful advising.  His mentorship provides me inspiration on a daily basis, which is perhaps the best gift an advisor can give his student.

Much gratitude is also owed to Persi Diaconis and Amir Dembo, whose instruction and guidance early in my graduate career were pivotal in my nascent studies of probability theory.  Their suggestions and encouragement have been influential throughout my time at Stanford; I am thankful for the great many things they have taught me.

For the wonderful experience I have had in the Stanford Department of Mathematics, I thank the entirety of its faculty and staff.  Particularly impactful have been Gretchen Lantz, who helped shape every step (from visit weekend to filing for graduation) for the better, and Rose Stauder, whose friendship and wholehearted service  I have missed since her retirement.
I would also like to thank Lenya Ryzhik and Eleny Ionel for their leadership and counsel as directors of graduate studies, as well as Emmanuel Cand\`es, Jacob Fox, Andrea Montanari, George Papanicolaou, George Schaeffer, Tadashi Tokieda, Andr\'as Vasy, Ravi Vakil, Jan Vondr\'ak, and Lexing Ying for excellent learning experiences.
Most of all, I thank my classmates with whom I have had the privilege of sharing my graduate education.
Their friendship has brought great joy.  I also owe special appreciation to my current and former office mates---Francisco Arana Herrera, Hannah Larson, Nhi Troung, Weiluo Ren, and Gurbir Dhillon---both for their unfailing cheer and for putting up with my clothes, shoes, plants, and office hours.

My education at Stanford has been significantly influenced by several others elsewhere in the university.
I am deeply thankful for Carole Pertofsky, who inspired me (and many others) to learn how to pursue wellness.  In the journey that ensued, I was fortunate to learn from Donnovan Yisreal, Aneel Chima, and Orgyen Chowang Rinpoche.  Their influence on my life vastly outsizes the small amount of class time their courses are allotted.
I am especially grateful to Suzi Weersing, Stephanie Kalfayan, and Persis Drell for not only their role in making this university great, but also the unique opportunities I have had to learn from their leadership on campus.

I am privileged to have been preceded by a number of terrific role models, to whom I am indebted for their example and advice: Megan Bernstein, Evita Nestoridi, Graham White, Chris Janjigian, Arjun Krishnan, Riddhi Basu, Nick Cook, Naomi Feldheim, Ohad Feldheim, Julian Gold, Moumanti Podder, and Subhabrata Sen.
I also thank the many others who played a role in my job application process: Antonio Auffinger, Alan Hammond, Sam Kimport, Tom Church, Shirshendu Ganguly, Peter Hintz, Sarah Peluse, Jens Reinhold, Kate Coppess, Irene Li, James Miller, and Maureen Bragger.

Before graduate school I was influenced by many other very important educators.  It would be near impossible to overstate the importance of Jeanne Wald in shaping my undergraduate experience at Michigan State University. 
I benefitted tremendously from the exceptional opportunities she cultivated for students in the Advanced Track program, the inaugural class of which I was lucky to be a part of.
I am also especially thankful to Anne Gelb, Bruce Sagan, Jeff Schenker, and Yang Wang for their commitment and enthusiasm in helping me grow mathematically.

I enjoyed truly every one of my high school and middle school mathematics teachers, whose instruction was second to none: Don Tolly, Rebecca Achterhof, Andy Evans, Robin Teliczan, Mike Michaud, and Dave Friday.
Just as important, however, have been the writing skills I learned from my english teachers: Linda McCarthy, Lynn Cvengros, Kathleen Devarenne, and Kelly Kermode.
And to Cynthia Girodat, Erik Cliff, and Sara Ahmicasaube, I will be always be grateful for the love of learning they inspired.

Special thanks are due to four of my closest friends. 
Without Melvin, my passion for mathematics would not be the same.  Without Chad, my passion for non-mathematics would not be the same. 
Without Trevor, problem sets on Friday and Saturday nights would not have been the same.
And without Spencer, my website's domain name would not be the same.

Finally, I owe my deepest thanks to my parents, grandparents, siblings, and partner Laura, for their unconditional support.
They are the greatest part of life.
I dedicate this dissertation to my grandfather, Clifford Weil, who inspired me to be a full-time mathematician.

\vspace{2\baselineskip}
\noindent The research comprising this dissertation was partially supported by a National Science Foundation Graduate Research Fellowship, grant DGE-114747.

\DeclareRobustCommand*{\ASref}{\ref{averages_squared}}
\DeclareRobustCommand*{\EOref}{\ref{expected_overlap_thm}}
\DeclareRobustCommand*{\ECref}{\ref{easy_cor}}
\DeclareRobustCommand*{\RefVLemma}{\ref{V_calculations}}

    \tablespagefalse
    \afterpreface
    
 %: INTRODUCTION
 
    \chapter{Introduction}

In statistical mechanics, physical systems are considered to exist in a random state, where the probability of  realizing a given state is determined by its energetic favorability.
For instance, if the total energy of the system is fixed, then all possible states are equally likely.
That is, if there are $N$ possible states, then the system at equilibrium has chance $1/N$ of being in any particular state.
This is called a \textit{microcanonical ensemble} and is a consequence of the second law of thermodynamics,  which states that the entropy of an isolated system can never decrease. 
In particular, entropy must be maximized at equilibrium; indeed, regarding the case above, the unique probability measure on $N$ elements with maximum entropy is the uniform measure.

In real-world systems, however, different states very often have different energies.
The system can change between these states by exchanging energy with its surroundings.
This results in a \textit{canonical ensemble}, wherein the probability of realizing state $\sigma$ (from among a finite collection $\Sigma$ of possibilities) is
\eeq{ \label{canonical_ensemble}
\frac{1}{Z}\e^{-H(\sigma)/(kT)},
}
where $H(\sigma)$ is the energy of state $\sigma$, $T$ is temperature, $k$ is Boltzmann's constant, and $Z$ is a normalizing constant known as the \textit{partition function}:
\eq{
Z = \sum_{\sigma\in\Sigma} \e^{-H(\sigma)/(kT)}.
} 
The function $H(\cdot)$ is called the \textit{Hamiltonian}.
The reason for \eqref{canonical_ensemble} is again the second law: among all probability measures $\mu$ on $\Sigma$ yielding the same average energy,
\eq{
\sum_{\sigma\in\Sigma} H(\sigma)\mu(\sigma) = \sum_{\sigma\in\Sigma} H(\sigma)\frac{1}{Z}\e^{-H(\sigma)/(kT)},
}
it is \eqref{canonical_ensemble} that maximizes entropy.
To understand equilibria of thermodynamic systems, then, one must be able to analyze canonical ensembles.

Typically one is interested in the asymptotic properties of these ensembles in the so-called \textit{thermodynamic limit}, in which the volume and number of particles are simultaneously taken to infinity in such a way that the particle density remains fixed.\footnote{There are also \textit{grand canonical ensembles}, in which a system exchanges not just energy but also mass with its surroundings.  That is, the number of particles is itself a random quantity.  Since we will focus on thermodynamic limits, this distinction is not important.
Also, here ``volume" and ``particle" are generic terms whose physical meaning changes with the system under consideration.}
This is how we can begin to relate our models to real-world systems.
Indeed, it is only in the infinite-volume setting that the models will exhibit phase transitions.
To be precise, let us set some notation which will appear throughout the text.

Let $(\Sigma_n,\mathscr{F}_n,P_n)_{n\geq1}$ be a sequence of probability spaces, each one equipped with a Hamiltonian $H_n : \Sigma_n \to \R$.
We will write $E_n(\cdot)$ for expectation with respect to $P_n$.
For convenience, we set $\beta \coloneqq 1/(kT)$, which (in a slight abuse of terminology) we call the \textit{inverse temperature}.
The canonical ensemble or \textit{Gibbs measure} is then the probability measure $\mu_n^\beta$ on $(\Sigma_n,\mathscr{F}_n)$ given by\footnote{\label{minus_footnote}Notice that we have adopted the mathematician's convention of replacing the Hamiltonian by its negative.  While this means the energetically favorable states are now those with \textit{high} energy, it will save us several hundred minus signs throughout the text.}
\eeq{ \label{gibbs_definition}
\mu_n^\beta(\dd\sigma) \coloneqq \frac{\e^{\beta H_n(\sigma)}}{Z_n(\beta)}\ P_n(\dd\sigma), \qquad \text{where} \qquad
Z_n(\beta) \coloneqq E_n(\e^{\beta H_n(\sigma)}).
}
That is, $\mu_n^\beta$ is absolutely continuous with respect to $P_n$, having density $\e^{\beta H_n(\sigma)}/Z_n(\beta)$.
To denote expectation with respect to $\mu_n^\beta$, we use angled brackets:
\eq{
\langle f(\sigma)\rangle_\beta \coloneqq \int_{\Sigma_n} f(\sigma)\ \mu_n^\beta(\dd\sigma) = \frac{E_n(f(\sigma)\e^{\beta H_n(\sigma)})}{E_n(\e^{\beta H_n(\sigma)})}.
}
Typically the scaling will be chosen so that $Z_n(\beta)$ is of exponential order in $n$, and so the relevant intensive quantity is the \textit{free energy density} (which we will simply call the \textit{free energy}),\footnote{It would be more proper to call $p(\beta)/\beta$ the free energy, although it will be easier to work with $p(\beta)$ directly.}
\eq{
F_n(\beta) \coloneqq \frac{1}{n}\log Z_n(\beta).
}
We will say that a thermodynamic limit exists if there exists a limiting free energy,
\eq{
p(\beta) \coloneqq \lim_{n\to\infty} F_n(\beta) \quad \text{for all $\beta\geq0$.}
}
The fundamental importance of the function $p(\cdot)$ can be understood from at least two perspectives.

On one hand, $F_n(\cdot)$ is the logarithmic moment generating function (also known as the cumulant generating function) of the random variable $H_n(\sigma)$ with respect to $P_n$.
As such, it completely determines the distribution of said random variable, and key thermodynamic quantities can be expressed in terms of its derivatives.
For example, the expected energy density of a state is simply a first derivative,
\eq{
\frac{\langle H_n(\sigma)\rangle_\beta}{n} = F_n'(\beta),
}
while the density of energy variance is the second derivative:
\eq{
\frac{\langle{H_n(\sigma)^2}\rangle_\beta - \langle{H_n(\sigma)}\rangle_\beta^2}{n} = F_n''(\beta).
}
In turn, the asymptotics of these statistics are recorded in the limit function $p(\cdot)$.
Moreover, phase transitions can be identified as points of non-analyticity.\footnote{Assuming $F_n(\beta)$ is finite for all $\beta$, the function $F_n(\cdot)$ is everywhere analytic.  Mathematically, this is why phase transitions only appear in infinite-volume settings.}
These generally occur because one of the derivatives of $p(\cdot)$ has a discontinuity, which corresponds to some thermodynamic quantity being discontinuous with the temperature.

On the other hand, from a more probabilistic perspective, $p(\cdot)$ is the instrument by which one can obtain large deviations principles for the canonical ensemble from those for the reference sequence $(P_n)_{n\geq1}$.
In this sense, the free energy encodes information on the statistics of not only typical states, but also rare ones.
For example, suppose that
$H_n/n$ obeys
%$M_n : \Sigma_n \to \Omega$ is some function of the state\footnote{Often $M_n$ is called a \textit{macrostate}, and $\sigma\in\Sigma_n$ is correspondingly called a \textit{microstate}.} obeying 
a large deviations principle with respect to $P_n$, which we express in condensed form by
\eq{
%-\lim_{n\to\infty} \frac{1}{n}\log P_n(M_n(\sigma) \in \dd m) = I(m), \quad m\in\Omega.
-\lim_{n\to\infty} \frac{1}{n}\log P_n\Big(\frac{H_n(\sigma)}{n} \in \dd h\Big) = I(h), \quad h\in\R.
}
%If $M_n$ admits an energy representation, i.e.~a function $h : \Omega \to \R$ such that $h \circ M_n = H_n/n$, then
At least formally, it follows that
\eq{
%-\lim_{n\to\infty} \frac{1}{n}\log \mu_n^\beta(M_n(\sigma)\in \dd m) = I(m) - \beta h(m)  - p(\beta) \eqqcolon I^\beta(m).
-\lim_{n\to\infty} \frac{1}{n}\log \mu_n^\beta\Big(\frac{H_n(\sigma)}{n}\in \dd h\Big) = I(h) - \beta h  + p(\beta) \eqqcolon I^\beta(h).
}
Furthermore, knowing that $\inf_h I^\beta(h) = 0$, we can write $p(\cdot)$ as the Legendre--Fenchel transform of the original rate function:
\eeq{ \label{min_energy_principle}
p(\beta) = \sup_{h}\{\beta h - I(h)\}.
}
This is called the \textit{minimum energy principle} and specifies the optimal tradeoff between energy (i.e.~the value of $h$) and entropy (the density of states having that energy), occurring at equilibrium.\footnote{It may seem strange that a variational formula with a supremum is labeled as a \textit{minimum} principle.
The reason for this is that we replaced $-H_n(\sigma)$ with $H_n(\sigma)$ in \eqref{gibbs_definition}; see footnote \ref{minus_footnote}.
Also notice that if $I(\cdot)$ is convex, then duality allows us to conclude from \eqref{min_energy_principle} that
$I(h) = \sup_\beta \{\beta h - p(\beta)\}$.
}
%The ``entropy" in the name comes from the fact that the entropy density of the ensemble (relative to $P_n$) is
%\eeq{ \label{entropy_density}
%\frac{1}{n}\Big\langle \log\frac{\e^{\beta H_n(\sigma)}}{Z_n(\beta)}\Big\rangle = \beta F_n'(\beta) - F_n(\beta)
%= \sup_\alpha\{\alpha F_n'(\beta) - F_n(\alpha)\}.
%}
%If the average energy of the system is fixed at $F_n'(\beta) = h$, then the right-hand sides of \eqref{max_entropy_principle} and \eqref{entropy_density} coincide in the limit $n\to\infty$.

The above discussion amounts to the following premise: tantamount to equilibrium statistical mechanics are the connections between a system's physical properties and a single function: the free energy.
Historically, the range of mathematical tools developed to understand these connections has been immense.
Many of the most advanced techniques, however, are model-specific and not robust to small parameter changes.
This is particularly true for \textit{disordered} models, which are the focus of this dissertation.

In disordered models, the Hamiltonian $H_n(\cdot)$ is itself random.
Physically, this is meant to model random irregularities in either the system's ``particles" or the surroundings in which they exist.
The total energy of a configuration thus depends on the random realization of the disorder from which the Hamiltonian is defined, and so statements like \eqref{min_energy_principle} become more difficult to formulate.  
In general, the challenge is to determine if, and in what ways, this disorder changes the behavior of the system.
That is, how does the \textit{quenched} system (i.e.~the ensemble given the random disorder) compare to the prior distribution $P_n$?
Moreover, to what degree do the differences depend on the specific realization of the disorder?
Given a specific model, common objectives include:
\begin{itemize}
\item[(a)] establishing the existence of a thermodynamic limit and (ideally) constructing an infinite-volume ensemble to which the finite-volume ensembles converge;
\item[(b)] identifying phase transitions that determine when qualitative differences appear between the disordered and non-disordered systems; 
\item[(c)] demonstrating that the qualitative behavior of the quenched system does not depend on the realization of the disorder (in an almost sure sense); and
\item[(d)] determining, within a single qualitative phase, the quantitative variability that \textit{does} depend on the realization of the disorder.
\end{itemize}
The last item above refers to the \textit{averaged quenched} ensemble, which is the ensemble obtained by averaging over all possible quenched ensembles.

In the following chapters, we will make progress on each these goals for various disordered systems.
Our emphasis will be on methodology, namely the development of new mathematical tools that might be applied in other contexts, and so we will seek generality whenever possible.
With that said, two families of models will feature prominently throughout, namely \textit{directed polymers in random environment} and \textit{spin glasses}.
These models will be defined in Chapters \ref{endpoint} and \ref{replica}, respectively.
Very briefly, the contents of the dissertation can be summarized as follows:
\begin{itemize}
\item Chapter \ref{endpoint} (includes content from \cite{bates-chatterjee20I,bates18}): 
A series of abstract tools are developed in order to study the endpoint localization phenomenon exhibited by directed polymers at low temperatures.
As a first application of these tools, we prove that in an environment with finite exponential moment, the endpoint distribution is asymptotically purely atomic if and only if the system is in the low temperature phase. 
The analogous result for a heavy-tailed environment was proved by Vargas in~2007.
As a second application, we prove a subsequential version of the longstanding conjecture that in the low temperature phase, the endpoint distribution is asymptotically localized in a region of stochastically bounded diameter. 
All results hold in arbitrary dimensions and make no use of integrability. \\
Main results: Theorems \ref{intro_result1}, \ref{intro_result2}, and \ref{upper_bound}

\item Chapter \ref{replica} (includes content from \cite{bates-chatterjee20II}): We consider a broad class of disordered systems in which the disorder is assumed to be Gaussian distributed, including directed polymers in Gaussian environment and also classical mean-field spin glasses.
For this collection of models, we identify a sufficient condition (in terms of the free energy) for the ensemble to concentrate on a small collection of states.
This proves an alternate version of the localization phenomenon from Chapter \ref{endpoint}, a version which in fact generalizes very naturally to the broader class of models. \\
Main results: Theorems \ref{easy_cor}, \ref{expected_overlap_thm}, and \ref{averages_squared}

\item Chapter \ref{rsb} (includes content from \cite{bates-sloman-sohn19}): Here we focus on the \textit{multi-species Sherrington--Kirkpatrick (SK) spin glass}, which provides a mild departure from the mean-field setting (i.e.~the classical SK model) toward a model which captures the communities and inhomogeneities of real-world networks.
By an analysis of the Parisi variational formula for the free energy, we prove the existence of a phase transition and identify a sufficient condition for subcriticality that is conjecturally sharp.
This gives the first non-asymptotic proof of so-called replica symmetry breaking for a multi-species spin glass. \\
Main results: Theorems \ref{uniquenessof2speciesrRSsol} and \ref{2speciessymmetrybreaking}

\item Chapter \ref{fluctuations} (includes content from \cite{bates-chatterjee20III}):
The final chapter specializes to planar random growth models, including not only directed polymers but also first- and last-passage percolation, which can be thought of as zero-temperature polymer models.
We prove $\sqrt{\log n}$ lower bounds on the order of growth fluctuations under no assumptions on the distribution of vertex or edge weights other than the minimum conditions required for avoiding pathologies. 
Such bounds were previously known only for certain restrictive classes of distributions.
In addition, the first-passage shape fluctuation exponent is shown to be at least $1/8$, extending previous results to more general distributions.  \\
Main results: Theorems \ref{fpp_thm}, \ref{fpp_thm_1}, \ref{fpp_thm_2}, \ref{lpp_thm}, and \ref{dp_thm}

\end{itemize}

Each chapter can be read independently of the others.

\section{Notation}
Throughout the manuscript, $\R$ will denote the real numbers, $\Z$ the integers, and $\N$ the positive integers.
The symbol $\P$ will denote a probability measure defined on some probability space---which we often do not mention explicitly---and $\E$ will denote expectation with respect to $\P$.

Euler's number will be written in roman font as $\e$, so as to not be confused with $e$, which in Chapter \ref{fluctuations} will denote an edge of $\Z^2$.

When defining key words or phrases, we will use italics to indicate which word or phrase is being defined.

%In this area of study, a principle objective is to connect a system's predictable macroscopic behavior with its microscopic properties, which usually include so many degrees of freedom as to make direct calculation infeasible.
%When these many degrees of freedom are modeled with randomness, however, one can begin to predict and to explain physical outcomes using probability theory.
%Moreover, this process often motivates mathematical innovation to broaden the set of examples for which rigorous analysis can be performed.

%As a friendly introduction to the types of questions addressed in this dissertation, let us consider the familiar thermodynamics of water.
%For instance, when liquid water is cooled below a critical temperature, it crystalizes and changes its name to ``ice".
%This immediately prompts a first question:
%\textit{Assuming constant pressure, why does water always freeze at the same temperature?}
%After all, the positions of and interactions between individual H$_2$O molecules are constantly changing, so why should the phase of water not be similarly fluctuating?
%When we view the microscopic details as being random, probability theory provides a mathematical explanation: 
%\textit{When a physical event depends on an enormous number of independent random variables but not on any small subset of those variables}
%
%\textit{Why do other liquids, with completely different chemical structures, exhibit the same qualitative phenomenon?}
%
%\textit{Why does the temperature of water remain constant throughout the freezing process?}

%: ENDPOINT LOCALIZATION
    
    \chapter{Endpoint localization of directed polymers} \label{endpoint}
    
\section{Introduction}
The model of directed polymers in random environment was introduced in the physics literature by Huse and Henley \cite{huse-henley85} to represent the phase boundary of the Ising model in the presence of random impurities. It was later mathematically reformulated as a model of random walk in random potential by Imbrie and Spencer~\cite{imbrie-spencer88}. 
In its classical form, the model considers a simple random walk (SRW) on the integer lattice $\Z^d$, whose paths---considered the ``polymer"---are reweighted according to a random environment that refreshes at each time step.
Large values in the environment tend to attract the random walker and possibly force localization phenomena; this attraction grows more effective in lower dimensions and at lower temperatures.
On the other hand, the random walk's natural dynamics favor diffusivity.
Which of these competing features dominates asymptotically is a central question in the study of directed polymers.  

Over the last thirty years, the directed polymer model has played an important role as a source of many fascinating problems in the probability literature, culminating in the amazing recent developments in integrable polymer models. However, in spite of the wealth of information now available for integrable models, our knowledge about the general case is fairly limited, especially in spatial dimension greater than one. The goal of this chapter is to introduce an abstract theory that allows computations for polymer models that are not integrable. 
Before we discuss our approach in detail, let us very briefly communicate the main consequences:

%Very briefly, the main ideas and results may be summarized as follows:
%To demonstrate the power of this machine, we prove that the endpoint of directed polymers in any dimension is fully localized to a few favorite sites, and along a subsequence is localized in a single favorite region whose diameter remains stochastically bounded as the length of the polymer tends to infinity. The latter result proves a subsequential version of a well-known conjecture. % about directed polymers. %Previously, such strong localization results were known only for one-dimensional models. %~\cite{sinai82, golosov84, gantert-peres-shi10, comets-nguyen16}. 
\begin{itemize}
\item The probabilistic model of $(d+1)$-dimensional directed polymers of length $n$ assigns a random probability measure to the set of random walk paths of length $n$ in $\Z^d$ that start at the origin. The precise mathematical model is defined in Section \ref{model_1} below. The ``endpoint distribution'' is the probability distribution of the final vertex reached by the path. Note that this is a random probability measure on $\Z^d$, which we will denote $f_n$. Understanding the  behavior of $f_n$ is the main goal of this chapter. Although a number of results about $f_n$ were known prior to this work (reviewed in  Section \ref{disorder_background}), this is the first work that puts forward a comprehensive theoretical framework for analyzing the asymptotic properties of~$f_n$.
\item %One problem with $f_n$ is that it is not expected to have a limit. Rather, it keeps fluctuating. 
To understand the asymptotic behavior of $f_n$, we %therefore 
consider the {\it empirical measure} of the endpoint distribution,
\eq{
\rho_n \coloneqq  \frac{1}{n}\sum_{i=0}^{n-1} \delta_{f_i},
}
where $\delta_{f_i}$ is the Dirac point mass at $f_i$, considering $f_i$ as a random element of the space of probability measures on $\Z^d$. Note that $\rho_n$ is therefore a random probability measure on the space of probability measures on $\Z^d$. 
\item To obtain an adequate understanding of $\rho_n$, it turns out that we need to move to a larger space. The reason for this will be explained later, in Section \ref{endpoint_results}. We first define a non-standard compactification $\SS$ of the space of probability measures on $\Z^d$, and then consider $\PP(\SS)$, the space of probability measures on $\SS$ with the Wasserstein metric. We consider $\rho_n$ as an element of $\PP(\SS)$. The elements of $\SS$ are called ``partitioned subprobability measures''.

\item We define a certain continuous function $\TT:\PP(\SS)\to \PP(\SS)$, called the ``update map'', and let $\KK$ be the set of fixed points of $\TT$. Furthermore, we define a linear functional $\RR$ on $\PP(\SS)$, and let $\MM$ be the subset of $\KK$ where $\RR$ is minimized. Our first main result is that $\MM$ is nonempty, and the distance between $\rho_n$ and the set $\MM$ converges almost surely to zero as $n\to \infty$. 

\item As a consequence of the above result, we obtain a new variational formula for the limiting free energy of directed polymers; namely, the limiting free energy equals the value of $\RR$ on $\MM$. 
Like many statistical mechanical models, directed polymers have a high temperature phase and a low temperature phase.
These are characterized respectively by whether $\KK$ is just a single trivial object or instead contains non-trivial elements.

\item  One of the most striking features of directed polymers is that in the low temperature phase, $f_n$ has ``atoms'' whose weights do not decay to zero as $n\to \infty$. This is in stark contrast with the endpoint distribution of, say, simple random walk, where the most likely site has mass of order $n^{-d/2}$. The precise statement of this well-known localization phenomenon will be discussed in Section~\ref{disorder_background}. One of the main applications of the abstract machinery developed in this chapter is to show that in the low temperature regime, the atoms account for {\it all} of the mass---that is, there is no part of the mass that diffuses out. Such a result was proved earlier for directed polymers in heavy-tailed environment, and also for a particular $(1+1)$-dimensional integrable model. 
We prove it under finite exponential moments, to which previously known techniques do not apply.

\item Our second main application is to show that in the low temperature regime, there is almost surely a subsequence of positive density along which the endpoint distribution concentrates mass $> 1-\delta$ on a set of diameter $\leq K(\delta)$, where $\delta$ is arbitrary and $K(\delta)$ is a deterministic constant that depends only on $\delta$ and some features of the model. In other words, not only does the mass localize on atoms, but the atoms themselves localize in a set of bounded diameter. This proves a subsequential version of a longstanding conjecture about the endpoint distribution. Prior to this work, the only case where a similar statement could be proved was for a $(1+1)$-dimensional integrable model.

\end{itemize}
We will now begin a more detailed presentation by defining the model below. This is followed by a discussion of the known results about polymer models, and then a general overview of the results proved in this chapter and the ideas involved in the proofs. 

\subsection{The model of directed polymers in random environment} \label{model_1}
Let $d$ be a positive integer, to be called the \textit{spatial} or \textit{transverse dimension}. 
Let $P$ denote the law of the \textit{reference walk}, which is a homogeneous random walk $(\sigma_i)_{i\geq0}$ on $\Z^d$.
%the \textit{path space}
%$\Omega_p \coloneqq \{(\sigma_i)_{i \geq 0} : \sigma_i \in \Z^d\}$,
%equipped with its cylindrical $\sigma$-algebra.
To be precise, we state the assumptions on $P$:
\eeq{
P(\sigma_0 = 0) = 1, \qquad P\givenp{\sigma_{i+1} = x}{\sigma_i = y} = P(\sigma_1 = x - y) \eqqcolon P(y,x) < 1. \label{walk_assumption_1}
}
Next we introduce a collection of i.i.d.~random variables $\vc \omega = (\omega(i,x) : i \geq 1, x \in \Z^d)$, called the \textit{random environment}, supported on some probability space $(\Omega,\FF,\P)$.
We will write $E$ and $\E$ for expectation with respect to $P$ and $\P$, respectively.
Finally, let $\beta\geq0$ be a parameter representing \textit{inverse temperature}.
Then for $n\geq0$, the \textit{quenched polymer measure} of length $n $ is the Gibbs measure $\mu_n^\beta$ defined by
\eeq{ \label{polymer_measure_def}
\mu_n^\beta(\dd \sigma) = \frac{1}{Z_n(\beta)} \e^{\beta H_n(\sigma)}\ P(\dd \sigma), \quad \text{where} \quad H_n(\sigma) \coloneqq \sum_{i = 1}^n \omega(i,\sigma_i).
}
The normalizing constant
\eq{
Z_n(\beta) \coloneqq E(\e^{\beta H_n(\sigma)}) = \sum_{\sigma_1,\dots,\, \sigma_n \in \Z^d} \exp\bigg(\beta \sum_{i = 1}^n \omega(i,\sigma_i)\bigg)\prod_{i=1}^n P(\sigma_{i-1},\sigma_i), \quad \sigma_0 \coloneqq 0,
}
is called the \textit{quenched partition function}.
A fundamental quantity of the system is calculated from this constant, namely the \textit{quenched free energy},
\eq{
F_n(\beta) \coloneqq \frac{\log Z_n(\beta)}{n}.
}
We specify ``quenched" to indicate that the randomness from the environment has not been averaged out.
That is, $\mu_n^\beta$, $Z_n(\beta)$, and $F_n(\beta)$ are each random processes with respect to the filtration
\eeq{ \label{fndef}
\FF_n \coloneqq \sigma(\omega(i,x) : 1 \leq i \leq n, x \in \Z^d), \quad n \geq 0.
}
It is this quenched setting in which we are interested, and so unless stated otherwise, ``almost sure" statements are made with respect to $\P$.

When $Z_n(\beta)$ is replaced by its expectation $\E Z_n(\beta)$, one obtains the \textit{annealed} free energy,
\eeq{ \label{mgf_def}
\frac{\log \E Z_n(\beta)}{n} = \frac{\log (\E \e^{\beta \omega})^n}{n} = \log \E(\e^{\beta \omega}) \eqqcolon \lambda(\beta),
}
where $\omega$ denotes (here and henceforth) a generic copy of $\omega(1,0)$.
%Notice that when $n = 0$, the original measure is recovered: $\mu_0 = P$ and $Z_0 = 1$.
Notice that $\lambda(\cdot)$ depends only on the law of the environment, which we denote $\mathfrak{L}_\omega$.
We will assume finite ($1+\eps$)-exponential moments,
\eeq{ \label{mgf_assumption}
0 \leq \beta < \beta_{\max} \coloneqq \sup\{t \geq 0: \lambda(\pm t) < \infty\},
}
so that $Z_n(\beta)$ has finite $\pm(1+\eps)$-moments at the given inverse temperature.
Otherwise $\mathfrak{L}_\omega$ is completely general, although to avoid trivialities, we will always assume that $\omega$ is not an almost sure constant.

We can equivalently write
\eq{
Z_n(\beta) = \sum_{x \in \Z^d} Z_n(\beta,x),
}
where
\eeq{ \label{p2p}
Z_n(\beta,x) \coloneqq  E(\e^{\beta H_n(\sigma)}; \sigma_n = x)
}
is often referred to as a \textit{point-to-point} partition function.
Correspondingly, $Z_n(\beta)$ is distinguished as the \textit{point-to-level} partition function.
In the sequel, we will neither obtain nor mention results specific to the point-to-point setting, although there is an inherent duality with the point-to-level setting if the random environment is paired with an external field; see \cite[Section 4]{georgiou-rassoul-seppalainen16}.
This relationship is essential in the study of (1+1)-dimensional integrable models, which we highlight in Section \ref{solvability_background} below.
First, though, we review results for the general setting.

\section{An overview of known results}  \label{disorder_background}
In this section we briefly survey results from the literature that will be relevant to the ones proved in this chapter.  
Since we will work with arbitrary dimension and arbitrary environment, our main focus will be on results proved in this general setting; these are presented in Sections \ref{SRW_background} and \ref{long_range_review}.
In Section \ref{solvability_background}, we will highlight related work for so-called ``exactly solvable" or "integrable" models that are limited to the one-dimensional case.

\subsection{Simple random walk case} \label{SRW_background}
We first specialize to the classical (and standard) case when $P$ is SRW.
One can find a more detailed review in the book of Comets \cite{comets17}.

This directed polymer model first appeared in physics literature \cite{huse-henley85,kardar85,huse-henley-fisher85,kardar-nelson85,kardar-zhang87}.
A typical physical interpretation of the setup explains the ``polymer" nomenclature.
Each vertex in $\Z^d$ is a possible location of a monomer, and a polymer is a chain of monomers occupying neighboring vertices.
A random walk $(\sigma_i)_{i \geq 0}$ chosen according to $P$ represents a polymer growing without preference for particular directions; that is, growing in a purely uniform environment.
When random impurities are introduced, the polymer may exhibit markedly different behavior because of attraction to or repulsion from certain impurities.
A simple choice of environment to model this scenario is a Bernoulli random environment:
\eq{
\omega_u = \begin{cases}
-1 &\text{with probability $p > 0$,} \\
+1 &\text{with probability $1 - p > 0$}.
\end{cases}
}
In this case, the energy of a sample path would increase by traversing a $+1$ site, thus decreasing its likelihood under $\mu_n^\beta$.
Another standard choice is a Gaussian random environment, where each $\omega_u$ is a standard normal random variable.

%In Section \ref{long_range_review}, we will describe some results that are known to extend to more general models, in particular so-called ``long-range" polymers.

\subsubsection{High and low temperature phases}
The qualitative behavior of directed polymers depends on the disorder distribution $\mathfrak{L}_\omega$, the inverse temperature $\beta$, and the dimension $d$.
Moreover, this dependence can be detected through the free energy $F_n(\beta)$ as follows.
When the randomness of the environment is averaged out, one obtains the \textit{expected quenched free energy},
\eq{
\E F_n(\beta) = \frac{1}{n}\, \E \log Z_n(\beta).
}
In general, this quantity is distinct from $\lambda(\beta)$, the annealed free energy.
Indeed, Jensen's inequality %(Durrett \cite{durrett10}, Theorem 1.5.1) 
gives the comparison
\eeq{ \label{jensen_applied_finite}
\E F_n(\beta) < \lambda(\beta),
}
where the inequality is strict because $F_n(\beta)$ is not an almost sure constant, and $x\mapsto\log x$ is not linear.
A superadditivity argument  (see the proof of Lemma \ref{means_converge}) 
shows that $\E F_n(\beta)$ converges to $\sup_{n \geq 0} \E F_n(\beta)$ as $n\to\infty$.
It has also been shown in \cite{vargas07}, under hypotheses much weaker than \eqref{mgf_assumption}, that $F_n(\beta)-\E F_n(\beta)$ tends to $0$ almost surely (see also Lemma \ref{concentration}).
Therefore, there is a deterministic limit\footnote{For $P =$ SRW, \eqref{Fn_lim} was initially shown by Carmona and Hu \cite[Proposition 1.4]{carmona-hu02} for a Gaussian environment and by Comets, Shiga, and Yoshida \cite[Proposition 2.5]{comets-shiga-yoshida03} for a general environment.
It was later proved by Vargas \cite[Theorem 3.1]{vargas07} under weaker assumptions.
In Proposition \ref{free_energy_converges}, we give a proof for general $P$ under the assumption \eqref{mgf_assumption}.}
\eeq{
p(\beta) \coloneqq  \lim_{n \to \infty} \E F_n(\beta) = \lim_{n\to\infty} F_n(\beta) \quad \mathrm{a.s.} \label{Fn_lim}
}
Using the FKG inequality, Comets and Yoshida~\cite{comets-yoshida06} identified a phase transition:

\begin{theirthm}[{\cite[Theorem 3.2]{comets-yoshida06}}]\label{critical_temperature}
Let $P$ be SRW and assume $\lambda(t) < \infty$ for all $t \in \R$.
Then there exists a critical value $\beta_\cc =  \beta_{\mathrm{c}}(\mathfrak{L}_\omega,d) \in [0,\infty]$ such that
\eq{
0 \le \beta \leq \beta_{\mathrm{c}} \quad &\implies \quad p(\beta) = \lambda(\beta), \\
\beta > \beta_{\mathrm{c}} \quad &\implies \quad p(\beta) < \lambda(\beta).
}
\end{theirthm}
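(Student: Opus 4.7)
The plan is to define $\beta_c$ intrinsically as
\[
\beta_c := \sup\{\beta \geq 0 : p(\beta) = \lambda(\beta)\},
\]
and verify both implications. Jensen's inequality applied to $\E \log Z_n(\beta) \leq \log \E Z_n(\beta) = n\lambda(\beta)$ yields the universal bound $p(\beta) \leq \lambda(\beta)$, and $p(0)=\lambda(0)=0$ forces $\beta_c \geq 0$. The theorem reduces to showing that $\{\beta : p(\beta) = \lambda(\beta)\}$ is an interval of the form $[0,\beta_c]$; once this monotonicity is in hand, both conclusions are automatic from the definition of $\beta_c$.

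For the monotonicity I would introduce the normalized partition function
\[
W_n(\beta) := Z_n(\beta)\,\e^{-n\lambda(\beta)}.
\]
Independence of $\omega(n,\cdot)$ from $\FF_{n-1}$ combined with $\E \e^{\beta\omega(n,x)} = \e^{\lambda(\beta)}$ gives the direct computation $\E[Z_n(\beta) \mid \FF_{n-1}] = \e^{\lambda(\beta)} Z_{n-1}(\beta)$, so $W_n(\beta)$ is a nonnegative mean-one $\FF_n$-martingale. Martingale convergence yields $W_n(\beta)\to W_\infty(\beta)$ almost surely, and a standard $0$--$1$ argument shows $\P(W_\infty(\beta) > 0) \in \{0,1\}$. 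Since $F_n(\beta) - \lambda(\beta) = \tfrac{1}{n}\log W_n(\beta)$, on the full-measure event $\{W_\infty(\beta) > 0\}$ one has $p(\beta) = \lambda(\beta)$. It therefore suffices to prove that if $W_\infty(\beta_0) > 0$ almost surely, then $W_\infty(\beta) > 0$ almost surely for every $0 \leq \beta < \beta_0$.

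This is where the FKG inequality enters. Each $W_n(\beta)$ is coordinatewise increasing in $\omega$, so Harris--FKG gives positive correlation of the level events $\{W_n(\beta_1) \geq \varepsilon\}$ and $\{W_n(\beta_2) \geq \varepsilon\}$ across different inverse temperatures; combined with a second-moment bound this transfers a.s.\ positivity from $\beta_0$ down to $\beta_1 < \beta_0$. I expect this FKG comparison to be the main obstacle: one must extract an effective lower bound on $\E[W_n(\beta_1)^2]$ from the control at $\beta_0$ using only the moment hypothesis \eqref{mgf_assumption}, and it is here that non-Gaussianity of $\omega$ makes the argument delicate. As a complementary backup strategy I would pursue the differential identity
\[
\lambda(\beta) - \E F_n(\beta) = \int_0^\beta \Psi_n(s)\,\dd s
\]
with $\Psi_n(s) \geq 0$ proportional to the expected two-replica overlap at inverse temperature $s$. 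This identity is immediate from Gaussian integration by parts in the Gaussian case and admits analogous (though more technical) versions under \eqref{mgf_assumption}. Once it is in hand, $p(\beta_0) = \lambda(\beta_0)$ forces $\Psi_n \equiv 0$ on $[0,\beta_0]$ in the limit, giving $p \equiv \lambda$ throughout that interval.
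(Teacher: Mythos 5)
Your primary strategy has a genuine gap: the set $\{\beta : W_\infty(\beta)>0 \text{ a.s.}\}$ is not the same set as $\{\beta : p(\beta)=\lambda(\beta)\}$, and the theorem is a statement about the latter. The implication only runs one way: $W_\infty(\beta)>0$ a.s.\ forces $\tfrac1n\log W_n(\beta)\to 0$ and hence $p(\beta)=\lambda(\beta)$, but $p(\beta)=\lambda(\beta)$ permits $W_\infty(\beta)=0$ a.s.\ with $\log W_n(\beta)\to -\infty$ subexponentially. These two thresholds are denoted $\bar\beta_{\mathrm{c}}$ (weak/strong disorder) and $\beta_{\mathrm{c}}$ (high/low temperature) in the paper, and it is a well-known open problem to show $\bar\beta_{\mathrm{c}}=\beta_{\mathrm{c}}$ for $d\geq 3$; only $\bar\beta_{\mathrm{c}}\leq\beta_{\mathrm{c}}$ is general. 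So even if your cross-temperature FKG comparison succeeded in proving monotonicity of weak disorder (which it can, and this is how $\bar\beta_{\mathrm{c}}$ is constructed in Comets--Yoshida's Theorem~12.1), it would not prove that $\{\beta: p(\beta)=\lambda(\beta)\}$ is an interval $[0,\beta_{\mathrm{c}}]$: there could in principle be a gap $(\bar\beta_{\mathrm{c}},\beta_{\mathrm{c}}]$ where $W_\infty=0$ yet $p=\lambda$, and your argument says nothing about the structure of $\{p=\lambda\}$ in that range.

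Your backup strategy is the correct one, and is in fact essentially the paper's proof (given as Proposition~\ref{phase_transition}, the generalization of this theorem to arbitrary reference walks and $\beta_{\max}<\infty$). The paper does not argue across temperatures; instead it bounds the derivative at a single $\beta$. Writing $(\E F_n)'(\beta)=\tfrac1n E\bigl[\E\bigl(H_n(\sigma)\e^{\beta H_n(\sigma)}/Z_n(\beta)\bigr)\bigr]$, and for fixed $\sigma$ introducing the tilted product measure $\dd\wt\P = \e^{\beta H_n(\sigma)-n\lambda(\beta)}\,\dd\P$, the Harris--FKG inequality applies because $H_n(\sigma)$ is coordinatewise non-decreasing in $\omega$ while $Z_n(\beta)^{-1}$ is non-increasing. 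This gives $\wt\E(H_n Z_n^{-1})\leq\wt\E(H_n)\wt\E(Z_n^{-1})$ with $\wt\E(H_n)=n\lambda'(\beta)$, and after averaging over $\sigma$ one obtains $(\E F_n)'(\beta)\leq\lambda'(\beta)$. Integrating, $\lambda(\beta_0)-p(\beta_0)=\int_0^{\beta_0}\bigl[\lambda'-p'\bigr]\,\dd\beta$ is non-decreasing and vanishes at $0$, which is exactly the interval structure you need. Your $\Psi_n(s)$ is $\lambda'(s)-(\E F_n)'(s)$, and your intuition that it equals the expected replica overlap (times $\lambda'$) is correct in the Gaussian case via integration by parts; the FKG step above is the precise ``analogous version'' that covers general environments under the moment hypothesis, so this part of your plan does carry through. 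I would therefore jettison the martingale route entirely and develop the differential inequality as the main argument.
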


We refer to the region $0 \le \beta < \beta_{\mathrm{c}}$ as the ``high temperature phase'', while $\beta > \beta_{\mathrm{c}}$ defines the ``low temperature phase".\footnote{The difference $\lambda(\beta)-p(\beta)$ is sometimes called the \textit{Lyapunov exponent} of the system.
In this language, the low temperature phase is characterized by a nonzero Lyapunov exponent.}
Roughly speaking, high temperatures reduce the influence of the random environment, and so polymer growth resembles the original reference walk, while at low temperatures the random impurities force a much different behavior.
This distinction has been most frequently made in terms of the \textit{endpoint distribution} $\mu_n^\beta(\sigma_n \in \cdot) $, which is a random probability measure on $\Z^d$.
For instance, one striking result first proved by Carmona and Hu \cite{carmona-hu02} (for a Gaussian environment) % Theorem 1.2 for d = 1, 2 
and then by Comets, Shiga, and Yoshida \cite{comets-shiga-yoshida03} (in the general case) % Corollary 2.2 and Theorem 2.3(a),
is the following: 
If $\beta > \beta_{\mathrm{c}}$, then the polymer endpoint observes so-called \textit{strong localization}:
\begin{align} \label{strong_localization}
\exists\ c > 0, \quad \limsup_{n \to \infty} \max_{x \in \Z^d} \mu_n^\beta(\sigma_n = x) \geq c \quad \text{a.s.}
\tag{SL}
\end{align}
That is, infinitely often the polymer has ``favorite sites" at which its endpoint distribution concentrates.

\subsubsection{Weak and strong disorder regimes}
While examining the high and low temperature regimes is very natural, the mathematical development of directed polymers in random environment has often followed an ostensibly different route.
Since the work of Bolthausen \cite{bolthausen89}, analysis of the directed polymer model has frequently focused on the \textit{normalized partition function},
\eq{
W_n(\beta) \coloneqq  Z_n(\beta) \e^{-n\lambda(\beta)}, %\frac{Z_n}{(\E\, \e^{\beta \omega_u})^n},
}
with $W_0 = Z_0 = 1$. %and $\lambda(\beta)$ defined as in \eqref{mgf_def}.
%It was first noticed by Bolthausen [bolthausen89] for a Bernoulli environment and by Song and Zhou [song-zhou96] for general environments, 
It is not difficult to check that $(W_n(\beta))_{n\geq0}$ is a positive martingale adapted to the filtration $(\FF_n)_{n \geq 0}$ defined in \eqref{fndef}.
%where
%\eeq{
%\FF_n \coloneqq  \sigma(\omega(i,x) : 1 \leq i \leq n,\, x \in \Z^d).\label{gndef}
%}
In particular, $\E[W_n(\beta)] = 1$ for all $n$, and the martingale convergence theorem %(see, e.g.,~Durrett \cite{durrett10}, Theorem~5.2.8) 
implies that there is an $\FF$-measurable random variable $W_\infty(\beta)$ such that
\eeq{
\lim_{n \to \infty} W_n(\beta) = W_\infty(\beta) \quad \text{a.s.} \label{Ztilde_def}
}
Furthermore, we necessarily have $W_\infty(\beta) \geq 0$ almost surely, and the event of positivity
$\{W_\infty(\beta) > 0\}$ is measurable with respect to the tail $\sigma$-algebra,
\eq{
\bigcap_{n = 1}^\infty \sigma(\omega(i,x) : i \geq n,\, x\in \Z^d).
}
By Kolmogorov's zero-one law %(Durrett \cite{durrett10}, Theorem 2.5.1), 
we have either \textit{weak} or \textit{strong disorder},\footnote{Related work on the (multiplicative) stochastic heat equation has been done for $d\geq3$ \cite{mukherjee-shamov-zeitouni16}.}
\begin{align}
W_\infty(\beta) &> 0 \quad \text{a.s.}, \tag{WD} \label{weak_disorder} \\
W_\infty(\beta) &= 0 \quad \text{a.s.} \tag{SD} \label{strong_disorder}
\end{align}
Carmona and Hu \cite{carmona-hu02} 
%(for a Gaussian environment)
and Comets, Shiga, and Yoshida \cite{comets-shiga-yoshida03} 
%(in the general case)
gave the following characterization of the two phases.
It says that there is strong disorder exactly when the endpoint overlap of two independent polymers has infinite expectation.

\begin{theirthm}[{\cite[Proposition 5.1]{carmona-hu02} and \cite[Theorem 2.1]{comets-shiga-yoshida03}}]\label{disorder_equiv} 
\label{sd_equals_wl}
Let $P$ be SRW and assume $\lambda(t) < \infty$ for all $t \in \R$.
Then strong disorder \eqref{strong_disorder} is equivalent to \textit{weak localization},
\begin{align}  \label{weak_localization}
\sum_{n = 0}^\infty (\mu_n^\beta)^{\otimes 2}(\sigma_n = \sigma_n') = \infty \quad \mathrm{a.s.}, \tag{WL}
\end{align}
where $\sigma$ and $\sigma'$ are independent samples from $\mu_n^\beta$.
%where $(\sigma_n)_{n \geq 0}$ and $(\sigma_n')_{n \geq 0}$ are i.i.d.~directed polymers supported on the product space $(\Omega_{\text{p}} \times \Omega_{\text{p}}, \FF_{\text{c}} \times \FF_{\text{c}})$,  whose finite dimensional distributions are given by the polymer measures $(\mu_n^\beta)_{n \geq 0}$.
\end{theirthm}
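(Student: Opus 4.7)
The plan is to tie the positive martingale $W_n(\beta)$ to the overlap $(\mu_n^\beta)^{\otimes 2}(\sigma_n = \sigma_n')$ through a single conditional variance identity and then analyze each direction of the equivalence separately. By the Markov property,
\[
\frac{W_{n+1}(\beta)}{W_n(\beta)} = \sum_{y \in \Z^d} \hat\mu_n(y)\, \e^{\beta\omega(n+1,y) - \lambda(\beta)}, \qquad \hat\mu_n(y) := \sum_{x \in \Z^d} \mu_n^\beta(\sigma_n = x)\, P(x,y);
\]
conditional on $\FF_n$ the increment is a $\hat\mu_n$-weighted sum of i.i.d.\ mean-$1$ variables with common variance $c(\beta) := \e^{\lambda(2\beta)-2\lambda(\beta)} - 1$, which is strictly positive because $\omega$ is not almost surely constant. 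Hence
\[
\Var\!\left(W_{n+1}(\beta)/W_n(\beta) \mid \FF_n\right) = c(\beta)\,\hat I_n, \qquad \hat I_n := \sum_y \hat\mu_n(y)^2.
\]
Young's inequality applied to the convolution $\mu_n^\beta \ast P$ gives $\hat I_n \leq I_n$, where $I_n := \sum_x \mu_n^\beta(\sigma_n = x)^2 = (\mu_n^\beta)^{\otimes 2}(\sigma_n = \sigma_n')$, and keeping only the diagonal terms in the double sum defining $\hat I_n$ yields $\hat I_n \geq \pi\, I_n$ with $\pi := \sum_z P(0,z)^2 > 0$. Thus $\hat I_n$ and $I_n$ are pointwise comparable, and the theorem reduces to showing that $W_\infty(\beta) = 0$ a.s.\ if and only if $\sum_n \hat I_n = \infty$ a.s.

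For \eqref{weak_localization} $\Rightarrow$ \eqref{strong_disorder}, I would form the Doob decomposition $\log W_n(\beta) = M_n + A_n$ of the supermartingale $\log W_n(\beta)$. A second-order Taylor expansion of $\log$ near $1$, combined with truncation of $\e^{\beta\omega(n+1,y) - \lambda(\beta)}$ justified by the finite exponential moment hypothesis \eqref{mgf_assumption}, gives a one-sided estimate $\E[\log(W_{n+1}(\beta)/W_n(\beta)) \mid \FF_n] \leq -c'(\beta) \hat I_n$ for some constant $c'(\beta)>0$. Under \eqref{weak_localization} this forces the predictable part to satisfy $A_n \to -\infty$ a.s., while a matching $L^2$-control on the truncated martingale increments (again in terms of $\hat I_n$) yields $M_n = o(|A_n|)$ via the strong law of large numbers for square-integrable martingales. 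Combining, $\log W_n(\beta) \to -\infty$, and hence $W_\infty(\beta) = 0$ almost surely.

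For the reverse direction I argue contrapositively: assume $\P(\sum_n \hat I_n < \infty) > 0$ and produce an event of positive probability on which $W_\infty(\beta) > 0$. Fix $M$ so that $\P(\tau_M = \infty) > 0$ for the stopping time $\tau_M := \inf\{n : \sum_{k<n} \hat I_k > M\}$. Telescoping the variance identity yields $\sup_n \E[W_{n\wedge\tau_M}(\beta)^2] \leq 1 + c(\beta) M$, so $W_{n\wedge\tau_M}(\beta)$ converges in $L^2$ to a limit of mean $1$, and is in particular positive with positive probability. Kolmogorov's zero-one law applied to the tail event $\{W_\infty(\beta) > 0\}$ then upgrades this to $W_\infty(\beta) > 0$ almost surely, contradicting \eqref{strong_disorder}.

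The key obstacle is the Taylor-type bound $\E[\log(W_{n+1}(\beta)/W_n(\beta)) \mid \FF_n] \leq -c'(\beta)\hat I_n$: naive second-order expansions are valid only when $W_{n+1}/W_n$ stays close to $1$, whereas an unbounded environment allows arbitrarily large ratios. Overcoming this calls for a careful truncation of $\e^{\beta\omega(n+1,y) - \lambda(\beta)}$ at a $\beta$-dependent threshold, treating the bounded part with classical Taylor analysis and controlling the tail using \eqref{mgf_assumption}---precisely the place where the finite-exponential-moment hypothesis enters essentially.
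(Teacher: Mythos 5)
Your setup and the implication \eqref{weak_localization}$\implies$\eqref{strong_disorder} track the Carmona--Hu / Comets--Shiga--Yoshida route closely: the conditional variance identity $\Var(W_{n+1}/W_n\mid\FF_n)=c(\beta)\hat I_n$, the two-sided comparison $\pi I_n\le\hat I_n\le I_n$ (your Young's inequality plus diagonal-term argument is exactly the computation appearing in this chapter's remark on the definition of the endpoint distribution), the Doob decomposition of $\log W_n$, and the martingale SLLN for $M_n/\langle M\rangle_n$. The one delicate step --- making the second-order bound $\E[\log(W_{n+1}/W_n)\mid\FF_n]\le-c'(\beta)\hat I_n$ rigorous for unbounded $\omega$ --- you correctly flag; the known proofs handle it with a global concavity inequality such as $\log(1+u)\le u-\tfrac{u^2}{2(1+u_+)}$ followed by a truncation, which is morally what you propose.

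The reverse direction as written has a concrete gap. The conditional variance of the $W$-increment is $\E[(W_{n+1}-W_n)^2\mid\FF_n]=W_n^2\,\Var(W_{n+1}/W_n\mid\FF_n)=c(\beta)W_n^2\hat I_n$, so the telescoped stopped second moment is
\eq{
\E\bigl[W_{n\wedge\tau_M}(\beta)^2\bigr]=1+c(\beta)\sum_{k<n}\E\bigl[\one_{\{\tau_M>k\}}W_k(\beta)^2\,\hat I_k\bigr],
}
and the uncontrolled factor $W_k^2$ invalidates the claimed bound $\le 1+c(\beta)M$: your stopping time controls $\sum\hat I_k$ but not $W_k$ itself, and indeed $\sum_n\hat I_n<\infty$ a.s.\ does not imply $(W_n)$ is $L^2$-bounded. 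The standard way to close this direction keeps the $\log W_n$ decomposition: prove the \emph{complementary} bounds $\bigl|\E[\log(W_{n+1}/W_n)\mid\FF_n]\bigr|\le C\hat I_n$ and $\E[(\Delta M_n)^2\mid\FF_n]\le C\hat I_n$ (both again via truncation and the exponential moment), so that $\sum_n\hat I_n<\infty$ forces $A_n$ and $\langle M\rangle_n$ to converge, hence $\log W_n$ converges to a finite limit and $W_\infty(\beta)>0$; the zero--one law then upgrades to a.s.\ positivity exactly as you intended. (A supermartingale such as $W_n^2\exp(-c(\beta)\sum_{k<n}\hat I_k)$ is available if you want to stay in $L^2$, but its convergence alone does not preclude $W_\infty=0$; the $\log$-route is what actually closes the argument.)
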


Despite the unfortunate clash of terminology in the above theorem, one should not mistake weak localization to be associated with weak disorder.
The use of ``weak" in the first case is only to distinguish this notion of localization from the other \eqref{strong_localization} notion defined before.
Indeed, it is apparent from the inequality
\eq{
(\mu_n^\beta)^{\otimes 2}(\sigma_n = \sigma_n') \geq \Big(\max_{x \in \Z^d} \mu_n^\beta(\sigma_n = x)\Big)^2
}
that $\eqref{strong_localization} \implies \eqref{weak_localization}$.
In fact, Carmona and Hu \cite{carmona-hu06} proved that in a continuous-time model with an environment of i.i.d.~Brownian motions---the parabolic Anderson model---the converse is also true: $\eqref{strong_localization} \Leftarrow \eqref{weak_localization}$. % Theorem 1.1
It is believed that the two notions are equivalent in general.

As with the high and low temperature regimes, there is a phase transition between weak and strong disorder. %(once again, see  \cite{comets-yoshida06} for a proof).

\begin{theirthm}[{\cite[Theorem 12.1]{comets-yoshida06}}]
Let $P$ be SRW and assume $\lambda(t) < \infty$ for all $t \in \R$.
There exists a critical inverse temperature $\bar{\beta}_{\mathrm{c}} = \bar{\beta}_{\mathrm{c}}(\mathfrak{L}_\omega,d) \in [0,\infty]$ such that
\eq{
0 \le \beta < \bar{\beta}_{\mathrm{c}} \quad &\implies \quad \eqref{weak_disorder}, \\
\beta > \bar{\beta}_{\mathrm{c}} \quad &\implies \quad \eqref{strong_disorder}.
}
\end{theirthm}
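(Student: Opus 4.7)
The plan is to handle the theorem in two stages: derive the $0$--$1$ dichotomy at each fixed $\beta$ from a martingale / Kolmogorov $0$--$1$ law argument, then establish monotonicity of that dichotomy in $\beta$. For the first stage, a direct calculation using the independence of $\omega(n{+}1,\cdot)$ from $\FF_n$ and the identity $\lambda(\beta)=\log\E[\e^{\beta\omega}]$ shows that $W_n(\beta)$ is a non-negative $(\FF_n)$-martingale with $\E[W_n(\beta)]=1$, so $W_\infty(\beta)\coloneqq\lim_n W_n(\beta)$ exists almost surely. The event $\{W_\infty(\beta)>0\}$ is tail-measurable (by the usual translation argument on the recursion expressing $W_n(\beta)$ as a sum over first steps of the walk), so Kolmogorov's $0$--$1$ law delivers the dichotomy \eqref{weak_disorder} vs.\ \eqref{strong_disorder} for each fixed $\beta$.

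With the dichotomy in hand I set $I\coloneqq\{\beta\ge 0:\eqref{weak_disorder}\text{ holds}\}$ and $\bar\beta_{\mathrm{c}}\coloneqq\sup I$; since $W_n(0)\equiv 1$, $0\in I$, so $\bar\beta_{\mathrm{c}}\in[0,\infty]$. What remains is the monotonicity statement that $I$ is downward-closed. A convenient reformulation uses fractional moments: $\sqrt{W_n(\beta)}$ is a non-negative supermartingale bounded in $L^2$ (since $\E[W_n(\beta)]=1$), so uniform integrability gives $\sqrt{W_n(\beta)}\to\sqrt{W_\infty(\beta)}$ in $L^1$, and combined with the dichotomy this yields
\eq{
\eqref{weak_disorder}\iff\phi(\beta)\coloneqq\lim_{n\to\infty}\E\bigl[\sqrt{W_n(\beta)}\bigr]>0,
}
with the sequence $\E[\sqrt{W_n(\beta)}]$ non-increasing in $n$ (as $\sqrt{\cdot}$ is concave and $W_n$ is a martingale).

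The main obstacle is showing that $\{\beta:\phi(\beta)>0\}$ is downward-closed. Elementary H\"older-type comparisons between $W_n(\beta_1)$ and $W_n(\beta_2)$ for $\beta_1<\beta_2$ (using convexity of $\lambda$) go in the wrong direction for transferring positivity of $\phi$ downward. My plan is to follow Comets--Yoshida's size-biasing strategy: when weak disorder holds at $\beta_2$, the formula $\dd\Q_{\beta_2}/\dd\P\coloneqq W_\infty(\beta_2)$ defines a probability measure on $(\Omega,\FF)$, and one bounds $\E[\sqrt{W_n(\beta_1)}]$ from below by expressions involving $\Q_{\beta_2}$-expectations of quantities obtained by unfolding the polymer recursion at both temperatures simultaneously. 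Controlling the interaction between the Radon--Nikodym derivative and the cross-temperature partition functions --- where the $(1{+}\eps)$-exponential moment assumption \eqref{mgf_assumption} enters to supply the necessary integrability --- is the delicate technical heart of the proof. Once this monotonicity is established, the definition of $\bar\beta_{\mathrm{c}}$ together with the Step~1 dichotomy immediately deliver the two implications of the theorem.
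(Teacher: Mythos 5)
Note first that the paper does not prove this statement; it is reproduced verbatim as a citation to Comets and Yoshida, so there is no ``paper's own proof'' to measure your proposal against. The preceding text of the chapter does spell out your Stage~1 argument---that $W_n(\beta)$ is a nonnegative mean-one martingale and that $\{W_\infty(\beta)>0\}$ is a tail event, hence trivial---and your reproduction of that step is correct, as is the reduction to showing $I\coloneqq\{\beta\ge0:\eqref{weak_disorder}\}$ is downward-closed and setting $\bar\beta_{\mathrm{c}}=\sup I$. The reformulation via $\phi(\beta)=\lim_n\E[\sqrt{W_n(\beta)}]$ is also sound: $\sqrt{W_n}$ is a nonnegative supermartingale by Jensen, is $L^2$-bounded since $\E[W_n]=1$, and hence converges in $L^1$, so $\phi(\beta)=\E[\sqrt{W_\infty(\beta)}]>0$ iff weak disorder by the zero--one law. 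All of this is right, and it is also right that elementary H\"older/convexity comparisons of $W_n(\beta_1)$ with $W_n(\beta_2)$ only give the useless upper bound $W_n(\beta_1)\le W_n(\beta_2)^{\beta_1/\beta_2}\,\e^{n(\theta\lambda(\beta_2)-\lambda(\beta_1))}$ rather than a lower bound.

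The genuine gap is that Stage~2 is announced rather than executed, and the announcement by itself does not close the argument. Saying ``bound $\E[\sqrt{W_n(\beta_1)}]$ from below by expressions involving $\Q_{\beta_2}$-expectations'' is a statement of intent, not a proof: the entire content of Comets and Yoshida's Lemma 3.3 (which is what the cited Theorem 12.1 rests on) is a concrete estimate of exactly this type, and nothing in your sketch pins down the estimate or why it survives in the $n\to\infty$ limit. In particular, the most tempting calculation---that $\E_\P[W_m(\beta_1)W_m(\beta_2)]\ge 1$ because $\lambda(\beta_1+\beta_2)\ge\lambda(\beta_1)+\lambda(\beta_2)$---only gives $\E_{\Q_{\beta_2}}[W_m(\beta_1)]\ge 1$ for each finite $m$, and a sequence of random variables can have expectations bounded below by $1$ while converging to $0$ almost surely (this is precisely the mechanism of strong disorder), so a nondegenerate positive limit requires a separate uniform-integrability-type input that you have not supplied. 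A second, smaller point: you invoke the $(1+\eps)$-exponential-moment assumption \eqref{mgf_assumption} as the source of integrability, but the theorem as stated already assumes $\lambda(t)<\infty$ for all $t\in\R$, which is strictly stronger and is what one actually uses to control quantities such as $\E[\e^{(\beta_1+\beta_2)\omega}]$ in the size-biased computation; citing the weaker hypothesis is not wrong, but it is not where the integrability comes from here. To call this a complete proof you would need to state and prove a cross-temperature comparison lemma (e.g.\ an inequality of the form $\E[\sqrt{W_n(\beta_1)}]\ge c(\beta_1,\beta_2)\,\E[\sqrt{W_n(\beta_2)}]$ uniformly in $n$, or a positivity statement for $W_\infty(\beta_1)$ under $\Q_{\beta_2}$ using a spine decomposition) and verify the dominated-convergence or uniform-integrability step that lets the inequality pass to the limit.
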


Determining the behavior of $W_\infty(\beta)$ at $\beta = \bar{\beta}_{\mathrm{c}}$ is an open problem. 
For analogous models on $b$-ary trees, it is known from work of Kahane and Peyri\`ere \cite{kahane-peyriere76} that strong disorder occurs at $\bar{\beta}_{\mathrm{c}}$.
Interestingly, in dimensions $d = 1$ and $d = 2$, there is strong disorder at all finite temperatures (i.e.~$\bar{\beta}_{\mathrm{c}} = 0$), while in higher dimensions weak disorder occurs at sufficiently high temperatures ($\bar{\beta}_{\mathrm{c}} > 0$).
Precise conditions on $\lambda(\beta)$ guaranteeing either behavior can be found in Theorem 2.3.2 of the review \cite{comets-shiga-yoshida04}.
%or in Chapter 12 of \cite{denHollander09} (Theorems 12.2 and 12.3, also equation (12.25)). 
These conditions are in fact a culmination of results from \cite{imbrie-spencer88,bolthausen89,sinai95,albeverio-zhou96,song-zhou96,kifer97,carmona-hu02,comets-shiga-yoshida03}.

Whereas Theorem \ref{disorder_equiv} characterizes the disorder regimes in terms of endpoint localization, one can also attempt to give a characterization based on endpoint diffusion. 
This is usually described by some exponent $\xi = \xi(\mathfrak{L},\beta,d)$ such that ``typical" polymer endpoints are distance $O(n^\xi)$ from the origin.
One way to make this precise is to insist that $\xi$ satisfy
\eq{
\lim_{C\to\infty}\liminf_{n\to\infty} \PP\bigg(C^{-1}n^{\xi} \leq \int \|\omega_n\|_2\ \rho_n(\dd\omega) \leq Cn^{\xi}\bigg) = 1.
}
For instance, Comets and Yoshida  \cite{comets-yoshida06} showed, as part of a more general Brownian central limit theorem, that $\xi = 1/2$ in weak disorder.
%\eeq{
%(\WD) \quad \implies \quad \lim_{n \to \infty} \quad \text{a.s.} \label{diffusive}
%}
%as well as delocalized behavior,
%\eq{
%\sum_{n = 0}^\infty \rho_n^{\otimes 2}(\omega_n = \omega_n') < \infty \quad \mathrm{a.s.}
%\sum_{n = 0}^\infty \max_{x \in \Z^d} \rho_n(\omega_n = x)^2 < \infty \quad \text{a.s.}\label{delocalized}
%}
That is, the location of the polymer endpoint asymptotically matches that of simple random walk on $\Z^d$.
On the other hand, it is believed (at least in low dimensions, see \cite{piza97}) that the polymer endpoint is superdiffusive in strong disorder.
Specifically, when $d = 1$ (for which any $\beta>0$ yields strong disorder), it is conjectured that $\xi = 2/3$.
%\eeq{ \label{superdiffusive}
%0<\liminf_{n\to\infty}\E\int \frac{|\omega_n|}{n^{2/3}}\ \rho_n(\dd\omega)
%\leq \limsup_{n\to\infty}\E\int\frac{|\omega_n|}{n^{2/3}}\ \rho_n(\dd\omega) <\infty.
%}
Apart from the integrable models discussed in Section \ref{solvability_background}, the only result in this direction is an upper bound due to Piza \cite{piza97} and is conditional on a curvature assumption.
In some related models on $\R$, both upper and lower bounds are known: $3/5\leq \xi \leq 3/4$ \cite{wuthrich98,petermann00,mejane04,comets-yoshida05,comets-yoshida04,bezerra-tindel-viens08}.

%That \eqref{wd_condition} implies \eqref{diffusive} was originally shown by Imbrie and Spencer \cite{imbrie-spencer88} for a Bernoulli environment, 
%and Song and Zhou \cite{song-zhou96} for general environments.
%Of course, localization does \textit{not} occur in simple random walk on $\Z^d$, for which the probability of the most likely sites tends to 0 as $n \to \infty$, in fact at a $\o(n^{-d/2})$ rate.
%In the weak disorder regime, Comets and Yoshida  \cite{comets-yoshida06} showed that the diffusivity in \eqref{diffusive} always holds, not only when \eqref{wd_condition} is satisfied.
%More generally, they established the following central limit theorem. % Theorem 1.2
%In particular, one recovers \eqref{diffusive} by taking $\vphi(W) = W(1)^2$ below.

%\begin{theirthm}[\cite{comets-yoshida06}, Theorem 1.2]
%Assume $d \geq 3$ and weak disorder.
%Then for any bounded continuous function $\vphi : \W^d \to \R$ on the Wiener space
%\eq{
%\W^d \coloneqq  \{W \in C([0,1] \to \R^d) : W(0) = 0\}
%}
%with the uniform norm $\|W\| \coloneqq  \sup_{0 \leq t \leq 1} |W(t)|$ and Wiener measure $P_{\W^d}$, we have
%\eq{
%\int \vphi(\sigma^{(n)})\ \mu_n^\beta(\dd \sigma) \to \int \vphi(W/\sqrt{d})\ P_{\W^d}(\dd W) \quad \text{in probability, as $n \to \infty$,}
%}
%where $\sigma^{(n)}$ is the linear interpolation of $(\sigma_{i}/\sqrt{n})_{0 \leq i \leq n}$.
%\end{theirthm}

A major challenge is to reconcile the notion of high and low temperature with the notion of weak and strong disorder.
The first is more natural from the perspective of statistical physics, 
because phase transitions, in the standard sense, are defined in terms of non-analyticities of limiting free energies. Since the annealed free energy is typically analytic everywhere, the transition of the 
%Lyapunov exponent
difference $\lambda(\beta)-p(\beta)$ from zero to nonzero denotes the first point of non-analyticity of the limiting free energy $p(\beta)$.
On the other hand, the more abstract conditions of \eqref{weak_disorder} and \eqref{strong_disorder} have lead to a wealth of mathematical results.
It is conjectured (see \cite{comets-yoshida06,carmona-hu06}) that $\beta_{\mathrm{c}} = \bar{\beta}_{\mathrm{c}}$,\footnote{The result $\beta_\cc = \bar\beta_\cc$ is well-known for analogous models on $b$-ary trees.
A nice exposition can be found in \cite[Chapter 4]{comets17}.} 
which would mean
\eq{
0 \leq \beta < \beta_{\mathrm{c}} \quad \implies \quad \eqref{weak_disorder} \qquad \text{and} \qquad
\beta > \beta_{\mathrm{c}} \quad \implies \quad  \eqref{strong_disorder}.
}
Evidence for this belief includes the result by Comets and Vargas \cite[Theorem 1.1]{comets-vargas06} that $\beta_{\mathrm{c}} = 0$ universally in $d = 1$, and the subsequent proof by Lacoin  \cite[Theorem 1.6]{lacoin10} that $\beta_{\mathrm{c}} = 0$ in $d = 2$.\footnote{The $d=1$ result can also be seen from \cite{lacoin10} as a corollary of estimates on the asymptotics of $p(\beta)$ as $\beta\to0$.
These estimates have been subsequently sharpened in \cite{watbled12,nakashima14,alexander-yildirim15,berger-lacoin17}.}
For $d \geq 3$, only the trivial second implication above is known (i.e.~$\beta_{\mathrm{c}} \geq \bar{\beta}_{\mathrm{c}}$).
Indeed, it is clear from \eqref{Fn_lim} and \eqref{Ztilde_def} that $p(\beta)<\lambda(\beta) \implies  \eqref{strong_disorder}$, since
\eq{
 \lambda(\beta) - p(\beta) &= \lambda(\beta) - \lim_{n \to \infty}\frac{1}{n} \log Z_n(\beta)
 = \lambda(\beta) - \lim_{n \to \infty} \frac{1}{n} \log(\e^{n\lambda(\beta)}\, W_n(\beta))
= - \lim_{n \to \infty} \frac{\log W_n(\beta)}{n},
}
and so $p(\beta) < \lambda(\beta)$ implies $\lim_{n\to\infty} W_n(\beta) = 0$.
Therefore, it has become common to say that in the low temperature phase, there is \textit{very strong disorder},
\begin{align} \label{VSD}
 p(\beta) < \lambda(\beta). \tag{VSD}
\end{align}

In recent work, Rassoul-Agha, Sepp\"al\"ainen and Yilmaz \cite{rassoul-seppalainen-yilmaz13,rassoul-seppalainen-yilmaz17} % Theorem 2.3 and Theorem 2.1
expressed $p(\beta)$ in terms of several variational formulas.\footnote{In fact, they did the same for the annealed free energy and then gave a variational characterization of weak disorder, namely the existence of a minimizer to their expression. The minimizer was shown to be unique, equal to $W_\infty(\beta)$, and interestingly \textit{not} bounded away from 0.}
A method for finding minimizers to these formulas was proposed in \cite{georgiou-rassoul-seppalainen16}, and it was noted in \cite{rassoul-seppalainen-yilmaz17} %, Remark 2.12) 
that they may not admit minimizers when $\beta > \beta_{\mathrm{c}}$.
These results suggest the possibility of a more general correspondence between the disorder regimes and the existence of variational minimizers (see \cite[Conjecture 2.13]{rassoul-seppalainen-yilmaz17}).

When $d = 1$, a so-called ``intermediate" disorder regime was discovered in \cite{alberts-khanin-quastel14I} by scaling the inverse temperature to $0$ as $n\to\infty$, in such a way that features of both weak and strong disorder are observed in the limit. 
More specifically, when $\beta_n = \beta n^{-1/4}$, the polymer measure exhibits diffusivity ($\xi=1/2$), and the fluctuations of $\log Z_n(\beta)$ are order $1$ as in weak disorder.
The actual fluctuations, however, are not Gaussian, but rather depend on $\beta$ and the random environment, interpolating between Gaussian at $\beta = 0$ and Tracy--Widom at $\beta = \infty$.

\subsubsection{Further versions of localization}

Another central task in the theory of directed polymers is determining if and when weak localization results imply stronger ones.
The following analog to Theorem \ref{disorder_equiv} shows that $\eqref{VSD} \implies \eqref{strong_localization}$, which means the strongest type of localization occurs throughout the low temperature regime.
Therefore, if $\beta_{\mathrm{c}} = \bar{\beta}_{\mathrm{c}}$, then all notions of disorder and localization are equivalent, except possibly at the critical temperature.

\begin{theirthm}[{\cite[Corollary 2.2 and Theorem 2.3(a)]{comets-shiga-yoshida03}}]\label{characterization0} %\footnote{This reference also includes the result that $\eqref{sd_condition} \implies (\VSD)$.}] 
Let $P$ be SRW and assume $\lambda(t) < \infty$ for all $t \in \R$.
Then \eqref{VSD} holds if and only if there exists $\eps > 0$ such that
\eq{
\liminf_{n \to \infty} \frac{1}{n} \sum_{i = 0}^{n-1} (\mu_{i}^\beta)^{\otimes 2}(\sigma_i = \sigma_i') \geq \eps \quad \mathrm{a.s.}
}
Equivalently, there exists $\eps > 0$ such that
\eeq{
\liminf_{n \to \infty} \frac{1}{n} \sum_{i = 0}^{n-1}  \max_{x \in \Z^d} \mu_{i}^\beta(\sigma_i = x) \geq \eps \quad \mathrm{a.s.} \label{similar_apa}
}
\end{theirthm}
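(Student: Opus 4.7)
The plan is to relate the free energy gap $\lambda(\beta)-p(\beta)$ to the Cesaro averages of the overlap process by analyzing the increments of the positive martingale $W_n(\beta) = Z_n(\beta)\e^{-n\lambda(\beta)}$. Since $\tfrac{1}{n}\log W_n \to p(\beta)-\lambda(\beta)$ almost surely by \eqref{Fn_lim} and \eqref{Ztilde_def}, proving the theorem reduces to an almost-sure sandwich of $-\tfrac{1}{n}\log W_n$ between positive multiples of $\liminf_n \tfrac{1}{n}\sum_{i=0}^{n-1}(\mu_i^\beta)^{\otimes 2}(\sigma_i=\sigma_i')$.

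First I would write $W_n = \prod_{i=1}^n(1+e_i)$, where
$$e_i := \frac{W_i}{W_{i-1}} - 1 = \sum_{x,y\in\Z^d}\mu_{i-1}^\beta(\sigma_{i-1}=x)P(x,y)\bigl(\e^{\beta\omega(i,y)-\lambda(\beta)}-1\bigr)$$
is a martingale difference for $(\FF_i)$ by definition of $\lambda(\beta)$. Using independence of $\omega(i,\cdot)$ from $\FF_{i-1}$, a direct second-moment calculation yields
$$\E[e_i^2 \mid \FF_{i-1}] \;=\; \bigl(\e^{\lambda(2\beta)-2\lambda(\beta)}-1\bigr)\sum_{y\in\Z^d}\Bigl(\sum_{x\in\Z^d}\mu_{i-1}^\beta(\sigma_{i-1}=x)P(x,y)\Bigr)^{\!2},$$
and the inner double sum is, up to a dimension-dependent constant for SRW, equal to $q_{i-1}:=(\mu_{i-1}^\beta)^{\otimes 2}(\sigma_{i-1}=\sigma_{i-1}')$. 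Expanding $\log(1+e_i)=e_i - \tfrac{1}{2}e_i^2 + O(e_i^3)$, summing, and applying a martingale law of large numbers to the centered piece (whose per-step conditional second moment is uniformly bounded because $\lambda(\pm t)<\infty$ for all $t$), I expect to obtain
$$\frac{1}{n}\log W_n \;=\; -\frac{c(\beta)}{2}\cdot\frac{1}{n}\sum_{i=0}^{n-1}q_i \;+\; o(1)\quad\text{a.s.},$$
with $c(\beta):=C_d(\e^{\lambda(2\beta)-2\lambda(\beta)}-1)>0$ whenever $\omega$ is non-constant. Both implications in the theorem then follow by taking $\liminf$ on each side, with the positive $\eps$ in either direction extracted from the gap $\lambda(\beta)-p(\beta)$ (or vice versa).

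The reformulation \eqref{similar_apa} in terms of the maxima is then immediate from the elementary sandwich
$$\max_{x\in\Z^d}\mu_i^\beta(\sigma_i=x)^2 \;\leq\; q_i \;=\; \sum_{x\in\Z^d}\mu_i^\beta(\sigma_i=x)^2 \;\leq\; \max_{x\in\Z^d}\mu_i^\beta(\sigma_i=x),$$
which forces the Cesaro averages of $q_i$ and $\max_x\mu_i^\beta(\sigma_i=x)$ to be simultaneously bounded away from zero.

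The main obstacle will be justifying the $O(e_i^3)$ remainder in the expansion: although $e_i\geq -1$, it admits no deterministic upper bound, so the Taylor inequality $|\log(1+x)-x+\tfrac{1}{2}x^2|\leq C|x|^3$ is available only on $\{|e_i|\leq 1/2\}$. The remedy is a truncation argument exploiting the hypothesis $\lambda(t)<\infty$ for all $t\in\R$, which furnishes exponential moments of all orders for $e_i$ (conditional on $\FF_{i-1}$); then $\P(|e_i|>1/2\mid \FF_{i-1})$ decays fast enough that a Borel--Cantelli argument, combined with the $L^2$-boundedness of the truncated martingale increments, shows that the contribution of the rare event $\{|e_i|>1/2\}$ to $\sum_i\log(1+e_i)$ is a.s.\ $o(n)$. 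This is precisely where the strong moment hypothesis on $\mathfrak{L}_\omega$ is used decisively.
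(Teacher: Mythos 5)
Your high-level plan---Doob-decompose $\log W_n$, show the martingale part is $o(n)$ almost surely, and compare the predictable part (the sum of conditional Jensen gaps $-\E[\log(W_i/W_{i-1})\mid\FF_{i-1}]$) to the Ces\`aro average of the overlap---is the classical argument of Comets--Shiga--Yoshida and Carmona--Hu, and it does lead to the theorem. Note it is an entirely different route from the paper's: the paper obtains the statement as a special case of Theorem \ref{reprove_equivalence}, proved by embedding endpoint mass functions into the compact space $\SS$ of partitioned subprobability measures, showing the empirical measure converges almost surely to the set $\MM$ of energy-minimizing fixed points of the update map, and then reading off the answer from the characterization in Theorem \ref{characterization} ($\MM=\{\delta_{\vc{0}}\}$ when $p(\beta)=\lambda(\beta)$; $\MM$ is carried by norm-one profiles when $p(\beta)<\lambda(\beta)$). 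Within your chosen route, however, two steps would fail as written.

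First, the claimed asymptotic identity $\tfrac{1}{n}\log W_n = -\tfrac{c(\beta)}{2}\cdot\tfrac{1}{n}\sum_i q_i + o(1)$ with a single constant cannot hold. Because $x\mapsto x-\log(1+x)$ is unbounded near $x=-1$ and sublinear for large $x>0$, the conditional Jensen gap is comparable to the conditional second moment $\E[e_i^2\mid\FF_{i-1}]$ only in the two-sided sense
\eq{
c_1\, q_{i-1} \leq -\E[\log(1+e_i)\mid\FF_{i-1}] \leq c_2\, q_{i-1}, \qquad 0 < c_1 < c_2.
}
This sandwich is what Comets--Shiga--Yoshida actually prove and it suffices for the equivalence, but establishing it requires pointwise inequalities for $x-\log(1+x)$ valid on all of $(-1,\infty)$ (e.g.~$x-\log(1+x)\geq c(x\wedge 1)^2$ for the lower bound, and a genuinely separate argument for the upper bound); a local Taylor expansion with error control does not produce it.

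Second, the proposed remedy is not the right mechanism. The variable $e_i$ does not have conditional exponential moments: for a Gaussian environment, $1+e_i=W_i/W_{i-1}$ is a mixture of lognormals, so $\E[\e^{te_i}\mid\FF_{i-1}]=\infty$ for every $t>0$. What is true---and what the martingale law of large numbers actually needs---is that $|\log(1+e_i)|$ has conditionally bounded polynomial moments of every order (this is Corollary \ref{increment_cor}). Moreover, the event $\{|e_i|>1/2\}$ is not rare: $\P(|e_i|>1/2\mid\FF_{i-1})$ is only bounded by a constant multiple of $q_{i-1}$, which is bounded away from zero precisely in the low-temperature regime where the conclusion is nontrivial. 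There, the contribution of $\{|e_i|>1/2\}$ to $\sum_i\log(1+e_i)$ is of order $n$, not $o(n)$---it is in fact the main source of the strictly negative drift in $\log W_n$---so a Borel--Cantelli argument cannot be used to discard it.
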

The above result was generalized to the case of $0 < \beta < B\coloneqq \sup\{t \geq 0 : \lambda(t) < \infty\}$ by Vargas \cite[Theorem 3.6]{vargas07}, who also proved a stronger localization \eqref{eps_atom_delta} for parameters sufficiently close to $B$.
In the spirit of \eqref{similar_apa}, we consider the set of ``$\eps$-atoms",
\eq{
\AA_i^\eps \coloneqq  \{x \in \Z^d : \mu_{i}^\beta(\sigma_i = x) > \eps\}, \quad i \geq 0,\ \eps > 0.
}

\begin{theirthm}[{\cite[Theorem 3.7]{vargas07}}] \label{partial_apa_background}
Let $P$ be SRW.
Assume $B > 0$ and $\lim_{t \to B} \lambda(t)/t = \infty$.
Then for every $\delta < 1$, there exists $\eps > 0$ and $\beta_0 \in (0,B)$ such that
\eeq{ \label{eps_atom_delta}
\beta \in (\beta_0,B) \quad \implies \quad \liminf_{n\to\infty}\frac{1}{n}\sum_{i = 0}^{n-1} \mu_i^\beta(\sigma_i \in \AA_i^\eps)  \geq \delta \quad \mathrm{a.s.}
}
\end{theirthm}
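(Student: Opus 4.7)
The plan is to exploit the hypothesis $\lambda(\beta)/\beta \to \infty$ as $\beta \to B$, which forces the annealed--quenched gap $\lambda(\beta) - p(\beta)$ to diverge, and then to translate that divergence into concentration on $\eps$-atoms.

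First (ceiling on $p(\beta)/\beta$): From $Z_n(\beta) \leq \e^{\beta M_n}$ with $M_n = \max_\sigma H_n(\sigma)$ the path-maximum, and a union bound over the $(2d)^n$ SRW paths combined with Cram\'er's theorem (applied to $H_n(\sigma)/n$, a sum of $n$ i.i.d.\ copies of $\omega$), one has $\P(M_n/n > t) \leq (2d)^n\e^{-nI(t)}$ with $I(t) = \sup_\beta(\beta t - \lambda(\beta))$. The hypothesis $\lambda(\beta)/\beta \to \infty$ forces $\lambda'(\beta) \to \infty$ as $\beta \to B$ (by convexity of $\lambda$), so $I(t) \to \infty$ super-linearly and $T \coloneqq \inf\{t : I(t) > \log 2d\} < \infty$. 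Hence $p(\beta) \leq \beta T$ and $\lambda(\beta) - p(\beta) \geq \beta(\lambda(\beta)/\beta - T) \to \infty$ as $\beta \to B$.

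Second (per-step lower bound on $p(\beta)$): Write $W_i/W_{i-1} = \e^{-\lambda(\beta)}\sum_x \mu_{i-1}^*(x)\e^{\beta\omega(i,x)}$, where $\mu_{i-1}^*(x) = \sum_y \mu_{i-1}^\beta(\sigma_{i-1}=y)P(y,x)$ is the one-step-ahead endpoint measure, and set $\alpha_i \coloneqq \mu_{i-1}^*(\{x : \mu_{i-1}^*(x) > \eps\})$. Dropping the atom part of $\mu_{i-1}^*$ in the sum yields
\eq{
\E[\log(W_i/W_{i-1}) \mid \FF_{i-1}] \geq -\lambda(\beta) + \log(1-\alpha_i) + \E[\log S^{(i)}_{\bar A} \mid \FF_{i-1}],
}
where $S^{(i)}_{\bar A}$ is a weighted sum of $\e^{\beta\omega(i,x)}$'s with all weights bounded by $\eps/(1-\alpha_i)$. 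For $\alpha_i \leq 1-c$, a Bahr--Esseen-type concentration argument---using finite $\E\e^{\pm(1+\eps')\beta\omega}$ from \eqref{mgf_assumption}---gives $\E[\log S^{(i)}_{\bar A}\mid\FF_{i-1}] \geq \lambda(\beta) - \eta(\eps,c,\beta)$ with $\eta \to 0$ as $\eps \to 0$ (for each fixed $c,\beta$). For $\alpha_i > 1-c$, use the trivial bound $\E[\log(W_i/W_{i-1})\mid\FF_{i-1}] \geq -\lambda(\beta)$.

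Third (contradiction): Suppose $\liminf_n \bar\alpha_n < \delta$ with positive probability, where $\bar\alpha_n \coloneqq \frac{1}{n}\sum_{i=0}^{n-1}\alpha_i$; along such $n$ the bad-time fraction $|\{i \leq n : \alpha_i > 1-c\}|/n$ is at most $\delta/(1-c)$. Summing the per-step bounds, using a martingale law of large numbers for the differences $\log(W_i/W_{i-1}) - \E[\log(W_i/W_{i-1})\mid\FF_{i-1}]$ together with Lemma \ref{concentration} to identify $\lim\frac{1}{n}\log W_n = p(\beta)-\lambda(\beta)$ a.s., one obtains
\eq{
\lambda(\beta) - p(\beta) \leq |\log c| + \eta + \frac{\delta}{1-c}\,\lambda(\beta).
}
Combined with the first step, $\lambda(\beta)(1 - \delta/(1-c)) \leq \beta T + O(1)$; choosing $c < 1-\delta$ makes the coefficient of $\lambda(\beta)$ strictly positive, forcing $\lambda(\beta)/\beta = O(1)$, contradicting the hypothesis. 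Hence $\liminf_n \bar\alpha_n \geq \delta$ a.s.\ for $\beta$ sufficiently close to $B$. The main obstacle is the concentration step: passing from $L^{1+\eps'}$-concentration of $S^{(i)}_{\bar A}/\e^{\lambda(\beta)}$ around $1$ to an $\E\log$ bound requires uniform integrability of $|\log S^{(i)}_{\bar A} - \lambda(\beta)|$, delivered by the $\pm(1+\eps')$-moment hypothesis. Finally, the theorem's $\mu_i^\beta(A_i^\eps)$ differs from $\alpha_i$ (defined via $\mu_{i-1}^*$); this mismatch is handled by adjusting the threshold $\eps$ by a factor accounting for the single additional random tilt, which is harmless since $\eps$ may be chosen at the end.
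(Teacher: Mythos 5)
The paper itself does not prove this statement; it quotes it from Vargas, and later establishes a strictly stronger result (Theorem \ref{total_mass}) by the entirely different route of partitioned subprobability measures. So I am assessing your argument on its own terms.

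Your Doob-decomposition framework is sound, but there is a genuine gap in the concentration step that the contradiction in Step 3 cannot survive. You need the per-step lower bound $\E[\log \hat S^{(i)}_{\bar A}\mid\FF_{i-1}] \geq \lambda(\beta) - \eta(\eps,c,\beta)$ with $\eta$ small \emph{uniformly over $\beta\in(\beta_0,B)$ for a fixed $\eps$}, since $\eps$ is chosen before $\beta_0$ in the statement of the theorem. More precisely, inserting this bound into Step 3 produces the inequality
\eq{
\lambda(\beta)\Big(1-\tfrac{\delta}{1-c}\Big) \;\leq\; \beta T + |\log c| + \eta(\eps,c,\beta),
}
and you treat the right side as $\beta T + O(1)$ to derive a contradiction with $\lambda(\beta)/\beta\to\infty$. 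But $\eta(\eps,c,\beta)$ is \emph{not} $o(\lambda(\beta))$ as $\beta\to B$; it is comparable to $\lambda(\beta)$. By concavity of $\mu\mapsto\E\log\sum_x\mu(x)\e^{\beta\omega(x)}$, the supremum of $\eta$ over feasible $\mu$ (all weights $\leq \eps/c$) is attained at a measure that is essentially uniform on $m\approx c/\eps$ sites. For that measure, $\hat S = \frac{1}{m}\sum_{j=1}^m \e^{\beta\omega_j}$ satisfies $\beta\max_j\omega_j - \log m \leq \log\hat S \leq \beta\max_j\omega_j$, whence $\E\log\hat S = \beta\,\E\max_{j\leq m}\omega_j + O(\log m)$. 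For $m$ fixed, $\E\max_{j\leq m}\omega_j$ is a fixed constant $c_m$, so $\E\log\hat S = O(\beta)$, while $\lambda(\beta)/\beta\to\infty$; hence $\eta = \lambda(\beta)-\E\log\hat S = \lambda(\beta)(1-o(1))$. (Concretely, for $\omega\sim\NN(0,1)$ one has $B=\infty$, $\lambda(\beta)=\beta^2/2$, $c_m\approx\sqrt{2\log m}$, so $\eta \approx \tfrac{\beta^2}{2}-\beta\sqrt{2\log m}\sim\lambda(\beta)$.) Substituting $\eta\sim\lambda(\beta)$ into your Step-3 inequality makes it hold trivially; there is no contradiction. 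The Bahr--Esseen bound $\E|\tilde S-1|^p \lesssim (\eps')^{p-1}\E|\tilde X-1|^p$ does not help either: keeping $p>1$ fixed requires $p\beta < B$ (impossible as $\beta\to B < \infty$; if $B=\infty$ the factor $\E|\tilde X-1|^p = \e^{\lambda(p\beta)-p\lambda(\beta)}+O(1)$ blows up faster than $\lambda$), while letting $p\to 1$ kills the gain $(\eps')^{p-1}\to 1$. The underlying obstruction is real: for $\beta$ near $B$ the annealed value $\E\hat S = \e^{\lambda(\beta)}$ is driven by rare environments, so a measure spread over $O(1/\eps)$ sites has $\E\log\hat S$ of order $\beta$, not $\lambda(\beta)$; a large gap $\lambda(\beta)-p(\beta)$ can therefore be produced by diffuse endpoint distributions as well as by atoms, and your per-step bound cannot disentangle the two. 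You would need either a different per-step decomposition that does not aim at $\lambda(\beta)$, or a global argument (e.g.\ tying the atom fraction to the a.s.\ Ces\`aro overlap lower bound used to characterize $p<\lambda$, which is more in the spirit of Vargas's Theorem 3.6 and of the present paper's own approach).
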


Strong localization \eqref{strong_localization} or the stronger property \eqref{similar_apa} captures the tendency of the endpoint distribution to localize mass when $\beta > \beta_{\mathrm{c}}$. 
In this scenario, it is natural to ask whether the entire mass localizes, or if some positive proportion of the mass remains delocalized.  
Theorem \ref{partial_apa_background} says that the former holds as $\beta\nearrow B$, but we would like to ask the question for \textit{fixed} $\beta$.
In \cite{vargas07}, Vargas proposed the following definition for complete localization, or as Vargas called it, ``asymptotic pure atomicity".
We say that the sequence $(\mu_{i}^\beta(\sigma_i \in \cdot))_{i \geq 0}$ is \textit{asymptotically purely atomic} if for every sequence $(\eps_i)_{i \geq 0}$ tending to 0 as $i \to \infty$, we have
\eeq{ \label{apa_definition}
\frac{1}{n} \sum_{i = 0}^{n-1} \mu_{i}^\beta(\sigma_i \in \AA_i^{\eps_i}) \to 1 \quad \text{in probability, as $n \to \infty$.}
}
The following result was obtained in \cite{vargas07}. % Theorem 3.2 and Corollary 3.3

\begin{theirthm}[{\cite[Theorem 3.2 and Corollary 3.3]{vargas07}}] \label{vargas_apa}
Let $P$ be SRW.
If $\lambda(\beta) = \infty$, then $(\mu_{i}^\beta(\sigma_i \in \cdot))_{i \geq 0}$ is asymptotically purely atomic.
\end{theirthm}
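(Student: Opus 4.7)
My plan is to show that for every vanishing sequence $(\eps_i)_{i\geq 0}$ the mass on small atoms decays in Ces\`aro average, by first securing a uniform-in-$i$ bound and then applying Ces\`aro summation. Specifically, I would aim to produce a function $g:(0,\infty)\to[0,1]$ with $g(\eps)\to 0$ as $\eps\to 0$, such that
\eeq{ \label{uniform_small_atom_bound}
\E[\mu_i^\beta(\sigma_i\notin\AA_i^\eps)]\leq g(\eps)\quad\text{for every } i\geq 0 \text{ and every } \eps>0.
}
Once \eqref{uniform_small_atom_bound} is in hand, specializing $\eps=\eps_i$ and averaging yields $\E\bigl[\frac{1}{n}\sum_{i<n}\mu_i^\beta(\sigma_i\notin\AA_i^{\eps_i})\bigr]\leq \frac{1}{n}\sum_{i<n}g(\eps_i)\to 0$, which is $L^1$ convergence and hence convergence in probability, as required.

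The route to \eqref{uniform_small_atom_bound} passes through the one-step recursion
\eq{
\mu_i^\beta(\sigma_i=x) = \frac{\e^{\beta\omega(i,x)}\tilde p_i(x)}{\sum_y \e^{\beta\omega(i,y)}\tilde p_i(y)},\qquad \tilde p_i(x)\coloneqq\sum_y \mu_{i-1}^\beta(\sigma_{i-1}=y) P(y,x),
}
in which $\tilde p_i$ is $\FF_{i-1}$-measurable, hence independent of the fresh environment $\omega(i,\cdot)$. Conditioning on $\FF_{i-1}$ reduces the task to a purely static one-step statement: \emph{for every probability vector $q$ on $\Z^d$, the random Gibbs-tilt $\nu_q(x)\coloneqq \e^{\beta\omega(x)}q(x)/\sum_y \e^{\beta\omega(y)}q(y)$ satisfies $\E\bigl[\sum_x \nu_q(x)\one_{\nu_q(x)\leq\eps}\bigr]\leq g(\eps)$, with a bound independent of $q$.} Granting this lemma, \eqref{uniform_small_atom_bound} follows by the tower property.

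This static one-step lemma is where the hypothesis $\lambda(\beta)=\infty$ plays the decisive role, and it is the principal obstacle. Because $\E[\e^{\beta\omega}]=\infty$, the weighted sum $\sum_y \e^{\beta\omega(y)}q(y)$ is heavy-tailed enough that its value is typically dictated by just a few large summands, forcing $\nu_q$ onto a small collection of sizable atoms. A natural tactic is truncation: for a cutoff $T>0$, split $\e^{\beta\omega}=(\e^{\beta\omega}\wedge T)+(\e^{\beta\omega}-T)_+$ and exploit $\lambda(\beta)=\infty$ to render $\E[\e^{\beta\omega}\wedge T]$ arbitrarily large in $T$. The hard part is achieving uniformity in $q$: when the effective support of $q$ is small, one cannot rely on many independent draws of $\omega$ to produce an extreme value, so one must instead invoke a direct atomic argument (few supporting sites mean each has non-negligible $q$-mass and hence yields an atom of $\nu_q$). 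I would therefore split the analysis into a ``spread-out $q$'' regime, handled by the heavy-tail truncation argument, and a ``concentrated $q$'' regime, handled by a pigeonhole count, with the dividing threshold calibrated to $\eps$ so as to deliver $g(\eps)\to 0$.
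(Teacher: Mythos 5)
Your reduction to a one-step statement via conditioning on $\FF_{i-1}$ is correct, and the Ces\`aro / $L^1$ packaging would be sound if the one-step lemma held. But the uniform one-step lemma---that $\E\bigl[\sum_x\nu_q(x)\one_{\{\nu_q(x)\leq\eps\}}\bigr]\leq g(\eps)$ with $g(\eps)\to 0$ independently of $q$---is \emph{false} under the bare hypothesis $\lambda(\beta)=\infty$, and the heuristic you invoke to support it (``because $\E[\e^{\beta\omega}]=\infty$, the weighted sum is dictated by a few large summands'') is wrong at exactly the boundary of the hypothesis. That heuristic is the $\alpha<1$ stable-law phenomenon; it breaks when the tail index equals $1$. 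Concretely, take $\omega\sim\mathrm{Exp}(1)$ and $\beta=1$, so $\lambda(\beta)=\infty$ while $\e^{\beta\omega}$ has the Pareto tail $\P(\e^{\beta\omega}>t)=1/t$. For $q$ uniform on $m$ sites, with i.i.d.\ weights $X_1,\dots,X_m$, a classical result of Darling gives $\max_i X_i/\sum_i X_i\to 0$ in probability as $m\to\infty$; indeed $\sum_i X_i\sim m\log m$ while the top $k$ order statistics carry mass fraction only $\sim \log k/\log m$. Hence for every fixed $\eps>0$, $\P(\max_i\nu_q(i)>\eps)\to 0$, so $\E\bigl[\sum_i\nu_q(i)\one_{\{\nu_q(i)\leq\eps\}}\bigr]\to 1$ as $m\to\infty$, and no $q$-uniform bound $g(\eps)$ can exist.

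You also have the hard and easy regimes reversed: the ``concentrated $q$'' side is the easy pigeonhole case, while the ``spread-out $q$'' side is precisely where the static statement fails, and no calibration of the dividing threshold to $\eps$ repairs it, because the failure is unbounded as the effective support of $q$ grows. The theorem is nonetheless true because $\tilde p_i$ is not an arbitrary probability vector: it is the nearest-neighbor smoothing of $\mu_{i-1}^\beta$, which is itself a tilted polymer measure, so localization at time $i-1$ forces $\tilde p_i$ to be carried, up to small error, by boundedly many sites, and the Gibbs tilt then re-concentrates at time $i$. This self-reinforcing dynamical structure is what Vargas's argument (and, under different hypotheses, the proof of Theorem~\ref{total_mass}(a) in this chapter) actually exploits; a bound uniform over all $q$ is strictly stronger than the theorem and is simply false.
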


Actually, Vargas showed that $(\mu_{i-1}^\beta(\sigma_i \in \cdot))_{i \geq 1}$ is asymptotically purely atomic, with the set $\AA_i^\eps$ replaced by $\{x \in \Z^d : \mu_{i-1}^\beta(\sigma_i = x) > \eps\}$.  
From observation \eqref{next_step}, it is not difficult to check that the two notions are equivalent; see Corollary \eqref{apa_either_way}.
One of the main results of this chapter, Theorem \ref{total_mass}, asserts that the conclusion of Theorem \ref{vargas_apa} continues to hold even if $\lambda(\beta)$ is finite, as long as $\beta > \beta_{\mathrm{c}}$ and~\eqref{mgf_assumption} holds.\footnote{Furthermore, we show that ``in probability" from \eqref{apa_definition} can be replaced with ``almost surely".} 
In other words, one can take $\beta_0 = \beta_\cc$ in Theorem \ref{partial_apa_background}.
This is discussed in greater detail in Section \ref{endpoint_results} below.

\subsection{Polymers with other reference walks} \label{long_range_review}
A smaller portion of the literature allows for $P$ to be a more general walk.
Let us begin with a motivating family of examples.

\subsubsection{Long-range models}
In \cite{comets07}, Comets initiated the study of \textit{long-range} directed polymers.
In this model, the simple random walk is replaced by a general random walk capable of superdiffusive motion.
In this way, the long-range polymer can model the behavior of %not just a simple random walk, but also 
heavy-tailed walks, such as L\'evy flights, when placed in an inhomogeneous random environment.
Indeed, L\'evy flights in random potentials have been used to study chemical reactions \cite{chen-deem01} and particle dispersions \cite{sokolov-mai-blumen97,brockmann-geisel03}.
Moreover, their continuous-time analogs, L\'evy processes, appear in a variety of disciplines including fluid mechanics, solid state physics, polymer chemistry, and mathematical finance \cite{nielsen-mikosch-resnick01}.

Let us now state the model precisely. 
It is assumed that the walk $P$ belongs to the domain of attraction of an $\alpha$-stable law for some $\alpha \in (0,2]$.
That is, there is $\alpha \in (0,2]$ and deterministic sequences $a_n > 0$, $b_n \in \R^d$ such that
\eeq{ \label{alpha-stable}
\lim_{n \to \infty} E \exp\Big(iz \cdot \frac{\sigma_n - b_n}{a_n}\Big) = E(\e^{iz \cdot S_\alpha}) \quad \text{for all $z \in \R^d$},
}
where $S_\alpha$ is a random variable with a law satisfying the following property.
If $X_1,X_2,\dots,X_n$ are i.i.d.~and share this law, then $(X_1+\cdots+X_n)/n^{1/\alpha}$ also has this law.
When $d = 1$, the condition \eqref{alpha-stable} has a simpler description in terms of the tails of the random walk increment.
Namely, when $\alpha \in (0,2)$ it is assumed that
\eq{
P(|\sigma_1| \geq r) = r^{-\alpha} L(r) \quad \text{for all $r \geq 1$},
}
and for $\alpha = 2$,
\eq{
E\big[|\sigma_1|^2\, \one_{\{|\sigma_1|\leq r\}}\big] = L(r) \quad \text{for all $r \geq 1$},
}
where $L$ is a positive function slowly varying at infinity.\footnote{``Slowly varying" is in the sense of Karamata: For all $s>0$, $L(sx)/L(x)\to1$ as $x\to\infty$.}
Then $a_n = L'(n)n^{1/\alpha}$ for some slowly varying function $L'$, and $b_n$ can be taken equal to zero if $\alpha\in(0,1)$, equal to $n E(\sigma_1)$ if $\alpha\in(1,2]$, or computed if $\alpha=1$ (see \cite[Chapter 7]{gnedenko-kolmogorov54}).
For example, any walk having increments with a finite second moment belongs to the $\alpha=2$ case, where $S_2$ is Gaussian.

\subsubsection{Behaviors at high and low temperature}

For sufficiently high temperatures, the long-range setting described above admits a CLT involving the $\alpha$-stable law (\cite[Theorem 4.2]{comets07} and \cite[Theorem 1.9]{wei16}), which generalizes the Brownian CLT proved in \cite[Theorem 1.2]{comets-yoshida06} when the reference walk is SRW.
Long-range polymers raise further intrigue because these CLTs can hold even in low dimensions (when $\alpha\leq d$), for instance when two independent walks with law $P$ intersect only finitely many times \cite[Theorems 4.1 and 4.2]{comets07}.
This is in contrast to the SRW case, for which $\beta_\cc = 0$ in $d = 1$ \cite[Theorem 1.1]{comets-vargas06} and $d = 2$ \cite[Theorem 1.6]{lacoin10}.

On the other hand, \cite[Corollary 6.3]{comets07} and its generalization \cite[Theorem 1.17]{wei16} show that Theorem \ref{characterization0} continues to hold for long-range polymers, suggesting that universal behaviors also appear at low temperatures.
%Interestingly, universal behaviors have also appeared at low temperatures, where the system exists in  a ``disordered" phase.
Indeed, part of the work in \cite{comets07,wei16} was to extend localization results known for polymers constructed from a SRW to those constructed with $\alpha$-stable reference walks.
This chapter continues the progress in this direction by proving that in the low temperature regime, certain qualitative behaviors of the polymer's endpoint distribution are the same for \textit{any} reference walk.
%Namely, the strongest forms of localization known for arbitrary environment and arbitrary dimension, which were only recently proved for the SRW case in \cite{bates-chatterjee17}, are established here for the general case.

We conclude this section by mentioning a simple sufficient condition for the low temperature phase.
Notice that the condition depends only on the entropy of the random walk increment, not its actual distribution.
One can therefore interpret the result as providing a temperature threshold below which concentration of the polymer measure is strong enough to overcome any superdiffusivity of the random walk.

\begin{theirthm}[{\cite[Proposition 5.1]{comets07}}] \label{localization_sufficient_background}
Assume $\lambda(\beta) < \infty$, and also that $q(x) \coloneqq P(\sigma_1 = x)$ has finite entropy.
If
\eq{
\beta \lambda'(\beta) - \lambda(\beta) > -\sum_{x \in \Z^d} q(x)\log q(x),
}
then $p(\beta) < \lambda(\beta)$.
\end{theirthm}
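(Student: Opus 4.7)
The plan is to apply the \emph{fractional moment method}. For $s\in(0,1)$, the elementary subadditivity $(\sum_i a_i)^s\le\sum_i a_i^s$ applied to $Z_n(\beta)=\sum_\sigma P(\sigma)\e^{\beta H_n(\sigma)}$ gives
\[
Z_n(\beta)^s\le\sum_\sigma P(\sigma)^s\,\e^{s\beta H_n(\sigma)}.
\]
For each fixed path $\sigma$ of length $n$, the random variables $\{\omega(i,\sigma_i)\}_{i=1}^n$ are independent (the spacetime indices $(i,\sigma_i)$ being distinct), so $\E\,\e^{s\beta H_n(\sigma)}=\e^{n\lambda(s\beta)}$; meanwhile, the Markovian factorization of $P$ yields $\sum_\sigma P(\sigma)^s=\big(\sum_x q(x)^s\big)^n$. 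Combining,
\[
\E Z_n(\beta)^s\le\exp\!\Big(n\big[\lambda(s\beta)+\log\textstyle\sum_x q(x)^s\big]\Big).
\]

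Jensen's inequality in the form $\E\log Z_n(\beta)\le\tfrac1s\log\E Z_n(\beta)^s$, combined with~\eqref{Fn_lim}, now yields the bound $p(\beta)\le f(s)$ for every admissible $s\in(0,1)$, where
\[
f(s)\coloneqq\frac{\lambda(s\beta)+\log\sum_x q(x)^s}{s},\qquad f(1)=\lambda(\beta).
\]
The rest is calculus. Since $\lambda$ is smooth on the interior of $\{t:\lambda(t)<\infty\}$ and the finite-entropy hypothesis gives $\tfrac{\dd}{\dd s}\log\sum_x q(x)^s\big|_{s=1}=\sum_x q(x)\log q(x)=-H(q)$, a direct computation produces
\[
f'(1^-)=\beta\lambda'(\beta)-\lambda(\beta)-H(q),
\]
which is strictly positive by hypothesis. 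Hence $f(s)<f(1)=\lambda(\beta)$ for $s$ slightly below $1$, giving $p(\beta)<\lambda(\beta)$ as desired.

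The main technical obstacle is ensuring $\sum_x q(x)^s<\infty$ for $s$ in a left-neighborhood of $1$, so that $f'(1^-)$ is actually meaningful. This is automatic when $q$ is finitely supported (covering the SRW case) but is strictly stronger than $H(q)<\infty$ in general: a sufficiently heavy-tailed finite-entropy $q$ can fail it. In such borderline cases one must work harder---for instance, truncating the step distribution to $q_R\propto q\,\one_{\{|x|\le R\}}$, running the fractional-moment bound in that model (where the analog of $f$ is honestly defined), and then transferring back via a coupling of reference walks, exploiting $H(q_R)\to H(q)$ together with the standing strict inequality $\beta\lambda'(\beta)-\lambda(\beta)>H(q)$ to secure the conclusion uniformly for large $R$. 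This regularity check is the one place where anything beyond calculus and $\ell^s$-subadditivity enters.
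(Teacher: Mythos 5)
The paper does not reprove this result --- it is imported verbatim from Comets \cite{comets07} --- so the only question is whether your argument is correct and self-contained. Your fractional-moment derivation is the same one Comets uses, and the calculus is right: $f(s)=\bigl[\lambda(s\beta)+\log\sum_x q(x)^s\bigr]/s$ has $f(1)=\lambda(\beta)$ and $f'(1^-)=\beta\lambda'(\beta)-\lambda(\beta)-H(q)$, so $f'(1^-)>0$ under the hypothesis, which gives $p(\beta)\le f(s)<\lambda(\beta)$ for $s$ just below $1$ \emph{provided} $\sum_x q(x)^s$ is finite there. You are also right that $H(q)<\infty$ is strictly weaker than this: for instance $q(x)\asymp |x|^{-d}(\log|x|)^{-3}$ on $\Z^d$ is summable with finite entropy, yet $\sum_x q(x)^s=\infty$ for every $s<1$. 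In Comets' actual setting --- reference walks in the domain of attraction of an $\alpha$-stable law --- one has $q(x)\asymp |x|^{-(d+\alpha)}$ with $\alpha\in(0,2]$, so $\sum_x q(x)^s<\infty$ for $s>d/(d+\alpha)$, a genuine left-neighborhood of $1$, and your proof is complete there.

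The truncation patch you sketch does not close the remaining gap, and the reason is a sign issue worth flagging. Truncating to $q_R\propto q\,\one_{\{|x|\le R\}}$ only deletes paths, so with $c_R=\sum_{|x|\le R}q(x)$ one gets $c_R^n Z_n^{(R)}(\beta)\le Z_n(\beta)$ and therefore $p_R(\beta)+\log c_R\le p(\beta)$; since $\log c_R<0$, this is a \emph{lower} bound on $p(\beta)$. Proving $p_R(\beta)<\lambda(\beta)$ for large $R$ then controls $p(\beta)$ from below, not from above, and there is no obvious ``coupling of reference walks'' that reverses the inequality: the long-range steps you removed could only increase $Z_n$. So as written the argument establishes the proposition under the additional tail condition $\sum_x q(x)^s<\infty$ for some $s<1$ (automatic in the long-range setting of Chapter~2 where this theorem is invoked), but the claimed extension to arbitrary finite-entropy $q$ is not justified; either add that tail hypothesis to your statement or produce a different mechanism for the heavy-tailed case.
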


\subsubsection{Further analogies with SRW case}
Here we list several other properties of directed polymers constructed from SRW that have been proved to extend to long-range cases:
\begin{itemize}
\item The method of identifying a KPZ regime initiated in \cite{alberts-khanin-quastel14I,alberts-khanin-quastel14II} was extended to certain long-range cases in \cite{caravenna-sun-zygouras17I,caravenna-sun-zygouras17II}.
In fact, the authors of \cite{caravenna-sun-zygouras17II} are able to identify a universal limit for the point-to-point log partition functions, in critical cases, for both $d= 1$ and $d=2$.
%Related work on the stochastic heat equation has been done for $d \geq 3$ \cite{mukherjee-shamov-zeitouni16}.
\item In \cite{comets-fukushima-nakajima-yoshida15}, the asymptotics of $p(\beta)$ as $\beta\to\pm\infty$ are derived when $P$ has a stretched exponential tail, and the environment consists of Bernoulli random variables.
\item Recall that for the SRW case, it was proved that $\beta_\cc = 0$ by Comets and Vargas for $d = 1$ \cite{comets-vargas06} and by Lacoin for $d = 2$ \cite{lacoin10}.
For the long-range $\alpha$-stable case with $\alpha \in (1,2]$ and $d = 1$, Wei showed the analogous result: $\beta_\cc = \bar \beta_\cc = 0$ \cite[Theorem 1.11(ii)]{wei16}.\footnote{On the other hand, for $d \geq 3$ and any truly $d$-dimensional random walk $P$, the transience of $P$ forces $\bar\beta_\cc > 0$.
Also when $d = 1$ and $\alpha \in (0,1)$, or $d = 2$ and $\alpha \in (0,2)$, transience implies $\bar\beta_\cc > 0$, see \cite[Theorem 4.1]{comets07}.}
Later, Wei showed \cite[Theorem 1.3]{wei18} that in the critical case $\alpha = d = 1$ and under some regularity assumptions on $P$, $\bar\beta_\cc = 0$ again implies $\beta_\cc = 0$.
\end{itemize}

\subsubsection{Continuous versions of long-range polymers}
In \cite{miura-tawara-tsuchida08}, a continuous version of $\alpha$-stable long-range polymers is considered.
Specifically, a phase transition was shown for the normalized partition function associated to a L\'evy process subjected to a Poissonian random environment. 
Sufficient conditions were given for either side of the transition.
In the same way that the $\alpha$-stable polymer introduced in \cite{comets07} generalized classical lattice polymers, this L\'evy process model generalized a Brownian motion in Poissonian environment, which was introduced in \cite{comets-yoshida05} and also considered in \cite{lacoin11,comets-yoshida13,comets-cosco??,fukushima-junk19}.

\subsection{Related results in integrable models} \label{solvability_background}
Until recently, no directed polymer model offered the possibility of exact asymptotic calculations.
This was in contrast to the integrable last passage percolation (LPP) models in $1+1$ dimensions, for which specific choices of the passage time distribution (namely geometric or exponential) allow one to use representation theory to derive exact formulas for the distribution of passage times, which are the LPP analogs of partition functions.
Such was the approach advanced in the seminal work of Johansson \cite{johansson00}, which showed that asymptotic fluctuations of passage times follow the Tracy--Widom distributions at scale $n^{1/3}$---GUE for point-to-point passage times, and GOE for the point-to-line passage time.
The connection to Tracy--Widom laws can be seen more explicitly in a model known as Brownian LPP \cite{oconnell03}, which also exhibits the $n^{1/3}$ scaling.
These results place the LPP models within the KPZ universality class (see \cite{corwin12, borodin-petrov14}). 
For a review of related works, including results verifying spatial fluctuations at scale $n^{2/3}$, we refer the reader to \cite{quastel-remenik14} and references therein.

In the zero-temperature limit $\beta \to \infty$, the polymer measure $\mu_n^\beta$ concentrates on the path that is most likely given the random environment.
When $\beta = \infty$, the directed polymer model is equivalent to LPP.
The natural conjecture, therefore, is that directed polymers in strong disorder obey the same KPZ scaling relations.
In particular, models in $1+1$ dimensions should have energy fluctuations of order $n^{1/3}$, and endpoint fluctuations of order $n^{2/3}$. %(see \eqref{superdiffusive}).
The first case permitting exact calculations was the integrable log-gamma model introduced by Sepp{\"a}l{\"a}inen \cite{seppalainen12}, whose breakthrough work proved the conjectured exponents.
Subsequent studies showed that free energy fluctuations were again of Tracy--Widom type \cite{corwin-oconnell-seppalainen-zygouras14,borodin-corwin-remenik13}, gave a large deviation rate function \cite{georgiou-seppalainen13}, and computed the limiting value $p(\beta)$ \cite{georgiou-rassoul-seppalainen-yilmaz15}.

Regarding spatial fluctuations, Comets and Nguyen \cite{comets-nguyen16} found an explicit limiting endpoint distribution in the equilibrium case of the log-gamma model.
More precisely, they showed that the endpoint distribution, when re-centered around the most likely site, converges in law to a certain random distribution on $\Z$. 
While the boundary conditions they enforce prevent the correct exponent of $2/3$ from appearing, which would mean fluctuations of the averaged quenched distribution occur on the order of $n^{2/3}$, their result implies that the \textit{quenched} endpoint distributions concentrate all their mass in a microscopic region of $O(1)$ diameter around a single favorite site.
In the notation of this chapter, if $x_n = \arg \max \mu_n^\beta(\sigma_n = \cdot)$, then the result of \cite{comets-nguyen16} says
\eeq{
\lim_{K \to \infty} \liminf_{n \to \infty} \mu_n^\beta(\|\sigma_n - x_n\|_1 \leq K) = 1 \quad \text{in probability.}\label{mode}
}
This provides an affirmative case of the so-called ``favorite region" conjecture, which speculates that in strong disorder, directed polymers on the lattice localize their endpoint to a region of fixed diameter. 
We emphasize once more that this is a statement about quenched endpoint distributions.
While little is known on annealed distributions in the general case, 
there has been considerable progress for the integrable LPP models. 
Johansson \cite{johansson03} proved that the rescaled location of the endpoint converges in distribution to the maximal point of an Airy$_2$ process minus a parabola.
More recent work has expressed the density of this limit in terms of Fredholm determinants \cite{flores-quastel-remenik13}, or from a different approach, in terms of the Hastings-McLeod solution to the Painlev\'{e} II equation \cite{schehr12}; the two formulas were shown to be equivalent in \cite{baik-liechty-schehr12}.
Tail estimates for this density can be found in \cite{bothner-liechty13,quastel-remenik15}.

For further literature on integrable directed polymers, see \cite{corwin-seppalainen-shen15,barraquand-corwin17,thiery-doussal15} and references therein.
We also mention the semi-discrete model introduced by O'Connell and Yor \cite{oconnell-yor01}, which is a positive-temperature version of Brownian LPP and has offered another provable case of $n^{1/3}$ energy fluctuations \cite{seppalainen-valko10,borodin-corwin14,borodin-corwin-ferrari14,talyigas-veto19}. 

%\renewcommand*{\thethm}{\arabic{thm}} \setcounter{thm}{0}
%\rhomberwithin{thm}{section}

\subsection{A remark about the definition of endpoint distribution}
All the results of Section \ref{disorder_background} are normally stated in the literature using the measure $\mu_{n-1}^\beta(\sigma_n \in \cdot)$, as opposed to $\mu_n^\beta(\sigma_n \in \cdot)$. 
The reason is that the former arises naturally out of the martingale analysis for $W_n(\beta)$ (for instance, see the proof of \cite[Theorem~3.3.1]{comets-shiga-yoshida04}), but in all cases, the statements are equivalent when the latter is used instead.
This equivalence can be seen by writing
\eeq{
\mu_{n-1}^\beta(\sigma_n = x) =  \sum_{y\in\Z^d} \mu_{n-1}^\beta(\sigma_{n-1} = y)P(y,x), \label{next_step}
}
(where $\|\cdot\|_1$ denotes the $\ell^1$ norm on $\Z^d$), which implies
\eq{
\Big(\max_{y\in\Z^d} P(y,x) \Big) \max_{y \in \Z^d} \mu_{n-1}^\beta(\sigma_{n-1} = y) \leq \max_{x \in \Z^d} \mu_{n-1}^\beta(\sigma_n = x) \leq \max_{y \in \Z^d} \mu_{n-1}^\beta(\sigma_{n-1} = y).
}
Similarly, we have
\eq{
\Big(\max_{y\in\Z^d} P(y,x)^2 \Big) (\mu_{n-1}^\beta)^{\otimes 2}(\sigma_{n-1} = \sigma_{n-1}') \leq (\mu_{n-1}^\beta)^{\otimes 2}(\sigma_{n} = \sigma_{n}') \leq (\mu_{n-1}^\beta)^{\otimes 2}(\sigma_{n-1} = \sigma_{n-1}'),
}
where the first inequality follows from the observation that if $\sigma_{n-1} = \sigma_{n-1}' = y$, then $\sigma_n = \sigma_n'$ with probability $\sum_{y\in\Z^d} P(y,x)^2$ under $\mu_{n-1}^\beta$.
The second inequality is due to Cauchy--Schwarz applied to \eqref{next_step}:
\eq{
(\mu_{n-1}^\beta)^{\otimes 2}(\sigma_n = \sigma_n') 
= \sum_{x \in \Z^d} \mu_{n-1}^\beta(\sigma_n = x)^2
&= \sum_{x\in\Z^d}\bigg(\sum_{y\in\Z^d}\mu_{n-1}^\beta(\sigma_{n-1}=y)P(y,x)\bigg)^2 \\
&\leq \sum_{x \in \Z^d} \sum_{y\in\Z^d} \mu_{n-1}^\beta(\sigma_{n-1} = y)^2P(y,x)\\
&= \sum_{y \in \Z^d} \mu_{n-1}^\beta(\sigma_{n-1} = y)^2
= (\mu_{n-1}^\beta)^{\otimes 2}(\sigma_{n-1}=\sigma_{n-1}').
}
As $\mu_n^\beta(\sigma_n \in \cdot)$ will be the more natural object for our purposes, we reserve the term ``endpoint distribution" for this measure.
%Its mass function on $\Z^d$ will be denoted $f_n(\cdot) \coloneqq  \mu_n^\beta(\sigma_n = \cdot)$.

\subsection{An overview of results proved in this chapter} \label{endpoint_results}
The endpoint distribution $\mu_n^\beta(\sigma_n \in \cdot)$ of length-$n$ polymers is a random probability measure on $\mathbb{Z}^d$. The main goal of this chapter is to understand the behavior of this object as $n\to\infty$.
The majority of our work is in %building an ``abstract machine" designed 
developing methods
to compute limits of endpoint distributions, or more to the point, of \textit{functionals} of the endpoint distribution.
While proving the existence of these limits is presently out of reach, by instead considering empirical averages we are able to establish results in Ces\`aro form.
Broadly speaking, the construction consists of three components: (i) a compactification of measures on $\Z^d$; (ii) a Markov kernel whose invariant measures are the possible limits of the endpoint distribution; and (iii) a functional that distinguishes those invariant measures with minimal energy.
In fact, our approach has inspired similar works by Br{\"o}ker and Mukherjee for the stochastic heat equation and Gaussian multiplicative chaos \cite{broker-mukherjee19II}, and by Bakhtin and Seo \cite{bakhtin-seo20} for directed polymers with a continuous-space reference walk.

\subsubsection{Compactification of measures on $\Z^d$}
One key obstacle to studying the endpoint distribution is that the standard topology of weak convergence of probability measures is inadequate for understanding its limiting behavior. Even the weaker topology of vague convergence does not provide an adequate description of the localization phenomenon that is of central interest.
In the recent work of Mukherjee and Varadhan \cite{mukherjee-varadhan16}, this issue was addressed by exploiting translation invariance inherent in their problem to pass to a certain compactification of probability measures on $\R^d$.
We have drawn inspiration from their methods, using a similar construction we will soon describe.
Ultimately, this approach is enabled by the ``concentration compactness" phenomenon.
Before we discuss this topic further, let us motivate the discussion with two elementary examples.

Roughly speaking, the difficulty of using standard weak or vague convergence is that the endpoint distribution may manifest itself as multiple localized ``blobs'' that escape to infinity in different directions as $n\to \infty$. Additionally, some part of the mass may just ``diffuse to zero''. As a first example, consider the following sequence of probability mass functions on $\mathbb{Z}$:
\[
q_n(x) \coloneqq 
\begin{cases}
1/2 &\text{if $x=n$,}\\
1/(2n) &\text{if $0\le x< n$,}\\
0 &\text{otherwise.}
\end{cases}
\]
This sequence fails to converge weakly, and its vague limit is the zero measure. 
However, to understand the true limiting behavior of $q_n$, it seems more appropriate to take the vague limit after translating the measure by $n$; that is, taking $\wt{q}_n(x) \coloneqq  q_n(x-n)$. 
The limit of the sequence $\wt{q}_n$ is the subprobability measure that puts mass $1/2$ at the origin and zero elsewhere, which gives a better picture of the true limiting behavior of $q_n$. 

The situation is more complicated when multiple blobs escape to infinity in different directions. For example, consider the following sequence of probability mass functions on $\mathbb{Z}$:
\[
r_n(x) \coloneqq 
\begin{cases}
1/5 &\text{if $x \in \{-2n, -n, n, n+1\}$,}\\
1/(5n) &\text{if $0\le x<n$,}\\
0 &\text{otherwise.}
\end{cases}
\]
Like $q_n$, the sequence $r_n$ also converges vaguely 
to the subprobability measure of mass zero. But in this case, no sequence of translates of $r_n$ can fully capture its limiting behavior, because a translate can only capture the mass at one of the three blobs but not all of them simultaneously.  The only recourse, it seems, is to express the limit not as a single subprobability measure, but a union of three subprobability measures on three distinct copies of $\mathbb{Z}$. Two of these will put mass $1/5$ at the origin in their respective copies of $\Z$, and the third will put mass $1/5$ at $0$ and $1/5$ at $1$ in its copy of $\Z$. 
The first two are the vague 
limits of $r_n(\cdot +2n)$ and $r_n(\cdot + n)$, and the third is the vague 
limit of $r_n(\cdot - n)$. One can view this object jointly as a subprobability measure of total mass $4/5$ on the set $\{1,2,3\}\times \Z$. Of course, it is not important which copy of $\Z$ gets which part of the measure, nor does it matter if a translation is applied to the subprobability measure on one of the copies. Thus, the limit object is an equivalence class of subprobability measures on $\{1,2,3\}\times \Z$ rather than a single subprobability measure.

Generalizing the above idea, we can consider equivalence classes of subprobability measures on $\N \times \Z^d$. First, define the $\N$-support of a subprobability measure $f$ on $\N\times \Z^d$ to be the set of all $n\in \N$ such that $f$ puts nonzero mass on $\{n\}\times \Z^d$. Next, declare two subprobability measures $f$ and $g$ on $\N \times \Z^d$ to be equivalent if there is a bijection $\tau$ between their $\N$-supports and a number $x_n\in \Z^d$ for each $n$ in the $\N$-support of $f$ such that 
\[
g(\tau(n), \cdot) = f(n, x_n +\cdot).
\]
It is easy to verify that this is indeed an equivalence relation. 
We call the equivalence classes \textit{partitioned subprobability measures} on $\Z^d$, and the set of all equivalence classes is denoted by $\SS$. The set $\SS$ is formally defined and studied in Section \ref{topology}.

We will view probability measures on $\Z^d$ (or more precisely, their mass functions) as elements of $\SS$ supported on a single copy of $\Z^d$.
In particular, we have a natural embedding of endpoint distributions into $\SS$.
In Section \ref{topology-definitions}, we define a metric $d_\alpha$ on $\SS$ (not to be confused with the dimension $d$).
Due to the somewhat complicated nature of the metric, we will not reproduce the definition in this overview section, but it is constructed to ensure that certain functionals on $\SS$ are continuous or at least semi-continuous.
In this study, these functionals are related to either localization or the free energy of the system.
One of the first nontrivial results of this chapter is that $(\SS, d_\alpha)$ is a compact metric space, in analogy with the construction in \cite{mukherjee-varadhan16}. 
This is Theorem~\ref{compactness_result} in Section~\ref{compactness}.

In summary, the metric $d_\alpha$ imposes a translation invariance that prevents so-called blobs from escaping to infinity.
Moreover, such escape turns out to be the only obstruction to compactness.
This is the fundamental observation of a more general theory known as ``concentration compactness", which builds on the notion of the concentration functions of probability measures, defined by L\'evy~\cite{levy54}.  The idea of using L\'evy's concentration functions for constructing compactifications of function spaces was introduced by Parthasarathy, Ranga Rao, and Varadhan~\cite{parthasarathy-rangarao-varadhan62}, and  developed into a powerful tool by Lions \cite{lions84I,lions84II,lions85I,lions85II}. It has been applied broadly in the study of calculus of variations and PDEs. 
In the language of those settings, the equivalence relation described above plays the role of a ``profile decomposition" (for a friendly discussion, see \cite{tao08,tao10}).
In the present setting, this decomposition states that 
In other words, within any sequence of probability mass functions on $\Z^d$, one can find a subsequence $s_n$ for which there exists a countable collection $(p_j)_{j \in \N}$ of fixed \textit{sub}probability mass functions, and sequences $x_{j,n}$ in $\Z^d$ such that the following three statements hold:
\eq{
\sum_{j \in \N} \sum_{x \in \Z^d} p_j(x) \leq 1, \quad
\lim_{n \to \infty} \|x_{j,n} - x_{k,n}\|_1 = \infty \quad \text{if $j \neq k$}, \quad
&\lim_{n \to \infty} \Big\|s_n(\cdot) - \sum_{j \in \N} p_j(\cdot - x_{j,n})\Big\|_\infty = 0.
}
The $p_j$ are called \textit{profiles} and play the role of the vague limits in the examples given above.
In the first example, there was only one nonzero profile, while in the second there were three.
Our motivation for defining the set $\SS$, then, is to record these profiles as a single function.
%Let us now outline the approach.

As mentioned before, a similar construction in a continuous setting has appeared recently in the paper~\cite{mukherjee-varadhan16}.
Specifically, its authors first identify probability distributions on $\R^d$ that are translations of one another (that is, they define an equivalence relation), and then embed the resulting quotient space within a set (which is analogous to $\SS$) of countable sequences of subprobability measures on $\R^d$, again identified by translations.
Upon defining a metric on this set under which the image of the embedding is dense, they prove that the resulting topology is compact.
This fact is used to establish a large deviation result for occupation measures on $\R^d$ induced by Brownian motion.
In comparison, our construction is built to study the endpoint measures on $\Z^d$ induced by directed polymers, and we follow a similar program to obtain a compact topology on the space of interest: the set $\SS$, in our case.
One key difference is that a measure on $\Z^d$ can be thought of as a function on $\Z^d$, and so we can define a metric on $\SS$ permitting explicit calculations. 
This bypasses the use of an abstract family of test functions and allows us to directly check the continuity of certain functionals on (equivalence classes of) measures.
%Let us now outline the approach.

After defining the metric space $(\SS,d_\alpha)$ and proving its compactness, we present in Section \ref{compare_topologies} a formal comparison between our construction and the one of Mukherjee and Varadhan.
In particular, we first introduce the relevant ideas from their paper \cite{mukherjee-varadhan16} for measures on $\Z^d$, and check that in this discrete setting one can still define a metric $D$ on $\SS$ by using translation invariant test functions.
Heuristics and motivating examples suggest correctly that this alternative metric $D$ is equivalent to $d_\alpha$;
the equivalence is stated as Proposition \ref{equal_topologies}, in terms of convergent sequences.
This result is satisfying in that it shows a consistency of theory that might be generalized.
Nevertheless, its proof is quite involved.
In fact, our argument requires the compactness of $d_\alpha$ to even establish the equivalence of topologies, meaning a separate proof that $(\SS,D)$ is compact would not be sufficient to show that $(\SS,d_\alpha)$ is compact.
In any case, the entirety of Section \ref{compare_topologies} is supplementary and tangential to the subject of directed polymers.
The reader may choose to skip this part or return to it after reading later sections.

\subsubsection{The update map}
In Section \ref{transformation} we resume developing our abstract machinery by defining 
an ``update'' map $T:\SS \to \PP(\SS)$,  where $\PP(\SS)$ is the set of all probability measures on $\SS$ equipped with the Kantorovich--Rubinstein--Wasserstein  distance  $\WW$ (the $L^1$ version). 
The map $T$ has the property that if $f_n(\cdot) \coloneqq  \mu_n^\beta(\sigma_n = \cdot)$ is the endpoint mass function of the length-$n$ polymers (considered as an element of $\SS$ supported on a single copy of $\Z^d$), then $Tf_n$ is the law of $f_{n+1}$ given $\FF_n$ (recall the definition \eqref{fndef} of $\FF_n$).
In fact, it is not difficult to see that $(f_n)_{n\geq0}$ is Markovian, and thus $T$ is the Markov kernel generating this chain.
Considerable work is done in Section \ref{endpoint_distributions} to show that $T$ is a continuous map (i.e.~has the Feller property).  
This is the conclusion of Proposition \ref{continuous1}. The explicit nature of the metric $d_\alpha$ is particularly important in the proof of this result.
Finally, the map $T$ lifts to a map $\TT: \PP(\SS) \to \PP(\SS)$, defined as
\eq{
\TT\rho(\dd g) \coloneqq  \int Tf(\dd g)\ \rho(\dd f).
}
The continuity of $T$ implies that $\TT$ is also continuous.

In Section \ref{free_energy} we study the following random element of $\PP(\SS)$:
\eq{
\rho_n \coloneqq  \frac{1}{n}\sum_{i = 0}^{n-1} \delta_{f_i}.
}
Here $\delta_{f_i}$ is the unit point mass at the $i^{\text{th}}$ endpoint mass function $f_i$, considered as an element of $\SS$ as before. In words, $\rho_n$ is the empirical measure of the endpoint distributions up to time $n$. Let 
\[
\KK \coloneqq  \{\rho \in \PP(\SS) : \TT\rho = \rho\}
\]
be the set of fixed points of $\TT: \PP(\SS) \to \PP(\SS)$.
The first main result of Section \ref{free_energy} is Corollary \ref{close_probability}, which says that
\eeq{
\lim_{n \to \infty} \inf_{\rho \in \KK} \WW(\rho_n,\rho) = 0 \quad \mathrm{a.s.} \label{going_to_K}
}
This result provides a heuristic connection to $(1+1)$-dimensional integrable models (for instance, see \cite{seppalainen12,corwin-seppalainen-shen15,barraquand-corwin17,thiery-doussal15}), which work in part by identifying a disorder distribution and boundary conditions such that the system is stationary under spatial translations.
Loosely speaking, our approach similarly recovers a stationarity property \textit{in the limit}, even without explicit calculations.
In this way, our methods replace this key feature of integrable models with a much weaker, but more general, abstract framework.
This is enabled by a topology on endpoint distributions that is rich enough to capture the desired localization, yet sufficiently ``compressed" to be compact.

\subsubsection{The energy functional}
Given the convergence \eqref{going_to_K}, the next key observation is that the free energy $F_n(\beta)$ can be expressed in terms of the empirical measure $\rho_n$, see \eqref{Fn_ito_empirical}.
We thus define a functional $\RR : \PP(\SS) \to \R$ so that we may concisely write
$\E F_n(\beta) = \E \RR(\rho_{n})$.
Proposition \ref{free_energy_converges} and Corollary \ref{close_probability} then lead to the following variational formula for the limiting free energy, given in Theorem \ref{upper_bound}. For any $\beta$ such that \eqref{mgf_assumption} holds,
\eq{
p(\beta) = \lim_{n\to\infty} F_n(\beta) = \lim_{n \to \infty} \E F_n(\beta) = \inf_{\rho \in \KK} \RR(\rho).
}
The connections between this formula and those of \cite{rassoul-seppalainen-yilmaz13,rassoul-seppalainen-yilmaz17} are unclear.

Nevertheless, this computation allows us to improve Corollary \ref{close_probability} in an important way to yield Theorem \ref{close_M}, which says that if 
\[
\MM \coloneqq  \Big\{\rho_0 \in \KK : \RR(\rho_0) = \inf_{\rho \in \KK} \RR(\rho)\Big\},
\]
then
\eq{
\lim_{n \to \infty} \inf_{\rho \in \MM} \WW(\rho_n,\rho) = 0 \quad \mathrm{a.s.}
}
In Section \ref{empirical_limits} we study the minimizing set $\MM$. In particular, Theorem \ref{characterization} says that either $\MM$ consists of the single element of total mass zero (which happens when $0\le \beta\le \beta_{\mathrm{c}}$, where $\beta_{\mathrm{c}}$ is the critical inverse temperature of Theorem \ref{critical_temperature}), or every element of $\MM$ has total mass one (which happens when $\beta>\beta_{\mathrm{c}}$).
This result is similar to the technique of identifying a phase transition as the critical point at which a recursive distributional equation begins to have a nontrivial solution; for an account of this method, we refer the reader to the survey of Aldous and Bandyopadhyay~\cite{aldous-bandyopadhyay05} and references therein.
This idea is also present in work of Yoshida \cite{yoshida08} on more general linear stochastic evolutions, although there the nontrivial solutions exist in the high temperature regime rather than the low temperature regime.

From a different perspective, the limit law of the empirical measure can be viewed as an ``order parameter'' for the model, whose behavior distinguishes between the high and low temperature regimes. Such order parameters arise frequently in the study of disordered systems. A prominent example is the Sherrington--Kirkpatrick (SK) model of spin glasses, where the limiting distribution of the overlap serves as the order parameter (see Panchenko~\cite{panchenko13}). Interestingly, the limiting free energy of the SK model can also be expressed as the solution of a variational problem involving the order parameter. This is the famous Parisi formula proved by Talagrand~\cite{talagrand06}. 
In this way, the partitioned subprobability measures might be seen as counterparts to the random overlap structures introduced by Aizenman, Sims and Starr~\cite{ass07}, %for understanding the limiting behavior of mean-field spin glass models,
and the update map as the analog to similar stabilizing maps arising out of the cavity method for spin glasses and related competing particle systems, that were studied by Aizenman and Contucci~\cite{aizenman-contucci98}, Ruzmaikina and Aizenman~\cite{ruzmaikina-aizenman05} and Arguin and Chatterjee~\cite{arguin-chatterjee13}.
In other ways, however, the analogy is quite distant.
For instance, the spin glass systems we speak of are mean-field models lacking any geometry from the lattice.
Also, our variational formula relies on the directed nature of the problem; that is, the random environment refreshes at each time step, allowing us to exploit Markovian structure.

\subsection{Main applications}
Theorem \ref{characterization} yields the following concrete application of our abstract theory of partitioned subprobability measures; it is later stated as Theorem \ref{total_mass}.
Recall the notations and terminologies related to Theorem~\ref{critical_temperature} and Theorem~\ref{vargas_apa}. 

\begin{thm} \label{intro_result1}
Assume \eqref{walk_assumption_1} and \eqref{mgf_assumption}.
\begin{itemize}
\item[(a)] If $p(\beta) < \lambda(\beta)$, then for every sequence $(\eps_i)_{i \geq 0}$ tending to 0 as $i \to \infty$,
\eq{
\lim_{n \to \infty} \frac{1}{n} \sum_{i = 0}^{n-1} \mu_{i}^\beta(\sigma_i \in \AA_i^{\eps_i}) = 1 \quad \mathrm{a.s.}
}
\item[(b)] If $p(\beta) = \lambda(\beta)$, then there exists a sequence $(\eps_i)_{i \geq 0}$ tending to 0 as $i \to \infty$ such that
\eq{
\lim_{n \to \infty} \frac{1}{n} \sum_{i = 0}^{n-1} \mu_{i}^\beta(\sigma_i \in \AA_i^{\eps_i}) = 0 \quad \mathrm{a.s.}
}
\end{itemize}
\end{thm}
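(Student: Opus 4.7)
The plan is to combine the $\WW$-distance convergence $\rho_n \to \MM$ established in Theorem~\ref{close_M} with the mass dichotomy for $\MM$ given in Theorem~\ref{characterization}. For each $\eps > 0$, I define the functional $M_\eps : \SS \to [0,1]$ by
\eq{
M_\eps([p]) \coloneqq \sum_j \sum_{x \in \Z^d} p_j(x)\,\one_{\{p_j(x) > \eps\}},
}
representing the total weight, across all profiles, of atoms exceeding $\eps$. Since every probability mass function $f_i \coloneqq \mu_i^\beta(\sigma_i \in \cdot)$ embeds into $\SS$ as a single-profile partitioned measure, $M_\eps(f_i) = \mu_i^\beta(\sigma_i \in \AA_i^\eps)$, and hence
\eq{
\int_\SS M_\eps\, \dd\rho_n = \frac{1}{n}\sum_{i=0}^{n-1} \mu_i^\beta(\sigma_i \in \AA_i^\eps).
}
I expect $M_\eps$ to be upper semi-continuous on $(\SS, d_\alpha)$ (atoms that straddle the threshold in a limit can fall below it), while its closed-inequality companion $M_\eps^{\leq}([p]) \coloneqq \sum_j \sum_x p_j(x)\,\one_{\{p_j(x) \geq \eps\}}$ is lower semi-continuous; both properties lift to $\PP(\SS)$ under the Wasserstein distance.

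For part (a), Theorem~\ref{characterization} ensures every $\rho \in \MM$ is supported on partitioned measures of total mass one, so $M_\eps^{\leq}$ increases pointwise to the total-mass functional as $\eps \downarrow 0$, and $\int M_\eps^{\leq}\,\dd\rho \uparrow 1$ for each $\rho \in \MM$ by monotone convergence. Using compactness of $\MM \subset \PP(\SS)$ (inherited from compactness of $(\SS, d_\alpha)$), I extract, for each $\delta > 0$, a uniform $\eps^* > 0$ with $\int M_{\eps^*}^{\leq}\,\dd\rho \geq 1 - \delta$ for all $\rho \in \MM$. Lower semi-continuity of $\rho \mapsto \int M_{\eps^*}^{\leq}\,\dd\rho$ combined with $\inf_{\rho \in \MM}\WW(\rho_n,\rho) \to 0$ then yields $\liminf_n \int M_{\eps^*}^{\leq}\,\dd\rho_n \geq 1 - \delta$ almost surely. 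Since $\eps_i \to 0$, eventually $\eps_i < \eps^*$ and thus $\AA_i^{\eps_i} \supseteq \AA_i^{\eps^*}$, giving $\liminf_n \frac{1}{n}\sum_{i=0}^{n-1} \mu_i^\beta(\sigma_i \in \AA_i^{\eps_i}) \geq 1 - \delta$; sending $\delta \downarrow 0$ concludes the argument.

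Part (b) runs in reverse. Here $\MM = \{\delta_{[\mathbf{0}]}\}$ with $[\mathbf{0}]$ the zero partitioned measure, so $\int M_\eps\,\dd\rho = 0$ for $\rho \in \MM$ and every $\eps > 0$. Upper semi-continuity of $\rho \mapsto \int M_\eps\,\dd\rho$ together with $\rho_n \to \delta_{[\mathbf{0}]}$ in $\WW$ gives, for each fixed $\eps > 0$,
\eq{
\lim_{n\to\infty} \frac{1}{n}\sum_{i=0}^{n-1} \mu_i^\beta(\sigma_i \in \AA_i^\eps) = 0 \quad \mathrm{a.s.}
}
A diagonal construction then produces the desired vanishing sequence: pick $N_k \uparrow \infty$ with $N_{k+1}/N_k \to \infty$ and $\frac{1}{n}\sum_{i<n}\mu_i^\beta(\sigma_i \in \AA_i^{1/k}) < 1/k$ for $n \geq N_k$, and set $\eps_i \coloneqq 1/k$ for $i \in [N_k, N_{k+1})$. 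The rapid growth of $N_k$ renders contributions from ranges $[N_j, N_{j+1})$ with $j < k$ negligible relative to $n \in [N_k, N_{k+1})$, delivering $\frac{1}{n}\sum_i \mu_i^\beta(\sigma_i \in \AA_i^{\eps_i}) \to 0$.

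The principal obstacle is verifying the semi-continuity properties of $M_\eps$ and $M_\eps^{\leq}$ under $d_\alpha$. This demands a careful use of the explicit form of the metric to show that atoms above a fixed threshold persist in the limit up to (and only up to) the threshold value itself, with no splitting or diffusive loss. A secondary difficulty is the uniform-in-$\MM$ extraction of $\eps^*$ in part (a) and the diagonal construction in part (b); both are standard once the semi-continuity ingredients and the compactness of $\MM$ are in hand.
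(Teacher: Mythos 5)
Your overall strategy matches the paper's proof of Theorem \ref{total_mass} — embed $f_i$ into $\SS$, apply Theorem \ref{close_M} and Theorem \ref{characterization}, use appropriate semi-continuous functionals with Portmanteau, uniformize via Dini on the compact set $\MM$, and finish part (b) with a diagonal extraction. But there is a genuine gap: you have the semi-continuity directions exactly reversed, and the paper's Lemma \ref{eps_norm_equivalence} tells you so. The strict-threshold functional $M_\eps$ (which the paper calls $\|\cdot\|_\eps$) is \emph{lower} semi-continuous, not upper. To see why, take $f$ with a single atom of mass exactly $\eps$ so that $M_\eps(f)=0$, and $f_n$ with the atom at mass $\eps+1/n$ so that $M_\eps(f_n)=\eps+1/n\to\eps>0$; one has $d_\alpha(f_n,f)\to 0$, and $\liminf M_\eps(f_n)>M_\eps(f)$, which is a violation of upper semi-continuity and a confirmation of lower. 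Symmetrically, your closed-threshold companion $M^{\leq}_\eps$ is \emph{upper} semi-continuous, not lower (the atom can drop below $\eps$). Your heuristic ``atoms that straddle the threshold in a limit can fall below it'' correctly identifies the borderline phenomenon, but you drew the wrong conclusion from it.

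The consequence differs in the two parts. For part (a) the fix is cosmetic: use the strict-threshold functional $M_\eps$, which really is lower semi-continuous, still increases monotonically to $\|f\|$ as $\eps\downarrow 0$, still hands you a uniform $\eps^*$ on $\MM$ via Dini (Lemma \ref{dini}), and moreover matches the statement exactly since $M_\eps(f_i)=\mu_i^\beta(\sigma_i\in\AA_i^\eps)$ by the paper's definition of $\AA_i^\eps$ (strict inequality). For part (b), however, the gap is substantive: you need an upper semi-continuous functional to push the limsup down to $0$ via Portmanteau, and neither $M_\eps$ nor $M^{\leq}_\eps$ supplies it globally (you would need continuity everywhere, or at least on a $\rho$-full set, and neither is continuous). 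The paper instead uses the indicator $\vc I_\eps(f)=\one_{\{\max_u f(u)\geq\eps\}}$, proves it upper semi-continuous in Lemma \ref{eps_norm_equivalence}(b), and then exploits the pointwise inequality $\mu_i^\beta(\sigma_i\in\AA_i^\eps)\leq\vc I_\eps(f_i)$ to transfer the bound. That is the missing ingredient in your part (b). As a secondary remark, your diagonal construction describes picking $N_k$ depending on a.s. convergence that has already happened, which makes the $N_k$ random; the paper avoids this by choosing deterministic thresholds $M_j$ with a tail-probability estimate $\P\big(\bigcup_{n\geq M_j}\{\vc\II_{\kappa_{j+1}}(\rho_n)\geq 2^{-(j+1)}\}\big)<2^{-(j+1)}$ and closing with Borel--Cantelli, which is the cleaner way to package the argument.
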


This generalizes Theorem \ref{vargas_apa}, where an ``in probability" version of (a) was proved under the condition $\lambda(\beta)=\infty$. 

In Section \ref{main_thm2} we apply our techniques to go further than atomic localization by considering ``geometric localization".
In the low temperature phase, the endpoint distribution can not only concentrate mass on a few likely sites, but moreover have those sites close together.
We make this phenomenon precise in the following manner.
%Let $f_i(\cdot) = \mu_i^\beta(\sigma_i = \cdot)$ be the mass function for the $i^{\text{th}}$ endpoint distribution.
For $\delta \in (0,1)$ and a nonnegative number $K$, let $\GG_{\delta,K}$ denote the set of probability mass functions on $\Z^d$ that assign measure greater than $1-\delta$ to some subset of $\Z^d$ having diameter at most $K$; see \eqref{g_set_def} for a symbolic definition.
We will say that the sequence $(\mu_i^\beta(\sigma_i \in \cdot))_{i \geq 0}$ is \textit{geometrically localized with positive density} if for every $\delta$, there is $K< \infty$ and $\theta > 0$ such that
\eq{
\liminf_{n \to \infty} \frac{1}{n} \sum_{i = 0}^{n-1} \one_{\{\mu_i^\beta(\sigma_i = \cdot)\in \GG_{\delta,K}\}} \geq \theta \quad \mathrm{a.s.,}
}
where $\one_{A}$ denotes the indicator of the event $A$.
That is, there are endpoint distributions with limiting density at least $\theta$ that place mass greater than $1 - \delta$ on a set of bounded diameter.
We will say $(\mu_i^\beta(\sigma_i \in \cdot))_{i \geq 0}$ is \textit{geometrically localized with full density} if for every $\delta$, there is $K < \infty$ such that
\eeq{
\liminf_{n \to \infty} \frac{1}{n} \sum_{i = 0}^{n-1} \one_{\{\mu_i^\beta(\sigma_i = \cdot)\in \GG_{\delta,K}\}} \geq 1-\delta \quad \mathrm{a.s.} \label{geometric_localization}
}
%In this stronger case, the limiting density is 1 no matter how small $\delta$ is, provided $K$ is chosen sufficiently large depending on $\delta$.
The main result of Section \ref{main_thm2} is contained in Theorem \ref{localized_subsequence} and says the following.

\begin{thm} \label{intro_result2}
Assume \eqref{walk_assumption_1} and \eqref{mgf_assumption}. 
\begin{itemize} 
\item[(a)] If $p(\beta) < \lambda(\beta)$, then there is geometric localization with positive density. 
Moreover, the numbers $K$ and $\theta$ are deterministic quantities that depend only on the choice of $\delta$, the disorder distribution $\mathfrak{L}_\omega$, the parameter $\beta$, and the dimension $d$.
\item[(b)] If $p(\beta) = \lambda(\beta)$, then for any $K$ and any $\delta \in (0,1)$,
\eq{
\lim_{n\to\infty} \frac{1}{n} \sum_{i=0}^{n-1} \one_{\{\mu_i^\beta(\sigma_i = \cdot) \in \GG_{\delta,K}\}} = 0 \quad \mathrm{a.s.}
}
\end{itemize}
\end{thm}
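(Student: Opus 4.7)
The plan is to deduce both parts from Theorem~\ref{close_M}, which gives $\inf_{\rho\in\MM}\WW(\rho_n,\rho)\to 0$ almost surely for the empirical measure $\rho_n = \frac{1}{n}\sum_{i=0}^{n-1}\delta_{f_i}$, together with the dichotomy for $\MM$ in Theorem~\ref{characterization}. Writing $\wh\GG_{\delta,K}\subset\SS$ for the image of $\GG_{\delta,K}$ under the embedding of probability mass functions on $\Z^d$ into $\SS$, the Ces\`aro average in the theorem statement is exactly
\[
\frac{1}{n}\sum_{i=0}^{n-1}\one_{\{f_i\in\GG_{\delta,K}\}} = \rho_n(\wh\GG_{\delta,K}),
\]
so everything reduces to controlling $\rho_n(\wh\GG_{\delta,K})$ via its subsequential Wasserstein limits, which by Theorem~\ref{close_M} are supported on $\MM$. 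A preliminary step, which I would verify directly from the definition of $d_\alpha$, is that the strict condition defining $\GG_{\delta,K}$ is stable under small perturbations, so that $\wh\GG_{\delta,K}$ is open in $(\SS,d_\alpha)$ and the map $\rho\mapsto\rho(\wh\GG_{\delta,K})$ is lower semi-continuous on $(\PP(\SS),\WW)$.

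For part~(a), Theorem~\ref{characterization}(a) says every $\rho\in\MM$ is supported on partitioned \emph{probability} measures (total mass one). The heart of the argument is the uniform deterministic bound
\[
\inf_{\rho\in\MM}\rho(\wh\GG_{\delta,K}) \geq \theta > 0
\]
for some $K=K(\delta,\mathfrak{L}_\omega,\beta,d)$ and $\theta=\theta(\delta,\mathfrak{L}_\omega,\beta,d)$; this, combined with the semi-continuity just noted and the convergence $\rho_n\to\MM$, immediately yields the almost sure bound $\liminf_n \rho_n(\wh\GG_{\delta,K}) \geq \theta$. I would first establish positivity for each $\rho$ separately: if $\rho\in\MM$ gave zero mass to $\wh\GG_{\delta,K}$ for every $K$, then $\rho$ would be supported on ``balanced'' mass-one configurations whose profile decompositions never concentrate more than $1-\delta$ on a bounded window. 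Because such configurations are $\TT$-invariant yet should be strictly suboptimal for the free-energy functional $\RR$---in the low-temperature regime the update map compounds favored sites and ought to drive $\RR$ below that of any purely balanced stationary measure---this would contradict $\RR(\rho)=p(\beta)$. Uniformity across $\rho\in\MM$ would then follow from compactness of $\MM\subset\PP(\SS)$ combined with the lower semi-continuity of the integrand.

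For part~(b), Theorem~\ref{characterization}(b) reduces $\MM$ to the single point $\delta_\mathbf{0}$, where $\mathbf{0}\in\SS$ is the total-mass-zero element. Since the total-mass functional is (at least sequentially) upper semi-continuous on $\SS$, there is a $d_\alpha$-neighborhood $U$ of $\mathbf{0}$ on which total mass stays strictly below $1-\delta$; hence $U$ is disjoint from $\wh\GG_{\delta,K}$ for every $K$, and the almost sure Wasserstein convergence $\rho_n\to\delta_\mathbf{0}$ forces $\rho_n(\wh\GG_{\delta,K})\leq \rho_n(\SS\setminus U)\to 0$ a.s. The main obstacle throughout is part~(a): translating the minimizer characterization of $\MM$ into a rigorous exclusion of purely balanced $\TT$-invariant distributions, and then promoting this into quantitative constants $K$ and $\theta$ depending only on $(\delta,\mathfrak{L}_\omega,\beta,d)$, is where the bulk of the technical work is expected to lie.
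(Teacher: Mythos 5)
Your high-level plan---reduce everything to $\rho_n\to\MM$ via Theorem~\ref{close_M} and the dichotomy of Theorem~\ref{characterization}---is correct, but two load-bearing topological claims are false, and the core of part~(a) is left as a heuristic that the paper resolves by a genuinely different device.

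First, the set $\wh\GG_{\delta,K}$ is \emph{not} open in $(\SS,d_\alpha)$. The definition \eqref{g_set_def} requires $\|f\|=1$ and, via \eqref{g_intersection}, $N(f)=1$. Neither condition is stable under small perturbations: from a mass-one single-copy $f$, one can relocate an arbitrarily tiny piece of mass either to a second copy of $\Z^d$ (making $N(g)\geq 2$) or ``to infinity'' (making $\|g\|<1$) while keeping $d_\alpha(f,g)$ as small as desired. The paper sidesteps this by proving instead that $W_\delta$ is upper semi-continuous (Lemma~\ref{more_equivalences}(a)), so that the larger set $\VV_{\delta,K}=\{W_\delta\leq K\}$---which imposes neither $\|f\|=1$ nor $N(f)=1$---is open. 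It then uses the crucial observation that for the empirical measure $\rho_n$, which is supported on honest single-copy probability mass functions $f_i$, one has $\rho_n(\VV_{\delta,K})=\rho_n(\GG_{\delta,K})$. The lower semi-continuity you want holds for $\rho\mapsto\rho(\VV_{\delta,K})$, not for $\rho\mapsto\rho(\wh\GG_{\delta,K})$.

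Second, in part~(b) you invoke upper semi-continuity of the total-mass functional near $\vc 0$ to get a neighborhood $U$ on which $\|f\|<1-\delta$. That fails: by Lemma~\ref{norm_equivalence} the norm is only \emph{lower} semi-continuous, and there are sequences $f_n\to\vc 0$ in $d_\alpha$ with $\|f_n\|=1$ (e.g.\ the uniform distribution on $\{1,\dots,n\}$). So no such $U$ exists, and this is precisely the ``mass diffusing to zero'' phenomenon the compactification was built to capture. The paper instead uses $\tfrac{1}{n}\sum\max_x f_i(x)\to 0$ a.s.\ (from Theorem~\ref{reprove_equivalence}) and a pigeonhole argument: if $\eps|D|<1-\delta$ for all $D$ of diameter $\leq K$, then $\max f_i\geq\eps$ is a necessary condition for $f_i\in\GG_{\delta,K}$, forcing the indicator average to vanish.

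Third, for part~(a) you correctly identify the key step but your sketch (``the update map compounds favored sites and ought to drive $\RR$ below that of any purely balanced stationary measure'') does not suggest a workable proof. Indeed $\RR(\TT\rho)=\RR(\rho)$ for any $\TT$-fixed point, so no $\RR$-suboptimality argument of this form can be run directly. The paper's actual mechanism is Lemma~\ref{Q_lemma}: it introduces the functional $Q(f)=\sum_n q_n(f)/(1-q_n(f))$ and proves, via a strict conditional Jensen inequality for the $\TT$-update, that $\int Q\,\dd\rho=\infty$ for every $\rho\in\MM$ in the low-temperature phase. That forces $\rho(\UU_\delta)>0$ for every $\delta$, and then Dini's lemma (Lemma~\ref{dini}) plus compactness of $\MM$ and lower semi-continuity of $\rho\mapsto\rho(\VV_{\delta,K})$ upgrades this to the uniform, $K$-and-$\theta$-deterministic bound. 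Filling your gap would essentially require discovering this $Q$-functional argument.
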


As mentioned in Section \ref{solvability_background}, the only case where a version of geometric localization has been proved is an integrable $(1+1)$-dimensional model, for which Comets and Nguyen \cite{comets-nguyen16} proved localization and moreover computed the limit distribution of the endpoint. Similar results for one-dimensional random walk in random environment were proved by Sinai~\cite{sinai82}, Golosov~\cite{golosov84}, and Gantert, Peres and Shi~\cite{gantert-peres-shi10}.

\subsubsection*{The single-copy condition}
In addition to the above unconditional results, we also prove a few conditional statements, which hold under the condition that every $\rho \in \MM$ puts all its mass on those $f \in \SS$ that are supported on a single copy of $\Z^d$. We call this the ``single-copy condition''.
One consequence of the single-copy condition is geometric localization with full density, as defined in \eqref{geometric_localization}. Part (b) of Theorem \ref{localized_subsequence} proves this conditional claim. 

A second consequence of the single-copy condition is Proposition \ref{localization_thm}, which gives the following Ces\`aro form of \eqref{mode}. For each $i\ge 0$ and $K\ge 0$, let $\CC_{i}^K$ be the set of all $x\in \Z^d$ that are at distance $\le K$ from {\it every} mode of the endpoint mass function $\mu_i^\beta(\sigma_i = \cdot)$.\footnote{To relate this definition back to geometric localization, note that $\CC_i^K$ has diameter $\le 2K$.} 
Then, assuming \eqref{walk_assumption_1}, \eqref{mgf_assumption}, and the single-copy condition, Proposition~\ref{localization_thm} asserts that 
\eq{
\lim_{K \to \infty} \liminf_{n \to \infty} \frac{1}{n} \sum_{i = 0}^{n-1} \mu_i^\beta(\sigma_i\in \CC_{i}^K) = 1 \quad \mathrm{a.s.}
}
That is, endpoint localization is tight after suitable translations. So if the additional hypothesis mentioned in the previous paragraph is always true in the low temperature phase, this would solve in a Ces\`aro sense the longstanding ``favorite region" conjecture.

In view of the result \eqref{mode} of Comets and Nguyen~\cite{comets-nguyen16}, it seems plausible that the single-copy condition holds for the log-gamma polymer in $1+1$ dimensions. 
Unfortunately, we have been unable to determine whether or not the single-copy condition holds in general. 
Furthermore, we are not aware of any conjectures on what is true in higher dimensions.
The results of \cite{barral-rhodes-vargas12} suggest that at least for directed polymers on $b$-ary trees, the single-copy condition does not hold, and full geometric localization is not valid. 
This may be related to the fluctuations of $\log Z_n(\beta)$, which are known to be order $1$ on the tree (see \cite{derrida-spohn88}) but conjectured to be order $n^{1/3}$ when $d=1$.

%Heuristic arguments have led us to make the following conjecture:
%\begin{conjecture}
%In the directed polymer model in any dimension, the single-copy condition holds if and only if the sequence of random variables $(\log Z_n)_{n\ge 1}$ fails to be tight along a subsequence. 
%\end{conjecture}
%Indeed, our argument for the failure of the single-copy condition on binary trees hinges on the tightness of the log-partition function for polymers on trees. 

\section{Free energy and phase transition} \label{free_energy_background}
In order to give context for the main results, which concern the behavior of polymer measures above and below a phase transition, we must first check that such a phase transition exists!
To do so, we need to prove \eqref{Fn_lim} and Theorem \ref{critical_temperature} in the general setting.
First, in Section \ref{convergence_of_free_energy} we show that the quenched free energy has a deterministic limit. 
The arguments used here are standard, and the expert reader may skip them; nevertheless, the details are included to verify that no essential facts are lost when working with an arbitrary reference walk.
Next, a proof of the phase transition is given in Section \ref{existence_of_phase_transition}.
In particular, the methods initiated in \cite{comets-yoshida06} must be refined to account for general $P$ and weaker assumptions on the logarithmic moment generating function $\lambda(\beta)$.

\subsection{Convergence of free energy} \label{convergence_of_free_energy}
We begin by showing the existence of a limiting free energy.
Throughout this section one may assume a condition just slightly weaker than \eqref{mgf_assumption}, namely
\eeq{ \label{weaker_mgf_assumption}
\lambda(\pm\beta) < \infty.
}

\begin{prop} \label{free_energy_converges}
Assume \eqref{weaker_mgf_assumption}.
Then the limiting free energy exists and is deterministic:
\eeq{ \label{Fn_lim_general}
\lim_{n \to \infty} F_n(\beta) = p(\beta) \quad \mathrm{a.s.} \text{ and in } L^p,\, p \in [1,\infty).
}
\end{prop}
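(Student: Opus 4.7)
The plan is to prove Proposition \ref{free_energy_converges} in two stages: first establish convergence of the deterministic means $\E F_n(\beta) \to p(\beta) \in \R$, and then upgrade this to almost sure and $L^p$ convergence via a concentration estimate.

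For the first stage, I would show that $n \mapsto \E \log Z_n(\beta)$ is superadditive and then apply Fekete's lemma. The key decomposition is
\eq{
Z_{n+m}(\beta) = \sum_{x \in \Z^d} Z_n(\beta, x) \cdot \wh{Z}_m^{x}(\beta),
}
where $\wh{Z}_m^x(\beta)$ is the length-$m$ point-to-level partition function starting at $x$ using the time-shifted environment $\{\omega(n+i,\cdot)\}_{i \geq 1}$. By the spatial homogeneity assumed in \eqref{walk_assumption_1} together with the i.i.d.\ structure of the environment, $\wh{Z}_m^x(\beta)$ is independent of $\FF_n$ and has the same law as $Z_m(\beta)$. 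Dividing the identity by $Z_n(\beta)$ and applying Jensen's inequality to $\log$ against the endpoint mass function $\mu_n^\beta(\sigma_n = x)$ yields
\eq{
\log Z_{n+m}(\beta) \geq \log Z_n(\beta) + \sum_{x\in\Z^d} \mu_n^\beta(\sigma_n = x) \log \wh{Z}_m^x(\beta).
}
Taking expectations and using tower/independence gives $\E \log Z_{n+m}(\beta) \geq \E \log Z_n(\beta) + \E \log Z_m(\beta)$. Finiteness follows from: the upper bound $\E F_n(\beta) \leq \lambda(\beta) < \infty$ via Jensen (using \eqref{weaker_mgf_assumption}); and the lower bound obtained by restricting the partition function to a single deterministic path $(ky)_{k=1}^n$ for any $y$ with $P(\sigma_1=y)>0$, producing $\E F_n(\beta) \ge \log P(\sigma_1=y) + \beta \E\omega$ (with $\E\omega$ finite since $\lambda(\pm\beta) < \infty$ implies finite first moments). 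Fekete then delivers $p(\beta) := \lim_n \E F_n(\beta) = \sup_n \E F_n(\beta) \in \R$.

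For the second stage, I would invoke the concentration estimate which will appear as Lemma \ref{concentration}: under \eqref{weaker_mgf_assumption}, $F_n(\beta) - \E F_n(\beta) \to 0$ almost surely. The intuition behind that lemma is that $\log Z_n(\beta)$ is globally $\beta$-Lipschitz in each coordinate $\omega(i,x)$, since the direct computation $\log Z_n(\omega(i,x)+t) - \log Z_n(\omega(i,x)) = \log\bigl(1 + (\e^{\beta t}-1)\mu_n^\beta(\sigma_i=x)\bigr)$ shows the partial derivative lies in $[0,\beta]$. This Lipschitz control, combined with the tail bound supplied by $\lambda(\pm\beta) < \infty$, drives a martingale-difference bound on the increments $\E[\log Z_n\mid \FF_i] - \E[\log Z_n \mid \FF_{i-1}]$ that gives the desired concentration via Borel--Cantelli after a truncation. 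Combining with the first stage yields $F_n(\beta) \to p(\beta)$ almost surely.

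For $L^p$ convergence, I would show uniform integrability of $\{|F_n(\beta)|^p\}_n$. The two-sided deterministic bounds above already control $\E F_n(\beta)$ uniformly, and the concentration estimate produces tail bounds $\P(|F_n(\beta) - \E F_n(\beta)| > t) \le C \e^{-c t^\gamma}$ with $\gamma > 0$ that are summable in $n$-uniform fashion, yielding $\sup_n \E|F_n(\beta)|^{p+\eta} < \infty$ for some $\eta > 0$. Together with the a.s.\ convergence this gives convergence in $L^p$. The main obstacle is the concentration statement itself: because $\omega$ may be unbounded, one cannot invoke Azuma directly, and the martingale differences must be controlled by a careful interplay between the uniform $\beta$-Lipschitz property and the tail bound from the exponential moment assumption — exactly the content that has to be packaged into Lemma \ref{concentration}.
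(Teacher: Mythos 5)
Your proposal follows the same two-stage route the paper uses: Fekete's lemma via the identical spatial-factorization/Jensen argument to get $\E F_n(\beta) \to p(\beta) = \sup_n \E F_n(\beta)$, then the concentration estimate (Lemma \ref{concentration}) plus Borel--Cantelli and tail integration for the a.s.\ and $L^p$ conclusions. The only minor departures are cosmetic: the paper gets the finite lower bound $\E F_n(\beta) \geq \beta\E\omega$ by Jensen applied to $t\mapsto\e^t$ under $P$ rather than by restricting to a single path, and the paper's proof of Lemma \ref{concentration} does not go through the coordinatewise Lipschitz/truncation picture you sketch but rather through a conditional-Jensen bound on $\E(\e^{|X_j|}\mid\FF_{j-1})$ fed into the Liu--Watbled martingale inequality---but since you defer to that lemma as a black box, this does not affect the validity of your argument.
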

The proof follows the usual program of showing first that $\E F_n(\beta)$ converges, and second that $F_n(\beta)$ concentrates around its mean.

\begin{lemma} \label{means_converge}
Assume \eqref{weaker_mgf_assumption}.
Then
\eeq{
\lim_{n \to \infty} \E F_n(\beta) = \sup_{n \geq 0} \E F_n(\beta) \eqqcolon p(\beta) \leq \lambda(\beta). \label{p_def}
}
\end{lemma}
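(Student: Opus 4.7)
The plan is to apply Fekete's subadditive (here superadditive) lemma to the sequence $a_n \coloneqq \E\log Z_n(\beta)$. Set $\sigma^*$ to be any fixed nearest-neighbor-admissible path with $P(\sigma = \sigma^*) > 0$, so that $Z_n(\beta) \geq P(\sigma^*)\,\e^{\beta H_n(\sigma^*)}$. Since \eqref{weaker_mgf_assumption} implies $\E|\omega| < \infty$, taking expectation yields $a_n \geq n \log P(\sigma^*) + n\beta\,\E\omega > -\infty$, while Jensen's inequality gives $a_n \leq \log \E Z_n(\beta) = n\lambda(\beta) < \infty$. So each $a_n$ is finite, and in particular
\[
\E F_n(\beta) = \frac{a_n}{n} \leq \lambda(\beta).
\]

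Next I would establish the superadditivity $a_{m+n} \geq a_m + a_n$ via the standard splitting at time $m$. Writing $\hat\mu_m(x) \coloneqq Z_m(\beta,x)/Z_m(\beta)$ for the endpoint mass function at time $m$, a factorization of the path sum gives
\[
Z_{m+n}(\beta) = Z_m(\beta)\sum_{x\in\Z^d} \hat\mu_m(x)\,\wt Z_{m,n}(\beta,x),
\]
where $\wt Z_{m,n}(\beta,x) \coloneqq E\bigl(\e^{\beta\sum_{i=1}^n \omega(m+i,\,x+\sigma_i)}\bigr)$ is the partition function of an independent length-$n$ polymer started at $x$ in the shifted environment. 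Taking $\log$ and applying Jensen's inequality to the probability measure $\hat\mu_m$,
\[
\log Z_{m+n}(\beta) \geq \log Z_m(\beta) + \sum_{x\in\Z^d}\hat\mu_m(x)\,\log \wt Z_{m,n}(\beta,x).
\]
Now take $\E$. Since $\hat\mu_m$ is $\FF_m$-measurable and the family $\{\wt Z_{m,n}(\beta,x)\}_{x\in\Z^d}$ is independent of $\FF_m$, while the i.i.d.\ translation-invariant environment makes $\E\log\wt Z_{m,n}(\beta,x) = \E\log Z_n(\beta) = a_n$ for every $x$, the second term on the right equals $a_n\cdot \E\sum_x\hat\mu_m(x) = a_n$. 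This yields $a_{m+n} \geq a_m + a_n$.

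With superadditivity in hand, Fekete's lemma implies $a_n/n \to \sup_{n\geq 1} a_n/n \in (-\infty,\infty]$; combined with the upper bound from the first step, we conclude
\[
\lim_{n\to\infty}\E F_n(\beta) = \sup_{n\geq 0}\E F_n(\beta) \leq \lambda(\beta),
\]
which is precisely \eqref{p_def}. (The $n=0$ case is trivial since $F_0(\beta)=0$, so including it in the supremum is harmless.)

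There is no real obstacle here; the only subtlety is making sure that the exchange of expectation and the $\log$-sum uses independence correctly, i.e.\ that $\wt Z_{m,n}(\beta,x)$ really is independent of $\FF_m$ and equidistributed in $x$. Both follow from the i.i.d.\ product structure of $\vc\omega$ and the translation invariance \eqref{walk_assumption_1} of the reference walk. The finiteness bookkeeping in the very first step is the one place to be careful, since we need $\E|\omega|<\infty$ to rule out $a_n = -\infty$, but this is immediate from $\lambda(\pm\beta)<\infty$.
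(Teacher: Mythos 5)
Your proposal is correct and follows essentially the same route as the paper: finiteness of $\E\log Z_n(\beta)$, then the time-$m$ factorization of $Z_{m+n}(\beta)$ weighted by the endpoint mass function, Jensen's inequality, and independence of the shifted partition functions from $\FF_m$ to get superadditivity, and finally Fekete's lemma. The only cosmetic difference is the lower bound ruling out $a_n=-\infty$: you bound $Z_n(\beta)$ below by a single path's contribution, while the paper applies Jensen with $t\mapsto\e^t$ to the whole sum; both are fine once $\E|\omega|<\infty$ is verified from \eqref{weaker_mgf_assumption}, as the paper does.
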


\begin{lemma}[{cf.~\cite[Theorem 1.4]{liu-watbled09}}] \label{concentration}
Assume \eqref{weaker_mgf_assumption}.
Then
\eeq{ \label{Fn_tail_estimates}
\P(|F_n(\beta) - \E F_n(\beta)| > x) \leq \begin{cases}
2\e^{-ncx^2} &\mathrm{if~} x \in [0,1], \\
2\e^{-ncx} &\mathrm{if~} x > 1, \end{cases}
}
where $c > 0$ is a constant depending only on the value of $K \coloneqq 2\e^{\lambda(\beta)+\lambda(-\beta)}$.
In particular,
\eq{ %\label{as_and_Lp}
\lim_{n \to \infty} |F_n(\beta) - \E F_n(\beta)| = 0  \quad \mathrm{a.s.}\text{ and in } L^p,\, p \in [1,\infty).
}
\end{lemma}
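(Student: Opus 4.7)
The plan is to write $n(F_n(\beta) - \E F_n(\beta))$ as a sum of martingale increments and apply a Bernstein-type concentration inequality with two regimes. For the filtration $(\FF_i)_{0\le i\le n}$ from \eqref{fndef}, define
\[
D_i \coloneqq \E[\log Z_n(\beta) \mid \FF_i] - \E[\log Z_n(\beta) \mid \FF_{i-1}], \quad 1 \le i \le n,
\]
so that $n(F_n(\beta) - \E F_n(\beta)) = \sum_{i=1}^n D_i$ is a centered $(\FF_i)$-martingale. The tail bound will follow from a control on the conditional MGF $\E[\e^{\theta D_i} \mid \FF_{i-1}]$ valid on an interval whose length and sub-Gaussian constant depend only on $K$.

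To obtain that MGF bound, I would use a resampling trick. Let $\omega_i' = (\omega'(i,x))_{x \in \Z^d}$ be an independent copy of the $i$-th environment slice, and let $Z_n^{(i)}(\beta)$ denote the partition function with $\omega_i$ replaced by $\omega_i'$. Since $\omega_i'$ has the same law as $\omega_i$ and is independent of everything else, averaging over the unseen variables yields
\[
D_i = \tilde\E\bigl[\log Z_n(\beta) - \log Z_n^{(i)}(\beta) \bigm| \FF_i\bigr],
\]
where $\tilde\E$ integrates over $\omega_i', \omega_{i+1}, \dots, \omega_n$. Writing the ratio $Z_n(\beta)/Z_n^{(i)}(\beta)$ as a polymer average of $\e^{\beta(\omega(i,\sigma_i) - \omega'(i,\sigma_i))}$ and applying Jensen's inequality twice --- first to push $\e^{\theta(\cdot)}$ through $\tilde\E$, and second to push it through the polymer average --- followed by Fubini to integrate out the independent variables $\omega_i$ and $\omega_i'$, reduces the MGF estimate to moments of the form $\e^{\lambda(\pm s\beta)}$. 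By \eqref{weaker_mgf_assumption} and the convexity of $\lambda$, these are finite and bounded for $|s| \le 1$. Combined with the centering $\E[D_i \mid \FF_{i-1}] = 0$, which kills the linear term in $\theta$, a second-order Taylor expansion yields a sub-Gaussian bound $\E[\e^{\theta D_i} \mid \FF_{i-1}] \le \e^{c\theta^2}$ on a small interval around $0$ and a sub-exponential bound $\E[\e^{\theta D_i} \mid \FF_{i-1}] \le \e^{c|\theta|}$ on a larger fixed interval, with $c$ controlled by $K$.

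Multiplying these bounds across the $n$ martingale steps and optimizing via Chernoff --- choosing $\theta \propto x$ in the sub-Gaussian regime when $x \in [0,1]$ and $\theta$ equal to a fixed constant in the sub-exponential regime when $x > 1$ --- produces the split tail estimate \eqref{Fn_tail_estimates}. The almost sure convergence of $F_n(\beta) - \E F_n(\beta)$ to $0$ then follows from Borel--Cantelli using the summability $\sum_n \e^{-ncx^2} < \infty$ for each $x > 0$, while $L^p$ convergence follows by integrating the tail bound against $p x^{p-1}\,\dd x$. The main obstacle I anticipate is the sharp quantitative MGF estimate: the two Jensen reductions must be orchestrated carefully so that the resulting constant is expressible in terms of $K = 2\e^{\lambda(\beta)+\lambda(-\beta)}$ alone, rather than in terms of finer features of the disorder distribution. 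This bookkeeping is the technical heart of the argument and is carried out in \cite[Theorem 1.4]{liu-watbled09}, which can be quoted directly.
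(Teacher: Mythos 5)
Your proposal is correct and follows essentially the same route as the paper: decompose $n(F_n(\beta)-\E F_n(\beta))$ into Doob martingale increments over time slices, use a local resampling/removal device together with two applications of Jensen's inequality to bound $\E[\e^{|X_j|}\mid\FF_{j-1}]$ by $K = 2\e^{\lambda(\beta)+\lambda(-\beta)}$, and then invoke the Liu--Watbled martingale concentration theorem to obtain the two-regime tail bound. The only cosmetic difference is that the paper deletes the $j$-th time slice (defining $\hat W_j$) rather than resampling it, which cleanly exhibits $W_n/\hat W_j$ as a weighted average of $\e^{\beta\omega(j,x)-\lambda(\beta)}$; after simplification the two perturbations are equivalent.
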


Proposition \ref{free_energy_converges} is immediate given Lemmas \ref{means_converge} and \ref{concentration}.
Using the estimates from \eqref{Fn_tail_estimates}, one simply integrates the tail to obtain the $L^p$ part of \eqref{Fn_lim_general}, and appeals to Borel--Cantelli for the almost sure part.

First we prove Lemma \ref{means_converge}.

\begin{proof}[Proof of Lemma \ref{means_converge}]
The equality in \eqref{p_def} follows from Fekete's lemma, once we show that the sequence $(\E \log Z_n(\beta))_{n\geq0}$ is superadditive.
Indeed, we will simply generalize the argument seen, for instance, in \cite[p.~440]{carmona-hu02}.
But first we check that $\E\log Z_n(\beta)$ is finite for each $n$.
To obtain an upper bound, we use Jensen's inequality applied to $t \mapsto \log t$.
This just yields the annealed bound,
\eq{
\E\log Z_n(\beta) \leq \log \E Z_n(\beta) = \log \E\bigg[\sum_{\sigma_1,\dots,\sigma_n} \exp\bigg(\beta\sum_{i=1}^n\omega(i,\sigma_i)\bigg)\prod_{i=1}^nP(\sigma_{i-1},\sigma_i)\bigg].
}
The nonnegativity of all summands allows us, by Tonelli's theorem, to pass the expectation through the sum.
That is,
\eq{
\E Z_n(\beta) = \sum_{\sigma_1,\dots,\sigma_n} \E\exp\bigg(\beta\sum_{i=1}^n\omega(i,\sigma_i)\bigg)\prod_{i=1}^nP(\sigma_{i-1},\sigma_i)
= \sum_{\sigma_1,\dots,\sigma_n} \e^{n\lambda(\beta)}\prod_{i=1}^nP(\sigma_{i-1},\sigma_i) = \e^{n\lambda(\beta)},
}
and so
\eq{
\E \log Z_n(\beta) \leq n\lambda(\beta) < \infty.
}
In particular, the inequality in \eqref{p_def} follows from the above display.

On the other hand, a lower bound is found by again using Jensen's inequality, but applied to $t \mapsto \e^t$ and with respect to $P$:
\eq{
\E \log Z_n(\beta) &= \E\bigg[\log \sum_{\sigma_1,\dots,\sigma_n} \exp\bigg(\beta\sum_{i=1}^n\omega(i,\sigma_i)\bigg)\prod_{i=1}^nP(\sigma_{i-1},\sigma_i)\bigg] \\
&\geq \E\bigg[\sum_{\sigma_1,\dots,\sigma_n} \beta \sum_{i = 1}^n \omega(i,\sigma_i) \prod_{i = 1}^n P(\sigma_{i-1},\sigma_i)\bigg]
= \beta\E(\omega),
}
where the final equality is a consequence of Fubini's theorem.
Indeed, we may apply Fubini's theorem because
\eq{
\E|\omega| 
= \int_0^\infty \P(|\omega|\geq t)\ \dd t
\leq \int_0^\infty \e^{-\beta t} \E(\e^{\beta|\omega|})\ \dd t
< \infty.
}
Therefore, we obtain the desired lower bound:
\eq{
\E\log Z_n(\beta) \geq \beta\E(\omega) > -\infty.
}

Now we can prove superadditivity.
For a given integer $k \geq 0$ and $y \in \Z^d$, let $\theta_{k,y}$ be the associated time-space translation of the environment:
\eq{
(\theta_{k,y}\, \omega)(i,x) = \omega(i+k,x+y).
}
Because the collection $(\omega(i,x): i \geq 1, x\in\Z^d)$ is i.i.d., the random variables $Z_n(\beta)$ and $Z_n(\beta) \circ \theta_{k,y}$ have the same law.
Furthermore, for any $0 \leq k \leq n$, we have the identity
\eeq{
Z_n(\beta) = \sum_{y \in \Z^d} Z_k(\beta,y)\cdot(Z_{n-k}(\beta) \circ \theta_{k,y}), \label{Z_identity}
}
where $Z_k(\beta,y) \coloneqq E(\e^{\beta H_k(\sigma)}; \sigma_k = y)$ is the contribution to $Z_k(\beta)$ coming from the endpoint $y$.
%More precisely,
%\eq{
%Z_k(y) = E(\e^{\beta H_k(\sigma)}; \sigma_k = y)
%= \sum_{\sigma_1,\dots,\, \sigma_{k-1} \in \Z^d} \exp\bigg(\beta \sum_{i = 1}^{k-1} \omega(i,\sigma_i)\bigg)\e^{\beta\omega(k,y)}\bigg[\prod_{i = 1}^{k-1} P(\sigma_{i-1},\sigma_i)\bigg] P(\sigma_{k-1},y).
%}
By Jensen's inequality,
\eq{
\log Z_n(\beta) = \log \sum_{y \in \Z^d} Z_k(\beta,y)\cdot(Z_{n-k}(\beta) \circ \theta_{k,y})
&= \log Z_k(\beta) + \log \sum_{y \in \Z^d} \frac{Z_k(\beta,y)}{Z_k(\beta)}\cdot (Z_{n-k}(\beta) \circ \theta_{k,y}) \\
&\geq \log Z_k(\beta) + \sum_{y \in \Z^d} \frac{Z_k(\beta,y)}{Z_k(\beta)}\log(Z_{n-k}(\beta) \circ \theta_{k,y}).
}
Notice that $Z_{n-k}(\beta) \circ \theta_{k,y}$ depends only on the environment after time $k$, meaning it is independent of $\FF_k$. 
We thus have
\eq{
\E\givenk[\bigg]{\sum_{y \in \Z^d} \frac{Z_k(\beta,y)}{Z_k(\beta)}\log(Z_{n-k}(\beta) \circ \theta_{k,y})}{\FF_k} 
&= \sum_{y \in \Z^d} \frac{Z_k(\beta,y)}{Z_k(\beta)}\E(\log Z_{n-k}(\beta) \circ \theta_{k,y})
= \E \log Z_{n-k}(\beta).
}
Using this observation in the previous display, we arrive at the desired superadditive inequality:
\eq{
\E\log Z_n(\beta) \geq \E \log Z_k(\beta) + \E \log Z_{n-k}(\beta).
}
\end{proof}

Next we show concentration of $F_n(\beta)$.
The proof of Lemma \ref{concentration} comes directly from \cite{liu-watbled09}, in which Liu and Watbled prove the martingale inequality \eqref{mg_ineq}.
One interesting feature of Lemma \ref{concentration} is that the constant $c$ can be chosen independently of the reference walk.
To verify that this is the case, we will recall the relevant arguments from \cite{liu-watbled09} that utilize the following proposition.

\begin{prop}[{\cite[Theorem 2.1]{liu-watbled09}}] \label{mg_inequality}
Let $(X_j)_{1 \leq j\leq n}$ be a sequence of supermartingale differences, adapted to the filtration $(\FF_j)_{1\leq j \leq n}$.
Let $S_n \coloneqq \sum_{j = 1}^n X_i$.
If for some constant $K > 0$ and all $j = 1,2,\dots,n$,
\eq{
\E\givenp{\e^{|X_j|}}{\FF_{j-1}} \leq K \quad \mathrm{a.s.},
}
then
\eeq{ \label{mg_ineq}
\P\Big(\frac{S_n}{n} > x\Big) \leq \begin{cases}
\exp\big(-\frac{nx^2}{K(1+\sqrt{2})^2}\big) &\text{if }0\leq x\leq K, \\
\exp\big(-\frac{nx}{(1+\sqrt{2})^2}\big) &\text{if }x > K.
\end{cases}
}
\end{prop}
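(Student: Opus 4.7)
The plan is to apply the exponential Markov inequality to $S_n$ and bound the moment generating function by iterating the tower property, using the supermartingale difference structure together with the hypothesis on $\E[\e^{|X_j|} \mid \FF_{j-1}]$. For any $t > 0$,
\[
\P(S_n/n > x) \leq \e^{-tnx}\, \E[\e^{tS_n}],
\]
and a single conditioning on $\FF_{n-1}$ gives $\E[\e^{tS_n}] = \E\bigl[\e^{tS_{n-1}} \E[\e^{tX_n} \mid \FF_{n-1}]\bigr]$. It therefore suffices to establish a deterministic upper bound, uniform in $j$, on the conditional increment $\E[\e^{tX_j} \mid \FF_{j-1}]$ for an appropriate range of $t$.

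The heart of the argument is to estimate this conditional increment for $t \in (0,1]$ by combining \emph{both} pieces of information about $X_j$: the supermartingale relation $\E[X_j \mid \FF_{j-1}] \leq 0$ kills the linear term in the Taylor expansion of $\e^{tX_j}$, while the bound $\E[\e^{|X_j|} \mid \FF_{j-1}] \leq K$ absorbs the higher-order terms. The clean analytic input is the elementary inequality
\[
\e^{ty} \leq 1 + ty + t^2\bigl(\e^{|y|} - 1 - |y|\bigr), \qquad t \in (0,1],\ y \in \R,
\]
which follows from $t^k \leq t^2$ for $k \geq 2$ or, equivalently, from the monotonicity of $u \mapsto (\e^u - 1 - u)/u^2$. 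Taking conditional expectations and using $1 + u \leq \e^u$ yields $\E[\e^{tX_j} \mid \FF_{j-1}] \leq \e^{t^2 K}$, and iterating gives $\E[\e^{tS_n}] \leq \e^{n t^2 K}$.

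Combining the two bounds produces $\P(S_n/n > x) \leq \exp\bigl(-n(tx - t^2 K)\bigr)$ for every $t \in (0,1]$, which is then optimized in $t$. The unconstrained minimum is attained at $t^\star = x/(2K)$; for small $x$ this lies in $(0,1]$ and yields a sub-Gaussian bound of the form $\exp(-c n x^2 / K)$, while for large $x$ the optimum is attained on the boundary $t = 1$, yielding a linear-type bound $\exp(-c n x)$. I expect the main obstacle to be calibrating the elementary inequality and the optimization so as to produce the precise threshold $x = K$ and the specific constant $1/(1 + \sqrt{2})^2$ stated in the proposition; this likely requires a sharper one-parameter family of inequalities (for instance, bounding $\e^u - 1 - u$ by a more carefully chosen combination of $u^2$ and $\e^{|u|}$ whose constant is tuned against the boundary value $t=1$) rather than the crude monotonicity argument sketched above.
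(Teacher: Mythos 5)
The paper does not supply its own proof of this proposition: it is stated with a citation to Liu and Watbled \cite{liu-watbled09} and then invoked as a black box in the proof of Lemma \ref{concentration}, so there is no in-paper argument to compare against. On its own terms your proposal is correct, and in fact it proves a strictly stronger bound than the one asserted; the calibration worry in your closing paragraph is unnecessary.

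Your elementary inequality $\e^{ty}-1-ty \leq t^2(\e^{|y|}-1-|y|)$ for $t\in(0,1]$, $y\in\R$ is valid --- from the monotonicity of $u\mapsto(\e^u-1-u)/u^2$ on $\R$ together with $ty\le|y|$ --- and after taking conditional expectations (the supermartingale property kills the linear term, while $\e^{|y|}-1-|y|\le\e^{|y|}$ lets you invoke the hypothesis) you get $\E\givenp{\e^{tX_j}}{\FF_{j-1}}\le 1+t^2K\le\e^{t^2K}$. Iterating and applying Markov then gives $\P(S_n/n>x)\le\exp\bigl(-n\sup_{0<t\le1}(tx-t^2K)\bigr)$, which evaluates to $\exp(-nx^2/(4K))$ for $0\le x\le 2K$ and $\exp(-n(x-K))$ for $x>2K$. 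This dominates the stated bound pointwise: on $[0,K]$ because $4<(1+\sqrt{2})^2=3+2\sqrt{2}$; on $(K,2K]$ because $x^2/(4K)=x\cdot x/(4K)>x/4>x/(1+\sqrt{2})^2$; and for $x>2K$ because $x-K>x/2\ge x/(1+\sqrt{2})^2$. So there is no need to reverse-engineer the threshold $x=K$ or the constant $(1+\sqrt{2})^2$; those are artifacts of Liu and Watbled's particular one-parameter inequality, and any argument producing a pointwise smaller tail, as yours does, proves the proposition a fortiori. Your proof is complete once the optimization over $t\in(0,1]$ is carried out explicitly.
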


\begin{proof}[Proof of Lemma \ref{concentration}]
Fix $n$ and consider the normalized partition function
\eq{
W_n(\beta) \coloneqq \frac{Z_n(\beta)}{\E Z_n(\beta)} = \frac{Z_n(\beta)}{\e^{n\lambda(\beta)}} 
= E \exp\bigg(\sum_{i = 1}^n [\beta\eta(i,\omega_i) - \lambda(\beta)]\bigg).
}
%which defines a martingale adapted to $(\FF_n)_{n\geq0}$.
%Indeed, upon writing
%\eq{
%W_k(x) \coloneqq \e^{-n\lambda(\beta)} Z_n(\beta,x), \quad x \in \Z^d,
%}
%we have the identity analogous to \eqref{Z_identity}:
%\eq{
%W_n(\beta) &= \sum_{x \in \Z^d} W_{n-1}(x) \cdot (W_1 \circ \theta_{n-1,x}).
%}
%To verify the martingale property, we then calculate
%\eq{
% \quad \E\givenp{W_n(\beta)}{\FF_{n-1}} &= \sum_{x \in \Z^d} W_{n-1}(x) \cdot \E\givenk{(W_1 \circ \theta_{n-1,x})}{\FF_{n-1}}
%= \sum_{x \in \Z^d} W_{n-1}(x) \cdot \E(W_1)
%= W_{n-1}.
%}
%It follows that $\log W_n(\beta)$ defines a supermartingale, and moreover
%\eq{
%\log Z_n(\beta) - \E \log Z_n(\beta) = \log W_n(\beta) + n\lambda(\beta) - \E \log W_n(\beta) - n\lambda(\beta) = \log W_n(\beta) - \E \log W_n(\beta) 
%}
For each $1 \leq j \leq n$, define the $\sigma$-algebra
\eq{
\wh F_j = \sigma(\eta(i,x) ; 1 \leq i \leq n, i \neq j, x \in \Z^d).
}
By writing
\eq{
\log W_n(\beta) - \E\log W_n(\beta) = \sum_{j = 1}^n \E\givenp{\log W_n(\beta)}{\FF_j} - \E\givenp{\log W_n(\beta)}{\FF_{j-1}},
}
we identity the martingale differences
\eq{
X_{j} \coloneqq \E\givenp{\log W_n(\beta)}{\FF_j} - \E\givenp{\log W_n(\beta)}{\FF_{j-1}}, \quad 1 \leq j \leq n.
}
If we define the random variable
\eq{
\wh W_j(\beta) \coloneqq E(e_{j}), \quad \text{where} \quad e_{j} \coloneqq \exp\bigg(\sum_{\substack{1\leq i \leq n \\ i \neq j}} [\beta\eta(i,\omega_i) - \lambda(\beta)]\bigg),
}
then the fact that $\wh W_j(\beta)$ is $\wh F_j$-measurable implies $\E\givenp{\wh W_j(\beta)}{\FF_{j}} = \E\givenp{\wh W_j(\beta)}{\FF_{j-1}}$.
Therefore, we can instead write
\eq{
X_j = \E\givenp[\Big]{\log \frac{W_n(\beta)}{\wh W_j(\beta)}}{\FF_j} - \E\givenp[\Big]{\log \frac{W_n(\beta)}{\wh W_j(\beta)}}{\FF_{j-1}}.
}
Now, for any $t \in \R$, conditional Jensen's inequality applied to $x \mapsto \e^{x}$ gives
\eeq{ \label{t_and_minus_t}
\E\givenp{\e^{tX_j}}{\FF_{j-1}} &= \E\givenp{\e^{t\E\givenp{\log W_n(\beta)/\wh W_j(\beta)}{\FF_j}}}{\FF_{j-1}}\e^{-t\E\givenp{\log W_n(\beta)/\wh W_j(\beta)}{\FF_{j-1}}} \\
&\leq \E\givenp[\big]{(W_n(\beta)/\wh W_j(\beta))^t}{\FF_{j-1}}\E\givenp[\big]{(W_n(\beta)/\wh W_j(\beta))^{-t}}{\FF_{j-1}}.
}
Now observe that
\eq{
\frac{W_n(\beta)}{\wh W_j(\beta)} = \sum_{x \in \Z^d} \frac{E(e_{j}\, ;\, \omega_j = x)}{\wh W_j(\beta)}\e^{\beta\eta(j,x) - \lambda(\beta)},
}
from which Jensen's inequality shows
\eq{
\Big(\frac{W_n(\beta)}{\wh W_j(\beta)}\Big)^t \leq \sum_{x \in \Z^d} \frac{E(e_{j}\, ;\, \omega_j = x)}{\wh W_j(\beta)}\e^{t\beta\eta(j,x) - t\lambda(\beta)}
}
whenever $t \leq 0$ or $t \geq 1$.
Now the independence of $\eta(j,x)$ from $\wh \FF_j \supset \FF_{j-1}$ yields
\eeq{ \label{at_least_1}
\E\givenk[\bigg]{\Big(\frac{W_n(\beta)}{\wh W_j(\beta)}\Big)^t}{\FF_{j-1}}
&= \E\givenk[\bigg]{\E\givenk[\Big]{\Big(\frac{W_n(\beta)}{\wh W_j(\beta)}\Big)^t}{\wh \FF_j}}{\FF_{j-1}} \\
&\leq  \E\givenk[\bigg]{\sum_{x \in \Z^d}\frac{E(e_{j}\, ;\, \omega_j = x)}{\wh W_j(\beta)}}{\FF_{j-1}}\e^{\lambda(t\beta)-t\lambda(\beta)} 
= \e^{\lambda(t\beta)-t\lambda(\beta)}.
}
%On the other hand, for $t \in (0,1)$ we have
%\eeq{ \label{less_than_1}
%\E\givenk[\bigg]{\Big(\frac{W_n(\beta)}{\wh W_j(\beta)}\Big)^t}{\FF_{j-1}}
%\leq \E\givenp[\Big]{\frac{W_n(\beta)}{\wh W_j(\beta)}}{\FF_{j-1}}^t = 1.
%}
Therefore, if $|t| \geq 1$, then we can deduce from \eqref{t_and_minus_t} and two applications of \eqref{at_least_1} that
\eq{
\E\givenp{\e^{tX_j}}{\FF_{j-1}} \leq \e^{\lambda(t\beta)-t\lambda(\beta)}\e^{\lambda(-t\beta)+t\lambda(\beta)} = \e^{\lambda(t\beta)+\lambda(-t\beta)}.
}
In particular,
\eq{
\E\givenp{\e^{|X_j|}}{\FF_{j-1}} \leq \E\givenp{\e^{X_j}}{\FF_{j-1}} + \E\givenp{\e^{-X_j}}{\FF_{j-1}}
\leq 2\e^{\lambda(\beta)+\lambda(-\beta)} \eqqcolon K(\beta).
}
%If instead $|t| < 1$, then after one application of \eqref{at_least_1} and one application of \eqref{less_than_1}, \eqref{t_and_minus_t} shows 
%\eq{
%\E\givenp{\e^{tX_j}}{\FF_{j-1}} \leq \e^{\lambda(-|t|\beta)+t|\lambda(\beta)}.
%}
Applying Proposition \ref{mg_inequality} to
\eq{
\frac{S_n}{n} = \frac{\log W_n(\beta)}{n} - \frac{\E\log W_n(\beta)}{n}
= \frac{\log Z_n(\beta)}{n} - \lambda(\beta) - \frac{\E \log Z_n(\beta)}{n} + \lambda(\beta)
= F_n(\beta) - \E F_n(\beta),
}
we see that that \eqref{Fn_tail_estimates} is a weaker form of \eqref{mg_ineq}, with
\eq{
c \coloneqq \min\Big(\frac{1}{K(\beta)(1+\sqrt{2})^2},\frac{1}{(1+\sqrt{2})^2}\Big).
}
\end{proof}

\subsection{Existence of critical temperature} \label{existence_of_phase_transition}
Now we prove a phase transition between the high temperature and low temperature regimes.
%The discussion here is similar to that in (\cite{comets-yoshida06}, CITE) or (\cite{comets17}, Sections 2.4 and 2.5), although suitably adapted for weaker assumptions on $\lambda$ and a general $P$.

\begin{prop} \label{phase_transition}
Assume
\eq{
\beta_{\max} \coloneqq \sup\{\beta \geq 0 : \lambda(\pm\beta) < \infty\} \in (0,\infty].
}
Then $\lambda - p \geq 0$ is non-decreasing on the interval $[0,\beta_{\max})$.
In particular, there exists a critical value $\beta_\cc = \beta_\cc(d,\mathfrak{L}_\omega,P) \in [0,\beta_{\max}]$ such that for every $\beta \in (0,\beta_{\max})$,
\begin{align}
0 \leq \beta \leq \beta_\cc \quad &\implies \quad p(\beta) = \lambda(\beta), \label{below_phase_transition}\\
\beta > \beta_\cc \quad &\implies \quad p(\beta) < \lambda(\beta). \label{above_phase_transition}
\end{align}
\end{prop}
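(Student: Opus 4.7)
The plan is to reduce everything to the monotonicity of the gap $\lambda - p$ on $[0, \beta_{\max})$, which in turn will be reduced to the monotonicity of the finite-$n$ gaps $\lambda - \E F_n$. Once this monotonicity is in hand, the critical inverse temperature is simply the largest $\beta$ at which the (non-negative) gap still vanishes, and the two implications \eqref{below_phase_transition} and \eqref{above_phase_transition} follow from elementary considerations.

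For the finite-$n$ step, I would fix $n$ and consider
\[
\psi_n(\beta) := \lambda(\beta) - \E F_n(\beta) = -\tfrac{1}{n}\E\log W_n(\beta) \ge 0,
\]
where the non-negativity was already noted in the proof of Lemma~\ref{means_converge}. Differentiating in $\beta$ gives
\[
\psi_n'(\beta) = \lambda'(\beta) - \tfrac{1}{n}\E\langle H_n(\sigma)\rangle_\beta,
\]
and the task is to show this is non-negative. I would follow the FKG-based approach of Comets and Yoshida [Theorem~3.2]. The key structural inputs are that the environment $\vc\omega$ is i.i.d.\ (hence a product measure to which FKG applies), and that $W_n(\beta)$ and the relevant functionals of the quenched measure can be written as coordinate-wise monotone functions of $\vc\omega$. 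Crucially, that argument only manipulates the environment and is blind to the choice of reference walk, so it adapts verbatim from the SRW setting to a general $P$ satisfying \eqref{walk_assumption_1}, with the hypothesis \eqref{mgf_assumption} (or even just \eqref{weaker_mgf_assumption}) ensuring all derivatives in $\beta$ are well-defined on the relevant open interval.

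With $\psi_n$ non-decreasing for every $n$, Proposition~\ref{free_energy_converges} gives $\psi_n(\beta) \to \lambda(\beta) - p(\beta)$ pointwise on $[0, \beta_{\max})$, and the pointwise limit of non-decreasing functions is non-decreasing. Since $p(0) = \lambda(0) = 0$ (trivially from $W_0 \equiv 1$) and both $\lambda$ and $p$ are continuous on $[0, \beta_{\max})$ (finite convex functions there, the latter as a limit of convex $\E F_n$), the non-negative non-decreasing gap $\lambda - p$ vanishes on a closed sub-interval of $[0, \beta_{\max})$ containing $0$. Setting
\[
\beta_\cc := \sup\{\beta \in [0, \beta_{\max}) : p(\beta) = \lambda(\beta)\}
\]
then yields \eqref{below_phase_transition} (using closedness of the equality set) and \eqref{above_phase_transition} (using monotonicity of the gap), with $\beta_\cc \in [0, \beta_{\max}]$ depending only on $d$, $\mathfrak{L}_\omega$, and $P$.

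The main obstacle is the inequality $\tfrac{1}{n}\E\langle H_n\rangle_\beta \le \lambda'(\beta)$. Because $\langle H_n\rangle_\beta$ is a ratio of two $\vc\omega$-dependent quantities, its monotonicity in the environment is not transparent, so one cannot apply FKG to it directly. The cleanest workaround is to rewrite $\langle H_n\rangle_\beta$ using an independent replica of the environment, turning the inequality into a comparison between expectations of genuinely coordinate-monotone functionals on the enlarged product space, at which point FKG closes the argument. Everything else is soft: the pointwise convergence of $\E F_n$ to $p$ is already available from Proposition~\ref{free_energy_converges}, and the passage from a monotone non-negative gap to a well-defined phase transition point is standard convex analysis.
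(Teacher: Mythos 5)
Your plan matches the paper's at the structural level: both reduce the statement to the finite-$n$ derivative comparison $\tfrac{\partial}{\partial\beta}\E F_n(\beta) = \tfrac{1}{n}\E\langle H_n(\sigma)\rangle_\beta \le \lambda'(\beta)$ proved by Harris--FKG, and then finish by soft convex analysis. Your reorganization at the end --- show each $\psi_n := \lambda - \E F_n$ is non-decreasing, then take pointwise limits --- is marginally cleaner than what the paper does (which invokes the standard fact that derivatives of convergent convex functions converge a.e.\ and integrates); both are soft. Two remarks on where your sketch is vague. First, on the FKG step: the paper does not use an independent replica. It exchanges $\E$ and $E$ (justified by Fubini), and then for each \emph{fixed} path $\sigma$ rewrites the $\E$-expectation under the Gibbs-tilted product measure $\dd\wt\P = \e^{\beta H_n(\sigma) - n\lambda(\beta)}\,\dd\P$; under $\wt\P$, $H_n(\sigma)$ is coordinate-wise non-decreasing in $\vc\omega$ while $Z_n(\beta)^{-1}$ is coordinate-wise non-increasing, so Harris--FKG separates the product and yields the bound after computing $\wt\E(H_n(\sigma)) = n\lambda'(\beta)$. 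Second, and more substantially, the interchanges you dismiss as being ``ensured by the hypothesis'' --- differentiating $\E\log Z_n$ under the expectation, differentiating $E(\e^{\beta H_n(\sigma)})$ under the sum, and swapping $\E$ with $E$ --- in fact consume most of the paper's proof: they are steps (a), (b), (c) in \eqref{would_like}, and justifying them takes a H\"older argument, a decomposition lemma for $|x|\e^{\beta x}$, and dominated convergence with dominating functions built from FKG estimates. These interchanges do hold under \eqref{weaker_mgf_assumption}, so your plan is correct, but that is where the technical weight of the proposition actually sits.
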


\begin{remark}
Before the proof, some comments are in order: \begin{itemize}
\item This phase transition was proved by Comets and Yoshida \cite[Theorem 3.2(b)]{comets-yoshida06} for the SRW case under the hypothesis $\beta_{\max} = \infty$, and we adopt their general proof strategy.
\item Two generalizations were claimed to follow easily from the same methods.
Vargas \cite[Lemma 3.4]{vargas07} suggests the hypothesis $\beta_{\max} = \infty$ can be dropped, while Comets \cite[Theorem 6.1]{comets07} allows $P$ to be $\alpha$-stable, $\alpha \in [0,2)$.
We do both by assuming only $\beta_{\max} > 0$ and allowing $P$ to be a general random walk.
It seems the resulting difficulties are only technical, but how to resolve them is not obvious, and so we provide a full proof.
\item If $\lim_{\beta\to\beta_{\max}}\beta\lambda'(\beta) - \lambda(\beta) = \infty$, then Theorem \ref{localization_sufficient_background} guarantees $\beta_\cc < \beta_{\max}$ whenever the entropy of $P(\sigma_1 = \cdot)$ is finite.
\end{itemize}
\end{remark}
%Rather, we are proving that the free energy is governed by the phase transition along the entire temperature region for which $\lambda$ is finite.
%In particular, we drop the assumption $\beta_{\max} = \infty$ made in \cite{comets-yoshida06}.
%On the other hand, Vargas \cite{vargas07} proved in the SRW case that $p(\beta) < \infty$ even when $\lambda(\beta) = \infty$.
%Crucial to that proof, however, is that $P$ permits only finitely many possible random walk increments.
%It would be interesting to understand exactly what conditions are needed for $p(\beta)$ to be finite.
%\end{remark}

\begin{lemma} \label{decomposition}
For any fixed $\beta > 0$, there is a decomposition
\eq{
|x|\e^{\beta x} = g(x) - h(x),
}
where $g : \R \to \R$ is non-decreasing, and $0 \leq h(x) \leq (\beta \e)^{-1}$ for all $x \in \R$.
\end{lemma}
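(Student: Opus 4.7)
The plan is to exploit the fact that $f(x) \coloneqq |x|\e^{\beta x}$ fails to be monotone only on a single bounded interval where its oscillation is controlled, and then to take $g$ to be the running supremum of $f$.

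First I would analyze the shape of $f$. On $[0,\infty)$, $f(x) = x\e^{\beta x}$ is smooth and increasing, tending to $\infty$. On $(-\infty,0]$, $f(x) = -x\e^{\beta x}$ has derivative $-(1+\beta x)\e^{\beta x}$, which is positive on $(-\infty,-1/\beta)$ and negative on $(-1/\beta,0)$. Hence $f$ increases up to a local maximum at $x = -1/\beta$ of value $f(-1/\beta) = (\beta\e)^{-1}$, decreases down to $f(0) = 0$, and then increases again on $[0,\infty)$. Let $x^{*} \in (0,\infty)$ be the unique point with $x^{*}\e^{\beta x^{*}} = (\beta\e)^{-1}$ (which exists and is unique because $x \mapsto x\e^{\beta x}$ is continuous and strictly increasing from $0$ to $\infty$ on $[0,\infty)$).

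Next I would simply define
\[
g(x) \coloneqq \sup_{y \leq x} f(y), \qquad h(x) \coloneqq g(x) - f(x).
\]
By construction $g$ is non-decreasing, and $h \geq 0$. Combining this with the shape analysis, $g$ can be computed explicitly:
\[
g(x) = \begin{cases} -x\e^{\beta x} & \text{if } x \leq -1/\beta, \\ (\beta\e)^{-1} & \text{if } -1/\beta \leq x \leq x^{*}, \\ x\e^{\beta x} & \text{if } x \geq x^{*}. \end{cases}
\]
Consequently $h$ vanishes outside $[-1/\beta, x^{*}]$, and on that interval equals $(\beta\e)^{-1} - f(x)$. Since $f \geq 0$ everywhere and $f \leq (\beta\e)^{-1}$ on $[-1/\beta, x^{*}]$ (by our shape analysis, as this is exactly the region where $f$ drops below its local max), we conclude $0 \leq h(x) \leq (\beta\e)^{-1}$ as required.

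There is no real obstacle here; the only subtlety is identifying the ``correct'' constant $(\beta\e)^{-1}$, which is forced by the value of $f$ at its unique local maximum. Once one writes down the running supremum as a candidate for $g$, the desired upper bound on $h$ is visible from the graph of $f$ and requires only the elementary computation above.
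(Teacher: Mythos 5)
Your proof is correct and produces exactly the same decomposition as the paper: your running-supremum $g$, after the shape analysis, coincides with the paper's explicit piecewise definition (constant $(\beta\e)^{-1}$ on $[-1/\beta, x_0]$ and equal to $|x|\e^{\beta x}$ elsewhere), so the two arguments are essentially identical. Framing $g$ as $\sup_{y\le x} f(y)$ is a slightly cleaner way to motivate the construction, but the resulting $g$, $h$, and the verification of the bounds are the same.
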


\begin{proof}
Observe that $\vphi(x) \coloneqq |x|\e^{\beta x}$ is an increasing function on $(-\infty,-\beta^{-1}]$ and on $[0,\infty)$, with
$\vphi(-\beta^{-1}) = (\beta \e)^{-1}$ and $\vphi(x_0) = (\beta \e)^{-1}$ for some (unique) $x_0 > 0$.
Therefore, we define
\eq{
g(x) \coloneqq \begin{cases} (\beta \e)^{-1} &\text{if }x \in [-\beta^{-1},x_0], \\
|x|\e^{\beta x} &\text{otherwise}, \end{cases} \qquad
h(x) \coloneqq g(x) - |x|\e^{\beta x}.
}
Then $g$ is non-decreasing with $g \geq \vphi$, and so
$0 \leq h(x) \leq (\beta \e)^{-1}$.
\end{proof}

\begin{proof}[Proof of Proposition \ref{phase_transition}]
%Note that $\lambda$ is a logarithmic moment generating function and thus analytic and convex on $(0,\beta_{\max})$.
%Similarly,
Note that
$\log Z_n(\beta) = E(\e^{\beta H_n(\sigma)})$
is the (random) logarithmic moment generating function for $H_n(\sigma)$ with respect to $P$, and is finite for all $\beta \in [0,\beta_{\max})$ almost surely by the proof of Lemma \ref{means_converge}.
Therefore, $\E\log Z_n(\beta)$ is convex on $(0,\beta_{\max})$.
Now seen to be the limit of convex functions, $p$ must be convex on $(0,\beta_{\max})$.
It follows from general convex function theory that $p$ is differentiable almost everywhere on $(0,\beta_{\max})$, and
\eq{
p'(\beta) = \lim_{n \to \infty} \frac{\partial}{\partial\beta} \E F_n(\beta) \quad \text{whenever $p'(\beta)$ exists}.
}
Furthermore, convexity implies $p$ is absolutely continuous on any closed subinterval of $[0,\beta_{\max})$, and thus
\eq{
p(\beta_0) = \int_0^{\beta_0} p'(\beta)\ \dd\beta \quad \text{for all } \beta_0 \in (0,\beta_{\max}).
}
Suppose we can show
\eeq{ \label{derivative_ineq_to_show}
\frac{\partial}{\partial \beta}\E F_n(\beta) \leq \lambda'(\beta) \quad \text{for all $\beta \in (0,\beta_{\max})$.}
}
Then we could conclude that
\eq{
\lambda(\beta_0) - p(\beta_0) = \int_0^{\beta_0} [\lambda'(\beta)-p'(\beta)]\ \dd\beta
= \int_0^{\beta_0} \Big[\lambda'(\beta) - \lim_{n\to\infty} \frac{\partial}{\partial\beta}\E F_n(\beta)\Big]\ \dd\beta
}
is an increasing function of $\beta_0 \in [0,\beta_{\max})$, with
$p(0) = \lambda(0) = 0.$
In particular, the existence of $\beta_\cc$ will be proved.
Therefore, we need only to show \eqref{derivative_ineq_to_show}.
To do so, we would like to write
\eeq{ \label{would_like}
\frac{\partial}{\partial\beta} \E\log Z_n(\beta)
\stackrel{\text{(a)}}{=} \E\Big[\frac{\partial}{\partial\beta}\log Z_n(\beta)\Big]
= \E\bigg[\frac{\frac{\partial}{\partial\beta}Z_n(\beta)}{Z_n(\beta)}\bigg] 
&\stackrel{\phantom{\text{(b)}}}{=} \E\bigg[\frac{\frac{\partial}{\partial\beta}E(\e^{\beta H_n(\sigma)})}{Z_n(\beta)}\bigg] \\
&\stackrel{\text{(b)}}{=} \E\bigg[E\bigg(\frac{H_n(\sigma)\e^{\beta H_n(\sigma)}}{Z_n(\beta)}\bigg)\bigg] \\
&\stackrel{\text{(c)}}{=} E\bigg[\E\bigg(\frac{H_n(\sigma)\e^{\beta H_n(\sigma)}}{Z_n(\beta)}\bigg)\bigg], \\
}
but each of (a), (b), and (c) require justification.
Postponing these technical verifications for the moment, we complete the proof of \eqref{derivative_ineq_to_show} assuming \eqref{would_like}.

For a fixed $\sigma \in \Omega_p$, we can write
\eq{
\E\bigg(\frac{H_n(\sigma)\e^{\beta H_n(\sigma)}}{Z_n(\beta)}\bigg) = \e^{n\lambda(\beta)}\wt\E\bigg(\frac{H_n(\sigma)}{Z_n(\beta)}\bigg),
}
where $\wt\E$ denotes expectation with respect to the probability measure $\wt\P$ given by
\eq{
\dd\wt\P = \frac{\e^{\beta H_n(\sigma)}}{\e^{n\lambda(\beta)}}\ \dd\P
= \frac{\e^{\beta H_n(\sigma)}}{\E(\e^{\beta H_n(\sigma)})}\ \dd\P.
}
Since the Radon-Nikodym derivative
\eq{
\frac{\dd \wt\P}{\dd\P} = \e^{\beta H_n(\sigma)-n\lambda(\beta)} = \prod_{i = 1}^n \e^{\beta \omega(i,\sigma_i)-\lambda(\beta)}
}
is a product of independent quantities (with respect to $\P$), the probability measure $\wt\P$ remains a product measure.
Therefore, we can apply the Harris-FKG inequality (e.g.~see \cite[Theorem 2.4]{grimmett99}):
$H_n(\sigma)$ is non-decreasing in all $\omega(i,x)$, while $Z_n(\beta)^{-1}$ is decreasing, which implies
\eq{
\wt\E\bigg(\frac{H_n(\sigma)}{Z_n(\beta)}\bigg) &\leq 
\wt\E(H_n(\sigma))\wt\E(Z_n(\beta)^{-1}),
%= \e^{-2n\lambda(\beta)} \E(H_n(\sigma)\e^{\beta H_n(\sigma)})\E(Z_n(\beta)^{-1}\e^{\beta H_n(\sigma)}).
}
where
\eq{
\wt\E(H_n(\sigma)) = \e^{-n\lambda(\beta)} \E(H_n(\sigma)\e^{\beta H_n(\sigma)}) 
&= \e^{-n\lambda(\beta)} \sum_{i = 1}^n \E(\omega(i,\sigma_i)\e^{\beta H_n(\sigma)}) \\
&= \e^{-n\lambda(\beta)} \sum_{i = 1}^n \underbrace{\E(\omega(i,\sigma_i)\e^{\beta \omega(i,\sigma_i)})}_{\lambda'(\beta)\e^{\lambda(\beta)}}\prod_{j \neq i} \underbrace{\E(\e^{\beta\omega(j,\sigma_j)})}_{\e^{\lambda(\beta)}}
= n\lambda'(\beta).
}
Therefore,
\eeq{ \label{last_inequality}
\E\bigg(\frac{H_n(\sigma)\e^{\beta H_n(\sigma)}}{Z_n(\beta)}\bigg) \leq \e^{n\lambda(\beta)}\wt\E(H_n(\sigma))\wt\E(Z_n(\beta)^{-1}) = n\lambda'(\beta)\cdot\E(Z_n(\beta)^{-1}\e^{\beta H_n(\sigma)}).
}
We now have
\eq{
\frac{\partial}{\partial \beta} \E\log Z_n(\beta) 
\stackrel{\mbox{\scriptsize\eqref{would_like}}}{=} E\bigg[\E\bigg(\frac{H_n(\sigma)\e^{\beta H_n(\sigma)}}{Z_n(\beta)}\bigg)\bigg]
&\stackrel{\mbox{\scriptsize\eqref{last_inequality}}}{\leq} n\lambda'(\beta)\cdot E[\E(Z_n(\beta)^{-1}\e^{\beta H_n(\sigma)})] \\
&\stackrel{\phantom{\eqref{last_inequality}}}{=} n\lambda'(\beta)\cdot\E[Z_n(\beta)^{-1}E(\e^{\beta H_n(\sigma)})] = n\lambda'(\beta),
}
where the penultimate equality is a consequence of Tonelli's theorem, since $Z_n(\beta)^{-1}\e^{\beta H_n(\sigma)} > 0$.
The inequality \eqref{derivative_ineq_to_show} now follows by dividing by $n$.

\subsubsection{Justification of $\mathrm{(c)}$ in \eqref{would_like}}
Fix $\beta \in (0,\beta_{\max})$.
Choose $q > 1$ such that $q\beta < \beta_{\max}$, and let $q'$ be its H\"{o}lder conjugate:
$1/q + 1/q' = 1$.
%\eq{
%\frac{1}{q} + \frac{1}{q'} = 1.
%}
Step (c) in \eqref{would_like} will follow from Fubini's theorem once we verify that
\eeq{ \label{before_expectation}
E\bigg(\E\bigg|\frac{H_n(\sigma)\e^{\beta H_n(\sigma)}}{Z_n(\beta)}\bigg|\bigg) < \infty.
}
Let $g$ and $h$ be as in Lemma \ref{decomposition}, so that we can write
\eeq{ \label{g_and_h}
\bigg|\frac{H_n(\sigma)\e^{\beta H_n(\sigma)}}{Z_n(\beta)}\bigg|
= \frac{|H_n(\sigma)|\e^{\beta H_n(\sigma)}}{Z_n(\beta)}
= \frac{g(H_n(\sigma))}{Z_n(\beta)} - \frac{h(H_n(\sigma))}{Z_n(\beta)} \leq \frac{g(H_n(\sigma))}{Z_n(\beta)}.
}
Temporarily fix a path $\sigma \in \Omega_p$.
Since $Z_n(\beta)$ and $H_n$, and therefore $g(H_n)$, are non-decreasing functions of all $\omega(i,x)$, the Harris-FKG inequality shows
\eeq{ \label{fkg_1}
\E\Big(\frac{g(H_n(\sigma))}{Z_n(\beta)}\Big) \leq \E\big[g(H_n(\sigma))\big]\E(Z_n(\beta)^{-1}).
}
The first factor satisfies
\eeq{ \label{first_factor}
\E\big[g(H_n(\sigma))\big]
&= \E\big[|H_n(\sigma)|\e^{\beta H_n(\sigma)}\big] + \E\big[h(H_n(\sigma))\big] \\
&\leq \E\bigg[\sum_{i = 1}^n |\omega(i,\sigma_i)|\exp\bigg(\beta\sum_{i = 1}^n \omega(i,\sigma_i)\bigg)\bigg] + (\beta \e)^{-1} \\
&= \E\bigg[\sum_{i=1}^n |\omega(i,\sigma_i)|\e^{\beta \omega(i,\sigma_i)}\prod_{j\neq i} \e^{\beta\omega(j,\sigma_j)}\bigg] + (\beta \e)^{-1} \\
%&\leq n\E(|\omega| \e^{\beta \omega})\E(\e^{\beta\omega})^{n-1} + (\beta e)^{-1} \\
&\leq n\E(|\omega| \e^{\beta \omega})\e^{(n-1)\lambda(\beta)} + (\beta \e)^{-1} \\
%&\leq n(\E|\omega|^{q'})^{1/q'}\E(\e^{q\beta\omega})^{1/q}\E(\e^{\beta\omega})^{n-1} + (\beta e)^{-1} \\
&\leq n(\E|\omega|^{q'})^{1/q'}\e^{\lambda(q\beta)/q}\e^{(n-1)\lambda(\beta)} + (\beta \e)^{-1} < \infty.
}
The second factor satisfies
\eeq{ \label{second_factor}
\E(Z_n(\beta)^{-1}) = \E\big(E(\e^{\beta H_n(\sigma)})^{-1}\big) \leq \E\big(E(\e^{\beta H_n(\sigma)})\big) = E\big(\E(\e^{\beta H_n(\sigma)})\big) = n\lambda(-\beta) < \infty,
}
where we have used Tonelli's theorem to exchange the order of integration.
We have thus shown
\eeq{ \label{fubini_hypothesis}
E\bigg(\E\bigg|\frac{H_n(\sigma)\e^{\beta H_n(\sigma)}}{Z_n(\beta)}\bigg|\bigg)
\stackrel{\mbox{\scriptsize\eqref{g_and_h}}}{\leq} E\bigg(\E\Big[\frac{g(H_n(\sigma))}{Z_n(\beta)}\Big]\bigg)
\stackrel{\text{\eqref{fkg_1}--\eqref{second_factor}}}{<} \infty,
}
as desired.

\subsubsection{Justification of $\mathrm{(b)}$ in \eqref{would_like}} \label{justification_b}
By simple differentiation rules,
\eq{
\frac{\partial}{\partial\beta}\log Z_n(\beta) = \frac{\frac{\partial}{\partial\beta}Z_n(\beta)}{Z_n(\beta)}
= \frac{\frac{\partial}{\partial \beta} E(\e^{\beta H_n(\sigma)})}{Z_n(\beta)}.
}
We would like to pass the derivative through the expectation and write
\eq{
\frac{\partial}{\partial \beta} E(\e^{\beta H_n(\sigma)}) = E(H_n(\sigma)\e^{\beta H_n(\sigma)}).
}
That is, we claim
\eeq{ \label{derivative_through_EE}
&\frac{\partial}{\partial \beta}\bigg[\sum_{\sigma_1,\sigma_2,\dots,\sigma_n \in \Z^d} \exp\bigg(\beta\sum_{i = 1}^n \omega(i,\sigma_i)\bigg)\prod_{i=1}^n P(\sigma_{i-1},\sigma_i)\bigg] \\
&= \sum_{\sigma_1,\sigma_2,\dots,\sigma_n \in \Z^d} \bigg(\sum_{i = 1}^n \omega(i,\sigma_i)\bigg)\exp\bigg(\beta\sum_{i = 1}^n \omega(i,\sigma_i)\bigg)\prod_{i=1}^n P(\sigma_{i-1},\sigma_i) \quad \mathrm{a.s.}
}
To show \eqref{derivative_through_EE} at a particular $\beta_0 \in (0,\beta_{\max})$, it suffices to exhibit $\eps > 0$ and a constant $C_{\vc\omega} < \infty$ depending only on the quenched environment $\vc\omega$, such that
\eeq{ \label{suffices_for_need}
\sum_{\sigma_1,\sigma_2,\dots,\sigma_n\in\Z^d} \bigg|\sum_{i = 1}^n \omega(i,\sigma_i)\bigg|\exp\bigg(\beta\sum_{i = 1}^n \omega(i,\sigma_i)\bigg)\prod_{i=1}^n P(\sigma_{i-1},\sigma_i) < C_{\vc\omega} \quad \forall\, \beta \in [\beta_0-\eps,\beta_0+\eps].
}
Indeed, if \eqref{suffices_for_need} holds, then for $\beta \in [\beta_0-\eps,\beta_0+\eps]$,
\eq{
\sum_{\substack{\sigma_1,\sigma_2,\dots,\sigma_n \in \Z^d \\ \|\sigma_i\|_1 \leq M\, \forall\, i}} \bigg(\sum_{i = 1}^n \omega(i,\sigma_i)\bigg)\exp\bigg(\beta\sum_{i = 1}^n \omega(i,\sigma_i)\bigg)\prod_{i=1}^n P(\sigma_{i-1},\sigma_i)
\xrightarrow[{M\to\infty}]{\mathrm{uniformly~in~}\beta} E(H_n(\sigma)\e^{\beta H_n(\sigma)}).
}
In summary, we know by definition that
\eq{
\sum_{\substack{\sigma_1,\sigma_2,\dots,\sigma_n \in \Z^d \\ \|\sigma_i\|_1 \leq M\, \forall\, i}} \exp\bigg(\beta\sum_{i = 1}^n \omega(i,\sigma_i)\bigg)\prod_{i=1}^n P(\sigma_{i-1},\sigma_i) \xrightarrow[M\to\infty]{} E(\e^{\beta H_n(\sigma)}) \quad
\text{for all $\beta \in [0,\beta_{\max})$},
}
and we know by the above argument that the derivative of the left-hand side converges uniformly to $E(H_n(\sigma)\e^{\beta H_n(\sigma)})$ near $\beta_0$.
It follows that
\eeq{ \label{derivative_on_interval}
\frac{\partial}{\partial\beta}E(\e^{\beta H_n(\sigma)})
= E(H_n(\sigma)\e^{\beta H_n(\sigma)}) \quad \text{for all $\beta \in (\beta_0-\eps,\beta_0+\eps)$,}
}
in particular when $\beta = \beta_0$.
We are thus left only with the task of establishing \eqref{suffices_for_need} for almost every $\vc\omega$.

Fix $\beta_0 \in (0,\beta_{\max})$, and let $\eps > 0$ be such that $\beta_0 + \eps < \beta_{\max}$ and $\beta_0 - \eps > 0$.
Choose a number $q > 1$ such that $q(\beta_0+\eps) < \beta_{\max}$, and let $q' > 1$ denote its H\"{o}lder conjugate: $1/q + 1/q' = 1$.
%\eq{
%\frac{1}{q} + \frac{1}{q'} = 1.
%}
For all $\beta \in [\beta_0-\eps,\beta_0+\eps]$, we have the uniform upper bound
\eq{
&\sum_{\sigma_1,\sigma_2,\dots,\sigma_n\in\Z^d} \bigg|\sum_{i = 1}^n \omega(i,\sigma_i)\bigg|\exp\bigg(\beta\sum_{i = 1}^n \omega(i,\sigma_i)\bigg)\prod_{i=1}^n P(\sigma_{i-1},\sigma_i) \\
&\leq \sum_{\sigma_1,\sigma_2,\dots,\sigma_n\in\Z^d} \sum_{i = 1}^n |\omega(i,\sigma_i)|\exp\bigg((\beta_0+\eps)\sum_{i = 1}^n \omega(i,\sigma_i)_+\bigg) \prod_{i=1}^n P(\sigma_{i-1},\sigma_i) \\
&\leq \sum_{\sigma_1,\sigma_2,\dots,\sigma_n\in\Z^d} \bigg[\sum_{i = 1}^n |\omega(i,\sigma_i)|(\e^{(\beta_0+\eps)\omega(i,\sigma_i)}+1)\prod_{j \neq i} (\e^{(\beta_0+\eps)\omega(j,\sigma_j)}+1)\bigg] \prod_{i=1}^n P(\sigma_{i-1},\sigma_i),
}
where $x_+ \coloneqq \max(0,x)$.
Furthermore, this upper bound is finite for almost every $\vc\omega$, since
\eq{
&\sum_{\sigma_1,\sigma_2,\dots,\sigma_n\in\Z^d} \E\bigg[\sum_{i = 1}^n |\omega(i,\sigma_i)|(\e^{(\beta_0+\eps)\omega(i,\sigma_i)}+1)\prod_{j \neq i} (\e^{(\beta_0+\eps)\omega(j,\sigma_j)}+1)\bigg] \prod_{i=1}^n P(\sigma_{i-1},\sigma_i) \\
&= n\E\big[|\omega|(\e^{(\beta_0+\eps)\omega}+1)\big]\big(\E[\e^{(\beta_0+\eps)\omega}+1]\big)^{n-1} \\
&\leq n(\E|\omega|^{q'})^{1/q'}\Big(\E\big[(\e^{(\beta_0+\eps)\omega}+1)^q\big]\Big)^{1/q}\big(\E[\e^{(\beta_0+\eps)\omega}+1]\big)^{n-1} \\
&\leq n(\E|\omega|^{q'})^{1/q'}\Big(2^q\E\big[\e^{q(\beta_0+\eps)\omega}+1\big]\Big)^{1/q}\big(\E[\e^{(\beta_0+\eps)\omega}+1]\big)^{n-1} \\
&= 2n(\E|\omega|^{q'})^{1/q'}\big(\e^{\lambda(q(\beta_0+\eps))}+1\big)^{1/q}(\e^{\lambda(\beta_0+\eps)}+1)^{n-1} < \infty.
}
In particular, some $C_{\vc\omega}$ satisfying \eqref{suffices_for_need} exists almost surely, and so
\eqref{derivative_through_EE} holds almost surely.
Therefore, (b) holds in \eqref{would_like}:
\eq{
\E\bigg[\frac{\frac{\partial}{\partial\beta}E(\e^{\beta H_n(\sigma)})}{Z_n(\beta)}\bigg]
= \E\bigg[\frac{E(H_n(\sigma)\e^{\beta H_n(\sigma)})}{Z_n(\beta)}\bigg]
=  \E\bigg[E\bigg(\frac{H_n(\sigma)\e^{\beta H_n(\sigma)}}{Z_n(\beta)}\bigg)\bigg].
}

\subsubsection{Justification of $\mathrm{(a)}$ in \eqref{would_like}}
Fix $\beta_0 \in (0,\beta_{\max})$ and $\eps > 0$ such that $\beta_0+\eps < \beta_{\max}$ and $\beta_0-\eps > 0$.
We begin by writing
\eq{
\Big[\frac{\partial}{\partial \beta} \E\log Z_n(\beta)\Big]_{\beta=\beta_0} = 
\lim_{h \to 0} \E\Big[\frac{\log Z_n(\beta_0+h) - \log Z_n(\beta_0)}{h}\Big].
}
We will ultimately invoke dominated convergence to pull the limit through the expectation.
Consider any fixed $h$ satisfying $|h| \leq \eps$.
Now, $\log Z_n(\cdot)$ is almost surely continuously differentiable, and
so by the mean value theorem,
\eq{
\frac{\log Z_n(\beta_0+h) - \log Z_n(\beta_0)}{h} = \Big[\frac{\partial}{\partial\beta}\log Z_n(\beta)\Big]_{\beta = \beta_0+\eps_{\vc\omega}} \quad \mathrm{a.s.}
}
for some $\eps_{\vc\omega}$ depending on the random environment $\vc\omega$ and satisfying $|\eps_{\vc\omega}| \leq |h| \leq \eps$.
But then by convexity of $\log Z_n(\beta)$, we can bound the difference quotient by consider the endpoints of the interval $[\beta_0-\eps,\beta_0+\eps]$:
\eq{
\Big|\frac{\log Z_n(\beta_0+h) - \log Z_n(\beta_0)}{h}\Big| &= \Big|\Big[\frac{\partial}{\partial\beta}\log Z_n(\beta)\Big]_{\beta=\beta_0+\eps_{\vc\omega}}\Big| \quad \mathrm{a.s.} \\
&\leq \max\bigg(\Big|\Big[\frac{\partial}{\partial\beta}\log Z_n(\beta)\Big]_{\beta=\beta_0-\eps}\Big|,\Big|\Big[\frac{\partial}{\partial\beta}\log Z_n(\beta)\Big]_{\beta=\beta_{0}+\eps}\Big|\bigg) \quad \mathrm{a.s.},
}
where the maximum is now a dominating function independent of $h$.
We showed \eqref{derivative_on_interval} holds almost surely, and so
\eq{
\frac{\partial}{\partial\beta}\log Z_n(\beta) = E\bigg(\frac{H_n(\sigma)\e^{\beta H_n(\sigma)}}{Z_n(\beta)}\bigg)  \quad \text{for all $\beta \in (0,\beta_{\max})$} \quad \mathrm{a.s.}
}
Therefore, for any $\beta_1 \in (0,\beta_{\max})$ we have
\eq{
\E\Big|\Big[\frac{\partial}{\partial\beta}\log Z_n(\beta)\Big]_{\beta=\beta_1}\Big| = \E\bigg|E\bigg(\frac{H_n(\sigma)\e^{-\beta_1 H_n(\sigma)}}{Z_n(\beta)}\bigg)\bigg|
\leq \E\bigg(E\bigg|\frac{H_n(\sigma)\e^{-\beta_1 H_n(\sigma)}}{Z_n(\beta)}\bigg|\bigg)
\stackrel{\mbox{\scriptsize\eqref{fubini_hypothesis}}}{<} \infty.
}
In particular, the dominating function is integrable:
\eq{
&\E\bigg[\max\bigg(\Big|\Big[\frac{\partial}{\partial\beta}\log Z_n(\beta)\Big]_{\beta=\beta_0-\eps}\Big|,\Big|\Big[\frac{\partial}{\partial\beta}\log Z_n(\beta)\Big]_{\beta=\beta_{0}+\eps}\Big|\bigg)\bigg] \\
&\leq \E\Big|\Big[\frac{\partial}{\partial\beta}\log Z_n(\beta)\Big]_{\beta=\beta_0 - \eps}\Big| + \E\Big|\Big[\frac{\partial}{\partial\beta}\log Z_n(\beta)\Big]_{\beta=\beta_{0}+\eps}\Big|
< \infty.
}
Dominated convergence now proves (a) in \eqref{would_like}:
\eq{
\Big[\frac{\partial}{\partial \beta} \E\log Z_n(\beta)\Big]_{\beta=\beta_0}
&= \lim_{h \to 0} \E\Big[\frac{\log Z_n(\beta_0+h) - \log Z_n(\beta_0)}{h}\Big] \\
&= \E\Big[\lim_{h \to 0}\frac{\log Z_n(\beta_0+h) - \log Z_n(\beta_0)}{h}\Big] 
= \E\Big(\Big[\frac{\partial}{\partial\beta} \log Z_n(\beta)\Big]_{\beta = \beta_0}\Big).
}
\end{proof}

\section{Partitioned subprobability measures} \label{topology}
In this section and in the remainder of this chapter, we will  always assume \eqref{mgf_assumption}. Also, throughout, $f_n(\cdot)$ will denote the endpoint probability mass function $\mu_n^\beta(\sigma_n=\cdot)$. 
 The goals of this section are to (i) define a space of functions which contains endpoint distributions (i.e.~probability measures on $\Z^d$) and their localization limits (subprobability measures on $\N \times \Z^d$); (ii) equip said space with a metric; and (iii) prove that the induced metric topology is compact.

\subsection{Definition and properties} \label{topology-definitions}
Let us restrict our attention to the set of functions
\eq{
\SS_0 \coloneqq  \{f : \N \times \Z^d \to [0,1] : \|f\| \leq 1\},
}
where $\N = \{1,2,\dots\}$, and
\eeq{
\|f\| \coloneqq  \sum_{u \in \N \times \Z^d} f(u). \label{norm_def}
}
Since we regard distant point masses as nearly existing on separate copies of $\Z^d$, we consider
differences between elements of $\N \times \Z^d$ in the following non-standard way:
\eq{
(n,x) - (m,y) \coloneqq  \begin{cases}
x - y &\text{if } n = m, \\
\infty &\text{otherwise}.
\end{cases}
}
It then makes sense to write, for $u = (n,x)$ and $v = (m,y)$ in $\N \times \Z^d$,
\eq{
\|u - v\|_1 \coloneqq  \begin{cases}
\|x - y\|_1 &\text{if } n = m, \\
\infty &\text{otherwise},
\end{cases}
}
although $\|\cdot\|_1$ is not to be thought of as a norm.
If $u = (n,x)$ and $y \in \Z^d$, then $u \pm y$ will be understood to mean $(n,x\pm y)$.
Our main tool for enabling compactness will be certain injective functions on $\N \times \Z^d$.
Given a set $A \subset \N \times \Z^d$, we call a map $\phi: A \to \N \times \Z^d$ an \textit{isometry} of degree $m \geq 1$ if for all $u,v\in A$,
\eeq{
\|u - v\|_1 < m
\quad \text{or} \quad
\|\phi(u) - \phi(v)\|_1 < m
\quad \implies \quad 
\phi(u) - \phi(v) = u - v.
 \label{isometry}
}
The maximum $m$ for which \eqref{isometry} holds is called the \textit{maximum degree} of $\phi$, denoted by $\deg(\phi)$.
To say that $\phi$ is an isometry of degree $1$ simply means $\phi$ is injective.
If \eqref{isometry} holds for every $m \in \N$, then $\deg(\phi) = \infty$, meaning $\phi$ acts by translations. That is, each copy of $\Z^d$ intersecting the domain $A$ is translated and moved to some copy of $\Z^d$ in the range, with no two copies in the domain going to the same copy in the range.
Note that an isometry is necessarily injective and thus has an inverse defined on its image.
Because the hypothesis in \eqref{isometry} is symmetric between domain and range, it is clear that
$\deg(\phi) = \deg(\phi^{-1})$.

%\begin{lemma} \label{infinite_degree}
%If $\phi: A \to \N \times \Z^d$ is an isometry, then $\phi^{-1}: \phi(A) \to \N \times \Z^d$ is also an isometry with $\deg(\phi^{-1}) = \deg(\phi)$.
%\end{lemma}
%
%\begin{proof}
%Let $\phi$ be an isometry of degree $m$.
%One necessarily has that $\phi^{-1}$ is injective, and for $\phi(u),\phi(v) \in \phi(A)$,
%\eq{
%\phi^{-1}(\phi(u)) - \phi^{-1}(\phi(v)) = u - v = \phi(u) - \phi(v),
%}
%where the first equality is trivial, and the second holds whenever
%\eq{
%\|\phi(u) - \phi(v)\|_1 < m
%\quad \text{or} \quad
%\|u - v\|_1 = \|\phi^{-1}(\phi(u)) - \phi^{-1}(\phi(v))\|_1 < m.
%}
%Indeed, $\phi^{-1}$ is an isometry of degree $m$.
%Letting $m$ be maximal, one concludes $\deg(\phi^{-1}) \geq \deg(\phi)$.
%Of course, by symmetry one also has $\deg(\phi) \geq \deg(\phi^{-1})$.
%\end{proof}

Another useful property of isometries is closure under composition, which is defined in the next lemma.
It is important, especially for the proof below, to note that an isometry is permitted to have empty domain.
That is, there exists the empty isometry $\phi: \varnothing \to \N \times \Z^d$, which is its own inverse and has $\deg(\phi) = \infty$.

\begin{lemma} \label{composition}
Let $\phi : A \to \N \times \Z^d$ and $\psi : B \to \N \times \Z^d$ be isometries.
Define $A' \coloneqq  \{a \in A : \phi(a) \in B\}$.  Then $\theta: A' \to \N \times \Z^d$ defined by $\theta(u) = \psi(\phi(u))$ is an isometry with $\deg(\theta) \geq \min(\deg(\phi),\deg(\psi))$.
\end{lemma}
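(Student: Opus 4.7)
The plan is to verify the defining condition \eqref{isometry} for $\theta$ directly at degree $m \coloneqq \min(\deg(\phi),\deg(\psi))$, exploiting the transitivity of the equality ``$\phi(u)-\phi(v) = u-v$'' under chaining. First I would note that $\theta$ is well-defined on $A'$ by construction of $A'$: for $u \in A'$, $\phi(u) \in B$, so $\psi(\phi(u))$ makes sense. Injectivity of $\theta$ will be automatic once the degree-$m$ isometry property is established (since $m \geq 1$).

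Next, fix $u,v \in A'$ and assume the hypothesis of \eqref{isometry} for $\theta$ with the above $m$, i.e.\ $\|u-v\|_1 < m$ or $\|\theta(u)-\theta(v)\|_1 < m$. I would split into the two symmetric cases. In the first, $\|u-v\|_1 < m \leq \deg(\phi)$ forces $\phi(u)-\phi(v) = u-v$ by applying \eqref{isometry} to $\phi$; consequently $\|\phi(u)-\phi(v)\|_1 = \|u-v\|_1 < m \leq \deg(\psi)$, and since $\phi(u),\phi(v) \in B$, applying \eqref{isometry} to $\psi$ yields $\theta(u)-\theta(v) = \phi(u)-\phi(v) = u-v$. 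In the second case, $\|\theta(u)-\theta(v)\|_1 < m \leq \deg(\psi)$ first gives $\psi(\phi(u))-\psi(\phi(v)) = \phi(u)-\phi(v)$, hence $\|\phi(u)-\phi(v)\|_1 = \|\theta(u)-\theta(v)\|_1 < m \leq \deg(\phi)$, and then applying \eqref{isometry} to $\phi$ gives $\phi(u)-\phi(v) = u-v$, which chains to $\theta(u)-\theta(v) = u-v$.

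The only real subtlety — which is the ``main obstacle'' in the sense of something one has to be careful about rather than something genuinely hard — is the nonstandard convention that $\|p-q\|_1 = \infty$ whenever $p$ and $q$ live on different copies of $\Z^d$. One must check that in each case the strict inequality ``$< m$'' is genuinely preserved when passing from $u,v$ to $\phi(u),\phi(v)$ (and vice versa), so that the hypothesis of \eqref{isometry} really does apply to the next isometry in the chain. This is immediate from the equality $\phi(u)-\phi(v) = u-v$ (which implies equal $\ell^1$ distance, both being finite), but it is worth spelling out because it is the one place the ``$\infty$'' convention could cause an apparent gap.
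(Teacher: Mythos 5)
Your proof is correct and follows essentially the same route as the paper's: the same $m = \min(\deg\phi,\deg\psi)$, the same two-case chaining of the equality $\phi(u)-\phi(v)=u-v$ through $\phi$ and then $\psi$ (and in reverse). The extra paragraph noting that the ``$\infty$'' convention never disrupts the strict inequality is a useful observation but adds nothing the paper felt it needed to say explicitly.
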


\begin{proof}
Let $m \coloneqq  \min(\deg(\phi),\deg(\psi))$ so that $\phi$ and $\psi$ are each isometries of degree $m$.
For any $a_1,a_2 \in A'$,
\eq{
\|a_1 - a_2\|_1 < m \quad &\implies \quad \phi(a_1) - \phi(a_2) = a_1 - a_2 \\
&\implies \quad \|\phi(a_1) - \phi(a_2)\|_1 < m \\
&\implies \quad \psi(\phi(a_1)) - \psi(\phi(a_2)) = \phi(a_1) - \phi(a_2) = a_1 - a_2,
}
as well as
\eq{
\|\theta(a_1) - \theta(a_2)\|_1 < m \quad &\implies \quad \phi(a_1) - \phi(a_2) = \psi(\phi(a_1)) - \psi(\phi(a_2)) \\
&\implies \quad \|\phi(a_1) - \phi(a_2)\|_1 < m \\
&\implies \quad a_1 - a_2 = \phi(a_1) - \phi(a_2) = \psi(\phi(a_1)) - \psi(\phi(a_2)).
}
Indeed, $\theta$ is an isometry of degree $m$.
\end{proof}

A final observation about isometries is that they obey a certain extension property, which is proved below:
By allowing the maximum degree of an isometry to be lowered by at most 2, we can expand its domain  by one unit in every direction.
If the maximum degree is infinite (i.e.~$\phi$ acts by translations), then the extension can be repeated \textit{ad infinitum} to recover the translation on all of $\Z^d$, for any copy of $\Z^d$ intersecting the domain.

\begin{lemma} \label{extension}
Suppose that $\phi : A \to \N \times \Z^d$ is an isometry of degree $m \geq 3$.
Then $\phi$ can be extended to an isometry $\Phi: A^{(1)} \to \N \times \Z^d$ of degree $m-2$, where
\eq{
A^{(1)} \coloneqq \{v \in \N \times \Z^d : \|u - v\|_1 \leq 1 \text{ for some $u \in A$}\} \supset A.
}
By induction, if $\phi$ has $\deg(\phi) \geq 2k + m$, then $\phi$ can be extended to an isometry $\Phi : A^{(k)} \to \N \times \Z^d$ of degree $m$, where
\eq{
A^{(k)} \coloneqq \{v \in \N \times \Z^d : \|u - v\|_1 \leq k \text{ for some $u \in A$}\} \supset A.
}
\end{lemma}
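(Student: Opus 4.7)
The plan is to define the extension $\Phi$ pointwise in the natural way. For each $v \in A^{(1)}$, select some $u \in A$ with $\|u - v\|_1 \leq 1$, and set $\Phi(v) \coloneqq \phi(u) + (v - u)$; since $u$ and $v$ lie in the same copy of $\Z^d$, the vector $v - u \in \Z^d$ is well-defined, and $\phi(u) + (v-u) \in \N \times \Z^d$ is the translate of $\phi(u)$ within its copy. When $v \in A$ one may take $u = v$, which yields $\Phi(v) = \phi(v)$, so $\Phi$ genuinely extends $\phi$. Well-definedness is the first thing to check: if $u_1, u_2 \in A$ both satisfy $\|u_i - v\|_1 \leq 1$, then $\|u_1 - u_2\|_1 \leq 2 < m$, so the hypothesis on $\phi$ forces $\phi(u_1) - \phi(u_2) = u_1 - u_2$, from which $\phi(u_1) + (v - u_1) = \phi(u_2) + (v - u_2)$. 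This step is exactly where the assumption $m \geq 3$ enters.

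Next I would verify the degree-$(m-2)$ property. Fix $v_1, v_2 \in A^{(1)}$ with corresponding $u_1, u_2 \in A$, and note that $\Phi(v_i) - \phi(u_i) = v_i - u_i$ has $\ell^1$-norm at most $1$ by construction. If $\|v_1 - v_2\|_1 < m-2$, the triangle inequality gives $\|u_1 - u_2\|_1 < 1 + (m-2) + 1 = m$; alternatively, if $\|\Phi(v_1) - \Phi(v_2)\|_1 < m-2$, the same triangle argument applied to $\phi(u_i)$ in place of $u_i$ gives $\|\phi(u_1) - \phi(u_2)\|_1 < m$. In either case the degree-$m$ property of $\phi$ yields $\phi(u_1) - \phi(u_2) = u_1 - u_2$, and hence
\eq{
\Phi(v_1) - \Phi(v_2) = \bigl(\phi(u_1) - \phi(u_2)\bigr) + (v_1 - u_1) - (v_2 - u_2) = v_1 - v_2.
}

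The inductive statement then follows by iterating this one-step construction $k$ times: starting from $\deg(\phi) \geq 2k + m$, the guaranteed input degree drops by $2$ at each application, so the hypothesis ``degree $\geq 3$'' is preserved throughout as long as the target $m \geq 1$ (the worst step being the last, where the input degree is $\geq m + 2$), and after $k$ iterations the domain is $A^{(k)}$ with degree $\geq m$. The only real obstacle is bookkeeping around the non-standard arithmetic on $\N \times \Z^d$: every difference that appears must remain finite, which is what the isometry condition guarantees, and the ``or $\|\phi(u) - \phi(v)\|_1 < m$'' clause in its definition is precisely what makes the second case of the degree check go through.
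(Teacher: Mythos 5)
Your proof is correct and matches the paper's argument essentially line for line: same definition $\Phi(v) = \phi(u) + (v-u)$, same well-definedness check using $\|u_1 - u_2\|_1 \leq 2 < m$, and same two-case degree verification via the triangle inequality. The bookkeeping for the inductive step is also sound — the input degree at the final iteration is at least $m+2 \geq 3$, so each one-step application is legitimate.
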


\begin{proof}
For $v \in A^{(1)}$ such that $\|u - v\|_1 \leq 1$ with $u \in A$, define
\eq{
\Phi(v) \coloneqq  \phi(u) + (v - u).
}
If $u' \in A$ also satisfies $\|u' - v\|_1 \leq 1$, then
\eq{
\|u - u'\|_1 \leq 2 \quad &\implies \quad \phi(u) - \phi(u') = u - u'
\quad \\
&\implies \quad \phi(u) + (v - u) = \phi(u') + (v - u').
}
So $\Phi$ is well-defined; in particular, $\Phi(u) = \phi(u)$ for all $u \in A$.
To see that $\Phi$ is an isometry of degree $m-2$, consider any $v,v' \in A^{(1)}$ and take $u,u' \in A$ such that $\|u - v\|_1 \leq 1$ and $\|u' - v'\|_1 \leq 1$.
If $\|v - v'\|_1 < m - 2$, then
\eq{
\|u - u'\|_1 < m \quad &\implies \quad \phi(u) - \phi(u') \hspace{0.3ex} = u - u' \\
&\implies \quad \Phi(v) - \Phi(v') = \phi(u) + (v - u)  - \phi(u') - (v' - u') = v - v'.
}
Alternatively, if $\|\Phi(v) - \Phi(v')\|_1 < m -2$, then
\eq{
\|\phi(u) - \phi(u')\| < m
\quad &\implies \quad u - u' = \phi(u) - \phi(u') \\
&\phantom{\implies \quad u - u' }\hspace{0.7ex}  = \Phi(v) - (v-u) - \Phi(v') + (v'-u') \\
&\implies \quad v - v' \hspace{0.3ex} = \Phi(v) - \Phi(v').
}
Indeed, $\deg(\Phi) \geq m-2$.
\end{proof}

We can now define the desired metric on functions.
Roughly speaking, an isometry allows for the comparison of the large values of two functions. 
The size of the isometry's domain reflects how many of their large values are similar, while the degree of the isometry captures how similar their relative positioning is.
The metric is constructed in stages.

First, given an isometry $\phi: A \to \N \times \Z^d$ 
%with finite\footnote{Finiteness is needed for measurability reasons; see Lemma \ref{S_meas})} domain $A \subset \N \times \Z^d$ 
and $\alpha > 1$, we define the $\alpha$-distance function according to $\phi$:
\eeq{ \label{d_phi_def}
d_{\alpha,\phi}(f,g) \coloneqq \alpha\sum_{u \in A} |f(u) - g(\phi(u))| + \sum_{u \notin A} f(u)^\alpha + \sum_{u \notin \phi(A)} g(u)^\alpha + 2^{-\deg(\phi)}, \quad f,g \in \SS_0.
}
Next we take the optimal $\alpha$-distance over all isometries with finite\footnote{Finiteness is needed for measurability reasons; see Lemma \ref{S_meas}.} domains:
\eq{
d_\alpha(f,g) \coloneqq \inf_{\substack{\phi : A \to \N\times\Z^d \\ \deg(\phi) \geq 1 \\ A \text{ finite}}} d_{\alpha,\phi}(f,g).
}
Since $\deg(\phi^{-1}) = \deg(\phi)$, it is easy to see that $d_{\alpha,\phi^{-1}}(g,f) = d_{\alpha,\phi}(f,g)$, and so the function $d_\alpha$ is symmetric:
\eeq{
d_\alpha(f,g) = d_\alpha(g,f) \quad \text{for all } f,g \in \SS_0. \label{symmetry}
}
It also satisfies the triangle inequality.

\begin{lemma}
For any $f,g,h \in \SS_0$,
\eeq{
d_\alpha(f,h) \leq d_\alpha(f,g) + d_\alpha(g,h). \label{triangle}
}
\end{lemma}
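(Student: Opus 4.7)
The plan is to start from near-optimal isometries on each half and compose them via Lemma~\ref{composition}. Fix $\eps>0$, and choose isometries $\phi:A\to\N\times\Z^d$ and $\psi:B\to\N\times\Z^d$ with $A,B$ finite and $\deg(\phi),\deg(\psi)\geq 1$, such that $d_{\alpha,\phi}(f,g) < d_\alpha(f,g) + \eps$ and $d_{\alpha,\psi}(g,h) < d_\alpha(g,h) + \eps$. Let $A' \coloneqq \{u \in A : \phi(u) \in B\}$ and $\theta \coloneqq \psi\circ\phi$ on $A'$; by Lemma~\ref{composition}, $\theta$ is an isometry with $\deg(\theta) \geq \min(\deg(\phi), \deg(\psi))$, and $A'$ is finite. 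The goal is then to establish $d_{\alpha,\theta}(f,h) \leq d_{\alpha,\phi}(f,g) + d_{\alpha,\psi}(g,h)$ by bounding the four summands of $d_{\alpha,\theta}(f,h)$ piece by piece, after which taking infima and letting $\eps\downarrow 0$ yields the claim.

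For the ``inner'' sum $\alpha\sum_{u\in A'}|f(u)-h(\theta(u))|$, the ordinary triangle inequality gives $|f(u)-h(\theta(u))| \leq |f(u)-g(\phi(u))| + |g(\phi(u))-h(\psi(\phi(u)))|$; summing, and using that $\phi$ is injective with $\phi(A')\subset B$, this consumes a portion of the two diagonal budgets on the right-hand side. For the degree term, Lemma~\ref{composition} gives $2^{-\deg(\theta)} \leq 2^{-\min(\deg(\phi),\deg(\psi))} = \max(2^{-\deg(\phi)},2^{-\deg(\psi)}) \leq 2^{-\deg(\phi)} + 2^{-\deg(\psi)}$.

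For the ``outer'' tail sums, the key tool is the elementary estimate $a^\alpha \leq b^\alpha + \alpha|a-b|$ for $a,b\in[0,1]$ and $\alpha>1$, which follows from $|a^\alpha-b^\alpha| = \bigl|\int_b^a \alpha t^{\alpha-1}\,\dd t\bigr| \leq \alpha|a-b|$ since $t^{\alpha-1}\leq 1$ on $[0,1]$. Applying it at each $u\in A\setminus A'$ bounds $f(u)^\alpha$ by $g(\phi(u))^\alpha + \alpha|f(u)-g(\phi(u))|$; since $u\in A\setminus A'$ exactly means $\phi(u)\notin B$, the $g(\phi(u))^\alpha$ contributions inject via the injective $\phi$ into $\sum_{v\notin B} g(v)^\alpha$, already present on the right, while the remaining $\alpha|f-g(\phi)|$ terms fill out the leftover portion of the $\phi$-budget not used in the inner-sum step. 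A symmetric treatment of $h(w)^\alpha$ for $w \in \psi(B)\setminus\theta(A')$ uses the identity $\phi(A') = \phi(A)\cap B$ (so $B\setminus\phi(A') = B\setminus\phi(A)$) together with injectivity of $\psi$ to rewrite this tail as a sum over $v \in B$ with $v\notin\phi(A)$; bounding $h(\psi(v))^\alpha$ by $g(v)^\alpha+\alpha|g(v)-h(\psi(v))|$ then absorbs the first piece into $\sum_{v\notin\phi(A)} g(v)^\alpha$ and the second into the leftover $\psi$-budget.

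The main obstacle is just the combinatorial bookkeeping: making sure the $\alpha|f-g(\phi)|$ and $\alpha|g-h(\psi)|$ budgets are apportioned between the diagonal-term use (on $A'$ and $\phi(A')$) and the ``leakage''-term use (on $A\setminus A'$ and $B\setminus\phi(A')$) with no double-counting. Because $A'$ is defined precisely as the preimage under $\phi$ of $B$, the decompositions $A = A' \sqcup (A\setminus A')$ and $B = \phi(A') \sqcup (B\setminus\phi(A'))$ split these budgets cleanly, and the four tail sums $\sum_{u\notin A}f^\alpha$, $\sum_{v\notin\phi(A)}g^\alpha$, $\sum_{v\notin B}g^\alpha$, $\sum_{w\notin\psi(B)}h^\alpha$ each appear on the right exactly once. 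Summing the piecewise bounds delivers $d_{\alpha,\theta}(f,h) \leq d_{\alpha,\phi}(f,g) + d_{\alpha,\psi}(g,h) < d_\alpha(f,g)+d_\alpha(g,h)+2\eps$, and letting $\eps\downarrow 0$ gives \eqref{triangle}.
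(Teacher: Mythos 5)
Your proposal is correct and follows essentially the same route as the paper's proof: compose the two near-optimal isometries via Lemma~\ref{composition}, use the ordinary triangle inequality on the $A'$-sum, the Lipschitz bound $|a^\alpha-b^\alpha|\leq\alpha|a-b|$ on $[0,1]$ to reshuffle the tail sums through the partitions $A=A'\sqcup(A\setminus A')$ and $B=\phi(A')\sqcup(B\setminus\phi(A))$, and the degree bound $2^{-\deg(\theta)}\leq 2^{-\deg(\phi)}+2^{-\deg(\psi)}$, then let $\eps\downarrow0$. The bookkeeping you describe (in particular the identity $B\setminus\phi(A')=B\setminus\phi(A)$ used on the $h$-tail) matches the paper's decomposition term for term.
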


\begin{proof}
Fix $\eps > 0$, and choose isometries $\phi : A \to \N \times \Z^d$ and $\psi : B \to \N \times \Z^d$ such that
\eq{
d_{\alpha,\phi}(f,g) < d_\alpha(f,g) + \eps \quad \text{and} \quad
d_{\alpha,\psi}(g,h) < d_\alpha(g,h) + \eps.
}
Define $\theta : A' \to \N \times \Z^d$ as in Lemma \ref{composition}.
We have
\eeq{
d_{\alpha,\theta}(f,h) = \alpha\sum_{u \in A'} |f(u) - h(\theta(u))| + \sum_{u \notin A'} f(u)^\alpha + \sum_{u \notin \theta(A')} h(u)^\alpha + 2^{-\deg(\theta)}. \label{theta_distance}
}
The first sum above can be bounded as
\eeq{
\alpha\sum_{u \in A'} |f(u) - h(\theta(u))| &\leq \alpha\sum_{u \in A'} \bigl(|f(u) - g(\phi(u))| + |g(\phi(u)) - h(\psi(\phi(u)))|\bigr)  \\
&= \alpha\sum_{u \in A'} |f(u) - g(\phi(u))| + \alpha\sum_{u \in B \cap \phi(A)} |g(u) - h(\psi(u))|. \label{tri_bound_1}
}
Now, the Lipschitz norm of the function $t \mapsto t^\alpha$ on $[0,1]$ is $\alpha$, meaning
\eq{
f(u)^\alpha \leq \alpha|f(u)-g(v)| + g(v)^\alpha \quad \text{for any $u,v \in \N \times \Z^d$}.
}
As a result, the second sum in \eqref{theta_distance} satisfies
\eeq{
\sum_{u \notin A'} f(u)^{\alpha} &= \sum_{u \in A \setminus A'} f(u)^\alpha + \sum_{u \notin A} f(u)^\alpha  \\
&\leq \sum_{u \in A \setminus A'}\bigl( \alpha|f(u) - g(\phi(u))| + g(\phi(u))^\alpha\bigr) + \sum_{u \notin A} f(u)^\alpha  \\
&\leq \alpha\sum_{u \in A \setminus A'} |f(u) - g(\phi(u))| + \sum_{u \notin B} g(u)^\alpha + \sum_{u \notin A} f(u)^\alpha. \label{tri_bound_2}
}
Similarly, the third sum satisfies
\eeq{
\sum_{u \notin \theta(A')} h(u)^\alpha &= \sum_{u \in \psi(B) \setminus \theta(A')} h(u)^\alpha + \sum_{u \notin \psi(B)} h(u)^{\alpha}  \\
&\leq \sum_{u \in B \setminus \phi(A)} \bigl(\alpha|h(\psi(u)) - g(u)| + g(u)^\alpha\bigr) + \sum_{u \notin \psi(B)} h(u)^{\alpha}  \\
&\leq \alpha\sum_{u \in B \setminus \phi(A)} |g(u) - h(\psi(u))| + \sum_{u \notin \phi(A)} g(u)^{\alpha}+ \sum_{u \notin \psi(B)} h(u)^{\alpha}. \label{tri_bound_3}
}
Finally, Lemma \ref{composition} guarantees 
\eeq{
\deg(\theta) &\geq \min(\deg(\phi),\deg(\psi))  \\
\implies \quad 2^{-\deg(\theta)} &\leq 2^{-\deg(\phi)} + 2^{-\deg(\psi)}. \label{tri_bound_4}
}
Using \eqref{tri_bound_1}--\eqref{tri_bound_4} in \eqref{theta_distance}, we find
\eq{
d_\alpha(f,h) \leq d_{\alpha,\theta}(f,h) \leq d_{\alpha,\phi}(f,g) + d_{\alpha,\psi}(g,h) < d_\alpha(f,g) + d_\alpha(g,h) + 2\eps.
}
As $\eps$ is arbitrary, \eqref{triangle} follows.
\end{proof}

From \eqref{symmetry} and \eqref{triangle}, we see that $d_\alpha$ is a pseudometric on $\SS_0$.
It does not, however, separate points.
Nevertheless, one can construct a metric space $(\SS,d_\alpha)$ by taking the quotient of $\SS_0$ with respect to the equivalence relation
\eq{
f \equiv g \quad \iff \quad d_\alpha(f,g) = 0.
}
We call $\SS$ the space of \textit{partitioned subprobability measures}, and it naturally inherits the metric $d_\alpha$.
Corollary \ref{better_def_cor} below shows that for distinct $\alpha,\alpha' > 1$, we have $d_\alpha(f,g) = 0$ if and only if $d_{\alpha'}(f,g) = 0$.
Therefore, we are justified in not decorating the space $\SS$ with an $\alpha$ parameter, since $\SS_0/(d_\alpha = 0)$ is always the same set.

We shall write without confusion the symbol $f$ for both the equivalence class in $\SS$ and the representative chosen from $\SS_0$.
When $f$ is evaluated at a particular element $u \in \N \times \Z^d$, an explicit representative has been chosen.
In the sequel, it will be important that certain properties of elements of $\SS_0$ are invariant within the equivalence classes and thus well-defined in $\SS$.
To identify the equivalence classes, we have the following result.
%In checking that this is the case, the following technical result will be critical.

\begin{prop} \label{superisometry}
Two functions $f,g \in \SS_0$ satisfy $d_\alpha(f,g) = 0$ if and only if
there is a set $B\subset \N \times \Z^d$ and a map $\psi : B \to \N \times \Z^d$ such that 
\begin{itemize}
\item[(i)] $f(u) = g(\psi(u))$ for all $u \in B$,
\item[(ii)] $f(u) = 0$ for all $u \notin B$,
\item[(iii)] $g(u) = 0$ for all $u \notin \psi(B)$, and
\item[(iv)] $\psi(u) - \psi(v) = u - v$ for all $u,v \in B$.
\end{itemize}
\end{prop}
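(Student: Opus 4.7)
The plan is to prove the biconditional in the two natural directions, with $(\Leftarrow)$ being essentially a direct computation and $(\Rightarrow)$ requiring a compactness/diagonal argument.

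For $(\Leftarrow)$, assuming such a $\psi$ exists, I will exhaust the countable support $B \cap \{f > 0\}$ by finite subsets $A_k$ and take $\phi_k \coloneqq \psi|_{A_k}$. Condition (iv) ensures $\phi_k(u) - \phi_k(v) = u - v$ for all $u,v \in A_k$, so $\phi_k$ is an isometry with $\deg(\phi_k) = \infty$ and the $2^{-\deg}$ term in $d_{\alpha,\phi_k}(f,g)$ vanishes. Conditions (i), (ii), (iii) then pinch the remaining three sums to zero as $A_k \uparrow B \cap \{f > 0\}$, using that $\sum f(u)^\alpha \leq \sum f(u) \leq 1$ has vanishing tail (and similarly for $g$ on $\psi(B)$).

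For $(\Rightarrow)$, suppose $d_\alpha(f,g) = 0$. I choose isometries $\phi_n : A_n \to \N \times \Z^d$ with finite $A_n$ and $d_{\alpha,\phi_n}(f,g) < 1/n$; this forces $\deg(\phi_n) \to \infty$, $\sum_{u \notin A_n} f(u)^\alpha \to 0$, $\sum_{u \notin \phi_n(A_n)} g(u)^\alpha \to 0$, and $|f(u) - g(\phi_n(u))| < 1/(\alpha n)$ on $A_n$. Enumerate $\{f > 0\} = \{u_1, u_2, \ldots\}$. For each $i$, once $n$ is large, $u_i \in A_n$ and $g(\phi_n(u_i)) > f(u_i)/2$, so $\phi_n(u_i)$ lies in the finite set $\{v : g(v) > f(u_i)/2\}$. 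A standard diagonal extraction yields a subsequence along which $\phi_n(u_i)$ is eventually constant for every $i$; I define $\psi(u_i)$ to be this constant value and set $B \coloneqq \{u_i\}_{i \geq 1}$.

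Verifying (i)--(iv) is then largely bookkeeping. Property (i) follows from $|f(u_i) - g(\phi_n(u_i))| \to 0$ combined with the eventual constancy of $g(\phi_n(u_i))$; (ii) is immediate from the choice $B = \{f > 0\}$. For (iv), when $u_i - u_j$ is finite, the isometry condition \eqref{isometry} forces $\phi_n(u_i) - \phi_n(u_j) = u_i - u_j$ once $\deg(\phi_n) > \|u_i - u_j\|_1$, so passing to the limit gives $\psi(u_i) - \psi(u_j) = u_i - u_j$; the case $u_i - u_j = \infty$ is handled by contrapositive, since if $\psi(u_i)$ and $\psi(u_j)$ shared a copy of $\Z^d$ then the finite quantity $\|\psi(u_i) - \psi(u_j)\|_1$ would eventually be dominated by $\deg(\phi_n)$, forcing $\phi_n(u_i) - \phi_n(u_j) = \infty$ and contradicting finiteness. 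For (iii), given $v$ with $g(v) > 0$, one has $v \in \phi_n(A_n)$ for large $n$ with a preimage $u^{(n)}$ satisfying $f(u^{(n)}) > g(v)/2$; since this last condition defines a finite subset of $B$, pigeonhole produces some $u^* \in B$ with $\phi_n(u^*) = v$ infinitely often, which combined with $\phi_n(u^*) \to \psi(u^*)$ forces $v = \psi(u^*) \in \psi(B)$. The main obstacle is the $(\Rightarrow)$ direction, where the diagonal extraction and the case analysis for (iv) across distinct copies of $\Z^d$ demand careful use of the non-standard convention $(n,x) - (m,y) = \infty$ alongside the precise form of \eqref{isometry}; once $\psi$ is constructed, the remaining verifications are routine.
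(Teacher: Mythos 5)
Your proof is correct. The $(\Leftarrow)$ direction is essentially the paper's argument: restrict $\psi$ to an increasing sequence of finite sets, use (iv) to get an isometry of infinite degree, and let the tail sums vanish. The $(\Rightarrow)$ direction, however, takes a genuinely different route. The paper first establishes that $f$ and $g$ have the \emph{same set of positive values with the same multiplicities}, by comparing the finite level sets $\{u : f(u)=\kappa\}$ and $\{u : g(u)=\kappa\}$ under an isometry with $d_{\alpha,\phi}(f,g)$ below a threshold depending on $\kappa$ and the value gap; only then does it build $\psi$, directly when the common value set is finite, and by a pigeonhole argument over bijections of the nested finite sets $B_n = \{u : f(u)\in V_n\}$ when it is infinite. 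You skip this structural intermediate step: you fix $\phi_n$ with $d_{\alpha,\phi_n}(f,g)<1/n$, enumerate the support of $f$, observe that the tail of $\phi_n(u_i)$ is trapped in the finite super-level set $\{v : g(v) > f(u_i)/2\}$, and run a pointwise diagonal extraction to define $\psi$ directly. The verification of (i)--(iv) then recovers a posteriori what the paper establishes up front (for instance (i) gives the value correspondence). Your approach is more streamlined and uniform (no case split on whether the value set is finite); the paper's buys a cleaner intermediate picture of how the value spectra of $f$ and $g$ are forced to agree. Both hinge on the same finiteness mechanism supplied by $\|f\|,\|g\|\le 1$, and your handling of the degenerate case $u_i - u_j = \infty$ in (iv) correctly exploits the symmetry of the hypothesis in \eqref{isometry}.
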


\begin{proof}
The ``if" direction is straightforward to prove.
Suppose such a map $\psi$ exists.
For any $\eps > 0$, properties (ii) and (iii) allow us to take $A \subset B$ to be finite but large enough that
\eq{
\sum_{u \notin A} f(u)^\alpha + \sum_{u \notin \psi(A)} g(u)^\alpha < \eps. 
}
Let $\phi: A \to \N \times \Z^d$ be the restriction of $\psi$ to $A$.
Then (i) says $f(u) - g(\phi(u)) = 0$ for all $u \in A$, and (iv) shows $\deg(\phi) = \infty$.
Therefore, 
\eq{
d_\alpha(f,g) \leq d_{\alpha,\phi}(f,g) = \sum_{u \notin A} f(u)^\alpha + \sum_{u \notin \phi(A)} g(u)^\alpha < \eps.
}
As $\eps$ is arbitrary, we must have $d_\alpha(f,g) = 0$.

Now we prove the converse.
Assume $d_\alpha(f,g) = 0$.
Let $u_0$ be any element of $\N\times\Z^d$ with $f(u_0)>0$. 
With $\kappa \coloneqq  f(u_0)$, consider the sets $A \coloneqq  \{u \in\N\times\Z^d:f(u)=\kappa\}$ and $A' \coloneqq  \{u \in\N\times\Z^d:g(u)=\kappa\}$.
Because $\|g\|\le 1$, there is some $\delta \in(0,\kappa)$ such that 
if $g(u)\neq\kappa$, 
then $|\kappa-g(u)| \ge\delta$. Let $\eps = \min\{\kappa^\alpha,\alpha\delta\}$ and choose any isometry $\phi$ with finite domain $D$ such that $d_{\alpha,\phi}(f,g)<\eps$. 
Note that $A \subset D$, since otherwise there would exist some $u \in A\setminus D$ so that $\eps >d_{\alpha,\phi}(f,g) \geq f(u)^\alpha=\kappa^\alpha\ge \eps$. 
Moreover, every $u \in A$ must satisfy $g(\phi(u))=f(u)$, since otherwise
$\eps >d_{\alpha,\phi}(f,g)\ge \alpha|f(u)-g(\phi(u))| \ge \alpha\delta\ge\eps$. 
%whether $g(\phi(u))>0$
%or not.
Hence $\phi(A) \subset A'$, implying by injectivity of $\phi$ that the cardinality of $A$ is at most that of $C$.
Reversing the roles of $f$ and $g$ shows that these two sets actually have the same (finite) cardinality, meaning $\phi(A)=A'$. 
Since $u_0$ is arbitrary, it follows that the set of positive values of $f$ and those of $g$ are identical.
Denote this set by $V$.

First suppose $V$ is a finite set and let $\kappa$ denote the smallest element of $V$.
Take $\delta\in(0,\kappa)$ such that the distance between any two distinct elements of $V$ is at least $\delta$. 
Because $\|f\| \le 1$ and $\|g\|\le 1$, the sets
$B\coloneqq \{u : f(u)\in V\}$ and $B' \coloneqq  \{u : g(u)\in V\}$ are also finite. 
Therefore, there is an $m\in \N$ large enough that if $u,v\in B \cup B'$ with $\|u-v\|_1<\infty$, then $\|u-v\|_1<m$. 
Let $\eps = \min\{\kappa^\alpha, \alpha\delta,2^{-m}\}$ and
choose any isometry $\phi$ with finite domain $D$ such that $d_{\alpha,\phi}(f,g)<\eps$. 
By the same arguments as in the first paragraph, it follows that $B \subset D$, $g(\phi(u)) = f(u)$ for every $u \in B$, and $\phi(B) = B'$.
Hence $\psi \coloneqq  \phi\big|_{B}$ satisfies (i)--(iii).
Furthermore, it is immediate from the choice of $\eps$ that $\deg(\psi)\geq\deg(\phi)>m$. 
Consequently, whenever $u,v\in B$ satisfy $\|u-v\|_1<\infty$ (and thus $\|u-v\|_1<m$) or $\|\phi(u)-\phi(v)\|_1 < \infty$ (and thus $\|\phi(u)-\phi(v)\|_1 < m$), we have $\phi(u)-\phi(v) = u-v$. 
So $\psi$ also satisfies (iv).

Now suppose $V = \{\kappa_1 > \kappa_2 > \cdots\}$ is an infinite set.
For each $n\in \N$, let $V_n=\{\kappa_i : i=1,\dots, n\}$. 
Arguing as in the previous paragraph, we can find an isometry $\psi_n$ with domain $B_n\coloneqq \{u: f(u)\in V_n\}$ satisfying (i) and (iv) with $B$ replaced by $B_n$, and also satisfying $\psi_n(B_n) = B_n' \coloneqq \{u : g(u)\in V_n\}$. 
To complete the proof, we will identify a subsequence of $(\psi_n)_{n=1}^\infty$ that converges to the desired $\psi$. 
For each $n\ge1$, the function $\psi_n\big|_{B_1}$ maps $B_1$ bijectively onto $B_1'$.
Because there are only finitely many such mappings, there must be infinitely many $n_k$ that produce the same mapping.
Let $n_1$ denote the first such index (that is at least $2$) and take $\psi\big|_{B_1}$ to be equal to $\psi_{n_1}\big|_{B_1}$. 
Then repeat the argument along the subsequence $(\psi_{n_k})_{k=1}^\infty$ to define $\psi\big|_{B_2}$ as an extension of $\psi\big|_{B_2}$. 
Continued indefinitely, this procedure yields a map $\psi$ defined on all of $B \coloneqq  \bigcup_{n=1}^\infty B_n$, for which (i) and (iv) hold by construction.
Considering that $B=\{u : f(u)>0\}$ and 
\eq{
\psi(B)=\bigcup_{n=1}^\infty \psi(B_n) = \bigcup_{n=1}^\infty B_n' = \{u : g(u) > 0\},
}
(ii) and (iii) obviously hold as well.
\end{proof}

A more transparent description of equivalence under $d_\alpha$ was given in the introductory Section~\ref{endpoint_results}.
We restate it here, and prove equivalence to the conditions of Proposition \ref{superisometry}.
Recall that the $\N$-support of $f \in \SS_0$ is the set
\eeq{ \label{Nsupport_def}
H_f \coloneqq  \{n \in \N : f(n,x) > 0 \text{ for some $x \in \Z^d$}\}.
}

\begin{cor} \label{better_def_cor}
Let $f,g \in \SS_0$ have $\N$-supports denoted by $H_f$ and $H_g$, respectively.
Then $d_\alpha(f,g) = 0$ if and only if there is a bijection $\tau : H_f \to H_g$ and vectors $(x_n)_{n \in H_f} \subset \Z^d$ such that 
\eeq{
g(\tau(n),x) = f(n,x+x_n) \quad \text{for all $n \in H_f$, $x \in \Z^d$.} \label{better_def}
}
\end{cor}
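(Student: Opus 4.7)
The plan is to derive this corollary from Proposition \ref{superisometry} by recognizing that its abstract condition, the existence of a map $\psi : B \to \N \times \Z^d$ satisfying (i)--(iv), unpacks into precisely the bijection-plus-translation data in \eqref{better_def}. As a bonus, since \eqref{better_def} makes no reference to $\alpha$, this also verifies the earlier claim that the quotient space $\SS_0/(d_\alpha = 0)$ is independent of $\alpha > 1$.

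For the easy direction, given $\tau : H_f \to H_g$ and vectors $(x_n)_{n \in H_f}$, I would set $B \coloneqq \{u : f(u) > 0\}$ and $\psi(n,x) \coloneqq (\tau(n), x - x_n)$, then read off (i)--(iv) of Proposition \ref{superisometry} directly. Property (i) is a substitution $x \mapsto x - x_n$ in \eqref{better_def}; (ii) is the definition of $B$; (iii) uses the computation $\psi(B) = \{(\tau(n), y) : g(\tau(n), y) > 0\}$ together with the surjectivity of $\tau$ onto $H_g$; and (iv) holds because $\psi$ acts as the translation $x \mapsto x - x_n$ on each copy $\{n\} \times \Z^d$, while injectivity of $\tau$ ensures distinct input copies are sent to distinct output copies, so differences taking the value $\infty$ are preserved on both sides.

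For the converse, starting from any $\psi : B \to \N \times \Z^d$ satisfying (i)--(iv), I would first shrink $B$ to $B \cap \{f > 0\}$: conditions (ii) and (iv) are unaffected, and the points removed map via (i) to zeros of $g$, so (iii) still holds. Condition (iv), interpreted via the convention $\|(n,x) - (m,y)\|_1 = \infty$ when $n \neq m$, then forces two structural facts: first, if $(n,x), (n,y) \in B$ then $\psi(n,x)$ and $\psi(n,y)$ lie on the same copy of $\Z^d$ with second coordinates differing by $x - y$; second, if $n \neq n'$ and $(n,x), (n',x') \in B$, then the images lie on different copies. This lets me read off a well-defined injective $\tau : H_f \to \N$ and translation vectors $x_n \in \Z^d$ such that $\psi(n,x) = (\tau(n), x - x_n)$ on the shrunk $B$; surjectivity of $\tau$ onto $H_g$ then follows from (iii). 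Property (i) delivers the identity $g(\tau(n), x) = f(n, x + x_n)$ whenever $(n, x + x_n)$ lies in the (shrunk) $B$, and (iii) forces both sides to vanish in the remaining case, giving \eqref{better_def} on all of $\Z^d$.

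The main obstacle is really just the bookkeeping to extract from the single equation in (iv) the two separate pieces of structure (translation on each copy, plus a permutation of the copies themselves); once this is exposed, the definitions of $\tau$ and $(x_n)$ and the verification of \eqref{better_def} everywhere rather than only on the support of $f$ are routine.
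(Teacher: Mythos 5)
Your proof is correct, and it is a genuine variant of the paper's. Where the paper goes \emph{up} — extending the domain $B$ of $\psi$ to a full union of copies $H_f \times \Z^d$ via repeated application of Lemma \ref{extension} (possible because $\deg(\psi) = \infty$), which makes the extraction of $\tau$ and $(x_n)$ and the verification of \eqref{better_def} immediate on all of $\Z^d$ — you instead go \emph{down}, shrinking $B$ to the support $\{f>0\}$, reading off $\tau$ and $(x_n)$ from a single data point per copy, and then verifying \eqref{better_def} by splitting into the cases $f(n,x+x_n) > 0$ and $f(n,x+x_n) = 0$, the latter handled by the (correctly preserved) property (iii) plus injectivity of $\tau$. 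The paper's version is marginally shorter because the extension lemma does the global bookkeeping for you; yours has the modest advantage of being self-contained, not invoking Lemma \ref{extension}, at the cost of one extra case analysis outside the support. Your observation that this confirms $\alpha$-independence of the quotient is also apt — it is exactly what the paper notes right after Proposition \ref{superisometry}.
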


\begin{proof}
First assume $d_\alpha(f,g) = 0$, and take $\psi : B \to \N \times \Z^d$ as in Proposition \ref{superisometry} so that properties (i)--(iv) hold.
By repeatedly applying Lemma \ref{extension},
one may assume that the domain $B$ is a union of copies of $\Z^d$; that is, $B = H \times \Z^d$.
By property (ii), we can take $H = H_f$, while property (iv) shows first that for every $n \in H_f$, there is $\tau(n) \in \N$ and $x_n \in \Z^d$ satisfying
\eq{
\psi(n,x) = (\tau(n),x-x_n) \quad \text{for all $x \in \Z^d$,}
}
and second that $n \mapsto \tau(n)$ is injective.
Then (i) leads to \eqref{better_def}, while (iii) guarantees that $\tau(H_f) = H_g$. Conversely, assume $\tau : H_f \to H_g$ and $(x_n)_{n \in H_f}$ satisfy \eqref{better_def}.
Then define $\psi : H_f \times \Z^d \to H_g \times \Z^d$ by $\psi(n,x) \coloneqq  (\tau(n),x-x_n)$.
Since the domain of $\psi$ is $H_f \times \Z^d$, and its range is $H_g \times \Z^d$, this map satisfies (ii) and (iii).
At the same time, (iv) holds by construction, and (i) follows from~\eqref{better_def}.
\end{proof}

We can now state a clear geometric condition for a function on $\SS_0$ to be well-defined on $\SS$.
For the functions that will be of interest to us, it will be obvious that the hypotheses of the following corollary are satisfied.
For instance, the norm functional $\|\cdot\|$ defined in \eqref{norm_def} does not depend on the choice of representative, and we can thus safely write $\|f\|$ without reference to a particular representative.

\begin{cor} \label{defined_pspm}
Suppose $L$ is a function on $\SS_0$ satisfying the following:
\begin{itemize}
\item[(i)] L is invariant under shifts of $\Z^d$: If there are vectors $(x_n)_{n \in \N}$ in $\Z^d$ such that $f(n,x) = g(n,x-x_n)$, then $L(f) = L(g)$.
\item[(ii)] L is invariant under permutations of $\N$: If there is a bijection $\tau : \N \to \N$ such that $f(n,x) = g(\tau(n),x)$, then $L(f) = L(g)$.
\item[(iii)] L is invariant under zero-padding: If there is an increasing sequence $(n_k)_{k \geq 1}$ in $\N$ such that
\eq{
f(n,x) = \begin{cases}
g(k,x) &\text{if } n = n_k, \\
0 &\text{otherwise},
\end{cases}
}
then $L(f) = L(g)$.
\end{itemize}
Then $L$ is well-defined on $\SS$ by evaluating at any representative in $\SS_0$.
\end{cor}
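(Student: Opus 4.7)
The plan is to use Corollary \ref{better_def_cor} to give an explicit description of the equivalence relation $d_\alpha(f,g) = 0$, and then to factor the passage from $f$ to $g$ into three elementary moves, each handled by one of the invariances (i)--(iii).

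Suppose $f, g \in \SS_0$ with $d_\alpha(f, g) = 0$. By Corollary \ref{better_def_cor}, there exist a bijection $\tau : H_f \to H_g$ and vectors $(x_n)_{n \in H_f} \subset \Z^d$ with $g(\tau(n), x) = f(n, x + x_n)$ for all $n \in H_f$ and $x \in \Z^d$. First I would extend $x_n := 0$ for $n \notin H_f$ and set $f^{(1)}(n, x) := f(n, x + x_n)$; since $f$ vanishes off $H_f \times \Z^d$, this extension is harmless, and invariance (i) yields $L(f) = L(f^{(1)})$. By construction, $f^{(1)}$ has $\N$-support $H_f$ and satisfies $f^{(1)}(n, \cdot) = g(\tau(n), \cdot)$ there.

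Next I would collapse any gaps in the $\N$-supports. Enumerate $H_f = \{m_1 < m_2 < \cdots\}$ and $H_g = \{\ell_1 < \ell_2 < \cdots\}$; these sequences have the common cardinality $s \in \N \cup \{\infty\}$ because $\tau$ is a bijection. Define $f^{(2)}(k, x) := f^{(1)}(m_k, x)$ and $g^{(2)}(k, x) := g(\ell_k, x)$ for $1 \leq k \leq s$, and zero otherwise. When $s = \infty$ this is immediately a zero-padding in the sense of (iii); when $s$ is finite, I would extend $(m_k)$ and $(\ell_k)$ to arbitrary infinite strictly increasing sequences so that the hypothesis of (iii) applies literally, with $f^{(2)}$ and $g^{(2)}$ vanishing past index $s$. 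Two invocations of (iii) then give $L(f^{(1)}) = L(f^{(2)})$ and $L(g) = L(g^{(2)})$.

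Finally, the bijection $\tau$ descends to a bijection $\tilde\tau$ of $\{1,\dots,s\}$ via $\ell_{\tilde\tau(k)} = \tau(m_k)$; extending by the identity on $\N \setminus \{1,\dots,s\}$ when $s < \infty$ makes $\tilde\tau$ a permutation of $\N$. The equality $f^{(2)}(k, x) = g^{(2)}(\tilde\tau(k), x)$ then holds for every $k \in \N$ and $x \in \Z^d$: for $k \leq s$ by construction, and for $k > s$ because both sides vanish. Invariance (ii) yields $L(f^{(2)}) = L(g^{(2)})$, and chaining the equalities gives $L(f) = L(g)$. No single step is difficult; the work is in arranging each elementary move so that its input exactly matches the hypothesis of the relevant invariance --- in particular, extending partial bijections and partial sequences to all of $\N$. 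The proof really just verifies that properties (i)--(iii) have been \emph{designed} to account for the three degrees of freedom present in Corollary \ref{better_def_cor}: shifts within each copy, permutations of copies, and insertion or deletion of zero copies.
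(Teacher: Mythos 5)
Your proof is correct and follows essentially the same route as the paper's: apply Corollary \ref{better_def_cor} to get the bijection $\tau : H_f \to H_g$ and the shift vectors, then peel off the shifts via (i), collapse the supports to initial segments via (iii), and absorb the induced permutation via (ii). The one small structural difference is that the paper splits into cases on whether $H_f$ is finite or infinite, handling the finite case without invoking (iii) by extending $\tau$ directly to a bijection of $\N$, whereas you treat both cases uniformly by always first zero-collapsing through (iii) — a harmless and arguably cleaner organization.
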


\begin{proof}
Suppose $f,g \in \SS_0$ are such that $d_\alpha(f,g) = 0$.
We wish to show that $L(f) = L(g)$.
Let $H_f$ and $H_g$ be the $\N$-supports of $f$ and $g$ respectively, and let $\tau : H_f \to H_g$ be a bijection satisfying \eqref{better_def} with vectors $(x_n)_{n \in H_f}$.
If $H_f$ is finite, then $\tau$ can be trivially extended to a bijection on $\N$, and then invariance properties (i) and (ii) are sufficient to show $L(f) = L(g)$.
If $H$ is infinite, then we enumerate the sets
\eq{
H_f = \{n_1 < n_2 < \cdots\}, \qquad H_g = \{m_1 < m_2 < \cdots\},
}
and define the functions $h_f \in \SS_0$ by $h_f(k,x) = f(n_k,x)$ and $h_g(\ell,x) = g(m_\ell,x)$.
By (iii), we have $L(f) = L(h_f)$ and $L(g) = L(h_g)$.
Furthermore, $\tau$ induces a bijection $\tau' : \N \to \N$ by
\eq{
\tau : n_k \mapsto m_\ell \quad \iff \quad \tau' : k \mapsto \ell.
}
We now have
\eq{
h_f(k,x) = f(n_k,x) = g(\tau(n_k),x - x_n) = h_g(\tau'(k),x - x_n),
}
so that (i) and (ii) give $L(h_f) = L(h_g)$.
So $L(f) = L(g)$ in this case as well.
\end{proof}

In the next lemma we discuss our first example of a function $L$ satisfying the hypotheses of Corollary \ref{defined_pspm}.
This function will be important in defining the ``update procedure" of Section~\ref{transformation}. 

\begin{lemma} \label{norm_equivalence}
The map $\|\cdot\| : \SS \to [0,1]$ defined by \eqref{norm_def} is lower semi-continuous and thus measurable.
\end{lemma}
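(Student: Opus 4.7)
The plan is to first verify that $\|\cdot\|$ descends to a well-defined function on $\SS$ and then to show that this function is lower semi-continuous on $(\SS,d_\alpha)$; Borel measurability will then be automatic, since any lower semi-continuous function on a metric space is Borel. For well-definedness I would simply appeal to Corollary \ref{defined_pspm} and verify its three invariance conditions in turn. Shifts of $\Z^d$ and permutations of $\N$ merely reindex the sum $\sum_{u \in \N \times \Z^d} f(u)$, and zero-padding only inserts vanishing summands; in each case the total is unchanged.

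For lower semi-continuity, I would fix $f_n \to f$ in $(\SS, d_\alpha)$ and aim to show $\|f\| \leq \liminf_n \|f_n\|$. Given $\eps > 0$, choose a finite set $A \subset \N \times \Z^d$ with $\sum_{u \in A} f(u) > \|f\| - \eps$, which is possible because the series defining $\|f\|$ converges. For each $n$, pick an isometry $\phi_n : D_n \to \N \times \Z^d$ with finite domain satisfying $d_{\alpha, \phi_n}(f_n, f) \leq d_\alpha(f_n, f) + 1/n =: \delta_n$, so that $\delta_n \to 0$.

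The next step is to split the sum over $A$ as
\[
\sum_{u \in A} f(u) = \sum_{u \in A \cap \phi_n(D_n)} f(u) + \sum_{u \in A \setminus \phi_n(D_n)} f(u).
\]
For the second piece, H\"older's inequality with exponents $\alpha$ and $\alpha/(\alpha-1)$ gives
\[
\sum_{u \in A \setminus \phi_n(D_n)} f(u) \leq |A|^{1-1/\alpha} \bigg( \sum_{u \notin \phi_n(D_n)} f(u)^\alpha \bigg)^{\!1/\alpha} \leq |A|^{1-1/\alpha}\, \delta_n^{1/\alpha},
\]
where the last bound uses that $\sum_{u \notin \phi_n(D_n)} f(u)^\alpha$ appears as a nonnegative summand of $d_{\alpha,\phi_n}(f_n,f)$. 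For the first piece, I would change variables via the bijection $\phi_n : D_n \to \phi_n(D_n)$ and apply the pointwise inequality $f(\phi_n(v)) \leq f_n(v) + |f(\phi_n(v)) - f_n(v)|$ to obtain
\[
\sum_{u \in A \cap \phi_n(D_n)} f(u) \leq \sum_{v \in D_n} f_n(v) + \sum_{v \in D_n} |f(\phi_n(v)) - f_n(v)| \leq \|f_n\| + \delta_n/\alpha,
\]
using that the weighted $L^1$ term $\alpha \sum_{v \in D_n} |f_n(v) - f(\phi_n(v))|$ also appears in $d_{\alpha,\phi_n}(f_n,f)$. Combining the two bounds and letting $n \to \infty$ yields $\|f\| - \eps \leq \liminf_n \|f_n\|$, and sending $\eps \to 0$ completes the argument.

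I do not expect a serious obstacle here; the only real content is recognizing that the terms in $d_{\alpha, \phi_n}$ are tailored precisely to control the two sources of error in this split — the mismatch between $f$ and $f_n$ on paired points, and the $f$-mass living outside the image $\phi_n(D_n)$. The fact that $A$ is held fixed while $\delta_n \to 0$ means the constant $|A|^{1-1/\alpha}$ is harmless, and no structural property of the isometries (such as the translation structure or the degree) enters the estimate.
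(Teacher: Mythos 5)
Your proof is correct, and it takes a genuinely different route from the paper's. The paper works with the $\eps$-$\delta$ definition of lower semi-continuity directly: it shrinks $A$ so that $f$ is strictly positive there, then picks $\delta < \inf_{u\in A} f(u)^\alpha$; this choice forces the domain $C$ of any isometry with $d_{\alpha,\phi}(f,g)<\delta$ to contain all of $A$, so every point of $A$ is paired to some point where $g$ can be compared, and the leftover term $\sum_{u\notin A} f(u)$ is controlled by the initial choice of $A$. You instead work with the sequential characterization and make no attempt to force $A \subset \phi_n(D_n)$; you split $A$ into the part paired by $\phi_n$ and the remainder, and control the remainder by H\"older against the tail term $\sum_{u\notin\phi_n(D_n)} f(u)^\alpha$ in $d_{\alpha,\phi_n}(f_n,f)$. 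This sidesteps the need to assume $f>0$ on $A$ and to fine-tune $\delta$ against $\inf_A f^\alpha$, at the cost of introducing the harmless constant $|A|^{1-1/\alpha}$. Both arguments are clean; yours is slightly more robust because it relies on nothing about the finer structure of the chosen $A$, only its finiteness, while the paper's is a bit shorter because the containment $A\subset C$ lets it avoid the two-way split entirely. One small caution: when you write $d_{\alpha,\phi_n}(f_n,f)$ you have fixed representatives of $f_n$ and $f$ in $\SS_0$ (as the paper also implicitly does), and then the isometry $\phi_n$ has $f_n$ on the domain side and $f$ on the range side, which is what makes both the $L^1$ term $\alpha\sum_{v\in D_n}|f_n(v)-f(\phi_n(v))|$ and the tail term $\sum_{u\notin\phi_n(D_n)} f(u)^\alpha$ land exactly where you need them — worth stating explicitly to avoid confusion about which variable sits where in $d_{\alpha,\phi}$.
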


It is worth noting to the reader that the proof below %of Lemma \ref{norm_equivalence} 
is similar to the arguments for several later results. %appearing throughout the rest of the chapter.
%Since one encounters in these cases only variations of this proof, we will frequently spare the reader details and place them in Appendix \ref{measurability_support_number}.

\begin{proof}[Proof of Lemma \ref{norm_equivalence}]
It is clear that $\|\cdot\| : \SS_0 \to [0,1]$ satisfies (i)--(iii) in Corollary \ref{defined_pspm}, and so the map $f \mapsto \|f\|$ is well-defined on $\SS$.
To prove lower semi-continuity, it suffices to fix $f \in \SS$, let $\eps > 0$ be arbitrary, and find $\delta > 0$ sufficiently small that
\eq{
d_\alpha(f,g) < \delta \quad \implies \quad \|g\| > \|f\| - \eps.
}
Upon selecting a representative $f \in \SS_0$, we can find $A \subset \N \times \Z^d$ finite but large enough that
$\sum_{u \notin A} f(u) < \frac{\eps}{2}$.
By possibly omitting some elements of $A$, we may assume that $f$ is strictly positive on $A$ (if $f$ is the constant zero function, this results in $A = \varnothing$).

Now take $0 < \delta < \inf_{u \in A} f(u)^\alpha$, where an infimum over the empty set is $\infty$.
We will further assume $\delta/\alpha < \eps/2$.
If $d_\alpha(f,g) < \delta$, then there is a representative $g \in \SS_0$ and an isometry $\phi : C \to \N \times \Z^d$ such that $d_{\alpha,\phi}(f,g) < \delta$.
It follows that $A \subset C$, since otherwise we would have $d_{\alpha,\phi}(f,g) \geq f(u)^\alpha > \delta$ for some $u \in A \setminus C$.
Hence
\eq{
\|g\| \geq \sum_{u \in \phi(A)} g(u)
&\geq \sum_{u \in A} f(u) - \sum_{u \in A} |f(u) - g(\phi(u))| + \sum_{u \notin A} f(u) - \sum_{u \notin A} f(u) \\
&> \|f\| - \frac{d_{\alpha,\phi}(f,g)}{\alpha} - \frac{\eps}{2} > \|f\| - \frac{\delta}{\alpha} - \frac{\eps}{2} > \|f\| - \eps.
}
\end{proof}

\subsection{Compactness} \label{compactness}
We now state %and prove
the key compactness result for the metric space $(\SS,d_\alpha)$.

\begin{thm} \label{compactness_result}
$(\SS,d_\alpha)$ is a compact metric space.
\end{thm}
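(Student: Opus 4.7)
Since $(\SS, d_\alpha)$ is a metric space, it suffices to establish sequential compactness, and I will produce a convergent subsequence from an arbitrary $(f_n)_{n \geq 1} \subset \SS$ by a concentration-compactness style extraction. Choose representatives in $\SS_0$ and list the positive values of each $f_n$ in non-increasing order as $a_1(f_n) \geq a_2(f_n) \geq \cdots$ at positions $u_k(f_n) \in \N \times \Z^d$, breaking ties by any fixed rule. The bound $\|f_n\| \leq 1$ forces $a_k(f_n) \leq 1/k$. A first diagonal extraction produces a subsequence along which $a_k(f_n) \to a_k \in [0, 1/k]$ for every $k \geq 1$. A second diagonal extraction then ensures that for every pair $(k, j)$ with $a_k, a_j > 0$, either (i) $u_k(f_n) - u_j(f_n)$ is eventually constant equal to some $v_{kj} \in \Z^d$, or (ii) $\|u_k(f_n) - u_j(f_n)\|_1 \to \infty$; this is possible because any sequence in $\Z^d \cup \{\infty\}$ is either bounded in a fixed copy of $\Z^d$ (hence eventually constant on a subsequence) or has norm diverging on a subsequence.

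Declare $k \sim j$ when (i) holds; transitivity follows from $v_{kj} + v_{j\ell} = v_{k\ell}$, so $\sim$ is an equivalence relation on $\{k : a_k > 0\}$ with classes $C_1, C_2, \ldots$ that will play the role of the distinct copies of $\Z^d$ in the limit. Pick references $k_\ell \in C_\ell$, so that $v_{k_\ell, k_\ell} = 0$, and define the candidate limit $f \in \SS_0$ by placing mass $a_j$ at the point $(\ell, v_{j, k_\ell}) \in \N \times \Z^d$ for each $j \in C_\ell$. Distinct indices within a class produce distinct positions (otherwise $u_j(f_n) = u_{j'}(f_n)$ eventually, violating the injective enumeration of positive values), and Fatou's lemma gives $\|f\| = \sum_j a_j \leq \liminf_n \|f_n\| \leq 1$, confirming $f \in \SS_0$.

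To verify $d_\alpha(f_n, f) \to 0$ along the extracted subsequence, fix $\eps > 0$ and choose $K$ so that $\sum_{k > K} k^{-\alpha} < \eps/4$ (finite since $\alpha > 1$) and $2^{-K} < \eps/4$. Let $A$ be the finite set of positions of $f$ corresponding to its top $K$ values, and define $\phi_n : A \to \N \times \Z^d$ by $\phi_n(\ell, v_{j, k_\ell}) \coloneqq u_j(f_n)$. For $n$ large this is well-defined, and the isometry condition \eqref{isometry} is satisfied with $\deg(\phi_n) \to \infty$: within each class $C_\ell$ property (i) makes $\phi_n$ preserve differences exactly, while for cross-class pairs $u, v \in A$ one has $u - v = \infty$ in $f$ and $\|\phi_n(u) - \phi_n(v)\|_1 \to \infty$ in $f_n$ by (ii), so the hypothesis of \eqref{isometry} is vacuous on any prescribed scale once $n$ is large enough. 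The four terms in $d_{\alpha, \phi_n}(f, f_n)$ are then controlled separately: the matching sum consists of $K$ terms, each tending to zero by $a_j(f_n) \to a_j$; the two tail sums $\sum_{u \notin A} f(u)^\alpha$ and $\sum_{u \notin \phi_n(A)} f_n(u)^\alpha$ are each at most $\sum_{k > K} k^{-\alpha} < \eps/4$ via the uniform bound $a_k, a_k(f_n) \leq 1/k$; and $2^{-\deg(\phi_n)} < \eps/4$ eventually. Hence $d_\alpha(f_n, f) \leq d_{\alpha, \phi_n}(f, f_n) < \eps$ for all large $n$.

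The main obstacle is marshalling the double diagonal extraction together with the verification that the isometry built from the top-$K$ clusters has degree growing to infinity. This works precisely because the metric treats distinct copies of $\Z^d$ as infinitely far apart: mass of $f_n$ that fragments into clusters drifting to infinity within a single copy is indistinguishable, in the limit, from mass distributed across separate copies, and this is exactly the identification that the isometry $\phi_n$ is engineered to encode.
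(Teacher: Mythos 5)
Your argument is, in essence, the same concentration-compactness extraction as the paper's: order the masses of each $f_n$, diagonalize to obtain limits of the sizes $a_k$ and of the pairwise position differences in the discrete one-point compactification $\Z^d\cup\{\infty\}$, cluster indices by the equivalence ``difference converges in $\Z^d$'' to build a limit $f$ spread across several copies of $\Z^d$, and then construct finite-domain isometries whose degree diverges with $n$. The Fatou verification of $\|f\|\le 1$ and the dichotomy you use instead of the explicit $\ell(r)$-induction are cosmetic variations.

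There is one small bookkeeping gap. You take $A$ to be the positions carrying the top $K$ values of $f$, but when $f$ has only $K'<K$ positive atoms, $A$ has $K'$ elements, $\phi_n(A)=\{u_1(f_n),\dots,u_{K'}(f_n)\}$, and the tail
$\sum_{u\notin\phi_n(A)}f_n(u)^\alpha=\sum_{j>K'}a_j(f_n)^\alpha$
contains the $K-K'$ terms with $K'<j\le K$. These are not controlled by $\sum_{k>K}k^{-\alpha}$ as you assert. The fix is easy --- each such $a_j(f_n)^\alpha\to a_j^\alpha=0$, so their contribution is eventually below $\eps/8$, say --- or you can sidestep the issue entirely as the paper does, by assigning a limit position $v_k$ to \emph{every} index $k$ (also when $a_k=0$) and defining $\phi_{n,K}$ on the top-$K$ positions of $f_n$ rather than of $f$, which keeps both tail sums indexed over $j>K$.
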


A continuum version of this result is \cite[Theorem 3.2]{mukherjee-varadhan16}. %, and we make the comparison precise in Appendix \ref{compare_topologies}.
The proof of Theorem \ref{compactness_result}, however, strongly uses discreteness of $\Z^d$ to deal with the explicit metric $d_\alpha$.
%It can be found in Appendix \ref{proof_compactness}.

\begin{proof}
Since compactness is equivalent to sequential compactness in metric spaces, it suffices to prove that every sequence $(f_n)_{n \geq 1}$ in $(\SS,d_\alpha)$ contains a converging subsequence.  
That is, there is $f \in \SS$ and a subsequence $(f_{n_k})_{k \geq 1}$ such that
\eq{
\lim_{k \to \infty} d_\alpha(f_{n_k},f) = 0.
}

For each $n$, fix a representative $f_n \in \SS_0$. 
The elements of $\N \times \Z^d$ can be ordered so that $f_n$ attains its $k^{\text{th}}$ largest value at the $k^{\text{th}}$ element on the list.
Ties are broken according to some arbitrary but fixed rule (e.g.~assigning an order to $\Z^d$ and following the lexicographic ordering induced on $\N \times \Z^d$).
This information is recorded by defining
\eq{
u_{n, k} \coloneqq  u \in \N \times \Z^d \text{ at which $f_n$ attains its $k^{\text{th}}$ largest value}.
}
Of interest will be the quantities
\eq{
t_n(k,\ell) \coloneqq  u_{n, k} - u_{n, \ell} \in \Z^d \cup \{\infty\}.
}
Here we consider $\Z^d \cup \{\infty\}$ as the one-point compactification of $\Z^d$ (cf.~Steen and Seebach \cite[page 63]{seebach-steen78}).\footnote{The $\ell^1$ metric on $\Z^d$ induces the discrete topology, so every subset of $\Z^d$ is open.  
In addition, if $K \subset \Z^d$ is compact (here equivalent to bounded), then $\{\infty\} \cup (\Z^d \setminus K)$ is defined to be open.  Any open cover of $\Z^d \cup \{\infty\}$ must contain a set of the form $\{\infty\} \cup (\Z^d \setminus K)$. 
The complement $\Z^d \setminus K$ is a finite set.
The open cover, therefore, must have a finite subcover.}
Since this space is second countable, compactness is equivalent to sequential compactness (see \cite[page 22]{seebach-steen78}).
Consequently, by passing to a subsequence, we may assume that for every $k,\ell \in \N$, there is $t(k,\ell) \in \Z^d \cup \{\infty\}$ so that
\eq{
t_n(k,\ell) \to t(k,\ell) \quad \text{as } n \to \infty.
}
Because convergence takes place in the discrete space $\Z^d \cup \{\infty\}$, either $t(k,\ell) \in \Z^d$ or $t(k,\ell) = \infty$.
In the former case, $t_n(k,\ell) = t(k,\ell)$ for all $n$ sufficiently large.
On the other hand, $t(k,\ell) = \infty$ when either $t_n(k,\ell)$ is finite infinitely often but $\liminf_n \|t_n(k,\ell)\|_1 = \infty$, or $t_n(k,\ell) = \infty$ for all $n$ sufficiently large.

Since $\|f_n\| \leq 1$, we necessarily have
\eeq{
0 \leq f_n(u_{n, k}) \leq \frac{1}{k} \quad \text{for all } n,k. \label{f_n_bound}
}
So by passing to a further subsequence, it may be assumed that
\eeq{
\eps_k \coloneqq  \lim_{n\to\infty} f_n(u_{n, k}) \quad \text{exists for all } k, \label{eps_convergence}
}
where $\eps_k$ satisfies
\eeq{
0 \leq \eps_k \leq \frac{1}{k}. \label{eps_bound}
}
Next, inductively define
\eq{
\ell(1) &\coloneqq  1, \\
\ell(r) &\coloneqq  \min\{k > \ell(r-1)\ :\ t(k,\ell(s)) = \infty \text{ for all } s \leq r - 1\}, \quad r \geq 2.
}
It may be the case that only finitely many $\ell(r)$ can be defined, if the set considered in the definition is empty for some $r$.
In any case, let
\eq{
R \coloneqq  \text{number of $r$ for which $\ell(r)$ is defined} \leq \infty.
}
Clearly each $\ell(r)$ is distinct.
In words, $\ell(r)$ is the next integer past $\ell(r-1)$ such that the distance between $u_{n, \ell(r)}$ and each of $u_{n, \ell(1)}, u_{n, \ell(2)}, \dots ,u_{n, \ell(r-1)}$ is tending to $\infty$ with $n$.

Consider any fixed $k \in \N$.
There is some $r$ such that $t(k,\ell(r))$ is finite.
For instance, if $k = \ell(r)$, then $t(k,\ell(r)) = t(\ell(r),\ell(r)) = 0$.
If $\ell(r) < k < \ell(r+1)$, then $t(k,\ell(s))$ is finite for some $s \leq r$, by the definition of $\ell(r+1)$.
Similarly, if $R < \infty$ and $k > \ell(R)$, then $t(k,\ell(s))$ is finite for some $s \leq R$.
Now given the existence of $r$ with $t(k,\ell(r))$ finite, we claim there is a unique such $r$.
%This is clear if $k = \ell(r)$ for some $r \leq R$.
Indeed, if $t(k,\ell(r))$ and $t(k,\ell(r'))$ are both finite, then
\eq{
t(\ell(r),\ell(r')) 
&= \lim_{n \to \infty} (u_{n, \ell(r)} - u_{n, \ell(r')})\\
&= \lim_{n \to \infty} \bigl(-(u_{n, k} - u_{n, \ell(r)}) + (u_{n, k} - u_{n, \ell(r')}) \bigr)\\
&= -t(k,\ell(r)) + t(k,\ell(r'))
\ne \infty,
}
which forces $r = r'$.
This discussion shows it is possible to define
\eq{
r_k \coloneqq  \text{unique $r$ such that } t(k,\ell(r)) \text{ is finite.}
}
Let us summarize the construction thus far.
For each $k \in \N$, consider the point
\eq{
v_k \coloneqq  \Big(r_k, t\big(k,\ell(r_k)\big)\Big) \in \N \times \Z^d.
}
That is, $v_k$ is in the $r_k^{\text{th}}$ copy of $\Z^d$, at the finite limit point of $u_{n, k} - u_{n, \ell(r_k)}$.
Moreover, $r_k$ is the unique $r$ such that $u_{n, k} - u_{n, \ell(r)}$ converges to a finite limit.
So there are $R$ copies of $\Z^d$ populated by the $v_k$, the $r^{\text{th}}$ copy containing those $v_k$ for which $u_{n, k}$ and $u_{n, \ell(r)}$ remain close as $n$ grows large.
The actual point in that copy of $\Z^d$, at which $v_k$ is located, encodes the limiting difference $t(k,\ell(r))$, prescribing exactly how those two locations are separated for large $n$.
More precisely, there is some $N_k$ such that
\eeq{
u_{n, k} - u_{n, \ell(r_k)} = t\big(k,\ell(r_k)\big) \in \Z^d \quad \text{for all $n \geq N_k$}. \label{relative_finite}
}
Finally, recall that these locations give the order statistics on the values of $f_n$.
The first coordinate of $v_k$ indicates that the location of the $k^{\text{th}}$ largest value of $f_n$ remains close to the location of the $\ell(r_k)^{\text{th}}$ largest value of $f_n$, as $n$ tends to infinity.
The $R$ different locations of the $\ell(r)^{\text{th}}$ largest values of $f_n$ may not converge, but they serve as moving reference points to which all other locations remain close.

With the definitions made above, it is now possible to construct the limit function $f$.
Set
\eq{
f(u) \coloneqq  \begin{cases} 
\eps_k &\text{if } u = v_k, \\
0 &\text{otherwise.}
\end{cases}
}
In order for $f$ to be well-defined, it must be checked that the $v_k$ are distinct.
Suppose $v_k = v_{k'}$.
That is, $r_{k} = r_{k'}$ and $t\big(k,\ell(r_k)\big) = t\big(k',\ell(r_{k'})\big)$, meaning that for all $n \geq \max\{N_k,N_{k'}\}$,
\eq{
u_{n, k} - u_{n, \ell(r_k)} 
= t\big(k,\ell(r_k)\big)
&= t\big(k',\ell(r_{k'})\big) 
= u_{n, k'} - u_{n, \ell(r_{k'})} 
= u_{n, k'} - u_{n, \ell(r_k)}.
}
It follows that $u_{n, k} = u_{n, k'}$ for all sufficiently large $n$.
That this holds for even one value of $n$ implies $k = k'$.

In order for $f$ to be an element of $\SS_0$, it must be checked that $\|f\| \leq 1$
($f$ is nonnegative due to \eqref{eps_bound}), which is a straightforward task:
Let $K \in \N$ be fixed.
Given $\eps > 0$, from \eqref{eps_convergence} we may choose $N$ large enough that
\eq{
|f_N(u_{N, k}) - \eps_k| \leq\frac{\eps}{K} \quad \text{for } k = 1,2,\dots,K.
}
Then
\eq{
\sum_{k = 1}^K \eps_k \leq \sum_{k = 1}^K \bigg(f_N(u_{N, k}) + \frac{\eps}{K}\bigg) \leq \|f_N\| + \eps \leq 1 + \eps.
}
As $\eps$ is arbitrary, it follows that
\eq{
\sum_{k = 1}^K f(v_k) = \sum_{k=1}^K \eps_k \leq 1.
}
Letting $K$ tend to infinity yields the bound $\|f\| \leq 1$.

In some sense, the remaining goal of the proof is to show that $f_n(v_k) \to \eps_k$.
Of course, this is not true.
Rather, the pointwise convergence of $(f_n)_{n \geq 1}$ is given by \eqref{eps_convergence}.
Nevertheless, we have the convergence \eqref{relative_finite} of \textit{relative} coordinates.
Furthermore, for any $k$ and $k'$ such that $r_k \neq r_{k'}$, we have $t(k,k') = \infty$. 
Therefore, for any $M > 0$, there is $N_{k, k'}$ such that
\eeq{
\|u_{n, k} - u_{n, k'}\|_1 \geq M \quad \text{for all $n \geq N_{k, k'}$.} \label{relative_infinite}
}
So the desired convergence will hold after pre-composing $f$ with a suitable isometry.
The correct choice of isometry is described below.

Temporarily fix $K \in \N$.
For $n \in \N$, define $\phi_{n, K} : \{u_{n, 1},u_{n, 2},\dots,u_{n, K}\} \to \N \times \Z^d$ by
\eq{
\phi_{n, K}(u_{n, k}) \coloneqq  v_k, \quad k = 1,2,\dots,K.
}
%It has been shown that the $v_k$ are distinct, and so $\phi_{n, K}$ is injective.
Consider the case when $n \geq N_k$ for all $k = 1,2,\dots,K$.
Then \eqref{relative_finite} guarantees that $u_{n, k} - u_{n, \ell(r_k)} = t\big(k,\ell(r_k)\big)$ for each $k = 1,2,\dots,K$.
Consequently, for $1 \leq k,k' \leq K$ we have
\eq{
r_k = r_{k'} \quad &\implies \quad u_{n, k} - u_{n, k'} = t\big(k,\ell(r_k)\big) - t\big(k',\ell(r_{k'})\big)
= v_k - v_{k'} 
= \phi_{n, K}(u_{n, k}) - \phi_{n, K}(u_{n, k'}).
}
If it is further the case that $n \geq N_{k, k'}$ for all $1 \leq k,k' \leq K$ with $r_k \neq r_{k'}$, then \eqref{relative_infinite} shows
\eq{
r_k \neq r_{k'} \quad &\implies \quad 
\|u_{n, k} - u_{n, k'}\|_1 \geq M \quad \text{and} \quad \|\phi_{n, K}(u_{n, k}) - \phi_{n, K}(u_{n, k'})\|_1 = \|v_k - v_{k'}\|_1 = \infty.
}
Together, the previous two displays ensure $\deg(\phi_{n, K}) \geq M$ for large $n$.
Moreover, since $M$ in \eqref{relative_infinite} can be chosen arbitrarily large, we conclude that for fixed $K$,
\eeq{
\lim_{n \to \infty} \deg(\phi_{n, K}) = \infty. \label{degtoinf}
}
One can now verify convergence of $f_n$ to $f$ under the metric $d_\alpha$.
To do so, one must exhibit, for every $\eps > 0$, some $N \in \N$ satisfying the following condition:
For every $n \geq N$, there is some finite $A_n \subset \N \times \Z^d$ and some isometry $\phi_n : A_n\to \N \times \Z^d$ satisfying $d_{\alpha,\phi_n}(f_n,f) < \eps$.
%\eq{
%d_{\alpha,\phi_n}(f_n,f) < \eps.
%}

So fix $\eps > 0$.
For each $K \in \N$, write $A_{n, K} = \{u_{n, 1},u_{n, 2},\dots,u_{n, K}\}$ for the domain of $\phi_{n, K}$.
Choose $K$ large enough that
\eq{
\sum_{k > K} \frac{1}{k^\alpha} < \frac{\eps}{4}.
}
It follows from \eqref{f_n_bound} that
\eeq{
\sum_{k > K} f_n(u_{n, k})^\alpha
< \frac{\eps}{4} \quad \text{for all $n$.} \label{final_bound1}
}
Similarly, from \eqref{eps_bound} we have
\eeq{
\sum_{k > K} f(v_k)^\alpha
= \sum_{k > K} \eps_k^\alpha
< \frac{\eps}{4}. \label{final_bound2}
}
Finally, in light of \eqref{eps_convergence} and \eqref{degtoinf}, it is possible to choose $N$ large enough that
\eeq{
\alpha \sum_{k = 1}^K |f_n(u_{n, k}) - \eps_k| < \frac{\eps}{4} \quad \text{for all } n \geq N,\label{final_bound3}
}
and that
\eeq{
2^{-\deg(\phi_{n, K})} < \frac{\eps}{4} \quad \text{for all $n \geq N$}. \label{final_bound4}
}
Using \eqref{final_bound1}--\eqref{final_bound4}, one arrives at
\eq{
d_{\alpha,\phi_{n, K}}(f_n,f) &=
\alpha\sum_{u \in A_{n,K}} |f_n(u) - f(\phi_{n, K}(u))|
+ \sum_{u \notin A_{n, K}} f_n(u)^\alpha %\\
%&\qquad  
+ \sum_{v \notin \phi_{n, K}(A_{n, K})} f(v)^\alpha + 2^{-\deg(\phi_{n, K})} \\
&= \alpha\sum_{k=1}^K |f_n(u_{n, k}) - \underbrace{f(v_k)}_{= \eps_k}|
+ \sum_{k > K} f_n(u_{n, k})^\alpha %\\
%&\qquad 
+ \sum_{k > K} f(v_k)^\alpha + 2^{-\deg(\phi_{n, K})}  \\
&< \frac{\eps}{4} + \frac{\eps}{4} + \frac{\eps}{4} + \frac{\eps}{4} = \eps
}
for all $n \geq N$.
In particular,
\eq{
d_\alpha(f_n,f) \leq d_{\alpha,\phi_{n, K}}(f_n,f) < \eps \quad \text{for all } n \geq N.
}
One concludes $d_\alpha(f_n,f) \to 0$ as $n \to \infty$, as desired.
\end{proof}

The compactness guaranteed by Theorem \ref{compactness_result} %just shown 
will be used to establish results in the following metric space of probability measures.
Denote by $\PP(\SS)$ the set of Borel probability measures on $\SS$, and equip this space with the \textit{Wasserstein metric} (see \cite[Definition 6.1]{villani09})\footnote{Note that we are always using the Wasserstein-$1$ metric, and the subscript $\alpha$ refers only to the subscript of $d_\alpha$.
For instance, $\WW_2$ would be the Wasserstein-$1$ metric associated with the metric $d_2$, \textit{not} a Wasserstein-$2$ metric.}:
\eq{
\WW_\alpha(\nu,\rho) \coloneqq  \inf_{\pi \in \Pi(\nu,\rho) }\int_{\SS \times \SS} d_\alpha(f,g)\, \pi(\dd f,\dd g),
}
where $\Pi(\nu,\rho)$ denotes the set of probability measures on $\SS \times \SS$ having $\nu$ and $\rho$ as marginals.
Note that we may use dual representation of $\WW$ due to Kantorovich~\cite{kantorovich42} when it is convenient:
\eeq{
\WW_\alpha(\nu,\rho) = \sup_{\vphi} \biggl(\int_\SS \vphi(f)\ \nu(\dd f) - \int_\SS \vphi(f)\ \rho(\dd f)\biggr),\label{kantorovich}
}
where the supremum is over $1$-Lipschitz functions $\vphi : (\SS,d_\alpha) \to\R$.

Now we recall some general facts on convergence in $\PP(\XX)$, where $\XX$ is any Polish space.
It is a standard result (e.g.~see \cite[Theorem 6.9]{villani09}) that %for measures on a compact metric space, 
the Wasserstein distance metrizes the topology of weak convergence on $\PP(\XX)$.
Furthermore, if $\XX$ is compact, then weak convergence is equivalent to convergence of continuous test functions, and the topology on $\PP(\XX)$ is also compact (\cite[Remark 6.19]{villani09}).
In the coming sections, we will employ weak convergence to prove convergence of not only continuous test functions, but also semi-continuous test functions.
Therefore, we will repeatedly apply the Portmanteau lemma, which we record here so that it can be properly quoted later in the chapter.

\begin{lemma}[{Portmanteau, cf.~\cite[Theorem 2.1]{billingsley99} and \cite[Theorem 1.3.4]{vandervaart-wellner96}}] \label{portmanteau}
Given a function $L: \SS \to \R$, define the map $\LL : \PP(\SS) \to \R$ by
\eq{
\LL(\rho) \coloneqq  \int_\SS L(f)\ \rho(\dd f).
}
If $L$ is lower (resp.~upper) semi-continuous, then $\LL$ is lower (resp.~upper) semi-continuous with respect to $\WW_\alpha$.
\end{lemma}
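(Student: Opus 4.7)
The plan is to reduce the statement to the classical Portmanteau theorem for weak convergence of probability measures. The crucial input is Theorem \ref{compactness_result}: since $(\SS, d_\alpha)$ is a compact metric space, it is in particular Polish, and a standard fact from optimal transport guarantees that the Wasserstein metric $\WW_\alpha$ metrizes the topology of weak convergence on $\PP(\SS)$. Thus it will suffice to verify that $\LL$ is sequentially lower (resp.\ upper) semi-continuous with respect to weak convergence, which in a metric space is equivalent to semi-continuity.

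For the LSC case, let $L : \SS \to \R$ be lower semi-continuous. Compactness of $\SS$ ensures $L$ attains its minimum, so $L$ is bounded below and $\LL(\rho)$ is well-defined (as an element of $(-\infty,+\infty]$) for every $\rho \in \PP(\SS)$. If $\rho_n \to \rho$ in $\WW_\alpha$, equivalently $\rho_n \to \rho$ weakly, then the classical Portmanteau theorem gives
\eq{
\liminf_{n \to \infty} \LL(\rho_n) = \liminf_{n \to \infty} \int_\SS L(f)\ \rho_n(\dd f) \geq \int_\SS L(f)\ \rho(\dd f) = \LL(\rho),
}
which is the desired sequential lower semi-continuity. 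The upper semi-continuous case is then immediate by applying the LSC case to $-L$, which is LSC and bounded below (since $L$, being USC on a compact space, is bounded above).

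There is no genuine obstacle here; the statement is essentially a packaging of the compactness provided by Theorem \ref{compactness_result} together with standard weak-convergence machinery. If one preferred a more self-contained argument that avoids citing the equivalence between Wasserstein convergence and weak convergence, one could instead approximate $L$ from below by the pointwise-increasing sequence of $k$-Lipschitz functions $L_k(f) \coloneqq \inf_{g \in \SS} [L(g) + k\, d_\alpha(f,g)]$ (these converge pointwise upward to $L$ by semi-continuity and the metric structure), apply the Kantorovich dual representation \eqref{kantorovich} to each $L_k/k$ to conclude that $\rho \mapsto \int L_k\ \dd\rho$ is itself continuous on $(\PP(\SS), \WW_\alpha)$, and then pass to the limit via the monotone convergence theorem to obtain $\LL = \sup_k \int L_k\ \dd\rho$ as a supremum of continuous functions, hence LSC. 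Either route is routine.
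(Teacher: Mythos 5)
Your argument is correct and matches the route the paper itself points to: the paper provides no proof of the lemma, instead citing classical references and relying on the surrounding discussion (that $\WW_\alpha$ metrizes weak convergence on $\PP(\SS)$ since $\SS$ is compact by Theorem~\ref{compactness_result}). Your first paragraph is precisely the intended reduction to the classical Portmanteau theorem, and the alternate Lipschitz-approximation route you sketch is a valid self-contained variant but not needed.
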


\subsection{Equivalence of metrics} \label{generalized_topology}
We have introduced a family of metrics $(d_\alpha)_{\alpha>1}$ on $\SS$, where the flexibility of choosing $\alpha$ sufficiently close to $1$ will allow us to accommodate $\beta$ arbitrarily close to $\beta_\mathrm{max}$ in \eqref{mgf_assumption}.
It is useful to know that each metric induces the same topology.
The next proposition verifies this fact.
%In particular, any functional on $\SS$ that was proved in \cite{bates-chatterjee17} to be continuous with respect to $d_2$ remains continuous under $d_\alpha$, $\alpha > 1$.

\begin{prop} \label{metrics_equivalent}
For any $\alpha,\alpha' > 1$, $f \in \SS$, and sequence $(f_n)_{n\geq1}$ in $\SS$, we have $d_{\alpha}(f,f_n) \to 0$ as $n \to \infty$ if and only if $d_{\alpha'}(f,f_n) \to 0$.
\end{prop}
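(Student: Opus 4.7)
The plan is to derive one inclusion of topologies from a direct inequality between the metrics, then close the loop via compactness. By symmetry of the ``if and only if'' statement, it suffices to handle the case $\alpha \leq \alpha'$; the case $\alpha = \alpha'$ is vacuous, so assume $\alpha < \alpha'$.

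For the direction $d_\alpha(f_n,f) \to 0 \implies d_{\alpha'}(f_n,f) \to 0$, observe that $x^{\alpha'} \leq x^\alpha$ for $x \in [0,1]$ and $\alpha' \geq \alpha$. Consequently, for any isometry $\phi:A \to \N \times \Z^d$ with finite domain, the definition \eqref{d_phi_def} gives
\begin{align*}
d_{\alpha',\phi}(f,g)
&= \alpha'\sum_{u \in A} |f(u) - g(\phi(u))| + \sum_{u \notin A} f(u)^{\alpha'} + \sum_{u \notin \phi(A)} g(u)^{\alpha'} + 2^{-\deg(\phi)} \\
&\leq \frac{\alpha'}{\alpha}\cdot\alpha\sum_{u \in A}|f(u) - g(\phi(u))| + \sum_{u \notin A} f(u)^{\alpha} + \sum_{u \notin \phi(A)} g(u)^{\alpha} + 2^{-\deg(\phi)} \\
&\leq \Big(\frac{\alpha'}{\alpha}+3\Big)\, d_{\alpha,\phi}(f,g),
\end{align*}
where the last line majorizes each of the four nonnegative summands by $d_{\alpha,\phi}(f,g)$. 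Taking the infimum over $\phi$ yields $d_{\alpha'}(f,g) \leq (\alpha'/\alpha + 3)\, d_\alpha(f,g)$, so convergence in $d_\alpha$ implies convergence in $d_{\alpha'}$.

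For the reverse direction, suppose $d_{\alpha'}(f_n,f) \to 0$, and let $(f_{n_k})$ be an arbitrary subsequence. Since the proof of Theorem~\ref{compactness_result} applies verbatim for every parameter greater than $1$, the space $(\SS, d_\alpha)$ is compact, so there exist a further subsequence $(f_{n_{k_j}})$ and some $g \in \SS$ with $d_\alpha(f_{n_{k_j}}, g) \to 0$. Applying the first direction to this sub-subsequence gives $d_{\alpha'}(f_{n_{k_j}}, g) \to 0$. Combined with $d_{\alpha'}(f_{n_{k_j}}, f) \to 0$, the triangle inequality forces $d_{\alpha'}(f,g) = 0$; by Corollary~\ref{better_def_cor}, the equivalence relations $d_\alpha(\cdot,\cdot) = 0$ and $d_{\alpha'}(\cdot,\cdot) = 0$ coincide, so $f = g$ in $\SS$ and therefore $d_\alpha(f_{n_{k_j}}, f) \to 0$. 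Since every subsequence of $(f_n)$ has a further $d_\alpha$-convergent sub-subsequence with limit $f$, the full sequence satisfies $d_\alpha(f_n,f) \to 0$.

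The only genuine obstacle is the asymmetry of the inequality $x^{\alpha'} \leq x^\alpha$ on $[0,1]$: a direct bound of the form $d_\alpha \leq C\, d_{\alpha'}$ is unavailable when $\alpha < \alpha'$, because the tail sums $\sum f^\alpha$ are not controlled by $\sum f^{\alpha'}$. Compactness of $(\SS,d_\alpha)$ for every $\alpha>1$, together with the fact (Corollary~\ref{better_def_cor}) that all metrics in the family identify the same equivalence classes, is precisely what compensates for this asymmetry.
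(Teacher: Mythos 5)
Your proof is correct, but it takes a genuinely different route from the paper's. The paper does not split into cases according to whether $\alpha < \alpha'$: instead it gives a single $\eps$--$\delta$ argument that works for any unordered pair, hinging on the elementary bound
\[
\sum_{u\notin A} f(u)^{\alpha'} \le \Big(\max_{u\notin A} f(u)\Big)^{\alpha'-1}\sum_{u\notin A} f(u) \le \Big(\max_{u\notin A} f(u)\Big)^{\alpha'-1} \le d_{\alpha,\phi}(f,f_n)^{(\alpha'-1)/\alpha},
\]
which uses $\|f\|\le 1$ and $\max f(u) \le (\sum f(u)^\alpha)^{1/\alpha}$. Because the exponents $\alpha,\alpha'$ never need to be compared, the ``if'' and ``only if'' directions are literally the same statement, and the proof is entirely self-contained and quantitative (it produces an explicit $\delta(\eps)$ valid in both directions). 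You instead use the monotonicity $x^{\alpha'}\le x^\alpha$ on $[0,1]$ when $\alpha\le\alpha'$, which gives the clean Lipschitz bound $d_{\alpha'} \le (\alpha'/\alpha + 3)\,d_\alpha$ --- a sharper and more transparent estimate than the paper obtains for this direction --- but this bound is genuinely one-sided, and you are forced to invoke compactness of $(\SS,d_\alpha)$ (Theorem~\ref{compactness_result}, which is proved earlier and independently) together with the fact that the zero-distance equivalence classes agree across $\alpha$ (Corollary~\ref{better_def_cor}) to close the loop for the reverse direction. Your compactness step is correct and is essentially a direct-sequence rephrasing of Lemma~\ref{topology_fact}, which the paper does record and use, but only later for Proposition~\ref{equal_topologies} rather than here. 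The trade-off: your approach buys a cleaner explicit inequality for one inclusion at the cost of a soft, non-quantitative argument for the other, whereas the paper's single argument is uniform and yields modulus-of-continuity control in both directions. Both are valid proofs.
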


\begin{proof}
Since $\alpha$ and $\alpha'$ are interchangeable in the claim, it suffices prove the ``only if" direction.
That is, we assume $d_\alpha(f,f_n) \to 0$ as $n \to \infty$.
Fix representatives $f,f_n \in \SS_0$.
Given $\eps > 0$, set
\eq{
\delta \coloneqq \min\Big\{\Big(\frac{\eps}{4}\Big)^{\alpha/(\alpha'-1)},\frac{\alpha}{\alpha'}\Big(\frac{\eps}{4}\Big),\frac{\eps}{4}\Big\}.
}
Then choose $N$ sufficiently large that $d_\alpha(f,f_n) < \delta$ for all $n \geq N$.
In particular, for any such $n$, there is an isometry $\phi_n : A_n \to \N \times \Z^d$ satisfying
$d_{\alpha,\phi_n}(f,f_n) < \delta$.
In particular,
\eq{
\alpha'\sum_{u \in A_n} |f(u)-f_n(\phi_n(u))| \leq \frac{\alpha'}{\alpha} d_{\alpha,\phi_n}(f,f_n) < \frac{\alpha'}{\alpha}\delta < \frac{\eps}{4}.
}
Also,
\eq{
\sum_{u \notin A_n} f(u)^{\alpha'} \leq \max_{u \notin A_n} f(u)^{\alpha'-1} \sum_{u \notin A_n} f(u)
\leq \Big(\max_{u \notin A_n} f(u)\Big)^{\alpha'-1} \leq d_{\alpha,\phi_n}(f,f_n)^{(\alpha'-1)/\alpha} 
< \delta^{(\alpha'-1)/\alpha} < \frac{\eps}{4},
}
and similarly
\eq{
\sum_{u \notin \phi_n(A_n)} f_n(u)^{\alpha'} < \delta^{(\alpha'-1)/\alpha} < \frac{\eps}{4}.
}
Finally,
\eq{
2^{-\deg(\phi_n)} \leq d_{\alpha,\phi_n}(f,f_n) < \delta < \frac{\eps}{4}.
}
These four inequalities together show
\eq{
d_{\alpha'}(f,f_n) \leq d_{\alpha',\phi_n}(f,f_n) < \eps \quad \text{for all $n \geq N$.}
}
As $\eps > 0$ is arbitrary, it follows that $d_{\alpha'}(f,f_n) \to 0$.

\end{proof}

We conclude this section with some observations that will be needed in later arguments.

\begin{lemma} \label{trivial_bound}
For any $f,g \in \SS$, $d_\alpha(f,g) \leq 2$.
\end{lemma}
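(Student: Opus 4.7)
The plan is to obtain the bound by making the cheapest possible choice of isometry in the infimum defining $d_\alpha$, namely the empty isometry $\phi:\varnothing\to\N\times\Z^d$ (which the paper explicitly permits, noting that $\deg(\phi)=\infty$ and $A=\varnothing$ is finite, so this $\phi$ is admissible in the infimum). For this choice, the first sum in the definition of $d_{\alpha,\phi}(f,g)$ is empty, and the contribution $2^{-\deg(\phi)}=2^{-\infty}=0$ vanishes as well, so
\begin{equation*}
d_\alpha(f,g)\;\leq\; d_{\alpha,\phi}(f,g)\;=\;\sum_{u\in\N\times\Z^d} f(u)^\alpha\;+\;\sum_{u\in\N\times\Z^d} g(u)^\alpha.
\end{equation*}

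The remaining step is to bound each of these two sums by $1$. Since $f$ represents an element of $\SS$, we may choose the representative in $\SS_0$, for which $f(u)\in[0,1]$ and $\|f\|=\sum_u f(u)\leq 1$. Because $\alpha>1$, we have $t^\alpha\leq t$ for every $t\in[0,1]$, and therefore
\begin{equation*}
\sum_{u} f(u)^\alpha \;\leq\; \sum_{u} f(u) \;=\; \|f\|\;\leq\; 1.
\end{equation*}
The identical argument applied to $g$ gives $\sum_u g(u)^\alpha\leq 1$, and adding the two bounds yields $d_\alpha(f,g)\leq 2$.

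There is essentially no obstacle here; the only thing to be careful about is that the empty isometry is legitimately an element of the index set in the infimum, which is explicitly granted in the text preceding Lemma~\ref{composition}. One could alternatively take $A$ to be any finite set and send $|A|\to\infty$, but using the empty isometry avoids any limiting argument and makes the estimate immediate.
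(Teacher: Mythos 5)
Your proof is correct and matches the paper's own argument essentially verbatim: use the empty isometry to reduce $d_\alpha(f,g)$ to $\sum_u f(u)^\alpha + \sum_u g(u)^\alpha$, then apply $t^\alpha \le t$ on $[0,1]$ and the norm bound $\|f\|, \|g\| \le 1$. There is nothing to add.
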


\begin{proof}
Pick representatives $f,g \in \SS_0$ and let $\phi : \varnothing \to \N \times \Z^d$ be the empty isometry.
Then
\eq{
d_\alpha(f,g) \leq d_{\alpha,\phi}(f,g) = \sum_{u \in \N \times \Z^d} f(u)^\alpha + \sum_{u \in \N \times \Z^d} g(u)^\alpha 
\leq \sum_{u \in \N \times \Z^d} f(u) + \sum_{u \in \N \times \Z^d} g(u) = \|f\| + \|g\| \leq 2.
}
\end{proof}

The next lemma concerns measure-theoretic properties of the spaces $\SS_0$ and $\SS$.
In particular, $\SS_0$ is considered as a subset of
\eq{
\ell^1(\N \times \Z^d) = \{f : \N \times \Z^d \to \R : \|f\| < \infty\},
}
on which there is the standard $\ell^1$ norm $\|\cdot\|$ that extends \eqref{norm_def}:
\eq{
\|f\| \coloneqq  \sum_{u \in \N \times \Z^d} |f(u)|, \quad f \in \ell^1(\N \times \Z^d).
}

\begin{lemma} \label{S_meas}
Consider the metric spaces $\ell^1(\N \times \Z^d)$ and $\SS$ with their Borel $\sigma$-algebras.
Then the following statements hold: 
\begin{itemize}
\item[(a)] $\SS_0$ is a closed (in particular, measurable) subset of $\ell^1(\N \times \Z^d)$ and is thus itself a measurable space with the subspace $\sigma$-algebra.
\item[(b)] The quotient map $\iota: \SS_0 \to \SS$ that sends $f \in \SS_0$ to its equivalence class in $\SS$ is measurable.
\end{itemize}
\end{lemma}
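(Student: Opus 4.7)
The plan is to handle part (a) by describing $\SS_0$ as an intersection of closed sets in $\ell^1(\N\times\Z^d)$, and part (b) by proving the stronger statement that $\iota$ is continuous (which is more than enough for Borel measurability).

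For (a), I would note that point evaluation $f \mapsto f(u)$ is $1$-Lipschitz on $\ell^1(\N\times\Z^d)$, since $|f(u) - g(u)| \le \|f - g\|$. Hence each set $\{f : f(u) \ge 0\}$ is closed, and the norm itself is continuous, so $\{f : \|f\| \le 1\}$ is closed as well. The intersection of these closed sets is exactly $\SS_0$, which is therefore closed in $\ell^1(\N\times\Z^d)$ and in particular Borel.

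For (b), I would fix $f \in \SS_0$ and $\eps > 0$, and exhibit $\delta > 0$ so that $\|f - g\| < \delta$ forces $d_\alpha(f, g) < \eps$ for every $g \in \SS_0$. Because $f \in \ell^1$ with $\|f\|\le 1$, the set $A \coloneqq \{u : f(u) > \eta\}$ is finite (with at most $1/\eta$ elements) for every $\eta > 0$, so the inclusion $\phi_A : A \hookrightarrow \N\times\Z^d$ is an isometry, and in fact $\deg(\phi_A) = \infty$ because $\phi_A(u) - \phi_A(v) = u - v$ holds identically on $A$. Consequently the $2^{-\deg(\phi_A)}$ contribution to $d_{\alpha,\phi_A}(f,g)$ is zero, and I only need to control the other three terms. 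The first satisfies $\alpha \sum_{u \in A}|f(u) - g(u)| \le \alpha \|f-g\| < \alpha\delta$. The second satisfies $\sum_{u \notin A} f(u)^\alpha \le \eta^{\alpha-1}\|f\| \le \eta^{\alpha-1}$, using that $f(u) \le \eta$ off $A$. For the third, the $\ell^\infty \le \ell^1$ inequality yields $g(u) \le f(u) + \delta \le \eta + \delta$ for every $u \notin A$, so $\sum_{u \notin A} g(u)^\alpha \le (\eta+\delta)^{\alpha-1}\|g\| \le (\eta+\delta)^{\alpha-1}$. Picking $\eta$ small enough that $\eta^{\alpha-1}+(2\eta)^{\alpha-1} < 2\eps/3$, and then $\delta \le \min\{\eta, \eps/(3\alpha)\}$, makes each of the three contributions less than $\eps/3$, so $d_\alpha(f,g) \le d_{\alpha,\phi_A}(f,g) < \eps$, proving continuity.

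The main obstacle is controlling the $g$-tail $\sum_{u \notin A} g(u)^\alpha$, since a priori $g$ could deposit substantial mass far from where $f$ is supported. This is precisely where $\alpha > 1$ plays an essential role: combining the $\ell^\infty \le \ell^1$ bound with the factorization $g(u)^\alpha = g(u)^{\alpha-1}\cdot g(u)$ converts a small pointwise bound on $g$ outside $A$ into smallness of the tail sum via the factor $(\eta+\delta)^{\alpha-1}$. Every other step is a routine verification of the $1$-Lipschitz or continuous behavior of the ingredients of $d_{\alpha,\phi_A}$, and continuity of $\iota$ between metric spaces immediately yields Borel measurability.
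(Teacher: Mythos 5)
Your part (a) is identical to the paper's: $\SS_0$ is written as the intersection of the closed norm ball with the closed half-spaces $\{f : f(u) \geq 0\}$, each pulled back by a continuous functional.

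For part (b) you take a genuinely different and stronger route. The paper does not prove continuity of $\iota$; it proves Borel measurability directly, by expressing $\iota^{-1}(B_r(f)) = \bigcup_\phi \{g \in \SS_0 : d_{\alpha,\phi}(f,g) < r\}$, where the union runs over the countable family of isometries with finite domains, and observing that each $d_{\alpha,\phi}(f,\cdot)$ is a measurable function on $\SS_0$. Your argument instead establishes continuity of $\iota : (\SS_0,\|\cdot\|_1) \to (\SS,d_\alpha)$ via the identity isometry on the finite set $A = \{u : f(u) > \eta\}$, using that $\deg(\phi_A) = \infty$ kills the $2^{-\deg}$ term, the $\ell^1$ bound controls the diagonal sum over $A$, and the factorization $g(u)^\alpha = g(u)^{\alpha-1}g(u)$ with the $\ell^\infty \leq \ell^1$ estimate $g(u) < \eta + \delta$ off $A$ controls the tail (this is where $\alpha > 1$ is used essentially). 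Your argument is correct and delivers more than the lemma asks for; the paper's is shorter and needs only the countability of the family of finite-domain isometries. Either way measurability follows. The one small wording issue: you say your choices make ``each of the three contributions less than $\eps/3$,'' but the condition you impose, $\eta^{\alpha-1} + (2\eta)^{\alpha-1} < 2\eps/3$, only bounds the sum of the two tail terms, not each individually; the total is still below $\eps$, so the conclusion is unaffected.
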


\begin{proof}
To show $\SS_0 \subset \ell^1(\N \times \Z^d)$ is closed, we express $\SS_0$ as the %countable 
intersection of closed sets:
\eq{
\SS_0 = \{f \in \ell^1(\N \times \Z^d) : \|f\| \leq 1\} \cap \bigg(\bigcap_{u \in \N \times \Z^d} \{f \in \ell^1(\N \times \Z^d) : f(u) \geq 0\}\bigg).
}
To next show $\iota$ is measurable, it suffices to verify that the inverse image of any open ball is measurable.
For $f \in \SS$ and $r > 0$, we write
\eq{
B_r(f) \coloneqq  \{g \in \SS : d_\alpha(f,g) < r\}.
}
Notice that
\eq{
\iota^{-1}(B_r(f)) = \bigcup_{\phi}\ \{g \in \SS_0 : d_{\alpha,\phi}(f,g) < r\},
}
where the union is over isometries with \textit{finite} domains.
The union occurs, therefore, over a countable set.
For each $\phi$, it is clear from \eqref{d_phi_def} that $d_{\alpha,\phi}(f,\, \cdot\, )$ is a measurable function on $\SS_0$, and so each set in the union is measurable.
Being a countable union of measurable sets, $\iota^{-1}(B_r(f))$ is measurable.
\end{proof}

\section{Comparison to the Mukherjee--Varadhan topology} 
\label{compare_topologies}
This section accomplishes two goals: (i) adapt the compactification technique of Mukherjee and Varadhan \cite{mukherjee-varadhan16} to measures on $\Z^d$, and (ii) prove that the metric in this adaptation, which is defined in terms of suitable test functions, is equivalent to the metric $d_\alpha$.
None of the facts proved here are needed in the rest of our study, and the reader will not encounter any lapse of presentation by skipping this section entirely. %to Section \ref{transformation}.  
Rather, the results of this section are included to verify that our methods may achieve the same effect as those initiated in \cite{mukherjee-varadhan16}, while offering a more tractable metric with which to work.
Indeed, this is one way our approach capitalizes on the countability of $\Z^d$, although
the discussion that follows underscores the possibility that the abstract machinery can be made more general.

\subsection{Adaptation to the lattice}

In this preliminary section, we convert the Mukherjee--Varadhan setup to the discrete setting.
Aside from the proof of Proposition \ref{not_pseudo}, the construction is completely parallel to the one in \cite{mukherjee-varadhan16}.
For the sake of the ambitious reader, we mirror the notation of that article as closely as possible.
We will use boldface $\vc{x} = (x_1,x_2,\dots,x_k)$ to denote a vector in $(\Z^d)^k$.
For $\vc{x} \in (\Z^d)^k$ and $z \in \Z^d$, we use the notation
\eq{
\vc{x}+z \coloneqq  (x_1+z,x_2+z,\ldots,x_k+z).
}
For an integer $k \ge 2$, call a function $W : (\Z^d)^k \to \R$ \textit{translation invariant} if
\eeq{ \label{trans_invariant}
W(\vc{x}+z) = W(\vc{x}) \quad \text{for all $\vc{x} \in (\Z^d)^k,\ z \in \Z^d$}.
}
We will say such a function $W$ \textit{vanishes at infinity} if
\eeq{ \label{vanish_inf}
\lim_{\max_{i \neq j} \|x_i - x_j\|_1 \to \infty} W(\vc{x}) = 0.
}
Now let $\II_k$ denote the set of functions $W : (\Z^d)^k \to \R$ that are both translation invariant and vanishing at infinity.
The space $\II_k$ is naturally equipped with a metric by the uniform norm,
\eq{
\|W\|_\infty \coloneqq  \sup_{\vc{x} \in (\Z^d)^k} |W(\vc{x})|.
}
The condition \eqref{trans_invariant} means that $W$ depends only on the $k-1$ variables $x_2-x_1,x_3-x_1,\dots,x_{k}-x_{1}$.
That is,
\eq{
W(x_1,x_2,\dots,x_k) = w(x_2-x_1,x_3-x_1,\dots,x_{k}-x_{1}),
}
where $w : (\Z^d)^{k-1} \to \R$ is given by
\eq{
w(y_1,y_2,\dots,y_{k-1}) = W(0,y_1,y_2,\dots,y_{k-1}).
}
Then \eqref{vanish_inf} is equivalent to
\eeq{ \label{vanish_inf_2}
\lim_{\max_{1 \le i \le k-1} \|y_i\|_1 \to \infty} w(\vc{y}) = 0.
}
Since the space of functions $w : (\Z^d)^{k-1} \to \R$ satisfying \eqref{vanish_inf_2} is separable, it follows that $\II_k$ is separable.

The space of test functions will be 
\eq{
\II \coloneqq  \bigcup_{k \ge 2} \II_k.
}
As each $\II_k$ is separable, we can find
a countable dense subset $(W_r)_{r \in \N}$ of $\II$,
where $W_r \in \II_{k_r}$.
Notice that for any $W \in \II_k$ and any $f \in \SS$, the quantity
\eq{
I(W,f) \coloneqq  \sum_{n \in \N} \sum_{\vc{x} \in (\Z^d)^k} W(\vc{x}) \prod_{i = 1}^k f(n,x_i)
}
does not depend on the representative $f$ chosen from $\SS_0$.
(For the sake of exposition, we note that $I(W,f)$ is simply the sum of countably many integrals of $W$, the $n^{\text{th}}$ integral occurring  on the product space $((\Z^d)^k,f^{\otimes k}(n,\cdot))$,
where $f^{\otimes k}(n,\cdot)$ is the product measure whose every marginal has $f(n,\cdot)$ as its probability mass function.)
Indeed, if $f, g \in \SS_0$ are such that $d_\alpha(f,g) = 0$, then by Lemma \ref{better_def_cor} there exists a bijection $\tau$ between their $\N$-supports (denoted $H_f$ and $H_g$, respectively) and a collection $(z_n)_{n \in H_f}$ in $\Z^d$ such that
\eq{
f(n,x) = g(\tau(n),x-z_n), \quad x \in \Z^d.
}
In this case, \eqref{trans_invariant} gives
\eq{
\sum_{n \in \N} \sum_{\vc{x} \in (\Z^d)^k} W(\vc{x}) \prod_{i = 1}^k f(n,x_i)
&= \sum_{n \in H_f} \sum_{\vc{x} \in (\Z^d)^k} W(\vc{x}) \prod_{i = 1}^k g(\tau(n),x_i-z_n) \\
&= \sum_{n \in H_f} \sum_{\vc{x} \in (\Z^d)^k} W(\vc{x}+z_n) \prod_{i = 1}^k g(\tau(n),x_i)\\
&= \sum_{n \in \N} \sum_{\vc{x} \in (\Z^d)^k} W(\vc{x}) \prod_{i = 1}^k g(n,x_i).
}
So $I(W,\cdot)$ is well-defined on $\SS$.
We can thus define the following metric on $\SS$:
\eq{
D(f,g) \coloneqq  \sum_{r = 1}^\infty \frac{1}{2^r}\frac{1}{1+\|W_r\|_\infty} |I(W_r,f) - I(W_r,g)|.
}
Since the family $\{I(\cdot,f): f \in \SS\}$ is uniformly equicontinuous,
\eq{
|I(W_1,f) - I(W_2,f)| &= \bigg| \sum_{n \in \N} \sum_{\vc{x} \in (\Z^d)^k} (W_1(\vc{x}) - W_2(\vc{x})) \prod_{i = 1}^k f(n,x_i)\bigg| \\
&\leq \|W_1-W_2\|_\infty \sum_{n \in \N} \sum_{\vc{x} \in (\Z^d)^k} \prod_{i = 1}^k f(n,x_i) \\
&= \|W_1-W_2\|_\infty \sum_{n \in \N} \bigg(\sum_{x \in \Z^d} f(n,x)\bigg)^k \\
&\leq \|W_1-W_2\|_\infty \sum_{n \in \N} \sum_{x \in \Z^d} f(n,x) 
\leq \|W_1-W_2\|_\infty,
}
and $(W_r)_{r \in \N}$ is dense in $\II$, convergence in this metric implies convergence for \textit{all} test functions:
\eeq{ \label{convergence_criterion}
\lim_{j \to \infty} D(f_j,f) = 0 \quad \iff \quad
\lim_{j \to \infty} I(W,f_j) = I(W,f) \quad \text{for all $W \in \II$.}
}
It is clear that $D$ is reflexive and satisfies the triangle inequality.
It is nontrivial, however, that $D$ separates points.

\begin{prop} \label{not_pseudo}
For $f,g \in \SS_0$, $D(f,g) = 0$ if and only if $d_\alpha(f,g) = 0$.
\end{prop}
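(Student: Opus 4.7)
The plan is to treat the two directions separately. For $d_\alpha(f,g)=0\Rightarrow D(f,g)=0$, I would invoke Corollary~\ref{better_def_cor} to obtain a bijection $\tau:H_f\to H_g$ and translations $(z_n)_{n\in H_f}$ with $g(\tau(n),x)=f(n,x-z_n)$; then, since each $W\in\II_k$ is translation invariant, the computation displayed just above the proposition statement shows $I(W,f)=I(W,g)$, so the series defining $D(f,g)$ vanishes term by term.

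For the converse, I would use test functions in $\II$ to reconstruct, in stages, the equivalence class of $f$. First, the multi-diagonal indicator $W_k(x_1,\ldots,x_k):=\one\{x_1=\cdots=x_k\}$ lies in $\II_k$, so the hypothesis forces $\sum_u f(u)^k=\sum_u g(u)^k$ for every $k\geq 2$. Applying the Hausdorff moment problem to the finite positive measures $\nu_f:=\sum_u f(u)^2\,\delta_{f(u)}$ and $\nu_g$ on $[0,1]$ yields $\nu_f=\nu_g$, and reading off atoms identifies the multiset of positive values of $f$ with that of $g$. In particular, $f$ and $g$ share a common set $V\subset(0,1]$ of distinct positive values.

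Next, for each $k\geq 2$, $\vc y=(y_1,\ldots,y_{k-1})\in(\Z^d)^{k-1}$, and $(a_1,\ldots,a_k)\in\N^k$, a patched-diagonal indicator imposing $x_1=\cdots=x_{a_1}$, $x_{a_1+1}=\cdots=x_{a_1+a_2}=x_1+y_1$, and so on, is translation invariant, vanishes at infinity, and lies in $\II_{a_1+\cdots+a_k}$; its $I$-value is $\sum_{n,x}f(n,x)^{a_1}f(n,x+y_1)^{a_2}\cdots f(n,x+y_{k-1})^{a_k}$. Multiplying by $v_1\cdots v_k$ converts these into the ordinary Hausdorff moments of a finite positive measure on $[0,1]^k$, so by multidimensional moment uniqueness the measures on $V^k$ that count with multiplicity the tuples $(f(n,x),f(n,x+y_1),\ldots,f(n,x+y_{k-1}))$ over $(n,x)$ with all entries positive agree for $f$ and $g$. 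Reading off atoms yields, for every finite colored pattern $\pi:Y\to V$ with $Y\subset\Z^d$,
\eq{
N^{\geq}_\pi(f):=\#\{(n,z)\in\N\times\Z^d:f(n,z+y)=\pi(y)\text{ for all }y\in Y\}=N^{\geq}_\pi(g).
}
M\"obius inversion on the poset of finite colored patterns under extension, built from the identity $N^{\geq}_\pi=\sum_{\pi'\supseteq\pi}N^{=}_{\pi'}$, then converts these at-least counts into exact counts $N^=_{\pi'}(f)$ recording, weighted by the translation stabilizer of $\pi'$, the number of $n\in H_f$ whose profile $f(n,\cdot)$ is a translate of $\pi'$. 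Matching translation classes between $\{f(n,\cdot)\}_{n\in H_f}$ and $\{g(n,\cdot)\}_{n\in H_g}$ produces the bijection and shifts required by Corollary~\ref{better_def_cor}.

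The main obstacle is executing the M\"obius step when some $f(n,\cdot)$ has infinite support (which is permitted in $\SS_0$), since then the pattern poset is not locally finite and the inclusion-exclusion does not terminate. I would address this by first restricting attention to the $n$ with $|\mathrm{supp}\,f(n,\cdot)|\leq K$, running the finite-pattern argument for that truncated portion, and then sending $K\to\infty$, leveraging the compactness of $(\SS,d_\alpha)$ from Theorem~\ref{compactness_result} to assemble the truncated matchings into a genuine global bijection of translation classes.
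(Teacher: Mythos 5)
Your forward direction is fine and matches the paper: it is exactly the well-definedness computation that the paper presents just before the proposition.

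Your converse is a genuinely different route from the paper's, but it has a real gap. The paper never needs pattern M\"obius inversion; its engine is Lemma \ref{powers_lemma}, a purely elementary statement about power sums of lexicographically sorted double sequences, which, applied to $a_{i,j}:=f(u_i+z_j)$ and $b_{i,j}:=g(v_i+z_j)$, delivers the equality $f(u_i+z_j)=g(v_i+z_j)$ for all $i,j$ in one stroke, after which one only has to check that distinct copies of $\Z^d$ are matched to distinct copies (the short counting argument at the end of their proof). Your route replaces the sorting lemma with multidimensional Hausdorff uniqueness plus a M\"obius inversion over the poset of finite colored patterns, and this is where it breaks: the sum $N^=_\pi=\sum_{\pi'\supseteq\pi}\mu(\pi,\pi')N^\geq_{\pi'}$ runs over infinitely many extensions $\pi'$, and once any profile $f(n,\cdot)$ has infinite support (which $\SS_0$ permits), or more generally when the value set $V$ accumulates at $0$, there is no reason the alternating series converges or that the exact counts $N^=_{\pi'}$ are even well-defined; the identity you invert from only controls the at-least counts.

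Your proposed repair does not close this gap. Restricting to those $n$ with $|\mathrm{supp}\,f(n,\cdot)|\leq K$ is not available to you: the data $I(W,f)=I(W,g)$ are global sums over \emph{all} of $f$ and $g$, and there is no test function in $\II$ that isolates the contribution of the small-support copies, so you cannot "run the finite-pattern argument for that truncated portion." More seriously, invoking compactness of $(\SS,d_\alpha)$ is a non sequitur here. That theorem produces convergent subsequences of sequences \emph{in} $\SS$; it says nothing about assembling a compatible family of partial matchings between $H_f$ and $H_g$ into a single bijection. What is actually needed at this point is a pigeonhole/diagonalization over increasingly refined finite matchings, which is precisely the device the paper uses in the final paragraph of the proof of Proposition \ref{superisometry}; you would have to carry out an argument of that kind explicitly, not appeal to compactness of the ambient metric space. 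Until the M\"obius step and the infinite-support case are dealt with rigorously, the converse direction is not proved.
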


Therefore, $D$ is indeed a metric on $\SS$. The topology induced by $D$ on $\SS$ is the lattice analog of the topology defined in Mukherjee and Varadhan~\cite{mukherjee-varadhan16}.

To prove Proposition \ref{not_pseudo}, we will use the lemma below.
Recall that a sequence of real numbers $(a_j)$ \textit{lexicographically dominates} $(a_j')$ 
(which we denote by $(a_j) \succeq (a_j')$) 
if $a_j > a_j'$ for the smallest $j$ for which $a_j \neq a_j'$. 
Of course, if there is no such $j$, then the two sequences are equal. If the two sequences are not equal, then we will say that $(a_j)$ strictly dominates $(a_j')$, and write $(a_j) \succ (a_j')$.

We will say a collection of sequences $\{(a_{i,\, j})\}$ is lexicographically descending ``in $i$" if
\eq{
i \leq i' \quad \implies \quad (a_{i,\, j}) \succeq (a_{i',\, j}).
}
Given an infinite collection of sequences $\{(a_{i,\, j}) : i \in \N\}$, it is not always possible to rearrange the $i$-indices so that the collection is lexicographically descending.
One can easily check, however, that rearrangement  is possible for \textit{nonnegative} sequences satisfying the following condition:
\eeq{
|\{i : a_{i,\, j} > \eps\}|,\,  |\{i : b_{i,\, j} > \eps\}| < \infty \quad \text{for all $\eps > 0$, $j \geq 1$.} \label{all_j_zero}
}

\begin{lemma} \label{powers_lemma}
Let $\{(a_{i,\, j})_{j = 1}^\infty : 1 \leq i \leq N_1\}$ and $\{(b_{i,\, j})_{j = 1}^\infty : 1 \leq i \leq N_2\}$ be two collections of sequences in $[0,1]$, where $N_1,N_2 \in \N \cup \{\infty\}$.
Suppose that \eqref{all_j_zero} holds and
\eeq{
a_{i,\, 1},\, b_{i,\, 1} > 0 \quad \text{for all $i$,} \label{first_terms_pos}
}
so that we may assume each collection is lexicographically descending in $i$.
If, for every $\ell \in \N$,
\eeq{
&\sum_{i = 1}^{N_1} a_{i,\, 1}\prod_{j = 1}^\ell a_{i,\, j}^{p_j} = \sum_{i = 1}^{N_2} b_{i,\, 1}\prod_{j = 1}^\ell b_{i,\, j}^{p_j} < \infty
\quad \text{for all integers $p_1,\dots,p_\ell \geq 0$ with $\sum_{j=1}^\ell p_j \geq 1$,} \label{tests_agree}
}
then $N_1 = N_2$, and $a_{i,\, j} = b_{i,\, j}$ for every $i$ and $j$.
\end{lemma}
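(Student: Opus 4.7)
My approach is to iteratively peel off one ``layer'' of indices at a time, where a layer consists of those $i$ attaining the current largest first-coordinate value, and to use \eqref{tests_agree} to extract power-sum moments that pin down the layer. The two hypotheses \eqref{all_j_zero} and \eqref{first_terms_pos} drive the argument: the former guarantees each layer is finite, so the finite-dimensional moment problem is elementary, while the latter prevents the degeneracy that would render the first-column peel vacuous.

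The first step is to specialize \eqref{tests_agree} to $\ell=1$ and $p_1=p\ge 1$, obtaining $\sum_i a_{i,1}^{r}=\sum_i b_{i,1}^{r}<\infty$ for every integer $r\ge 2$. Because $\{i:a_{i,1}>\eps\}$ is finite for each $\eps>0$, the limit $(\sum_i a_{i,1}^{r})^{1/r}\to \max_i a_{i,1}$ as $r\to\infty$ recovers the maximum, and then $\sum_i a_{i,1}^{r}/(\max_i a_{i,1})^{r}$ recovers its multiplicity. Subtracting these top contributions and iterating yields equality of the multi-sets $\{a_{i,1}\}=\{b_{i,1}\}$, so in particular $N_1=N_2$ and, by the lex-descending arrangement, $a_{i,1}=b_{i,1}$ for every $i$.

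The second step is to peel the top layer. Let $M\coloneqq a_{1,1}=b_{1,1}$ and $m\coloneqq |\{i:a_{i,1}=M\}|$, which by \eqref{all_j_zero} is finite; the top layer is $\{1,\dots,m\}$ in either collection. For arbitrary $\ell\ge 2$ and $p_2,\dots,p_\ell\ge 0$, I divide both sides of \eqref{tests_agree} by $M^{p_1+1}$ and let $p_1\to\infty$. For $p_1\ge 1$ each summand on the $a$-side is bounded by $a_{i,1}^{2}/M^{2}$, which is summable by the first step, so dominated convergence applies and the pointwise limit retains only those $i$ with $a_{i,1}=M$. This yields
\[
\sum_{i=1}^{m} a_{i,2}^{p_2}\cdots a_{i,\ell}^{p_\ell} = \sum_{i=1}^{m} b_{i,2}^{p_2}\cdots b_{i,\ell}^{p_\ell}
\]
for every $\ell\ge 2$ and $p_2,\dots,p_\ell\ge 0$. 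For each fixed $\ell$ these are matching multivariate power sums for two $m$-point multi-sets in $[0,1]^{\ell-1}$, so by the classical moment problem on a compact cube (or elementarily, Newton's identities for finite systems) the $(\ell-1)$-dimensional marginal multi-sets coincide. Letting $\ell$ grow and using the lex-descending order among the top $m$ sequences identifies them entry-wise: $a_{i,j}=b_{i,j}$ for $1\le i\le m$ and every $j\ge 1$.

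To finish, I subtract the now-matched contributions of $i=1,\dots,m$ from both sides of \eqref{tests_agree}; the remaining tails $\{(a_{i,j}):i>m\}$ and $\{(b_{i,j}):i>m\}$ satisfy exactly the same hypotheses, now with a strictly smaller first-coordinate maximum. Induction on the layer then recovers every remaining entry. When $N_1=\infty$, each index lies in some finite layer because $a_{i,1}$ is non-increasing and, by \eqref{all_j_zero} applied at successively smaller thresholds, tends to zero, so the peeling covers each individual index after finitely many steps. The main technical obstacle I anticipate is the dominated-convergence swap in the peeling step, together with a careful verification that subtract-and-iterate really does preserve \eqref{tests_agree} (linearity of the sums makes this routine but it needs to be said); everything else should be elementary bookkeeping.
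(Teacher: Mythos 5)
Your proof is correct, and it takes a genuinely different route from the paper's after the shared first step. Both arguments begin identically, recovering the first-column multi-sets $\{a_{i,1}\}=\{b_{i,1}\}$ by $r$-th-root limits and iterative peeling of the maximum. From there the paper proceeds column by column: assuming columns $1,\dots,\ell-1$ match entry-wise, it constructs exponents $q_1,\dots,q_{\ell-1}$ by backward induction (see \eqref{prod_construction}) so that the tilted product $a_{i,\ell}\prod_{j<\ell}a_{i,j}^{q_j}$ has a unique index achieving the maximum, and then uses the $p$-th-power/$p$-th-root identity \eqref{max_agree} to isolate $a_{1,\ell}$, then $a_{2,\ell}$, and so on. You instead peel by the first-column \emph{layer}: dividing by $M^{p_1+1}$ and sending $p_1\to\infty$ kills all $i$ with $a_{i,1}<M$ (the DCT justification via the $p_1=1$ bound $\sum a_{i,1}^2<\infty$ is correct), leaving matching multivariate monomial moments for two $m$-point multi-sets in $[0,1]^{\ell-1}$; moment determinacy on a compact cube then identifies the multi-sets, and the lexicographic ordering turns multi-set equality (for every $\ell$) into entry-wise equality within the layer, after which one subtracts and recurses on the lower layers. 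The layer-peeling reduction cleanly outsources the hard identification to a standard moment theorem, whereas the paper's tilted-max construction is self-contained but more delicate; one small imprecision is your parenthetical appeal to ``Newton's identities for finite systems'' — those are univariate, so the correct elementary citation is Stone--Weierstrass (or the finite Hausdorff moment problem on $[0,1]^{\ell-1}$), which is what you also cite.
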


\begin{proof}
We will show by induction that for each finite $\ell$, $a_{i,\, \ell} = b_{i,\, \ell}$ for all $i$.
First consider the case when $\ell = 1$.
Since $a_{1,\, 1} = \max_i a_{i,\, 1}$ and $b_1 = \max_i b_{i,\, 1}$, we have
\eq{
a_{1,\, 1} = \lim_{p \to \infty} \bigg(\sum_{i = 1}^{N_1} a_{i,\, 1}^p\bigg)^{1/p}
\stackrel{\eqref{tests_agree}}{=} \lim_{p \to \infty} \bigg(\sum_{i = 1}^{N_2} b_{i,\, 1}^p\bigg)^{1/p}
= b_{1,\, 1}.
}
But then this argument can be repeated with the sequences $\{a_{i,\, 1} : 2 \leq i \leq N_1\}$ and $\{b_{i,\, 1} : 2 \leq i \leq N_2\}$ to obtain $a_{2,\, 1} = b_{2,\, 1}$.
Continuing in this way, one exhaustively determines that $N_1 = N_2 = N$, and $a_{i,\, 1} = b_{i,\, 1}$ for every $i$.

For $\ell \geq 2$, 
assume by induction that for each $j \leq \ell-1$, we have $a_{i,\, j} = b_{i,\, j}$ for every $i$.
By hypothesis \eqref{tests_agree}, for any nonnegative integers $q_1,q_2,\dots,q_{\ell-1}$ with $q_1 \geq 1$,
\eeq{ \label{max_agree}
\max_{i} \bigg(a_{i,\, \ell} \prod_{j = 1}^{\ell-1} a_{i,\, j}^{q_j}\bigg)
&= \lim_{p \to \infty} \bigg[\sum_{i = 1}^N \bigg( a_{i,\, \ell} \prod_{j = 1}^{\ell-1} a_{i,\, j}^{q_j}\bigg)^p\bigg]^{1/p} \\
&= \lim_{p \to \infty} \bigg[\sum_{i = 1}^N \bigg( b_{i,\, \ell} \prod_{j = 1}^{\ell-1} b_{i,\, j}^{q_j}\bigg)^p\bigg]^{1/p}
= \max_{i} \bigg(b_{i,\, \ell} \prod_{j = 1}^{\ell-1} b_{i,\, j}^{q_j}\bigg).
}
We now specify $q_1,q_2,\dots,q_{\ell-1}$ via the following backward induction:
\begin{itemize}
\item If $a_{1,\, \ell-1} = 0$, then set $q_{\ell-1} = 0$.
Otherwise, take $q_{\ell-1} = 1$.
\item Given $q_{\ell-1},q_{\ell-2},\dots,q_{k+1}$, choose $q_k$ as follows:
\begin{itemize}
\item If $a_{1,\, k} = 0$, then set $q_{k} = 0$.
\item Otherwise, consider all $i$ such that the sequences $(a_{i,\, j})_{j = 1}^{\ell-1}$ and $(a_{1,\, j})_{j=1}^{\ell-1}$ first differ when $j = k$.
If it exists, the smallest such $i$, call it $i_k$, will maximize $a_{i,\, k} < a_{1,\, k}$  (in particular, $a_{1,\, k} > 0$).
We then take $q_k$ sufficiently large that
\eeq{
\Big(\frac{a_{i_k,\,k}}{a_{1,\, k}}\Big)^{q_k} < \prod_{j = k+1}^{\ell-1} a_{1,\, j}^{q_j}.
\label{prod_construction}
}
If no such $i$ exists, then set $q_k = 0$.
Notice that \eqref{first_terms_pos} and \eqref{all_j_zero} force $q_1 \geq 1$.
\end{itemize}
\end{itemize}
%For ease of notation, we will understand $\big(a/0)^0$ to be equal to $1$ for any $a$.
Having defined $q_1,q_2,\dots,q_{\ell-1}$ to satisfy \eqref{prod_construction}, we obtain the following implication:
\begin{align}
(a_{1,\, j})_{j = 1}^{\ell-1} \succ (a_{i,\, j})_{j = 1}^{\ell-1} \quad
&\implies \quad
\exists\ k,\text{ $a_{1,\, j}$ and $a_{i,\, j}$ first differ at $j = k$} \label{differ}  \\
&\implies \quad
 \prod_{j = 1}^{\ell-1} \frac{a_{i,\, j}^{q_j}}{a_{1,\, j}^{q_j}}
 =  \prod_{j = k}^{\ell-1} \frac{a_{i,\, j}^{q_j}}{a_{1,\, j}^{q_j}}
\leq \Big(\frac{a_{i_k,\, k}}{a_{1,\, k}}\Big)^{q_k} \prod_{j = k+1}^{\ell-1} \frac{1}{a_{1,\, j}^{q_j}} < 1 \label{k_bound} \\
&\implies \quad \lim_{p \to \infty} \bigg(\prod_{j = 1}^{\ell-1} \frac{a_{i,\, j}^{q_j}}{a_{1,\, j}^{q_j}}\bigg)^p = 0. \label{ratio_to_0}
\end{align}
Moreover, because the first inequality in \eqref{k_bound} holds for all $i$ for which \eqref{differ} is true, the convergence in \eqref{ratio_to_0} is uniform over such $i$.
It follows that
\eq{
\lim_{p \to \infty} \max_{i} \bigg[a_{i,\, \ell} \bigg(\prod_{j = 1}^{\ell-1} \frac{a_{i,\, j}^{q_j}}{a_{1,\, j}^{q_j}}\bigg)^p\bigg] = a_{1,\, \ell}.
}
Since the choice of $q_1,q_2,\dots,q_{\ell-1}$ depended only on the $a_{i,\, j}$ with $j \leq \ell-1$, the induction hypothesis gives the same result for the $b$-collection:
\eq{
\lim_{p \to \infty} \max_{i} \bigg[b_{i,\, \ell}\bigg(\prod_{j = 1}^{\ell-1} \frac{b_{i,\, j}^{q_j}}{b_{1,\, j}^{q_j}}\bigg)^p\bigg] = b_{1,\, \ell}.
}
Now \eqref{max_agree}, with the fact that $a_{1,\, j} = b_{1,\, j}$ for $j \leq \ell-1$, allows us to conclude $a_{1,\, \ell} = b_{1,\, \ell}$.
As in the $\ell = 1$ case, we can repeat the above argument with the collections $\{(a_{i,\, j})_{j = 1}^\ell : 2 \leq i \leq N\}$ and $\{(b_{i,\, j})_{j = 1}^\ell : 2 \leq i \leq N\}$ to determine $a_{2,\, \ell} = b_{2,\, \ell}$.
Indeed, induction gives $a_{i,\, \ell} = b_{i,\, \ell}$ for every $i$.
\end{proof}

\begin{proof}[Proof of Proposition \ref{not_pseudo}]
The ``if" direction follows from the fact that $D$ is well-defined.
For the converse, we assume $D(f,g) = 0$ and prove $d_\alpha(f,g) = 0$.
Let $\{z_1,z_2,\dots\}$ be any enumeration of $\Z^d$.
Given integers $\ell \geq 1$ and $p_1,p_2,\dots,p_\ell \geq 0$ with $k\coloneqq  1 + \sum_{j=1}^\ell p_j \geq 2$, consider the following member of $\II_k$:
\eq{
W(\vc{x}) \coloneqq  \begin{cases}
1 &\text{if }x_{i} - x_1 = z_j  \text{ for all $1+\sum_{t = 1}^{j-1} p_t < i \leq 1+ \sum_{t = 1}^{j} p_t$, $1 \leq j \leq \ell$,}\\
0 &\text{otherwise}.
\end{cases}
}
Since $D(f,g) = 0$, we have
\begin{subequations} \label{tests_agree_2}
\begin{align}
I(W,f) &= \sum_{u \in \N \times \Z^d} f(u) \prod_{j = 1}^\ell f(u+z_j)^{p_j}
= \sum_{v \in \N \times \Z^d} g(v) \prod_{j = 1}^\ell g(v+z_j)^{p_j}
= I(W,g). 
\end{align}
Furthermore, these quantities are finite:
\begin{align}
\sum_{u \in \N \times \Z^d} f(u) \prod_{j = 1}^\ell f(u+z_j)^{p_j}
\leq \sum_{u \in \N \times \Z^d} f(u) \leq 1.
\end{align}
\end{subequations}
Now we lexicographically order (descending in $i$) the sequences given by
\eq{
a_{i,\, j} \coloneqq  f(u_i+z_j), \quad f(u_i) > 0, \qquad
b_{i,\, j} \coloneqq  g(v_i+z_j), \quad g(v_i) > 0,
}
for which \eqref{all_j_zero} is true because $\|f\|, \|g\| \leq 1$, and \eqref{tests_agree} is equivalent to \eqref{tests_agree_2}.
Therefore, Lemma \ref{powers_lemma} shows
\eeq{
f(u_i+z_j) = g(v_i+z_j) \quad \text{for all $i$ and $j$}. \label{lemma_consequence}
}
Let $H_f$ and $H_g$ denote the $\N$-supports of $f$ and $g$, respectively.
Since $z_j$ ranges over all of $\Z^d$, \eqref{lemma_consequence} implies that for every $n \in N_f$, there is $m \in H_g$ such that $f(n,\cdot)$ and $g(m,\cdot)$ are translates of each another.
The proof that $d_\alpha(f,g) = 0$ will be complete if we can show that for each $n \in H_f$, a \textit{distinct} $m \in H_g$ can be chosen, since then we would have an injection $\tau : H_f \to H_g$ such that $f(n,\cdot)$ and $g(\tau(n),\cdot)$ are always translates.
By interchanging $f$ and $g$, we would then see that $\tau$ is necessarily a bijection, and so Lemma \ref{better_def_cor} gives $d_\alpha(f,g) = 0$.

We now verify the final fact needed from above: $m \in H_g$ can be chosen distinctly for each $n \in H_f$.
Suppose that $f(n_1,\cdot),f(n_2,\cdot),\dots,f(n_K,\cdot)$ are translates of one another, where $n_1,\dots,n_K$ are distinct elements of $H_f$.
Even when $K$ is chosen maximally, $\|f\| \leq 1$ forces $K$ to be finite.
We can choose indices $i_1,i_2,\dots,i_K$ to simultaneously ``align" all these translates:
\eq{
u_{i_k} \in \{n_k\} \times \Z^d  \text{ and } 
f(u_{i_k}+z_j) = f(u_{i_1}+z_j) \quad \text{for all $j \geq 1$, $k = 1,2,\dots,K$.} \nonumber
}
By \eqref{lemma_consequence}, we then have
\begin{align}
g(v_{i_k}+z_j) &= g(v_{i_1}+z_j)  \quad \text{for all $j \geq 1$, $k = 1,2,\dots,K$.} \label{duplicate_copies}
\end{align}
Since $n_1,\dots,n_K$ are distinct, so too are $i_1,\dots,i_K$, and therefore $v_{i_1},\dots,v_{i_K}$ are distinct.
Upon writing $v_{i_k} = (m_k,y_k)$, we claim that $m_k \neq m_\ell$ for $k \neq \ell$.
Indeed, if $m_k = m_\ell$, then distinctness forces $y_k \neq y_\ell$.
Therefore, $z\coloneqq y_\ell-y_k$ is nonzero and satisfies $v_{i_\ell} = v_{i_k}+z$.
In particular,
\eq{
g(v_{i_\ell}) = g(v_{i_k}+z) \stackrel{\eqref{duplicate_copies}}{=} g(v_{i_\ell}+z).
}
More generally, for any integer $q$,
\eq{
g(v_{i_\ell}+qz) = g(v_{i_k} + (q+1)z) \stackrel{\eqref{duplicate_copies}}{=} g(v_{i_\ell}+(q+1)z).
}
Since $g(v_{i_\ell}) > 0$, it follows that
\eq{
\sum_{q = 0}^\infty g(v_{i_\ell}+qz) = \sum_{q = 0}^\infty g(v_{i_\ell}) = \infty,
}
an obvious contradiction to $\|g\| \leq 1$.
Now each $f(n_k,\cdot)$ is a translate of $g(m_k,\cdot)$, and $m_1,m_2,\dots,m_K$ are all distinct, as desired.
\end{proof}

\subsection{Equivalence of the metrics}
The metric $D$ does, in fact, give rise to a compact topology on $\SS$.
Rather than prove this directly, though, we first show that $d_\alpha$ induces a topology at least as fine as the one induced by $D$.
As the continuous image of a compact set is compact, this immediately implies $D$ also induces a compact topology.  
But the result actually implies more: The topologies are necessarily equivalent.

\begin{prop} \label{equal_topologies}
$D(f_j,f) \to 0$ if and only if $d_\alpha(f_j,f) \to 0$.
\end{prop}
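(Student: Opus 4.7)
The plan is to prove the forward implication ``$d_\alpha(f_j,f)\to 0$ implies $D(f_j,f)\to 0$'' first, and then deduce the reverse implication via compactness. For the forward direction I would show that for every $W\in\II$, the functional $I(W,\cdot)$ is continuous on $(\SS,d_\alpha)$; dominated convergence applied to the summable series defining $D$ then yields $D(f_j,f)\to 0$.

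To prove continuity of $I(W,\cdot)$ with $W\in\II_k$, I would begin by exploiting that $W$ vanishes at infinity: given $\eta>0$, choose $R$ such that $|W(\vc x)|<\eta$ whenever $\max_{i\neq j}\|x_i-x_j\|_1>R$, and split $W=W_R+(W-W_R)$ accordingly. Since $\sum_n(\sum_x f(n,x))^k\leq\|f\|\leq 1$, the tail satisfies $|I(W-W_R,f)|\leq\eta$ uniformly in $f$, so the problem reduces to the truncated $W_R$, supported on tuples of diameter at most $R$. For a given sequence with $d_\alpha(f_j,f)\to 0$, I would then pick isometries $\phi_j:A_j\to\N\times\Z^d$ with $d_{\alpha,\phi_j}(f_j,f)\to 0$; in particular $\deg(\phi_j)\to\infty$. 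For $j$ large enough that $\deg(\phi_j)>R$, $\phi_j$ acts as a single translation on any $k$-tuple of $A_j$-points with diameter at most $R$ lying in a common copy of $\Z^d$, so translation invariance of $W_R$ lets me reindex the interior contribution as a sum over $\vc v=\phi_j(\vc u)\in\phi_j(A_j)^k$. Telescoping $\prod_i f_j(\phi_j^{-1}(v_i))-\prod_i f(v_i)$ into a sum of $k$ single-factor differences then produces an error of order $|B_R|^{k-1}\cdot\alpha^{-1}d_{\alpha,\phi_j}(f_j,f)\to 0$.

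The main obstacle is the boundary contribution: tuples with some coordinate outside $A_j$ (for $f_j$) or outside $\phi_j(A_j)$ (for $f$). The $f$-side is handled by uniform integrability of $f\in\ell^1$: pick a finite set $F$ with $\sum_{v\notin F}f(v)<\eta$; for $j$ large, every $v\in F$ with $f(v)>0$ must lie in $\phi_j(A_j)$, since otherwise its contribution to $\sum_{w\notin\phi_j(A_j)}f(w)^\alpha$ would contradict $d_{\alpha,\phi_j}\to 0$. The $f_j$-side is the crux, since the $\ell^1$-mass of $f_j$ outside $A_j$ need not vanish --- only the $\alpha$-th power sum does. Here I would symmetrize over coordinates and factor out one index $u_{i_0}\notin A_j$, then use that $W_R$ forces the remaining coordinates to lie in $B_R(u_{i_0})$, reducing the boundary contribution to
\eq{
\sum_{u\notin A_j}f_j(u)\,g(u), \quad \text{where}\quad g(u)\coloneqq\bigg(\sum_{v\in B_R(u)}f_j(v)\bigg)^{k-1}.
}
H\"older's inequality with conjugate exponents $\alpha$ and $\alpha'=\alpha/(\alpha-1)$ then bounds this by $\big(\sum_{u\notin A_j}f_j(u)^\alpha\big)^{1/\alpha}|B_R|^{1/\alpha'}\to 0$, using $g\leq 1$ and $\sum_u g(u)\leq|B_R|$ to bound $\|g\|_{\alpha'}$. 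This is precisely where the hypothesis $\alpha>1$ in the definition of $d_\alpha$ is essential.

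For the reverse direction I would argue by contradiction: assuming $D(f_j,f)\to 0$ but $d_\alpha(f_j,f)\not\to 0$, pass to a subsequence with $d_\alpha(f_{j_k},f)\geq\delta$, extract via compactness of $(\SS,d_\alpha)$ (Theorem \ref{compactness_result}) a further $d_\alpha$-convergent subsequence with limit $f^*$, apply the forward direction to obtain $D(f_{j_{k_\ell}},f^*)\to 0$, and combine this with $D(f_{j_{k_\ell}},f)\to 0$ and Proposition \ref{not_pseudo} (which asserts that $D$ separates points on $\SS$) to force $f^*=f$. This contradicts the triangle inequality $d_\alpha(f,f^*)\geq\delta - d_\alpha(f_{j_{k_\ell}},f^*)\geq\delta/2$ for $\ell$ large, completing the proof.
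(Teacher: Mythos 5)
Your argument is correct and follows the same two-step strategy as the paper: first reduce the equivalence to the single implication $d_\alpha(f_j,f)\to 0 \Rightarrow D(f_j,f)\to 0$ by a compactness argument, then establish that implication by showing each $I(W,\cdot)$ is $d_\alpha$-continuous. Your reverse-direction argument is an inlined version of the paper's appeal to the general topological fact (Lemma~\ref{topology_fact}) that a continuous bijection from a compact space to a Hausdorff space is a homeomorphism, where the Hausdorff property of $(\SS,D)$ is supplied by Proposition~\ref{not_pseudo}, exactly as you use it.

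Where you genuinely diverge is in the bookkeeping for the continuity estimate, and your version is arguably cleaner. The paper does not truncate $W$; instead it fixes a finite set $A$ on which $f$ is concentrated and splits $|I(W,f)-I(W,g)|$ into $D_1$ (the $f$-tail outside $A^k$ and outside the first $N$ copies), $D_2$ (the interior telescoping on $A^k$), and $D_3$ (the entire $g$-boundary), with $D_3$ further carved into pieces $J_3,J_5,J_6$ depending on whether coordinates are far apart, in $\phi(C\setminus A)$, or outside $\phi(C)$. You instead truncate $W$ itself to diameter $\le R$, which makes the roles of $f_j$ and $f$ symmetric: one interior sum over $A_j^k$ and two boundary terms, one for each function. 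This avoids the paper's distinction between ``in $C$ but outside $A$'' versus ``outside $C$'' (your isometry domain $A_j$ plays the role of the paper's $C$, and there is no separate small set $A$). The second real difference is your treatment of the $f_j$-side boundary: you symmetrize over the index of the escaping coordinate, reduce to a bilinear sum $\sum_{u\notin A_j}f_j(u)\,g(u)$ with $g$ bounded by $1$ and with $\ell^1$-norm $\le|B_R|$, and apply H\"older with exponents $\alpha,\alpha'$. The paper's argument for the analogous region $J_6$ instead extracts $\sup_{u\notin\phi(C)}g(u)\le\delta^{1/\alpha}$ for one factor and uses an $\ell^1$ bound on another, with a combinatorial factor $(2K)^{(k-1)d}$ for the rest; this is an $L^\infty\times L^1$ pairing rather than an $L^\alpha\times L^{\alpha'}$ pairing. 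Both exploit the same quantity $\sum_{u\notin A_j}f_j(u)^\alpha\to 0$, but your H\"older formulation makes it immediate that $\alpha>1$ is the crucial structural input, which is a nice conceptual gain.

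One small omission worth noting when you write this up: in the interior telescoping bound you quote a factor $|B_R|^{k-1}$; the correct constant is $k|B_R|^{k-1}$ since you also symmetrize over which factor carries the difference. This is harmless. You should also make explicit, when reindexing the interior sum, that the degree bound $\deg(\phi_j)>R$ gives a two-sided correspondence between $R$-diameter tuples in $A_j^k$ and in $\phi_j(A_j)^k$ (both directions of \eqref{isometry} are used), so that the interior sums really match term by term.
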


The following topological fact reduces the proof of Proposition \ref{equal_topologies} to showing only one direction of the equivalence.

\begin{lemma}[{see \cite[Theorem 26.6]{munkres00}}] \label{topology_fact}
Suppose $\XX$ is a compact topological space, and $\YY$ is a Hausdorff topological space. 
If $F : \XX \to \YY$ is bijective and continuous, then $F$ must be a homeomorphism.
\end{lemma}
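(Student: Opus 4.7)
The plan is to show that the inverse map $F^{-1}:\YY\to\XX$ is continuous, which, since $F$ is already a continuous bijection, is equivalent to showing that $F$ is a closed map (i.e., sends closed sets in $\XX$ to closed sets in $\YY$). This reduction is the natural one: continuity of $F^{-1}$ means that $(F^{-1})^{-1}(C) = F(C)$ is closed whenever $C\subset\XX$ is closed.

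First I would take an arbitrary closed subset $C\subset\XX$ and use the compactness of $\XX$ to conclude that $C$ is itself compact (a closed subset of a compact space is compact). Next, because $F$ is continuous, the image $F(C)$ is compact in $\YY$, since continuous images of compact sets are compact. Finally, because $\YY$ is Hausdorff, every compact subset of $\YY$ is closed, so $F(C)$ is closed in $\YY$. This shows $F$ is a closed map.

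Combining the three facts above, $F$ is a closed continuous bijection. Since $F$ is a bijection, we may consider the set-theoretic inverse $F^{-1}:\YY\to\XX$, and then for any closed $C\subset\XX$ we have $(F^{-1})^{-1}(C) = F(C)$ closed in $\YY$. This is precisely the statement that $F^{-1}$ is continuous, so $F$ is a homeomorphism.

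None of the steps are difficult; each is a one-line application of a standard topological fact (closed-subset-of-compact is compact, continuous image of compact is compact, compact-in-Hausdorff is closed). The only step that might be worth emphasizing is the use of the Hausdorff hypothesis on $\YY$, since this is the only place where that assumption enters, and it is exactly what fails when one drops it (the identity map from a set with the discrete topology to the same set with the indiscrete topology is a continuous bijection from a compact space but not a homeomorphism).
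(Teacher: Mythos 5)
Your proof is correct and is exactly the standard argument (the one in Munkres, which the paper cites rather than reproving): closed subsets of the compact space $\XX$ are compact, their continuous images in $\YY$ are compact, and compact subsets of the Hausdorff space $\YY$ are closed, so $F$ is a closed map and hence $F^{-1}$ is continuous. Your counterexample illustrating the necessity of the Hausdorff hypothesis (identity from a finite discrete space to the same set with the indiscrete topology) is also apt.
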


In the present setting, we consider $\XX = (\SS,d_\alpha)$ and $\YY = (\SS,D)$.
With $F$ equal to the identity map, Lemma \ref{topology_fact} says the following: If $d_\alpha(f_j,f) \to 0$ implies $D(f_j,f) \to 0$, then the converse is also true.

\begin{proof}[Proof of Proposition \ref{equal_topologies}]
By Lemma \ref{topology_fact}, it suffices to show that if $d_\alpha(f_j,f)$ converges to $0$, then $D(f_j,f) \to 0$.
And by \eqref{convergence_criterion}, it suffices to check that given any $W \in \II$, we have
$I(W,f_j) \to I(W,f)$.
So consider any $W \in \II_k$, and let $\eps > 0$ be given.
We seek a number $\delta > 0$ such that for any $g \in \SS$,
\eq{
d_\alpha(f,g) < \delta \quad \implies \quad |I(W,f) - I(W,g)| < \eps. %\label{want_for_convergence}
}
This is trivial if $W$ is constant zero, and so we will henceforth assume $\|W\|_\infty > 0$.
By \eqref{vanish_inf}, there is $K$ large enough that
\eeq{ \label{K_choice_1}
\max_{i \neq j} \|x_i - x_j\|_1 \geq K \quad \implies \quad |W(\vc{x})| < \frac{\eps}{8}.
}
Upon fixing a representative $f \in \SS_0$, we can take $N \in \N$ such that
\eeq{ \label{small_after_N}
\sum_{n = N+1}^\infty \sum_{x \in \Z^d} f(n,x) < \frac{\eps}{8k\|W\|_\infty}.
} 
Next, for each $1 \leq n \leq N$, we choose $A_n \subset \Z^d$ finite but large enough that
\eeq{ \label{small_before_N}
\sum_{x \notin A_n} f(n,x) < \frac{\eps}{8kN\|W\|_\infty}.
}
Now define $A \coloneqq  \bigcup_{n = 1}^N (\{n\} \times A_n)$, so that \eqref{small_after_N} and \eqref{small_before_N} together show
\eeq{ \label{small_total}
\sum_{u \notin A} f(u) < \frac{\eps}{4k\|W\|_\infty}.
}
By possibly omitting some elements of $A$ and/or taking $N$ smaller, we may assume $f$ is strictly positive on $A$.
We may also assume 
\eq{
K \geq \sup_{1 \leq n \leq N} \diam(A_n),
}
since \eqref{K_choice_1} still holds if $K$ is made larger.
Now we choose $\delta > 0$ satisfying
\begin{subequations}
\begin{align}
\delta &< \inf_{u \in A} f(u)^\alpha, \label{delta_condition_1} \\
\delta &< 2^{-K}, \label{delta_condition_2} \\
\delta &< \frac{\eps}{8k\max\{1,|A|^{k-1}\}\|W\|_\infty}, \label{delta_condition_3} \\
{\delta}^{1/\alpha} &< \frac{\eps}{8k(2K)^{(k-1)d}\|W\|_\infty} \label{delta_condition_4}.
\end{align}
\end{subequations}
If $d_\alpha(f,g) < \delta$, then there is a representative $g \in \SS_0$ and an isometry $\phi : C \to \N \times \Z^d$ such that $d_{\alpha,\phi}(f,g) < \delta$.
The condition \eqref{delta_condition_1} implies $A \subset C$, while \eqref{delta_condition_2} guarantees that 
\eeq{
\deg(\phi) > K \geq \diam(A_n) \quad \text{for any $n \leq N$.} \label{deg_big_enough}
}
Consequently, $\phi$ acts by translation on $A_n$. 
That is, for each $n = 1,2,\dots,N$, there is $\tau(n) \in \N$ and $z_n \in \Z^d$ so that
\eeq{
\phi(n,x) = (\tau(n),x + z_n) \quad \text{for all $x \in A_n$.} \label{An_translate}
}
(Here $n \mapsto \tau(n)$ may not be injective.) We thus have
\eeq{ \label{triple_sum}
|I(W,f) - I(W,g)| &=
\bigg|\sum_{n \in \N} \sum_{\vc{x} \in (\Z^d)^k} W(\vc{x}) \prod_{i = 1}^k f(n,x_i) - \sum_{n \in \N} \sum_{\vc{x} \in (\Z^d)^k} W(\vc{x}) \prod_{i = 1}^k g(n,x_i)\bigg| \\
&\leq D_1 + D_2 + D_3,
}
where
\begin{align}
D_1 &\coloneqq  \bigg|\sum_{n \in \N} \sum_{\vc{x} \in (\Z^d)^k} W(\vc{x}) \prod_{i = 1}^k f(n,x_i)
- \sum_{n = 1}^N \sum_{\vc{x} \in A_n^k} W(\vc{x}) \prod_{i = 1}^k f(n,x_i)\bigg|, \label{D1} \\
D_2 &\coloneqq  \bigg|\sum_{n = 1}^N \sum_{\vc{x} \in A_n^k} W(\vc{x}) \prod_{i = 1}^k f(n,x_i)
- \sum_{n = 1}^N \sum_{\vc{x} \in A_n^k} W(\vc{x}+z_n) \prod_{i = 1}^k g(\tau(n),x_i+z_n)\bigg|, \nonumber \\
D_3 &\coloneqq  \bigg|\sum_{n = 1}^N \sum_{\vc{x} \in A_n^k} W(\vc{x}+z_n) \prod_{i = 1}^k g(\tau(n),x_i+z_n) - \sum_{n \in \N} \sum_{\vc{x} \in (\Z^d)^k} W(\vc{x}) \prod_{i = 1}^k g(n,x_i)\bigg|. \nonumber
\end{align}
We shall produce an upper bound for each of $D_1$, $D_2$, and $D_3$, and then use \eqref{triple_sum} to yield the desired result.

First, for $D_1$, notice that the summand  $W(\vc{x}) \prod_{i = 1}^k f(n,x_i)$ will appear in the first sum of \eqref{D1} but not the second if and only if $\vc{x} \notin A_{n}^k$ or $n > N$.
Considering the first case, we observe that $\vc{x} \notin A_n^k$ if and only if some $x_j$ does not belong to $A_n$. 
Hence
\eeq{ \label{bound_1_1}
\bigg| \sum_{n = 1}^N \sum_{\vc{x} \notin A_n^k} W(\vc{x}) \prod_{i = 1}^k f(n,x_i) \bigg|
&\leq \|W\|_\infty \sum_{n = 1}^N \sum_{\vc{x} \notin A_n^k} \prod_{i = 1}^k f(n,x_i) \\
&= \|W\|_\infty \sum_{n = 1}^N \sum_{j = 1}^k \sum_{x_j \notin A_n} f(n,x_j) \sum_{\vc{y} \in (\Z^d)^{k-1}} \prod_{i = 1}^{k-1} f(n,y_i) \\
&= \|W\|_\infty \sum_{n = 1}^N \sum_{j = 1}^k \sum_{x \notin A_n} f(n,x) \bigg(\sum_{y \in \Z^d} f(n,y)\bigg)^{k-1} \\
&\leq k\|W\|_\infty \sum_{n = 1}^N \sum_{x \notin A_n} f(n,x)
< \frac{\eps}{8},
}
where the final inequality is a consequence of \eqref{small_before_N}. 
Considering the second case, we have
\eeq{ \label{bound_1_2}
\bigg| \sum_{n = N+1}^\infty \sum_{\vc{x} \in (\Z^d)^k} W(\vc{x}) \prod_{i = 1}^k f(n,x_i) \bigg|
&\leq \|W\|_\infty \sum_{n = N+1}^\infty \sum_{\vc{x} \in (\Z^d)^k} \prod_{i = 1}^k f(n,x_i) \\
&= \|W\|_\infty \sum_{n = N+1}^\infty \bigg(\sum_{x \in \Z^d} f(n,x)\bigg)^k \\
&\leq \|W\|_\infty \sum_{n = N+1}^\infty \sum_{x \in \Z^d} f(n,x) < \frac{\eps}{8k} < \frac{\eps}{8},
}
where we have used \eqref{small_after_N} in the penultimate inequality.
Together, \eqref{bound_1_1} and \eqref{bound_1_2} yield
\eeq{ \label{bound_1}
D_1 < \frac{\eps}{4}.
}
Next we analyze the second difference, $D_2$, from \eqref{triple_sum}.
Here we use the translation invariance of $W$:
Making use of \eqref{An_translate}, we determine that
\eeq{ \label{before_telescope}
D_2 &= \bigg|\sum_{n = 1}^N \sum_{\vc{x} \in A_n^k} W(\vc{x}) \prod_{i = 1}^k f(n,x_i) - \sum_{n = 1}^N \sum_{\vc{x} \in A_n^k} W(\vc{x}+z_n) \prod_{i = 1}^k g(\tau(n),x_i+z_n)\bigg| \\
&= \bigg|\sum_{n = 1}^N \sum_{\vc{x} \in A_n^k} W(\vc{x}) \prod_{i = 1}^k f(n,x_i)
- \sum_{n = 1}^N \sum_{\vc{x} \in A_n^k} W(\vc{x}) \prod_{i = 1}^k g(\phi(n,x_i))\bigg| \\
&\leq \|W\|_\infty \sum_{n = 1}^N \sum_{\vc{x} \in A_n^k} \bigg|\prod_{i = 1}^k f(n,x_i) - \prod_{i = 1}^k g(\phi(n,x_i))\bigg|.
}
For $1 \leq n \leq N$ and $\vc{x} \in A_n^k$, we can use a telescoping sum to write
\eq{
&\bigg|\prod_{i = 1}^k f(n,x_i) - \prod_{i = 1}^k g(\phi(n,x_i))\bigg| \\
&= \bigg|\sum_{i = 1}^k f(n,x_1)\cdots f(n,x_{i-1})\big(f(n,x_i) - g(\phi(n,x_i))\big) \cdot g(\phi(n,x_{i+1}))\cdots g(\phi(n,x_k))\bigg| \\
&\leq\sum_{i = 1}^k |f(n,x_i) - g(\phi(n,x_i))|.
}
Therefore, \eqref{before_telescope} becomes 
\eq{
D_2 \leq \|W\|_\infty \sum_{n = 1}^N \sum_{\vc{x} \in A_n^k} \sum_{i = 1}^k |f(n,x_i) - g(\phi(n,x_i))|.
}
Now, given any $u = (n,x) \in A$, the summand $|f(u) - g(\phi(u))|$ will appear $k|A_n|^{k-1}$ times in the above sum: There must be some $i$ for which $x_i = x$, and the remaining $k-1$ coordinates of $\vc{x}$ can be any elements of $A_n$.
Using this fact and \eqref{delta_condition_3}, we arrive at
\eeq{ \label{bound_2}
D_2 &\leq k|A|^{k-1}\|W\|_\infty\sum_{u \in A} |f(u) - g(\phi(u))| < \frac{\eps}{8}.
}
Finally, we need to bound the third difference, $D_3$, in \eqref{triple_sum}.
Recall the map $n \mapsto \tau(n)$ from \eqref{An_translate}.
For each $\ell \in \N$, consider the partition of $(\Z^d)^k = J_1(\ell) \cup J_2(\ell)$, where
\eq{
J_1(\ell) &\coloneqq  \bigcup_{1 \leq n \leq N\, :\, \tau(n) = \ell} \{\vc{x} +z_n : \vc{x} \in A_n^k\}, \qquad
J_2(\ell) \coloneqq  (\Z^d)^k \setminus J_1(\ell).
}
In this notation, the product $W(\vc{y}) \prod_{i = 1}^k g(\ell,y_i)$ appears in the sum
\eq{
\sum_{n = 1}^N \sum_{\vc{x} \in A_n^k} W(\vc{x}+z_n) \prod_{i = 1}^k g(\tau(n),x_i+z_n)
}
if and only if $\vc{y} \in J_1(\ell)$.
Furthermore, in this case it will appear exactly once, since
\eq{
\big\{(\tau(n),x+z_n):x \in A_n\big\} \cap \big\{(\tau(m),x+z_m):x \in A_m\big\}
&= \phi(\{n\} \times A_n) \cap \phi(\{m\} \times A_m) = \varnothing
} 
for $n \neq m$.
Therefore,
\eeq{ \label{bound_3_rewrite}
D_3 &= \bigg|\sum_{n = 1}^N \sum_{\vc{x} \in A_n^k} W(\vc{x}+z_n) \prod_{i = 1}^k g(\tau(n),x_i+z_n)- \sum_{n \in \N} \sum_{\vc{x} \in (\Z^d)^k} W(\vc{x}) \prod_{i = 1}^k g(n,x_i)\bigg| \\
&= \bigg|\sum_{\ell \in \N} \sum_{\vc{x} \in J_2(\ell)} W(\vc{x}) \prod_{i = 1}^k g(\ell,x_i)\bigg|.
}
To analyze this quantity, we consider the further partition $J_2(\ell) = J_3(\ell) \cup J_4(\ell)$, where
\eq{
J_3(\ell) \coloneqq  \Big\{\vc{x} \in J_2(\ell) : \max_{i \neq j} \|x_i - x_j\|_1 \geq K\Big\}, \qquad
J_4(\ell) \coloneqq  J_2(\ell) \setminus J_3(\ell).
}
The sum over the various $J_3(\ell)$ is easy to control because of \eqref{K_choice_1}:
\eeq{ \label{bound_3_1}
\bigg|\sum_{\ell \in \N} \sum_{\vc{x} \in J_3(\ell)} W(\vc{x}) \prod_{i = 1}^k g(\ell,x_i)\bigg|
< \frac{\eps}{8} \sum_{\ell \in \N} \sum_{\vc{x} \in J_3(\ell)} \prod_{i = 1}^k g(\ell,x_i) 
&\leq \frac{\eps}{8} \sum_{\ell \in \N} \sum_{\vc{x} \in (\Z^d)^k} \prod_{i = 1}^k g(\ell,x_i) \\
&= \frac{\eps}{8} \sum_{\ell \in \N} \bigg(\sum_{x \in \Z^d} g(\ell,x)\bigg)^k \\
&\leq \frac{\eps}{8} \sum_{\ell \in \N} \sum_{x \in \Z^d} g(\ell,x)
\leq \frac{\eps}{8}.
}
Next considering $J_4(\ell)$, we make the following observation.
If $\vc{x} \in J_4(\ell)$, then there must be some coordinate $x_i$ for which $(\ell,x_i) \notin \phi(A)$.
Indeed, if $(\ell,x_i) = \phi(n,x_i')$ and $(\ell,x_j) = \phi(m,x_j')$ both belong to $\phi(A)$ (that is, $x_i' \in A_n$ and $x_j' \in A_m$), then
\eq{
\vc{x} \in J_4(\ell) \quad &\stackrel{\phantom{\eqref{deg_big_enough}}}{\implies} \quad
\|(\ell,x_i) - (\ell,x_j)\|_1 < K \\
&\stackrel{\eqref{deg_big_enough}}{\implies}\quad \|(n,x_i') - (m,x_j')\|_1 < K < \infty
\quad \implies \quad n = m.
}
Therefore, if it were the case that every coordinate $x_i$ satisfied $(\ell,x_i) \in \phi(A)$, then $\vc{x}$ would belong to $A_n^k + z_n$ for some $n$ such that $\ell = \tau(n)$.
This contradicts $J_4(\ell) \cap J_1(\ell) = \varnothing$.

We now consider one final (non-disjoint) partition: $J_4(\ell) = J_5(\ell) \cup J_6(\ell)$, where
\eq{
J_5(\ell) &\coloneqq  \{\vc{x} \in J_4(\ell) : (\ell,x_i) \in \phi(C \setminus A) \text{ for some $i$}\},
\intertext{and}
J_6(\ell) &\coloneqq  \{\vc{x} \in J_4(\ell) : (\ell,x_i) \notin \phi(C) \text{ for some $i$}\}.
}
\textit{A priori}, these definitions only imply $J_5(\ell) \cup J_6(\ell) \subset J_4(\ell)$, but the observation of the previous paragraph ensures that $J_5(\ell) \cup J_6(\ell) = J_4(\ell)$.
For the sum over the $J_5(\ell)$, there is a straightforward upper bound:
\eeq{
\bigg|\sum_{\ell \in \N} \sum_{\vc{x} \in J_5(\ell)} W(\vc{x}) \prod_{i = 1}^k g(\ell,x_i)\bigg|
&\leq \|W\|_\infty \sum_{\ell \in \N} \sum_{j = 1}^k \sum_{\vc{x} \in (\Z^d)^k\, :\, (\ell,x_j) \in \phi(C\setminus A)} \prod_{i = 1}^k g(\ell,x_i) \\
&= \|W\|_\infty \sum_{\ell \in \N} \sum_{j = 1}^k \sum_{x_j \in \Z^d\, :\, (\ell,x_j) \in \phi(C \setminus A)} g(\ell,x_j) \sum_{\vc{y} \in (\Z^d)^{k-1}} \prod_{i = 1}^{k-1} g(\ell,y_i) \\
&= k\|W\|_\infty \sum_{\ell \in \N} \sum_{x \in \Z^d\, :\, (\ell,x) \in \phi(C \setminus A)} g(\ell,x)\bigg(\sum_{y \in \Z^d} g(\ell,y)\bigg)^{k-1} \\
&\leq k\|W\|_\infty \sum_{\ell \in \N} \sum_{x \in \Z^d\, :\, (\ell,x) \in \phi(C \setminus A)} g(\ell,x) \\
&= k\|W\|_\infty \sum_{u \in C \setminus A} g(\phi(u)) \\
&\leq k\|W\|_\infty \bigg(\sum_{u \in C \setminus A} f(u) + \sum_{u \in C \setminus A} |f(u) - g(\phi(u))|\bigg) \\
&\leq k\|W\|_\infty\Big(\frac{\eps}{4k\|W\|_\infty} + d_{\alpha,\phi}(f,g)\Big)
< \frac{3\eps}{8} \label{bound_3_2},
}
where we have used \eqref{small_total} and \eqref{delta_condition_3} to establish the final two inequalities.
Now turning our focus to the sum over the $J_6(\ell)$, we define for each $x_1 \in \Z^d$ the set
\eq{
\NN(x_1) \coloneqq  \Big\{(x_2,\dots,x_{k}) \in (\Z^d)^{k-1} : \max_{1 \leq i < j \leq k} \|x_i - x_j\|_1 < K \Big\}.
}
Note that
\eq{
|\NN(x_1)| \leq (2K)^{(k-1)d},
}
since there are no more than $(2K)^d$ elements of $\Z^d$ at distance less than $K$ from $x_1$, and each of $x_2,\dots,x_{k}$ must satisfy this property.
By definition, if $\vc{x} \in J_4(\ell)$, then for every $j$ we have 
$(x_1,\dots,x_{j-1},x_{j+1},\dots,x_k) \in \NN(x_j)$.
Therefore, we obtain the bound
\eeq{ \label{first_attack}
\bigg|\sum_{\ell \in \N} \sum_{\vc{x} \in J_6(\ell)} W(\vc{x}) \prod_{i = 1}^k g(\ell,x_i)\bigg|
&\leq \|W\|_\infty \sum_{\ell \in \N} \sum_{j = 1}^{k} \sum_{\vc{x} \in J_6(\ell)\, :\, (\ell,x_{j}) \notin \phi(C)}\prod_{i = 1}^{k} g(\ell,x_i) \\
&\leq \|W\|_\infty \sum_{\ell \in \N} \sum_{j = 1}^k \sum_{x_j \in \Z^d\, :\, (\ell,x_j) \notin \phi(C)} \sum_{\vc{y} \in \NN(x_j)} g(\ell,x_j) \prod_{i=1}^{k-1} g(\ell,y_i) \\
&= k\|W\|_\infty \sum_{\ell \in \N} \sum_{x \in \Z^d\, :\, (\ell,x) \notin \phi(C)} \sum_{\vc{y} \in \NN(x)} g(\ell,x) \prod_{i = 1}^{k-1} g(\ell,y_i).
}
Now notice that
\eq{
\vc{y} = (y_1,y_2,\dots,y_{k-1}) \in \NN(x) \quad \iff \quad
(x,y_2,\dots,y_{k-1}) \in \NN(y_1),
}
and that
\eq{
(\ell,x) \notin \phi(C) \quad \implies \quad g(\ell,x) \leq \bigg({\sum_{u \notin \phi(C)} g(u)^\alpha}\bigg)^{1/\alpha} \leq {d_{\alpha,\phi}(f,g)}^{1/\alpha} < {\delta}^{1/\alpha}.
}
Therefore, we can rewrite \eqref{first_attack} as
\eeq{
\bigg|\sum_{\ell \in \N} \sum_{\vc{x} \in J_6(\ell)} W(\vc{x}) \prod_{i = 1}^k g(\ell,x_i)\bigg|
&\leq k\|W\|_\infty \sum_{\ell \in \N} \sum_{y \in \Z^d} \sum_{\vc{x} \in \NN(y)\, :\, (\ell,x_1) \notin \phi(C)} g(\ell,y) \prod_{i = 1}^{k-1} g(\ell,x_i) \\
&\leq k\|W\|_\infty \sum_{\ell \in \N} \sum_{y \in \Z^d}g(\ell,y) \sum_{\vc{x} \in \NN(y)} {\delta}^{1/\alpha}\prod_{i = 2}^{k-1} g(\ell,x_i) \\
&\leq k\|W\|_\infty (2K)^{(k-1)d}{\delta}^{1/\alpha} \sum_{\ell \in \N} \sum_{y \in \Z^d} g(\ell,y) \\
&\leq k\|W\|_\infty (2K)^{(k-1)d}{\delta}^{1/\alpha} 
\stackrel{\mbox{\footnotesize{\eqref{delta_condition_4}}}}{<} \frac{\eps}{8}. \label{bound_3_3}
}
In light of \eqref{bound_3_rewrite}, \eqref{bound_3_1}--\eqref{bound_3_3} now yield
\eeq{  \label{bound_3}
D_3 &=\bigg|\sum_{\ell \in \N} \sum_{\vc{x} \in J_2(\ell)} W(\vc{x}) \prod_{i = 1}^k g(\ell,x_i)\bigg| \\
&\leq \bigg|\sum_{\ell \in \N} \sum_{\vc{x} \in J_3(\ell)} W(\vc{x}) \prod_{i = 1}^k g(\ell,x_i)\bigg|  + \bigg|\sum_{\ell \in \N} \sum_{\vc{x} \in J_5(\ell)} W(\vc{x}) \prod_{i = 1}^k g(\ell,x_i)\bigg| + \bigg|\sum_{\ell \in \N} \sum_{\vc{x} \in J_6(\ell)} W(\vc{x}) \prod_{i = 1}^k g(\ell,x_i)\bigg|  \\
&< \frac{\eps}{8} + \frac{3\eps}{8} + \frac{\eps}{8} = \frac{5\eps}{8}.
}
Then using \eqref{bound_1}, \eqref{bound_2}, and \eqref{bound_3} in \eqref{triple_sum}, we find
\eq{
|I(W,f)-I(W,g)| < \frac{\eps}{4} + \frac{\eps}{8} + \frac{5\eps}{8} = \eps,
}
as desired.
\end{proof}

\section{The update map} \label{transformation}
Throughout the remainder of the chapter, we fix $\beta \in (0,\beta_{\max}) $ according to \eqref{mgf_assumption}, and we also fix some $\alpha > 1$ such that $\alpha\beta < \beta_{\max}$.
We then restrict our attention to $\SS$ equipped with the metric $d_\alpha$, and $\PP(\SS)$ with $\WW_\alpha$.
Proposition \ref{metrics_equivalent} tells us that the topology on $\SS$ does not depend on $\alpha$, although the same is not true for the topology on $\PP(\SS)$ induced by $\WW_\alpha$.
Indeed, there can exist functions $\vphi : \SS \to \R$ which are Lipschitz-1 with respect to some $d_\alpha$ but not Lipschitz at all with respect to some other $d_{\alpha'}$.

Consider the distribution of $\sigma_n$ under $\mu_n^\beta$. %given the random environment $(\omega_u)_{u \in \N \times \Z^d}$. 
In this section we identify said distribution with an element $f_n \in \SS$ and define the Markov kernel $T$ that maps $f_n$ to the law of $f_{n+1}$ when conditioned on the random environment only up to time $n$.
In the following, the notation $u \sim v$ is used when $u,v \in \N \times \Z^d$. %satisfy $\|u-v\|_1 = 1$.
The same notation will be used for adjacent elements $x,y \in \Z^d$.

\subsection{Definition and continuity of the update map} \label{endpoint_distributions}
To make the setup precise, we recall the polymer measure $\mu_n^\beta$ defined by \eqref{polymer_measure_def} and the endpoint probability mass function $f_n : \Z^d \to \R$ given by
\eeq{
f_n(x) &\coloneqq  \mu_n^\beta(\sigma_n = x)  \\
&= \frac{Z_n(\beta,x)}{Z_n(\beta)} \\
&= \frac{1}{Z_n(\beta)}\sum_{0=\sigma_0,\sigma_1,\cdots,\sigma_{n}=x} \exp\bigg(\beta\sum_{i = 1}^{n} \omega(i,\sigma_i)\bigg)\e^{\beta\omega(n,x)}\bigg(\prod_{i=1}^{n} P(\sigma_{i-1},\sigma_i)\bigg). \label{fn_def}
}
Then $f_n$ is a $[0,1]$-valued function on $\Z^d$ and random depending on the environment $\omega$.
Its value at $x$ gives the probability that a polymer sampled from $\mu_n^\beta$ has $x$ as its endpoint.

When the polymer is extended from length $n$ to length $n+1$, the endpoint distribution updates to
\eq{
f_{n+1}(x)
&= \frac{1}{Z_{n+1}(\beta)}\sum_{0=\sigma_0,\sigma_1,\cdots,\sigma_n,\sigma_{n+1}=x} \exp\bigg(\beta\sum_{i = 1}^{n} \omega(i,\sigma_i)\bigg)\e^{\beta\omega(n+1,x)}\bigg(\prod_{i=1}^{n} P(\sigma_{i-1},\sigma_i)\bigg)P(\sigma_n,x) \\
&= \frac{Z_n(\beta)}{Z_{n+1}(\beta)}\sum_{\sigma_n\in\Z^d} f_n(\sigma_n)\e^{\beta \omega(n+1,x)}P(\sigma_n,x),
}
where the fact that $\sum_x f_{n+1}(x) = 1$ implies
\eeq{
\frac{Z_{n+1}(\beta)}{Z_{n}(\beta)} = \frac{Z_{n+1}(\beta)}{Z_{n}(\beta)}\sum_{x \in \Z^d} f_{n+1}(x) = \sum_{x \in \Z^d}\sum_{\sigma_n \in\Z^d} f_n(\sigma_n) \e^{\beta \omega(n+1,x)}P(y,x). \label{Zfrac}
}
We can thus write
\eq{
f_{n+1}(x) = \frac{\sum_{y\in\Z^d} f_n(y)\e^{\beta \omega(n+1,x)}P(y,x)}{\sum_{z\in\Z^d}\sum_{y\in\Z^d} f_n(y)\e^{\beta \omega(n+1,z)}P(y,z)}.
}
Recall that $(\omega(n+1,x))_{x \in \Z^d}$ is independent of $\FF_n$, while $f_n$ is measurable with respect to $\FF_n$.
Therefore, the above identity shows how $f_0 \mapsto f_1 \mapsto \cdots $ forms a Markov chain. 
Using the quotient map from Lemma \ref{S_meas}, we can embed this chain into $\SS$.
That is, we identify $f_{n}$ with its equivalence class in $\SS$ so that a representative takes values on $\N \times \Z^d$ instead of $\Z^d$. 
For concreteness, one could take the representative
\eeq{ \label{fn_def2}
f_n(k,x) = \begin{cases}
\frac{Z_n(\beta,x)}{Z_n(\beta)}  &\text{if $k = 1$,} \\
0 &\text{otherwise,}
\end{cases}
\quad (k,x)\in\N\times\Z^d.
}
Then the law of $f_{n+1} \in \SS$ given $f_{n} = f$ is the law of the random variable $F \in \SS$ defined by
\eeq{ \label{F_def_0}
F(u) = \frac{\sum_{v \in \N \times \Z^d} f(v)\e^{\beta\omega_u}P(v,u)}{\sum_{w\in \N \times \Z^d}\sum_{v\in \N \times \Z^d} f(v)\e^{\beta\omega_w}P(v,w)}, \quad u\in\N\times\Z^d,
}
where $(\omega_u)_{u \in \N \times \Z^d}$ is an i.i.d.~collection of random variables having law $\mathfrak{L}_\omega$, and
\eq{
P(v,u) = \begin{cases}
P(y,x) &\text{if $v = (n,y)$, $u = (n,x)$,} \\
0 &\text{if $v = (m,y)$, $u = (n,x)$, $m\neq n$.}
\end{cases}
}
To simplify notation, we write $v \sim u$ in the first case (i.e.~$v$ and $u$ have the same first coordinate) and $v \nsim u$ otherwise.

In addition to the Markov chain of endpoint distributions, there is the corresponding process $(\log Z_n(\beta))_{n\geq0}$ of free energies, which satisfies the recursion
\eq{
\log Z_{n+1}(\beta) &= \log Z_{n}(\beta) + \E\givenk[\Big]{\log \frac{Z_{n+1}(\beta)}{Z_{n}(\beta)}}{\FF_{n}} + (\log Z_{n+1}(\beta) - \E\givenk{\log Z_{n+1}(\beta)}{\FF_{n}}) \\
&= \log Z_n(\beta) + R(f_n) + d_n,
}
where 
\eeq{ \label{R_predef}
R(f_n)\coloneqq\E\givenk[\Big]{\log \frac{Z_{n+1}(\beta)}{Z_{n}(\beta)}}{\FF_{n}}
\stackrel{\mbox{\footnotesize\eqref{Zfrac}}}{=}\E\givenk[\bigg]{\log\sum_{x,y\in\Z^d} f_n(y)\e^{\beta \omega(n+1,x)}P(y,x)}{\FF_n},
}
and
\eeq{ \label{d_n_def}
d_n \coloneqq \log \frac{Z_{n+1}(\beta)}{Z_n(\beta)} - R(f_n).
} 
Notice that $d_n$ is a martingale increment, meaning the Doob decomposition of $(\log Z_n(\beta))_{n\geq0}$ is
\eeq{ \label{doob_decomposition}
\log Z_n(\beta) = \sum_{i=0}^{n-1} d_i + \sum_{i=0}^{n-1} R(f_i),
}
a fact which has been frequently used in the literature (e.g.~\cite{carmona-hu02,comets-shiga-yoshida03,comets-shiga-yoshida04,carmona-hu06}).

Next we wish to generalize the Markov chain of endpoint distributions to the entire space $\SS$, and then reinterpret the above observations about the free energy to match this generalization.
For a given $f\in\SS$, choose a representative (also called $f$) from $\SS_0$.
Consider the law of the random variable $F \in \SS$ whose representative is defined by
\eeq{
F(u) = \frac{\sum_{v\sim u} f(v)\e^{\beta\omega_u}P(v,u)}{\sum_{w\in\N\times\Z^d}\sum_{v\sim w} f(v)\e^{\beta\omega_w}P(v,w)+(1-\|f\|)\e^{\lambda(\beta)}}, \quad u\in\N\times\Z^d. \label{F_def}
}
Notice that \eqref{F_def} is a generalization of \eqref{F_def_0} because in the latter, $\|f_n\| = 1$.
The additional summand in the denominator of \eqref{F_def} is needed so that $F$ is defined even when $f \equiv 0$; its precise value is chosen so that \eqref{Zfrac} is generalized in an averaged form:
\eq{
\E\bigg[\sum_{u \in \N \times \Z^d} \sum_{v \sim u} f(v) \e^{\beta \omega_u}P(v,u) + (1 - \|f\|){\e^{\lambda(\beta)}}\bigg] 
&= {\e^{\lambda(\beta)}}
= \E\Big(\frac{Z_{n+1}(\beta)}{Z_n(\beta)}\Big).
}
%In this way, the ``map" $f \mapsto F$ will be used to approximate the ratio of successive partition functions.
%In turn, the free energy can be calculated from these ratios, as shown in Section \ref{free_energy}.
%The reason for this choice will become clear in Section \ref{free_energy}.
%Since ultimately we care about the quantity
%\eeq{ \label{Fn_cesaro}
%F_n = \frac{1}{n}\sum_{i = 0}^{n-1} \log \frac{Z_{i+1}}{Z_{i}}
%= \frac{1}{n} \sum_{i = 0}^{n-1} \log\bigg((2d)^{-1}\sum_{x \in \Z^d} \sum_{y \sim x} f_i(y)\e^{\beta X_{i+1,\, x}}\bigg),
%}
It is thus apparent that the correct extension of \eqref{R_predef} is to define $R : \SS\to\R$ by
\eeq{
R(f) &\coloneqq  \E\log\wt F,\qquad \wt F \coloneqq \sum_{u \in \N \times \Z^d} \sum_{v \sim u} f(v)\e^{\beta \omega_u}P(v,u) + (1-\|f\|){\e^{\lambda(\beta)}}, \label{R_def}
}
where the expectation is over the environment $(\omega_u)_{u\in\N\times\Z^d}$.

\begin{remark}
Although the indexing of $\omega_u$ by $u \in \N \times \Z^d$ might appear to reflect a notion of time, we are not using $\N$ to consider time.
Rather, in order to compactify the space of measures on $\Z^d$, we needed to pass to subprobability measures on $\N \times \Z^d$.
To avoid confusion, we will never write $\N$ to index time.
Following this rule, we will write $\omega_u$ whenever we wish to think of a random environment on $\N \times \Z^d$, always at a \textit{fixed} time.
When considering the original random environment defining the polymer measures, we will follow the original $\omega(i,x)$ notation.
In either case, we will continue to use boldface $\vc\omega$ when referring to the entire collection of environment random variables.
\end{remark}

We must now check that the quantities defined above are measurable and do not depend on the representative $f\in \SS_0$.
That is, let us define $Tf$ to be the law of $F$ given $f \in \SS$, and then we need to show the following:
\begin{itemize}
\item[(i)] Given any $f \in \SS_0$, the map $\R^{\N\times\Z^d} \to \SS$ given by $\vc\omega \mapsto F$ is Borel measurable, where $\R^{\N\times\Z^d}$ is equipped with the product topology and product measure $(\mathfrak{L}_\omega)^{\otimes \N\times\Z^d}$, and $\mathfrak{L}_\omega$ is the law of $\omega$.
\item[(ii)] The law of $F$ does not depend on the representative $f \in \SS_0$.
\item[(iii)] The value of $R$ does not depend on the representative $f\in\SS_0$.
\end{itemize}
Claim (i) is immediate, since $\vc \omega \mapsto F$ is clearly a measurable map from $\R^{\N\times\Z^d}$ to $\SS_0$.
After all, $F$ can be expressed using only sums and quotients involving the measurable functions $(\e^{\beta\omega_u})_{u\in\Z^d}$.
And then $F \to \iota(F)$ from $\SS_0$ to $\SS$ is measurable by Lemma \ref{S_meas}.
Claims (ii) and (iii) are given by the following lemma.

\begin{prop} \label{same_law}
Suppose $f,g \in \SS_0$ satisfy $d_\alpha(f,g) = 0$.
Define $F$ and $\wt F$ as in~\eqref{F_def} and \eqref{R_def}, 
and similarly define
\eeq{ \label{G_def}
G(u) \coloneqq  \frac{\sum_{v \sim u} g(v) \e^{\beta \eta_u}P(v,u)}{\wt G}, \quad \wt G \coloneqq{\sum_{w \in \N \times \Z^d} \sum_{v \sim w} g(v)\e^{\beta \eta_w}P(v,w) + (1 - \|g\|){\e^{\lambda(\beta)}}}, %\label{G_def}
}
where the $\eta_w$ variables are i.i.d.~with law $\mathfrak{L}_\omega$.
Then the following statements hold:
\begin{itemize}
\item[(a)] The law of $F \in \SS$ is equal to the law of $G \in \SS$.
\item[(b)] The law of $\wt F \in \R$ is equal to the law of $\wt G \in \R$.
In particular, $R(f) = R(g)$.
\end{itemize}
\end{prop}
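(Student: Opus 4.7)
The plan is to construct an explicit coupling of the environments $(\omega_u)$ and $(\eta_u)$ that, with probability one, produces representatives of $F$ and $G$ falling into the same equivalence class of $\SS$. Together with the fact that $\wt F = \wt G$ under the coupling, this yields both (a) and (b) simultaneously.

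First I would invoke Corollary \ref{better_def_cor} on the hypothesis $d_\alpha(f,g) = 0$ to obtain a bijection $\tau : H_f \to H_g$ of the $\N$-supports and vectors $(x_n)_{n \in H_f} \subset \Z^d$ satisfying $g(\tau(n), x) = f(n, x + x_n)$ for all $n \in H_f$ and $x \in \Z^d$. Note this immediately forces $\|f\| = \|g\|$. Define the bijection
\[
\psi : H_f \times \Z^d \to H_g \times \Z^d, \qquad \psi(n, y) \coloneqq (\tau(n),\, y - x_n),
\]
and check directly that $g(\psi(n,y)) = f(n, y)$ and that $\psi$ is translation-preserving within each copy of $\Z^d$.

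Next I would couple the environments by letting $(\omega_u)_{u \in \N \times \Z^d}$ be the i.i.d.\ collection with law $\mathfrak{L}_\omega$ used in \eqref{F_def}, and then defining $\eta_{\psi(u)} \coloneqq \omega_u$ for $u \in H_f \times \Z^d$ and letting $\eta_v$ for $v \notin H_g \times \Z^d$ be i.i.d.\ fresh copies independent of $(\omega_u)$. Because $\psi$ is a bijection onto $H_g \times \Z^d$, the collection $(\eta_v)$ is i.i.d.\ with law $\mathfrak{L}_\omega$, so it is a legitimate sample from the distribution used to define $G$ in \eqref{G_def}. The main computation is then to fix $u = (n, y+x_n) \in H_f \times \Z^d$ with $v = \psi(u) = (\tau(n), y)$, expand the numerator of $G(v)$ as a sum over $w = (\tau(n), z)$, and use translation invariance of the walk---i.e.\ $P(z, y) = P(z - x_n,\, y - x_n)$, a consequence of \eqref{walk_assumption_1}---to convert the sum to $\sum_{w' \sim u} f(w') \e^{\beta \omega_u} P(w', u)$. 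This is the principal technical step, and the main obstacle will be keeping the bookkeeping of the translations and of the index set $H_f$ versus $H_g$ completely straight; after that, the identity is bilinear and reduces to applying the relation $g(\tau(n),z) = f(n,z+x_n)$ and shifting the summation variable.

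From that identity I would deduce two conclusions. First, summing over $v \in H_g \times \Z^d$ and observing that the inner sums vanish for $v \notin H_g \times \Z^d$ (respectively $u \notin H_f \times \Z^d$) because $g$ (respectively $f$) is zero off its $\N$-support, together with $\|f\| = \|g\|$, gives $\wt G = \wt F$ pointwise in $\vc\omega$; this proves (b) under the coupling and hence in distribution, and in turn yields $R(f) = \E \log \wt F = \E \log \wt G = R(g)$. Second, the identity also gives $G(\psi(u)) \cdot \wt G = F(u) \cdot \wt F$, so $G(\psi(u)) = F(u)$ for every $u \in H_f \times \Z^d$, while $F$ and $G$ vanish outside $H_f \times \Z^d$ and $H_g \times \Z^d$, respectively. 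Applying Corollary \ref{better_def_cor} (with bijection $\tau$ restricted to the $\N$-supports of $F$ and $G$ and the same vectors $x_n$) then gives $d_\alpha(F, G) = 0$ almost surely, so $F$ and $G$ agree as $\SS$-valued random variables under this coupling. In particular their laws on $\SS$ coincide, proving (a).
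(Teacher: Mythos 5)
Your proposal is correct and follows essentially the same route as the paper: invoke Corollary~\ref{better_def_cor} to obtain $\tau$ and $(x_n)$, build the translation map $\psi$, couple the environments via $\eta_{\psi(u)}=\omega_u$, use translation invariance of $P$ to match numerators, deduce $\wt F=\wt G$ (hence (b)), and conclude $F(u)=G(\psi(u))$ so that $F$ and $G$ are identified in $\SS$ (giving (a)). The only cosmetic difference is that the paper finishes by writing out an explicit isometry and letting $\eps\to0$ rather than re-invoking Corollary~\ref{better_def_cor}, and your stated translation identity $P(z,y)=P(z-x_n,y-x_n)$ is the reverse of the shift actually needed in the change of variables (both are valid instances of translation invariance, so this is just a bookkeeping nit to fix when filling in the detail you flag).
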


\begin{proof}
To show the two claims, it suffices to exhibit a coupling of the environments $(\omega_u)_{u \in \N \times \Z^d}$ and $(\eta_u)_{u \in \N \times \Z^d}$ such that $F = G$ in $\SS$ and $\wt F = \wt G$ in $\R$.
So we let $H_f$ and $H_g$ denote the $\N$-supports of $f$ and $g$, respectively (recall the definition \eqref{Nsupport_def}), and take
$\tau : H_f \to H_g$ and $(x_n)_{n \in H_f}$ as in Corollary \ref{better_def_cor}, so that \eqref{better_def} holds.
Define the translation(s) $\psi : H_f \times \Z^d \to H_g \times \Z^d$ by $\psi(n,x) \coloneqq (\tau(n),x-x_n)$, so that \eqref{better_def} now reads as
\eeq{
f(v) = g(\psi(v)) \quad \text{for all $v \in H_f \times \Z^d$}. \label{f_equals_g}
}

Next we couple the environments.
Let $\eta_u$ be equal to $\omega_{\psi^{-1}(u)}$ whenever $u \in H_g \times \Z^d$.
Otherwise, we may take $\eta_u$ to be an independent copy of $\omega_u$.
Now, for any $u = (n,x)$ and $v = (n,y)$ with $n \in H_f$,
\eeq{
\psi(u) - \psi(v) = u - v \quad \implies \quad P(v,u) &= P(\sigma_1 = u - v) \\ &= P(\sigma_1 = \psi(u)-\psi(v)) = P(\psi(v),\psi(u)). \label{transitions_equal}
}
Therefore, for any $u \in H_f \times \Z^d$, we have
\eeq{ \label{numerator_same}
\sum_{v \sim u} f(v)\e^{\beta \omega_u}P(v,u)
&\stackrel{\mbox{\scriptsize\eqref{f_equals_g},\eqref{transitions_equal}}}{=} \sum_{v \sim u} g(\psi(v))\e^{\beta \omega_u}P(\psi(v),\psi(u)) \\
&\stackrel{\phantom{\eqref{f_equals_g},\eqref{transitions_equal}}}{=} \sum_{v \sim \psi(u)} g(v)\e^{\beta \eta_{\psi(u)}}P(v,\psi(u)).
}
On the other hand, for any $u \in (\N \setminus H_f) \times \Z^d$ we have $f(v) = 0$ for every $v \sim u$. 
Similarly, $g(v) = 0$ whenever $u \in (\N \setminus H_g) \times \Z^d$ and $v \sim u$.
Consequently, \eqref{numerator_same} trivially implies
\eq{
\sum_{w \in \N \times \Z^d} \sum_{v \sim w} f(v) \e^{\beta \omega_w}P(v,w)
= \sum_{w \in \N \times \Z^d} \sum_{v \sim w} g(v) \e^{\beta \eta_w}P(v,w). %\label{denominator_same}
}
This identity, together with the fact that $\|f\| = \|g\|$ (see Lemma \ref{norm_equivalence}), shows $\wt F = \wt G$ and thus proves claim (b).
When we further consider \eqref{numerator_same}, we see that $F(u) = G(\psi(u))$ for all $u \in H_f \times \Z^d$.
Hence
\eq{
\alpha\sum_{u \in H_f \times \Z^d} |F(u) - G(\psi(u))| + \sum_{u \notin H_f \times \Z^d} F(u)^\alpha + \sum_{u \notin H_g \times \Z^d} G(u)^\alpha = 0.
}
Moreover, $\eps > 0$, we can find a finite subset $A \subset H_f \times \Z^d$ such that
\eq{
\sum_{u \notin A} F(u)^\alpha + \sum_{u \notin \psi(A)} G(u)^\alpha < \eps.
}
With $\phi \coloneqq \psi\rvert_{A}$ (so $\deg(\phi) = \infty$), we thus have $d_\alpha(F,G) \leq d_{\alpha,\phi}(F,G) < \eps$.
Letting $\eps$ tend to 0 gives claim (a).
\end{proof}

Given Proposition \ref{same_law}, the map $f \mapsto Tf \in \PP(\SS)$ is well-defined.
The next step in our construction is to establish continuity. 
The seemingly unexciting result below will require a careful and lengthy proof. %that deserves a separate Appendix \ref{proof_continuity}.

\begin{prop} \label{continuous1} \hspace*{1ex}
\begin{itemize}
\item[(a)] $f\mapsto Tf$ is a continuous map from $(\SS,d_\alpha)$ to $(\PP(\SS),\WW_\alpha)$.
\item[(b)] For any positive integer $q$, $f\mapsto \E(\log^q\wt F)$ is a continuous map from $(\SS,d_\alpha)$ to $\R$.
In particular, the case $q=1$ implies $R(\cdot)$ is continuous.
\end{itemize}
\end{prop}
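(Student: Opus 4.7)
The plan is to prove (b) first and then derive (a) by the same technique; both rest on coupling the environments for $\wt F_n$ and $\wt F$ via the isometries realizing $d_\alpha(f_n, f) \to 0$.

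For the coupling: given $f_n \to f$ in $d_\alpha$, I fix representatives in $\SS_0$ and choose isometries $\phi_n : A_n \to \N \times \Z^d$ with $A_n$ finite and $d_{\alpha, \phi_n}(f_n, f) \to 0$, so in particular $\deg(\phi_n) \to \infty$. For each fixed $k$, Lemma \ref{extension} eventually lets me extend $\phi_n$ to an isometry $\Phi_n : A_n^{(k)} \to \N \times \Z^d$ of large degree. I let $(\eta_u)_u$ be i.i.d.\ with law $\mathfrak{L}_\omega$, use this environment to build $F$ and $\wt F$ from $f$, and couple by setting $\omega^{(n)}_u = \eta_{\Phi_n(u)}$ for $u$ in the domain of $\Phi_n$ and taking independent copies elsewhere; then I build $F_n$ and $\wt F_n$ from $f_n$ using $\omega^{(n)}$.

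For (b) I decompose $\wt F_n$ and $\wt F$ into double sums over pairs $(u,v)$. On the \emph{matched} region (both $u$ and $v$ in the domain of $\Phi_n$ with $\|u - v\|_1 < \deg(\Phi_n)$), the isometry gives $P(v, u) = P(\Phi_n(v), \Phi_n(u))$ and the coupling gives $\e^{\beta \omega^{(n)}_u} = \e^{\beta \eta_{\Phi_n(u)}}$, so the matched contributions differ only by factors $|f_n(v) - f(\Phi_n(v))|$, whose $\ell^1$ sum is dominated by $d_{\alpha, \phi_n}(f_n, f)/\alpha \to 0$. The error contributions---indices outside the domain or range of $\Phi_n$, and \emph{long} transitions with $\|v - u\|_1 \geq \deg(\Phi_n)$---are each small in $L^1(\P)$: the first two by the tail sums $\sum_{v \notin A_n} f_n(v) \leq (d_{\alpha, \phi_n})^{1/\alpha}$ combined with $\E \e^{\beta \eta_u} = \e^{\lambda(\beta)}$, and the third by $\sum_u P(v, u) = 1$ (so the long part vanishes as one first lets $k$, and then $n$, go to infinity). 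This yields $\wt F_n \to \wt F$ in $L^1$. To upgrade to convergence of $\E \log^q \wt F_n$, I establish uniform integrability of $|\log \wt F_n|^q$: the convex-combination representation $\wt F = \sum_v f(v) Y_v + (1 - \|f\|) \e^{\lambda(\beta)}$ with $Y_v = \sum_{u \sim v} \e^{\beta \eta_u} P(v, u)$, combined with Jensen's inequality applied to the convex function $x \mapsto x^\alpha$, gives $\E \wt F^\alpha \leq \|f\| \E Y_0^\alpha + (1-\|f\|) \e^{\alpha\lambda(\beta)}$ with $\E Y_0^\alpha \leq \e^{\lambda(\alpha \beta)} < \infty$---precisely where the assumption $\alpha \beta < \beta_{\max}$ enters. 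Negative moments of $\wt F$ are controlled analogously using $\lambda(-\alpha\beta) < \infty$.

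For (a), the same coupling and extension $\Phi_n$ furnish a candidate isometry witnessing closeness of $F_n$ and $F$ in $\SS$: the $\ell^1$-smallness of the numerator differences inherited from (b), together with the $L^1$ convergence $\wt F_n \to \wt F$ and the uniform moment bounds on $\wt F_n^{\pm 1}$, controls each term in $d_{\alpha, \Phi_n}(F_n, F)$ after dividing by the two denominators. This gives $\E d_\alpha(F_n, F) \to 0$ under the coupling, hence $\WW_\alpha(Tf_n, Tf) \to 0$ by the definition of the Wasserstein distance. The main technical obstacle is maintaining uniform integrability of $\log \wt F_n$, especially its negative tail when $\|f_n\| \to 1$ and no single $Y_v$-term dominates; a second subtlety is the interplay between the extension radius $k$ and the degree $\deg(\phi_n)$ when $P$ has infinite range, which forces a careful diagonal choice---first $k$ large enough to make the $P$-tail negligible, then $n$ large enough that $\deg(\phi_n) \geq 2k + 3$ so Lemma \ref{extension} produces $\Phi_n$ of usable degree.
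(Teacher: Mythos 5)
The argument has a genuine gap in the claimed $L^1$ convergence $\wt F_n \to \wt F$. You bound the escaping-mass error via $\sum_{v\notin A_n} f_n(v) \le d_{\alpha,\phi_n}(f_n,f)^{1/\alpha}$, but this inequality is false: the metric controls $\sum_{v\notin A_n} f_n(v)^\alpha$, and $\|\cdot\|_{\ell^1} \le \|\cdot\|_{\ell^\alpha}$ goes the wrong way for $\alpha>1$. Indeed the compactification of $\SS$ is designed precisely so that mass may spread diffusely and escape: take $f_n$ to be half a point mass at $0$ plus $1/2$ uniformly over $m\to\infty$ remote sites, with $d_\alpha$-limit $f$ of total mass $1/2$; then $d_\alpha(f_n,f)\to 0$ while $\sum_{v\notin A_n} f_n(v)$ stays bounded below. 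In such cases the escaping mass contributes a nontrivial \emph{random} quantity $\sum_v f_n(v)\sum_u \e^{\beta\omega_u}P(v,u)$ to $\wt F_n$, whereas $\wt F$ only carries the deterministic placeholder $(1-\|f\|)\e^{\lambda(\beta)}$. These match only after a concentration step: because each individual escaping weight is small (that \emph{is} controlled, via $\sup_{v\notin A_n} f_n(v)\le d_{\alpha,\phi_n}^{1/\alpha}$), a law-of-large-numbers argument shows the escaping random sum concentrates around $\e^{\lambda(\beta)}$ times the escaping mass. This concentration is the paper's Lemma~\ref{LLN} and is the technical heart of the proof; it cannot be replaced by the naive $L^1$ estimate you describe. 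The same lemma is also needed to control the ratio $\E\big(|\wt F_n - \wt F|/\wt F_n\big)$, which is what actually appears when you expand $d_{\alpha,\Phi_n}(F_n,F)$ in part (a); mere $L^1$ convergence of $\wt F_n-\wt F$ does not give this, and the paper treats it with a further case split on $\|f\|\le 1/2$ versus $\|f\|>1/2$ (so that the denominator is bounded below either by the constant $(1-\|f\|)\e^{\lambda(\beta)}$ or by a large chunk of genuinely localized mass).

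A secondary structural remark: the paper fixes a finite set $A$ depending only on the limit $f$ (with $\sum_{u\notin A} f(u) < \kappa$) and a fixed radius $k$, and then argues uniform continuity via a ``for every $\eps$ choose $\delta$'' scheme; this keeps the domain of the extended isometry fixed and simplifies the bookkeeping considerably compared to taking $A_n$ to be the (growing, $n$-dependent) domain of $\phi_n$ and diagonalizing over $k$. Your diagonal choice is workable in principle once the LLN is supplied, but it is worth knowing the cleaner route exists.
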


The following consequence of part (b) is not new but will be convenient to have recorded for later use.

\begin{cor} \label{increment_cor}
Recall the martingale increment $d_n$ from \eqref{d_n_def}.
For any positive integer $q$, there exists a constant $C$ depending only on $\mathfrak{L}_\omega$, $\beta$, $d$, and $q$, such that $\E(d_n^q) \leq C$ for every $n$.
\end{cor}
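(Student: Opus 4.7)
The plan is to derive Corollary \ref{increment_cor} as an immediate consequence of part (b) of Proposition \ref{continuous1} together with the compactness of $(\SS, d_\alpha)$ established in Theorem \ref{compactness_result}. Since a continuous function on a compact metric space is bounded, part (b) yields
\[
M_q \coloneqq \sup_{f \in \SS} \E\bigl[(\log \wt F)^q\bigr] < \infty
\]
for every positive integer $q$, with $M_q$ depending only on $\mathfrak{L}_\omega$, $\beta$, $d$, and $q$. This is the only substantive input needed; the rest of the argument is a routine identification of $d_n$ as a centered functional of $\wt F$ evaluated at $f = f_n$.

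To carry this out, I would first observe that, conditionally on $\FF_n$, the environment $(\omega(n+1,x))_{x \in \Z^d}$ is a fresh i.i.d.\ family with law $\mathfrak{L}_\omega$. Embedding $f_n$ in $\SS$ via the representative \eqref{fn_def2} supported on $\{1\} \times \Z^d$ and identifying $\omega_{(1,x)}$ with $\omega(n+1,x)$, one has $\|f_n\| = 1$, so the correction term $(1-\|f\|)\e^{\lambda(\beta)}$ in \eqref{R_def} vanishes and $\wt F$ coincides with $Z_{n+1}(\beta)/Z_n(\beta)$ by \eqref{Zfrac}. Hence, conditionally on $\FF_n$, the random variable $d_n$ is distributed as $\log \wt F - \E[\log \wt F]$, where the expectation is in the conditional probability space that defines $R(f_n)$.

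From here, the conditional Minkowski inequality gives
\[
\E\bigl[|d_n|^q \,\big|\, \FF_n\bigr]^{1/q} \leq 2\, \E\bigl[|\log \wt F|^q\bigr]^{1/q}\Big|_{f = f_n},
\]
and Lyapunov's inequality bounds $\E[|\log \wt F|^q]$ by $\E[(\log \wt F)^{2q}]^{1/2} \leq M_{2q}^{1/2}$. Taking unconditional expectations, $\E[d_n^q] \leq \E[|d_n|^q] \leq 2^q M_{2q}^{1/2}$, which is the claimed bound.

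There is no real obstacle here beyond verifying the distributional identification of $d_n$ with $\log \wt F - \E \log \wt F$, which is essentially bookkeeping with the definitions in Section \ref{transformation}. All the analytic difficulty has been absorbed into the proofs of Proposition \ref{continuous1}(b) and Theorem \ref{compactness_result}; once those are in hand, the corollary follows by the soft argument above.
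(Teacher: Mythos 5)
Your proposal is correct and follows essentially the same route as the paper: both invoke Proposition \ref{continuous1}(b) together with compactness of $(\SS,d_\alpha)$ to obtain a uniform bound on $\E[\log^q\wt F]$, then identify $d_n$ conditionally on $\FF_n$ with $\log\wt F - \E\log\wt F$ at $f=f_n$ via \eqref{Zfrac}. Your explicit Minkowski/Lyapunov bookkeeping merely fills in the final step that the paper summarizes as ``and thus also for $\E(d_n^q)$.''
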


\begin{proof}
If we fix $f = f_n$ and take $\wt F$ as in \eqref{R_def}, then \eqref{Zfrac} tells us
\eq{
\E\givenk[\Big]{\log^q\frac{Z_{n+1}(\beta)}{Z_n(\beta)}}{\FF_n} = \E \log^q \wt F.
 }
By the compactness of $\SS$ and the continuity property of Proposition \ref{continuous1}(b), the above right-hand side is bounded by a constant depending only on $\mathfrak{L}_\omega$, $\beta$, $d$, and $q$.
The same is then true for $\E(\log^q \frac{Z_{n+1}(\beta)}{Z_n(\beta)})$, and thus also for $\E(d_n^q)$.
\end{proof}

In review, $f_n$ is a function on $\Z^d$ that is random with respect to $\FF_n$.
It is natural (and measurable) to identify $f_n$ with a function on $\N \times \Z^d$ that is supported on a single copy of $\Z^d$; this is \eqref{fn_def2}.
Finally, that function---thus far an element of $\SS_0$---is identified with its equivalence class $\iota(f_n) \in \SS$, where $\iota$ is the quotient map from Lemma \ref{S_meas}.
In accordance with our previous notation, the symbol $f_n$ will henceforth denote the equivalence class in $\SS$ unless stated otherwise, while $f_n(u)$ will denote the representative defined by \eqref{fn_def2}, evaluated at $u \in \N \times \Z^d$.
By the discussion preceding \eqref{F_def_0}, the law of $f_{n+1} \in \SS$ given $\FF_n$ is equal to $Tf_n$.
For this reason we refer to $T$ as the ``update map".

In preparation for the proof of Proposition \ref{continuous1}, we record the following results.

\begin{lemma} \label{amgm}
Let $X_1,X_2,\dots$ be  i.i.d.~copies of a positive random variable $X$.
If $c_1,c_2,\dots$ are nonnegative constants 
 satisfying $C \leq \sum_{i = 1}^\infty c_i < \infty$ for a positive constant $C$, then
\eq{
\E\bigg[\bigg(\sum_{i=1}^\infty c_i X_i\bigg)^{-q}\bigg] \leq C^{-q}\, \E(X^{-q}) \quad \text{for any $q > 0$.}
}
\end{lemma}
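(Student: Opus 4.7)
The plan is to apply Jensen's inequality to the convex function $t \mapsto t^{-q}$ on $(0,\infty)$, using the probability measure obtained by normalizing the weights $c_i$. If $\E(X^{-q}) = \infty$ the bound is trivial, so assume it is finite.

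First I would set $S \coloneqq \sum_{i=1}^\infty c_i$, which by hypothesis satisfies $C \leq S < \infty$. Define $p_i \coloneqq c_i/S$, so that $(p_i)_{i\geq 1}$ forms a probability distribution on $\N$. Then
\eq{
\bigg(\sum_{i=1}^\infty c_i X_i\bigg)^{-q} = S^{-q}\bigg(\sum_{i=1}^\infty p_i X_i\bigg)^{-q}.
}
Since $t \mapsto t^{-q}$ is convex on $(0,\infty)$ for $q > 0$, Jensen's inequality (applied on the probability space $(\N, (p_i))$, with the $X_i$'s viewed as realizations at each point) gives, almost surely,
\eq{
\bigg(\sum_{i=1}^\infty p_i X_i\bigg)^{-q} \leq \sum_{i=1}^\infty p_i X_i^{-q}.
}
Taking expectation and using that the $X_i$ are i.i.d.\ copies of $X$, together with Tonelli's theorem (all terms are nonnegative):
\eq{
\E\bigg[\bigg(\sum_{i=1}^\infty c_i X_i\bigg)^{-q}\bigg] \leq S^{-q}\sum_{i=1}^\infty p_i \, \E(X_i^{-q}) = S^{-q}\, \E(X^{-q}) \leq C^{-q}\,\E(X^{-q}),
}
where the last inequality uses $S \geq C$.

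There is no real obstacle here; the only minor care needed is the justification of Jensen's inequality when the index set is countably infinite, which is standard (or alternatively one can take finite truncations $\sum_{i=1}^N c_i X_i$, apply Jensen on the finite probability space with weights $c_i/\sum_{j=1}^N c_j$, and pass to the limit using monotone convergence for the lower bound on the sum inside the $(\cdot)^{-q}$ and Fatou's lemma for the expectation).
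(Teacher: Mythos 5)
Your proof is correct and follows essentially the same route as the paper's: normalize the weights, apply Jensen's inequality to the convex function $t\mapsto t^{-q}$, pass expectation through the sum by positivity, and finally use $\sum_i c_i\geq C$. The only difference is that you spell out the normalization and the limiting argument a bit more explicitly, which the paper leaves implicit.
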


\begin{proof}
%If $\E(Z^{-p}) = \infty$, then the claim is trivial.
%So assume $\E(Z^{-p}) < \infty$.
By Jensen's inequality, we have
\eq{
\bigg(\sum_{i=1}^\infty c_i X_i\bigg)^{-q} 
&= \bigg(\sum_{i=1}^\infty c_i\bigg)^{-q}\bigg(\sum_{i=1}^\infty \frac{c_i}{\sum_{j=1}^\infty c_j} X_i\bigg)^{-q} 
\leq C^{-q}\sum_{i=1}^\infty \frac{c_i}{\sum_{j=1}^\infty c_j} X_i^{-q}.
}
Now take expectation, using positivity to pass through the sum on the right-hand side.
\end{proof}

\begin{lemma} \label{LLN}
Let $X_1,X_2,\dots$  be i.i.d.~copies of an integrable, centered random variable $X$.
For any $t > 0$, there exists $b > 0$ such that whenever $c_1,c_2,\dots$ are constants satisfying $|c_i| \leq b$ for all $i$ and $\sum_{i = 1}^\infty|c_i| \leq 1$, then
\eeq{
\E\bigg|\sum_{i = 1}^\infty c_i X_i\bigg| \leq t. \label{LLN_to_show}
}
\end{lemma}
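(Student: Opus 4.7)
The plan is to prove this via a truncation argument: split each $X_i$ into a bounded centered piece and a small-$L^1$ centered piece, then control the two contributions by $L^2$ and $L^1$ bounds respectively. Note first that since $\sum |c_i| \leq 1$ and $\E |X| < \infty$, the series $\sum c_i X_i$ converges absolutely in $L^1$, so the left-hand side of \eqref{LLN_to_show} is well-defined.

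Given $t > 0$, I would use integrability of $X$ to choose $M > 0$ large enough that $\E[|X| \one_{\{|X| > M\}}] < t/4$. Set $m_M \coloneqq \E[X \one_{\{|X| \leq M\}}]$, and decompose
\eq{
X_i = Y_i + Z_i, \qquad Y_i \coloneqq X_i \one_{\{|X_i| \leq M\}} - m_M, \qquad Z_i \coloneqq X_i \one_{\{|X_i| > M\}} + m_M.
}
Since $\E X = 0$, we have $|m_M| = |\E[X\one_{\{|X| > M\}}]| \leq \E[|X|\one_{\{|X| > M\}}] < t/4$, so both $Y_i$ and $Z_i$ are centered; moreover $Y_i$ is bounded by $M + t/4 \leq 2M$ (assuming $t \leq 4M$, which we may), and $\E|Z_i| \leq \E[|X|\one_{\{|X|>M\}}] + |m_M| < t/2$.

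For the tail piece, the triangle inequality immediately gives
\eq{
\E\Big|\sum_{i=1}^\infty c_i Z_i\Big| \leq \sum_{i=1}^\infty |c_i|\, \E|Z_i| \leq (t/2)\sum_{i=1}^\infty |c_i| \leq t/2.
}
For the bounded piece, independence and centering of the $Y_i$ together with Cauchy--Schwarz yield
\eq{
\E\Big|\sum_{i=1}^\infty c_i Y_i\Big| \leq \Big(\sum_{i=1}^\infty c_i^2\, \E Y_i^2\Big)^{1/2} \leq 2M\Big(\sum_{i=1}^\infty c_i^2\Big)^{1/2}.
}
Since $|c_i| \leq b$, we have $\sum c_i^2 \leq b \sum |c_i| \leq b$, so the right side is at most $2M\sqrt{b}$. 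Choosing $b \coloneqq t^2/(16M^2)$ makes this bound $\leq t/2$, and combining the two estimates yields $\E|\sum c_i X_i| \leq t$, as desired.

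There is no real obstacle here; the only thing to be careful about is that the truncation level $M$ is fixed first (depending on $t$) and then $b$ is chosen depending on both $t$ and $M$, so the ordering of quantifiers matches the statement.
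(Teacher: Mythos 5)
Your proof is correct, and it uses the same underlying truncation-and-recenter decomposition as the paper: cut $X$ at a level determined by $t$, absorb the tail into an $L^1$ estimate, and handle the bounded centered piece separately. Where you diverge is in how the bounded piece is controlled. The paper applies the Azuma--Hoeffding inequality to the sum $\sum_i c_i(\bar X_i - \E\bar X)$ to get an exponential tail bound, then converts that into an expectation bound using the a.s.\ bound $|\sum_i c_i(\bar X_i - \E\bar X)|\leq L+1$; the choice of $b$ is dictated by the exponential rate. You instead apply Cauchy--Schwarz and exploit independence directly, bounding $\E|\sum_i c_iY_i|$ by $(\sum_i c_i^2\E Y_i^2)^{1/2}\leq 2M(\sum_i c_i^2)^{1/2}\leq 2M\sqrt{b}$, which gives a cleaner and more elementary derivation and a simpler explicit formula for $b$. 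Both are valid; your version avoids concentration machinery entirely and is arguably the more natural route here, since all that is needed is an $L^1$ estimate, not a tail bound. One small presentational point: you should say $|Y_i|\leq M + |m_M|$ and then note this is at most $2M$ after possibly enlarging $M$; you do flag this, so it's fine.
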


\begin{proof}
Fix $t > 0$ and choose $L$ sufficiently large that if $\bar X \coloneqq X\one_{\{|X|\leq L\}}$, then
\eeq{ \label{truncation_1}
\E|X-\bar X| \leq \frac{t}{4} \wedge 1.
}
In particular,
\eeq{ \label{truncation_2}
|\E(\bar X)| = |\E(\bar X - X)| \leq \E|\bar X - X| \leq \frac{t}{4} \wedge 1.
}
Given $t$ and $L$, let $b>0$ be sufficiently small that
\eeq{
\e^{-t^2/(128(L+1)^2b)} \leq \frac{t}{8(L+1)}. \label{exponential_bound}
}
As in the hypothesis, assume $|c_i| \leq b$ for all $i$, and $\sum_{i=1}^\infty |c_i| \leq 1$.
Consider that
\eeq{ \label{first_triangle}
\E\bigg|\sum_{i = 1}^\infty c_i X_i\bigg|
= \E\bigg|\sum_{i = 1}^\infty c_i\bar X_i + c_i(X-\bar X_i)\bigg|
&\stackrel{\phantom{\eqref{truncation_1}}}{\leq} \E\bigg|\sum_{i = 1}^\infty c_i\bar X_i\bigg|
+ \sum_{i=1}^\infty |c_i|\E|X-\bar X| \\
&\stackrel{\mbox{\scriptsize\eqref{truncation_1}}}{\leq}  \E\bigg|\sum_{i = 1}^\infty c_i\bar X_i\bigg|+ \frac{t}{4}.
}
In order to apply martingale inequalities, we recenter the remaining sum:
\eeq{ \label{second_triangle}
 \E\bigg|\sum_{i = 1}^\infty c_i\bar X_i\bigg|
 \leq  \E\bigg|\sum_{i = 1}^\infty c_i(\bar X_i-\E(\bar X))\bigg| + \sum_{i=1}^\infty |c_i\E(\bar X)| \stackrel{\mbox{\scriptsize\eqref{truncation_2}}}{\leq} \E\bigg|\sum_{i = 1}^\infty c_i(\bar X_i-\E(\bar X))\bigg| + \frac{t}{4}.
}
For each $i$, the random variable $c_i(\bar X_i - \E(\bar X))$ has mean $0$ and takes values between $-|c_i|(L+1)$ and $|c_i|(L+1)$.
Therefore, by the Azuma--Hoeffding inequality \cite[Theorem 2.8]{boucheron-lugosi-massart13},
\eq{
\P\bigg(\bigg|\sum_{i = 1}^\infty c_i(\bar X_i-\E(\bar X))\bigg| > \frac{t}{4}\bigg) &\leq 2\exp\Big(\frac{-(t/4)^2}{2\sum_{i = 1}^\infty (2|c_i|(L+1))^2}\Big) \\
&= 2\exp\Big(\frac{-t^2}{128(L+1)^2\sum_{i=1}^\infty |c_i|^2}\Big) \\
&\leq 2\exp\Big(\frac{-t^2}{128(L+1)^2b}\Big) \stackrel{\mbox{\scriptsize\eqref{exponential_bound}}}{\leq} \frac{t}{4(L+1)}.
}
Since $|\sum_{i = 1}^\infty c_i(\bar X_i - \E(\bar X))| \leq L+1$, we conclude
\eeq{ \label{third_triangle}
\E\bigg|\sum_{i = 1}^\infty c_i(\bar X_i-\E(\bar X))\bigg| \leq \frac{t}{2}.
}
Together, the inequalities \eqref{first_triangle}--\eqref{third_triangle} give \eqref{LLN_to_show}.
\end{proof}

\begin{proof}[Proof of Proposition \ref{continuous1}]
Since $(\SS,d_\alpha)$ is a compact metric space, uniform continuity %of $f \mapsto Tf$ 
will be implied by continuity.
So it suffices to prove continuity at a fixed $f \in \SS$.

Let $\eps > 0$ be given.
To prove continuity of $g\mapsto Tg$ at $f \in \SS$, we need to exhibit $\delta > 0$ such that if $g \in \SS$ satisfies $d_\alpha(f,g) < \delta$, then there exist representatives $f,g \in \SS_0$ and a coupling of environments $(\vc\omega,\vc\eta)$ such that under this coupling the following inequality holds:
\eq{
\E[d_\alpha(F,G)] < \eps.
}
Here $F,G\in\SS$ are given by \eqref{F_def} and \eqref{G_def}.
We shall see that it suffices to choose $\delta$ satisfying conditions \eqref{delta_condition_k}--\eqref{delta_condition_min} below.

Let $q \coloneqq \max_{x \in \Z^d} P(0,x)$, which is strictly less than $1$ by \eqref{walk_assumption_1}.
Next choose $t > 0$ sufficiently small to satisfy the following conditions
\begin{subequations}
\label{t_conditions}
\begin{align}
\frac{6t}{\e^{\lambda(\beta)}} &< \frac{\eps}{8\alpha}, \label{t_condition_1} \\
\frac{235t}{69}\e^{\lambda(-\beta)} &< \frac{\eps}{16\alpha}. \label{t_condition_2}
\end{align}
\end{subequations}
Given $t$, we can take $b > 0$ as in Lemma \ref{LLN}, and then choose $\kappa > 0$ to satisfy
\begin{subequations}
\label{kappa_conditions}
\begin{align}
\kappa &\leq \frac{1}{7}, \label{kappa_condition_1} \\
7\kappa \e^{\lambda(\beta)+\lambda(-\beta)} &< \frac{\eps}{8\alpha}, \label{kappa_condition_4} \\
7\kappa + (2\kappa)^{1/\alpha} &\leq b, \label{kappa_condition_2} \\
14\kappa(1-q)^{-1}\e^{\lambda(-\beta)}\E|\e^{\beta\omega}-\e^{\lambda(\beta)}| &<\frac{\eps}{16\alpha}, \label{kappa_condition_3} \\
6\e^{\lambda(\alpha\beta)+\lambda(-\alpha\beta)}\kappa &< \frac{\eps}{4}. \label{kappa_condition_5}
\end{align}
\end{subequations}
Then fix a representative $f \in \SS_0$, and choose $A \subset \N \times \Z^d$ finite but sufficiently large that
\eeq{
\sum_{u \notin A} f(u) < \kappa. \label{A_condition}
}
By possibly omitting some elements, we may assume $f$ is strictly positive on $A$.
Next, let $k$ be a positive integer sufficiently large that
\begin{subequations}
\label{k_conditions}
\begin{align}
2^{-k} &< \frac{\eps}{4}, \label{k_condition_1} \\
P(\|\sigma_1\|_1 > k) &< \kappa. \label{k_condition_2}
\end{align}
\end{subequations}

Now choose $\delta > 0$ such that
\begin{subequations}
\label{delta_conditions}
\begin{align}
  \delta &< 2^{-3k}. \label{delta_condition_k}
\intertext{If $A$ is nonempty, we will also demand that}
(2d)^k|A|\delta^{1/\alpha} &< \kappa. \label{delta_condition_A}
\intertext{Otherwise, we relax this assumption to}
  \delta^{1/\alpha} &< \kappa. \label{delta_condition_noA}
\intertext{Finally, we assume}
\delta^{1/\alpha} &< \inf_{u \in A} f(u). \label{delta_condition_min}
\end{align}
\end{subequations}
We claim this $\delta > 0$ is sufficient for $\eps > 0$ in the sense described above.

Assume $g \in \SS$ satisfies $d_\alpha(f,g) < \delta$.
Then given any representative $g \in \SS_0$, there exists an isometry $\psi : C \to \N \times \Z^d$ such that
\eeq{
d_{\alpha,\psi}(f,g) = \alpha\sum_{u \in C} |f(u)-g(\psi(u))| + \sum_{u\notin C}f(u)^\alpha + \sum_{u\notin \psi(C)}g(u)^\alpha + 2^{-\deg(\psi)} < \delta. \label{psi_condition}
}
By \eqref{delta_condition_k}, it follows that $\deg(\psi) > 3k$. 
In particular, upon defining $\phi \coloneqq \psi|_A$, we have $\deg(\phi) \geq \deg(\psi) > 3k$.
Therefore, Lemma \ref{extension} guarantees that $\phi$ can be extended to an isometry %\footnote{Notice that $\Phi$ does not depend on the environments $\vc\omega$ and $\vc\eta$, and so there are no measurability concerns in writing an expression like $\E[d_{\alpha,\Phi}(F,G)]$.} 
$\Phi : A^{(k)} \to \N \times \Z^d$ with $\deg(\Phi) > k$, where
\eq{
A^{(k)} = \{v \in \N \times \Z^d: \|u - v\|_1 \leq k \text{ for some $u \in A$}\}.
}
Alternatively, we can express this set as
\eq{
A^{(k)} = \bigcup_{u \in A} \NN(u,k), \qquad \NN(u,k) \coloneqq \{v \in \N \times \Z^d: \|u - v\|_1 \leq k\}.
}
The inequality $\deg(\Phi) > k$ implies that for any $B \subset A$,
\eeq{ \label{degree_consequence}
\Phi(B^{(k)}) = \Phi(B)^{(k)} \coloneqq \bigcup_{u \in B} \NN(\Phi(u),k).
}
Indeed,
\eq{
u \in B^{(k)} \quad \iff \quad \exists\, v \in B \cap \NN(u,k) \quad
\iff \quad \exists\, v \in B, \Phi(v) \in \NN(\Phi(u),k) \quad
\iff \quad \Phi(u) \in \Phi(B)^{(k)}.
}
Furthermore, \eqref{psi_condition} and \eqref{delta_condition_min} together force $A \subset C$, since
\eq{
u \in A \quad \implies \quad f(u) \geq \inf_{u \in A} f(u) > \delta^{1/\alpha}
\quad \implies \quad f(u)^\alpha > \delta > d_\psi(f,g) \quad \implies \quad u \in C.
}

Given a random environment $\vc\omega$, we couple to it $\vc\eta$ in the following way.
If $u \in \Phi(A^{(k)})$, then set $\eta_u = \omega_{\Phi^{-1}(u)}$.
Otherwise, let $\eta_u$ be an independent copy of $\omega_u$.
Note that $F$ and $G$ are now distributed as $Tf$ and $Tg$, respectively, when mapped to $\SS$.
On the other hand, $\Phi$ is deterministic (i.e.~does not depend on the environments $\vc\omega$ and $\vc\eta$, and so no measurability issues arise in the bound
\eq{
\E(d_\alpha(F,G)) \leq \E(d_{\alpha,\Phi}(F,G)),
}
where by definition
\eeq{ \label{d_Phi_def}
d_{\alpha,\Phi}(F,G) = \delta \sum_{u \in A^{(k)}} |F(u)-G(\Phi(u))| + \sum_{u \notin A^{(k)}} F(u)^\alpha + \sum_{u \notin \Phi(A^{(k)})} G(u)^\alpha + 2^{-\deg(\Phi)}.
}
It thus suffices to show $\E(d_{\alpha,\Phi}(F,G)) < \eps$.
To simplify notation, we will write
\eq{
\wt{f}(u) &= \sum_{v \sim u} f(v)\e^{\beta \omega_u}P(v,u), & \wt{F} &= \sum_{w \in \N \times \Z^d} \wt{f}(w) + (1 - \|f\|)\e^{\lambda(\beta)}, \\
\wt{g}(u) &= \sum_{v \sim u} g(v)\e^{\beta \eta_u}P(v,u), & \wt{G} &= \sum_{w \in \N \times \Z^d} \wt{g}(w) + (1 - \|g\|)\e^{\lambda(\beta)},
}
so that $F(u) = \wt{f}(u)/\wt{F}$ and $G(u) = \wt{g}(u)/\wt{G}$.

Consider the first of the four terms on the right-hand side of \eqref{d_Phi_def}.
Observe that for $u \in A^{(k)}$,
\eeq{
|F(u) - G(\Phi(u))| = \bigg|\frac{\wt{f}(u)}{\wt{F}} - \frac{\wt{g}(\Phi(u))}{\wt{G}}\bigg| 
&\leq \bigg|\frac{\wt{f}(u)}{\wt{F}} - \frac{\wt{g}(\Phi(u))}{\wt{F}}\bigg| + \bigg|\frac{\wt{g}(\Phi(u))}{\wt{F}}-\frac{\wt{g}(\Phi(u))}{\wt{G}}\bigg|  \\
&= \frac{|\wt{f}(u) - \wt{g}(\Phi(u))|}{\wt{F}} + \frac{\wt{g}(\Phi(u))}{\wt{G}}\cdot\bigg|\frac{\wt{G}}{\wt{F}} - 1\bigg|. \label{two_parts}
}
Summing over $A^{(k)}$ and taking expectation, we obtain the following from the first term in the last line of \eqref{two_parts}:
\eeq{ \label{first_part}
\E\Bigg[\sum_{u \in A^{(k)}} \frac{|\wt{f}(u) - \wt{g}(\Phi(u))|}{\wt{F}}\Bigg]
&= \sum_{u \in A^{(k)}} \E\bigg[\frac{|\wt{f}(u) - \wt{g}(\Phi(u))|}{\wt{F}}\bigg].
}
Meanwhile, the second term in \eqref{two_parts} gives
\eeq{ \label{second_part}
\E\Bigg[\sum_{u \in A^{(k)}} \frac{\wt{g}(\Phi(u))}{\wt{G}}\cdot\bigg|\frac{\wt{G}}{\wt{F}}-1\bigg|\Bigg]
\leq \E\, \bigg|\frac{\wt{G}}{\wt{F}}-1\bigg|
= \E\bigg[ \frac{|\wt{G}-\wt{F}|}{\wt{F}}\bigg].
}
These preliminary steps suggest two quantities we should control from above, namely the right-hand sides of \eqref{first_part} and \eqref{second_part}.
Consideration of the second and third terms on the right-hand side of \eqref{d_Phi_def} suggests four more quantities,
since the Harris--FKG inequality yields
\eeq{ \label{to_bound_2}
\E\bigg[\sum_{u \notin A^{(k)}} F(u)^\alpha\bigg]
\leq \E(\wt F^{-\alpha})\E\bigg[\sum_{u \notin A^{(k)}} \wt f(u)^\alpha\bigg],
}
and similarly
\eeq{ \label{to_bound_3}
\E\bigg[\sum_{u \notin \Phi(A^{(k)})} G(u)^\alpha\bigg]
\leq \E(\wt G^{-\alpha})\E\bigg[\sum_{u \notin \Phi(A^{(k)})} \wt g(u)^\alpha\bigg].
}
Therefore, we should seek an upper bound for $\E(\wt F^{-\alpha})$ and $\E(\wt G^{-\alpha})$, as well as $\E\big[\sum_{u \notin A^{(k)}} \wt f(u)^\alpha\big]$ and $\E\big[\sum_{u \notin \Phi(A^{(k)})} \wt g(u)^\alpha\big]$.
For clarity of presentation, we divide our task into the next four subsections.

\subsubsection{Upper bound for $\E(\wt{F}^{-\alpha})$ and $\E(\wt{G}^{-\alpha})$}
Suppose $q\in[-\alpha,0] \cup [1,\alpha]$.
Since
\eq{
\sum_{u\in\N\times\Z^d} \sum_{v\sim u} f(v)P(v,u) + (1-\|f\|) = 1,
}
Jensen's inequality applied to the convex function $t\mapsto t^q$ gives
\eeq{ \label{Fbound}
\E(\wt F^{q}) &\leq \E\bigg[\sum_{u\in\N\times\Z^d} \sum_{v\sim u} f(v)\e^{q\beta \omega_u} P(v,u) + (1-\|f\|)\e^{\alpha\lambda(\beta)}\bigg] \\
&= (\|f\|\e^{\lambda(q\beta)} + (1-\|f\|)\e^{q\lambda(\beta)}) \leq \e^{\lambda(q\beta)}.
}
By the same argument with $g$ in place of $f$, we obtain the fourth desired inequality:
\eeq{
\E(\wt{G}^{q}) \leq \e^{\lambda(q\beta)}. \label{Gbound}
}

%If $\|f\| < 1$, then
%\eeq{ \label{Fcase_1}
%\wt F \geq (1-\|f\|)\e^{\lambda(\beta)}  \quad \implies \quad
%\E(\wt F^{-\alpha}) &\leq (1-\|f\|)^{-\alpha}\e^{-\alpha\lambda(\beta)} \leq (1-\|f\|)^{-\alpha}\e^{\lambda(-\alpha\beta)}.
%}
%On the other hand,
%\eq{
%\sum_{u \in \N \times \Z^d} \sum_{v \sim u} f(v)P(v,u) = \sum_{v \in \N \times \Z^d} \sum_{u \sim v} f(v)P(v,u) = \sum_{v \in \N \times \Z^d} f(v) = \|f\|,
%}
%and so if $\|f\| > 0$, then Lemma \ref{amgm} gives
%\eeq{
%\E\, \wt{F}^{-\alpha}
%\leq \E\Bigg[\bigg(\sum_{u \in \N \times \Z^d} \sum_{v \sim u} f(v)\e^{\beta \omega_u}P(v,u)\bigg)^{-\alpha}\Bigg]
%\leq \|f\|^{-\alpha} \cdot \E(\e^{-\alpha\beta \omega}) = \|f\|^{-\alpha} \e^{\lambda(-\alpha\beta)}. \label{Fcase_2}
%}
%Viewing \eqref{Fcase_1} and \eqref{Fcase_2} together, we have unconditionally that
%\eeq{
%\E(\wt{F}^{-\alpha}) \leq \Big[\max_{t \in [0,1]} \min((1-t)^{-\alpha},t^{-\alpha})\Big]\e^{\lambda(-\alpha\beta)}
%= 2^\alpha \e^{\lambda(-\alpha\beta)}. \label{Fbound}
%}
%The exact same argument shows
%\eeq{
%\E(\wt G^{-\alpha}) \leq 2^\alpha \e^{\lambda(-\alpha\beta)}. \label{Gbound}
%}

\subsubsection{Upper bound for $\sum_{u \in A^{(k)}} \E(|\wt{f}(u) - \wt{g}(\Phi(u))|/\wt F)$}

Observe that for $u \in A^{(k)}$,
\eq{
|\wt f(u) - \wt g(\Phi(u))|
&= \bigg| \sum_{v \sim u} f(v)\e^{\beta \omega_u}P(v,u) - \sum_{v \sim \Phi(u)} g(v)\e^{\beta \eta_{\Phi(u)}}P(v,\Phi(u))\bigg| \\
&= \e^{\beta\omega_u}\bigg|\sum_{v \sim u} f(v)P(v,u) - \sum_{v \sim \Phi(u)} g(v)P(v,\Phi(u))\bigg|,
}
meaning $|\wt f(u) - \wt g(\Phi(u))|$ is a non-decreasing function of $\omega_u$ and independent of $\omega_w$ for $w \neq u$.
Since $\wt F^{-1}$ is a non-increasing function of all $\omega_w$, the Harris--FKG inequality yields
\eeq{ \label{initial_fkg}
\sum_{u \in A^{(k)}} \E\bigg[\frac{|\wt{f}(u) - \wt{g}(\Phi(u))|}{\wt F}\bigg]
\leq \E(\wt F^{-1}) \sum_{u \in A^{(k)}} \E|\wt f(u) - \wt g(\Phi(u))|,
}
where
\eeq{  \label{fixed_u}
 \sum_{u \in A^{(k)}}\E|\wt f(u) - \wt g(\Phi(u))|
&\leq \e^{\lambda(\beta)} \sum_{u \in A^{(k)}}\bigg(\bigg|\sum_{v \in \NN(u,k)} f(v)P(v,u) - \sum_{v \in \NN(\Phi(u),k)} g(v)P(v,\Phi(u))\bigg| \\
&\phantom{=\e^{\beta\omega_u}\sum_{u\in A^{(k)}}\bigg(}+\bigg|\sum_{\substack{v \sim u \\ v \notin \NN(u,k)}} f(v)P(v,u) - \sum_{\substack{v \sim \Phi(u) \\ v \notin \NN(\Phi(u),k)}} g(v)P(v,\Phi(u))\bigg|\bigg).
}
Of the two absolute values above, the second is easier to control.
Indeed,
\eeq{ \label{easier_one}
&\sum_{u \in A^{(k)}} \bigg|\sum_{\substack{v \sim u \\ v \notin \NN(u,k)}} f(v)P(v,u) - \sum_{\substack{v \sim \Phi(u) \\ v \notin \NN(\Phi(u),k)}} g(v)P(v,\Phi(u))\bigg| \\
&\stackrel{\phantom{\eqref{k_condition_2}}}{\leq} \sum_{v \in \N \times \Z^d} \sum_{\substack{u \sim v \\ u \notin \NN(v,k)}} (f(v)+g(v))P(v,u)  \\
&\stackrel{\mbox{\scriptsize\eqref{k_condition_2}}}{<} \sum_{v \in \N \times \Z^d} (f(v)+g(v))\kappa
\leq 2\kappa.
}
Next consider the first absolute value in the final line of \eqref{fixed_u}. 
The difference between the two sums can be bounded as
\eeq{ \label{individual_u}
&\bigg|\sum_{v \in \NN(u,k)} f(v)P(v,u) - \sum_{v \in \NN(\Phi(u),k)} g(v)P(v,\Phi(u))\bigg| \\
&\leq \bigg|\sum_{v \in \NN(u,k) \cap A} f(v)P(v,u)-\sum_{v \in \NN(\Phi(u),k) \cap \psi(A)} g(v)P(v,\Phi(u))\bigg| + \sum_{v \in \NN(u,k) \setminus A} f(v)P(v,u) \\
&\phantom{\leq}+ \sum_{v \in \NN(\Phi(u),k) \cap \psi(C\setminus A)} g(v)P(v,\Phi(u)) + \sum_{v \in \NN(\Phi(u),k) \setminus \psi(C)} g(v)P(v,\Phi(u)).
}
Each of the four terms above can be controlled separately.
For the first term, notice that because $\deg(\Phi) > k$,
\eq{
v \in \NN(u,k) \cap A \quad \implies \quad u - v = \Phi(u) - \Phi(v) = \Phi(u) - \psi(v).
}
That is, if $v \in \NN(u,k) \cap A$, then $\psi(v) \in \NN(\Phi(u),k) \cap \psi(A)$ and satisfies
\eq{
P(\psi(v),\Phi(u)) = P(\sigma_1 = \Phi(u) - \psi(v)) = P(\sigma_1 = u - v) = P(v,u).
}
Conversely,
\eq{
\psi(v) \in \NN(\Phi(u),k) \cap \psi(A) \quad \implies \quad \Phi(u)-\psi(v) = \Phi(u)-\Phi(v) = u - v.
}
That is, if $\psi(v) \in \NN(\Phi(u),k) \cap \psi(A)$, then $v \in \NN(u,k) \cap A$ and satisfies
\eq{
P(v,u) = P(\sigma_1 = u - v) = P(\sigma_1 = \Phi(u)-\psi(v)) = P(\psi(v),\Phi(u)).
}
We thus have a bijection between $\NN(u,k) \cap A$ and $\NN(\Phi(u),k) \cap \psi(A)$, meaning
\eq{
\bigg|\sum_{v \in \NN(u,k) \cap A} f(v)P(v,u)-\sum_{v \in \NN(\Phi(u),k) \cap \psi(A)} g(v)P(v,\Phi(u))\bigg|
&= \bigg|\sum_{v \in \NN(u,k) \cap A} \big[f(v) - g(\psi(v))\big]P(v,u)\bigg| \\
&\leq \sum_{v \in \NN(u,k) \cap A} |f(v) - g(\psi(v))|P(v,u) \\
&\leq \sum_{v \in A}|f(v) - g(\psi(v))|P(v,u) \\
&\leq \sum_{v \in C}|f(v) - g(\psi(v))|P(v,u),
}
where the final inequality is trivial since $A \subset C$.
Summing over $u \in A^{(k)}$ gives the total bound
\eeq{ \label{all_u_1}
&\sum_{u \in A^{(k)}} \bigg|\sum_{v \in \NN(u,k) \cap A} f(v)P(v,u)-\sum_{v \in \NN(\Phi(u),k) \cap \psi(A)} g(v)P(v,\Phi(u))\bigg| \\
&\leq \sum_{u \in A^{(k)}} \sum_{v \in C} |f(v)-g(\psi(v))|P(v,u) \\
&= \sum_{v \in C} \sum_{u \in A^{(k)}} |f(v) - g(\psi(v))|P(v,u) 
\leq \sum_{v \in C} |f(v) - g(\psi(v))|
\leq d_{\alpha,\psi}(f,g) \stackrel{\mbox{\scriptsize\eqref{psi_condition}}}{<} \delta \stackrel{\mbox{\scriptsize\eqref{delta_condition_noA}}}{<} \kappa.
}
Considering the second term on the right-hand side of \eqref{individual_u}, we have
\eeq{ \label{all_u_2}
\sum_{u \in A^{(k)}} \sum_{v \in \NN(u,k) \setminus A} f(v)P(v,u)
\leq \sum_{v \notin A} \sum_{u \in \NN(v,k)} f(v)P(v,u)
\leq \sum_{v \notin A} f(v) \stackrel{\mbox{\scriptsize\eqref{A_condition}}}{<} \kappa.
}
Similarly, for the third term,
\eeq{ \label{all_u_3}
&\sum_{u \in A^{(k)}} \sum_{v \in \NN(\Phi(u),k) \cap \psi(C\setminus A)} g(v)P(v,\Phi(u)) \\
&\leq \sum_{u \in A^{(k)}} \sum_{v \in \NN(\Phi(u),k) \cap \psi(C\setminus A)} \big[|f(\psi^{-1}(v))-g(v)| + f(\psi^{-1}(v))\big]P(v,\Phi(u)) \\
&\leq \sum_{v \in C \setminus A} \sum_{u \in \NN(\psi(v),k)}\big[|f(v)-g(\psi(v))| + f(v)\big]P(\psi(v),u) \\
&\leq \sum_{v \in C} |f(v)-g(\psi(v))| + \sum_{v \notin A} f(v)
\stackrel{\mbox{\scriptsize\eqref{A_condition}}}{\leq} d_{\alpha,\psi}(f,g) + \kappa \stackrel{\mbox{\scriptsize\eqref{psi_condition}}}{<} \delta + \kappa
\stackrel{\mbox{\scriptsize\eqref{delta_condition_noA}}}{<} 2\kappa.
}
Finally, for the fourth term,
\eeq{ \label{all_u_4} \raisetag{3.5\baselineskip}
\sum_{u \in A^{(k)}} \sum_{v \in \NN(\Phi(u),k) \setminus \psi(C)} g(v)P(v,\Phi(u))
&= \sum_{u \in A^{(k)}} \sum_{v \in \NN(\Phi(u),k) \setminus \psi(C)} g(v)P(0,\Phi(u)-v) \\
&\leq\sum_{u \in A^{(k)}} \sup_{v \notin \psi(C)} g(v) \\
&\leq (2d)^k|A| \bigg(\sum_{v \notin \psi(C)} g(v)^\alpha\bigg)^{1/\alpha} \\
&\leq (2d)^k|A| d_{\alpha,\psi}(f,g)^{1/\alpha} \stackrel{\mbox{\scriptsize\eqref{psi_condition}}}{<} (2d)^k|A|\delta^{1/\alpha} \stackrel{\mbox{\scriptsize\eqref{delta_condition_A}}}{<} \kappa.
}
Combining \eqref{individual_u}--\eqref{all_u_4}, we arrive at
\eeq{
\sum_{u \in A^{(k)}} \bigg|\sum_{v \in \NN(u,k)} f(v)P(v,u) - \sum_{v \in \NN(\Phi(u),k)} g(v)P(v,\Phi(u))\bigg| < 5\kappa. \label{all_u}
}
Using \eqref{easier_one} and \eqref{all_u} with \eqref{fixed_u} reveals 
\eq{
\sum_{u \in A^{(k)}} \E|\wt f(u) - \wt g(\Phi(u))| \leq 7\kappa \e^{\lambda(\beta)},
}
and thus we have the desired bound:
\eeq{ \label{part_2_bound}
\sum_{u \in A^{(k)}} \E\bigg[\frac{|\wt{f}(u) - \wt{g}(\Phi(u))|}{\wt F}\bigg]
&\stackrel{\mbox{\scriptsize\eqref{initial_fkg}}}{\leq} \E(\wt F^{-1}) \sum_{u \in A^{(k)}} \E|\wt f(u) - \wt g(\Phi(u))| \\
&\stackrel{\mbox{\scriptsize\eqref{Fbound}}}{\leq} 7\kappa \e^{\lambda(-\beta)}\e^{\lambda(\beta)}
 \stackrel{\mbox{\scriptsize\eqref{kappa_condition_4}}}{<} \frac{\eps}{8\alpha}.
}

\subsubsection{Upper bounds for $\E(|\wt{G}-\wt{F}|/\wt{F})$ and $\E(|\wt{G}-\wt{F}|/\wt{G})$}

First write the trivial equality
\eq{
\wt F - \wt G = (\wt F - \e^{\lambda(\beta)}) - (\wt G - \e^{\lambda(\beta)}),
}
and then observe that
\eeq{ \label{F_minus}
\wt F - \e^{\lambda(\beta)} &= \sum_{u \in \N \times \Z^d} \wt{f}(u) - \|f\|\e^{\lambda(\beta)} \\
&= \sum_{u \in \N \times \Z^d} \sum_{v \sim u} f(v) \e^{\beta \omega_u}P(v,u) - \sum_{u \in \N \times \Z^d} \sum_{v \sim u} f(v)P(v,u) \e^{\lambda(\beta)} \\
&= \sum_{u \in \N \times \Z^d} \sum_{v \sim u} f(v)\big[\e^{\beta \omega_u} - \e^{\lambda(\beta)}\big]P(v,u) \\
&= \sum_{u \in A^{(k)}} \sum_{v \sim u} f(v)\big[\e^{\beta \omega_u} - \e^{\lambda(\beta)}\big]P(v,u)
+ \sum_{u \notin A^{(k)}} \sum_{v \sim u} f(v)\big[\e^{\beta \omega_u} - \e^{\lambda(\beta)}\big]P(v,u).
}
Similarly,
\eq{
\wt{G} - \e^{\lambda(\beta)}
&= \sum_{u \in \Phi(A^{(k)})} \sum_{v \sim u} g(v)\big[\e^{\beta \eta_u} - \e^{\lambda(\beta)}\big]P(v,u) + \sum_{u \notin \Phi(A^{(k)})} \sum_{v \sim u} g(v)\big[\e^{\beta \eta_u} - \e^{\lambda(\beta)}\big]P(v,u) \\
&= \sum_{u \in A^{(k)}} \sum_{v \sim \Phi(u)} g(v)\big[\e^{\beta \omega_u} - \e^{\lambda(\beta)}\big]P(v,\Phi(u)) + \sum_{u \notin \Phi(A^{(k)})} \sum_{v \sim u} g(v)\big[\e^{\beta \eta_u} - \e^{\lambda(\beta)}\big]P(v,u).
}
Hence
\eq{
\wt{F} - \wt{G}
&= \sum_{u \in A^{(k)}}\bigg( \sum_{v \sim u} f(v)P(v,u) - \sum_{v \sim \Phi(u)} g(v)P(v,\Phi(u))\bigg)\big[\e^{\beta \omega_u} - \e^{\lambda(\beta)}\big] \\
&\phantom{=|} + \sum_{u \notin A^{(k)}} \sum_{v \sim u} f(v)\big[\e^{\beta \omega_u} - \e^{\lambda(\beta)}\big]P(v,u)
- \sum_{u \notin \Phi(A^{(k)})} \sum_{v \sim u} g(v)\big[\e^{\beta \eta_u} - \e^{\lambda(\beta)}\big]P(v,u).
}
In the notation of Lemma \ref{LLN}, the following random variables are i.i.d.: 
\eq{
X_{u} &\coloneqq \e^{\beta \omega_u} - \e^{\lambda(\beta)}, \quad u \in A^{(k)}, \\
X'_{u} &\coloneqq \e^{\beta \omega_u} - \e^{\lambda(\beta)}, \quad u \notin A^{(k)}, \\
X''_{u} &\coloneqq \e^{\beta \eta_u} - \e^{\lambda(\beta)}, \quad u \notin \Phi(A^{(k)}).
} 
Their coefficients are
\eq{
c_{u} &\coloneqq \sum_{v \sim u} f(v)P(v,u) - \sum_{v \sim \Phi(u)} g(v)P(v,\Phi(u)), \quad u \in A^{(k)},\\
c'_{u} &\coloneqq \sum_{v \sim u} f(v)P(v,u), \quad u \notin A^{(k)}, \\
c''_{u} &\coloneqq -\sum_{v \sim u} g(v)P(v,u), \quad u \notin \Phi(A^{(k)}).
}
That is,
\begin{align}
\wt F - \wt G &= \sum_{u \in A^{(k)}} c_u X_u + \sum_{u \notin A^{(k)}} c_u'X_u' + \sum_{u \notin \Phi(A^{(k)})} c_u''X_u'' \nonumber \\
\implies \quad |\wt F - \wt G| &\leq \bigg|\sum_{u \in A^{(k)}} c_u X_u\bigg|
+ \bigg|\sum_{u \notin A^{(k)}} c'_u X_u'\bigg|
+ \bigg|\sum_{u \notin \Phi(A^{(k)})} c''_u X_u''\bigg|. \label{tilde_difference_1} %\\
%&\leq 3^c\bigg(\sum_{u \in A^{(k)}} |c_u||X_u|\bigg)^\alpha
%+ 3^\alpha\bigg(\sum_{u \notin A^{(k)}} c'_u |X_u'|\bigg)^\alpha
%+ 3^\alpha\bigg(\sum_{u \notin \Phi(A^{(k)})} |c''_u| |X_u''|\bigg)^\alpha \label{tilde_difference_2}. % \\
%&\leq 3^\alpha\bigg(\sum_{u \in A^{(k)}} |\alpha_u|\bigg)^{\alpha-1}\bigg(\sum_{u \in A^{(k)}} |\alpha_u||X_u^\alpha\bigg)
%+ 3^\alpha\bigg(\sum_{u \notin A^{(k)}} \alpha'_u |X_u'|\bigg)^\alpha
%+ 3^\alpha\bigg(\sum_{u \notin \Phi(A^{(k)})} |\alpha''_u| |X_u''|\bigg)^\alpha. %\\
%&\implies \quad \E\Big(\frac{|\wt F - \wt G|}{\wt F}\Big)
%&\leq \E\bigg[\sum_{u \in A^{(k)}} |\alpha_u|\frac{|X_u|}{\wt F}\bigg]
%+ \E\bigg[\sum_{u \notin A^{(k)}} \alpha'_u \frac{|X_u'|}{\wt F}\bigg]
%+ \E\bigg[\sum_{u \notin \Phi(A^{(k)})} |\alpha''_u| \frac{|X_u''|}{\wt F}\bigg].
\end{align}
Assume we can show the following inequalities:
\begin{align} 
\sum_{u \in A^{(k)}} |c_u|  \leq 7\kappa &\stackrel{\mbox{\scriptsize\eqref{kappa_condition_1}},\mbox{\scriptsize\eqref{kappa_condition_2}}}{\leq}  b \wedge 1, \label{c_sum} \\
c'_u < 2\kappa &\stackrel{\hspace{3.5ex}\eqref{kappa_condition_2}\hspace{3.5ex}}{\leq} b  \quad \text{for all $u \notin A^{(k)}$,} \label{c_prime} \\
c''_u < (2\kappa)^{1/\alpha} + \kappa &\stackrel{\hspace{3.5ex}\eqref{kappa_condition_2}\hspace{3.5ex}}{\leq} b \quad \text{for all $u \notin \Phi(A^{(k)})$.} \label{c_prime_prime}
\end{align}
We also have the trivial inequalities
\eq{ %\label{c_prime_sum}
\sum_{u \notin A^{(k)}} c_u' = \sum_{u \notin A^{(k)}} \sum_{v \sim u} f(v)P(v,u)
\leq \sum_{v \in \N \times \Z^d} \sum_{u \sim v} f(v)P(v,u)
= \|f\| \leq 1,
}
and similarly
\eq{ %\label{c_prime_prime_sum}
\sum_{u \notin \Phi(A^{(k)})} |c_u''| \leq \|g\| \leq 1.
}
Therefore, by the choice of $b$ in relation to Lemma \ref{LLN}, we deduce from \eqref{tilde_difference_1} that
\eeq{ \label{numerator_bound}
\E|\wt F - \wt G| \leq 3t.
}
There are now two cases to consider.

\textbf{Case 1.} If $\|f\|\leq1/2$, then
\eeq{ \label{part_3_bound_1}
\E\Big(\frac{|\wt F - \wt G|}{\wt F}\Big) \leq \frac{\E|\wt F - \wt G|}{(1-\|f\|)\e^{\lambda(\beta)}}
\leq \frac{3t}{(1-\|f\|)\e^{\lambda(\beta)}} \stackrel{\mbox{\scriptsize\eqref{t_condition_1}}}{<} \frac{\eps}{8\alpha}.
}
Similarly, if $\|g\|\leq 1/2$, then
\eeq{ \label{part_3_bound_1g}
\E\Big(\frac{|\wt F - \wt G|}{\wt G}\Big) \leq \frac{\E|\wt F - \wt G|}{(1-\|g\|)\e^{\lambda(\beta)}}
\leq \frac{3t}{(1-\|g\|)\e^{\lambda(\beta)}} \stackrel{\mbox{\scriptsize\eqref{t_condition_1}}}{<} \frac{\eps}{8\alpha}.
}

\textbf{Case 2.} On the other hand, if $\|f\| > 1/2$, then we can consider the three sums in \eqref{tilde_difference_1} separately.
First observe that for any $u \in A^{(k)}$, the quantities
\eq{
\wt F_u &\coloneqq \wt F - \sum_{v \sim u} f(v)\e^{\beta\omega_u} = \sum_{w \neq u} \sum_{v \sim w} f(v)\e^{\beta \omega_w}P(v,w), \\
\wt G_u &\coloneqq \wt G - \sum_{v \sim \Phi(u)} g(v)\e^{\beta\omega_u} = \sum_{w \neq \Phi(u)} \sum_{v \sim w} g(v)\e^{\beta \eta_w}P(v,w),
}
are independent of $X_u$.
When $\|f\|>1/2$, we have
\eq{
\sum_{w \neq u} \sum_{v \sim w} f(v)P(v,w) = \sum_{v}f(v)\sum_{\substack{w \sim v \\ w \neq u}}P(v,w)
= \sum_{v}f(v)(1-P(v,u))
\geq \|f\|(1-q) \geq \frac{1-q}{2},
}
and so Lemma \ref{amgm} guarantees
\eq{
\E(\wt F_u^{-1}) \leq 2(1-q)^{-1}\e^{\lambda(-\beta)}.
}
Hence
\eeq{ \label{equals1_sum1}
\E\bigg(\frac{\big|\sum_{u \in A^{(k)}} c_u X_u\big|}{\wt F}\bigg)
\leq \sum_{u \in A^{(k)}} |c_u|\E\Big(\frac{|X_u|}{\wt F_u}\Big)
&\stackrel{\phantom{\eqref{Fbound},\eqref{c_sum}}}{=} \sum_{u \in A^{(k)}} |c_u|\E|X_u|\E(\wt F_u^{-1}) \\
&\stackrel{\mbox{\scriptsize\eqref{Fbound},\eqref{c_sum}}}{\leq} 14\kappa(1-q)^{-1}\e^{\lambda(-\beta)}\E|\e^{\beta\omega}-\e^{\lambda(\beta)}|.
}
When $\|g\|>1/2$, we can apply the same argument with $\wt G_u$ replacing $\wt F_u$, and obtain
\eeq{ \label{equals1_sum1g}
\E\bigg(\frac{\big|\sum_{u \in A^{(k)}} c_u X_u\big|}{\wt G}\bigg)
%\leq \sum_{u \in A^{(k)}} |c_u|\E\Big(\frac{|X_u|}{\wt G_u}\Big)
%&\stackrel{\phantom{\eqref{c_sum}}}{=} \sum_{u \in A^{(k)}} |c_u|\E|X_u|\E(\wt G_u^{-1}) \\
%&\stackrel{\mbox{\scriptsize\eqref{c_sum}}}{\leq} 
\leq 14\kappa(1-q)^{-1}\e^{\lambda(-\beta)}\E|\e^{\beta\omega}-\e^{\lambda(\beta)}|.
}

Second, observe that 
\eq{
\sum_{u \in A^{(k)}} \sum_{v \sim u} f(v) P(v,u)
\geq \sum_{v \in A} \sum_{u \in \NN(v,k)} f(v) P(v,u) 
\stackrel{\mbox{\scriptsize\eqref{k_condition_2}}}{>} (1-\kappa)\sum_{v \in A} f(v) \stackrel{\mbox{\scriptsize\eqref{A_condition}}}{>} (1-\kappa)\Big(\frac{1}{2}-\kappa\Big).
}
and so by applying Lemma \ref{amgm} once more, we see
\eq{ 
\E\bigg[\bigg(\sum_{u \in A^{(k)}} \sum_{v \sim u} f(v)\e^{\beta \omega_u}P(v,u)\bigg)^{-1}\bigg] \leq (1-\kappa)^{-1}\Big(\frac{1}{2}-\kappa\Big)^{-1}\e^{\lambda(-\beta)}.
}
Again appealing to independence and then Lemma \ref{LLN}, we obtain the bound
\eeq{ \label{equals1_sum2}
\E\bigg(\frac{\big|\sum_{u \notin A^{(k)}} c_u'X_u'\big|}{\wt F}\bigg)
&\leq \E\bigg(\frac{\big|\sum_{u \notin A^{(k)}} c_u'X_u'\big|}{\sum_{u \in A^{(k)}} \sum_{v \sim u} f(v)\e^{\beta \omega_u}P(v,u)}\bigg) \\
&=\E\bigg|\sum_{u \notin A^{(k)}} c_u'X_u'\bigg|\E\bigg[\bigg(\sum_{u \in A^{(k)}} \sum_{v \sim u} f(v)\e^{\beta \omega_u}P(v,u)\bigg)^{-1}\bigg] \\
&\leq t(1-\kappa)^{-1}\Big(\frac{1}{2}-\kappa\Big)^{-1}\e^{\lambda(-\beta)}.
}
Similarly, when $\|g\|>1/2$, we have
\eq{
\sum_{u \in \Phi(A^{(k)})} \sum_{v \sim u} g(v) P(v,u)
\stackrel{\mbox{\footnotesize\eqref{degree_consequence}}}{\geq} \sum_{v \in \Phi(A)} \sum_{u \in \NN(v,k)} g(v) P(v,u) 
&\stackrel{\mbox{\scriptsize\eqref{k_condition_2}}}{>} (1-\kappa)\sum_{v \in \Phi(A)} g(v) \\
&\stackrel{\phantom{\mbox{\scriptsize\eqref{k_condition_2}}}}{>} (1-\kappa)\bigg(\sum_{v\in A} f(v) - d_{\alpha,\phi}(f,g)\bigg) \\
&\stackrel{\hspace{0.5ex}\mbox{\scriptsize\eqref{A_condition}}\hspace{0.5ex}}{>} (1-\kappa)\Big(\frac{1}{2}-\kappa-\delta\Big),
}
and so analogous reasoning leads to 
\eeq{ \label{equals1_sum2g}
\E\bigg(\frac{\big|\sum_{u \notin \Phi(A^{(k)})} c_u''X_u''\big|}{\wt G}\bigg)
&\leq t(1-\kappa)^{-1}\Big(\frac{1}{2}-\kappa-\delta\Big)^{-1}\e^{\lambda(-\beta)}.
}

Third and finally, we use independence and Lemma \ref{LLN} once more to obtain
\eeq{ \label{equals1_sum3}
\E\bigg(\frac{\big|\sum_{u \notin \Phi(A^{(k)})} c_u''X_u''\big|}{\wt F}\bigg)
= \E\bigg|\sum_{u \notin \Phi(A^{(k)})} c_u''X_u''\bigg|\E(\wt F^{-1})
\leq t\E(\wt F^{-1})
\stackrel{\mbox{\scriptsize\eqref{Fbound}}}{\leq} t\e^{\lambda(-\beta)},
}
where here we do not actually need $\|f\|\geq1/2$.
Similarly,
\eeq{ \label{equals1_sum3g}
\E\bigg(\frac{\big|\sum_{u \notin A^{(k)}} c_u'X_u'\big|}{\wt G}\bigg)
= \E\bigg|\sum_{u \notin A^{(k)}} c_u'X_u'\bigg|\E(\wt G^{-1})
\leq t\E(\wt G^{-1})
\stackrel{\mbox{\scriptsize\eqref{Gbound}}}{\leq} t\e^{\lambda(-\beta)}.
}

Combining \eqref{equals1_sum1}, \eqref{equals1_sum2}, and \eqref{equals1_sum3}, we conclude that if $\|f\| > 1/2$, then
\eeq{ \label{part_3_bound_2} \raisetag{2\baselineskip}
\E\Big(\frac{|\wt F - \wt G|}{\wt F}\Big) 
%\stackrel{\phantom{\eqref{kappa_condition_1},\eqref{kappa_condition_3}}}{\leq} 
&\leq14\kappa(1-q)^{-1}\e^{\lambda(-\beta)}\E|\e^{\beta\omega}-\e^{\lambda(\beta)}| + t(1-\kappa)^{-1}\Big(\frac{1}{2}-\kappa\Big)^{-1}\e^{\lambda(-\beta)} + t\e^{\lambda(-\beta)} \\
&\hspace{-5.5ex}\stackrel{\phantom{xxx}\mbox{\scriptsize{\eqref{kappa_condition_1}}}\phantom{xxx}}{<} 14\kappa(1-q)^{-1}\e^{\lambda(-\beta)}\E|\e^{\beta\omega}-\e^{\lambda(\beta)}| + \frac{64t}{15}\e^{\lambda(-\beta)} \\
&\hspace{-6ex}\stackrel{\mbox{\scriptsize\eqref{t_condition_2},\eqref{kappa_condition_3}}}{<} \frac{\eps}{8\alpha}.
}
When $\|g\|>1/2$, we use \eqref{equals1_sum1g}, \eqref{equals1_sum2g}, and \eqref{equals1_sum3g} to conclude
\eeq{ \label{part_3_bound_2g} \raisetag{2\baselineskip}
\E\Big(\frac{|\wt F - \wt G|}{\wt G}\Big) 
%&\stackrel{\phantom{\eqref{kappa_condition_1},\eqref{kappa_condition_3}}}{\leq} 
&\leq14\kappa(1-q)^{-1}\e^{\lambda(-\beta)}\E|\e^{\beta\omega}-\e^{\lambda(\beta)}| + t(1-\kappa)^{-1}\Big(\frac{1}{2}-\kappa-\delta\Big)^{-1}\e^{\lambda(-\beta)} + t\e^{\lambda(-\beta)} \\
&\hspace{-6ex}\stackrel{\mbox{\scriptsize{\eqref{kappa_condition_1},\eqref{delta_condition_k}}}}{<} 14\kappa(1-q)^{-1}\e^{\lambda(-\beta)}\E|\e^{\beta\omega}-\e^{\lambda(\beta)}| + \frac{235t}{39}\e^{\lambda(-\beta)} \\
&\hspace{-6ex}\stackrel{\mbox{\scriptsize\eqref{t_condition_2},\eqref{kappa_condition_3}}}{<} \frac{\eps}{8\alpha}.
}
Having established \eqref{part_3_bound_1} and \eqref{part_3_bound_2}, as well as \eqref{part_3_bound_1g}, and \eqref{part_3_bound_2g} we may unconditionally write
\eeq{ \label{part_3_bound}
\E\Big(\frac{|\wt F - \wt G|}{\wt F}\Big) < \frac{\eps}{8\alpha}, \qquad \E\Big(\frac{|\wt F - \wt G|}{\wt G}\Big) < \frac{\eps}{8\alpha},
}
irrespective of the values of $\|f\|$ and $\|g\|$.

Now we check the inequalities \eqref{c_sum}--\eqref{c_prime_prime}.
First, for the $c_u$,
\eq{
\sum_{u \in A^{(k)}} |c_u| 
&\leq \sum_{u \in A^{(k)}} \bigg|\sum_{v \in \NN(u,k)} f(v)P(v,u) - \sum_{v \in \NN(\Phi(u),k)} g(v)P(v,\Phi(u))\bigg| \\
&\phantom{\leq}+ \sum_{u \in A^{(k)}}\bigg|\sum_{\substack{v\sim u \\ v \notin \NN(u,k)}} f(v)P(v,u) - \sum_{\substack{v \sim \Phi(u) \\ v \notin \NN(\Phi(u),k)}} g(v)P(v,\Phi(u))\bigg|,
}
where the first sum is bounded by $5\kappa$ according to $\eqref{all_u}$,
and the second sum satisfies
\eq{
\sum_{u \in A^{(k)}}\bigg|\sum_{\substack{v\sim u \\ v \notin \NN(u,k)}} f(v)P(v,u) - \sum_{\substack{v \sim \Phi(u) \\ v \notin \NN(\Phi(u),k)}} g(v)P(v,\Phi(u))\bigg|
%&\stackrel{\phantom{\eqref{k_condition_2}}}{\leq} 
&\leq \sum_{u \in \N\times\Z^d}\sum_{\substack{v\sim u \\ v \notin \NN(u,k)}} (f(v)+g(v))P(v,u) \\
&\hspace{-2.5ex}\stackrel{\phantom{\eqref{k_condition_2}}}{=}\sum_{v \in \N\times\Z^d}\sum_{\substack{u\sim v \\ u \notin \NN(v,k)}} (f(v)+g(v))P(v,u) \\
&\hspace{-2.5ex}\stackrel{\mbox{\scriptsize\eqref{k_condition_2}}}{<}\sum_{v\in\N\times\Z^d} (f(v)+g(v))\kappa \leq 2\kappa.
}
Hence \eqref{c_sum} holds:
\eq{
\sum_{u \in A^{(k)}} |c_k| \leq 5\kappa + 2\kappa = 7\kappa.
}
Next consider the $c_u'$.
By definition of $A^{(k)}$,
\eeq{ \label{notinAk_implication}
u \notin A^{(k)} \quad \implies \quad \NN(u,k) \subset (\N \times \Z^d) \setminus A,
}
which implies
\eq{
c'_u &=  \sum_{v \in \NN(u,k)} f(v)P(v,u) + \sum_{\substack{v \sim u \\ v \notin \NN(u,k)}} f(v)P(v,u) \\
&\leq \sum_{v \notin A} f(v)P(v,u) + P(\|\sigma_1\|_1 > k)
\stackrel{\mbox{\scriptsize\eqref{k_condition_2}}}{<} \sum_{v \notin A} f(v) + \kappa
\stackrel{\mbox{\scriptsize\eqref{A_condition}}}{<} 2\kappa < 7\kappa.
}
Last, consider the $c_u''$.
We have the implication
\eeq{ \label{notinPhiAk_implication}
u \notin \Phi(A^{(k)}) \stackrel{\mbox{\scriptsize\eqref{degree_consequence}}}{=} \Phi(A)^{(k)} \quad \implies \quad \NN(u,k) \subset (\N\times\Z^d)\setminus \Phi(A),
}
and thus
\eeq{ \label{c_prime_prime_setup}
|c_u''| &= \sum_{v \in \NN(u,k)}g(v)P(v,u) + \sum_{\substack{v\sim u \\ v \notin \NN(u,k)}} g(v)P(v,u) \\
&\leq \sum_{v \notin \Phi(A)} g(v)P(v,u) + \sum_{\substack{v\sim u \\ v \notin \NN(u,k)}} g(v)P(0,u-v) 
\leq \sup_{v \notin \Phi(A)} g(v) + P(\|\sigma_1\|_1 > k).
}
A further bound is needed:
\eeq{ \label{notinPhiA_sum}
\sum_{v \notin \Phi(A)} g(v)^\alpha
&\stackrel{\phantom{\mbox{\scriptsize\eqref{A_condition}}}}{\leq} \sum_{v \notin \psi(C)} g(v)^\alpha + \sum_{v \in \psi(C \setminus A)} g(v) \\
&\stackrel{\phantom{\mbox{\scriptsize\eqref{A_condition}}}}{=} \sum_{v \notin \psi(C)} g(v)^\alpha + \sum_{v \in C \setminus A} g(\psi(v)) \\
&\stackrel{\phantom{\mbox{\scriptsize\eqref{A_condition}}}}{\leq} \sum_{v \notin \psi(C)} g(v)^\alpha + \sum_{v \in C \setminus A} |f(v)-g(\psi(v))| + \sum_{v \in C \setminus A} f(v) \\
&\stackrel{\phantom{\mbox{\scriptsize\eqref{A_condition}}}}{\leq} \sum_{v \notin \psi(C)} g(v)^\alpha + \sum_{v \in C} |f(v) - g(\psi(v))| + \sum_{v \notin A} f(v) \\
&\stackrel{\mbox{\scriptsize\eqref{A_condition}}}{<} d_{\alpha,\psi}(f,g) + \kappa 
\stackrel{\mbox{\scriptsize\eqref{psi_condition}}}{<} \delta + \kappa
\stackrel{\mbox{\scriptsize\eqref{delta_condition_noA}}}{<} 2\kappa.
}
In particular,
\eq{
\sup_{v \in \Phi(A)} g(v) \leq \bigg(\sum_{v \notin \Phi(A)} g(v)^\alpha\bigg)^{1/\alpha} < (2\kappa)^{1/\alpha},
}
which completes \eqref{c_prime_prime_setup} to yield \eqref{c_prime_prime}:
\eq{ 
|c_u''|
\leq \sup_{v \notin \Phi(A)} g(v) + P(\|\sigma_1\|_1 > k)
\stackrel{\mbox{\scriptsize\eqref{k_condition_2}}}{\leq} (2\kappa)^{1/\alpha} + \kappa.
}

\subsubsection{Upper bounds for $\E\big[\sum_{u \notin A^{(k)}} \wt f(u)^\alpha\big]$ and $\E\big[\sum_{u \notin \Phi(A^{(k)})} \wt g(u)^\alpha\big]$}
By definition,
\eq{
\E\bigg[\sum_{u \notin A^{(k)}} \wt f(u)^\alpha\bigg]
= \sum_{u \notin A^{(k)}} \E\bigg(\sum_{v \sim u} f(v)\e^{\beta\omega_u}P(v,u)\bigg)^\alpha
= \e^{\lambda(\alpha\beta)}\sum_{u \notin A^{(k)}} \bigg(\sum_{v\sim u} f(v)P(v,u)\bigg)^\alpha.
}
Since the last inner sum is at most 1, we have
\eq{
\E\bigg[\sum_{u \notin A^{(k)}} \wt f(u)^\alpha\bigg]
\leq \e^{\lambda(\alpha\beta)}\sum_{u \notin A^{(k)}} \sum_{v\sim u} f(v)P(v,u),
}
where
\eq{
\sum_{u \notin A^{(k)}} \sum_{v\sim u} f(v)P(v,u)
&\stackrel{\phantom{\eqref{notinAk_implication}}}{=} \sum_{u \notin A^{(k)}}\bigg[ \sum_{v\in \NN(u,k)} f(v)P(v,u) + \sum_{\substack{v \sim u \\ v \notin \NN(u,k)}} f(v)P(v,u)\bigg] \\
&\stackrel{\mbox{\scriptsize\eqref{notinAk_implication}}}{\leq} \sum_{v \notin A} \sum_{u \sim v} f(v)P(v,u) + \sum_{v \in \N \times \Z^d} \sum_{\substack{u \sim v \\ u \notin \NN(v,k)}} f(v)P(v,u) \\
&\stackrel{\phantom{\eqref{notinAk_implication}}}{=} \sum_{v \notin A} f(v) + \sum_{v \in \N \times \Z^d} f(v)P(\|\sigma_1\|_1 > k)
\stackrel{\mbox{\scriptsize\eqref{A_condition},\eqref{k_condition_2}}}{<} 2\kappa.
}
Combining the two previous two displays yields
\eeq{
\E\bigg[\sum_{u \notin A^{(k)}} \wt f(u)^\alpha\bigg] < 2\kappa \e^{\lambda(\alpha\beta)}. \label{part_4_bound1}
}
For $\wt g$, Jensen's inequality gives
\eq{
\E\bigg[\sum_{u \notin \Phi(A^{(k))}} \wt g(u)^\alpha\bigg]
= \e^{\lambda(\alpha\beta)} \sum_{u \notin \Phi(A^{(k)})} \bigg(\sum_{v \sim u} g(v)P(v,u)\bigg)^\alpha
\leq \e^{\lambda(\alpha\beta)} \sum_{u \notin \Phi(A^{(k)})} \bigg(\sum_{v \sim u} g(v)^\alpha P(v,u)\bigg),
}
where
\eq{
 \sum_{u \notin \Phi(A^{(k)})} \bigg(\sum_{v \sim u} g(v)^\alpha P(v,u)\bigg)
 &\stackrel{\phantom{\eqref{k_condition_2}}}{=}  \sum_{u \notin \Phi(A^{(k)})}\bigg[\sum_{v \in \NN(u,k)} g(v)^\alpha P(v,u) + \sum_{\substack{v \sim u \\ v \notin \NN(u,k)}} g(v)^\alpha P(v,u)\bigg] \\
 &\stackrel{\mbox{\scriptsize\eqref{notinPhiAk_implication}\phantom{b}}}{\leq} \sum_{v \notin \Phi(A)}\sum_{u \in \NN(u,k)} g(v)^\alpha P(v,u) + \sum_{v \in \N \times \Z^d}\sum_{\substack{u \sim v \\ u \notin \NN(v,k)}} g(v)^\alpha P(v,u) \\
 &\stackrel{\mbox{\scriptsize\eqref{k_condition_2}}}{<} \sum_{v \notin \Phi(A)}g(v)^\alpha + \kappa
 \stackrel{\mbox{\scriptsize\eqref{notinPhiA_sum}}}{<} 3\kappa.
}
In summary,
\eeq{ \label{part_4_bound2}
\E\bigg[\sum_{u \notin \Phi(A^{(k))}} \wt g(u)^\alpha\bigg] < 3\kappa \e^{\lambda(\alpha\beta)}.
}
\subsubsection{Completing the proof}
Once \eqref{part_2_bound} and \eqref{part_3_bound} are used in \eqref{first_part} and \eqref{second_part}, respectively, \eqref{two_parts} gives
\eeq{ \label{final_sum_1}
\E\bigg[\alpha\sum_{u \in A^{(k)}} |F(u)-G(\Phi(u))|\bigg] < \alpha\Big(\frac{\eps}{8\alpha}+\frac{\eps}{8\alpha}\Big) = \frac{\eps}{4}.
}
Next, \eqref{Fbound} and \eqref{part_4_bound1} make \eqref{to_bound_2} read as
\eeq{ \label{final_sum_2}
\E\bigg[\sum_{u \notin A^{(k)}} F(u)^\alpha\bigg] < 2 \e^{\lambda(\alpha\beta)+\lambda(-\alpha\beta)}\kappa \stackrel{\mbox{\scriptsize\eqref{kappa_condition_5}}}{<} \frac{\eps}{4}.
}
Similarly, \eqref{Gbound} and \eqref{part_4_bound2} translate \eqref{to_bound_3} to
\eeq{ \label{final_sum_3}
\E\bigg[\sum_{u \notin \Phi(A^{(k))}} G(u)^\alpha\bigg] < 6\e^{\lambda(\alpha\beta)+\lambda(-\alpha\beta)}\kappa \stackrel{\mbox{\scriptsize\eqref{kappa_condition_5}}}{<} \frac{\eps}{4}.
}
Finally,
\eeq{ \label{final_sum_4}
\deg(\Phi) > k \quad {\implies} \quad 2^{-\deg(\Phi)} < 2^{-k} \stackrel{\mbox{\scriptsize\eqref{k_condition_1}}}{<} \frac{\eps}{4}.
}
Together, \eqref{final_sum_1}--\eqref{final_sum_4} produce the desired conclusion for part (a):
\eq{
W_\alpha(Tf,Tg) \leq \E[d_\alpha(F,G)] \leq \E(d_{\alpha,\Phi}(F,G)) < \eps.
}

For part (b), we assume $q$ is a positive integer.
By several applications of Cauchy--Schwarz, we have
\eeq{ \label{part_b_many_CS}
\big|\E(\log^q \wt F) - \E(\log^q \wt G)\big|
&\leq \E\, |\log^q \wt F - \log^q \wt G| \\
&= \E\, \bigg|\Big(\log\frac{\wt F}{\wt G}\Big)\sum_{i=0}^{q-1}(\log^{q-1-i} \wt F)(\log^i \wt G)\bigg| \\
&\leq \sqrt{\E\log^2\frac{\wt F}{\wt G}}\sqrt{\E\bigg[\bigg(\sum_{i=0}^{q-1}(\log^{q-1-i} \wt F)(\log^i \wt G)\bigg)^2\bigg]} \\
&\leq \sqrt{\E\log^2\frac{\wt F}{\wt G}}\sqrt{q\sum_{i=0}^{q-1}\sqrt{\E(\log^{4(q-1-i)} \wt F)\E(\log^{4i} \wt G)}}. 
}
Now, by the inequality $\log^2 x \leq 2(|x-1| +|x^{-1}-1|)$ for $x>0$, we have
\eq{
\E\log^2\frac{\wt F}{\wt G} \leq 2\Big(\E\, \Big|\frac{\wt F}{\wt G}-1\Big| + \E\, \Big|\frac{\wt G}{\wt F}-1\Big|\Big) \stackrel{\mbox{\footnotesize{\eqref{part_3_bound}}}}{\leq} \frac{\eps}{2}.
}
On the other hand, for each integer $i\geq0$, there exists a large enough constant $C_i$ so that $\log^{4i} x \leq C_i(x+x^{-1})$ for all $x>0$.
Therefore, we can appeal to \eqref{Fbound} and \eqref{Gbound} with $q=\pm1$ in order to upper bound the second square root in the final expression of \eqref{part_b_many_CS}.
In summary,
\eq{
\big|\E(\log^q \wt F) - \E(\log^q \wt G)\big| \leq C\sqrt{\eps},
}
where $C$ is a constant that depends only on $\mathfrak{L}_\omega$, $\beta$, $d$, and $q$.
The continuity of the map $f \mapsto \E(\log^q \wt F)$ is now clear.
\end{proof}

\subsection{Lifting the update map} \label{extension_to_measures}
Thus far we have only considered the random variable $F$ in \eqref{F_def} for \textit{fixed} $f \in \SS$.
If $f$ is itself a random element of $\SS$ drawn from the probability measure $\rho \in \PP(\SS)$, the resulting total law of $F$ will be written $\TT\rho$.
That is,
%\eeq{
%\TT\rho(\dd g) = \int Tf(\dd g)\, \rho(\dd f), \label{t_on_measures}
%}
%which means
\eq{
\TT\rho(\AA) = \int Tf(\AA)\ \rho(\dd f), \quad \text{Borel $\AA \subset \SS$}.
}
More generally, for $\vphi : \SS\to\R$,
\eeq{
\int \vphi(g)\ \TT\rho(\dd g) = \int_\SS \int_\SS \vphi(g)\ Tf(\dd g)\, \rho(\dd f), \label{T_fubini}
}
where the integral is well-defined if $\vphi$ is nonnegative, or if
\eq{
\int_\SS \int_\SS |\vphi(g)|\ Tf(\dd g)\,\rho(\dd f) < \infty. %\label{finite_for_fubini}
}
In this notation, we have a kernel on measures $\PP(\SS) \to \PP(\SS)$ given by $\rho \mapsto \TT\rho$.
We can recover the map $T$ as  by restricting to Dirac measures; that is, 
$Tf = \TT\delta_f$, where $\delta_f \in \PP(\SS)$ is the unit point mass at $f$.
%From Proposition \ref{continuous1}, one can conclude by standard arguments that $\rho \mapsto \TT\rho$ is (uniformly) continuous.

For completeness, let us check that the measure $\TT\rho$ is well-defined.
Specifically, we must check that for every Borel set $\AA\subset \SS$, $f \mapsto Tf(\AA)$ is measurable as a map $\SS \to \R$.
As we have just shown that $f \mapsto Tf$ is continuous and hence measurable, it suffices to prove the following lemma.

\begin{lemma} \label{evaluation_meas}
Let $(\XX,\tau)$ be a Polish space, and $A \subset \XX$ a Borel set.
The map $\PP(\XX) \to \R$ given by $\rho \mapsto \rho(A)$ is measurable.
\end{lemma}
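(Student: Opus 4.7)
The plan is to prove this via the standard Dynkin $\pi$-$\lambda$ argument, reducing measurability of $\rho \mapsto \rho(A)$ for general Borel $A$ to the easy case where $A$ is open. Since $\PP(\XX)$ is equipped with the topology of weak convergence (which, as noted, is metrized by the Wasserstein metric on Polish $\XX$), its Borel $\sigma$-algebra is generated by the weakly open sets, and it suffices to show that $\rho \mapsto \rho(A)$ is Borel measurable for every Borel $A \subset \XX$.

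First I would handle open sets. If $U \subset \XX$ is open, then by the Portmanteau theorem (Lemma \ref{portmanteau}, applied here on $\XX$ rather than on $\SS$, since the statement holds on any Polish space) the map $\rho \mapsto \rho(U)$ is lower semi-continuous with respect to weak convergence, hence Borel measurable. Equivalently, one can write $\one_U$ as a pointwise increasing limit of bounded continuous functions $\vphi_n$ (using the metric: $\vphi_n(x) = \min(1, n \cdot \mathrm{dist}(x, U^c))$), and note that $\rho \mapsto \int \vphi_n\, \dd\rho$ is continuous by the very definition of weak convergence, so $\rho \mapsto \rho(U) = \sup_n \int \vphi_n\, \dd\rho$ is lower semi-continuous.

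Next I would define
\[
\DD \coloneqq \{A \subset \XX \text{ Borel} : \rho \mapsto \rho(A) \text{ is Borel measurable}\}
\]
and verify that $\DD$ is a Dynkin (i.e.~$\lambda$-)system. Indeed, $\XX \in \DD$ since $\rho(\XX) = 1$ is constant; if $A \subset B$ both lie in $\DD$, then $\rho(B \setminus A) = \rho(B) - \rho(A)$ is a difference of measurable functions, so $B \setminus A \in \DD$; and if $A_n \in \DD$ with $A_n \uparrow A$, then $\rho(A) = \lim_n \rho(A_n)$ is measurable as a pointwise limit of measurable functions, so $A \in \DD$. By the previous paragraph, $\DD$ contains the $\pi$-system of open subsets of $\XX$. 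Dynkin's $\pi$-$\lambda$ theorem therefore gives that $\DD$ contains the entire Borel $\sigma$-algebra of $\XX$, which is precisely the claim.

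There is no significant obstacle here; the only subtle ingredient is the Portmanteau direction for open sets, and this is standard. The rest is a routine verification of the Dynkin class axioms. One could equally well prove the lemma by showing that the family of bounded Borel $\vphi$ for which $\rho \mapsto \int \vphi\, \dd\rho$ is Borel measurable is a monotone vector space containing the bounded continuous functions, and then invoking the functional monotone class theorem; applying to $\vphi = \one_A$ gives the conclusion. Both routes rest on the same single nontrivial input, namely that integration of bounded continuous functions is continuous under weak convergence.
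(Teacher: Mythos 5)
Your proof is correct and follows essentially the same Dynkin $\pi$-$\lambda$ strategy as the paper, with the only cosmetic difference being that you take open sets as the generating $\pi$-system (using lower semi-continuity from Portmanteau) whereas the paper uses closed sets (with upper semi-continuity); the two are mirror images and equally valid.
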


\begin{proof}
We will make use of Dynkin's $\pi$-$\lambda$ theorem.
First define
\eq{
\CC \coloneqq  \{C \subset \XX : \text{$C$ is closed, $\rho \mapsto \rho(C)$ is measurable}\}.
}
It is easy to see that $\CC$ actually contains all closed sets: If $C \subset \XX$ is closed, then Lemma \ref{portmanteau} shows $\rho \mapsto \rho(C)$ is upper semi-continuous and hence measurable.
Furthermore, since closed sets remain closed under intersection, $C$ is a $\pi$-system.

Next define the larger collection
\eq{
\DD \coloneqq  \{A \subset \XX: \text{$A$ is Borel, $\rho \mapsto \rho(A)$ is measurable}\}.
}
We check that $\DD$ is a $\lambda$-system:
\begin{itemize}
\item Clearly the empty set is in $\DD$, since $\rho \mapsto \rho(\varnothing) = 0$ is constant.
\item If $A \in \CC$, then $\rho \mapsto \rho(\XX \setminus A) = 1 - \rho(A)$ is the difference of measurable functions, and so itself measurable.
That is, $\XX \setminus A \in \DD$.
\item If $A_1,A_2,\dots$ is a sequence in $\DD$ consisting of disjoint sets, then
\eq{
\rho\bigg(\bigcup_{i = 1}^\infty A_i\bigg) = \sum_{i = 1}^\infty \rho(A_i) = \lim_{n \to \infty} \sum_{i = 1}^n \rho(A_i),
}
meaning $\rho \mapsto \rho(\cup_{i = 1}^\infty A_i)$ is the limit of measurable functions, and so again measurable.
Therefore $\cup_{i = 1}^\infty A_i \in \DD$.
\end{itemize}
Now the $\pi$-$\lambda$ theorem implies that the $\sigma$-algebra generated by $\CC$ is contained in $\DD$.
As $\CC$ generates the Borel $\sigma$-algebra, the claim holds.
\end{proof}

For clarity, we review the measure theoretic definitions we have made.
Lemma \ref{evaluation_meas} verifies that $K(f,\dd g) \coloneqq  Tf(\dd g) : \SS \times \BB(\SS) \to [0,1]$ is a Markov kernel, where $\BB(\SS)$ denotes the Borel $\sigma$-algebra on $\SS$.
Then for any probability $\rho \in \PP(\SS)$, we can define $\TT\rho$ to be the product measure associated to $\rho$ and $K$:
For Borel subsets $\AA_1,\AA_2 \subset \SS$,
\eq{
\TT\rho(\AA_1 \times \AA_2) 
\coloneqq  (\rho \otimes K)(\AA_1 \times \AA_2)
= \int_{\AA_1} K(f,\AA_2)\ \rho(\dd f)
= \int_{\AA_1} Tf(\AA_2)\ \rho(\dd f).
}
We then know how to integrate functions $\Phi : \SS \times \SS \to \R$ with respect to $\TT\rho$, since
Fubini's theorem for kernels allows us to write
\eq{
\int_{\SS \times \SS} \Phi(f,g)\ \TT\rho(\dd f,\dd g) 
= \int_{\SS \times \SS} \Phi(f,g)\ (\rho \otimes K)(\dd f,\dd g)
&= \int_\SS \int_\SS \Phi(f,g)\ K(f,\dd g)\, \rho(\dd f) \\
&=  \int_\SS \int_\SS \Phi(f,g)\ Tf(\dd g)\, \rho(\dd f).
}
The above calculation holds when either $\Phi$ is nonnegative, or
\eq{
\int_\SS \int_\SS |\Phi(f,g)|\ Tf(\dd g)\, \rho(\dd f) < \infty.
}
Throughout the remainder of the chapter, we will only be interested in functions depending on a single coordinate: $\Phi(f,g) = \vphi(g)$, in which case
\eq{
 \int_\SS \int_\SS \Phi(f,g)\ Tf(\dd g)\, \rho(\dd f) = \int_\SS \int_\SS \vphi(g)\ Tf(\dd g)\, \rho(\dd f).
}
Therefore, it will not be a problem to henceforth write $\TT\rho$ for restriction of the product measure to the second coordinate:
$\TT\rho(\AA_2) = \TT\rho(\SS \times \AA_2)$.
That is, by $\TT\rho$ we mean a measure on $\SS$ rather than $\SS \times \SS$.
With this convention, the integral
\eq{
\int \vphi(g)\ \TT\rho(\dd g) = \int_\SS \int_\SS \vphi(g)\ Tf(\dd g)\, \rho(\dd f) %\label{T_fubini}
}
is well-defined when either $\vphi$ is nonnegative, or
\eq{
\int_\SS \int_\SS |\vphi(g)|\ Tf(\dd g)\,\rho(\dd f) < \infty. %\label{finite_for_fubini}
}
Finally, we prove continuity for the lifted map $\rho \mapsto \TT\rho$.
%
%A proof is included in Appendix \ref{continuous2_subsection}.
\begin{prop} \label{continuous2}
$\rho\mapsto\TT\rho$ is a continuous map $(\PP(\SS),\WW_\alpha)\to(\PP(\SS),\WW_\alpha)$.
%For any $\eps > 0$, there exists $\delta > 0$ such that for $\nu,\rho \in \PP(\SS)$,
%\eq{
%\WW_\alpha(\nu,\rho) < \delta \quad \implies \quad \WW_\alpha(\TT\nu,\TT\rho) < \eps.
%}
\end{prop}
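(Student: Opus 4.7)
The plan is to lift the continuity of $f \mapsto Tf$ (Proposition \ref{continuous1}(a)) to continuity of $\rho \mapsto \TT\rho$ by testing against continuous bounded functions. Since $(\SS, d_\alpha)$ is compact by Theorem \ref{compactness_result}, the Wasserstein metric $\WW_\alpha$ metrizes weak convergence on $\PP(\SS)$, and the topology on $\PP(\SS)$ is itself compact. Thus it suffices to verify that whenever $\rho_n \to \rho$ in $\WW_\alpha$ (equivalently, weakly), we have $\TT\rho_n \to \TT\rho$ weakly.

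To this end, I would fix an arbitrary continuous bounded function $\varphi : \SS \to \R$ and define
\[
\Psi_\varphi(f) \coloneqq \int_\SS \varphi(g)\ Tf(\dd g), \quad f \in \SS.
\]
By Proposition \ref{continuous1}(a), $f \mapsto Tf$ is continuous from $(\SS, d_\alpha)$ to $(\PP(\SS), \WW_\alpha)$; since $\SS$ is compact, $\WW_\alpha$-convergence coincides with weak convergence, and so $\Psi_\varphi$ is a continuous (hence bounded, by compactness of $\SS$) function on $\SS$. Using the Fubini-type identity \eqref{T_fubini} applied to $\vphi$ and then to $\rho_n, \rho$, I would write
\[
\int_\SS \varphi(g)\ \TT\rho_n(\dd g) = \int_\SS \Psi_\varphi(f)\ \rho_n(\dd f),
\]
and similarly for $\rho$. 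Since $\rho_n \to \rho$ weakly and $\Psi_\varphi$ is continuous and bounded, the right-hand sides converge. Hence
\[
\int_\SS \varphi(g)\ \TT\rho_n(\dd g) \to \int_\SS \varphi(g)\ \TT\rho(\dd g),
\]
which gives weak convergence $\TT\rho_n \to \TT\rho$, i.e., $\WW_\alpha(\TT\rho_n, \TT\rho) \to 0$.

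The argument is essentially routine given the hard work already done: the main obstacle, namely continuity of the kernel $f \mapsto Tf$, is handled by Proposition \ref{continuous1}(a), and the equivalence of Wasserstein and weak topologies is guaranteed by the compactness established in Theorem \ref{compactness_result}. No separate estimate or coupling is required at this stage. One small point to take care of is the measurability of $\Psi_\varphi$, but continuity on the Polish space $\SS$ is stronger than what is needed, and measurability of $f \mapsto Tf(\AA)$ for Borel $\AA$ has already been verified via Lemma \ref{evaluation_meas}.
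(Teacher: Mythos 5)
Your argument is correct, and it takes a genuinely different route from the paper. The paper works quantitatively: it uses the Kantorovich dual representation \eqref{kantorovich}, constructs an explicit coupling $\pi\in\Pi(\nu,\rho)$ realizing $\E[d_\alpha(f,g)]<\delta\eps/4$, applies Markov's inequality to control $\P(d_\alpha(f,g)\geq\delta)$, and then bounds $\WW_\alpha(\TT\nu,\TT\rho)\leq\E[\WW_\alpha(Tf,Tg)]$ by splitting on the good and bad sets, using Lemma \ref{trivial_bound} to handle the tail. This yields an explicit modulus of continuity ($\WW_\alpha(\nu,\rho)<\delta\eps/4\Rightarrow\WW_\alpha(\TT\nu,\TT\rho)<\eps$). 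Your proof is purely topological: you push the compactness of $\SS$ one step further, noting that on a compact space $\WW_\alpha$-convergence is just weak convergence, so continuity reduces to checking test functions, and the continuity of $\Psi_\varphi(f)=\int\varphi\,\dd(Tf)$ is immediate from the Feller property established in Proposition \ref{continuous1}(a) composed with the continuity of $\mu\mapsto\int\varphi\,\dd\mu$. Your version is shorter and cleaner, but gives no quantitative information; the paper's version is self-contained at the level of couplings and could be adapted to settings where $\PP(\SS)$ is not compact (provided $T$ were still uniformly continuous). Both are sound here, since $\SS$ is compact and has $d_\alpha$-diameter at most $2$.
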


\begin{proof}
Given $\eps > 0$, we may choose by Proposition \ref{continuous1} some $\delta > 0$ such that
\eeq{
d_\alpha(f,g) < \delta \quad \implies \quad \WW_\alpha(Tf,Tg) < \frac{\eps}{2}. \label{from_before}
}
Now suppose $\nu,\rho \in \PP(\SS)$ satisfy $\WW_\alpha(\nu,\rho) < \delta\eps/4$.
Then there exists a coupling $(f,g) \in \SS \times \SS$ of the distributions $\nu$ and $\rho$ such that
$\E[d_\alpha(f,g)] < \delta \eps/4$.
Denote the joint distribution of $(f,g)$ by $\pi \in \Pi(\nu,\rho)$.
Markov's inequality gives
\eeq{
\P(d_\alpha(f,g) \geq \delta) \leq \delta^{-1} \E[d_\alpha(f,g)] < \frac{\eps}{4}. \label{measure_bound}
}
To bound $\WW_\alpha(\TT\nu,\TT\rho)$, we will use definition \eqref{kantorovich}. For any Lipschitz function $\vphi : \SS \to \R$, we have
$\sup_{f \in \SS} |\vphi(f)| < \infty$ by compactness of $\SS$.
Considering such $\vphi$ with minimal Lipschitz constant at most 1, we can invoke \eqref{T_fubini} to write
\eeq{
\int \vphi(h)\ \TT\nu(\dd h) - \int \vphi(h)\ \TT\rho(\dd h)
&= \iint \vphi(h)\ Tf(\dd h)\, \nu(\dd f) - \iint \vphi(h)\ Tg(\dd h)\, \rho(\dd g) \\
&= \int\bigg(\int \vphi(h)\ Tf(\dd h) - \int \vphi(h)\ Tg(\dd h)\bigg)\, \pi(\dd f,\dd g)  \\
&\leq \int \WW_\alpha(Tf,Tg)\ \pi(\dd f,\dd g) 
= \E[\WW_\alpha(Tf,Tg)]. \label{each_vphi}
}
We now consider the expectation of $\WW_\alpha(Tf,Tg)$ over the set $\{d_\alpha(f,g) < \delta\}$, where we can apply \eqref{from_before}, and separately over the complement $\{d_\alpha(f,g) \geq \delta\}$, which has $\pi$-measure less than $\eps/4$ by \eqref{measure_bound}.
As the bound \eqref{each_vphi} holds for every $\vphi$, we have
\eq{
\WW_\alpha(\TT\nu,\TT\rho) 
&\leq \E\big[\WW_\alpha(Tf,Tg)\big] \\
&\leq \E\givenk[\big]{\WW_\alpha(Tf,Tg)}{d_\alpha(f,g) < \delta} + \frac{\eps}{4}\, \E\givenk[\big]{\WW_\alpha(Tf,Tg)}{d_\alpha(f,g) \geq \delta}
< \frac{\eps}{2} + \frac{\eps}{2} = \eps,
}
where we have applied Lemma \ref{trivial_bound} to bound the latter conditional expectation.
\end{proof}

\section{The empirical measure of the endpoint distribution} \label{free_energy}

\subsection{Definition and properties} \label{empirical_measures}
As discussed in Section \ref{endpoint_distributions}, the law of the endpoint distribution $f_{n+1}$ given $\FF_n$ is equal to $Tf_n$.
Now define the empirical probability measure on $\SS$ generated by the $f_i$,
\eeq{
\rho_n \coloneqq  \frac{1}{n} \sum_{i = 0}^{n-1} \delta_{f_i}. \label{mu_n_def}
}
%where $\delta_f$ denotes the unit point mass at $f \in \SS$.
Then $\rho_n$ is a random element of $\PP(\SS)$, measurable with respect to $\FF_n$.\footnote{By the discussion following Corollary \ref{increment_cor}, $f_i \in \SS$ is $\FF_n$-measurable for $0 \leq i \leq n$. It is clear that $f \mapsto \delta_f$ is a continuous (in particular, measurable) map $\SS \to \PP(\SS)$. It follows that $\rho_n \in \PP(\SS)$ is also $\FF_n$-measurable.}
While we will be interested in the quantity $\WW_\alpha(\rho_n,\TT\rho_n)$, it is easier to replace $\rho_n$ by %compare $\TT\rho_n$ with 
the shifted empirical measure,
\eq{
\rho_n' \coloneqq  \frac{1}{n}\sum_{i=1}^{n} \delta_{f_i},
}
since $Tf_{i}$ is the distribution of $f_{i+1}$ given $\FF_{i}$.

%While we will be interested in the quantity $\WW_\alpha(\rho_n,\TT\rho_n)$, it is easier to compare $\rho_n'$ and $\TT\rho_n$, since $Tf_{i}$ is the distribution of $f_{i+1}$ given $\FF_{i}$.
Making use of the dual formulation \eqref{kantorovich} of Wasserstein distance, one has
\eq{
\WW_\alpha(\rho_n',\TT\rho_n) &= \sup_{\vphi} \biggl(\int \vphi(f)\ \rho_n'(\dd f) - \int \vphi(f)\ \TT\rho_n(\mathrm{d}f) \biggr)\\
&= \sup_\vphi \frac{1}{n}\sum_{i = 0}^{n-1} \bigl(\vphi(f_{i+1}) - \E\givenk{\vphi(f_{i+1})}{\FF_{i}}\bigr),
}
where the supremum is taken over all functions $\vphi : \SS \to \R$ satisfying 
\eeq{
|\vphi(f) - \vphi(g)| \leq d_\alpha(f,g) \quad \text{for all $f,g \in \SS$.} \label{lip1}
}
Notice that adding a constant to $\vphi$ does not change the value of
$\int \vphi(f)\, \rho'_n(\dd f) - \int \vphi(f)\, \TT\rho_n(\dd f)$, and so the supremum can equivalently be taken over such $\vphi$ satisfying $\vphi(\vc{0}) = 0$, where $\mathbf{0}$ denotes (the equivalence class of) the constant zero function.
Let us denote the set of such functions by
\eq{
\LL _\alpha \coloneqq \{\vphi : \SS \to \R : |\vphi(f) - \vphi(g)| \leq d_\alpha(f,g) \text{ for all $f,g \in \SS$},\ \vphi(\vc{0}) = 0\}.
}
Recall that the space of real-valued continuous functions on a compact metric space is equipped with the uniform norm,
\eq{
\|\vphi\|_\infty \coloneqq  \sup_{f \in \SS} |\vphi(f)| < \infty.
}
For $\vphi \in \LL_\alpha$, the Lipschitz condition \eqref{lip1} and Lemma \ref{trivial_bound} imply
$\|\vphi\|_\infty \leq 2$.
In particular, $\LL_\alpha$ is a uniformly bounded family of continuous functions.
Furthermore, since $\LL_\alpha$ consists of Lipschitz functions whose minimal Lipschitz constants are uniformly bounded, it is both equicontinuous and closed under the topology induced by the uniform norm.
By the Arzel\`a--Ascoli Theorem (see Munkres \cite[Theorem 47.1]{munkres00}), $\LL_\alpha$ is compact in this topology.
Having made this observation, we are now ready to prove the following convergence result.

\begin{prop} \label{primeT_as}
As $n \to \infty$, $\WW_\alpha(\rho_n',\TT\rho_n) \to 0$ almost surely.
\end{prop}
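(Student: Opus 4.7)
The plan is to combine the Kantorovich dual representation (already written out just before the statement) with a standard martingale concentration bound, and then use the compactness of $\LL_\alpha$ under the uniform norm to pass from pointwise to uniform control. Fix $\vphi \in \LL_\alpha$ and set $X_i^\vphi \coloneqq \vphi(f_{i+1}) - \E\givenk{\vphi(f_{i+1})}{\FF_i}$. Since the law of $f_{i+1}$ given $\FF_i$ is $Tf_i$, the sequence $(X_i^\vphi)_{i \geq 0}$ is a martingale difference sequence with respect to $(\FF_n)$, and the bound $\|\vphi\|_\infty \leq 2$ that follows from \eqref{lip1} and Lemma~\ref{trivial_bound} gives $|X_i^\vphi| \leq 4$.

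The first step is to apply the Azuma--Hoeffding inequality to each such fixed $\vphi$:
\eq{
\P\bigg(\Big|\frac{1}{n}\sum_{i=0}^{n-1} X_i^\vphi\Big| > t\bigg) \leq 2\exp\Big(-\frac{n t^2}{32}\Big), \qquad t > 0.
}
Because these tail bounds are summable in $n$, Borel--Cantelli yields $\frac{1}{n}\sum_{i=0}^{n-1} X_i^\vphi \to 0$ almost surely for each fixed $\vphi \in \LL_\alpha$.

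Next I would upgrade pointwise a.s.\ convergence to convergence uniform over $\LL_\alpha$, using the compactness of $(\LL_\alpha, \|\cdot\|_\infty)$ established by Arzel\`a--Ascoli in the paragraph preceding the statement. Fix $\eps > 0$ and choose a finite $\frac{\eps}{4}$-net $\vphi_1,\dots,\vphi_N \in \LL_\alpha$ in uniform norm. For an arbitrary $\vphi \in \LL_\alpha$, pick $j$ with $\|\vphi - \vphi_j\|_\infty \leq \eps/4$; then $|X_i^\vphi - X_i^{\vphi_j}| \leq 2\|\vphi - \vphi_j\|_\infty \leq \eps/2$, so
\eq{
\bigg|\frac{1}{n}\sum_{i=0}^{n-1}X_i^\vphi\bigg| \leq \bigg|\frac{1}{n}\sum_{i=0}^{n-1}X_i^{\vphi_j}\bigg| + \frac{\eps}{2}.
}
Since $N$ is finite, the almost-sure event on which $\frac{1}{n}\sum_{i=0}^{n-1} X_i^{\vphi_j} \to 0$ for every $j = 1,\dots,N$ has full probability; on this event, for all $n$ large enough each of these $N$ averages is at most $\eps/2$ in absolute value. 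Taking the supremum over $\vphi \in \LL_\alpha$ and invoking the dual formula for $\WW_\alpha(\rho_n', \TT\rho_n)$, this gives $\limsup_{n \to \infty} \WW_\alpha(\rho_n', \TT\rho_n) \leq \eps$ almost surely. Running $\eps$ through a countable sequence tending to $0$ completes the proof.

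I do not expect a serious obstacle here: boundedness of $\vphi$ makes Azuma--Hoeffding immediate, and equicontinuity of $\LL_\alpha$ is built into its definition via the uniform Lipschitz constraint \eqref{lip1}. The only point that requires care is ensuring that the $\frac{\eps}{4}$-net lies in $\LL_\alpha$ itself (so that each $\vphi_j$ is a genuine candidate in the dual formula), which is guaranteed because $\LL_\alpha$ is compact, hence totally bounded, as a metric space in its own right.
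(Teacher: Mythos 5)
Your proof is correct and, at the level of strategy, runs parallel to the paper's: first establish pointwise almost-sure convergence $\frac{1}{n}M_n(\vphi) \to 0$ for fixed $\vphi$, then exploit compactness and equicontinuity of $\LL_\alpha$ to make the convergence uniform and hence conclude via the Kantorovich dual. The differences are in the implementation of both steps. For the pointwise step, you use Azuma--Hoeffding (which applies directly since the martingale differences are bounded by $4$ in absolute value), giving exponential tails; the paper instead invokes the Burkholder--Davis--Gundy inequality to get a fourth-moment bound $\E[M_n(\vphi)^4] \leq Cn^2$ and then Markov with threshold $n^{-1/5}$, giving only polynomial tails summable in $n$. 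Your route is more elementary and in fact gives a sharper rate, since boundedness of $\vphi$ is all that is needed and Azuma--Hoeffding uses it directly. For the uniformization step, you extract a finite $\eps/4$-net (total boundedness from compactness) and use the $2$-Lipschitz dependence of $X_i^\vphi$ on $\vphi$ in uniform norm; the paper instead takes a countable dense subset, observes equicontinuity of $(M_n(\cdot)/n)_{n}$ as functions on $\LL_\alpha$, and appeals to Arzelà--Ascoli a second time to upgrade pointwise-on-a-dense-set to uniform. These two arguments are essentially the same net-plus-equicontinuity idea; your version is slightly more hands-on and the paper's slightly more structural. In short, both proofs are valid; yours substitutes a more elementary concentration bound and an explicit net argument for BDG and a second application of Arzelà--Ascoli.
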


We will use the following well-known fact.

\begin{lemma}[{Burkholder--Davis--Gundy Inequality, see \cite[Theorem 1.1]{burkholder-davis-gundy72}}]
\label{bdg}
Let $(M_n)_{n \geq 0}$ be a martingale, and write
\eq{
M_n = \sum_{i = 0}^n d_i, \qquad d_0 \coloneqq M_0, \quad d_i \coloneqq M_{i}-M_{i-1} \text{ for $i \geq 1$}.
}
Let $M_n^* \coloneqq  \sup_{0 \leq i \leq n} M_n$.
Then for any $q \geq 1$, there are positive constants $c_q$ and $C_q$ such that
\eq{
c_q\, \E\bigg[\bigg(\sum_{i = 0}^n d_i^2\bigg)^{q/2}\bigg]
\leq \E\big[(M_n^*)^q\big]
\leq C_q\, \E\bigg[\bigg(\sum_{i = 0}^n d_i^2\bigg)^{q/2}\bigg] \quad \text{for all $n \geq 0$.}
}
\end{lemma}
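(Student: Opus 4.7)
The plan is to use the Kantorovich dual representation already recorded above the statement,
\[
\WW_\alpha(\rho_n',\TT\rho_n) = \sup_{\vphi\in\LL_\alpha}\, \frac{1}{n}\sum_{i=0}^{n-1}\bigl(\vphi(f_{i+1})-\E\givenk{\vphi(f_{i+1})}{\FF_i}\bigr),
\]
and to establish a uniform (over $\LL_\alpha$) law of large numbers for the family of martingales indexed by $\vphi$. For each $\vphi \in \LL_\alpha$, set $d_i^\vphi \coloneqq \vphi(f_{i+1})-\E\givenk{\vphi(f_{i+1})}{\FF_i}$ and $M_n^\vphi \coloneqq \sum_{i=0}^{n-1}d_i^\vphi$. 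Since every $\vphi\in\LL_\alpha$ satisfies $\vphi(\vc0)=0$ and is $1$-Lipschitz in $d_\alpha$, Lemma \ref{trivial_bound} gives $\|\vphi\|_\infty\leq 2$ and hence the deterministic bound $|d_i^\vphi|\leq 4$.

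First I would prove pointwise a.s. convergence $M_n^\vphi/n\to 0$ for each fixed $\vphi\in\LL_\alpha$. Apply the Burkholder--Davis--Gundy inequality (Lemma \ref{bdg}) with $q=4$, using $(d_i^\vphi)^2\leq 16$, to obtain
\[
\E\bigl[(M_n^\vphi)^4\bigr]\;\leq\; C_4\,\E\bigg[\bigg(\sum_{i=0}^{n-1}(d_i^\vphi)^2\bigg)^{\!2}\bigg]\;\leq\; 256\,C_4\,n^2.
\]
For any $\eps>0$, Markov's inequality yields $\P(|M_n^\vphi|>n\eps)\leq 256\,C_4\,\eps^{-4}\,n^{-2}$, which is summable in $n$, and Borel--Cantelli gives $M_n^\vphi/n\to 0$ almost surely.

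Next I would upgrade this pointwise a.s. statement to a uniform one over $\LL_\alpha$ via a compactness argument. The map $\vphi\mapsto M_n^\vphi$ is $2n$-Lipschitz in the uniform norm, since $|d_i^\vphi-d_i^\psi|\leq 2\|\vphi-\psi\|_\infty$, so $|M_n^\vphi/n-M_n^\psi/n|\leq 2\|\vphi-\psi\|_\infty$. The discussion immediately preceding the proposition already records, via Arzel\`a--Ascoli, that $\LL_\alpha$ is compact in the uniform-norm topology. Given $\eps>0$, cover $\LL_\alpha$ by finitely many $\|\cdot\|_\infty$-balls of radius $\eps/4$ centered at $\vphi_1,\dots,\vphi_N\in\LL_\alpha$. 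On the intersection of the $N$ full-measure events from the previous step, $\max_{1\leq j\leq N}|M_n^{\vphi_j}|/n<\eps/2$ for all $n$ sufficiently large; the Lipschitz estimate then forces $\sup_{\vphi\in\LL_\alpha}|M_n^\vphi|/n<\eps$ for all such $n$. Sending $\eps$ through a countable sequence completes the proof.

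The main obstacle is passing from pointwise-in-$\vphi$ a.s. convergence (which BDG delivers routinely) to uniform-in-$\vphi$ a.s. convergence, since $\LL_\alpha$ is uncountable. The combination of the compactness of $\LL_\alpha$ in $\|\cdot\|_\infty$ and the uniform Lipschitz control of $M_n^\vphi/n$ in $\vphi$ is exactly what is needed to reduce the uncountable supremum to a finite one on a single full-measure event. It is also important that BDG with $q=4$ (rather than $q=2$) produces the summable $n^{-2}$ tail bound; the pure $L^2$ orthogonality estimate would only give $M_n^\vphi=O(\sqrt n)$ in $L^2$, which is insufficient for almost-sure convergence without further work.
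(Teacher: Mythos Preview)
Your proposal does not prove the stated lemma. Lemma~\ref{bdg} is the Burkholder--Davis--Gundy inequality itself, which in the paper is simply quoted from the literature (with the citation \cite[Theorem 1.1]{burkholder-davis-gundy72}) and given no proof. What you have written is instead a proof of Proposition~\ref{primeT_as} (that $\WW_\alpha(\rho_n',\TT\rho_n)\to 0$ almost surely), the result stated just before Lemma~\ref{bdg} in the paper and which \emph{applies} the BDG inequality as a tool. You have mistaken the target statement for the surrounding application.

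If your intended target was Proposition~\ref{primeT_as}, then your argument is correct and follows essentially the same route as the paper: apply BDG with $q=4$ to obtain $\E[(M_n^\vphi)^4]=O(n^2)$, deduce pointwise a.s.\ convergence via Markov's inequality and Borel--Cantelli, and then upgrade to uniformity over $\LL_\alpha$ using its compactness in the sup-norm together with the Lipschitz dependence of $M_n^\vphi/n$ on $\vphi$. The only cosmetic difference is in the final step: the paper passes through a countable dense subset of $\LL_\alpha$ and invokes equicontinuity of the family $\{M_n(\cdot)/n\}$ via Arzel\`a--Ascoli, whereas you use a finite $\eps$-net directly. These are equivalent packagings of the same compactness argument.
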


\begin{proof}[Proof of Proposition \ref{primeT_as}]
We have
\eq{
\WW_\alpha(\rho_n',\TT\rho_n) = \sup_{\vphi \in \LL_\alpha} \frac{1}{n} \sum_{i = 0}^{n-1} \bigl(\vphi(f_{i+1}) - \E\givenk{\vphi(f_{i+1})}{\FF_i}\bigr).
}
Notice that for any fixed $\vphi \in \LL_\alpha$,
\eq{
M_{n}(\vphi) \coloneqq  \sum_{i = 0}^{n-1} \bigl(\vphi(f_{i+1}) - \E\givenk{\vphi(f_{i+1})}{\FF_i}\bigr)
}
defines a martingale $(M_n(\vphi))_{n \geq 0}$ adapted to the filtration $(\FF_{n})_{n \geq 0}$.
By Lemma \ref{bdg}, there is a constant $C = C(\vphi)$ such that
\eq{
\E\big[M_n(\vphi)^4\big] 
&\leq \E\big[M_n^*(\vphi)^4\big]
\leq C\, \E\bigg[\bigg(\sum_{i = 0}^{n-1} \big(\vphi(f_{i+1}) - \E\givenk{\vphi(f_{i+1})}{\FF_i}\big)^2\bigg)^2\bigg]
\leq 16Cn^2,  %\label{mg_1}
}
where the final inequality follows from \eqref{lip1} and Lemma \ref{trivial_bound}.
A Markov bound now gives
\eq{
\sum_{n = 0}^\infty \P\Big(\frac{|M_n(\vphi)|}{n} > n^{-1/5}\Big)
%&= \sum_{n = 0}^\infty \P\big((n+1)^{-4}M_n(\vphi)^4 > (n+1)^{-4/5}) \\
= \sum_{n = 0}^\infty \P\big(M_n(\vphi)^4 > n^{16/5})
\leq \sum_{n = 0}^\infty n^{-16/5}\E\big[M_n(\vphi)^4\big]
&\leq \sum_{n = 0}^\infty 16Cn^{-6/5}
< \infty.
}
By Borel--Cantelli, we may conclude
\eeq{
\lim_{n \to \infty} \frac{|M_n(\vphi)|}{n} = 0 \quad \mathrm{a.s.} \label{phi_MG}
}
As discussed above, $\LL_\alpha$ is compact in the uniform norm topology.
In particular, it is separable.
Let $\vphi_1,\vphi_2,\dots$ be a countable, dense subset of $\LL_\alpha$.
Because of \eqref{phi_MG}, we can say that with probability one,
\eeq{
\lim_{n \to \infty} \frac{M_n(\vphi_j)}{n} = 0 \quad \text{for all $j \geq 1$.} \label{dense_to_0}
}
Assume that this almost sure event occurs.
%It is a simple observation that
Again from \eqref{lip1} and Lemma \ref{trivial_bound}, we know
\eq{
\|\vphi - \vphi'\|_\infty < \eps \quad \implies \quad \bigg|\frac{M_n(\vphi)}{n} - \frac{M_n(\vphi')}{n}\bigg| < 2\eps,
}
meaning $(M_n(\cdot)/n)_{n \geq 0}$ is an equicontinuous sequence of functions on the compact metric space $(\LL_\alpha,d_\alpha)$.
The assumption \eqref{dense_to_0} says that this family converges pointwise to $0$ on a dense subset.
The Arzel\`a--Ascoli Theorem forces this convergence to be uniform.
That is, for any $\eps > 0$, there is $N$ large enough that
\eq{
n \geq N \quad \implies \quad \WW_\alpha(\rho_n',\TT\rho_n) = \sup_{\vphi \in \LL_\alpha} \frac{M_n(\vphi)}{n} < \eps.
}
We conclude that $\WW_\alpha(\rho_n',\TT\rho_n)$ tends to 0 as $n \to \infty$.
As this holds given the almost sure event \eqref{dense_to_0}, we are done.
\end{proof}

\subsection{Convergence to fixed points of the update map} \label{convergence_fixed}
Proposition \ref{primeT_as} suggests that for large $n$, the empirical measure will be close to the set of fixed points of $\TT$:
%In other words, the empirical measure should be close to the set of invariant measures,
\eeq{
\KK \coloneqq  \{\rho \in \PP(\SS) : \TT\rho = \rho\}. \label{K_def}
}
%Indeed, the following corollary makes this intuition precise.
For $\UU \subset \PP(\SS)$, we will denote distance to $\UU$ by
\eq{
\WW_\alpha(\rho,\UU) \coloneqq  \inf_{\nu \in \UU} \WW_\alpha(\rho,\nu), \quad \rho \in \PP(\SS).
}

\begin{cor} \label{close_probability}
As $n \to \infty$, $\WW_\alpha(\rho_n,\KK) \to 0$ almost surely.
\end{cor}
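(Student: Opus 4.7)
The plan is to reduce the statement to a combination of Proposition \ref{primeT_as}, the continuity of $\TT$ on $\PP(\SS)$ (Proposition \ref{continuous2}), and compactness of $\PP(\SS)$, which follows from the compactness of $(\SS, d_\alpha)$ (Theorem \ref{compactness_result}) and the Wasserstein metric's standard metrization of weak convergence.

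The first step is to replace $\rho_n'$ with $\rho_n$ in Proposition \ref{primeT_as} at a cost of $O(1/n)$. The measures $\rho_n$ and $\rho_n'$ differ only in that the former contains the atom $\frac{1}{n}\delta_{f_0}$ while the latter contains $\frac{1}{n}\delta_{f_n}$; all other atoms coincide. Constructing the coupling that matches the common atoms and pairs $f_0$ with $f_n$ in the remaining $1/n$ mass, and using Lemma \ref{trivial_bound} to bound $d_\alpha(f_0,f_n)\le 2$, I obtain $\WW_\alpha(\rho_n,\rho_n')\le 2/n$. Combined with Proposition \ref{primeT_as}, the triangle inequality gives
\[
\WW_\alpha(\rho_n,\TT\rho_n)\;\le\;\WW_\alpha(\rho_n,\rho_n')+\WW_\alpha(\rho_n',\TT\rho_n)\;\longrightarrow\;0 \quad \mathrm{a.s.}
\]

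The second step is the standard compactness-plus-continuity argument. Suppose, toward a contradiction, that on some event of positive probability there exist $\eps>0$ and a (random) subsequence $(n_k)$ with $\WW_\alpha(\rho_{n_k},\KK)>\eps$ for all $k$. Since $(\SS,d_\alpha)$ is compact, so is $(\PP(\SS),\WW_\alpha)$, so along a further subsequence $\rho_{n_{k_j}}\to\rho_\ast$ for some $\rho_\ast\in\PP(\SS)$. By Proposition \ref{continuous2}, $\TT\rho_{n_{k_j}}\to\TT\rho_\ast$. Since we have already established $\WW_\alpha(\rho_n,\TT\rho_n)\to 0$ almost surely, the triangle inequality forces $\WW_\alpha(\rho_\ast,\TT\rho_\ast)=0$, i.e.\ $\rho_\ast\in\KK$. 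But then $\WW_\alpha(\rho_{n_{k_j}},\KK)\le \WW_\alpha(\rho_{n_{k_j}},\rho_\ast)\to 0$, contradicting the standing assumption.

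Neither step is really an obstacle; the substance of the corollary is carried by Proposition \ref{primeT_as}, whose proof was the hard martingale-and-Arzel\`a--Ascoli argument. The only minor care needed here is to ensure the contradiction argument is carried out on the almost-sure event on which $\WW_\alpha(\rho_n,\TT\rho_n)\to 0$, so that the limit point $\rho_\ast$ is indeed fixed by $\TT$; this requires no additional measurability work because $\KK$ is a closed subset of $\PP(\SS)$ (by continuity of $\TT$) and thus $\rho\mapsto\WW_\alpha(\rho,\KK)$ is continuous.
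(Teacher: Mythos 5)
Your proof is correct and follows essentially the same route as the paper: bound $\WW_\alpha(\rho_n,\rho_n')\le 2/n$, invoke Proposition \ref{primeT_as} to conclude $\WW_\alpha(\rho_n,\TT\rho_n)\to 0$ almost surely, and then run the standard compactness-plus-continuity contradiction argument using Theorem \ref{compactness_result} and Proposition \ref{continuous2}. The only cosmetic difference is that you spell out the coupling behind the $2/n$ bound, which the paper treats as immediate from Lemma \ref{trivial_bound}.
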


\begin{proof}
Recall that $\vc{0}$ is the element of $\SS$ whose unique representative in $\SS_0$ is the constant zero function.
Notice that $\TT\delta_\vc{0} = \delta_\vc{0}$ so that $\KK$ is nonempty.
Next observe that from Lemma \ref{trivial_bound}, we have the trivial bound $\WW_\alpha(\rho_n,\rho_{n}') \leq 2/n$, and so Proposition \ref{primeT_as} immediately implies $\WW_\alpha(\rho_n,\TT\rho_n) \to 0$ almost surely.
On this almost sure event, it follows that $\WW_\alpha(\rho_n,\KK) \to 0$, since otherwise there would exist a subsequence $\rho_{n_k}$ remaining a fixed positive distance away from $\KK$.
By compactness, we could assume $\rho_{n_k}$ converges to some $\rho$, but then continuity of $\TT$ would force $\WW_\alpha(\rho,\TT\rho) = 0$; that is, $\rho_{n_k}$ converges to an element of $\KK$, which is a contradiction.
%Next we claim that for every $\eps > 0$, there is $\delta > 0$ such that
%\eeq{
%\WW_\alpha(\rho,\TT\rho) < \delta \quad \implies \quad \WW_\alpha(\rho,\KK) < \eps. \label{closetoK}
%}
%If this were not true, then there would exist $\eps > 0$ and a sequence $(\rho_k)_{k \geq 1}$ such that $\WW_\alpha(\rho_k,\TT\rho_k) \to 0$ as $k \to \infty$, but $\WW_\alpha(\rho_k,\KK) \geq \eps$ for all $k$.
%By (sequential) compactness of $\PP(\SS)$, we may pass to a subsequence and assume $\WW_\alpha(\rho_k,\rho) \to 0$ for some $\rho \in \PP(\SS)$.  This forces $\WW_\alpha(\rho,\KK) \geq \eps$, since
%\eq{
%\WW_\alpha(\rho,\KK) \geq \WW_\alpha(\rho_k,\KK) - \WW_\alpha(\rho_k,\rho) \geq \eps - \WW_\alpha(\rho_k,\rho) \quad \text{for all $k \geq 1$,}
%}
%with $\WW_\alpha(\rho_k,\rho) \to 0$ as $k \to \infty$.
%On the other hand, continuity of $\TT$ guarantees $\WW_\alpha(\TT\rho_k,\TT\rho) \to 0$.
%The inequality
%\eq{
%\WW_\alpha(\rho,\TT\rho) \leq \WW_\alpha(\rho,\rho_k) + \WW_\alpha(\rho_k,\TT\rho_k) + \WW_\alpha(\TT\rho_k,\TT\rho)
%}
%thus implies $\WW_\alpha(\rho,\TT\rho) = 0$.
%That is, $\rho \in \KK$, a contradiction to $\WW_\alpha(\rho,\KK) \geq \eps$.
%
%Lemma \ref{regT} and Proposition \ref{primeT_as} together imply $\WW_\alpha(\rho_n,\TT\rho_n)$ converges to 0 almost surely as $n \to \infty$, and then
%\eqref{closetoK} implies $\WW_\alpha(\rho_n,\KK) \to 0$ almost surely as well.
\end{proof}

Now that the set $\KK$ is seen to contain all possible limits of the empirical measure, we should like to have some description of its elements.
One suggestive fact proved below is that any measure in $\KK$ places all its mass on those elements of $\SS$ with norm 0 or 1.
This observation will be crucial in proving our characterization of the low temperature phase in Section \ref{empirical_limits}.

\begin{prop} \label{no_middle}
If $\rho \in \KK$, then
\eq{
\rho(\{f \in \SS : 0 < \|f\| < 1\}) = 0.
}
\end{prop}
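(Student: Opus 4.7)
The strategy is to show that the functional $\|\cdot\|$ is a supermartingale under the update map, strictly so on the set $\{0<\|f\|<1\}$, and then exploit stationarity to force $\rho$ to avoid that set. Concretely, I would work with a fixed representative $f\in\SS_0$ and the random variable $F$ defined in \eqref{F_def}. A direct calculation gives
\begin{align*}
\|F\| \;=\; \sum_{u\in\N\times\Z^d} F(u) \;=\; \frac{\wt F - (1-\|f\|)\e^{\lambda(\beta)}}{\wt F} \;=\; 1 - \frac{(1-\|f\|)\e^{\lambda(\beta)}}{\wt F}.
\end{align*}
Since $\|\cdot\|$ is well-defined on $\SS$ (Lemma \ref{norm_equivalence}), this identity transfers to $\SS$ under the quotient.

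Next I would estimate $\E[\|F\|\mid f]$. Writing $X \coloneqq \sum_{u}\sum_{v\sim u} f(v)\e^{\beta\omega_u}P(v,u)$ and $c \coloneqq (1-\|f\|)\e^{\lambda(\beta)}$, one has $\wt F = X + c$, and linearity together with $\sum_u P(v,u) = 1$ gives $\E X = \|f\|\e^{\lambda(\beta)}$, so $\E\wt F = \e^{\lambda(\beta)}$. The function $t\mapsto c/(t+c)$ is strictly convex on $(0,\infty)$ whenever $c>0$, so Jensen's inequality yields
\begin{align*}
\E\big[1-\|F\|\,\big|\,f\big] \;=\; \E\Big[\tfrac{c}{X+c}\,\Big|\,f\Big] \;\geq\; \tfrac{c}{\E X + c} \;=\; \tfrac{c}{\e^{\lambda(\beta)}} \;=\; 1-\|f\|,
\end{align*}
i.e.\ $\E[\|F\|\mid f] \leq \|f\|$. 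The case $\|f\|=0$ is trivial (then $F\equiv 0$) and the case $\|f\|=1$ forces $c=0$ so $\|F\|\equiv 1$. When $0<\|f\|<1$, one has $c>0$, and because some $v$ has $f(v)>0$ and some $u\sim v$ has $P(v,u)>0$, the quantity $X$ genuinely depends on the non-degenerate variable $\omega_u$; hence $X$ is not a.s.\ constant and Jensen's inequality is strict:
\begin{align*}
\E\big[\|F\|\,\big|\,f\big] \;<\; \|f\| \qquad \text{for every } f\in\SS \text{ with } 0<\|f\|<1.
\end{align*}

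To close the argument, suppose $\rho\in\KK$. Using $\TT\rho=\rho$ and the definition of $\TT$ from Section \ref{extension_to_measures} applied to the bounded measurable (in fact lower semi-continuous) functional $\|\cdot\|$, I would write
\begin{align*}
\int_\SS \|f\|\, \rho(\dd f) \;=\; \int_\SS \|g\|\, \TT\rho(\dd g) \;=\; \int_\SS \E\big[\|F\|\,\big|\,f\big]\, \rho(\dd f).
\end{align*}
Combined with the pointwise bound $\E[\|F\|\mid f]\leq \|f\|$, equality of the first and last integrals forces $\E[\|F\|\mid f]=\|f\|$ for $\rho$-a.e.\ $f$. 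The strict inequality above then shows that $\rho$ assigns zero mass to $\{0<\|f\|<1\}$, as desired.

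The only genuine subtlety is verifying that Jensen's inequality is strict on $\{0<\|f\|<1\}$; this is the ``hard part'' in the sense that it requires invoking the assumption that $\omega$ is not a.s.\ constant and picking an explicit $u$ contributing a nontrivial random term to $X$. Everything else (well-definedness of $\|\cdot\|$ on $\SS$, the identity $\int \|g\|\,\TT\rho(\dd g) = \int \E[\|F\|\mid f]\,\rho(\dd f)$, and the final implication from pointwise monotonicity plus equal integrals) is a direct consequence of the machinery already set up in Sections \ref{topology}--\ref{transformation}.
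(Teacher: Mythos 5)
Your proposal is correct and follows essentially the same route as the paper's proof: both express $\|F\|$ as a ratio $X/(X+c)$ with $X=\sum_{u}\sum_{v\sim u}f(v)\e^{\beta\omega_u}P(v,u)$ and $c=(1-\|f\|)\e^{\lambda(\beta)}$, apply Jensen's inequality to the strictly convex/concave map $t\mapsto c/(t+c)$ (resp. $t\mapsto t/(t+c)$) together with $\E X=\|f\|\e^{\lambda(\beta)}$, use non-degeneracy of $\mathfrak L_\omega$ to make the inequality strict when $0<\|f\|<1$, and close via the fixed-point identity $\int\|f\|\,\rho(\dd f)=\int\E[\|F\|\mid f]\,\rho(\dd f)$. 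The only cosmetic difference is that you work with $1-\|F\|$ and convexity rather than $\|F\|$ and concavity, which is a matter of presentation.
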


\begin{proof}
First take $f \in \SS$ to be non-random.
Then $Tf$ is the law of the random function
\eq{
F(u) = \frac{\sum_{v \sim u} f(v) \e^{\beta \omega_u}P(v,u)}{\sum_{w \in \N \times \Z^d} \sum_{v \sim w} f(v) \e^{\beta \omega_w}P(v,w) + (1 - \|f\|){\e^{\lambda(\beta)}}}.
}
If $\|f\| = 0$ or $\|f\| = 1$, then $\|F\| = \|f\|$.
If instead $0 < \|f\| < 1$, then $\|F\|$ is random and still satisfies $0 < \|F\| < 1$.
By summing over $u \in \N \times \Z^d$, we have
\eq{
\E(\|F\|) = \E\bigg[\frac{\sum_{u \in \N \times \Z^d} \sum_{v \sim u} f(v) \e^{\beta \omega_u}P(v,u)}{\sum_{u \in \N \times \Z^d} \sum_{v \sim u} f(v) \e^{\beta \omega_u}P(v,u) + (1-\|f\|){\e^{\lambda(\beta)}}}\bigg].
}
Upon observing that for any constant $C > 0$, the mapping $t \mapsto \frac{t}{t+C}$
%\eq{
%t \mapsto \frac{t}{t+C} = 1 - \frac{C}{t+C}, \quad t > 0
%}
is strictly concave, we deduce from Jensen's inequality that
\eq{
\E(\|F\|) &\leq \frac{\E \big[\sum_{u \in \N \times \Z^d} \sum_{v \sim u} f(v) \e^{\beta \omega_u}P(v,u)\big]}{\E\big[\sum_{u \in \N \times \Z^d} \sum_{v \sim u} f(v) \e^{\beta \omega_u}P(v,u)\big] + (1-\|f\|){\e^{\lambda(\beta)}}} 
= \frac{\|f\| \e^{\lambda(\beta)}}{\e^{\lambda(\beta)}} = \|f\|,
}
where equality holds if and only if $\sum_{u \in \N \times \Z^d} \sum_{v \sim u} f(v) \e^{\beta \omega_u}P(v,u)$ is an almost sure constant.
Since the disorder distribution $\mathfrak{L}_\omega$ is not a point mass, this is not the case.

Now let $\rho \in \PP(\SS)$, and take $f \in \SS$ to be random with law $\rho$.
If $\rho$ assigns positive measure to the set $\{f \in \SS : 0 < \|f\| < 1\}$, then the above argument gives
\eq{
\int \|g\|\ \TT\rho(\dd g) = \iint \|F\|\ Tf(\dd F)\, \rho(\dd f) < \int \|f\|\ \rho(\dd f).
}
But when $\rho \in \KK$ 
so that $\TT\rho = \rho$, the first and last expressions above are equal. 
%we clearly have
%\eq{
%\int \|g\|\ \TT\rho(\dd g) =  \int \|f\|\ \rho(\dd f),
%}
%contradicting the previous inequality.
We conclude that any $\rho \in \KK$ must assign mass 0 to the set $\{f \in \SS : 0 < \|f\| < 1\}$.
\end{proof}

\subsection{Variational formula for the free energy} \label{calculations}
In order to connect the results of Section \ref{convergence_fixed} to the temperature conditions of Theorems \ref{intro_result1} and \ref{intro_result2}, we will need to relate the free energy $F_n(\beta) = \frac{1}{n}\log Z_n(\beta)$ to the abstract objects we have introduced.
Recall the Doob decomposition from \eqref{R_predef}--\eqref{doob_decomposition}:
\eq{
\log Z_n(\beta) = M_n + A_n, \qquad \text{where} \qquad M_n = \sum_{i=0}^{n-1} d_i, \quad  A_n = \sum_{i=0}^{n-1} R(f_i).
}
If we lift the functional $R : \SS\to\R$ of \eqref{R_def} to $\RR:\PP(\SS)\to\R$ by defining
\eeq{
\RR(\rho) \coloneqq  \int R(f)\ \rho(\dd f), \quad \rho\in\PP(\SS), \label{R_def2}
}
then we can conveniently rewrite the above decomposition as
\eeq{ \label{Fn_ito_empirical}
F_n(\beta) = \frac{\log Z_n(\beta)}{n} = \frac{M_n}{n} + \RR(\rho_n),
}
where $\rho_n$ is the empirical measure from \eqref{mu_n_def}.
Our next result, which says the mean-zero martingale $(M_n)_{n\geq0}$ has a vanishing contribution in the above expression, is merely a reinterpretation of arguments appearing in earlier works (e.g.~\cite{carmona-hu02,comets-shiga-yoshida03,comets-shiga-yoshida04,carmona-hu06}).

\begin{prop} \label{FR_prop}
As $n \to \infty$, $|F_n(\beta) - \RR(\rho_{n})| \to 0$ almost surely.
\end{prop}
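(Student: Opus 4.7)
The plan is to observe that the claimed convergence is essentially a statement about the negligibility of the martingale part in the Doob decomposition, and then to bound the fourth moment of that martingale using the uniform moment estimates from Corollary \ref{increment_cor}, so that a Borel--Cantelli argument finishes the job.

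First I would rewrite the difference in a clean form. Using the decomposition \eqref{doob_decomposition} and the definition of $\RR$ in \eqref{R_def2}, one sees that
\begin{align*}
\RR(\rho_n) = \int R(f)\,\rho_n(\dd f) = \frac{1}{n}\sum_{i=0}^{n-1} R(f_i) = \frac{A_n}{n},
\end{align*}
so that
\begin{align*}
F_n(\beta) - \RR(\rho_n) = \frac{\log Z_n(\beta) - A_n}{n} = \frac{M_n}{n}.
\end{align*}
Thus the proposition reduces to showing $M_n/n \to 0$ almost surely, where $(M_n)_{n\geq 0}$ is the martingale adapted to $(\FF_n)_{n\geq0}$ with mean-zero increments $d_i$ defined in \eqref{d_n_def}.

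Next I would control the fourth moment of $M_n$. By Lemma \ref{bdg} (Burkholder--Davis--Gundy with $q=4$), there is a universal constant $C_4$ such that
\begin{align*}
\E(M_n^4) \leq C_4\, \E\!\left[\Big(\sum_{i=0}^{n-1} d_i^2\Big)^{2}\right] \leq C_4\, n \sum_{i=0}^{n-1} \E(d_i^4),
\end{align*}
where the second inequality is Cauchy--Schwarz. Corollary \ref{increment_cor}, applied with $q=4$, gives a constant $C$ (depending only on $\mathfrak{L}_\omega$, $\beta$, and $d$) such that $\E(d_i^4)\leq C$ for every $i$. Hence $\E(M_n^4) \leq C_4 C\, n^2$.

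Finally I would apply Markov's inequality: for any $\eps>0$,
\begin{align*}
\P\Big(\frac{|M_n|}{n} > \eps\Big) \leq \frac{\E(M_n^4)}{n^4 \eps^4} \leq \frac{C_4 C}{n^2\eps^4},
\end{align*}
which is summable in $n$. By Borel--Cantelli, $M_n/n \to 0$ almost surely, completing the proof. There is no real obstacle here; the only mildly delicate point is the uniform-in-$n$ moment bound on the increments $d_n$, and that has already been deposited into Corollary \ref{increment_cor} via the Feller continuity of $f\mapsto \E(\log^q \wt F)$ (Proposition \ref{continuous1}(b)) combined with the compactness of $\SS$.
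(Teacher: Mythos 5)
Your proposal is correct and follows essentially the same route as the paper: Doob decomposition, Burkholder--Davis--Gundy with $q=4$, the uniform fourth-moment bound on the increments from Corollary~\ref{increment_cor}, and Markov plus Borel--Cantelli. The only cosmetic difference is how you bound $\E\big[(\sum d_i^2)^2\big]$ --- you apply Cauchy--Schwarz to the sum directly to get $n\sum\E(d_i^4)$, whereas the paper expands the square and applies Cauchy--Schwarz to the cross terms --- but both yield the same $O(n^2)$ estimate.
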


\begin{proof}
By Corollary \ref{increment_cor}, we have $\E(d_i^4) \leq C_1$ for some constant $C_1$ independent of $i$.
From Lemma \ref{bdg}, we deduce that
\eq{
\E(M_n^4) \leq C_2\, \E\bigg[\bigg(\sum_{i = 0}^{n-1} d_i^2 \bigg)^2\bigg]
\leq C_2\sum_{i = 0}^{n-1} \sum_{j = 0}^{n-1} \sqrt{\E(d_i^4)\E(d_j^4)}
\leq Cn^2,
}
where the constant $C = C_1C_2$ is independent of $n$.
It follows that $\E\big[(n^{-1}M_n)^4] \leq Cn^{-2}$.
%\eq{
%\E\big[(n^{-1}M_n)^4] \leq Cn^{-2}.
%}
As in the proof of Proposition \ref{primeT_as}, an argument using Markov's inequality and Borel--Cantelli now shows
\eq{
\lim_{n \to \infty} |F_n(\beta) - \RR(\rho_{n})| = \lim_{n \to \infty} \frac{|M_n|}{n} = 0 \quad \mathrm{a.s.}
}
\end{proof}

Given the convergence of $\rho_{n}$ to the set $\KK$, one should expect $\E F_n(\beta) = \E\,\RR(\rho_{n})$ to become close to $\RR(\rho)$ for some $\rho \in \KK$.
One difficulty is that $\rho_{n}$ does not converge to a particular $\rho \in \KK$, but rather becomes arbitrarily close to the set $\KK$. 
Nevertheless, we can instead consider the subset
\eeq{
\MM = \Big\{\rho_0 \in \KK : \RR(\rho_0) = \inf_{\rho \in \KK} \RR(\rho)\Big\}, \label{M_def}
}
and show convergence to $\MM$.
%One should predict that this is the case, since the system will seek a lowest energy state.
By Proposition \ref{continuous1}(b) and Lemma \ref{portmanteau}, $\RR$ is continuous. 
Therefore, since $\KK$ is compact (being a closed subset of a compact metric space), the set $\MM$ is nonempty.
Moreover, $\MM$ is a closed subset of the compact space $\KK$, and so $\MM$ is compact.
The first step in proving the desired convergence is the following consequence of Corollary \ref{close_probability}.

\begin{prop} \label{lower_bound}
Let $\KK$ be defined by \eqref{K_def}.
Let $\RR : \PP(\SS) \to \R$ be defined by \eqref{R_def2}.
Then
\eeq{
\liminf_{n \to \infty} F_n(\beta) \geq \inf_{\rho \in \KK} \RR(\rho) \quad \mathrm{a.s.} \label{lower_bound_as}
}
In particular,
\eeq{
\liminf_{n \to \infty} \E F_n(\beta) \geq \inf_{\rho \in \KK} \RR(\rho). \label{lower_bound_eq}
}
\end{prop}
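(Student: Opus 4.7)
The plan is to reduce the statement to a claim about the empirical measure $\rho_n$ and then exploit the continuity of $\RR$ together with the convergence of $\rho_n$ to $\KK$ established in Corollary~\ref{close_probability}.

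First, recall the decomposition \eqref{Fn_ito_empirical}:
\eq{
F_n(\beta) = \frac{M_n}{n} + \RR(\rho_n),
}
and the fact, proved in Proposition~\ref{FR_prop}, that $M_n/n \to 0$ almost surely. Consequently, it is enough to show that
\eq{
\liminf_{n\to\infty} \RR(\rho_n) \geq \inf_{\rho \in \KK}\RR(\rho) \quad \mathrm{a.s.}
}

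Second, observe that $\RR$ is continuous on $(\PP(\SS),\WW_\alpha)$: indeed, Proposition~\ref{continuous1}(b) with $q=1$ yields the continuity of $R:\SS\to\R$, and then Lemma~\ref{portmanteau} (applied both to $R$ and to $-R$, both of which are continuous) lifts this to continuity of $\RR(\rho) = \int R\ \dd\rho$ on $\PP(\SS)$. Since $\PP(\SS)$ is compact (being the space of probability measures on the compact metric space $(\SS,d_\alpha)$, by Theorem~\ref{compactness_result}), $\RR$ is in fact uniformly continuous. Fix $\eps>0$ and choose $\delta>0$ such that $\WW_\alpha(\rho,\rho')<\delta$ implies $|\RR(\rho)-\RR(\rho')|<\eps$. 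By Corollary~\ref{close_probability}, there is an almost sure event on which $\WW_\alpha(\rho_n,\KK) \to 0$; on this event, for every sufficiently large $n$ we can select $\rho_n^\star \in \KK$ with $\WW_\alpha(\rho_n,\rho_n^\star)<\delta$ (using that $\KK$ is closed, hence compact, so the infimum is attained up to an error), whence
\eq{
\RR(\rho_n) \geq \RR(\rho_n^\star) - \eps \geq \inf_{\rho\in\KK}\RR(\rho) - \eps.
}
Taking $\liminf_n$ and then letting $\eps\to 0$ gives \eqref{lower_bound_as}.

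Third, for the averaged statement \eqref{lower_bound_eq}, the cleanest route is to invoke Proposition~\ref{free_energy_converges}, which gives both $F_n(\beta)\to p(\beta)$ a.s.\ and $\E F_n(\beta)\to p(\beta)$. Applying \eqref{lower_bound_as} to the almost sure limit immediately yields $p(\beta) \geq \inf_{\rho\in\KK}\RR(\rho)$, and therefore $\liminf_n \E F_n(\beta) = p(\beta) \geq \inf_{\rho\in\KK}\RR(\rho)$, as desired.

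No serious obstacle is anticipated: all the substantive work (compactness of $\SS$, continuity of $T$ and of $f\mapsto \E\log\wt F$, convergence of $\rho_n$ to $\KK$, and convergence of $F_n(\beta)$ to $p(\beta)$) has already been done. The only step requiring care is the passage from continuity of $R$ to continuity of $\RR$, which is a direct invocation of the Portmanteau lemma, and the selection of $\rho_n^\star\in\KK$, which uses compactness of $\KK$.
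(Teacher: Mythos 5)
Your proof is correct, and for the almost sure statement \eqref{lower_bound_as} it follows essentially the same route as the paper: decompose $F_n(\beta)$ via \eqref{Fn_ito_empirical}, invoke Proposition~\ref{FR_prop} to kill the martingale term, use Proposition~\ref{continuous1}(b) and Lemma~\ref{portmanteau} for continuity of $\RR$, then combine uniform continuity on the compact space $\PP(\SS)$ with Corollary~\ref{close_probability}. (One small simplification: you do not need $\KK$ to be closed or compact to select $\rho_n^\star$; $\WW_\alpha(\rho_n,\KK)<\delta$ already guarantees, by definition of the infimum, some $\rho_n^\star\in\KK$ within $\delta$.)

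For the averaged statement \eqref{lower_bound_eq} you take a genuinely different route. The paper applies Fatou's lemma to $\E\,\RR(\rho_n)$ (using that $\RR$ is bounded) and then uses the identity $\E F_n(\beta)=\E\,\RR(\rho_n)$, which holds because $M_n$ is a mean-zero martingale. You instead appeal to Proposition~\ref{free_energy_converges}, which already gives $F_n(\beta)\to p(\beta)$ both almost surely and in $L^1$, so that $\E F_n(\beta)\to p(\beta)$ and the almost sure bound \eqref{lower_bound_as} transfers immediately. Both work: your version is shorter given the $L^1$ convergence was established earlier, while the paper's version is self-contained in the sense that it does not lean on the concentration result and instead uses only boundedness of $\RR$ and Fatou.
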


\begin{proof}
$\RR$ is continuous on the compact metric space $(\PP(\SS),\WW_\alpha)$, and thus uniformly continuous.
Therefore, a simple argument using Corollary \ref{close_probability} gives
%for any $\eps > 0$, there is $\delta > 0$ such that
%\eeq{
%\WW_\alpha(\rho,\UU) < \delta \quad \implies \quad |\RR(\rho) - \RR(\rho)| < \eps. \label{second_choice}
%}
%By Corollary \ref{close_probability}, there is almost surely some $N$ satisfying
%\eq{
%n \geq N \quad \implies \quad \WW_\alpha(\rho_{n},\KK) < \delta.
%}
%That is, for any $n \geq N$ there is some $\rho_n \in \KK$ satisfying $\WW_\alpha(\rho_{n},\rho_n) < \delta$.
%Now \eqref{second_choice} shows
%\eq{
%n \geq N \quad \implies \quad \RR(\rho_{n}) > \RR(\rho_n) - \eps \geq \inf_{\rho \in \KK} \RR(\rho) - \eps.
%}
%We have thus shown that with probability one,
%\eeq{
%\liminf_{n \to \infty} \RR(\rho_n) \geq \inf_{\rho \in \KK} \RR(\rho) - \eps. \label{liminf_eps}
%}
%Now we take a countable sequence $\eps_k \to 0$ so that with probability one, \eqref{liminf_eps} holds with $\eps = \eps_k$ for every $k$.
%That is,
\eeq{
\liminf_{n \to \infty} \RR(\rho_{n}) \geq \inf_{\rho \in \KK} \RR(\rho) \quad \mathrm{a.s.} \label{lower_bound_asR}
}
Since $\RR$ is bounded, we may apply Fatou's Lemma to obtain
\eeq{
\liminf_{n \to \infty} \E\, \RR(\rho_n) \geq \E\Big[\liminf_{n \to \infty} \RR(\rho_{n})\Big] \geq \inf_{\rho \in \KK} \RR(\rho). \label{lower_bound_eqR}
}
Now \eqref{lower_bound_as} follows from \eqref{lower_bound_asR} and Proposition \ref{FR_prop}, and \eqref{lower_bound_eq} follows from \eqref{lower_bound_eqR} and the fact that $\E F_n(\beta) = \E\, \RR(\rho_n)$.
\end{proof}

Following Proposition \ref{lower_bound}, we naturally ask if there is a matching upper bound.
The next result answers this question in the affirmative.
%In light of \eqref{avgFn_lim}, we could have replaced the limit infimum in \eqref{lower_bound_eq} with a limit, but the result below independently proves convergence.
To state the full theorem, we need to denote one element of $\SS$ in particular.
Notice that for $f \in \SS_0$ satisfying $f(u) = 1$ for some $u \in \N \times \Z^d$, Proposition \ref{superisometry} implies $d_\alpha(f,g) = 0$ for $g \in \SS_0$ if and only if $g(v) = 1$ for some $v \in \N \times \Z^d$.
We can thus define the element $\vc{1} \in \SS$ whose representatives in $\SS_0$ are the norm-1 functions supported on a single point.

\begin{thm} \label{upper_bound}
Let $\KK$ be defined by \eqref{K_def}.  
Let $\RR : \PP(\SS) \to \R$ be defined by \eqref{R_def2}.
Then
\eq{
\limsup_{n \to \infty} \E F_n(\beta) \leq \inf_{\rho \in \KK} \RR(\rho),
}
and so
\eeq{
\lim_{n \to \infty} F_n(\beta) = \inf_{\rho \in \KK} \RR(\rho) \quad \mathrm{a.s.} \label{limit}
}
The minimum value is equal to
\eeq{
\inf_{\rho \in \KK} \RR(\rho) = \lim_{n \to \infty} \frac{1}{n} \sum_{i = 0}^{n-1} \RR(\TT^i\delta_{\vc{1}}). \label{minimal_value}
}
\end{thm}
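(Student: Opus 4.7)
The plan is to establish the upper bound $\limsup_{n\to\infty}\E F_n(\beta) \leq \inf_{\rho \in \KK}\RR(\rho)$ by producing an explicit element of $\KK$ at which $\RR$ equals the free energy; the reverse inequality is already Proposition \ref{lower_bound}. The starting point is that the initial endpoint distribution is deterministic: from \eqref{fn_def2} one reads off $f_0 = \vc{1}$, so the law of $f_i$ in $\SS$ is $\TT^i\delta_{\vc{1}}$ for every $i \ge 0$. Combined with the Doob decomposition \eqref{doob_decomposition} and the fact that $\E d_i = 0$, this yields the telescoping identity
\[
\E \log Z_n(\beta) = \sum_{i=0}^{n-1}\E R(f_i) = \sum_{i=0}^{n-1}\RR(\TT^i\delta_{\vc{1}}),
\]
so that $\E F_n(\beta) = \RR(\bar\rho_n)$ where $\bar\rho_n \coloneqq \tfrac{1}{n}\sum_{i=0}^{n-1}\TT^i\delta_{\vc{1}} \in \PP(\SS)$.

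Next, a Krylov--Bogolyubov-style argument will show that any limit point of $\bar\rho_n$ belongs to $\KK$. Explicitly, $\TT\bar\rho_n - \bar\rho_n = \tfrac{1}{n}(\TT^n\delta_{\vc{1}} - \delta_{\vc{1}})$ as a signed measure, and since every $\vphi\in\LL_\alpha$ is uniformly bounded by $2$ (by Lemma \ref{trivial_bound} and the normalization $\vphi(\vc{0})=0$), the dual formula \eqref{kantorovich} gives $\WW_\alpha(\TT\bar\rho_n,\bar\rho_n) \le 4/n \to 0$. By compactness of $(\PP(\SS),\WW_\alpha)$ we may extract $\bar\rho_{n_k} \to \nu$, and the continuity of $\TT$ (Proposition \ref{continuous2}) then forces $\TT\nu = \nu$, so $\nu \in \KK$. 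Meanwhile, $\RR$ is continuous on $\PP(\SS)$ (by Proposition \ref{continuous1}(b) and Lemma \ref{portmanteau}) and $\E F_n(\beta)\to p(\beta)$ by Proposition \ref{free_energy_converges}, so $\RR(\nu) = \lim_k \RR(\bar\rho_{n_k}) = \lim_k \E F_{n_k}(\beta) = p(\beta)$. Hence $\inf_{\rho\in\KK}\RR(\rho) \le \RR(\nu) = p(\beta)$, which together with Proposition \ref{lower_bound} gives equality. The a.s.\ form \eqref{limit} is then immediate from Proposition \ref{free_energy_converges}, and \eqref{minimal_value} is the identity $\E F_n(\beta) = \RR(\bar\rho_n)$ read in the limit.

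The only delicate point is the Krylov--Bogolyubov extraction of an invariant measure from the Cesàro averages of a forward orbit: once the continuity of $\TT$ on the compact space $\PP(\SS)$ is in hand, the argument runs almost automatically, but it does require the uniform $L^\infty$ bound on $\LL_\alpha$ to convert the telescoping identity into a Wasserstein estimate. Everything else is bookkeeping that chains together the Doob decomposition of $\log Z_n(\beta)$, the convergence $\E F_n(\beta)\to p(\beta)$, and the continuity of $\RR$ and $\TT$ prepared in the preceding sections.
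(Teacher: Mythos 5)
Your argument correctly observes that $\E F_n(\beta) = \RR(\bar\rho_n)$ where $\bar\rho_n = \tfrac{1}{n}\sum_{i=0}^{n-1}\TT^i\delta_{\vc 1}$, and the Krylov--Bogolyubov extraction does produce some $\nu\in\KK$ with $\RR(\nu) = p(\beta)$. But this only gives you $\inf_{\rho\in\KK}\RR(\rho)\leq\RR(\nu)=p(\beta)$, which is exactly the content of Proposition \ref{lower_bound} read as $p(\beta)\geq\inf_{\rho\in\KK}\RR(\rho)$, restated. What the theorem asserts is the \emph{reverse} inequality $\limsup_n\E F_n(\beta)\leq\inf_{\rho\in\KK}\RR(\rho)$, i.e.\ $p(\beta)\leq\inf_{\rho\in\KK}\RR(\rho)$, which is what converts the one-sided bound into the equality \eqref{limit}. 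Producing a single $\nu\in\KK$ whose $\RR$-value equals $p(\beta)$ says nothing about this: $\KK$ might a priori contain some other $\rho$ with $\RR(\rho)<p(\beta)$ (it certainly contains $\delta_{\vc 0}$ with $\RR(\delta_{\vc 0})=\lambda(\beta)\geq p(\beta)$, so values on either side are conceivable). Your sentence ``which together with Proposition \ref{lower_bound} gives equality'' is therefore a non sequitur --- both inequalities you have in hand point the same way.

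The missing ingredient is a lower bound on $\RR(\rho)$ that holds for \emph{every} $\rho\in\KK$, not just the one you manufacture. This is what the paper's Lemma \ref{upper_bound_lemma} supplies: for \emph{any} $f_0\in\SS$ (not only $f_0=\vc 1$), one has $\sum_{i=0}^{n-1}\RR(\TT^i\delta_{f_0})\geq\E\log Z_n(\beta)$. Integrating this against an arbitrary $\rho\in\KK$ and using $\TT\rho=\rho$ gives $\RR(\rho)=\int\tfrac{1}{n}\sum_{i=0}^{n-1}\RR(\TT^i\delta_f)\,\rho(\dd f)\geq\E F_n(\beta)$, uniformly in $\rho$, which after $n\to\infty$ yields $\inf_{\rho\in\KK}\RR(\rho)\geq p(\beta)$. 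Your telescoping computation for $f_0=\vc 1$ is precisely the equality case of that lemma, so it is a useful ingredient for \eqref{minimal_value} but cannot by itself close the argument: you would need to extend the Doob-decomposition identity to an inequality that is valid when the initial distribution is an arbitrary $f_0\in\SS$ rather than $\vc 1$, which is exactly the concavity/Jensen argument in the proof of Lemma \ref{upper_bound_lemma}.
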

The key ingredient in the proof of Theorem \ref{upper_bound} is the following lemma.
When $\|f_0\|=1$, the result can be regarded as Jensen's inequality applied to a weighted sum of partition functions, each one corresponding to a different starting vertex chosen at random according to $f_0$.

\begin{lemma} \label{upper_bound_lemma}
For any $f_0 \in \SS$ and $n \geq 1$,
\eq{
\sum_{i = 0}^{n-1} \RR(\TT^i \delta_{f_0}) \geq \E\log Z_n(\beta),
}
where $\delta_{f_0} \in \PP(\SS)$ is the unit mass at $f_0$.
Equality holds if and only if $f_0 = \vc{1}$.
\end{lemma}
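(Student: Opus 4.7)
The plan is to give $\sum_{i=0}^{n-1}\RR(\TT^i\delta_{f_0})$ a concrete probabilistic interpretation, and then relate it to $\E\log Z_n(\beta)$ via concavity of $\log$. First I would introduce the random scalar
\[
W_n(f_0)\coloneqq\prod_{i=0}^{n-1}\wt F_i,
\]
where $(f_i)_{i\ge 0}$ is the Markov chain on $\SS$ driven by $T$ and started at $f_0$, and each $\wt F_i$ is built from $f_i$ and a fresh layer of environment $\omega^{(i+1)}$ as in \eqref{R_def}. Since $\E[\log \wt F_i\mid f_i]=R(f_i)$ by definition and $\mathrm{Law}(f_i)=\TT^i\delta_{f_0}$, a routine tower/linearity argument gives $\E\log W_n(f_0)=\sum_{i=0}^{n-1}\RR(\TT^i\delta_{f_0})$, reducing the claim to $\E\log W_n(f_0)\ge\E\log Z_n(\beta)$ with equality iff $f_0=\vc 1$.

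The main calculation of the proof is the deterministic identity
\[
W_n(f_0)=\sum_{u\in\N\times\Z^d} f_0(u)\,Z_n(\beta)^u+(1-\|f_0\|)\,\e^{n\lambda(\beta)},
\]
where $Z_n(\beta)^u$ denotes the point-to-line partition function of a length-$n$ polymer started at position $u$, using the environment restricted to $u$'s copy of $\Z^d$ (so distinct copies give independent environments, and each $Z_n(\beta)^u$ has the same law as $Z_n(\beta)$). I would prove this by induction on $n$: the base case is just the definition of $\wt F_0$ after swapping the order of summation, while for the inductive step, writing $W_{n+1}(f_0)=\wt F_0\cdot W_n'(f_1)$ (with $W_n'$ using the environment at times $2,\dots,n+1$ and $f_1=\wt f_0/\wt F_0$), substituting the hypothesis, and unfolding the definitions collapses the result to the stated formula via the one-step recursion for $Z_{n+1}^v$.

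Given the identity, the inequality comes from two applications of Jensen. Concavity of $\log$ yields $\log W_n(f_0)\ge\sum_u f_0(u)\log Z_n(\beta)^u+(1-\|f_0\|)\,n\lambda(\beta)$ almost surely, and taking expectations together with $\E\log Z_n(\beta)^u=\E\log Z_n(\beta)$ (translation invariance of the environment) gives
\[
\E\log W_n(f_0)\ge\|f_0\|\E\log Z_n(\beta)+(1-\|f_0\|)n\lambda(\beta)\ge\E\log Z_n(\beta),
\]
the last step being the annealed bound $\E\log Z_n(\beta)\le\log\E Z_n(\beta)=n\lambda(\beta)$.

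For the equality case I would argue as follows. The second Jensen is strict whenever $\|f_0\|<1$, since $\E\log Z_n(\beta)<n\lambda(\beta)$ strictly (as $\mathfrak L_\omega$ is not a point mass, $Z_n(\beta)$ is not a.s.\ constant). If $\|f_0\|=1$ but $f_0\ne\vc 1$, then $f_0$ has support on at least two distinct points $u\ne v$ of $\N\times\Z^d$; if they lie in different copies, $Z_n(\beta)^u$ and $Z_n(\beta)^v$ are independent nondegenerate, and if they lie in the same copy one checks directly (at $n=1$ it reduces to showing that a nontrivial linear combination of independent copies of $\e^{\beta\omega}$ is not almost surely zero) that $Z_n(\beta)^u\neq Z_n(\beta)^v$ with positive probability. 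Either way the first Jensen is strict, so no $f_0\ne\vc 1$ achieves equality; conversely $W_n(\vc 1)=Z_n(\beta)$ gives equality exactly. The main obstacle I expect is precisely this last step: ruling out any coincidental equality $Z_n(\beta)^u=Z_n(\beta)^v$ a.s.\ using only the non-degeneracy of $\mathfrak L_\omega$ and minimal assumptions on $P$.
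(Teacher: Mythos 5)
Your proposal is correct and follows essentially the same route as the paper: the same unfolding of $\prod_{i=0}^{n-1}\wt F_i$ into a mixture of starting-point partition functions $Z_n(\beta)^u$ plus $(1-\|f_0\|)\e^{n\lambda(\beta)}$, followed by concavity of $\log$ and the annealed bound. The minor differences (applying Jensen once to the combined probability distribution $\{f_0(u)\}\cup\{1-\|f_0\|\}$ rather than after first normalizing by $\|f_0\|$, and spelling out the equality case more explicitly than the paper does) are cosmetic rather than substantive.
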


\begin{proof}
If $f_0 = \vc 0$, then $\RR(\TT^i\delta_{\vc 0}) = \RR(\delta_{\vc 0}) = \lambda(\beta)$, and so the desired inequality is immediate from \eqref{jensen_applied_finite}.
So we may assume $f_0 \neq \vc 0$.
Fix a representative $f_0 \in \SS_0$.
Let $(\omega^{(i)}_u)_{u \in \N \times \Z^d}$, $1 \leq i \leq n$, be independent collections of i.i.d.~random variables with law $\mathfrak{L}_\omega$.
For $1 \leq i \leq n-1$, inductively define $f_i \in \SS$ to have representative
\eq{
f_i(u) = \frac{\sum_{v \sim u} f_{i-1}(v) \e^{\beta \omega^{(i)}_u}P(v,u)}{ \sum_{w \in \N \times \Z^d}\sum_{v \sim w} f_{i-1}(v) \e^{\beta \omega^{(i)}_w}P(v,w) + (1-\|f_{i-1}\|){\e^{\lambda(\beta)}}},
}
so that the law of $f_i$ is equal to $\TT$ applied to the law of $f_{i-1}$.
By induction, then,
the law of $f_i$ is $\TT^i \delta_{f_0}$.
By definitions \eqref{R_def2} and \eqref{R_def},
\eq{
\RR(\TT^i \delta_{f_0})
&= \E \log \wt F_{i},
}
where %the expectation is over both $f_i$ and the $\omega^{(i+1)}$ variables, and
\eq{
\wt F_i &\coloneqq  \sum_{u_{i+1} \in \N \times \Z^d}\sum_{u_{i} \sim u_{i+1}} f_{i}(u_{i})\e^{\beta \omega^{(i+1)}_{u_{i+1}}}P(u_i,u_{i+1}) + (1 - \|f_{i}\|){\e^{\lambda(\beta)}}, %\label{mess0}
}
and the expectation is over both $f_i$ and the $\omega^{(i+1)}$ variables.
Repeatedly rewriting $f_i$ in terms of $f_{i-1}$ leads to the identity
\eq{
\wt F_0\wt F_1\cdots \wt F_{n-1}
&= \sum_{u_n\in\N\times\Z^d}\sum_{u_0\sim u_1\sim\cdots\sim u_n} f_0(u_0)\e^{\beta\sum_{i=1}^n\omega_{u_i}^{(i)}}\prod_{i=1}^nP(u_{i-1},u_i) + (1-\|f_0\|){\e^{n\lambda(\beta)}} \\
&= \sum_{u_0 \in \N \times \Z^d} \frac{f_0(u_0)}{\|f_0\|}\bigg[\|f_0\|\sum_{u_{n} \sim \cdots \sim u_0}\e^{\beta \sum_{i = 1}^n \omega^{(i)}_{u_i}}\prod_{i=1}^nP(u_{i-1},u_i) + (1 - \|f_0\|)\E Z_{n}(\beta)\bigg].
}
Using the concavity of the $\log$ function in three consecutive steps, we deduce
\eq{
&\log(\wt F_0\wt F_1\cdots \wt F_{n-1}) \\
&\geq 
\sum_{u_0} \frac{f_0(u_0)}{\|f_0\|} \log \bigg[\|f_0\|\sum_{u_{n} \sim \cdots \sim u_0}\e^{\beta \sum_{i = 1}^n \omega^{(i)}_{u_i}}\prod_{i=1}^nP(u_{i-1},u_i)+ (1 - \|f_0\|)\E Z_{n}(\beta)\bigg] \\
&\geq 
\sum_{u_0} \frac{f_0(u_0)}{\|f_0\|} \bigg[\|f_0\|\log\sum_{u_{n} \sim \cdots \sim u_0}\e^{\beta \sum_{i = 1}^n \omega^{(i)}_{u_i}}\prod_{i=1}^nP(u_{i-1},u_i) +(1 - \|f_0\|)\log \E Z_{n}(\beta)\bigg] \\
&\geq 
\sum_{u_0} \frac{f_0(u_0)}{\|f_0\|} \bigg[\|f_0\|\log\sum_{u_{n} \sim \cdots \sim u_0}\e^{\beta \sum_{i = 1}^n \omega^{(i)}_{u_i}}\prod_{i=1}^nP(u_{i-1},u_i) +(1 - \|f_0\|)\E\log Z_{n}(\beta)\bigg],
}
where equality holds throughout if and only if $f_0(u_0) = 1$ for some $u_0 \in \N\times\Z^d$.
Since the random variable $\sum_{u_{n} \sim \cdots \sim u_0}\exp\big(\beta \sum_{i = 1}^n \omega^{(i)}_{u_i}\big)\prod_{i=1}^nP(u_{i-1},u_i)$ is equal in law to $Z_n(\beta)$ for any fixed $u_0 \in \N \times \Z^d$, taking expectation yields
\eq{
\E \log(\wt F_0\wt F_1\cdots \wt F_{n-1})
&\geq \sum_{u_0} \frac{f_0(u_0)}{\|f_0\|} \Big(\|f_0\|\, \E \log Z_n(\beta) + (1 - \|f_0\|)\, \E\log Z_n(\beta)\Big) 
= \E\log Z_n(\beta).
}
It follows that
\eq{
\sum_{i = 0}^{n-1}  \RR(\TT^i \delta_{f_0})
&= \sum_{i = 0}^{n-1} \E\log \wt F_{i}
= \E \log(\wt F_0\wt F_1\cdots \wt F_{n-1})
\geq \E\log Z_n(\beta),
}
with equality if and only if $f_0 = \vc{1}$.
\end{proof}

\begin{proof}[Proof of Theorem \ref{upper_bound}]
Consider any $\rho \in \KK$.
Using the fact that $\TT\rho = \rho$ and Lemma \ref{upper_bound_lemma}, we have
\eq{
\RR(\rho) = \frac{1}{n} \sum_{i = 0}^{n-1} \RR(\TT^i \rho) 
&= \int \frac{1}{n} \sum_{i = 0}^{n-1} \RR(\TT^i\delta_f)\ \rho(\dd f) 
\geq \int \frac{1}{n}\, \E \log Z_n(\beta)\ \rho(\dd f)
= \E F_n(\beta).
}
We note that linearity of $\TT$ and Fubini's theorem \eqref{T_fubini} have been applied, which is permissible since Proposition \ref{continuous1}(b) shows that $R$ is continuous on $\SS$ and hence bounded.

As the above estimate holds for every $\rho \in \KK$ and every $n$, we have
\eeq{
\limsup_{n \to \infty} \E F_n(\beta) \leq \inf_{\rho \in \KK} \RR(\rho). \label{upper_bound_eq}
}
It now follows from \eqref{lower_bound_eq} and \eqref{upper_bound_eq} that
\eeq{
\lim_{n \to \infty} \E F_n(\beta) = \inf_{\rho \in \KK} \RR(\rho). \label{expectations_converge}
}
Then \eqref{Fn_lim} improves \eqref{expectations_converge} to \eqref{limit}.
Finally, equation \eqref{minimal_value} follows from the final statement in Lemma~\ref{upper_bound_lemma}.
\end{proof}

We now turn to strengthening Corollary \ref{close_probability} by proving convergence not only to $\KK$, but to the smaller set $\MM$, defined in \eqref{M_def}.

\begin{thm} \label{close_M}
As $n \to \infty$, $\WW_\alpha(\rho_n,\MM) \to 0$ almost surely.
\end{thm}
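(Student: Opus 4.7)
The plan is to run a straightforward compactness-and-continuity argument, upgrading the convergence $\WW_\alpha(\rho_n,\KK)\to 0$ from Corollary \ref{close_probability} by combining it with the quantitative information that $\RR(\rho_n)$ tends to the \emph{minimum} value of $\RR$ on $\KK$. The engine is the identity $F_n(\beta) = n^{-1}M_n + \RR(\rho_n)$ from \eqref{Fn_ito_empirical}: Proposition \ref{FR_prop} tells us the martingale term vanishes almost surely, and Theorem \ref{upper_bound} evaluates the limit. Thus one first deduces, as a preliminary observation, that
\eq{
\lim_{n\to\infty} \RR(\rho_n) \;=\; \inf_{\rho\in\KK}\RR(\rho) \quad \mathrm{a.s.}
}

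Next I would argue by contradiction. Fix a realization in the full-measure event on which both $\WW_\alpha(\rho_n,\KK)\to 0$ and $\RR(\rho_n)\to \inf_{\rho\in\KK}\RR(\rho)$. Suppose $\WW_\alpha(\rho_n,\MM)\not\to 0$. Then there exists $\eps>0$ and a subsequence $(\rho_{n_k})_{k\geq 1}$ with $\WW_\alpha(\rho_{n_k},\MM)\geq\eps$ for all $k$. Since $\SS$ is compact by Theorem \ref{compactness_result}, so is $(\PP(\SS),\WW_\alpha)$, and hence we may extract a further subsequence (still denoted $\rho_{n_k}$) converging to some $\rho^*\in\PP(\SS)$.

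Now I would identify $\rho^*$ as an element of $\MM$. That $\rho^*\in\KK$ follows from $\WW_\alpha(\rho_n,\KK)\to 0$ together with the fact that $\KK$ is closed: it is the preimage of $\{0\}$ under the continuous map $\rho\mapsto\WW_\alpha(\rho,\TT\rho)$, where $\TT$ is continuous by Proposition \ref{continuous2}. Moreover $\RR$ is continuous on $\PP(\SS)$ by Proposition \ref{continuous1}(b) combined with Lemma \ref{portmanteau}, so
\eq{
\RR(\rho^*) \;=\; \lim_{k\to\infty}\RR(\rho_{n_k}) \;=\; \inf_{\rho\in\KK}\RR(\rho),
}
forcing $\rho^*\in\MM$. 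But then $\WW_\alpha(\rho_{n_k},\MM)\leq \WW_\alpha(\rho_{n_k},\rho^*)\to 0$, contradicting $\WW_\alpha(\rho_{n_k},\MM)\geq\eps$.

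There is no real obstacle here: every ingredient (compactness of $\PP(\SS)$, continuity of $\TT$ and $\RR$, evaluation of the free energy limit) has already been established in the preceding sections, and the only substantive content is the preliminary observation that $\RR(\rho_n)\to \inf_{\rho\in\KK}\RR(\rho)$ almost surely, which requires nothing beyond algebraically rearranging \eqref{Fn_ito_empirical} and citing Proposition \ref{FR_prop} and Theorem \ref{upper_bound}. The entire argument is a clean compactness contradiction in the spirit of the proof of Corollary \ref{close_probability}, with $\RR$-continuity doing the extra work needed to force the limit point into $\MM$ rather than merely $\KK$.
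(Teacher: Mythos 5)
Your proof is correct and uses the same ingredients as the paper's (Corollary \ref{close_probability}, Proposition \ref{FR_prop}, Theorem \ref{upper_bound}, compactness of $\PP(\SS)$, and continuity of $\RR$), with the same compactness-and-continuity contradiction at its heart. The paper routes the argument through the intermediate sets $\KK_\delta=\{\rho_0\in\KK:\RR(\rho_0)<\inf_{\rho\in\KK}\RR(\rho)+\delta\}$ and applies the subsequence extraction to an auxiliary sequence $(\nu_k)\subset\KK$ rather than to $(\rho_n)$ directly, but that is a cosmetic difference; your more direct extraction of a limit point $\rho^*$ of $(\rho_{n_k})$ and identification $\rho^*\in\MM$ is an equally valid (arguably cleaner) phrasing of the same argument.
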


\begin{proof}
As $n\to\infty$, $\WW_\alpha(\rho_n,\KK) \to 0$ almost surely (by Corollary \ref{close_probability}), and $\RR(\rho_n)$ converges almost surely to $\inf_{\rho\in\KK}\RR(\rho)$ (by Proposition \ref{FR_prop} and Theorem \ref{upper_bound}).
Therefore, by continuity of $\RR$ and compactness of $\PP(\SS)$, we have $\WW_\alpha(\rho_n,\KK_\delta) \to 0$ almost surely, where $\delta$ is any positive number, and
\eq{
\KK_\delta \coloneqq \Big\{\rho_0 \in \KK : \RR(\rho_0) < \inf_{\rho\in\KK} \RR(\rho)+\delta\Big\}.
} 

Now, given any $\eps > 0$, we can choose $\delta$ such that $\sup_{\nu\in\KK_\delta}\WW_\alpha(\nu,\MM) < \eps$.
(Indeed, if this were not the case, then one could find a sequence $(\nu_k)_{k \geq 1}$ in $\KK$ such that
$\RR(\nu_k) \searrow \inf_{\rho \in \KK} \RR(\rho)$ as $k \to \infty$, but $\WW_\alpha(\nu_k,\MM) \geq \eps$ for all $k$.
Since $\KK$ is compact, we may pass to a subsequence and assume $\nu_k$ converges to some $\nu \in \KK$.
In particular, $\WW_\alpha(\nu,\MM) \geq \eps$.
But the continuity of $\RR$ implies $\RR(\nu) = \inf_{\rho \in \KK} \RR(\rho)$, meaning $\nu \in \MM$, a contradiction.)
As $\WW_\alpha(\rho_n,\KK_\delta) \to 0$ almost surely and $\eps > 0$ is arbitrary, we are done.
\end{proof}

\section{Limits of empirical measures} \label{empirical_limits}
In the first part of this section, we give a characterization of the low temperature regime in terms of the fixed points of the update map $\TT$.
This is stated as Theorem \ref{characterization}(b). 
%and completes the construction of our so-called abstract machine.
We know from Theorem \ref{close_M} that those fixed points minimizing the energy functional $\RR$ constitute the possible limits of the empirical measure $\rho_n$, and so this characterization will allow us to study the Ces\`aro asymptotics of endpoint distributions.

In Section \ref{example_application}, we make this connection more concrete by outlining the general steps by which the theory developed in the previous three sections can be used to prove statements about directed polymers.
Indeed, the strategy described will be employed in Sections \ref{main_thm} and \ref{main_thm2}.
But to provide a more brief, instructional example in the present section, we give a new proof of Theorem \ref{characterization0},
which offered a first characterization of the low temperature phase.
Once appropriate definitions are made, only a short argument is needed to prove the statement.

%We will continue to write $f_i$ for the endpoint distribution $\mu_i^\beta(\sigma_i = \cdot)$, viewed as an element of $\SS$.

\subsection{Characterizing high and low temperature phases} \label{norm_monotonicity}
%We first set forth some notation: For $f \in \SS$ and $\alpha \in [0,1]$, let $\alpha f \in \SS$ have representative in $\SS_0$ defined by $(\alpha f)(u) \coloneqq  \alpha f(u)$.
%It is immediate from Corollary \ref{better_def_cor} that $\alpha f = \alpha g$ in $\SS$ whenever $f = g$ in $\SS_0$, and so $\alpha f$ is a well-defined element of $\SS$.
%In particular, 
Recall that $\vc{0}$ is the element of $\SS$ whose unique representative in $\SS_0$ is the constant zero function,
%Notice also that for $f \in \SS_0$ satisfying $f(u) = 1$ for some $u \in \N \times \Z^d$, Proposition \ref{superisometry} implies $d_\alpha(f,g) = 0$ for $g \in \SS_0$ if and only if $g(v) = 1$ for some $v \in \N \times \Z^d$.
%Also recall from Section \ref{calculations} that 
and $\vc{1}$ is the element of $\SS$ whose representatives in $\SS_0$ are the norm-1 functions supported on a single point.

\begin{lemma} \label{unique_max}
%The map $R : \SS \to \R$ defined by \eqref{R_def} has the following monotonicity property:
%\eq{
%R(\alpha f) > R(f) \quad \text{for all $\alpha \in [0,1)$, $f \neq \vc{0}$.}
%}
The function $R : \SS \to \R$ from \eqref{R_def} achieves a unique minimum $R(\vc{1}) = \E\log Z_1(\beta)$,
%\eq{
%R(\vc{1}) = \E\log Z_1,
%}
and a unique maximum $R(\vc{0}) = \lambda(\beta)$.
%\eq{
%R(\vc{0}) %= \log(\E\, \e^{\beta \omega_u}) 
%= \lambda(\beta).
%}
\end{lemma}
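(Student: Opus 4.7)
The plan is to compute $R(\vc 0)$ and $R(\vc 1)$ directly from the definition \eqref{R_def}, and then bound $R(f)$ above and below by two applications of Jensen's inequality for the concave function $\log$.

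For the direct computations, note that when $f = \vc 0$ we have $\|f\|=0$ and $\wt F = \e^{\lambda(\beta)}$ identically, so $R(\vc 0) = \lambda(\beta)$. When $f = \vc 1$ with representative supported at some $(n_0,x_0)$, the definition of $\wt F$ collapses to $\sum_{y \in \Z^d} \e^{\beta\omega_{(n_0,y)}} P(0,y-x_0)$, which is equal in law to $Z_1(\beta)$; hence $R(\vc 1) = \E\log Z_1(\beta)$.

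For the upper bound, I would use the elementary identity $\E\wt F = \e^{\lambda(\beta)}$ (which follows from summing $\sum_v f(v)P(v,u) = \sum_v f(v)$ and combining with $(1-\|f\|)\e^{\lambda(\beta)}$) and apply Jensen's inequality to obtain $R(f) = \E\log\wt F \leq \log\E\wt F = \lambda(\beta)$. Equality in Jensen requires $\wt F$ to be an almost sure constant; but if $\|f\|>0$, there is some $v$ with $f(v)>0$ and some $u\sim v$ with $P(v,u)>0$, so the coefficient of $\e^{\beta\omega_u}$ in $\wt F$ is strictly positive, and since $\omega$ is not a.s.\ constant, neither is $\wt F$. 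Hence equality forces $\|f\|=0$, i.e., $f = \vc 0$.

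For the lower bound, I would rewrite
\eq{
\wt F = \sum_{v\in\N\times\Z^d} f(v)\, Y_v + (1-\|f\|)\,\e^{\lambda(\beta)}, \qquad Y_v \coloneqq \sum_{u\sim v} \e^{\beta\omega_u} P(v,u),
}
observing that $\sum_v f(v) + (1-\|f\|) = 1$ makes this a convex combination, and that translation invariance of the environment gives $Y_v \stackrel{d}{=} Z_1(\beta)$ for every $v$. Jensen's inequality then yields $\log\wt F \geq \sum_v f(v)\log Y_v + (1-\|f\|)\lambda(\beta)$, and taking expectation gives $R(f)\geq \|f\|\,\E\log Z_1(\beta) + (1-\|f\|)\lambda(\beta)$. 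Since $Z_1(\beta)$ has mean $\e^{\lambda(\beta)}$ and is non-degenerate, $\E\log Z_1(\beta) < \lambda(\beta)$, so this convex combination is minimized at $\|f\|=1$, giving $R(f)\geq \E\log Z_1(\beta)$ with equality forcing $\|f\|=1$. The main (mildly delicate) step is identifying \emph{uniqueness} of the minimizer: one must further trace the equality case in the Jensen step, which demands that $Y_{v}$ be a.s.\ constant in $v$ on the support of $f$. For two distinct points $v_1,v_2$ in different copies of $\Z^d$, $Y_{v_1}$ and $Y_{v_2}$ depend on disjoint independent blocks of the environment and cannot coincide; for two distinct points in the same copy, matching coefficients of $\e^{\beta\omega_u}$ forces $P(v_1,\cdot) \equiv P(v_2,\cdot)$, which is impossible for a probability mass function on $\Z^d$. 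Hence the support of $f$ is a single point, and combined with $\|f\|=1$, this means $f = \vc 1$.
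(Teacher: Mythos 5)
Your proof is correct, and while the upper‑bound half matches the paper, the lower‑bound half takes a genuinely different route. For the maximum, your computation $\E\wt F = \e^{\lambda(\beta)}$ followed by Jensen and the observation that $\wt F$ is non‑degenerate for $f\neq\vc 0$ is exactly the paper's argument. For the minimum, however, the paper simply invokes Lemma~\ref{upper_bound_lemma} with $n=1$; the proof of that lemma decomposes the product $\wt F_0\cdots\wt F_{n-1}$ over the starting vertex $u_0$ and applies three successive concavity steps, which is considerably more machinery than the $n=1$ case requires. You instead observe directly that
\eq{
\wt F = \sum_{v\in\N\times\Z^d} f(v)\,Y_v + (1-\|f\|)\,\e^{\lambda(\beta)}, \qquad Y_v \coloneqq \sum_{u\sim v}\e^{\beta\omega_u}P(v,u),
}
is a convex combination (the weights $f(v)$ and $1-\|f\|$ sum to $1$), that each $Y_v$ has the law of $Z_1(\beta)$ by translation invariance, and that a single application of Jensen therefore yields $R(f)\geq \|f\|\,\E\log Z_1(\beta)+(1-\|f\|)\lambda(\beta)\geq\E\log Z_1(\beta)$, using $\E\log Z_1(\beta)<\lambda(\beta)$. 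This is more elementary and self‑contained. You also make the uniqueness analysis more explicit than the paper does: equality in Jensen requires the $Y_v$ to coincide almost surely over the support of $f$, and you correctly rule out a multi‑point support — across distinct copies of $\Z^d$ because the $Y_v$ are independent and non‑degenerate, and within a copy because $Y_{v_1}=Y_{v_2}$ a.s.\ would force (by a variance computation over the i.i.d.\ $\e^{\beta\omega_u}$) a nontrivial shift‑invariance of the step kernel, which no probability mass function on $\Z^d$ can have, consistent with \eqref{walk_assumption_1}. The paper's route has the advantage of keeping this lemma to one line by reusing Lemma~\ref{upper_bound_lemma}, which is independently needed for Theorem~\ref{upper_bound}; your route has the advantage of being a clean, direct argument requiring only one Jensen application and no appeal to the $\TT$‑iterate machinery.
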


\begin{proof}
The first statement is immediate from Lemma \ref{upper_bound_lemma} by taking $n = 1$.
The second statement follows from the Jensen inequality $R(f) \leq \log \E(\wt F) = \lambda(\beta)$, where $\wt F$ is defined in \eqref{R_def}.
Moreover, the inequality is strict whenever $f \neq \vc{0}$, since then $\wt{F}$ is not an almost sure constant.
\end{proof}

Now we characterize the high and low temperature regimes by the elements of $\KK$ and $\MM$, defined by \eqref{K_def} and \eqref{M_def}, respectively. 
Notice that $\delta_{\vc 0}$ is always an element of $\KK$; the temperature regime is determined by whether it is also an element of $\MM$.

\begin{thm} \label{characterization}
Assume \eqref{walk_assumption_1} and \eqref{mgf_assumption}. %Let $\RR : \PP(\SS) \to \R$ be defined by \eqref{R_def2}.
Then the following statements hold:
\begin{itemize}
\item[(a)] If $0 \leq \beta \leq \beta_{\mathrm{c}}$, then $\KK = \MM = \{\delta_{\vc{0}}\}$.
\item[(b)] If $\beta > \beta_{\mathrm{c}}$, then $\rho(\{f \in \SS : \|f\| = 1\}) = 1$ for every $\rho \in \MM$, and so $\TT$ has more than one fixed point.
\end{itemize}
\end{thm}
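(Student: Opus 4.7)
The plan is to exploit the variational formula from Theorem \ref{upper_bound} together with the support restriction in Proposition \ref{no_middle} and the uniqueness statement in Lemma \ref{unique_max}. The key structural observation is that by Proposition \ref{no_middle}, every $\rho \in \KK$ is supported on $\{\vc{0}\} \cup \{f \in \SS : \|f\|=1\}$, so one may decompose $\rho = \alpha \delta_{\vc{0}} + (1-\alpha)\rho_1$ for some $\alpha \in [0,1]$ and probability measure $\rho_1$ concentrated on $\{f : \|f\|=1\}$. Since $\TT\delta_{\vc{0}} = \delta_{\vc{0}}$ and $\nu \mapsto \TT\nu$ is linear on $\PP(\SS)$, the fixed-point equation $\TT\rho = \rho$ reduces to $(1-\alpha)(\TT\rho_1 - \rho_1) = 0$, so $\rho_1$ also lies in $\KK$ whenever $\alpha < 1$.

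For part (a), I would use that $p(\beta) = \lambda(\beta)$ when $\beta \leq \beta_\cc$. Theorem \ref{upper_bound} gives $\RR(\rho) \geq p(\beta) = \lambda(\beta)$ for every $\rho \in \KK$, while Lemma \ref{unique_max} gives $R(f) \leq \lambda(\beta)$ pointwise with equality only at $f = \vc{0}$, hence $\RR(\rho) = \int R\,\dd\rho \leq \lambda(\beta)$ with equality forcing $\rho$ to concentrate on $\{\vc{0}\}$. Combining these yields $\KK = \MM = \{\delta_{\vc{0}}\}$.

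For part (b), where $p(\beta) < \lambda(\beta)$, I would fix any $\rho \in \MM$ (nonempty by compactness of $\KK$ and continuity of $\RR$) and apply the decomposition above. If $\alpha = 1$ then $\RR(\rho) = \lambda(\beta) > p(\beta)$, contradicting $\rho \in \MM$. So $\alpha < 1$ and $\rho_1 \in \KK$, giving $\RR(\rho_1) \geq p(\beta)$. Writing
\[
p(\beta) = \RR(\rho) = \alpha\lambda(\beta) + (1-\alpha)\RR(\rho_1) \geq \alpha\lambda(\beta) + (1-\alpha)p(\beta),
\]
one rearranges to $\alpha(\lambda(\beta)-p(\beta)) \leq 0$, and since $\lambda(\beta) > p(\beta)$, this forces $\alpha = 0$. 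Thus $\rho$ is fully supported on $\{f : \|f\|=1\}$; in particular $\rho \neq \delta_{\vc{0}} \in \KK$, so $\TT$ has at least two fixed points. No step looks to be a genuine obstacle: the proof is essentially a short algebraic consequence of the variational principle, the norm-dichotomy on $\KK$, and the sharp uniqueness of $\vc{0}$ as the maximizer of $R$.
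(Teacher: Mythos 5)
Your proof is correct and follows the same essential strategy as the paper's: both use Proposition~\ref{no_middle} to decompose any $\rho\in\KK$ into a piece at $\vc 0$ and a piece on $\UU=\{f:\|f\|=1\}$, show that the $\UU$-piece (your $\rho_1$, the paper's $\rho_\UU$) is itself a fixed point, and then compare energies via Lemma~\ref{unique_max} and Theorem~\ref{upper_bound}. Your execution is slightly more streamlined in two places: (i) the paper establishes $\rho_\UU\in\KK$ by a direct computation appealing to the invariance of $\UU$ and $\SS\setminus\UU$ under $T$ (equations~\eqref{invariant1}--\eqref{invariant2}), whereas you deduce $\TT\rho_1=\rho_1$ by linearity of $\TT$ and cancellation of the $\alpha\delta_{\vc 0}$ term, which is a bit slicker; and (ii) for part (b) the paper computes $\RR(\rho_\UU)<\RR(\rho)$ directly from the pointwise bound $R(f)<R(\vc 0)$ on $\UU$, while you invoke $\RR(\rho_1)\geq p(\beta)$ from the variational formula and rearrange. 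Both yield the same conclusion with comparable effort, so this is a stylistic variant rather than a genuinely different argument.
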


\begin{proof}
Since the hypotheses of (a) and (b) are complementary, it suffices to prove their converses.
We always have $\TT\delta_{\vc{0}} = \delta_{\vc{0}}$.
%That is, $\delta_{\vc{0}}$ is an element of $\KK$, the set of fixed points of $\TT$.
If $\KK$ contains no other elements of $\PP(\SS)$, then $\MM = \{\delta_{\vc{0}}\}$ and Theorem \ref{upper_bound} shows
\eq{
p(\beta) =  \lim_{n \to \infty} \E F_n(\beta) = \RR(\delta_\vc{0}) = R(\vc{0}) = \lambda(\beta).
}
%Hence $\Lambda(\beta) = 0$, where $\Lambda(\beta)$ is the Lyapunov exponent defined in Section \ref{disorder_background}.
That is, $0 \leq \beta \leq \beta_{\mathrm{c}}$ when $\KK = \MM = \{\delta_{\vc{0}}\}$.

If instead there exists $\rho \in \KK$ distinct from $\delta_\vc{0}$, then Proposition \ref{no_middle} implies that $\rho$ assigns positive mass to the set
\eq{
\UU \coloneqq  \{f \in \SS : \|f\| = 1\}, 
}
which is measurable by Lemma \ref{norm_equivalence}.
Moreover, Proposition \ref{no_middle} guarantees $\rho(\{\vc{0}\}) = 1 - \rho(\UU)$,
and Lemma \ref{unique_max} shows $R(f) < R(\vc{0})$ for all $f \in \UU$.
It follows that
\eq{
\lim_{n \to \infty} \E F_n(\beta) \leq \RR(\rho) = \int R(f)\ \rho(\dd f) < R(\vc{0}).
}
In this case we have $p(\beta) < \lambda(\beta)$, meaning $\beta > \beta_{\mathrm{c}}$.
Furthermore, we can consider the probability measure
\eq{
\rho_\UU(\AA) \coloneqq  \frac{\rho(\AA \cap \UU)}{\rho(\UU)}, \quad \text{Borel } \AA \subset \SS.
}
Notice that $\UU$ and $\SS\setminus\UU$ are both closed under $\TT$:
\begin{align}
f \in \UU \quad \implies \quad Tf(\UU)= 1, \label{invariant1} \\
f \notin \UU \quad \implies \quad Tf(\UU)  = 0. \label{invariant2}
\end{align}
This observation was made in the proof of Proposition \ref{no_middle}.
Therefore, $\TT\rho = \rho$ implies $\rho_\UU$ is also an element of $\KK$.
Indeed, for every Borel set $\AA \subset \SS$,
\eq{
\TT\rho_\UU(\AA)
= \int_\SS Tf(\AA)\ \rho_\UU(\dd f)
= \int_\UU  Tf(\AA)\ \rho_\UU(\dd f)
= \int_\UU  Tf(\AA\cap \UU)\ \rho_\UU(\dd f),
}
where the second equality is justified by $\rho_\UU(\UU) = 1$, and the third equality by \eqref{invariant1}.
Then using the fact that $\rho_\UU = \rho/\rho(\UU)$ on $\UU$, followed by \eqref{invariant2}, we in turn get
\eq{
\TT\rho_\UU(\AA) = \int_\UU  Tf(\AA\cap \UU)\ \rho_\UU(\dd f)
= \frac{1}{\rho(\UU)} \int_\UU  Tf(\AA\cap \UU)\ \rho(\dd f)
&= \frac{1}{\rho(\UU)}  \int_\SS  Tf(\AA\cap \UU)\ \rho(\dd f) \\
&= \frac{\rho(\AA \cap \UU)}{\rho(\UU)} 
= \rho_\UU(\AA).
}
We have thus shown $\TT\rho_\UU(\AA) = \rho_\UU(\AA)$ for every Borel subset $\AA$, and so $\rho_\UU \in \KK$ as claimed.

If $\rho(\UU) < 1$, then $\rho_\UU$ satisfies
\eq{
\RR(\rho_\UU) = \frac{1}{\rho(\UU)} \int_\UU R(f)\ \rho(\dd f)
&= \int_\UU R(f)\ \rho(\dd f) + \frac{1 - \rho(\UU)}{\rho(\UU} \int_\UU R(f)\ \rho(\dd f) \\
&< \int_\UU R(f)\ \rho(\dd f) + \big(1 - \rho(\UU)\big)R(\vc{0}) = \RR(\rho).
}
It follows that $\rho \in \MM$ only if $\rho(\UU) = 1$.
\end{proof}

\subsection{An illustrative application} \label{example_application}
To demonstrate how the abstract setup can be employed to prove results on directed polymers, we will use it to prove the following generalization of Theorem \ref{characterization0}.
 
\begin{thm} \label{reprove_equivalence}
Assume \eqref{walk_assumption_1} and \eqref{mgf_assumption}.
Then the following statements hold:
\begin{itemize}
\item[(a)] If $0 \leq \beta \leq \beta_{\mathrm{c}}$, then
\eq{
\lim_{n \to \infty} \frac{1}{n} \sum_{i = 0}^{n-1} \max_{x \in \Z^d} f_i(x) = 0 
 \quad \mathrm{a.s.}
 }
\item[(b)] If $\beta > \beta_{\mathrm{c}}$, then there exists $c > 0$ such that
\eeq{
\liminf_{n \to \infty} \frac{1}{n} \sum_{i = 0}^{n-1}  \max_{x \in \Z^d} f_i(x) \geq c \quad \mathrm{a.s.} \label{liminf_max}
}
 \end{itemize}
\end{thm}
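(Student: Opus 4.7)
The plan is to introduce a single continuous functional on $\SS$ and then leverage the convergence $\rho_n \to \MM$ combined with the phase characterization of Theorem \ref{characterization}. Let $L : \SS \to [0,1]$ be defined by $L(f) \coloneqq \sup_{u \in \N \times \Z^d} f(u)$. Since this supremum is invariant under shifts of $\Z^d$, permutations of $\N$, and zero-padding, Corollary \ref{defined_pspm} shows that $L$ descends to $\SS$. Furthermore, with the identification \eqref{fn_def2} one has $L(f_i) = \max_{x \in \Z^d} f_i(x)$, so
\[
\frac{1}{n}\sum_{i=0}^{n-1}\max_{x\in\Z^d} f_i(x) = \LL(\rho_n), \qquad \LL(\rho) \coloneqq \int_\SS L(f)\ \rho(\dd f).
\]
Thus both assertions of the theorem reduce to controlling the asymptotic behavior of $\LL(\rho_n)$.

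Next I would show that $L$ is continuous on $(\SS,d_\alpha)$. Given $f,g\in\SS$ and an isometry $\phi: A \to \N\times\Z^d$ realizing $d_{\alpha,\phi}(f,g) < \delta$, fix $\eps > 0$ and pick $u^*$ with $f(u^*) > L(f) - \eps$. If $u^* \in A$, then $|f(u^*)-g(\phi(u^*))| < \delta/\alpha$ and hence $L(g) \ge L(f) - \eps - \delta/\alpha$; if $u^* \notin A$, then $f(u^*)^\alpha \le \sum_{u \notin A} f(u)^\alpha < \delta$, giving $L(f) \le \delta^{1/\alpha} + \eps$. Exchanging the roles of $f$ and $g$ and sending $\eps \to 0$ produces the uniform modulus $|L(f) - L(g)| \le \delta/\alpha + \delta^{1/\alpha}$. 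Continuity of $L$ then lifts through Lemma \ref{portmanteau} (applied in both semicontinuity directions, since $L$ is bounded) to continuity of $\LL$ on $(\PP(\SS),\WW_\alpha)$.

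For part (a), Theorem \ref{characterization}(a) yields $\MM = \{\delta_{\vc{0}}\}$, so Theorem \ref{close_M} gives $\WW_\alpha(\rho_n,\delta_{\vc{0}}) \to 0$ almost surely. Continuity of $\LL$ combined with $\LL(\delta_{\vc{0}}) = L(\vc{0}) = 0$ then finishes the proof.

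For part (b), set $\UU \coloneqq \{f \in \SS : \|f\| = 1\}$. Theorem \ref{characterization}(b) gives $\rho(\UU) = 1$ for every $\rho \in \MM$, and since $\|f\| = 1$ forces $\sup_u f(u) > 0$, we have $L > 0$ pointwise on $\UU$. Hence $\LL(\rho) > 0$ for every $\rho \in \MM$. Because $\MM$ is a closed (hence compact) subset of $\PP(\SS)$ and $\LL$ is continuous, the infimum $c \coloneqq \inf_{\rho \in \MM} \LL(\rho)$ is attained and strictly positive. Combining the convergence $\WW_\alpha(\rho_n, \MM) \to 0$ almost surely (Theorem \ref{close_M}) with uniform continuity of $\LL$ on the compact space $\PP(\SS)$ yields $\liminf_{n\to\infty} \LL(\rho_n) \ge c$ almost surely, which is precisely \eqref{liminf_max}. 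The main obstacle is the continuity of $L$ established two paragraphs above; the split into points inside versus outside the isometry's domain accounts for the two distinct moduli $\delta/\alpha$ and $\delta^{1/\alpha}$. Once continuity is in hand, both parts are routine consequences of the abstract framework.
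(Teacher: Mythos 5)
Your proposal is correct and follows essentially the same route as the paper: the paper likewise introduces the functional $\max(f)$, proves its continuity on $(\SS,d_\alpha)$ (Lemma~\ref{max_lemma}), lifts it to $\PP(\SS)$ via Lemma~\ref{portmanteau}, and then combines Theorem~\ref{characterization} with Theorem~\ref{close_M} (plus compactness of $\MM$ in part (b)) to conclude. The one small improvement in your write-up is the continuity estimate: by splitting on whether the near-maximizer lies in the isometry's domain you obtain a uniform modulus $\delta/\alpha + \delta^{1/\alpha}$ valid for all $f,g$, whereas the paper proves only pointwise continuity by taking $\delta < f(u)^\alpha$ to force the maximizer into the domain; the two arguments carry the same content.
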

The first step is to define the functional(s) relevant to the problem.
At first, definitions are made on the space $\SS_0$.
For instance, from \eqref{liminf_max} we are motivated to define $\max : \SS_0 \to [0,1]$ by
\eq{
\max(f) \coloneqq  \max_{u \in \N \times \Z^d} f(u).
}
By appealing to Corollary \ref{defined_pspm}, we see that the functional is well-defined on the quotient space $\SS$.
This should be true in order for the abstract machinery to be applicable.
Notice that when $f_i$ is seen as an element of $\SS$,
\eeq{
\frac{1}{n} \sum_{i = 0}^{n-1} \max_{x \in \Z^d} f_i(x) = \int \max(f)\ \rho_{n}(\dd f). \label{max_abstract}
}
The second step is to prove a continuity condition so that Portmanteau (Lemma \ref{portmanteau}) may be applied.
Therefore, it is important to understand the topology induced by the metric $d_\alpha$.
Both here and in later sections, this step is the most technical part of the process, but is generally straightforward and follows a similar proof strategy.

\begin{lemma} \label{max_lemma}
The function $\max : \SS \to [0,1]$ given by
\eq{
\max(f) \coloneqq  \max_{u \in \N \times \Z^d} f(u)
}
is continuous and thus measurable.
\end{lemma}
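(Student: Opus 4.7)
The plan is to first check that $\max$ is well-defined on $\SS$ by appealing to Corollary \ref{defined_pspm}, and then to establish continuity by proving lower and upper semi-continuity separately. Well-definedness is immediate: the quantity $\max(f) = \sup_{u \in \N\times\Z^d} f(u)$ depends only on the multiset of values of $f$, so it is invariant under shifts of $\Z^d$ (i), permutations of $\N$ (ii), and zero-padding (iii).

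For lower semi-continuity, I would fix $f \in \SS$ and $\eps > 0$, and exhibit $\delta > 0$ such that $d_\alpha(f,g) < \delta$ implies $\max(g) > \max(f) - \eps$. If $\max(f) = 0$ there is nothing to show. Otherwise pick a representative $f \in \SS_0$ and $u_0 \in \N\times\Z^d$ with $f(u_0) = \max(f)$, and set $\delta \coloneqq \min\{f(u_0)^\alpha, \alpha\eps\}/2$. If $d_\alpha(f,g) < \delta$, choose a representative $g \in \SS_0$ and an isometry $\phi : C \to \N\times\Z^d$ with $d_{\alpha,\phi}(f,g) < \delta$. The choice of $\delta$ forces $u_0 \in C$, since otherwise $d_{\alpha,\phi}(f,g) \geq f(u_0)^\alpha > \delta$. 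Hence
\eq{
\max(g) \geq g(\phi(u_0)) \geq f(u_0) - |f(u_0) - g(\phi(u_0))| \geq \max(f) - d_{\alpha,\phi}(f,g)/\alpha > \max(f) - \eps.
}
This mirrors the argument for Lemma \ref{norm_equivalence}, but requires only a single-point set $A = \{u_0\}$ rather than a large finite set.

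For upper semi-continuity, the point I expect to be the main subtlety is that $g$ could carry mass at a point lying \emph{outside} $\phi(C)$, so that the comparison with $f$-values is not direct; this is handled by the $\sum_{u \notin \phi(C)} g(u)^\alpha$ term in $d_{\alpha,\phi}$. Given $f \in \SS$ and $\eps > 0$, choose $\delta > 0$ with $\delta/\alpha < \eps/2$ and $\delta^{1/\alpha} < \eps/2$. Suppose $d_\alpha(f,g) < \delta$, with isometry $\phi:C\to\N\times\Z^d$ realizing $d_{\alpha,\phi}(f,g) < \delta$, and representatives $f,g \in \SS_0$. Since $\|g\| \leq 1$, the supremum $\max(g)$ is attained at some $v_0$ (assuming it is positive; otherwise the bound is trivial). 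Split into two cases. If $v_0 = \phi(u_0)$ for some $u_0 \in C$, then $g(v_0) \leq f(u_0) + d_{\alpha,\phi}(f,g)/\alpha \leq \max(f) + \eps/2$. If instead $v_0 \notin \phi(C)$, then $g(v_0)^\alpha \leq d_{\alpha,\phi}(f,g) < \delta$, so $g(v_0) < \delta^{1/\alpha} < \eps/2 \leq \max(f) + \eps/2$. In either case, $\max(g) \leq \max(f) + \eps$, completing the argument. Combining the two bounds yields continuity, and measurability follows from continuity on the metric space $(\SS, d_\alpha)$.
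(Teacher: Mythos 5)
Your proof is correct and follows essentially the same route as the paper's. Both arguments proceed by choosing $\delta$ smaller than $f(u_0)^\alpha$ so that the maximizer $u_0$ is forced into the domain of the isometry $\phi$, and then bound $\max(g)$ using the two terms $|f(u_0)-g(\phi(u_0))|\leq d_{\alpha,\phi}(f,g)/\alpha$ and $\sup_{v\notin\phi(C)}g(v)\leq d_{\alpha,\phi}(f,g)^{1/\alpha}$; the paper simply collapses your two semi-continuity bounds into a single two-sided inequality $|\max g-\max f|<\max\{\delta,\delta^{1/\alpha}\}$ rather than treating the directions separately.
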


\begin{proof}
%This maximum function is well-defined by Corollary \ref{defined_pspm}.
Let $f \in \SS$ be given, and fix a representative in $\SS_0$.
Choose $u \in \N \times \Z^d$ such that $f(u) = \max_{v \in \N \times \Z^d} f(v)$.
If $f(u) = 0$, then $f$ is identically zero, and
\eq{
d_\alpha(f,g) < \delta \quad &\implies \quad \exists\ \phi : A \to \N \times \Z^d,\quad
\sum_{v \in A} g(\phi(v)) + \sum_{v \notin \phi(A)} g(v)^\alpha < \delta \\
&\implies \quad \max_{v \in \N \times \Z^d} g(v) < \max\{\delta,\delta^{1/\alpha}\}.
}
Otherwise, for any $\delta \in (0,f(u)^\alpha)$, we have
\eq{
d_\alpha(f,g) < \delta \quad &\implies \quad \exists\ \phi : A \to \N \times \Z^d,\quad
d_{\alpha,\phi}(f,g) < \delta < f(u)^\alpha \\
&\implies \quad u \in A \text{ and } |f(v) - g(\phi(v))| < \delta \text{ for all $v \in A$, and }
g(v)^\alpha < \delta \text{ for all $v \notin \phi(A)$}  \\
&\implies \quad \Big|\max_{v \in \N \times \Z^d} g(v) - \max_{v \in \N \times \Z^d} f(v)\Big| < \max\{\delta,{\delta}^{1/\alpha}\}.
}
From these two cases, we conclude that $f \mapsto \max f$ is continuous on $\SS$.
\end{proof}

After the appropriate functionals have been defined, their limiting behavior (in a Ces\`aro sense) can be determined by applying Theorem \ref{characterization}.
Consequently, the next step in our program is to study how the functionals of interest behave according to the elements of $\MM$.
This will depend on the value of $\beta$.
For instance, in the high temperature phase, $\MM$ consists of only the point mass $\delta_\vc{0}$ at the zero function $\vc{0}$, and so trivially we have
\eeq{
0 \leq \beta \leq \beta_{\mathrm{c}} \quad \implies \quad \int \max(f)\ \rho(\dd f) = \max(\vc{0}) = 0 \quad \text{for all $\rho \in \MM$.} \label{max_high}
}
On the other hand, in the low temperature phase, every $\rho \in \MM$ is supported on those $f \in \SS$ with $\|f\| = 1$.
So clearly
\eq{
\beta > \beta_{\mathrm{c}} \quad \implies \quad \int \max(f)\ \rho(\dd f) > 0 \quad \text{for all $\rho \in \MM$.}
}
Furthermore, because of Lemma \ref{portmanteau}, Lemma \ref{max_lemma} shows $\rho \mapsto \int \max(f)\, \rho(\dd f)$ is continuous on the compact set $\MM$, and so we actually have
\eeq{
\beta > \beta_{\mathrm{c}} \quad \implies \quad \int \max(f)\ \rho(\dd f) \geq c \quad \text{for all $\rho \in \MM$,} \label{max_low}
}
for some $c > 0$.
Now we are poised to prove the desired result.
The final step is to interpret the above observations in terms of the directed polymer model, via the almost sure convergence to $\MM$ that is stated in Theorem \ref{close_M}.
It is useful to remember that $\SS$ and $\PP(\SS)$ are compact spaces.

\begin{proof}[Proof of Theorem \ref{reprove_equivalence}]
By the (uniform) continuity of the map 
\eq{
\rho \mapsto \int\max(f)\, \rho(\dd f)
}
on the compact space $\PP(\SS)$, we can find for any $\eps > 0$ some $\delta > 0$ such that
\eq{
\WW_\alpha(\rho,\MM) < \delta \quad &\implies \quad 
\inf_{\rho \in \MM} \int \max(f)\ \rho(\dd f) - \eps 
\leq \int \max(f)\ \rho(\dd f)
\leq \sup_{\rho \in \MM} \int \max(f)\ \rho(\dd f) + \eps. 
}
Thus Theorem \ref{close_M} implies that almost surely,
\eq{
\inf_{\rho \in \MM} \int \max(f)\ \rho(\dd f)
&\leq \liminf_{n \to \infty} \int \max(f)\ \rho_{n}(\dd f)  \\
&\leq \limsup_{n \to \infty} \int \max(f)\ \rho_{n}(\dd f)
\leq \sup_{\rho \in \MM} \int \max(f)\ \rho(\dd f).
}
When $0 \leq \beta \leq \beta_{\mathrm{c}}$, \eqref{max_high} says that the infimum and supremum appearing above are both equal to 0, and so
\eeq{
\lim_{n \to \infty} \int \max(f)\ \rho_{n}(\dd f) = 0 \quad \mathrm{a.s.} \label{max_claim2}
}
When $\beta > \beta_{\mathrm{c}}$, \eqref{max_low} shows that the infimum is bounded below by $c > 0$, in which case we have
\eeq{
\liminf_{n \to \infty} \int \max(f)\ \rho_{n}(\dd f) \geq c \quad \mathrm{a.s.} \label{max_claim1}
}
Recalling \eqref{max_abstract}, we see that \eqref{max_claim2} and \eqref{max_claim1} are exactly what we wanted to show.
\end{proof}

\section{Asymptotic pure atomicity} \label{main_thm}
Following the approach outlined in Section \ref{example_application}, we will prove that directed polymers are asymptotically purely atomic if and only if the parameter $\beta$ falls in the low temperature regime.
We say that the sequence of endpoint probability mass functions $(f_i)_{i \geq 0}$ is \textit{asymptotically purely atomic}\footnote{In \cite{vargas07}, Vargas defines asymptotic pure atomicity by the same convergence condition, but in probability rather than almost surely.  \textit{A priori}, the definition considered here is a stronger one, but since \eqref{noVSD_claim} implies the negation of the ``in probability" definition, Theorem \ref{total_mass} shows the two notions are equivalent given \eqref{walk_assumption_1} and \eqref{mgf_assumption}.}  
if for every sequence $(\eps_i)_{i \geq 0}$ tending to $0$ as $i \to \infty$, we have
\eeq{ \label{apa_def}
\lim_{n \to \infty} \frac{1}{n} \sum_{i = 0}^{n-1} \mu_i^\beta(\sigma_i \in \AA_i^{\eps_i}) = 1 \quad \mathrm{a.s.},
}
where
\eq{
\AA_i^\eps \coloneqq  \{x \in \Z^d : f_i(x) > \eps\}, \quad i \geq 0,\ \eps > 0.
}
In \cite{vargas07}, Vargas defines asymptotic pure atomicity by the same convergence condition, but in probability rather than almost surely.  \textit{A priori}, the definition considered here is a stronger one, but since \eqref{noVSD_claim} implies the negation of the ``in probability" definition, Theorem \ref{total_mass} shows the two definitions are equivalent given \eqref{walk_assumption_1} and \eqref{mgf_assumption}.

We begin by making the necessary definitions in the abstract setting.

\subsection{Definitions of relevant functionals}
Observe that the quantity of interest, $\mu_i^\beta(\sigma_i \in \AA_i^{\eps})$, is a function of the mass function $f_i$.
Specifically,
\eq{
\mu_i^\beta(\sigma_i \in \AA_i^{\eps}) = \sum_{x \in \Z^d\, : f_i(x) > \eps} f_i(x).
}
We are thus motivated to define $\|\cdot\|_\eps : \SS_0 \to [0,1]$ by
\eq{
\|f\|_\eps \coloneqq  \sum_{u\in\N\times\Z^d\, :\, f(u) > \eps} f(u).
}
For any $\eps \in (0,1)$, the map $f \mapsto \|f\|_\eps$ satisfies (i)--(iii) of Corollary \ref{defined_pspm} and thus induces a well-defined function on $\SS$.
To establish asymptotic pure atomicity, it will be important that this function is lower semi-continuous, a fact we prove in the lemma below.
Another useful functional will be $\vc{I}_\eps : \SS_0 \to \{0,1\}$ given by
\eq{
\vc{I}_\eps(f) = \begin{cases}
1 &\text{if } \max_{u \in \N \times \Z^d} f(u) \geq \eps, \\
0 &\text{otherwise}.
\end{cases}
}
Once more, it is clear that $\vc{I}_\eps$ satisfies the hypotheses of Corollary \ref{defined_pspm}, and so it is well-defined on~$\SS$.

\begin{lemma} \label{eps_norm_equivalence}
For any $\eps \in (0,1)$, the following statements hold:
\begin{itemize}
\item[(a)] The map $\|\cdot\|_\eps : \SS \to [0,1]$ is lower semi-continuous and thus measurable.
\item[(b)] The map $\vc{I}_\eps : \SS \to \{0,1\}$ is upper semi-continuous and thus measurable.
\end{itemize}
\end{lemma}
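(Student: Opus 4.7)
The plan is to treat (a) and (b) separately, with (b) essentially falling out of Lemma \RefVLemma... wait, actually out of Lemma \ref{max_lemma}, and (a) following the same pattern as the lower-semicontinuity proof for $\|\cdot\|$ in Lemma \ref{norm_equivalence}.

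For part (a), the key observation is that for any $f \in \SS_0$, the set $A_f \coloneqq \{u \in \N\times\Z^d : f(u)>\eps\}$ is \emph{finite}, since each of its elements contributes more than $\eps$ to the sum $\|f\|\leq 1$, so $|A_f|<1/\eps$. Fixing a representative $f \in \SS_0$ and an arbitrary small $\theta>0$, I would choose
\eq{
\delta \;<\; \min\Big\{\inf_{u\in A_f}\bigl(f(u)^\alpha - \eps^\alpha\bigr),\ \tfrac{\alpha\theta}{2},\ \tfrac{\theta}{2}\Big\},
}
and argue that any $g\in\SS$ with $d_\alpha(f,g)<\delta$ satisfies $\|g\|_\eps > \|f\|_\eps - \theta$. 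Given such a $g$, there is a representative and an isometry $\phi:C\to\N\times\Z^d$ with $d_{\alpha,\phi}(f,g)<\delta$. The first condition on $\delta$ forces $A_f\subset C$ (else some $u\in A_f\setminus C$ would give $d_{\alpha,\phi}(f,g)\geq f(u)^\alpha > \eps^\alpha + \delta$, a contradiction), and moreover forces $g(\phi(u))>\eps$ for every $u\in A_f$, since $|f(u)-g(\phi(u))|\leq d_{\alpha,\phi}(f,g)/\alpha$ is too small to bring $g(\phi(u))$ down to $\eps$. Thus $\phi(A_f)$ contributes to $\|g\|_\eps$, and
\eq{
\|g\|_\eps \;\geq\; \sum_{u\in A_f} g(\phi(u)) \;\geq\; \sum_{u\in A_f} f(u) - \frac{d_{\alpha,\phi}(f,g)}{\alpha} \;=\; \|f\|_\eps - \frac{\delta}{\alpha} \;>\; \|f\|_\eps - \theta,
}
which gives the desired lower semi-continuity.

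For part (b), I would simply note that $\vc{I}_\eps(f) = \one_{\{\max(f)\geq \eps\}}$, where $\max:\SS\to[0,1]$ was shown to be continuous in Lemma \ref{max_lemma}. Hence the set $\{f\in\SS:\max(f)\geq \eps\}$ is closed, and the indicator of a closed set is upper semi-continuous (equivalently, the superlevel set $\{\vc{I}_\eps\geq c\}$ is closed for every $c\in\R$: it is either $\varnothing$, the closed set $\{\max\geq\eps\}$, or all of $\SS$). Measurability of both functions follows from semi-continuity.

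I do not anticipate a significant obstacle here; the only care required is in part (a), where one must verify that the threshold $\delta$ simultaneously keeps all of $A_f$ inside the isometry's domain and keeps the transported values strictly above $\eps$ — both of which are handled by the single choice $\delta<\inf_{u\in A_f}(f(u)^\alpha - \eps^\alpha)$ (using $f(u)>\eps$ on $A_f$). If $A_f$ happens to be empty, the inequality $\|g\|_\eps\geq 0 = \|f\|_\eps$ is trivial for every $g$, so that degenerate case causes no trouble.
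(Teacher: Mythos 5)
Your proof is correct and follows essentially the same strategy as the paper's: for (a), use finiteness of the superlevel set $A_f$, choose $\delta$ small enough to force $A_f$ into the isometry's domain with transported values still above $\eps$, then bound $\|g\|_\eps$; for (b), invoke Lemma~\ref{max_lemma} (the paper argues via sequences rather than indicators of closed sets, but both routes are the same idea). One cosmetic slip: the ``$=$'' in your final display in part (a) should be a ``$>$'', since $d_{\alpha,\phi}(f,g)<\delta$ rather than equals it.
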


\begin{proof}
The proof of (a) will be similar to that of Lemma \ref{norm_equivalence}.
It suffices to fix $f \in \SS$, let $\delta_2 > 0$ be arbitrary, and find $\delta_1 > 0$ sufficiently small that
\eq{
d_\alpha(f,g) < \delta_1 \quad \implies \quad \|g\|_\eps > \|f\|_\eps - \delta_2.
}
This task is trivial if $\|f\|_\eps = 0$.
Otherwise, we pick a representative $f \in \SS_0$, and consider the nonempty set
$A \coloneqq  \{u \in \N \times \Z^d : f(u) > \eps\}$.
%\eq{
%A \coloneqq  \{u \in \N \times \Z^d : f(u) > \eps\}.
%}
Since $A$ is finite, $f$ achieves its minimum $t>\eps$ over $A$.
%\eq{
%t \coloneqq  \inf_{u \in A} f(u) > \eps.
%}
Now choose $\delta_1>0$ such that
\eq{
\delta_1 < \min\{\delta_2,\eps^\alpha,t-\eps\}.
}
If $d_\alpha(f,g) < \delta_1$, then there is a representative $g \in \SS_0$ and an isometry $\phi : C \to \N \times \Z^d$ such that $d_{\alpha,\phi}(f,g) < \delta_1$.
It follows that $A \subset C$, since otherwise $d_{\alpha,\phi}(f,g) \geq f(u)^\alpha > \eps^\alpha > \delta_1$ for some $u \in A \setminus C$.
Consequently,
\eq{
\sum_{u \in A} |f(u) - g(\phi(u))| \leq d_{\alpha,\phi}(f,g) < \delta_1 < t - \eps.
}
In particular, for every $u \in A$,
\eq{
g(\phi(u)) \geq f(u) - |f(u) - g(\phi(u))| > t - (t - \eps) = \eps.
}
Hence
\eq{
\|g\|_\eps \geq \sum_{u \in A} g(\phi(u)) &\geq \sum_{u \in A} f(u) - \sum_{u \in A} |f(u) - g(\phi(u))| \\
&\geq \|f\|_\eps - d_{\alpha,\phi}(f,g) > \|f\|_\eps - \delta_1 > \|f\|_\eps - \delta_2,
}
which completes the proof of (a).

For claim (b) we need only to consider the case when $(f_n)_{n \geq 1}$ is a sequence in $\SS$ satisfying $\vc{I}_\eps(f_n) = 1$ for infinitely many $n$, and $d_\alpha(f_n,f) \to 0$ as $n \to \infty$.
We must show $\vc{I}_\eps(f) = 1$.
By passing to a subsequence, we may assume $\vc{I}_\eps(f_n) = 1$ for all $n$.
That is, $\max(f_n) \geq \eps$ for all $n$, and so Lemma \ref{max_lemma} forces $\max(f) \geq \eps$, meaning $\vc{I}_\eps(f) = 1$.
\end{proof}

\subsection{Proof of asymptotic pure atomicity at low temperature} \label{characterize_proof}
%Before proving the main theorem of this section, we give a sufficient condition for asymptotic pure atomicity.
First we simplify the problem by providing a sufficient condition for asymptotic pure atomicity.
%The statement below can be easily verified by the reader.

\begin{lemma} \label{equiv_apa}
If for every $c < 1$, there is $\eps > 0$ such that
\eeq{ \label{equiv_apa_eq}
\liminf_{n \to \infty} \frac{1}{n} \sum_{i = 0}^{n-1} \mu_i^\beta(\sigma_i \in \AA_i^\eps) > c \quad \mathrm{a.s.}, 
}
then $(f_i)_{i \geq 0}$ is asymptotically purely atomic.
\end{lemma}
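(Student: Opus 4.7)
The plan is to exploit monotonicity in the threshold $\eps$: since $\eps_i \to 0$, the sets $\AA_i^{\eps_i}$ eventually contain the sets $\AA_i^\eps$ for any fixed $\eps > 0$, so the hypothesis at a single well-chosen $\eps$ already dominates the full Ces\`aro sum with vanishing thresholds. No probabilistic input beyond the hypothesis is needed; the argument is essentially pointwise on the almost sure event where \eqref{equiv_apa_eq} holds.

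First I would fix an arbitrary sequence $(\eps_i)_{i \geq 0}$ with $\eps_i \to 0$, and fix an arbitrary $c \in (0,1)$. Using the hypothesis, choose $\eps > 0$ such that \eqref{equiv_apa_eq} holds, and let $N = N_\eps$ be large enough that $\eps_i < \eps$ for all $i \geq N$. Then $\AA_i^\eps \subset \AA_i^{\eps_i}$ whenever $i \geq N$, so
\[
\mu_i^\beta(\sigma_i \in \AA_i^{\eps_i}) \geq \mu_i^\beta(\sigma_i \in \AA_i^\eps) \quad \text{for all } i \geq N.
\]

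Next I would deduce the Ces\`aro comparison: the two averages $\frac{1}{n}\sum_{i=0}^{n-1} \mu_i^\beta(\sigma_i \in \AA_i^{\eps_i})$ and $\frac{1}{n}\sum_{i=0}^{n-1}\mu_i^\beta(\sigma_i \in \AA_i^\eps)$ differ from their truncated analogues $\frac{1}{n}\sum_{i=N}^{n-1}(\cdot)$ by at most $N/n$, which tends to $0$. Therefore almost surely
\[
\liminf_{n \to \infty} \frac{1}{n}\sum_{i=0}^{n-1} \mu_i^\beta(\sigma_i \in \AA_i^{\eps_i}) \geq \liminf_{n \to \infty}\frac{1}{n}\sum_{i=0}^{n-1}\mu_i^\beta(\sigma_i \in \AA_i^\eps) > c.
\]
Since $c < 1$ is arbitrary, this gives $\liminf_n \frac{1}{n}\sum_{i=0}^{n-1}\mu_i^\beta(\sigma_i \in \AA_i^{\eps_i}) \geq 1$ almost surely. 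Combined with the trivial bound $\mu_i^\beta(\sigma_i \in \AA_i^{\eps_i}) \leq 1$, the $\liminf$ equals $1$, and therefore the limit exists and equals $1$ almost surely, establishing \eqref{apa_def}.

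There is no substantial obstacle here; the only point to handle carefully is the measure-zero exceptional set, which is easily managed by taking a countable sequence $c_k \nearrow 1$, choosing corresponding $\eps_k > 0$ via the hypothesis, and intersecting the countably many almost sure events on which \eqref{equiv_apa_eq} holds for each $\eps_k$. On this intersection, the argument above applies simultaneously for every $c_k$, yielding the desired almost sure convergence to $1$.
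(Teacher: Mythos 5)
Your proof is correct and takes essentially the same approach as the paper: both exploit the monotonicity $\AA_i^\eps \subset \AA_i^{\eps_i}$ once $\eps_i < \eps$, dispose of the finitely many initial indices via a vanishing $N/n$ contribution to the Ces\`aro average, and manage the exceptional null sets by intersecting over a countable sequence $c_k \nearrow 1$. The paper packages the same idea by explicitly threading $N_1, N_2, N_3$ and a target level $t = 1 - 1/k$, but the underlying argument is identical.
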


\begin{proof}
Let $(\eps_i)_{i \geq 0}$ be a sequence tending to 0 as $i \to \infty$.
%Let $t < 1$ be given.
We will show that under the given hypothesis, for any $k \in \N$ there is almost surely some $N$ such that
\eeq{
n \geq N \quad \implies \quad \frac{1}{n} \sum_{i = 0}^{n-1} \mu_{i}^\beta(\sigma_i \in \AA_i^{\eps_i}) \geq 1 - \frac{1}{k}. \label{apa_lemma_want}
}
This is sufficient to verify asymptotic pure atomicity, since then with probability one,
\eq{
\liminf_{n \to \infty} \frac{1}{n} \sum_{i = 0}^{n-1} \mu_i^\beta(\sigma_i \in \AA_i^{\eps_i}) \geq 1 - \frac{1}{k} \quad \text{for every $k$},
}
which of course implies \eqref{apa_def}.
%\eq{
%\P\bigg(\lim_{n \to \infty} \frac{1}{n} \sum_{i = 0}^{n-1} \mu_i^\beta(\sigma_i \in \AA_i^{\eps_i}) = 1\bigg)
%&= \P\bigg(\liminf_{n \to \infty} \frac{1}{n} \sum_{i = 0}^{n-1} \mu_i^\beta(\sigma_i \in \AA_i^{\eps_i}) \geq 1\bigg) \\
%&= \P\Bigg(\bigcap_{k = 1}^\infty \bigcup_{N = 1}^\infty \bigcap_{n = N}^\infty \bigg\{\frac{1}{n} \sum_{i = 0}^{n-1} \mu_i^\beta(\sigma_i \in \AA_i^{\eps_i}) \geq 1-\frac{1}{k}\bigg\}\Bigg) \\
%&= \lim_{k \to \infty} \P\Bigg(\bigcup_{N = 1}^\infty \bigcap_{n = N}^\infty \bigg\{\frac{1}{n} \sum_{i = 0}^{n-1} \mu_i^\beta(\sigma_i \in \AA_i^{\eps_i}) \geq 1- \frac{1}{k}\bigg\}\Bigg) \\
%&= \lim_{k \to \infty} 1 = 1.
%}
So set $t \coloneqq  1 - 1/k$, and choose any $c \in (t,1)$.
Then let $\eps > 0$ be such that \eqref{equiv_apa_eq} is satisfied.
In particular, there almost surely exists $N_1$ such that
\eq{
n \geq N_1 \quad \implies \quad \frac{1}{n} \sum_{i = 0}^{n-1} \mu_i^\beta(\sigma_i \in \AA_i^\eps) > c.
}
Next take $N_2$ large enough that
\eq{
i \geq N_2 \quad \implies \quad \eps_i < \eps \quad \implies \quad \mu_i^\beta(\sigma_i \in \AA_i^{\eps_i}) \geq \mu_i^\beta(\sigma_i \in \AA_i^\eps),
}
and then finally choose $N_3 > N_2$ so large that
\eq{
c - \frac{N_2}{N_3} > t.
}
It follows that for all $n \geq \max\{N_1,N_3\}$,
\eq{
\frac{1}{n} \sum_{i = 0}^{n-1} \mu_{i}^\beta(\sigma_i \in \AA_i^{\eps_i})
&\geq \frac{1}{n} \sum_{i = N_2}^{n-1} \mu_{i}^\beta(\sigma_i \in \AA_i^{\eps_i}) \\
&\geq \frac{N_2}{n} + \frac{1}{n} \sum_{i = N_2}^{n-1} \mu_{i}^\beta(\sigma_i \in \AA_i^{\eps_i}) - \frac{N_2}{N_3} \\
&\geq \frac{1}{n}\sum_{i = 0}^{N_2-1} \mu_{i}^\beta(\sigma_{i} \in \AA_i^{\eps}) + \frac{1}{n} \sum_{i = N_2}^{n-1} \mu_{i}^\beta(\sigma_{i} \in \AA_i^{\eps_i}) - \frac{N_2}{N_3} \\
&\geq \frac{1}{n}\sum_{i = 0}^{n-1} \mu_{i}^\beta(\sigma_{i} \in \AA_i^{\eps}) - \frac{N_2}{N_3}
> c - \frac{N_2}{N_3} > t.
}
So $N = \max\{N_1,N_3\}$ suffices for \eqref{apa_lemma_want}.
\end{proof}

We are now ready to prove Theorem \ref{intro_result1}. For the convenience of the reader, we will restate the result here before giving the proof.

\begin{thm} \label{total_mass}
Assume \eqref{walk_assumption_1} and \eqref{mgf_assumption}.
Then the following statements hold:
\begin{itemize}
\item[(a)] If $\beta > \beta_{\mathrm{c}}$, then $(f_i)_{i \geq 0}$ is asymptotically purely atomic.
\item[(b)] If $0 \leq \beta \leq \beta_{\mathrm{c}}$, then there is a sequence $(\eps_i)_{i \geq 0}$ tending to 0 as $i \to \infty$, such that
\eeq{
\lim_{n \to \infty} \frac{1}{n} \sum_{i = 0}^{n-1} \mu_{i}^\beta(\sigma_i \in \AA_i^{\eps_i}) = 0 \quad \mathrm{a.s.}\label{noVSD_claim}
}
\end{itemize}
\end{thm}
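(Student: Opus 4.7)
The plan is to translate the statement into the $\PP(\SS)$ framework and then apply Theorem \ref{close_M} together with the classification in Theorem \ref{characterization}. Since $f_i$ is supported on a single copy of $\Z^d$ by \eqref{fn_def2}, we have $\mu_i^\beta(\sigma_i \in \AA_i^\eps) = \|f_i\|_\eps$, so the Ces\`aro averages appearing in both (a) and (b) equal $\int \|f\|_\eps\,\rho_n(\dd f)$. Lemma \ref{eps_norm_equivalence}, combined with Lemma \ref{portmanteau}, shows that $\rho \mapsto \int \|f\|_\eps\,\rho(\dd f)$ is lower semi-continuous and that $\rho \mapsto \int \vc{I}_\eps(f)\,\rho(\dd f)$ is upper semi-continuous; these are the regularity properties to be exploited.

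For part (a), by Lemma \ref{equiv_apa} it suffices to produce, for each $c<1$, an $\eps>0$ with $\liminf_n \int \|f\|_\eps\,\rho_n(\dd f) > c$ almost surely. The first step is to show $\inf_{\rho \in \MM}\int \|f\|_\eps\,\rho(\dd f) \to 1$ as $\eps \downarrow 0$. Pointwise in $\rho \in \MM$, this follows from monotone convergence and Theorem \ref{characterization}(b), which gives $\|f\| = 1$ $\rho$-almost surely. To promote pointwise to uniform convergence over the compact set $\MM$, I would suppose for contradiction that some sequence $\rho_k \in \MM$ and $\eps_k \downarrow 0$ satisfied $\int \|f\|_{\eps_k}\,\rho_k(\dd f) \leq 1-\delta$; pass along a subsequence to a limit $\rho^* \in \MM$, and for each fixed $\eps > 0$ combine the monotonicity $\|f\|_\eps \leq \|f\|_{\eps_k}$ (for $\eps_k \leq \eps$) with the lower semi-continuity of $\rho \mapsto \int \|f\|_\eps\,\rho(\dd f)$ to obtain $\int \|f\|_\eps\,\rho^*(\dd f) \leq 1-\delta$ for every $\eps$, contradicting the pointwise limit at $\rho^*$. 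Given this uniform bound, choose $\eps$ so that $\inf_{\rho \in \MM}\int \|f\|_\eps\,\rho(\dd f) > c$; an analogous lower-semi-continuity-plus-compactness argument applied along the a.s.\ convergence $\WW_\alpha(\rho_n, \MM) \to 0$ of Theorem \ref{close_M} yields the required a.s.\ liminf.

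For part (b), Theorem \ref{characterization}(a) gives $\MM = \{\delta_{\vc{0}}\}$, so Theorem \ref{close_M} yields $\rho_n \to \delta_{\vc{0}}$ a.s. For each fixed $k$, upper semi-continuity of $\vc{I}_{1/k}$ gives $\frac{1}{n}\sum_{i<n}\vc{I}_{1/k}(f_i) \to 0$ a.s., and since $\|\cdot\|_{1/k} \leq \vc{I}_{1/k}$, also $\frac{1}{n}\sum_{i<n}\|f_i\|_{1/k} \to 0$ a.s. Bounded convergence then makes $a_k(n) := \E\bigl[\frac{1}{n}\sum_{i<n}\|f_i\|_{1/k}\bigr] \to 0$ as $n \to \infty$ for every $k$. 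I would choose deterministic integers $N_1 < N_2 < \cdots$ with $\sup_{n \geq N_k} a_k(n) < 4^{-k}$ and set $\eps_i := 1/k$ for $N_k \leq i < N_{k+1}$; then $\eps_i \to 0$ is deterministic. For $n \in [N_k, N_{k+1})$ and $i < n$ we have $\eps_i \geq 1/k$, hence $\|f_i\|_{\eps_i} \leq \|f_i\|_{1/k}$ by monotonicity of $\|\cdot\|_\eps$ in $\eps$, and so $\frac{1}{n}\sum_{i<n}\|f_i\|_{\eps_i} \leq \frac{1}{n}\sum_{i<n}\|f_i\|_{1/k}$. A Markov bound plus Borel--Cantelli along a subsequence $(n_m)$ with $n_{m+1}/n_m \to 1$, combined with the trivial Lipschitz bound $|S_{n'}-S_n| \leq n'-n$ for $S_n := \sum_{i<n}\|f_i\|_{\eps_i}$, upgrades the $L^1$ control into a.s.\ convergence over all $n$.

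The main obstacle is the uniform-in-$\rho$ claim in part (a): Dini's theorem does not apply directly because the approximants are merely lower (not continuous) semi-continuous on $\MM$, so one must argue via subsequential compactness and monotonicity. The diagonal construction in part (b) is also a little delicate, since the Ces\`aro sum must be controlled uniformly in $n$ inside each block $[N_k, N_{k+1})$ rather than just at the block endpoints.
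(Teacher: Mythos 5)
Your part (a) is essentially the paper's argument with one substitution: where the paper invokes a generalized Dini lemma (stated in the paper for a non-decreasing family of lower semi-continuous functions converging to an upper semi-continuous limit, exactly covering $\|\cdot\|_\eps \nearrow 1$ on $\MM$), you instead run a subsequential-compactness argument. Both are valid; the generalized Dini makes the step a one-liner, but your contradiction argument is a correct substitute. One small note: your statement that "Dini's theorem does not apply directly" is true of the classical version, but the paper sidesteps this precisely by proving the lower-semi-continuous variant as a lemma.

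Part (b), however, has a genuine gap in the step that upgrades $L^1$ control to almost sure convergence. You control $a_k(n) := \E\bigl[\frac{1}{n}\sum_{i<n}\|f_i\|_{1/k}\bigr]$, pick $N_k$ so that $\sup_{n\geq N_k}a_k(n) < 4^{-k}$, and then propose Markov $+$ Borel--Cantelli along a subsequence $(n_m)$ with $n_{m+1}/n_m \to 1$, with Lipschitz interpolation in between. The problem is that the number of subsequence points inside the block $[N_k,N_{k+1})$ is not controlled: the $N_k$'s are forced on you by the rate of decay of $a_k(n)$, and nothing prevents the blocks from growing so fast that the per-point bound $\P\bigl(S_{n_m}/n_m > \delta\bigr) \lesssim 4^{-k}/\delta$, summed over the points in block $k$, fails to converge. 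The interpolation itself also needs gaps $o(n_m)$, which interacts badly with the freedom needed in placing the $n_m$'s. The paper avoids this entirely by never passing through $L^1$: almost-sure convergence $\vc\II_\eps(\rho_n)\to 0$ directly gives $\lim_N \P\bigl(\bigcap_{n\geq N}\{\vc\II_\eps(\rho_n) < 2^{-j}\}\bigr) = 1$, and the $M_j$'s are chosen to make the complementary \emph{supremum-over-$n$} events have summable probability. Combined with a pointwise bound $\frac{1}{n}\sum_{i<n}\vc{I}_{\eps_i}(f_i) \leq \sum_{j \geq k : n \geq M_{j-1}} \vc\II_{\kappa_j}(\rho_n)$ valid for every realization, Borel--Cantelli applies with no subsequence or interpolation. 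Your argument can be patched along these lines --- for instance, replace $\sup_{n\geq N}\E[\cdot]$ with $\E\bigl[\sup_{n\geq N}\frac{1}{n}\sum_{i<n}\|f_i\|_{1/k}\bigr]$, which also tends to $0$ by dominated convergence, making a single Markov bound control all $n\geq N_k$ at once --- but as written the step does not go through.
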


While the functionals $\|\cdot\|_\eps$ and $\vc\II_\eps(\cdot)$ are not continuous, their semi-continuity will be sufficient to make a limiting argument, thanks to the following generalization of Dini's first theorem.

\begin{lemma} \label{dini}
Let $(\XX,m)$ be a compact metric space.
If $(F_n)_{n \geq 1}$ is a non-decreasing sequence of lower semi-continuous functions $\XX \to \R$ converging pointwise to an upper semi-continuous function $F$, then the convergence is uniform.
\end{lemma}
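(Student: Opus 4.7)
The plan is to follow the classical proof of Dini's theorem, modifying it so that only semi-continuity is used instead of continuity. The crucial observation is that for any $\eps > 0$, the ``level sets'' of $F - F_n$ at height $\eps$ remain open under the hypothesized semi-continuity.

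First I would fix $\eps > 0$ and define
\[
U_n \coloneqq \{x \in \XX : F(x) - F_n(x) < \eps\}.
\]
Since $F$ is upper semi-continuous and $F_n$ is lower semi-continuous, $-F_n$ is upper semi-continuous, so the sum $F - F_n$ is upper semi-continuous. Consequently $U_n$ is the preimage of the open set $(-\infty,\eps)$ under an upper semi-continuous function, hence open. The monotonicity assumption $F_n \le F_{n+1}$ gives $F - F_{n+1} \le F - F_n$, which implies $U_n \subseteq U_{n+1}$, so the sequence of open sets is nested and increasing.

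Next, because $F_n(x) \to F(x)$ monotonically for each $x$, we have $F(x) - F_n(x) \searrow 0$, so eventually $F(x) - F_n(x) < \eps$; that is, $x$ lies in some $U_n$. Therefore $\bigcup_n U_n = \XX$. Since $\XX$ is compact, the open cover $\{U_n\}$ has a finite subcover, and nestedness forces $U_N = \XX$ for some $N$. For all $n \ge N$ we then have $F(x) - F_n(x) < \eps$ uniformly in $x$. Monotone pointwise convergence also ensures $F_n \le F$ everywhere, so $0 \le F - F_n < \eps$ uniformly for $n \ge N$, establishing uniform convergence.

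The proof is essentially immediate once one recognizes that semi-continuity alone suffices to make the sets $U_n$ open, so there is no real obstacle; the only thing to be careful about is the direction of the semi-continuity hypotheses. Notably, the hypothesis that the limit $F$ is upper semi-continuous is essential: without it, the difference $F - F_n$ need not be upper semi-continuous, and the argument breaks down.
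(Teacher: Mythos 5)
Your proof is correct and follows essentially the same route as the paper's: both set $G_n = F - F_n$, observe that $G_n$ is upper semi-continuous so the sublevel sets $U_n = \{G_n < \eps\}$ are open, and then combine pointwise convergence, compactness, and nestedness to conclude $U_N = \XX$ for some $N$. The only cosmetic difference is that you explicitly justify $0 \le F - F_n$ at the end, whereas the paper absorbs this into the step $|F(x)-F_n(x)| = F(x)-F_n(x)$.
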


\begin{proof}
Let $G_n \coloneqq  F - F_n$ so that $G_n \searrow 0$ pointwise on $\XX$, and $G_n$ is upper semi-continuous.
For given $\eps > 0$, consider the set
\eq{
U_n \coloneqq  \{x \in \XX : G_n(x) < \eps\},
}
which is open because $G_n$ is upper semi-continuous.
Since $G_n(x) \to 0$ for every $x \in \XX$, we have $\bigcup_n U_n = \XX$.
By compactness of $\XX$, there is a finite list $n_1 < n_2 < \dots < n_k$ such that
$\bigcup_{j = 1}^k U_{n_j} = \XX$.
But the monotonicity assumption guarantees $U_n$ is an ascending sequence.
Hence $\XX = \bigcup_{j = 1}^k U_{n_j} = U_{n_k} = U_n$ for all $n \geq n_k$, meaning
\eq{
n \geq n_k \quad &\implies \quad
U_n = \XX \quad\implies \quad
|F(x) - F_n(x)| = F(x) - F_n(x) < \eps \quad \text{for all $x \in \XX$.}
}
That is, $F_n \nearrow F$ uniformly on $\XX$.
\end{proof}

\begin{proof}[Proof of Theorem \ref{total_mass}]
%Recall the notations introduced in the statement of Corollary \ref{equivalences}. 
We first prove (a).
%Suppose that $\beta > \beta_{\mathrm{c}}$.
For $\eps > 0$, define $\|\cdot\|_\eps$ on $\PP(\SS)$ by
\eq{
\|\rho\|_\eps \coloneqq  \int \|f\|_\eps\ \rho(\dd f)
= \int \sum_{u\, : f(u) > \eps} f(u)\ \rho(\dd f).
}
%In the notation of Lemma \ref{equiv_apa},
%Then note that
%\eq{
%\frac{1}{n} \sum_{i = 0}^{n-1} \mu_i^\beta(\sigma_i \in \AA_i^\eps) 
%= \frac{1}{n}\sum_{i = 0}^{n-1} \|f_i\|_\eps
%= \|\rho_{n-1}\|_\eps.
%}
In light of Lemma \ref{portmanteau}, Lemma \ref{eps_norm_equivalence}(a) implies $\|\cdot\|_\eps : \PP(\SS) \to \R$ is lower semi-continuous.
For any $f \in \SS$ such that $\|f\| = 1$, we have $\|f\|_\eps \nearrow 1$ as $\eps \to 0$.
In particular, under the assumption that $\beta > \beta_{\mathrm{c}}$,
Theorem \ref{characterization}(b) and monotone convergence imply that for any $\rho \in \MM$,
%\eq{
% \|f\|_\eps \nearrow 1 \quad \text{as $\eps \to 0$,} \quad \rho\text{-a.s.}
%}
%It follows by monotone convergence that
\eq{
\|\rho\|_\eps \nearrow 1 \quad \text{as $\eps \to 0$.} %\label{pointwise_on_M}
}
%for every $\rho \in \MM$.
Since $\MM$ is compact, Lemma \ref{dini} strengthens this pointwise convergence %\eqref{pointwise_on_M} 
to uniform convergence.
That is, for any $c < 1$, there is $\eps > 0$ such that $\|\rho\|_\eps > c$ for all $\rho\in\MM$.
%\eq{
%\|\rho\|_\eps > c \quad \text{for all $\rho \in \MM$.} %\label{uniform_min}
%}
%Given $c$ and $\eps$ satisfying \eqref{uniform_min}, 
Furthermore, by compactness of $\PP(\SS)$ and lower semi-continuity of $\|\cdot\|_\eps$, we can find $\delta > 0$ such that for any $\rho \in \PP(\SS)$,
\eq{
\WW_\alpha(\rho,\MM) < \delta \quad \implies \quad \|\rho\|_\eps > c. %\label{uniform_close}
}
%If this were not true, then one could construct a sequence $(\rho_k)_{k \geq 1}$ such that $\WW_\alpha(\rho_k,\MM) \to 0$ as $k \to \infty$, but $\|\rho_k\|_\eps \leq c$ for all $k$.
%By compactness, we could pass to a subsequence and assume $\rho_k \to \rho$ for some $\rho \in \PP(\SS)$.
%But then $\WW_\alpha(\rho,\MM) = 0$, implying $\rho \in \MM$ since $\MM$ is closed.
%Hence $\|\rho\|_\eps > c$ by \eqref{uniform_min}, a contradiction to the inequality given by lower semi-continuity:
%\eq{
%\|\rho\|_\eps \leq \liminf_{k \to \infty} \|\rho_k\|_\eps \leq c.
%}
Now Theorem \ref{close_M} implies
\eq{
\liminf_{n\to\infty} \frac{1}{n} \sum_{i = 0}^{n-1} \mu_i^\beta(\sigma_i \in \AA_i^\eps) 
%= \frac{1}{n}\sum_{i = 0}^{n-1} \|f_i\|_\eps
= \liminf_{n\to\infty} \|\rho_{n}\|_\eps \geq c \quad \mathrm{a.s.}
}
%We can now verify the hypothesis of Lemma \ref{equiv_apa}.
%Given $c < 1$, choose $\eps > 0$ and $\delta > 0$ such that \eqref{uniform_min} and \eqref{uniform_close} hold.
%By Theorem \ref{close_M}, there is almost surely $N$ large enough that
%\eq{
%n \geq N \quad \implies \quad \WW_\alpha(\rho_{n-1},\MM) < \delta
%\quad \implies \quad \|\rho_{n-1}\|_\eps > c,
%}
%where
%\eq{
%\|\rho_{n-1}\|_\eps = \frac{1}{n} \sum_{i = 0}^{n-1} \mu_i^\beta(\sigma_i \in \AA_i^\eps).
%}
We have thus verified the hypothesis of Lemma \ref{equiv_apa}, and so $(f_i)_{i \geq 0}$ is asymptotically purely atomic.

For (b), we assume $0 \leq \beta \leq \beta_{\mathrm{c}}$.
For $\eps > 0$, define $\vc{\II}_\eps : \PP(\SS) \to \R$ by
\eq{
\vc\II_\eps(\rho) \coloneqq  \int \vc{I}_\eps(f)\ \rho(\dd f)
= \int \one_{\{\max_{u \in \N \times \Z^d} f(u) \geq \eps\}}\ \rho(\dd f).
}
%where $\one$ denotes the indicator function.
Considering Lemma \ref{portmanteau}, we see from Lemma \ref{eps_norm_equivalence}(b) that $\vc\II_\eps$ is an upper semi-continuous map.
%In particular, for a sequence $(\rho_k)_{k \geq 1}$ in $\PP(\MM)$,
%\eeq{
%\lim_{k \to \infty} \WW_\alpha(\rho_k,\delta_{\vc{0}}) = 0 \quad \implies \quad
%\limsup_{k \to \infty} \vc\II_\eps(\rho_k) \leq \vc\II_\eps(\delta_\vc{0}) = 0. \label{usc_consequence}
%}
By Theorem \ref{characterization} and Theorem \ref{close_M}, $\WW_\alpha(\rho_n,\delta_\vc{0}) \to 0$ almost surely as $n \to \infty$, and so
\eq{
\limsup_{n \to \infty} \vc\II_\eps(\rho_n) \leq \vc\II_\eps(\delta_\vc{0}) = \vc{I}_\eps(\vc{0}) = 0 \quad \mathrm{a.s.} \quad \text{for any $\eps>0$}.
}
In particular, for any $j \in \N$,
\eq{
 %&=  \P\bigg(\bigcup_{N = 1}^\infty \bigcap_{n = N}^\infty \{\vc\II_\eps(\rho_{n-1}) < 2^{-j}\}\bigg)
%&= \lim_{M \to \infty} \P\bigg(\bigcup_{N = 1}^M \bigcap_{n = N}^\infty \{\vc\II_\eps(\rho_{n-1}) < 2^{-j}\}\bigg) \\
%&=\lim_{M \to \infty} \P\bigg(\bigcap_{n = M}^\infty \{\vc\II_\eps(\rho_{n-1}) < 2^{-j}\}\bigg). \label{limit_bad_probability}
\lim_{N \to \infty} \P\bigg(\bigcap_{n = N}^\infty \{\vc\II_\eps(\rho_{n}) < 2^{-j}\}\bigg)=1. %\label{limit_bad_probability}
}
%By a countable intersection of almost sure events, we have that with probability one,
%\eq{
%\lim_{n \to \infty} \vc\II_{k^{-1}}(\rho_n) = 0 \quad \text{for all $k \in \N$}.
%}
%Since $\vc\II_\eps \leq \vc\II_{\eps'}$ for $\eps > \eps'$, this is sufficient to conclude that with probability one,
%\eeq{
%\lim_{n \to \infty} \vc\II_{\eps}(\rho_n) = 0 \quad \text{for all $\eps > 0$}. \label{max_always_0}
%}
%Now assume the almost sure event \eqref{max_always_0}, and 
Now let $(\kappa_j)_{j \geq 1}$ be any decreasing sequence tending to $0$ as $j \to \infty$.
Set $M_0 \coloneqq  0$.
By taking complements in the above display, we can inductively choose $M_j > M_{j-1}$ such that
\eeq{ \label{bad_probability}
\P\bigg(\bigcup_{n = M_j}^\infty \{\vc\II_{\kappa_{j+1}}(\rho_{n}) \geq 2^{-(j+1)}\} \bigg) &< 2^{-(j+1)}, \quad j\geq1.
}
%\begin{align}
%\P\bigg(\bigcap_{n = M_j}^\infty \{\vc\II_{\kappa_{j+1}}(\rho_{n-1}) < 2^{-(j+1)}\}\bigg) &> 1 - 2^{-(j+1)}, \nonumber
%\intertext{which means}
%\P\bigg(\bigcup_{n = M_j}^\infty \{\vc\II_{\kappa_{j+1}}(\rho_{n-1}) \geq 2^{-(j+1)}\} \bigg) &< 2^{-(j+1)}.
%\label{bad_probability}
%\end{align}
Define $\eps_i \coloneqq  \kappa_j$ when $M_{j-1} \leq i \leq M_{j} - 1$.
For $M_{\ell-1} \leq n \leq M_{\ell}$ and any $k<\ell$, the monotonicity of $\kappa_j$ gives
\eq{
\frac{1}{n} \sum_{i = 0}^{n-1} \vc{I}_{\eps_i}(f_i)
&= \frac{1}{n} \bigg[\sum_{i = 0}^{M_k-1} \vc{I}_{\eps_i}(f_i) + \sum_{j = k+1}^{\ell-1} \sum_{i = M_{j-1}}^{M_{j}-1} \vc{I}_{\kappa_j}(f_i) + \sum_{i = M_{\ell-1}}^{n-1} \vc{I}_{\kappa_{\ell}}(f_i)\bigg]  \\
&\leq \frac{1}{n}\bigg[\sum_{i = 0}^{M_k-1} \vc{I}_{\kappa_{k}}(f_i) + \sum_{j = k+1}^{\ell} \sum_{i = 0}^{n-1} \vc{I}_{\kappa_j}(f_i)\bigg]\\
&\leq \sum_{j = k}^{\ell} \frac{1}{n} \sum_{i = 0}^{n-1} \vc{I}_{\kappa_j}(f_i)
= \sum_{j = k}^\ell \vc\II_{\kappa_j}(\rho_{n}),
}
where we have identified $f_i$ with an element of $\SS$ (this identification is measurable by Lemma~\ref{S_meas}, cf.~the discussion following Corollary \ref{increment_cor}).
Writing a more general inequality, we can say that for all $n \geq M_k$,
\eq{
\frac{1}{n} \sum_{i = 0}^{n-1} \vc{I}_{\eps_i}(f_i)
\leq \sum_{j \geq k :\ n \geq M_{j-1}} \vc\II_{\kappa_j}(\rho_{n}).
}
It follows from a union bound and \eqref{bad_probability}  that
\eq{
\P\bigg(\bigcup_{n = M_k}^\infty \bigg\{\frac{1}{n} \sum_{i = 0}^{n-1} \vc{I}_{\eps_i}(f_i) \geq \sum_{j \geq k} 2^{-j}\bigg\}\bigg)
&\leq \P\bigg(\bigcup_{n = M_{k-1}}^\infty \bigcup_{j \geq k\, :\, n \geq M_{j-1}} \{\vc\II_{\kappa_j}(\rho_{n}) \geq 2^{-j}\}\bigg) \\
&= \P\bigg(\bigcup_{j \geq k} \bigcup_{n= M_{j-1}}^\infty \{\vc\II_{\kappa_j}(\rho_{n}) \geq 2^{-j}\}\bigg) \\
&\leq \sum_{j \geq k} \P\bigg(\bigcup_{n = M_{j-1}}^\infty \{\vc\II_{\kappa_j}(\rho_{n}) \geq 2^{-j}\}\bigg) 
< \sum_{j \geq k} 2^{-j} = 2^{-k+1}.
}
By Borel--Cantelli, the following is true with probability one: For only finitely many $k$ is
\eq{
\frac{1}{n} \sum_{i = 0}^{n-1} \vc{I}_{\eps_i}(f_i) \geq 2^{-k+1} \quad \text{for some $n \geq M_k$.}
}
This implies
\eeq{
\lim_{n \to \infty} \frac{1}{n} \sum_{i = 0}^{n-1} \vc{I}_{\eps_i}(f_i) = 0 \quad \mathrm{a.s.} \label{bigger_to_0}
}
Finally, note that $\mu_i^\beta(\sigma_i \in \AA_i^\eps)$ is nonzero only when $\AA_i^\eps$ is nonempty, in which case $\vc{I}_\eps(f_i) = 1$.
Hence
\eq{
\mu_i^\beta(\sigma_i \in \AA_i^\eps) \leq \vc{I}_\eps (f_i) \quad \text{for any $i \geq 0$, $\eps > 0$,}
}
and so \eqref{noVSD_claim} follows from \eqref{bigger_to_0}.
\end{proof}

%\subsection{Equivalent notions of asymptotic pure atomicity} \label{equivalent_notions_apa}
\begin{cor} \label{apa_either_way}
Assume \eqref{walk_assumption_1} and \eqref{mgf_assumption}. 
Let $g_i(x) \coloneqq  \mu_{i-1}^\beta(\sigma_i =x)$. 
Then the sequence $(g_i)_{i \geq 1}$ is asymptotically purely atomic if and only if $(f_i)_{i \geq 0}$ is asymptotically purely atomic.
\end{cor}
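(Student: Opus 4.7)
The plan is to exploit the convolution identity $g_i(x) = \sum_y f_{i-1}(y) P(y,x)$ from \eqref{next_step}, together with elementary properties of the step distribution $P(0,\cdot)$, in order to prove each implication by splitting $f_{i-1}$ (respectively $f_i$) into its atomic part at a suitably chosen threshold and its non-atomic part, and controlling the two pieces so that the error terms vanish after Ces\`aro averaging. Throughout, I would work with the equivalent formulation of asymptotic pure atomicity, namely that $\frac{1}{n}\sum_{i} f_i(\{y : f_i(y) \leq \eps_i\}) \to 0$ almost surely for every $\eps_i \to 0$, and likewise for $g$. Let $\AA_i^\eps = \{y : f_i(y) > \eps\}$ as in the paper, and write $\BB_i^\eps \coloneqq \{x : g_i(x) > \eps\}$.

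For the forward direction, given $\eps_j \to 0$, I would estimate
\[
1 - g_j(\BB_j^{\eps_j}) = \sum_y f_{j-1}(y) \sum_{x\, :\, g_j(x) \leq \eps_j} P(y,x)
\]
and split the outer sum according to whether $f_{j-1}(y) > \delta_j$, for a threshold $\delta_j$ to be chosen. When $f_{j-1}(y) > \delta_j$, the inequality $g_j(x) \geq f_{j-1}(y) P(y,x) > \delta_j P(y,x)$ forces $\{x : g_j(x) \leq \eps_j\} \subseteq \{x : P(y,x) \leq \eps_j/\delta_j\}$, so the inner sum is bounded by $\mu(\eps_j/\delta_j)$, where
\[
\mu(\tau) \coloneqq \sum_{z\, :\, P(0,z) \leq \tau} P(0,z) \xrightarrow[\tau \to 0^+]{} 0
\]
by dominated convergence. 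The remaining part of the outer sum is trivially bounded by $1 - f_{j-1}(\AA_{j-1}^{\delta_j})$. Taking $\delta_j = \sqrt{\eps_j}$, so that $\eps_j/\delta_j = \sqrt{\eps_j} \to 0$, and Ces\`aro-averaging, the first term vanishes deterministically, while the second vanishes almost surely by applying the APA hypothesis for $f$ to the sequence $(\sqrt{\eps_j})_{j}$.

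For the converse, given $\eta_i \to 0$, the essential input is the pointwise bound $(f_i^{\leq \eta_i} \ast P)(x) \leq \eta_i$, valid because $\sum_y P(y,x) = 1$; here $f_i^{\leq \eta_i}$ denotes $f_i$ restricted to $\{y : f_i(y) \leq \eta_i\}$. Combining this with the counting bound $|\{x : g_{i+1}(x) > \eps_{i+1}\}| \leq 1/\eps_{i+1}$ and the decomposition $g_{i+1} = f_i^{>\eta_i} \ast P + f_i^{\leq \eta_i} \ast P$, one gets
\[
1 - f_i(\AA_i^{\eta_i}) = \sum_x (f_i^{\leq \eta_i}\ast P)(x) \leq \eta_i/\eps_{i+1} + \bigl(1 - g_{i+1}(\BB_{i+1}^{\eps_{i+1}})\bigr),
\]
and I would then choose $\eps_{i+1} = \sqrt{\eta_i}$. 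The Ces\`aro mean of $\eta_i/\eps_{i+1} = \sqrt{\eta_i}$ tends to $0$ deterministically, and the remaining term vanishes almost surely by applying APA of $(g_j)$ to the sequence $\eps'_j = \sqrt{\eta_{j-1}}$. The main task in both directions is simply to balance the two thresholds so that the atomic and non-atomic error terms both vanish in the Ces\`aro limit, which the choice $\delta_j = \sqrt{\eps_j}$ (respectively $\eps_{i+1} = \sqrt{\eta_i}$) achieves; there is no conceptual obstruction beyond this.
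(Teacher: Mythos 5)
Your proof is correct and takes a genuinely different route from the paper's, particularly in the converse direction. For the forward implication the paper fixes a finite box of radius $k$ around each atom, loses a factor $(1-\kappa)$ in the Ces\`aro average (where $\kappa = P(\|\sigma_1\|_1 > k)$), and then sends $\kappa \to 0$; you instead couple the two thresholds adaptively via $\delta_j = \sqrt{\eps_j}$ and absorb the tail of $P(0,\cdot)$ into the vanishing quantity $\mu(\sqrt{\eps_j})$, avoiding the $\kappa$-limit entirely. The sharper difference is the converse. The paper does not prove it directly: it argues by contrapositive, invoking Theorem \ref{total_mass} to conclude that failure of APA for $(f_i)$ forces $\beta \leq \beta_\cc$, at which point \eqref{bigger_to_0} supplies a witnessing sequence; the implication then follows from the crude bound $\mu_{i-1}^\beta(\sigma_i\in\BB_i^\eps) \leq \vc{I}_\eps(f_{i-1})$. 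Your converse is elementary and self-contained: the decomposition $g_{i+1} = f_i^{>\eta_i}\ast P + f_i^{\leq\eta_i}\ast P$, the uniform bound $(f_i^{\leq\eta_i}\ast P)(x)\leq\eta_i$, and the counting bound $|\BB_{i+1}^{\eps_{i+1}}|\leq 1/\eps_{i+1}$ give the two-sided comparison without appealing to the phase-transition machinery at all. This makes the equivalence a purely structural fact about the convolution $g_i = f_{i-1}\ast P$, valid as soon as the polymer measures are defined, rather than a consequence of the abstract theory of partitioned subprobability measures. In particular, your argument would let the corollary be stated and proved before Theorem \ref{total_mass}, rather than after it.
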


\begin{proof}
%Matching the notation used in \cite{vargas07}, 
Let $\kappa$ be an arbitrary positive number, and then choose $k$ sufficiently large that $P(\|\sigma_1\|_1 > k) < \kappa$.
For each $y\in\Z^d$, define the set
\eq{
\S(y,k) \coloneqq \{x\in\Z^d : \|x-y\|\leq k,\ P(y,x) > 0\}.
}
Set $\delta \coloneqq \min_{x\in\S(y,k)} P(y,x) > 0$ (by translation invariance, this number $\delta$ does not depend on $y$).
Let us write
\eq{
\BB_i^\eps \coloneqq  \{x \in \Z^d : \mu_{i-1}^\beta(\sigma_i = x) > \eps\}, \quad i \geq 1,\ \eps > 0.
}

Now suppose $(f_i)_{i \geq 0}$ is asymptotically purely atomic.
Let $(\eps_i)_{i \geq 0}$ be any sequence tending to 0 as $i \to \infty$.
Then $\eps_i/\delta$ also tends to 0 as $i \to \infty$, and so
\eeq{
\lim_{n \to \infty} \frac{1}{n} \sum_{i = 0}^{n-1} \mu_i^\beta(\sigma_i \in \AA_i^{\eps_i/\delta}) = 1 \quad \mathrm{a.s.} \label{b_apa}
}
The observation
\eeq{
\mu_{i-1}^\beta(\sigma_i = x) = \sum_{y\in\Z^d} \mu_{i-1}^\beta(\sigma_{i-1} = y)P(y,x) \label{next_step_repeat}
}
shows
\eq{
y \in \AA_{i-1}^{\eps/\delta} \quad \implies \quad x \in \BB_i^\eps \text{ for all $x\in\S(y,k)$.}
}
Consequently, for any $\eps > 0$ we have the bound
\eq{
\mu_{i-1}^\beta(\sigma_i \in \BB_i^\eps)
&\geq \sum_{y \in \AA_{i-1}^{ \eps/\delta}} \sum_{x \in\S(y,k)} \mu_{i-1}^\beta(\sigma_{i-1} = y)P(y,x)\\
&\geq (1-\kappa)\sum_{y \in \AA_{i-1}^{\eps/\delta}} \mu_{i-1}^\beta(\sigma_{i-1} = y) = (1-\kappa)\mu_{i-1}^\beta(\sigma_{i-1} \in \AA_{i-1}^{\eps/\delta}).
}
It now immediately follows from \eqref{b_apa} that
\eq{
\liminf_{n \to \infty} \frac{1}{n} \sum_{i = 1}^{n} \mu_{i-1}^\beta(\sigma_i \in \BB_i^{\eps_i}) \geq 1-\kappa \quad \mathrm{a.s.}
}
Taking $\kappa\to0$, we conclude that $(g_i)_{i \geq 1}$ is asymptotically purely atomic.

Conversely, assume $(f_i)_{i \geq 0}$ is not asymptotically purely atomic.
By Theorem \ref{total_mass}, we must have $0 \leq \beta \leq \beta_{\mathrm{c}}$, and so \eqref{bigger_to_0} holds for some sequence $(\eps_i)_{i \geq 0}$ tending to 0 as $i \to \infty$.
From \eqref{next_step_repeat}, we see that $\BB_i^\eps$ is nonempty only when $\AA_{i-1}^\eps$ is nonempty, in which case $\vc{I}_\eps(f_{i-1}) = 1$.
As a result,
\eq{
\mu_{i-1}^\beta(\sigma_i \in \BB_i^\eps) \leq \vc{I}_{\eps}(f_{i-1}) \quad \text{for any $i \geq 1$, $\eps > 0$,}
}
and so \eqref{bigger_to_0} forces
\eq{
\lim_{n \to \infty} \frac{1}{n} \sum_{i = 1}^{n} \mu_{i-1}^\beta(\sigma_i \in \BB_i^{\eps_{i-1}}) = 0 \quad \mathrm{a.s.}
}
Indeed, $(g_i)_{i \geq 1}$ is not asymptotically purely atomic.
\end{proof}

\section{Geometric localization} \label{main_thm2}
Recall the following definition from Section \ref{endpoint_results}.
We say that the sequence $(f_i)_{i \geq 0}$ exhibits \textit{geometric localization with positive density} if for every $\delta > 0$, there is $K < \infty$ and $\theta  > 0$ such that
\eq{
\liminf_{n \to \infty} \frac{1}{n} \sum_{i = 0}^{n-1} \one_{\{f_i \in \GG_{\delta,K}\}} \geq \theta \quad \mathrm{a.s.},
}
where
\eeq{ \label{g_set_def}
&\GG_{\delta,K} \coloneqq \Big\{f : \Z^d \to [0,1]\ \Big|\ \|f\| = 1, \exists\, D\subset \Z^d \text{ such that }\diam(D)\leq K\text{ and }\sum_{x\in D}f(x) > 1-\delta\Big\},
} 
and
\eq{
\diam(D) \coloneqq  \sup\{\|x - y\|_1 : x,y \in D\}.
}
%is the set of probability mass functions on $\Z^d$ that assign measure greater than $1-\delta$ to some subset of $\Z^d$ having diameter at most $K$.
If $K$ can always be chosen so that $\theta\geq1-\delta$, then the sequence is ``geometrically localized with \textit{full} density."
The main goal of this section is to establish that there is positive density geometric localization if and only if $\beta > \beta_{\mathrm{c}}$.
%But first, we have to deal with some measurability issues.

\subsection{Definitions of relevant functionals}
To employ the theory of partitioned subprobability measures, we make a few definitions.
For $f \in \SS_0$, let $H_f$ denote $\N$-support of $f$:
\eq{
H_f \coloneqq  \{n \in \N : f(n,x) > 0 \text{ for some $x \in \Z^d$}\}.
}
The first functional of interest is the size of the $\N$-support of $f$,
\eq{
N(f) \coloneqq  |H_f|,
}
which we call the \textit{support number} of $f$.
Second, for $f\in \SS$ and $\delta \in (0,1)$, it is natural in studying geometric localization to consider the quantity
\eq{
W_\delta(f) &\coloneqq  \inf\Big\{\diam(D) : D \subset \Z^d,\  \sum_{x \in D} f(n,x) > 1 - \delta \text{ for some $n \in \N$}\Big\}.
}
In words, $W_\delta(f)$ is the smallest $K$ such that a region of diameter $K$ in $\N \times \Z^d$ has $f$-mass strictly greater than $1-\delta$.
Here we follow the convention that the infimum of an empty set is infinity, meaning $W_\delta(f) = \infty$ whenever there is no one copy of $\Z^d$ on which $f$ has mass greater than $1-\delta$.
We will write
\eq{
\VV_{\delta,K} \coloneqq  \{f \in \SS : W_\delta(f) \leq K\},
}
so that we can naturally identify $\GG_{\delta,K}$ in $\SS$ by
\eeq{
\GG_{\delta,K} = \VV_{\delta,K} \cap \{f \in \SS : N(f) = 1,\, \|f\| = 1\}. \label{g_intersection}
}
Next define $q_n : \SS_0 \to [0,1]$ by
\eq{
q_n(f) \coloneqq  \sum_{x\in \Z^d} f(n,x),
}
and let
\eq{
m(f) \coloneqq  \max_{n\in \N} q_n(f).
}
Finally, it will also be useful to analyze the function
\eq{
Q(f) \coloneqq  \sum_{n \in \N} \frac{q_n(f)}{1 - q_n(f)},
}
where $1/0 = \infty$.
Observe that $Q(f) = \infty$ if and only if $N(f) = 1$ and $\|f\| = 1$.

It is easy to see from Corollary \ref{defined_pspm} that $N, W_\delta$, $m$, and $Q$ are well-defined on $\SS$.
The fact that $N : \SS \to \N \cup \{0,\infty\}$ is measurable is surprisingly non-trivial.
We must first prove it for $N$ viewed as a function on $\SS_0$.
%The proof, however, is peripheral to our present focus, and so we defer this argument to Appendix \ref{measurability_support_number}.

\begin{lemma} \label{N_meas1}
$N(\cdot) : \SS_0 \to \N \cup \{0,\infty\}$ is measurable.
\end{lemma}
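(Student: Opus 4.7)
The plan is to express $N$ as a countable sum of $\{0,1\}$-valued measurable functions on $\SS_0$. Recall that, by Lemma \ref{S_meas}(a), $\SS_0$ inherits its Borel $\sigma$-algebra as a closed subspace of $\ell^1(\N \times \Z^d)$. For each fixed $n \in \N$, I would consider the slice-mass functional
\[
q_n(f) \coloneqq \sum_{x \in \Z^d} f(n,x).
\]
Since $|q_n(f) - q_n(g)| \leq \sum_{x \in \Z^d} |f(n,x) - g(n,x)| \leq \|f - g\|$, the map $q_n : \SS_0 \to [0,1]$ is $1$-Lipschitz with respect to the ambient $\ell^1$ metric, and in particular continuous. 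Hence the set $A_n \coloneqq \{f \in \SS_0 : q_n(f) > 0\}$, being the preimage of the open interval $(0,1]$, is an open (hence Borel) subset of $\SS_0$.

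By the definition of the $\N$-support, $n \in H_f$ if and only if $q_n(f) > 0$. Therefore
\[
N(f) = |H_f| = \sum_{n \in \N} \one_{A_n}(f),
\]
expressing $N : \SS_0 \to \N \cup \{0, \infty\}$ as a countable sum of measurable indicator functions. Such a sum is automatically measurable as a map into $[0,\infty]$, and since its values all lie in $\N \cup \{0, \infty\}$ this gives the desired measurability.

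I do not foresee a genuine obstacle here: the argument relies only on the countability of the first factor $\N$ used in the definition of $\SS_0$ (which lets me decompose $N$ into countably many slice events), together with the trivial continuity of each slice-mass functional. The more subtle statement, namely that $N$ descends to a well-defined and measurable function on the quotient $\SS$, would presumably be treated afterwards using Corollary \ref{better_def_cor} to verify invariance under the bijection $\tau$ on $\N$-supports and the vector shifts $(x_n)_{n \in H_f}$; that is a separate task which does not arise in the present lemma since we work directly with representatives in $\SS_0$.
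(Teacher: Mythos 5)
Your proof is correct and rests on the same core observation as the paper: measurability comes from decomposing over the countable index $n \in \N$ and using continuity of coordinate-type functionals on $\SS_0 \subset \ell^1(\N \times \Z^d)$. The packaging differs slightly (the paper writes out each level set $\{N = k\}$ as a countable Boolean combination of sets $\{f(n,x) > 0\}$ and $\{f(n,x) = 0\}$, whereas you express $N$ directly as the countable sum $\sum_n \one_{A_n}$ using the Lipschitz slice-mass $q_n$), but this is a cosmetic streamlining rather than a different route; both are immediate once one notices the slice decomposition.
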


\begin{proof}
Fix $N \in \{0\} \cup \N$.
Let $U_N \coloneqq  \{f \in \SS_0 : N(f) = N\}$ so that
\eq{
U_N &= \bigcup_{A \subset \N\, :\, |A| = N} \bigg[\bigg(\bigcap_{n \in A} \bigcup_{x \in \Z^d} \{f \in \SS_0 : f(n,x) > 0\} \bigg) \cap \bigg(\bigcap_{n \in \N \setminus A} \bigcap_{x \in \Z^d} \{f \in \SS_0 : f(n,x) = 0\}\bigg)\bigg].
}
For any $(n,x) \in \N \times \Z^d$, the map $f \mapsto f(n,x)$ is continuous and thus measurable.
Furthermore, all intersections and unions in the above display are taken over countable sets, and so $U_N$ is measurable.
It follows that $U_\infty \coloneqq  \{f \in \SS_0 : N(f) = \infty\} = \SS_0 \setminus \big(\bigcup_{N \geq 0} U_N\big)$ is also measurable.
We conclude that $N(\cdot) : \SS_0 \to \N \cup \{0,\infty\}$ is a measurable function.
\end{proof}

By Corollary \ref{defined_pspm}, $N(\cdot)$ is well-defined on $\SS$.
That is, if we define for $f \in \SS_0$ the set
\eq{
A_f \coloneqq \{g \in \SS_0 : d_\alpha(f,g) = 0\},
}
then $N(g) = N(f)$ for all $g \in A_f$.
It remains to show, however, that $N(\cdot) : \SS \to \N \cup \{0,\infty\}$ is a measurable map.

\begin{lemma} \label{at_most_countable}
If $N(f) < \infty$, then the cardinality of $A_f$ is at most countably infinite.
\end{lemma}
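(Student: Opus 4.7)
The plan is to apply the concrete characterization of the equivalence relation $d_\alpha(f,g)=0$ provided by Corollary \ref{better_def_cor}, which says that $g \in A_f$ if and only if there exist a bijection $\tau : H_f \to H_g$ between the $\N$-supports and shift vectors $(x_n)_{n \in H_f} \subset \Z^d$ such that
\[
g(\tau(n), x) = f(n, x + x_n) \quad \text{for all } n \in H_f,\ x \in \Z^d,
\]
while $g(m,\cdot) \equiv 0$ for $m \notin H_g$. So each representative $g \in A_f$ is uniquely encoded by the triple $(H_g, \tau, (x_n)_{n \in H_f})$.

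The next step is simply to count such triples when $N(f) = |H_f| < \infty$. First, $H_g$ must be a subset of $\N$ of cardinality exactly $N(f)$, and the collection of finite subsets of $\N$ is countable. Second, for each such $H_g$, the number of bijections $\tau : H_f \to H_g$ is $N(f)!$, a finite number. Third, the tuple $(x_n)_{n \in H_f}$ lies in $(\Z^d)^{N(f)}$, which is a finite Cartesian product of countable sets and hence countable. A countable union of finite-times-countable collections is still countable, so the total number of admissible triples — and therefore $|A_f|$ — is at most countable.

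There really is no hard part here: once Corollary \ref{better_def_cor} is invoked, the argument is just bookkeeping about countability of $\N$ and $\Z^d$. The only mild subtlety worth flagging is that the encoding $(H_g,\tau,(x_n))$ need not be injective (different triples may yield the same $g$ when $f$ has symmetries), but this can only decrease $|A_f|$ and so does not affect the upper bound. Hence $|A_f| \leq \aleph_0$, as claimed.
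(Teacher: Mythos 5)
Your proof is correct and follows essentially the same route as the paper's: invoke Corollary \ref{better_def_cor}, encode each $g \in A_f$ by the data $(H_g,\tau,(x_n))$, and count. You are slightly more careful than the paper's version in explicitly including the bijection $\tau$ in the encoding (the paper says $g$ is determined by $H_g$ and $(x_n)$ alone, which implicitly fixes a labeling), but the argument is the same.
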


\begin{proof}
Fix $f \in \SS_0$ such that $N(f) < \infty$.
By Corollary \ref{better_def_cor}, for any $g \in A_f$ there is $\tau : H_f \to H_g$ and a set of vectors $(x_n)_{n \in H_f}$ such that \eqref{better_def} holds.
Notice that because $|H_f| < \infty$, there are only countably many choices for the set $\tau(H_f) = H_g$, and only countably many possibilities for the $x_n$.
As $g$ is determined from $f$ by $H_g$ and $(x_n)_{n \in H_f}$, we conclude that $A_f$ is at most countably infinite.
\end{proof}

We will use the following fact about countable-to-one Borel maps.
These are (Borel) measurable functions $f : \XX \to \YY$ between two topological spaces such that for every $y\in\YY$, $f^{-1}(\{y\})$ is at most countably infinite.

\begin{lemma}[{see Srivastava \cite[Theorem 4.12.4]{srivastava98}}] \label{countable_to_one}
Suppose $\XX,\YY$ are Polish spaces and $F : \XX \to \YY$ is a countable-to-one Borel map. 
Then $F(B)$ is Borel for every Borel set $B$ in $\XX$. 
\end{lemma}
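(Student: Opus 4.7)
The plan is to reduce the statement to the Lusin--Novikov uniformization theorem. First I would form the inverse graph $G \coloneqq \{(y,x) \in \YY \times \XX : F(x) = y\}$. Since $F$ is Borel and $\XX,\YY$ are Polish, $G$ is a Borel subset of the Polish space $\YY \times \XX$ (it is the preimage of the diagonal under the Borel map $(y,x)\mapsto (y,F(x))$, or equivalently the image of the graph of $F$ under coordinate swap). The countable-to-one hypothesis on $F$ translates exactly to the statement that every vertical section $G_y = F^{-1}(\{y\})$ is at most countable.

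Next I would invoke Lusin--Novikov: any Borel set $G \subset \YY \times \XX$ whose vertical sections are all at most countable admits a decomposition $G = \bigsqcup_{n \in \N} \mathrm{graph}(g_n)$, where each $g_n : D_n \to \XX$ is a Borel function on a Borel domain $D_n \subset \YY$. Equivalently, $F^{-1}$ can be covered fibrewise by countably many disjoint Borel partial cross-sections.

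Given this decomposition, for any Borel $B \subset \XX$ one has
\[
F(B) = \{y \in \YY : F^{-1}(\{y\}) \cap B \neq \varnothing\} = \bigcup_{n \in \N} g_n^{-1}(B),
\]
because $y \in F(B)$ iff some preimage $g_n(y)$ of $y$ lies in $B$. Each $g_n^{-1}(B)$ is a Borel subset of $D_n$ (hence of $\YY$) by Borel measurability of $g_n$, so $F(B)$ is a countable union of Borel sets and therefore Borel.

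The main obstacle is of course Lusin--Novikov itself, whose proof is nontrivial: one applies the Kuratowski--Ryll-Nardzewski selection theorem iteratively to peel off a Borel graph from $G$, then repeats the procedure on the remainder, verifying that the process exhausts $G$ in countably many steps. Since this is precisely the content of Srivastava's Theorem 4.12.4, I would cite it rather than reprove it; the remainder of the argument above is then a short bookkeeping step.
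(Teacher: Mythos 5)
The paper does not supply a proof of Lemma \ref{countable_to_one}; it simply cites Srivastava's Theorem 4.12.4 and moves on, so there is no in-text argument to compare your sketch against. Your reduction---form the inverse graph $G \subset \YY\times\XX$ with at most countable vertical sections, decompose it by Lusin--Novikov into countably many Borel partial graphs $g_n$, and write $F(B)=\bigcup_n g_n^{-1}(B)$---is correct and is in fact the standard route, essentially the proof that appears in Srivastava's book. One small bookkeeping point: since the paper cites Theorem 4.12.4 precisely for the countable-to-one image statement you are trying to prove, that numbered theorem is the conclusion, not the Lusin--Novikov input; when you invoke Lusin--Novikov you should be pointing at its own separate statement in Srivastava (a neighboring theorem in the same section) rather than at 4.12.4 itself, or else the argument becomes circular as written.
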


Recall the quotient map $\iota : \SS_0 \to \SS$ defined in Lemma \ref{S_meas}(b), sending $f \in \SS_0$ to its equivalence class in $\SS$.

\begin{prop} \label{N_meas2}
The map $N(\cdot) : \SS \to \N \cup \{0,\infty\}$ is measurable.
\end{prop}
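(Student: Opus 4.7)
The plan is as follows. By Corollary \ref{defined_pspm}, the value $N(f)$ is an invariant of the equivalence class of $f$, so we have a well-defined map $\wt N : \SS \to \N \cup \{0,\infty\}$ satisfying $N_0 = \wt N \circ \iota$, where $N_0 : \SS_0 \to \N \cup \{0,\infty\}$ is the (measurable) map from Lemma \ref{N_meas1}. To show $\wt N$ is Borel, it suffices to show $\wt N^{-1}(\{k\})$ is Borel in $\SS$ for every $k \in \{0\} \cup \N$, since then $\wt N^{-1}(\{\infty\}) = \SS \setminus \bigcup_{k=0}^\infty \wt N^{-1}(\{k\})$ is Borel as well.

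Fix $k \in \{0\} \cup \N$. For each subset $A \subset \N$ with $|A| = k$, set $S_A \coloneqq \{f \in \SS_0 : H_f = A\}$. There are only countably many such $A$, and $N_0^{-1}(\{k\}) = \bigsqcup_{|A| = k} S_A$. Moreover, $\iota(N_0^{-1}(\{k\})) = \wt N^{-1}(\{k\})$, because $N$ is constant on equivalence classes. Hence it is enough to show that $\iota(S_A)$ is Borel in $\SS$ for each such $A$.

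Fix $A$ with $|A| = k < \infty$. The set $S_A$ is the intersection of the closed set $\bigcap_{n \notin A} \bigcap_{x \in \Z^d} \{f : f(n,x) = 0\}$ with the finitely many open sets $\bigcup_{x \in \Z^d} \{f : f(n,x) > 0\}$, one for each $n \in A$. Thus $S_A$ is a $G_\delta$ subset of the Polish space $\SS_0$, and so by Alexandrov's theorem it is itself Polish in the subspace topology (with the same Borel structure). The restriction $\iota\big|_{S_A} : S_A \to \SS$ is Borel (as a restriction of a Borel map to a Borel set), and by Lemma \ref{at_most_countable} it is countable-to-one. Since $\SS$ is compact metrizable, hence Polish, Lemma \ref{countable_to_one} applies to yield that $\iota(S_A)$ is Borel in $\SS$. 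Consequently $\wt N^{-1}(\{k\}) = \bigcup_{|A|=k} \iota(S_A)$ is a countable union of Borel sets, which completes the proof.

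The main obstacle is that there is no obvious way to describe $\wt N^{-1}(\{k\})$ directly in terms of the explicit metric $d_\alpha$; the number of ``copies of $\Z^d$'' used by a representative is not preserved by sequences converging in $d_\alpha$ (indeed, finite-support functions can approximate infinite-support functions). The proposal circumvents this by working upstairs on $\SS_0$, where $N_0$ is manifestly measurable, and then invoking the descriptive set-theoretic fact (Lemma \ref{countable_to_one}) that images of Borel sets under countable-to-one Borel maps between Polish spaces remain Borel. The countability of the equivalence classes when $N$ is finite, guaranteed by Lemma \ref{at_most_countable}, is exactly what makes this machinery applicable.
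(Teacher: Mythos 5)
Your proposal is correct and follows essentially the same route as the paper: both proofs restrict $\iota$ to a Polish subset of $\SS_0$ on which the $\N$-support is finite, invoke Lemma \ref{at_most_countable} to get countable-to-one, invoke Lemma \ref{S_meas}(b) for Borel-measurability of the restriction, and then apply Lemma \ref{countable_to_one}. The only difference is cosmetic: the paper takes the domain to be $\bigcup_{i=0}^{k} U_i$ (a closed, hence Polish, subset of $\SS_0$) and applies the lemma once per $k$, whereas you decompose further into the sets $S_A$ and take a countable union at the end, which makes the $G_\delta$/Polish status of the domain slightly more transparent.
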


\begin{proof}
It suffices to show that for every $N \in \N \cup \{0,\infty\}$, the set
\eq{
\UU_N \coloneqq  \{f \in \SS : N(f) = N\}
}
is measurable.
We first restrict to the finite case.
For any nonnegative integer $N$, observe that $\XX \coloneqq  \bigcup_{i = 0}^N U_i$ is a Polish space (under the $\ell^1$ metric) and, by Lemma \ref{N_meas1}, a measurable subset of $\SS_0$.
Lemma \ref{at_most_countable} says that $\iota |_{\XX} : \XX \to \SS$ is a countable-to-one map.
Furthermore, this map is measurable by Lemma \ref{S_meas}(b).
Now Lemma \ref{countable_to_one} shows $\iota(U_N) = \UU_N$ is a measurable subset of $\SS$.

Given that $\UU_N$ is measurable for every finite $N$, the set $\UU_\infty = \SS \setminus \big(\bigcup_{N \geq 0} \UU_N\big)$ is also measurable.
We have thus shown $N(\cdot) : \SS \to \N \cup \{0,\infty\}$ is measurable.
\end{proof}

A more central part of our methods is the semi-continuity of relevant functionals, which is %demonstrated in 
the content of
the next lemma.

\begin{lemma} \label{more_equivalences}
The following statements hold:
\begin{itemize}
\item[(a)] For any $\delta \in (0,1)$, $W_\delta: \SS \to \N \cup \{0,\infty\}$ is upper semi-continuous and thus measurable.
\item[(b)] $m : \SS \to [0,1]$ is lower semi-continuous and thus measurable.
\item[(c)] $Q : \SS \to [0,\infty]$ is lower semi-continuous and thus measurable.
\end{itemize}
\end{lemma}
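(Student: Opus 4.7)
All three claims follow from a common template: at any fixed $f\in\SS$, I identify a finite set of ``witnesses'' --- an $A\subset \N\times\Z^d$ on which a chosen representative $f$ is strictly positive and which captures the relevant functional to within a prescribed tolerance --- and then show that for every $g$ with $d_\alpha(f,g)$ sufficiently small, any near-optimal isometry $\phi:C\to\N\times\Z^d$ realizing $d_{\alpha,\phi}(f,g)<\delta'$ automatically has $A\subset C$ (because $\delta'<\min_{u\in A}f(u)^\alpha$), acts by translation on each copy of $\Z^d$ appearing in $A$ (because the term $2^{-\deg(\phi)}$ in $d_{\alpha,\phi}$ forces $\deg(\phi)$ to exceed the relevant diameters), and approximately preserves $f$-values on $A$ (because $\alpha\sum_{u\in A}|f(u)-g(\phi(u))|\leq d_{\alpha,\phi}(f,g)$). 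Transferring the witnesses through $\phi$ then produces the desired bound for $g$.

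For (a), assume $W_\delta(f)=K<\infty$; otherwise the claim is vacuous. Pick a representative, an index $n^*$, and a finite $D^*\subset\Z^d$ with $\diam(D^*)\leq K$ and $\sum_{x\in D^*}f(n^*,x)>1-\delta$; discard any $x\in D^*$ where $f(n^*,x)=0$ and take $A=\{n^*\}\times D^*$. Choosing $\delta'$ small enough that $\delta'<\min_{u\in A}f(u)^\alpha$, $\delta'<2^{-K}$, and $\delta'/\alpha$ is less than the slack $\sum_{x\in D^*}f(n^*,x)-(1-\delta)$, the isometry $\phi$ restricted to $A$ has $\deg(\phi)>K\geq \diam(D^*)$ and so acts as a translation, sending $A$ to some $\{n'\}\times D'$ with $\diam(D')=\diam(D^*)\leq K$. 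The value-preservation bound gives $\sum_{x\in D'}g(n',x)>1-\delta$, hence $W_\delta(g)\leq K=W_\delta(f)$.

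For (b), fix $f$ and note that since $\sum_n q_n(f)=\|f\|\leq 1$, the maximum $m(f)$ is attained at some $n^*\in H_f$ (trivially if $m(f)=0$). For any $\eta>0$, take a finite $D\subset\Z^d$ with $\sum_{x\in D}f(n^*,x)>m(f)-\eta/2$ and $f$ strictly positive on $D$; set $A=\{n^*\}\times D$. Choosing $\delta'>0$ below the thresholds described above (with $\delta'/\alpha<\eta/2$), the same translation argument yields $\{n'\}\times D'=\phi(A)$ with $\sum_{x\in D'}g(n',x)>m(f)-\eta$, so $m(g)\geq q_{n'}(g)>m(f)-\eta$.

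For (c), given $L<Q(f)$ choose finitely many indices $n_1,\ldots,n_N\in H_f$ and finite sets $D_i\subset\Z^d$ (with $f$ strictly positive on each $\{n_i\}\times D_i$) so that $s_i\coloneqq\sum_{x\in D_i}f(n_i,x)$ satisfies $\sum_{i=1}^N s_i/(1-s_i)>L$; this is possible by monotonicity of $t\mapsto t/(1-t)$ and the fact that $s_i\nearrow q_{n_i}(f)$ as $D_i\nearrow\Z^d$ (including when $q_{n_i}(f)=1$). Let $A=\bigcup_{i=1}^N\{n_i\}\times D_i$, and pick $\delta'$ small enough that the isometry $\phi$ contains $A$ in its domain, has degree exceeding $\max_i\diam(D_i)$, and satisfies $\sum_i|s_i-s_i'|<\eta$ where $s_i'=\sum_{x\in D_i'}g(n_i',x)$ with $\phi(\{n_i\}\times D_i)=\{n_i'\}\times D_i'$. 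The \emph{main obstacle} is that the $n_i'$ need not be distinct --- two different copies in the domain can merge into one in the range. I will dispose of this using the superadditivity identity
\[
\frac{s+t}{1-s-t}-\Big(\frac{s}{1-s}+\frac{t}{1-t}\Big)=\frac{st(2-s-t)}{(1-s-t)(1-s)(1-t)}\geq 0,
\]
so collisions only increase the contribution: grouping indices $i$ sharing a common $n_i'$, the distinct-copy sum $\sum_{n}q_n(g)/(1-q_n(g))$ still dominates $\sum_i s_i'/(1-s_i')$. By continuity of $t\mapsto t/(1-t)$ at each $s_i<1$, choosing $\eta$ small ensures $Q(g)\geq\sum_i s_i'/(1-s_i')>L$. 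Measurability in all three parts then follows from semi-continuity on the separable metric space $(\SS,d_\alpha)$.
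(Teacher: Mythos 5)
Your proposal is correct and follows essentially the same approach as the paper: choose a finite witness set $A$ on which a representative of $f$ is strictly positive, tune $\delta'$ against $\min_{u\in A}f(u)^\alpha$ and $2^{-K}$ so that any near-optimal isometry $\phi$ absorbs $A$ into its domain and acts there by translations, then transfer approximate $f$-values across $\phi$. The superadditivity of $t\mapsto t/(1-t)$ that you use to dispose of copy collisions (distinct $n_i$ mapping to the same $n_i'$) is exactly the inequality the paper invokes as $\frac{U_n}{1-U_n}\ge\sum_{m:\tau(m)=\tau(n)}\frac{u_m}{1-u_m}$. The only structural difference is cosmetic: the paper proves (b) and (c) simultaneously with a case split on whether $m(f)=1$ (since its truncation-error bound carries a factor $(1-m(f))^{-2}$ that degenerates there), while your formulation targets an arbitrary $L<Q(f)$ directly and so handles $Q(f)=\infty$ by pushing $s_i\nearrow q_{n_i}(f)$, dispensing with the case split; the underlying estimates are the same.
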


These continuity properties can be understood intuitively.
For instance, (a) is simply a consequence of the fact that if a measure on $\Z^d$ places mass greater than $1-\delta$ on a compact set $K$, then any sufficiently similar measure (in the weak or vague topologies) must do the same.
On the other hand, (b) and (c) come from the observation that a converging sequence of partitioned subprobability measures can divide mass on one copy of $\Z^d$ between several copies in the limit when large parts of the mass are drifting away from each other.
But the reverse is not true, because vertices in distinct copies of $\Z^d$ are considered infinitely far apart.
The proofs that make these ideas precise are %tedious and thus postponed to Appendix \ref{proof_3_functionals}.
%They are 
similar in spirit the proof of Lemma \ref{norm_equivalence} but substantially more involved.

\begin{proof}[Proof of Lemma \ref{more_equivalences}]
For (a), we wish to show that for any $f \in \SS$, there is $\eps > 0$ such that
\eq{
d_\alpha(f,g) < \eps \quad \implies \quad W_\delta(g) \leq W_\delta(f).
}
It is only necessary to consider the case when $K \coloneqq  W_\delta(f)$ is finite (in particular, $f$ is nonzero).
Then $m(f) > 1 - \delta$, and we can select a representative $f \in \SS_0$ and $D \subset \Z^d$ such that
\eq{
\sum_{x \in D} f(1,x) > 1 - \delta, \qquad \diam(D) \leq K.
}
By possibly omitting some elements of $D$, we may assume $f$ is strictly positive on $\{1\} \times D$.
Choose $\eps > 0$ to satisfy the following three inequalities:
\begin{subequations}
\begin{align}
\eps &< \inf_{x \in D} f(1,x)^\alpha, \label{eps_choice_1} \\
\eps &< 2^{-K}, \label{eps_choice_2} \\
\sum_{x \in D} f(1,x) &> 1 - \delta + \eps. \label{eps_choice_3}
\end{align}
\end{subequations}
Suppose $g \in \SS$ has $d_\alpha(f,g) < \eps$.
Then there is some representative $g \in \SS_0$ and some isometry $\phi : A \to \N \times \Z^d$ so that $d_{\alpha,\phi}(f,g) < \eps$.
It follows from \eqref{eps_choice_1} that $A \supset \{1\} \times D$.
Furthermore, \eqref{eps_choice_2} guarantees $\deg(\phi) > K$, implying
\eeq{
x,y \in D \quad \implies \quad \phi(1,x) - \phi(1,y) = x - y. \label{perfect_on_D}
}
We then have
\eq{
\sum_{x \in D} g(\phi(1,x)) 
&\stackrel{\phantom{\mbox{\footnotesize\eqref{eps_choice_3}}}}{\geq} \sum_{x \in D} f(1,x) - \sum_{x \in D} |f(1,x) - g(\phi(1,x))| \\
&\stackrel{\mbox{\footnotesize\eqref{eps_choice_3}}}{>} 1 - \delta + \eps - \sum_{u \in A} |f(u) - g(\phi(u))| \\
&\stackrel{\phantom{\mbox{\footnotesize\eqref{eps_choice_3}}}}{\geq} 1 - \delta + \eps - d_{\alpha,\phi}(f,g) > 1 - \delta.
}
Because of \eqref{perfect_on_D}, the set $\{\phi(1,x) : x \in D\}$ has diameter equal to $\diam(D) \leq K$, and so the above inequality shows $W_\delta(g) \leq K = W_\delta(f)$.

Next we prove (b) and (c) together, for which
we consider two cases.
First, if $m(f) = 1$, then $Q(f) = \infty$, and we must show that for any $\eps > 0$ and any $L > 0$, there is $\delta > 0$ such that
\eq{
d_\alpha(f,g) < \delta \quad \implies \quad m(g) > 1 - \eps, \quad Q(g) > L.
}
In this case, it suffices to prove $\delta$ may be chosen so that $m(g) > 1 - \eps$, since any $\eps \leq 1/(L+1)$ gives
\eq{
m(g) > 1 - \eps \quad \implies \quad Q(g) > \frac{1-\eps}{1-(1-\eps)} \geq \frac{L/(L+1)}{1/(L+1)}= L.
}
The first step in doing so is to choose $A \subset \N \times \Z^d$ finite but sufficiently large that
\eeq{
\sum_{u \notin A} f(u) < \frac{\eps}{2}. \label{really_small_sum}
}
On the other hand, if $m(f) < 1$, so that $Q(f) < \infty$, then we shall find $\delta > 0$ satisfying
\eq{
d_\alpha(f,g) < \delta \quad \implies \quad m(g) > m(f) - \eps, \quad Q(g) > Q(f) - \eps.
}
This is trivial if $f = \vc{0}$.
Otherwise, fix a representative $f \in \SS_0$, and again choose a finite $A \subset \N \times \Z^d$, but now satisfying a slightly different condition:
\eeq{
\frac{\sum_{u \notin A} f(u)}{(1-m(f))^2} < \frac{\eps}{2}. \label{really_small_sum2}
}
In either case---\eqref{really_small_sum} or \eqref{really_small_sum2}---we may assume $f$ is strictly positive on $A$ by possibly omitting some elements.
Next consider the integer
\eq{
K \coloneqq  \sup\{\|x-y\|_1 : (n,x),(n,y) \in A \text{ for some $n \in \N$}\},
}
and take $\delta > 0$ satisfying the following conditions:
\begin{subequations}
\begin{align}
\delta &< \inf_{u \in A} f(u)^\alpha, \label{delta_condition1} \\
\delta &< 2^{-K}, \label{delta_condition2} \\
\delta &< \frac{\eps}{2}. \label{delta_condition3}
\intertext{If $m(f) < 1$, we will further assume}
0 < \hspace{0.55in}&\hspace{-0.55in}\frac{\delta}{(1-m(f))(1-m(f)-\delta)} < \frac{\eps}{2}.
\label{delta_condition4}
\end{align}
\end{subequations}
Now, if $d_\alpha(f,g) < \delta$, then there is a representative $g \in \SS_0$ and an isometry $\phi : C \to \N \times \Z^d$ such that $d_{\alpha,\phi}(f,g) < \delta$.
By \eqref{delta_condition1}, we must have $A \subset C$.
Furthermore, \eqref{delta_condition2} ensures $\deg(\phi) > K$, meaning the following implication holds:
\eq{
(n,x),(n,y) \in A \quad &\implies \quad \|(n,x)-(n,y)\|_1 \leq K \quad \implies \quad
\|\phi(n,x) - \phi(n,y)\|_1 = \|x-y\|_1 < \infty.
}
Consequently, for any $n$ such that $A \cap (\{n\} \times \Z^d)$ is nonempty, we can define $\tau(n)$ to be the unique integer such that 
\eq{
\phi(n,x) \in \{\tau(n)\} \times \Z^d \quad \text{whenever $(n,x) \in A$}.
}
Note that $n \mapsto \tau(n)$ may not be injective.
We consider the quantities
\eq{
r_n \coloneqq  \sum_{x\, :\, (n,x) \in A} f(n,x), \qquad 
u_n &\coloneqq  \sum_{x\, :\, (n,x) \in A} g(\phi(n,x)), \qquad
U_n \coloneqq  \sum_{m\, :\, \tau(m) = \tau(n)} u_m,
}
the first two of which satisfy
\eeq{
\sum_{n \in \N}|r_n-u_n|
&= \sum_{n \in \N}\bigg|\sum_{x\, :\, (n,x) \in A} f(n,x) - g(\phi(n,x))\bigg|
\leq \sum_{u \in A} |f(u) - g(\phi(u))| < \delta. \label{first_two}
}
Also notice that \eqref{really_small_sum} or \eqref{really_small_sum2} implies
\eq{
m(f) - \max_{n \in \N} r_n < \frac{\eps}{2},
}
and so
\eq{
m(g) \geq \max_{n \in \N} U_n \geq \max_{n \in \N} u_n 
\stackrel{\mbox{\footnotesize\eqref{first_two}}}{\geq}  \max_{n \in \N} r_n - \delta
> m(f) - \frac{\eps}{2} - \delta
\stackrel{\mbox{\footnotesize\eqref{delta_condition3}}}{>} m(f) - \eps.
}
This completes the proof in the case $m(f) = 1$.
Otherwise, we use the inequality $u_n < r_n + \delta \leq m(f) + \delta$ from \eqref{first_two} to conclude
\eq{ %\label{u_to_r}
\sum_{n \in \N} \frac{r_n}{1-r_n} - \sum_{n \in \N} \frac{u_n}{1-u_n}
= \sum_{n \in \N} \frac{r_n - u_n}{(1-r_n)(1-u_n)}
&\stackrel{\phantom{\mbox{\footnotesize\eqref{first_two}}}}{<} \frac{1}{(1-m(f))(1-m(f)-\delta)}\sum_{n \in \N}|r_n-u_n| \\
&\stackrel{\mbox{\footnotesize\eqref{first_two}}}{<} \frac{\delta}{(1-m(f))(1-m(f)-\delta)}
\stackrel{\mbox{\footnotesize\eqref{delta_condition4}}}{<} \frac{\eps}{2}.
}
Also observe that
\eq{ %\label{U_to_u}
\frac{U_n}{1-U_n} = \frac{\sum_{m\, :\, \tau(m) = \tau(n)} u_m}{1-\sum_{m\, :\, \tau(m) = \tau(n)} u_m}
\geq \sum_{m\, :\, \tau(m) = \tau(n)} \frac{u_m}{1-u_m}.
}
Together, the previous two displays show
\eeq{ \label{t_to_r}
\sum_{n \in \N} \frac{q_n(g)}{1-q_n(g)} \geq \sum_{n \in \N} \frac{U_n}{1-U_n} \geq \sum_{n \in \N} \frac{u_n}{1-u_n} > \sum_{n \in \N} \frac{r_n}{1-r_n} - \frac{\eps}{2}.
}
We also have
\eeq{ \label{s_to_r}
\sum_{n \in \N} \frac{q_n(f)}{1-q_n(f)} - \sum_{n \in \N} \frac{r_n}{1-r_n} 
= \sum_{n \in \N} \frac{q_n(f) - r_n}{(1 - q_n(f))(1-r_n)}
&\leq \frac{1}{(1-m(f))^2} \sum_{n \in \N} (q_n(f) - r_n )\\
&= \frac{\sum_{u \notin A} f(u)}{(1-m(f))^2}
\stackrel{\mbox{\footnotesize\eqref{really_small_sum2}}}{<} \frac{\eps}{2}.
}
Applying \eqref{s_to_r} in the rightmost expression of \eqref{t_to_r}, we obtain the desired result:
\eq{
Q(g) = \sum_{n \in \N} \frac{q_n(g)}{1-q_n(g)} > \sum_{n \in \N} \frac{q_n(f)}{1-q_n(f)} - \eps = Q(f) - \eps.
}
\end{proof}

An observation that will be useful is that at low temperature, the functional $Q$ must have infinite expectation according to any element of $\MM$.

\begin{lemma} \label{Q_lemma}
Assume $\beta > \beta_{\mathrm{c}}$.
Then for any $\rho \in \MM$,
\eq{
\int Q(f)\ \rho(\dd f) = \infty.
}
\end{lemma}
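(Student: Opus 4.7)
My plan is to show that the functional $Q$ is a strict submartingale under $T$ on the set $\{N(f)\geq 2,\,\|f\|=1\}$, and then combine this with the $\TT$-stationarity of $\rho$ and Theorem~\ref{characterization}(b).

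The key step is to prove
\[
\E[Q(F)\mid f] > Q(f) \quad\text{whenever } \|f\|=1 \text{ and } N(f)\geq 2,
\]
where $F=Tf$. Since $\|f\|=1$ we have $\|F\|=1$ a.s., and since each
\[
\tilde F_j \coloneqq \sum_{u\in\{j\}\times\Z^d}\sum_{v\sim u} f(v)\e^{\beta\omega_u}P(v,u), \qquad j\in H_f,
\]
is almost surely positive, it follows that $H_F=H_f$ and $N(F)=N(f)$ a.s. The $\tilde F_j$ are mutually independent with $\E \tilde F_j = q_j(f)\e^{\lambda(\beta)}$, and from \eqref{F_def} one reads $q_j(F)/(1-q_j(F)) = \tilde F_j\big/\sum_{k\neq j}\tilde F_k$. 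By independence together with Jensen's inequality for the strictly convex map $x\mapsto 1/x$,
\[
\E\!\left[\frac{\tilde F_j}{\sum_{k\neq j}\tilde F_k}\right] = \E\tilde F_j\cdot \E\!\left[\frac{1}{\sum_{k\neq j}\tilde F_k}\right] > \frac{\E\tilde F_j}{\sum_{k\neq j}\E\tilde F_k} = \frac{q_j(f)}{1-q_j(f)},
\]
the strictness coming from the non-degeneracy of $\omega$ (so $\sum_{k\neq j}\tilde F_k$ is not a.s.\ constant). Summing in $j$ via Tonelli yields the strict inequality, and both sides are finite because $\max_k q_k(f)<1$ on $\{N(f)\geq 2,\,\|f\|=1\}$.

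To conclude, I would argue as follows. By Theorem~\ref{characterization}(b), $\rho(\{\|f\|=1\})=1$ for $\rho\in\MM$. If $\rho(\{N(f)=1,\,\|f\|=1\})>0$, then $Q=\infty$ on a set of positive $\rho$-measure and $\int Q\,d\rho=\infty$ immediately. Otherwise $\rho$-a.s.\ $N(f)\geq 2$ and $\|f\|=1$, so the strict inequality above holds $\rho$-a.s. Tonelli (valid since $Q\geq 0$) combined with $\TT\rho=\rho$ gives
\[
\int Q\,d\rho \;=\; \int Q\,d(\TT\rho) \;=\; \int \E[Q(F)\mid f]\,\rho(df).
\]
If $\int Q\,d\rho$ were finite, subtraction would produce $\int\bigl(\E[Q(F)\mid f]-Q(f)\bigr)\rho(df)=0$, contradicting the pointwise strict positivity of the integrand on the support of $\rho$; hence $\int Q\,d\rho=\infty$ in this case as well.

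The main obstacle is establishing the strict submartingale inequality for $Q$. The calculation itself is short once one decomposes $\tilde F$ over copies and exploits their independence, but it is essential that $Q(f)$ be finite on $\{N(f)\geq 2,\,\|f\|=1\}$ (which uses $\max_k q_k(f)<1$ there), so that the final subtraction producing the contradiction is unambiguous and no ``$\infty-\infty$'' issue intrudes.
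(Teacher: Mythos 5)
Your proposal is correct and takes essentially the same approach as the paper: decompose $\tilde F$ into independent blocks over copies of $\Z^d$, express $q_j(F)/(1-q_j(F))$ as a ratio of independent quantities, and apply Jensen's inequality with the strict convexity of $x\mapsto 1/x$ (using non-degeneracy of $\omega$) to get $\E[Q(F)\mid f] > Q(f)$, then contradict $\TT\rho=\rho$. The only cosmetic difference is that you dispatch the $N(f)=1$ case directly rather than deducing $\rho(\{N(f)=1\})=0$ from the contradiction hypothesis $\int Q\,d\rho<\infty$; both handle the potential $\infty-\infty$ issue identically.
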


\begin{proof}
Consider any $\rho \in \MM$.
By Theorem \ref{characterization}, the assumption of $\beta > \beta_{\mathrm{c}}$ implies 
$\rho(\{f \in \SS: \|f\| = 1\}) = 1$.
Suppose toward a contradiction that $\rho \in \MM$ satisfies
\eeq{
\int Q(f)\ \rho(\dd f) < \infty. \label{finite_Q_integral}
}
It must then be the case that $\rho(\{f \in \SS: N(f) = 1\}) = 0$, since $Q(f) = \infty$ whenever $N(f) = 1$ and $\|f\| = 1$.
Let $f \in \SS$ be random with law $\rho$.
By the previous observation, $1-q_n(f) > 0$ for all $n \in \N$ with $\rho$-probability 1.
Consider an environment $(\omega_u)_{u \in \N \times \Z^d}$ of i.i.d.~random variables with law $\mathfrak{L}_\omega$, that is independent of $f$.
%As usual, $\omega$ will denote a generic random variable distributed according to $\mathfrak{L}_\omega$.
When viewed as an element of $\SS$, the law of the function
\eq{
F(u) \coloneqq  \frac{\sum_{v \sim u} f(v)\e^{\beta \omega_u}P(v,u)}{\sum_{w \in \N \times \Z^d} \sum_{v \sim w} f(v)\e^{\beta \omega_w}P(v,w)}, \quad u \in \N \times \Z^d,
}
is $\TT\rho = \rho$.
Observe that because $\|F\| = 1$,
\eq{
Q(F) = \sum_{n \in \N} \frac{q_n(F)}{1-q_n(F)}
&= \sum_{n \in \N} \frac{\sum_{x \in \Z^d} F(n,x)}{\sum_{k \neq n} \sum_{x \in \Z^d} F(k,x)} \\
&= \sum_{n \in \N} \frac{\sum_{x \in \Z^d} \sum_{y\in\Z^d} f(n,y)\e^{\beta \omega_{n, x}}P(y,x)}{\sum_{k \neq n} \sum_{x \in \Z^d} \sum_{y \in\Z^d} f(k,y)\e^{\beta \omega_{k, x}}P(y,x)}.
}
Conditioned on $f$, the numerator and denominator of
\eq{
\frac
{\sum_{x \in \Z^d} \sum_{y\in\Z^d} f(n,y)\e^{\beta \omega_{n, x}}P(y,x)}
{\sum_{k \neq n} \sum_{x \in \Z^d} \sum_{y \in\Z^d} f(k,y)\e^{\beta \omega_{k, x}}P(y,x)}
}
are independent.
Hence
\eq{
&\E\givenk{Q(F)}{f} = \sum_{n \in \N} \E\givenk[\bigg]{\frac{\sum_{x \in \Z^d} \sum_{y\in\Z^d} f(n,y)\e^{\beta \omega_{n, x}}P(y,x)}{\sum_{k \neq n} \sum_{x \in \Z^d} \sum_{y \in\Z^d} f(k,y)\e^{\beta \omega_{k, x}}P(y,x)}}{f} \\
&= \sum_{n \in \N} \E\givenk[\bigg]{{\sum_{x \in \Z^d} \sum_{y\in\Z^d} f(n,y)\e^{\beta \omega_{n, x}}P(y,x)}}{f}
\E\givenk[\bigg]{\frac{1}{{\sum_{k \neq n} \sum_{x \in \Z^d} \sum_{y \in\Z^d} f(k,y)\e^{\beta \omega_{k, x}}P(y,x)}}}{f} \\
&> \sum_{n \in \N} \frac{\E\givenk[\big]{{\sum_{x \in \Z^d} \sum_{y\in\Z^d} f(n,y)\e^{\beta \omega_{n, x}}P(y,x)}}{f}}{\E\givenk[\big]{{\sum_{k \neq n} \sum_{x \in \Z^d} \sum_{y \in\Z^d} f(k,y)\e^{\beta \omega_{k, x}}P(y,x)}}{f}}
\\
&= \sum_{n \in \N} \frac{ \e^{\lambda(\beta)} q_n(f)}{\e^{\lambda(\beta)}(1-q_n(f))}
= Q(f),
}
where the strict inequality is due to the strict convexity of $t \mapsto 1/t$ on $(0,\infty)$ and the non-degeneracy of $\mathfrak{L}_\omega$.
But now \eqref{finite_Q_integral} allows us to write
\eq{
\int Q(f)\ \rho(\dd f) &= \iint Q(F)\ Tf(\dd F)\, \rho(\dd f)
= \int \E\givenk{Q(F)}{f}\, \rho(\dd f)
> \int Q(f)\, \rho(\dd f),
}
yielding the desired contradiction.
\end{proof}

\subsection{Proof of geometric localization with positive density}
We can now prove the main theorem of Section \ref{main_thm2}. 
This theorem, which encompasses Theorem \ref{intro_result2}, shows that positive density geometric localization is equivalent to $\beta > \beta_{\mathrm{c}}$. 
It also proves that the single-copy condition (stated as \eqref{single_copy_assumption} below) implies full density geometric localization.

\begin{thm} \label{localized_subsequence}
Assume \eqref{walk_assumption_1} and \eqref{mgf_assumption}.
Let $f_i(\cdot) \coloneqq  \mu_i^\beta(\sigma_i = \cdot)$.
Then the following statements hold:
\begin{itemize}
\item[(a)] If $\beta > \beta_{\mathrm{c}}$, then $(f_i)_{i \geq 0}$ is geometrically localized with positive density. Moreover, the quantities $K$ and $\theta$ in the definition of positive density geometric localization are deterministic, and depend only on the choice of $\delta$, as well as $\mathfrak{L}_\omega$, $\beta$, and $d$. 
\item[(b)] If $\beta > \beta_{\mathrm{c}}$ and 
\eeq{
\rho(\UU_1) = \rho(\{f \in \SS: N(f) = 1\}) = 1 \quad \text{for all $\rho \in \MM$}, \label{single_copy_assumption}
}
then $(f_i)_{i \geq 0}$ is geometrically localized with full density.
\item[(c)] If $0 \leq \beta \leq \beta_{\mathrm{c}}$, then for any $\delta \in (0,1)$ and any $K$,
\eeq{
\lim_{n \to \infty} \frac{1}{n} \sum_{i = 0}^{n-1} \one_{\{f_i \in \GG_{\delta,K}\}} = 0 \quad \mathrm{a.s.} \label{no_localization}
}
\end{itemize}
\end{thm}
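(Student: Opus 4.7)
The plan for Theorem \ref{localized_subsequence} is to reduce all three parts to an analysis of the minimizing set $\MM$ via Theorem \ref{close_M}. Since each $f_i\in\SS$ has $N(f_i)=1$ and $\|f_i\|=1$, one has $\one_{\{f_i\in\GG_{\delta,K}\}}=\one_{\{f_i\in\VV_{\delta,K}\}}$, so the Ces\`aro averages in the theorem equal $\rho_n(\VV_{\delta,K})$. By Lemma \ref{more_equivalences}(a) together with the fact that $W_\delta$ is integer-valued, the set $\VV_{\delta,K}=\{W_\delta\leq K\}$ is open, so $\rho\mapsto\rho(\VV_{\delta,K})$ is lower semi-continuous on $\PP(\SS)$ by Lemma \ref{portmanteau}; a standard subsequential argument built on the almost-sure convergence $\WW_\alpha(\rho_n,\MM)\to 0$ then yields $\liminf_n \rho_n(\VV_{\delta,K})\geq\inf_{\rho\in\MM}\rho(\VV_{\delta,K})$ almost surely. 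For the opposite direction needed in part (c), $\{m\geq 1-\delta\}\supseteq\VV_{\delta,K}$ is closed (Lemma \ref{more_equivalences}(b)) and $\rho\mapsto\rho(\{m\geq 1-\delta\})$ is upper semi-continuous.

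Part (c) is then immediate: Theorem \ref{characterization}(a) forces $\MM=\{\delta_{\vc 0}\}$, and $\delta_{\vc 0}(\{m\geq 1-\delta\})=0$ since $m(\vc 0)=0<1-\delta$, so the upper-semi-continuous transfer gives \eqref{no_localization}.

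For parts (a) and (b), the task is to produce a uniform lower bound $\inf_{\rho\in\MM}\rho(\VV_{\delta,K})\geq\theta$ for a deterministic $K$ and $\theta$. I would proceed in two stages. In the first stage, I lower-bound $\inf_{\rho\in\MM}\rho(\{m>1-\delta\})$. For (a), the elementary computation $m(f)\leq 1-\delta\Rightarrow Q(f)=\sum_n q_n/(1-q_n)\leq\delta^{-1}\sum_n q_n\leq\delta^{-1}$ combines with the fact that the LSC map $\rho\mapsto\rho(\{m>1-\delta\})$ attains its infimum on the compact set $\MM$: any minimizer $\rho^*$ with $\rho^*(\{m>1-\delta\})=0$ would satisfy $\int Q\,\dd\rho^*\leq\delta^{-1}<\infty$, contradicting Lemma \ref{Q_lemma}, so $\inf_\MM\rho(\{m>1-\delta\})\geq\theta_0>0$. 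For (b), the single-copy hypothesis \eqref{single_copy_assumption} together with $\rho(\{\|f\|=1\})=1$ (from Theorem \ref{characterization}(b)) forces $\rho(\{m=1\})=1$, so $\inf_\MM\rho(\{m>1-\delta\})=1$. In the second stage, I promote the bound on $\bigcup_K\VV_{\delta,K}=\{m>1-\delta\}$ to the same (respectively, a $(1-\delta)$-discounted) bound on a single $\VV_{\delta,K}$ by contradiction: if no finite $K$ works, pick $\rho_K\in\MM$ violating the desired bound (attained by LSC), extract a subsequential limit $\rho_{K_j}\to\rho^{**}\in\MM$, and use the monotonicity $\rho_{K_j}(\VV_{\delta,L})\leq\rho_{K_j}(\VV_{\delta,K_j})$ valid once $K_j\geq L$, together with the LSC of $\rho\mapsto\rho(\VV_{\delta,L})$, to transfer the bound to $\rho^{**}$ for every fixed $L$; monotone convergence then contradicts the first-stage lower bound on $\rho^{**}(\{m>1-\delta\})$.

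The main technical obstacle is this second, uniformity-in-$\rho$ stage: the natural monotone family $\{\VV_{\delta,K}\}_K$ consists of open sets with LSC indicators increasing to an open limit, rather than the monotone semi-continuous setup of Lemma \ref{dini}, so no Dini-type shortcut is available; the argument must route through subsequential compactness of $\MM$ and LSC infimum-attainment, and it is the combination of these ingredients that produces a deterministic $K$ and $\theta$ depending only on $\delta$, $\mathfrak{L}_\omega$, $\beta$, and $d$.
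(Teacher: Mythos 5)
Your parts (a) and (b) are correct, and in fact take a genuinely different route from the paper's in one place. The paper \emph{does} use a Dini-type shortcut: it applies Lemma~\ref{dini} to the clipped functions $\rho\mapsto\rho(\VV_{\delta,K})\wedge\Theta$, which are lower semi-continuous and increase pointwise on $\MM$ to the \emph{constant} function $\Theta$ (the constant is continuous, hence upper semi-continuous, so Dini's hypotheses are met). Your claim that ``no Dini-type shortcut is available'' therefore overlooks this truncation trick. Your substitute argument --- choose $\rho_K \in \MM$ violating the bound, extract a subsequential limit $\rho^{**}\in\MM$, push the bound through LSC at each fixed level $L$, then contradict $\rho^{**}(\UU_\delta)\ge\Theta$ by monotone convergence --- is valid and achieves the same uniformity, though neither approach is meaningfully simpler than the other. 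Your reduction $\one_{\{f_i\in\GG_{\delta,K}\}}=\one_{\{f_i\in\VV_{\delta,K}\}}$, the $\liminf$ transfer via compactness and LSC, and the first stage via $m(f)\le 1-\delta\Rightarrow Q(f)\le\delta^{-1}$ plus Lemma~\ref{Q_lemma} all check out.

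Part (c), however, has a genuine gap. You assert that $\{m\ge 1-\delta\}$ is closed and cite Lemma~\ref{more_equivalences}(b), but that lemma says $m$ is \emph{lower} semi-continuous, which makes $\{m>1-\delta\}$ open and $\{m\le 1-\delta\}$ closed --- it does \emph{not} make $\{m\ge 1-\delta\}$ closed, so $\rho\mapsto\rho(\{m\ge 1-\delta\})$ is not necessarily upper semi-continuous, and the $\limsup$ transfer fails. The set is in fact genuinely not closed in $(\SS,d_\alpha)$: if $f_n$ assigns mass $1/2$ to $(1,0)$ and mass $1/2$ to $(1,n)$, then $m(f_n)=1$ for every $n$, but $f_n$ converges in $\SS$ to an $f$ with mass $1/2$ on each of two distinct copies of $\Z^d$, so $m(f)=1/2$. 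This is precisely the escaping-mass behaviour that the compactification is built to accommodate, and it is why $m$ cannot be upgraded beyond LSC. The correct route for part (c) is through the \emph{continuous} functional $\max$ of Lemma~\ref{max_lemma}: a set of $\ell^1$-diameter at most $K$ in $\Z^d$ contains at most a fixed number of points, so any $f \in \VV_{\delta,K}$ satisfies $\max(f) \geq \eps$ for some $\eps=\eps(\delta,K,d)>0$; then $\{\max\ge\eps\}$ is a closed superset of $\VV_{\delta,K}$ not containing $\vc 0$, and the USC transfer goes through. This is, in substance, what the paper does.
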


\begin{proof}
Fix $\delta\in (0,1)$ throughout. 
Recall that
\eq{
\VV_{\delta,K} = \{f \in \SS : W_\delta(f) \leq K\} = \{f \in \SS : W_\delta(f) < K+1\},
}
which is open by the upper semi-continuity of $W_\delta$.
For (a) and (b), we assume $\beta > \beta_{\mathrm{c}}$.
Given $\delta > 0$, define the set
\eq{
\UU_\delta \coloneqq  \{f\in \SS: m(f) > 1-\delta\} = \bigcup_{K = 0}^\infty \VV_{\delta,K}.
}
For (b) only, note the following consequence of Theorem \ref{characterization}(b):
\eq{
 \eqref{single_copy_assumption} \quad \implies \quad \rho(\UU_\delta) = 1 \quad \text{for all $\rho \in \MM$.} %\label{theta1}
} 
Otherwise, we refer to Lemma \ref{Q_lemma} which tells us that for any $\rho\in  \MM$,
\eq{
\int Q(f)\ \rho(\dd f) =\infty.
}
It follows that $\rho(\UU_\delta) > 0$,
%\eeq{
%\rho(\UU_\delta) > 0,\label{nueps}
%}
since otherwise the $\rho$-essential supremum of $m(f)$ would be strictly less than 1, forcing the $\rho$-essential supremum of $Q(f)$ to be finite.
By Lemma \ref{more_equivalences}(b), $\UU_\delta$ is an open set. Consequently, the map $\rho \mapsto \rho(\UU_\delta)$ is lower semi-continuous on $\PP(\SS)$. %, because of Lemma \ref{portmanteau}.
Since a lower semi-continuous function on a compact set attains its minimum value, we must have
%\eqref{nueps} shows that
\eq{
\Theta\coloneqq  \inf_{\rho\in\MM} \rho(\UU_\delta) > 0.%\label{nueps2}
}
For any $f \in \UU_\delta$, the quantity $W_\delta(f)$ is finite.
Therefore, for any $\rho \in \MM$,
\eq{
\rho(\VV_{\delta,K}) \wedge \Theta \nearrow \rho(\UU_\delta) \wedge \Theta = \Theta \quad \text{as $K \nearrow \infty$.}
}
Since $\rho \mapsto \rho(\VV_{\delta,K}) \wedge \Theta$ is lower semi-continuous, Lemma \ref{dini} upgrades this convergence to be uniform on the compact set $\MM$.
That is, for any $\theta<\Theta$, we can find $K$ large enough that $\rho(\VV_{\delta,K}) > \theta$ for every $\rho \in \MM$.
Furthermore, by compactness of $\PP(\SS)$ there exists $\xi > 0$ such that
\eq{
\WW_\alpha(\rho,\MM) < \xi \quad \implies \quad \rho(\VV_{\delta,K}) > \theta.
}
We may now conclude from Theorem \ref{close_M} and \eqref{g_intersection} that
\eq{
\theta \leq \liminf_{n\to\infty} \rho_{n}(\VV_{\delta,K}) 
&= \liminf_{n\to\infty} \frac{1}{n}\sum_{i=0}^{n-1} \one_{\{f_i \in \VV_{\delta,K}\}} 
= \liminf_{n\to\infty} \frac{1}{n}\sum_{i=0}^{n-1} \one_{\{f_i \in \GG_{\delta,K}\}} \quad \mathrm{a.s.},
}
which completes the proof of (a) and (b).
%Taking a countable sequence $\eps_k \to 0$ finishes the proof of (a) and (b).

For claim (c), suppose $0 \leq \beta \leq \beta_{\mathrm{c}}$ so that Theorem \ref{characterization} gives $\MM = \{\delta_\vc{0}\}$.
Fix $K > 0$.
Recall from the proof of Theorem \ref{reprove_equivalence} that in this high temperature case,
\eeq{
\lim_{n \to \infty} \int \max(f)\ \rho_{n}(\dd f) = \lim_{n \to \infty} \frac{1}{n} \sum_{i = 0}^{n-1} \max_{x \in \Z^d} f_i(x) = 0 \quad \mathrm{a.s.} \label{max0}
}
Notice that if $\eps > 0$ is sufficiently small that
\eeq{
D \subset \Z^d,\, \diam(D) \leq K \quad \implies \quad \eps|D| < 1 - \delta, \label{diam_constraint}
}
then
\eeq{
\frac{1}{n} \sum_{i = 0}^{n-1} \max_{x \in \Z^d} f_i(x) < \eps^2 \quad \implies \quad
\frac{1}{n} \sum_{i = 0}^{n-1} \one_{\{f_i \in \GG_{\delta,K}\}} < \eps. \label{max_diam}
}
Indeed, the hypothesis in \eqref{max_diam} implies there are fewer than $\eps n$ numbers $i$ between 0 and $n-1$ such that $\max_{x \in \Z^d} f_i(x) \geq \eps$, and \eqref{diam_constraint} implies all the remaining $i$ must satisfy $f_i \notin \GG_{\delta,K}$.
Therefore, \eqref{max_diam} is true, and so \eqref{max0} implies \eqref{no_localization}.
\end{proof}

\subsection{Localization in a favorite region}
In this final section we prove that single-copy condition~\eqref{single_copy_assumption} implies localization in a ``favorite region" of size $O(1)$.
Recall that the mode of a probability mass function is a location where the function attains its maximum. For any $n\ge 0$ and $K\ge 0$, let $\CC_{n}^K$ be the set of all $x\in \Z^d$ that are within $\ell^1$ distance $K$ from {\it every} mode of the random probability mass function $f_n$. Note that $\CC_n^K$ is a set with diameter at most $2K$. The following theorem establishes localization of the endpoint in $\CC_n^K$, if the single-copy condition holds.

\begin{prop}\label{localization_thm}
Assume \eqref{walk_assumption_1}, \eqref{mgf_assumption}, and \eqref{single_copy_assumption}.
Then
\eeq{
\lim_{K \to \infty} \liminf_{n \to \infty} \frac{1}{n} \sum_{i = 0}^{n-1} \mu_i^\beta(\sigma_i \in \CC_{i}^K) = 1\quad \mathrm{a.s.} \label{favorite_region}
}
\end{prop}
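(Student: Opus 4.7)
The plan is to reduce \eqref{favorite_region} to two Cesàro density bounds combined by a deterministic geometric observation. The key observation is: if $f_i \in \GG_{\delta,K}$ (so there exists $D \subset \Z^d$ with $\diam(D) \leq K$ and $\sum_{x \in D} f_i(x) > 1-\delta$) \emph{and} $\max_{x \in \Z^d} f_i(x) > \delta$, then every mode $v$ of $f_i$ must lie in $D$, since otherwise $f_i(v) = \max f_i > \delta > f_i(\Z^d \setminus D)$ would give a contradiction. Consequently, for each $x \in D$ and each mode $v \in D$ we have $\|x-v\|_1 \leq \diam(D) \leq K$, so $D \subset \CC_i^K$, and therefore $\mu_i^\beta(\sigma_i \in \CC_i^K) \geq \mu_i^\beta(\sigma_i \in D) > 1-\delta$.

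Next I would establish two density bounds. The first is immediate from Theorem \ref{localized_subsequence}(b): under the single-copy condition, for every $\delta > 0$ there exists $K=K(\delta)$ with
\[
\liminf_{n\to\infty} \frac{1}{n}\sum_{i=0}^{n-1} \one_{\{f_i \in \GG_{\delta,K}\}} \geq 1-\delta \quad \mathrm{a.s.}
\]
For the second, consider the open set $U_\delta \coloneqq \{f \in \SS : \max(f) > \delta\}$, which is open by Lemma \ref{max_lemma}; Portmanteau (Lemma \ref{portmanteau}) then makes $\rho \mapsto \rho(U_\delta)$ lower semi-continuous on $\PP(\SS)$. For any $\rho \in \MM$, the single-copy condition together with Theorem \ref{characterization}(b) gives that $\rho$-a.e.\ $f \in \SS$ is a probability mass function on a single copy of $\Z^d$, so that $\max(f)>0$ and hence $\rho(U_\delta) \nearrow 1$ as $\delta \searrow 0$. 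Lemma \ref{dini} upgrades this pointwise convergence to uniform convergence over the compact set $\MM$: there is a function $\nu(\delta) \to 0$ such that $\inf_{\rho \in \MM} \rho(U_\delta) \geq 1-\nu(\delta)$. Combining with Theorem \ref{close_M} via a standard compactness-plus-lower-semicontinuity argument yields
\[
\liminf_{n\to\infty} \frac{1}{n}\sum_{i=0}^{n-1} \one_{\{\max f_i > \delta\}} = \liminf_{n\to\infty} \rho_n(U_\delta) \geq 1-\nu(\delta) \quad \mathrm{a.s.}
\]

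Combining the two bounds by the elementary inequality $\one_{A \cap B} \geq \one_A + \one_B - 1$ shows that the density of indices $i$ on which both $f_i \in \GG_{\delta,K(\delta)}$ and $\max f_i > \delta$ hold is at least $1 - \delta - \nu(\delta)$, a.s. On that intersection the deterministic observation gives $\mu_i^\beta(\sigma_i \in \CC_i^{K(\delta)}) > 1-\delta$, so
\[
\liminf_{n\to\infty} \frac{1}{n} \sum_{i=0}^{n-1} \mu_i^\beta(\sigma_i \in \CC_i^{K(\delta)}) \geq (1-\delta)\bigl(1-\delta-\nu(\delta)\bigr) \quad \mathrm{a.s.}
\]
Since $K \mapsto \mu_i^\beta(\sigma_i \in \CC_i^K)$ is non-decreasing, taking $K \geq K(\delta)$ and then $\delta \to 0$ yields the claimed limit. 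The only real obstacle is the second density bound: it requires establishing lower semi-continuity of $\rho \mapsto \rho(U_\delta)$ and, more subtly, the uniform-on-$\MM$ convergence as $\delta \to 0$; this is where the single-copy hypothesis \eqref{single_copy_assumption} is essential (without it, some $\rho \in \MM$ could place mass on $\vc 0$, on which $\max = 0$, destroying the pointwise limit that drives Dini's lemma).
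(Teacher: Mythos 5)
Your proof is correct and follows essentially the same strategy as the paper: combine the full-density geometric localization with a Ces\`aro density bound on $\{\max f_i > \delta\}$ (equivalently, $\{\AA_i^\delta \ne \varnothing\}$), then use the geometric observation that when both hold, every mode lies in the small-diameter set $D$, so $D \subset \CC_i^K$. The paper obtains the second density bound by invoking the asymptotic-pure-atomicity machinery (the hypothesis of Lemma~\ref{equiv_apa}, established in the proof of Theorem~\ref{total_mass}(a)) and the observation $\mu_i^\beta(\sigma_i\in\AA_i^\eps)\le\one_{\{\AA_i^\eps\neq\varnothing\}}$, whereas you re-derive it directly from the $\max$ functional plus Dini plus Theorem~\ref{close_M}; the two derivations are parallel in structure and neither has a clear advantage.

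There is one small gap. Both Theorem~\ref{localized_subsequence}(b) and Theorem~\ref{characterization}(b) carry the hypothesis $\beta>\beta_{\mathrm{c}}$, which you invoke without verifying. It does follow from the single-copy condition, but a sentence is needed: if $\beta\le\beta_{\mathrm{c}}$ then $\MM=\{\delta_{\vc{0}}\}$ by Theorem~\ref{characterization}(a), and $\delta_{\vc 0}(\{f:N(f)=1\})=0\ne 1$, contradicting \eqref{single_copy_assumption} (equivalently, invoke Theorem~\ref{localized_subsequence}(c) in the contrapositive, as the paper does). Also, your closing remark mis-attributes the role of \eqref{single_copy_assumption}: once $\beta>\beta_{\mathrm{c}}$ is known, Theorem~\ref{characterization}(b) already gives $\rho(\{\|f\|=1\})=1$, so $\rho(\{\vc 0\})=0$ and the Dini argument goes through without \eqref{single_copy_assumption}. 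What the single-copy hypothesis is truly essential for is the \emph{first} density bound (full-density geometric localization); for the second, the only role of \eqref{single_copy_assumption} is to guarantee $\beta>\beta_{\mathrm{c}}$ in the first place.
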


\begin{proof}
For the sake of completeness, we first verify that $\mu_i^\beta(\sigma_i \in \CC_i^K)$ is a measurable function (with respect to $\FF_i$).
For any Borel set $B \subset \R$, the event $\{\mu_i^\beta(\sigma_i \in \CC_i^K) \in B\}$ can be expressed as
\eq{
\bigcup_{\substack{A \subset \Z^d \\ |A| \leq (2d)^i}} \bigg[\bigg(\bigcap_{x \in A} \{f_i(x) = \max_{y\in\Z^d} f_i(y)\}\bigg) \cap
\bigg\{\sum_{\substack{y \in \Z^d\, :\, \|x - y\|_1 \leq K\, \forall\, x \in A }} f_i(y) \in B\bigg\}\bigg].
}
It is clear that the above display is a measurable event, and so $\mu_i^\beta(\sigma_i \in \CC_i^K)$ is measurable.
%Of course, this makes $\mu_i^\beta(\sigma_i \notin \CC_i^K) = 1 -\mu_i^\beta(\sigma_i \in \CC_i^K)$ measurable as well.

Assume \eqref{single_copy_assumption}, so that $(f_i)_{i \geq 0}$ is geometrically localized with full density.
(In particular, by Theorem \ref{localized_subsequence}(c), we must have $\beta > \beta_{\mathrm{c}}$.)
As in the proof of Theorem \ref{total_mass}(a), the assumption $\beta > \beta_{\mathrm{c}}$ ensures that the conclusion of Lemma \ref{equiv_apa} holds.
Consequently, given any $\theta<1$, we can choose $\eps \in (0,1-\theta)$ such that
\eq{
\liminf_{n \to \infty} \frac{1}{n} \sum_{i = 0}^{n-1} \one_{\{\AA_i^\eps\text{ is nonempty}\}} > \theta \quad \mathrm{a.s.}
}
And by geometric localization with full density, there is $K$ such that
\eq{
\liminf_{n \to \infty} \frac{1}{n} \sum_{i = 0}^{n-1} \one_{\{f_i \in \GG_{\eps,K}\}} > \theta \quad \mathrm{a.s.} %\label{for_N2}
}
Therefore, there almost surely exists some $N$ satisfying
\eq{
n \geq N \quad &\implies \quad \frac{1}{n} \sum_{i = 0}^{n-1} \one_{\{\AA_i^\eps \text{ is nonempty}\}} > \theta \quad \text{and} \quad
\frac{1}{n}\sum_{i = 0}^{n-1} \one_{\{f_i \in \GG_{\eps,K}\}} > \theta.
}
%
%
%Now fix $c < 1$, and choose $\theta < 1$ such that $\theta(2\theta-1) \geq c$.
%As a consequence of Lemma \ref{equiv_apa}, we can choose
%$\eps > 0$ such that
%\eeq{
%\liminf_{n \to \infty} \frac{1}{n} \sum_{i = 0}^{n-1} \mu_i^\beta(\sigma_i \in \AA_i^{\eps}) > \theta \quad \mathrm{a.s.}, \label{for_N1}
%}
%where
%\eq{
%\AA_i^{\eps} = \{x \in \Z^d : f_i(x) > \eps\}.
%}
%Since \eqref{for_N1} will still hold if $\eps$ is made smaller, we may assume $\eps < 1 - \theta$.
%By geometric localization with full density, there is $K$ such that
%\eeq{
%\liminf_{n \to \infty} \frac{1}{n} \sum_{i = 0}^{n-1} \one_{\{f_i \in \GG_{\eps,K}\}} = 1 \quad \mathrm{a.s.} \label{for_N2}
%}
%By \eqref{for_N1} and \eqref{for_N2}, there almost surely exists some $N$ satisfying both
%\eq{
%n \geq N \quad &\implies \quad \frac{1}{n} \sum_{i = 0}^{n-1} \mu_i^\beta(\sigma_i \in \AA_i^{\eps}) > \theta\\
%&\implies \quad \frac{1}{n} \sum_{i = 0}^{n-1} \one_{\{\AA_i^\eps \text{ is nonempty}\}} > \theta,
%\intertext{and}
%n \geq N \quad &\implies \quad \frac{1}{n}\sum_{i = 0}^{n-1} \one_{\{f_i \in \GG_{\eps,K}\}} > \theta.
%}
So for $n \geq N$, there are at least $(2\theta - 1)n$ numbers $i$ between 0 and $n-1$ for which both of the following statements are true:
First, $f_i(x) > \eps$ for some (and thus any) mode $x \in \Z^d$ of $f_i$. 
Second $\mu_i^\beta(\sigma_i \in D_i) > 1 - \eps$ for some $D_i \subset \Z^d$ with $\diam(D_i) \leq K$.
For such $i$, \textit{all} modes must belong to $D_i$, which implies $D_i \subset \CC_i^K$.
In particular, 
\eq{
\mu_i^\beta(\sigma_i \in \CC_i^K) \geq \mu_i^\beta(\sigma_i \in D_i) > 1 - \eps > \theta,
}
and so
\eq{
n \geq N \quad \implies \quad \frac{1}{n} \sum_{i = 0}^{n-1} \mu_i^\beta(\sigma_i \in \CC_i^K) > \theta(2\theta - 1).
}
As $\mu_i^\beta(\sigma_i \in \CC_i^L) \geq \mu_i^\beta(\sigma_i \in \CC_i^{K})$ for $L \geq K$, we in fact have
\eq{
\lim_{K \to \infty} \liminf_{n \to \infty} \frac{1}{n} \sum_{i = 0}^{n-1}\mu_i^\beta(\sigma_i \in \CC_i^K) \geq \theta(2\theta-1) \quad \mathrm{a.s.}
}
As $\theta < 1$ is arbitrary, we can take a countable sequence $\theta_k \to 1$ to conclude \eqref{favorite_region}.
%Conversely, now suppose $(\mu_i^\beta(\sigma_i \in \cdot))_{i \geq 0}$ is not geometrically localized.
%That is, there is $\delta > 0$ such that for every $K$, we have with positive probability,
%\eq{
%\theta \coloneqq  \liminf_{n \to \infty} \frac{1}{n} \sum_{i = 0}^{n-1} \I\{\mu_i^\beta(\sigma_i \in \cdot) \in \GG_{\delta,K}\} < 1.
%}
%When this positive probability event occurs,
%\eeq{
%\frac{1}{n} \sum_{i = 0}^{n-1} \I\{\mu_i^\beta(\sigma_i \in \cdot) \notin \GG_{\delta,K}\} > \frac{1-\theta}{2} \quad \text{for infinitely many $n$.} \label{not_in_G}
%}
%Notice that
%\eq{
%\mu_i^\beta(\sigma_i \in \cdot) \notin \GG_{\delta,2K} \quad \implies \quad \mu_i^\beta(\sigma_i \in \CC_i^K) < (1-\delta).
%}
%Therefore, \eqref{not_in_G} implies
%\eq{
%\frac{1}{n}  \sum_{i = 0}^{n-1} \mu_i^\beta(\sigma_i \in \CC_i^K) < 1 - \frac{1-\theta}{2}(1-\delta) \quad \text{for infinitely many $n$,}
%}
%which in turn gives
%\eq{
%\liminf_{n \to \infty} \frac{1}{n}  \sum_{i = 0}^{n-1} \mu_i^\beta(\sigma_i \in \CC_i^K) \leq 1 - \frac{1-\theta}{2}(1-\delta).
%}
\end{proof}

\section{Open problems}
\label{open_problems}
%\renewcommand*{\thesection}{\Alph{section}}
%\setcounter{section}{1}
%Below we mention some closely related open problems concerning directed polymers in random environment.
Below we mention some remaining questions concerning the methods of this chapter.
%Given the subsequent applications \cite{koenig-mukherjee15,bolthausen-koenig-mukherjee17} of Mukherjee and Varadhan's construction in \cite{mukherjee-varadhan16} and of a variant construction in \cite{mukherjee15}, as well as the success of the concentration compactness phenomenon in general, we are optimistic that the ideas presented in this study can fuel future work.
For a broader discussion of  open problems on directed polymers, we refer the reader to Section 12.9 of~\cite{denHollander09}.

\begin{enumerate}

\item For the (1+1)-dimensional log-gamma model, localization occurs around a single favorite region \cite{comets-nguyen16}. Does this hold more generally, specifically in the low temperature regime? 
We showed via Theorem \ref{localized_subsequence}(b) and Proposition \ref{localization_thm} that the answer is yes if the single-copy condition holds.
Nevertheless, it is challenging to rule out the possibility that the endpoint distribution maintains an unbounded number of favorite regions, so that some $\rho \in \MM$ may put mass on partitioned subprobability measures supported on infinitely many copies of $\Z^d$. Such behavior has been observed for other types of Gibbs measures \cite{comets-dembo01,barral-rhodes-vargas12}.

\item The set $\MM$ defined by \eqref{M_def} is closed and convex in $\PP(\SS)$.
By Theorem \ref{characterization}, $\MM$ is a singleton at high temperature.  
Is the same true in the low temperature phase?  If true, this would imply that the empirical measure converges to a deterministic limit, instead of the subsequential convergence that is established in this chapter.
If false, can one describe the extreme points of $\MM$?

\item Does the endpoint distribution converge in law? In other words, can we go beyond the Ces\`aro averages and prove limiting results for the actual endpoint distribution? Note that if this is true, then the limiting law must be an element of $\MM$.

\item Can the methods of this chapter be extended to understand the distinction between  \eqref{strong_disorder} and \eqref{VSD}? Such an extension may lead to the resolution of some longstanding questions in this area, such as the following. (a) For $P = $ SRW and $d \geq 3$, do the critical values $\beta_{\mathrm{c}}$ and $\bar{\beta}_\cc$ coincide?
This is known to be the case for $d = 1$ \cite{comets-vargas06} and for $d = 2$ \cite{lacoin10}.
If the answer is no, then there would exist \textit{critical strong disorder}, in which  \eqref{strong_disorder} holds but not \eqref{VSD}. (b) For $P = $ SRW and $d \geq 3$, is there strong disorder at inverse temperature $\bar{\beta}_{\mathrm{c}}$?
For analogous models on self-similar trees, the answer is yes \cite{kahane-peyriere76}.

\end{enumerate}

%: GAUSSIAN DISORDERED SYSTEMS
    
    \chapter{Localization in Gaussian disordered systems} \label{replica}

\section{Introduction}
A ubiquitous theme in statistical mechanics is to understand how a system
behaves differently at high and low temperatures. In a disordered system,
where the interactions between its elements are governed by random quantities, the strength of the disorder is determined by temperature. Namely,
high temperatures mean the disorder is weak, and the system is likely to
resemble a generic one based on entropy. On the other hand, low temperatures indicate strong disorder, which creates dramatically different behavior
in which the system is constrained to a small set of states that are energetically favorable. In the latter case, this concentration phenomenon is often
called ``localization".

A useful statistic in distinguishing different temperature regimes is the
so-called ``replica overlap". That is, given the disorder, one can study the
similarity of two independently observed states. If the disorder is strong,
then these two states should closely resemble one another with good probability, since we believe the system is bound to a relatively small number
of possible realizations. Some version of this statement has been rigorously
established in a number of contexts, most famously in spin glass theory but
also in the settings of disordered random walks and disordered Brownian motion. Unfortunately, it does not follow that the number of realizable states
is small, but only that there is small number of states that are observed with
positive probability.

In the present study, our entry point to this problem is to consider \textit{conditional} overlap. Whereas previous results in the literature show the overlap
distribution between two independent states has a nonzero component, we
ask whether the same is true even if one conditions on the first state. That is,
does a typical state \textit{always} have positive expected overlap with an independent one? We show that for a broad class of Gaussian disordered systems,
the answer is yes, the key implication being that the \textit{entire} realizable state
space is small. Specifically, there is an $O(1)$ number of states such that all but a negligible fraction of samples from the system will have positive overlap with one of these
states. %In particular, this proves new results about low temperature  localization in  directed polymers, mean-field spin-glasses and other Gaussian disordered systems.

The general setting, notation, motivation, and results are given in Sections~\ref{model}--\ref{results}, respectively. 
The consequences for spin glasses, directed polymers,
and other Gaussian fields are discussed in Sections~\ref{applications} and~\ref{other_fields}.

\subsection{Model and assumptions} \label{model}
Let $(\Omega,\FF,\P)$ be an abstract probability space, and $(\Sigma_n)_{n\geq1}$ a sequence of Polish spaces %{\color{red} [Does it suffice to just assume that $\Sigma_n$ is a topological space? Or do we need to assume that it's a Polish space?]} 
equipped respectively with probability measures $(P_n)_{n\geq1}$. 
For each $n$, we consider a centered Gaussian field $H_n$ indexed by $\Sigma_n$ and defined on $\Omega$.  
Viewing this field as a Hamiltonian, we have the associated Gibbs measure at inverse temperature $\beta$:
%Given an inverse temperature $\beta\in\R$, we have the associated probability measure
\eq{
\mu_{n}^\beta(\dd\sigma) \coloneqq \frac{\e^{\beta H_n(\sigma)}}{Z_n(\beta)}\ P_n(\dd\sigma), \quad \text{where} \quad
Z_n(\beta) \coloneqq \int \e^{\beta H_n(\sigma)}\ P_n(\dd\sigma).
}
%Absent any decoration, $\langle \cdot \rangle$ will always denote expectation with respect to $\mu_{n,\vc g}^{\beta}$, meaning
%\eq{
%\langle f(\sigma)\rangle = \frac{E_n(f(\sigma)\e^{\beta H_n(\sigma)})}{E_n(\e^{\beta H_n(\sigma)})}.
%}
%At various points in the paper, we will decorate $\langle\cdot\rangle$ to denote expectation
%with respect to some perturbation of $\mu_{n,\vc g}^\beta$. 
%The type of perturbation will
%change between sections.
%The symbols $\sigma^{j}$, $j = 1,2,\dots$, shall denote independent samples from $\mu_{n,\vc g}^\beta$ if appearing within $\langle\cdot\rangle$, or from $P_n$ if appearing within~$E_n(\cdot)$.
%
Our results concern the relationship between the \textit{free energy},
\eq{
F_n(\beta) \coloneqq \frac{1}{n}\log Z_n(\beta),
}
and the covariance structure of $H_n$.
%and the \textit{generalized overlap},
%\eq{
%\RR(\sigma^j,\sigma^k) &\coloneqq \frac{1}{n}\Cov[H_n(\sigma^j),H_n(\sigma^k)] = \frac{1}{n}\sum_{i=1}^\infty\vphi_{i,n}(\sigma^j)\vphi_{i,n}(\sigma^k),
%}
%which we often write as simply $\RR_{j,k}$.
We make the following assumptions:
\begin{itemize}
%\item[(A1)] $|\Sigma_n| = a^n$ for some positive integer $a\geq2$.
\item There is a deterministic function $p : \R \to \R$ such that
\[\label{free_energy_assumption} \tag{A1}
\lim_{n\to\infty} F_n(\beta) = p(\beta) \quad \P\text{-}\mathrm{a.s.} \text{ and in $L^1(\P)$, for every $\beta\in\R$}. 
\]
\item For every $\sigma\in\Sigma_n$,
\[ \label{variance_assumption} \tag{A2}
\Var H_n(\sigma) = n.
\]
\item For every $\sigma^1,\sigma^2\in\Sigma_n$,
\[ \label{positive_overlap} \tag{A3}
\Cov(H_n(\sigma^1),H_n(\sigma^2))\geq-n\EEE_n,
\]
where $\EEE_n$ is a nonnegative constant tending to $0$ as $n\to\infty$.
\item For each $n$, there exist measurable real-valued functions $(\vphi_{i,n})_{i=1}^\infty$ on $\Sigma_n$ 
%{\color{red} [Do you at least need that $\vphi_{i,n}$ are measurable? Having no conditions at all looks a little suspicious. As soon as you get into measurability issues, it may be just safe to assume that $\Sigma_n$ are Polish spaces. Please check in the proofs. I would be very surprised if you didn't need any such assumptions at all.]} 
and i.i.d.~standard normal random variables $(g_{i,n})_{i=1}^\infty$ defined on $\Omega$ so that for each $\sigma\in\Sigma_n$, with $\P$-probability $1$, 
\[ \label{field_decomposition} \tag{A4}
H_n(\sigma)  = \sum_{i=1}^\infty g_{i,n}\vphi_{i,n}(\sigma), 
\]
where the series on the right converges in $L^2(\P)$. 
\end{itemize}

\begin{remark}
In all applications of interest (see Section~\ref{applications}), the hypothesis \eqref{positive_overlap} is trivially satisfied with $\EEE_n = 0$.
Nevertheless, we assume throughout only that $\EEE_n\to0$ (at any rate).
This modest relaxation is made so our results can apply to slightly more general models, for instance perturbations of the standard models we will soon describe.
\end{remark}

\begin{remark}
The condition \eqref{field_decomposition} is very mild: For example, it always holds when $\Sigma_n$ is finite. More generally, a sufficient condition for the existence of a representation \eqref{field_decomposition} is that $\Sigma_n$ is compact in the metric defined by $H_n$ (namely, the metric that defines the distance between $\sigma$ and $\sigma'$ as the $L^2$ distance between the random variables $H_n(\sigma)$ and $H_n(\sigma')$). For a proof of this standard result, see \cite[Theorem 3.1.1]{adler-taylor07}. 
Furthermore, in all applications of interest, $H_n$ will actually be explicitly defined using a sum of the form~\eqref{field_decomposition}.

%As will be discussed in Section~\ref{gibbs_measure}, if $H_n(\sigma)$ is defined via~\eqref{field_decomposition}, where
%$\sum_{i=1}^\infty \vphi_{i,n}(\sigma)^2 = n$ for all $\sigma\in\Sigma_n$ in accordance with \eqref{variance_assumption},
%$\sup_{\sigma^1,\sigma^2\in\Sigma_n} \sum_{i=1}^\infty \vphi_{i,n}(\sigma^1)\vphi_{i,n}(\sigma^2) < \infty $, 
%then $H_n$ is almost surely well-defined up to a set of measure zero.
%That is, with $\P$-probability $1$, the sum $\sum_{i=1}^\infty g_i\vphi_{i,n}(\sigma)$ converges for $P_n$-a.e.~$\sigma\in\Sigma_n$. 
%While the sum may be only conditionally convergent, any fixed ordering of the $\vphi_{i,n}$'s will yield the same law for $\mu_{n,\vc g}^\beta$ (because the covariance $\sum_{i=1}^\infty \vphi_{i,n}(\sigma^1)\vphi_{i,n}(\sigma^2)$ is absolutely convergent by Cauchy--Schwarz, and thus independent of the ordering).

%Conversely, a sufficient condition for the existence of a representation \eqref{field_decomposition} is that $\Sigma_n$ is compact, and $H_n$ is almost surely continuous (see \cite[Theorems 3.1.1 and 3.1.2]{adler-taylor07}).
%Of course, this includes the case when $\Sigma_n$ is finite. {\color{red} [May be you can state this converse statement first, before the other discussions --- and then say that in applications, the representation in terms of $\vphi_{i,n}$'s is usually explicit.]}
\end{remark}

\subsection{Notation} \label{notation}
Unless stated otherwise, ``almost sure" and ``in $L^\alpha$" statements are with respect to $\P$.
We will use $E_n$ and $\E$ to denote expectation with respect to $P_n$ and $\P$, respectively.
Absent any decoration, $\langle \cdot \rangle$ will always denote expectation with respect to $\mu_{n}^{\beta}$, meaning
\eq{
\langle f(\sigma)\rangle = \frac{E_n(f(\sigma)\e^{\beta H_n(\sigma)})}{E_n(\e^{\beta H_n(\sigma)})}.
}
At various points in the chapter, we will decorate $\langle\cdot\rangle$ to denote expectation
with respect to some perturbation of $\mu_{n}^\beta$. 
The type of perturbation will
change between sections.
The symbols $\sigma^{j}$, $j = 1,2,\dots$, shall denote independent samples from $\mu_{n}^\beta$ if appearing within $\langle\cdot\rangle$, or from $P_n$ if appearing within~$E_n(\cdot)$.
We will refer to the vector $\vc g_n = (g_{i,n})_{i=1}^\infty$ as the \textit{disorder} or \textit{random environment}. 
Sometimes we will consider multiple environments at the same time, which will necessitate that we write $\mu_{n,\vc g_n}^\beta$ instead of $\mu_n^\beta$ to emphasize the dependence on the environment $\vc g_n$.

In the sequel, $\sum_i$ will always mean $\sum_{i=1}^\infty$,
and we will condense our notation to $\vphi_i = \vphi_{i,n}(\sigma)$ when we are dealing with some fixed $n$. Similarly, $g_{i,n}$ will be shortened to $g_i$ and $\vc g_n$ will be shortened to $\vc g$. 
Also, $C(\cdot)$ will indicate a positive constant that depends only on the argument(s).
In particular, no such constant depends on $\vc g$ or $n$.
We will not concern ourselves with the
precise value, which may change from line to line.

\subsection{Motivation} \label{motivation}
Our results will be stated in terms of the correlation or \textit{overlap function},
\eq{
\RR(\sigma^1,\sigma^2) &\coloneqq \frac{1}{n}\Cov(H_n(\sigma^1),H_n(\sigma^2)), %= \frac{1}{n}\sum_{i=1}^\infty\vphi_{i,n}(\sigma^1)\vphi_{i,n}(\sigma^2), 
\quad \sigma^1,\sigma^2\in\Sigma_n.
}
Note that \eqref{variance_assumption} and \eqref{positive_overlap} imply
\eq{
-\EEE_n\leq\RR(\sigma^1,\sigma^2) \leq 1.
}
We will often abbreviate $\RR(\sigma^j,\sigma^k)$ to $\RR_{j,k}$. 

The Gaussian process $(H_n(\sigma))_{\sigma\in\Sigma_n}$ naturally defines a
(pseudo)metric $\rho$ on $\Sigma_n$, given by
\eeq{\label{rhodef}
\rho(\sigma^1,\sigma^2)\coloneqq 1-\RR_{1,2}.
}
Given the metric topology, we can study the so-called ``energy landscape"
of $\beta H_n$ on $\Sigma_n$. 
The geometry of this landscape is intimately
related to the free energy. By Jensen's inequality,
\eeq{ \label{basic_jensen_gap}
\E F_n(\beta) \leq \frac{1}{n}\log \E Z_n(\beta) \stackrel{\mbox{\footnotesize(\text{Lemma }\ref{moments_lemma})}}{=}\frac{\beta^2}{2},
}
which in particular implies $p(\beta) \leq \beta^2/2$. In general, whether or not this
inequality is strict determines the nature of the energy landscape: In order
for $p(\beta) = \beta^2/2$, the fluctuations of $\log Z_n(\beta)$ must be relatively small so that
the Jensen gap in \eqref{basic_jensen_gap} is $o(1)$. This behavior arises when the Gaussian deviations
of $\beta H_n(\sigma)$ are washed out by the entropy of $P_n$, creating a more or less flat
landscape. On the other hand, if $p(\beta) < \beta^2/2$, then these deviations will
have overcome the entropy of $P_n$, producing large peaks and valleys where
$\beta H_n(\sigma)$ is exceptionally positive or negative. From a physical perspective,
this latter scenario is more interesting, as these peaks can account for an
exponentially vanishing fraction of the state space even as their union accounts for a non-vanishing fraction of the mass of $\mu_{n}^\beta$.
The primary goal of this chapter is to
give a sufficient condition for when (in a sense Theorem~\ref{easy_cor} makes precise)
$\mu_{n}^\beta$ places all of its mass on this union of peaks.

Suppose that $p(\cdot)$ is differentiable at $\beta\geq0$. 
Using Gaussian integration by parts, it is not difficult to show (as we do in Corollary~\ref{overlap_identity_cor}) that
\eeq{ \label{expected_overlap_formula}
\lim_{n\to\infty} \E\langle \RR_{1,2}\rangle = 1 - \frac{p'(\beta)}{\beta}.
}
This identity has been observed before (e.g.~see \cite{aizenman-lebowitz-ruelle87,comets-neveu95,talagrand06III,panchenko08}, \cite[Lemma 7.1]{carmona-hu02}, and \cite[Theorem 6.1]{comets17}).
For this reason, the condition in which we are interested is $p'(\beta)<\beta$.
To improve upon \eqref{expected_overlap_formula}, a first step is to show that if $\E\langle \RR_{1,2}\rangle$ is bounded away
from $0$, then the random variable $\langle \RR_{1,2}\rangle$ is itself stochastically bounded away
from $0$. This is the content of Theorem~\ref{averages_squared}. 
The more substantial contribution
of this chapter, however, is to bootstrap this result to a proof of Theorem~\ref{expected_overlap_thm},
which roughly says that $\langle \RR_{1,2}\rangle$ is stochastically bounded away from $0$ even
conditional on $\sigma^1$.

It follows from Corollary~\ref{overlap_identity_cor} that $p'(\beta) < \beta$ implies $p(\beta) < \beta^2/2$,
but it is natural to ask whether the two conditions are equivalent.
This equivalence is true for spin glasses \cite{talagrand06III,panchenko08} and is believed to be true for directed
polymers \cite[Conjecture 6.1]{comets17}. But at the level of generality considered in
this chapter, we are not aware of any conjecture. 
In any case, for the examples we consider in Section~\ref{applications}, both conditions will be true for sufficiently large $\beta$.

\subsection{Results} \label{results}
Our main result is Theorem~\ref{easy_cor}, stated below. It says that at low temperatures, one can find a finite number of (random) states such that
almost any sample from the Gibbs measure will have positive overlap with
at least one of them.
To state this precisely, let us define the sets
\eeq{ \label{ball_def}
\BB(\sigma,\delta) \coloneqq \{\sigma'\in\Sigma_n : \RR(\sigma, \sigma')\geq\delta\}, \quad \sigma\in\Sigma_n,\, \delta>0.
}
In terms of the metric $\rho$ defined in \eqref{rhodef}, this is just the ball of radius $1-\delta$ centered at $\sigma$. Typically, such balls have  vanishingly small size under $P_n$ as $n\to\infty$, which should be contrasted with the following behavior of the Gibbs measure.
%{\color{red} [Throughout this paper, wherever you are mentioning the condition that $\beta$ is a point of differentiability of $p(\cdot)$, are you implicitly saying that $\beta$ is strictly positive instead of just nonnegative?]} 
\begin{thm} \label{easy_cor}
Assume \eqref{free_energy_assumption}, \eqref{variance_assumption}, \eqref{positive_overlap}, and \eqref{field_decomposition}.
If $\beta\geq0$ is a point of differentiability for $p(\cdot)$, and $p'(\beta) < \beta$,
then for every $\eps > 0$, there exist integers $k = k(\beta,\eps)$ and $n_0 = n_0(\beta,\eps)$ and a number $\delta = \delta(\beta,\eps)>0$ such that the following is true for all $n\geq n_0$.
With $\P$-probability at least $1-\eps$, there exist $\sigma^1,\dots,\sigma^k\in\Sigma_n$ such that 
\eq{
\mu_{n}^\beta\Big(\bigcup_{j=1}^k \BB(\sigma^j, \delta)\Big) \geq 1 - \eps.
}
\end{thm}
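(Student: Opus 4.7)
The strategy combines Corollary \ref{overlap_identity_cor} with the conditional-overlap bound of Theorem \ref{expected_overlap_thm} to identify a ``good'' set of configurations whose $\delta$-overlap neighborhoods carry non-negligible Gibbs mass, and then to cover $\mu_n^\beta$ by finitely many such neighborhoods via a probabilistic selection. Indeed, the hypothesis $p'(\beta) < \beta$ combined with Corollary \ref{overlap_identity_cor} gives $\lim_{n\to\infty}\E\langle\RR_{1,2}\rangle = 1 - p'(\beta)/\beta > 0$, which is precisely the regime in which Theorem \ref{expected_overlap_thm} produces useful estimates.

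Concretely, Theorem \ref{expected_overlap_thm} supplies constants $c_0 = c_0(\beta,\eps) > 0$ and $n_0 = n_0(\beta,\eps)$ such that for all $n \geq n_0$, with $\P$-probability at least $1 - \eps/2$, the set of $\sigma^1 \in \Sigma_n$ satisfying $\int\RR(\sigma^1,\sigma^2)\,\mu_n^\beta(\dd\sigma^2) \geq c_0$ has $\mu_n^\beta$-mass at least $1 - \eps/4$. Using $\RR_{1,2}\leq 1$ in a reverse Markov estimate,
\[
c_0 \leq \int\RR(\sigma^1,\sigma^2)\,\mu_n^\beta(\dd\sigma^2) \leq \mu_n^\beta(\BB(\sigma^1,c_0/2)) + c_0/2,
\]
so that $\mu_n^\beta(\BB(\sigma^1,c_0/2)) \geq c_0/2$. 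Setting $\delta \coloneqq c_0/2$ and $c \coloneqq c_0/2$, we conclude: on a $\P$-event of probability at least $1 - \eps/2$, the set
\[
G \coloneqq \{\sigma \in \Sigma_n : \mu_n^\beta(\BB(\sigma,\delta)) \geq c\}
\]
satisfies $\mu_n^\beta(G) \geq 1 - \eps/4$.

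The covering step is then probabilistic. Fix a realization of the disorder on the above event, and draw $\sigma^1,\ldots,\sigma^k$ i.i.d.\ from $\mu_n^\beta$ (independent of the disorder). Since $\BB(\cdot,\delta)$ is symmetric in its two arguments,
\[
\P_{\sigma^1,\ldots,\sigma^k}\Big(\sigma \notin \bigcup_{j=1}^{k}\BB(\sigma^j,\delta)\Big) = \big(1-\mu_n^\beta(\BB(\sigma,\delta))\big)^{k}
\]
for every $\sigma \in \Sigma_n$. Integrating against $\mu_n^\beta$ and splitting according to $\sigma \in G$ versus $\sigma \in G^c$ yields
\[
\E_{\sigma^1,\ldots,\sigma^k}\Big[\mu_n^\beta\Big(\Sigma_n \setminus \bigcup_{j=1}^{k}\BB(\sigma^j,\delta)\Big)\Big] \leq (1-c)^{k} + \eps/4.
\]
Choosing $k = k(\beta,\eps) \coloneqq \lceil \log(4/\eps)/c\rceil$, a quantity depending only on $\beta$ and $\eps$, makes $(1-c)^{k} \leq \eps/4$, so the expectation is strictly less than $\eps$. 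Therefore, on the fixed disorder realization, there must exist a deterministic tuple $(\sigma^1,\ldots,\sigma^k) \in \Sigma_n^k$ with $\mu_n^\beta(\bigcup_j \BB(\sigma^j,\delta)) \geq 1 - \eps$. This occurs on a $\P$-event of probability at least $1 - \eps/2 \geq 1 - \eps$, as required.

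The anticipated main obstacle is the covering step. Because the embedding $\sigma \mapsto H_n(\sigma)/\sqrt n$ lies on a very high-dimensional unit sphere in $L^2(\P)$, the overlap balls $\BB(\sigma,\delta)$ can overlap significantly without being nested, and a geometric (Vitali-type) packing argument would require $\delta$ close to $1$, which Theorem \ref{expected_overlap_thm} does not supply. The device that circumvents this is to sample the centers $\sigma^j$ from $\mu_n^\beta$ itself: the symmetry of $\RR$ turns the uncovered-mass bound into an explicit $(1-\mu_n^\beta(\BB(\sigma,\delta)))^{k}$ that averages to a quantity decaying exponentially in $k$ with no dependence on $n$, so all the substantive work is absorbed into Theorem \ref{expected_overlap_thm}.
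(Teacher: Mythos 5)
Your proposal is correct and follows essentially the same route as the paper's proof of the implication $\mathrm{(S2)}\implies\mathrm{(S1)}$: apply Markov's inequality to Theorem~\ref{expected_overlap_thm} to obtain a high-probability disorder event on which the set of low-conditional-overlap configurations has small Gibbs mass, lower-bound $\mu_n^\beta(\BB(\sigma,\delta))$ for configurations with high conditional overlap, then sample centers $\sigma^1,\ldots,\sigma^k$ i.i.d.\ from $\mu_n^\beta$ and use the product/exponential-decay bound to show the expected uncovered mass is small, so some tuple works. The only technical variation is in the ball-mass lower bound: the paper uses Paley--Zygmund (arriving at $\mu_n^\beta(\BB(\sigma,\delta))\geq\delta^2$ when $\RR(\sigma)>2\delta$), whereas you use a direct truncation/reverse-Markov argument ($\mu_n^\beta(\BB(\sigma,c_0/2))\geq c_0/2$ when $\RR(\sigma)\geq c_0$), which is slightly sharper but plays the identical structural role.
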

It is worth noting that in some cases, such as the directed polymer model defined in Section~\ref{directed_polymers}, it is possible (although unproven) that $k$ can be taken equal to $1$ if $\delta$ is chosen sufficiently small.
For other models, however, such as polymers on trees or the Random Energy Model discussed in Section~\ref{other_fields}, $k$ will necessarily diverge as $\eps\to0$.

We will derive Theorem~\ref{easy_cor} as a corollary of Theorem~\ref{expected_overlap_thm}, stated below. In fact,  Theorem~\ref{easy_cor} is actually equivalent to Theorem~\ref{expected_overlap_thm}, although the latter has a less transparent statement, which is why we have stated Theorem~\ref{easy_cor} as our main result.

%{\color{red} [I have a suspicion that Theorem~\ref{expected_overlap_thm} may in fact be easily shown to be equivalent to Theorem~\ref{easy_cor}. If so, we should say that (and prove it). Could you please think about it? Also, another point: At some place, we should say that there are models where the measure is concentrated in many balls (instead of just one), the simplest example being the Random Energy Model. I don't think we have to prove that for the REM --- have a look at Talagrand's first spin glass book from 2003, Chapter 1 --- I think he has enough results so that we can directly conclude what we need.]}

Theorem~\ref{expected_overlap_thm} concerns the following function on $\Sigma_n$.
For given $\sigma^1\in\Sigma_n$, we will write the conditional expectation of $\RR_{1,2}$ as
\eeq{\label{rdef}
\RR(\sigma^1) \coloneqq \givena{\RR_{1,2}}{\sigma^1}
= \frac{1}{n}\sum_{i=1}^\infty \vphi_{i,n}(\sigma^1) \langle\vphi_{i,n}(\sigma^2)\rangle.
}
(Note that the expectation $\givena{\cdot}{\sigma^1}$ can be exchanged with the sum because of Fubini's theorem, in light of \eqref{variance_assumption}.)
Given $\delta>0$, we consider the set
\eeq{ \label{A_def}
\AA_{n,\delta} \coloneqq \{\sigma\in\Sigma_n : \RR(\sigma) \leq \delta\}. %\quad \delta>0,
} 
%where $\RR$ is the function defined in \eqref{rdef}
With this notation, the quantity $\langle \one_{\AA_{n,\delta}}\rangle$ %=\langle \one_{\AA_\delta}(\sigma)\rangle$ 
is the probability that a state sampled from $\mu_{n}^\beta$ has expected overlap at most $\delta$ with an independent sample from $\mu_{n}^\beta$.
Theorem~\ref{expected_overlap_thm} says that at low temperatures and for small $\delta$, this probability is typically small.%$\E\langle \one_{\AA_\delta}\rangle$ tends to zero as $\eps\to0$, with a rate depending only on $\beta$ and not on $n$.

\begin{thm} \label{expected_overlap_thm}
Assume \eqref{free_energy_assumption}, \eqref{variance_assumption}, \eqref{positive_overlap}, and \eqref{field_decomposition}.
If $\beta\geq0$ is a point of differentiability for $p(\cdot)$, and $p'(\beta) < \beta$, then for every $\eps>0$, there exists $\delta = \delta(\beta,\eps) > 0$ sufficiently small that
\eeq{ \label{expected_overlap_thm_eq}
\limsup_{n\to\infty} \E\langle\one_{\AA_{n,\delta}}\rangle \leq \eps.
}
\end{thm}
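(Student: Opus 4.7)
The plan is to translate the differentiability hypothesis into complementary overlap estimates using the identities
\eq{
\langle\RR(\sigma^1)\rangle = \langle\RR_{1,2}\rangle \qquad\text{and}\qquad \langle\RR(\sigma^1)^2\rangle = \langle\RR_{1,2}\RR_{1,3}\rangle,
}
both of which follow from Fubini and the conditional independence of the replicas $\sigma^2,\sigma^3$ given $\sigma^1$. By Corollary~\ref{overlap_identity_cor}, $\E\langle\RR_{1,2}\rangle \to c \coloneqq 1 - p'(\beta)/\beta > 0$, and Theorem~\ref{averages_squared}, which I use as a black box, provides the corresponding stochastic lower bound $\langle\RR_{1,2}\rangle \ge c/2$ on a $\P$-event whose probability tends to~$1$. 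The problem is thereby reduced to showing that on this event the random variable $\RR(\sigma^1)$ is itself bounded away from $0$ with $\mu_n^\beta$-probability at least $1-\eps$.

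The first move is the deterministic pointwise bound $\RR(\sigma^1) \le \sqrt{\langle\RR_{1,2}\rangle}$, obtained by Cauchy--Schwarz applied to the coordinates $\vphi_{i,n}(\sigma^1)$ and $\langle\vphi_{i,n}\rangle$ in~\eqref{field_decomposition}, using the normalizations $\sum_i \vphi_i(\sigma)^2 = n$ from~\eqref{variance_assumption} and $\sum_i\langle\vphi_i\rangle^2 = n\langle\RR_{1,2}\rangle$. Substituting this bound and the pointwise estimate $\RR\le\delta$ on $\AA_{n,\delta}$ into $\langle\RR(\sigma^1)\rangle = \langle\RR_{1,2}\rangle$ and rearranging gives
\eq{
\langle\one_{\AA_{n,\delta}}\rangle \;\le\; \frac{\sqrt{\langle\RR_{1,2}\rangle}\bigl(1-\sqrt{\langle\RR_{1,2}\rangle}\bigr)}{\sqrt{\langle\RR_{1,2}\rangle}-\delta},
}
which in the limit $\delta\downarrow 0$ yields only the partial conclusion $\limsup_n\E\langle\one_{\AA_{n,\delta}}\rangle \le 1 - \sqrt{c}$. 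This is nontrivial but falls short of an arbitrary~$\eps$ when $c$ is small, and some form of bootstrapping is needed.

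The bootstrap I propose is a peeling argument. I would select $\sigma^\star\in\Sigma_n$ approximately maximizing $\RR(\cdot)$ (which by the first-order estimate above is at least a fixed multiple of $\sqrt{c}$ on the good $\P$-event), restrict $\mu_n^\beta$ to the ball $\BB(\sigma^\star, c/3)$ of~\eqref{ball_def}, and argue that this ball carries $\mu_n^\beta$-mass bounded below by an absolute constant $\eta(c)>0$. Then repeat the procedure on the complement with the renormalized Gibbs measure, verifying that hypotheses~\eqref{free_energy_assumption}--\eqref{field_decomposition}, together with the differentiability condition $p'(\beta)<\beta$, are inherited (up to asymptotically negligible error) by the restricted system. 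Since each extracted ball carries mass at least $\eta(c)$ and the total mass is~$1$, the procedure terminates after $O(1/\eta(c))$ rounds, leaving a residual mass at most~$\eps$ that by construction is contained in $\AA_{n,\delta}$ for $\delta$ sufficiently small in terms of~$\eps$ and~$c$.

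The main obstacle is the inheritance step in the iteration. While the first three hypotheses transfer immediately to the restriction, the key property $p'(\beta) < \beta$ for the restricted system is not automatic because the random ball $\BB(\sigma^\star, c/3)$ depends on the disorder~$\vc g_n$. Making this rigorous appears to require a quantitative Gaussian integration-by-parts estimate controlling the change in the restricted free energy and in the first overlap moment under the random restriction, and this is where I expect the decomposition~\eqref{field_decomposition} to be essential, analogously to its role in the proof of Theorem~\ref{averages_squared}.
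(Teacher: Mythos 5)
Your first‐order computation is correct and, interestingly, is the same Cauchy--Schwarz mechanism the paper exploits in Proposition~\ref{pre_iteration_1}: the pointwise bound $\RR(\sigma^1)\le\sqrt{\langle\RR_{1,2}\rangle}$ from \eqref{variance_assumption} paired with the identity $\langle\RR(\sigma^1)\rangle = \langle\RR_{1,2}\rangle$ does give $\langle\one_{\AA_{n,\delta}}\rangle\lesssim 1-\sqrt{c}$ on the good event from Theorem~\ref{averages_squared}. But the bootstrap you propose to close the gap to $\eps$ has a structural failure that goes beyond "needing a quantitative estimate."

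The peeling step leaves the theorem's framework entirely. Restricting $\mu_n^\beta$ to $\BB(\sigma^\star,c/3)^\cc$ replaces $P_n$ by a \emph{disorder-dependent random} reference measure. All four ingredients you need to re-run the argument --- the deterministic free-energy limit in \eqref{free_energy_assumption}, the differentiability condition $p'(\beta)<\beta$, Corollary~\ref{overlap_identity_cor}, and Theorem~\ref{averages_squared} itself --- are statements about a Gaussian Hamiltonian over a \emph{fixed} state space with a \emph{fixed} prior; none of them transfers to a random, $\vc{g}$-measurable restriction, and there is no limiting free energy (let alone a differentiable one) to speak of. More fundamentally, even if the restricted system could be formalized, nothing prevents its expected overlap from collapsing to zero. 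Indeed, collapse is exactly what you should expect when the residual mass is concentrated on $\AA_{n,\delta}$: a state with $\RR(\sigma)\le\delta$ has, by definition, small average overlap with an independent draw, and the residual set is full of such states. At that point the bootstrap terminates without controlling the residual mass, and one has re-derived Theorem~\ref{averages_squared} but nothing more. In short, the inheritance problem you flag is not a technical gap; it is the whole difficulty of upgrading from a statement about $\langle\RR_{1,2}\rangle$ to a statement about $\RR(\sigma^1)$.

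For contrast, the paper's proof avoids any geometric restriction and stays inside the Gaussian framework throughout. It perturbs the disorder additively by independent Gaussians $\vc{h}^{(j)}/\sqrt{n}$, which by the stability $\vc{g}+\vc{h}/\sqrt{n}\overset{\dd}{=}\sqrt{1+1/n}\,\vc{g}$ is exactly a small temperature perturbation and so preserves all structural hypotheses. The substance of the argument is Proposition~\ref{pre_iteration_2}: $\langle\one_{\AA_\delta}\e^{\frac{\beta}{\sqrt n}\wt H_n(\sigma)}\rangle$ has $O(\sqrt\delta)$ conditional fluctuations (Lemma~\ref{h_variance_lemma}(b)), whereas $\langle\one_{\AA_\delta^\cc}\e^{\frac{\beta}{\sqrt n}\wt H_n(\sigma)}\rangle$ has $\Omega(1)$ fluctuations once $\langle\RR_{1,2}\rangle\geq\alpha$; this yields a quantified Jensen gap forcing $\E\langle\one_{\AA_\delta}\rangle$ to grow by a factor $1+\gamma$ per perturbation step. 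Combined with Proposition~\ref{pre_iteration_1} (essentially your Cauchy--Schwarz step, used to track how the set $\AA_\delta$ evolves under the perturbation) and the trivial upper bound $\E\langle\one_{\AA_\delta}\rangle\le 1$, a tower-type iteration then gives the desired contradiction. Your proposal would need an analogous mechanism --- a self-improving inequality that does not invoke the free-energy hypothesis for a random sub-system --- and as written it does not have one.
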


To prove Theorem~\ref{expected_overlap_thm}, we first have to prove a weaker theorem stated below. This result considers the following event in the $\sigma$-algebra $\FF$,
\eq{
B_{n,\delta} \coloneqq \{\langle \RR_{1,2}\rangle \leq \delta\},
}
and shows that its probability is small at low temperature. 
%Interestingly, this result does not require assumption \eqref{positive_overlap}.

\begin{thm} \label{averages_squared}
Assume \eqref{free_energy_assumption}, \eqref{variance_assumption}, \eqref{positive_overlap}, and \eqref{field_decomposition}.
If $\beta\geq0$ is a point of differentiability for $p(\cdot)$, and $p'(\beta) < \beta$, then for every $\eps > 0$, there exists $\delta = \delta(\beta,\eps) > 0$ sufficiently small such that
\eeq{ \label{averages_squared_eq}
\limsup_{n\to\infty} \P(B_{n,\delta}) \leq \eps.
}
\end{thm}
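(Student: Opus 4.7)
The plan is to upgrade the mean-overlap identity $\E\langle\RR_{1,2}\rangle\to c:=1-p'(\beta)/\beta>0$ (which is essentially Corollary~\ref{overlap_identity_cor}) to $L^2$-concentration $\Var(\langle\RR_{1,2}\rangle)\to 0$, at which point Chebyshev closes the argument: for any fixed $\delta<c$,
$\P(\langle\RR_{1,2}\rangle\le\delta) \le \Var(\langle\RR_{1,2}\rangle)/(c-\delta)^2 \to 0$,
which gives $\limsup_n\P(B_{n,\delta})\le\eps$ once $\delta$ is chosen small.

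The mean identity itself comes from Gaussian integration by parts (IBP): writing $F_n'(\beta)=\frac{1}{n}\langle H_n(\sigma)\rangle=\frac{1}{n}\sum_i g_{i,n}\langle\vphi_{i,n}\rangle$ and applying $\E[g_i f(\vc g)]=\E[\partial_{g_i}f(\vc g)]$ yields $\E F_n'(\beta)=\beta(1-\E\langle\RR_{1,2}\rangle)$. Convexity of $F_n$ in $\beta$, the $L^1$-convergence $F_n\to p$ at nearby values of $\beta$, and the differentiability of $p$ at $\beta$ then force $\E F_n'(\beta)\to p'(\beta)$, hence $\E\langle\RR_{1,2}\rangle\to c$.

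The second-moment statement requires first upgrading this to $L^2$-convergence $F_n'(\beta)\to p'(\beta)$. The Lipschitz constant of $F_n$ in $\vc g$ is $(\beta/\sqrt n)\sqrt{1+\EEE_n}$, because $|\nabla_{\vc g}F_n|^2=\frac{\beta^2}{n}\langle\RR_{1,2}\rangle$ is bounded pointwise by the a.s.\ upper bound on $\langle\RR_{1,2}\rangle$; Gaussian concentration then gives $\|F_n(\beta)-\E F_n(\beta)\|_2=O(n^{-1/2})$, and the convexity sandwich $h^{-1}(F_n(\beta)-F_n(\beta-h))\le F_n'(\beta)\le h^{-1}(F_n(\beta+h)-F_n(\beta))$ with $h\to 0$ slower than $n^{-1/2}$ propagates this to $L^2$-control of $F_n'(\beta)$. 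Now decompose $F_n'(\beta)=\beta(1-\langle\RR_{1,2}\rangle)+\eta_n$, where $\eta_n:=\frac{1}{n}\sum_i(g_i\langle\vphi_i\rangle-\beta\Var_{\mu_n^\beta}\vphi_i)$ is the mean-zero Stein defect. A second round of IBP---using the identity $\E[(g_iF_i-\partial_iF_i)(g_jF_j-\partial_jF_j)]=\E[(\partial_jF_i)(\partial_iF_j)]$ for $i\ne j$---expresses $\E\eta_n^2$ and $\E[\langle\RR_{1,2}\rangle\eta_n]$ in closed form in terms of the replica moments $\E\langle\RR_{1,2}^2\rangle$, $\E\langle\RR_{1,2}\RR_{1,3}\rangle$, and $\E\langle\RR_{1,2}\rangle^2$. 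Matching the limit $\E(F_n'(\beta))^2\to\beta^2(1-c)^2$ against the expansion of $\E(\beta(1-\langle\RR_{1,2}\rangle)+\eta_n)^2$ yields the sought concentration $\E\langle\RR_{1,2}\rangle^2\to c^2$.

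The main obstacle is the algebraic closing of the system at the end. A plain count leaves one IBP-derived equation in three asymptotic replica moments ($\lim\E\langle\RR_{1,2}^2\rangle$, $\lim\E\langle\RR_{1,2}\RR_{1,3}\rangle$, $\lim\E\langle\RR_{1,2}\rangle^2$), so the identity does not on its own pin down $\Var(\langle\RR_{1,2}\rangle)$; indeed, a Cauchy--Schwarz argument on $\Var(F_n'(\beta))\to 0$ only reduces the Stein error $\eta_n$ to something comparable in $L^2$ to $\beta(1-\langle\RR_{1,2}\rangle)$. I anticipate closing the system either (i) through a third IBP that produces the Ghirlanda--Guerra-type relation $\lim\E\langle\RR_{1,2}\RR_{1,3}\rangle=\tfrac12(\lim\E\langle\RR_{1,2}^2\rangle+\lim\E\langle\RR_{1,2}\rangle^2)$, or (ii) through an infinitesimal perturbation $H_n\leadsto H_n+n^{-1/4}\tilde H_n$ (with $\tilde H_n$ an independent copy of $H_n$) that enforces the same identity asymptotically without affecting either the $L^2$-limit of $F_n'(\beta)$ or the value of $c$; verifying that the perturbation can be removed without disturbing $\lim\E\langle\RR_{1,2}\rangle^2$ is the technically most delicate piece.
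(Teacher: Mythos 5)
Your strategy falters at the central step. You plan to upgrade the first‐moment identity $\E\langle\RR_{1,2}\rangle\to c$ to $L^2$-concentration $\Var(\langle\RR_{1,2}\rangle)\to0$ and then conclude by Chebyshev. But the intermediate claim is false precisely in the regime the theorem addresses: when $p'(\beta)<\beta$ the system is in its low-temperature (RSB) phase, and $\langle\RR_{1,2}\rangle$ is a random variable over the disorder that does \emph{not} self-average. In fact, the Ghirlanda--Guerra identities you propose to use in order to "close the system" prove the opposite. Writing $\E\langle\RR_{1,2}\rangle^2=\E\langle\RR_{1,2}\RR_{3,4}\rangle$ (independence of replica pairs under $\mu_n^\otimes$) and applying the two GG relations
\eq{
\E\langle\RR_{1,2}\RR_{1,3}\rangle=\tfrac12\E\langle\RR_{1,2}^2\rangle+\tfrac12(\E\langle\RR_{1,2}\rangle)^2,\qquad
\E\langle\RR_{1,2}\RR_{3,4}\rangle=\tfrac13(\E\langle\RR_{1,2}\rangle)^2+\tfrac23\E\langle\RR_{1,2}\RR_{1,3}\rangle,
}
yields
\eq{
\Var(\langle\RR_{1,2}\rangle)=\E\langle\RR_{1,2}\RR_{3,4}\rangle-(\E\langle\RR_{1,2}\rangle)^2=\tfrac13\bigl[\E\langle\RR_{1,2}^2\rangle-(\E\langle\RR_{1,2}\rangle)^2\bigr],
}
which is strictly positive whenever the overlap distribution is nontrivial (two or more atoms), as is expected throughout the RSB phase of the SK model and for directed polymers at low temperature. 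So the identity you hoped would "pin down" the variance instead bounds it below. Equivalently: your proposed matching of $\E(F_n'(\beta))^2\to\beta^2(1-c)^2$ against the expansion of $\E(\beta(1-\langle\RR_{1,2}\rangle)+\eta_n)^2$ only tells you that the three asymptotic second moments cancel; it does not isolate $\E\langle\RR_{1,2}\rangle^2$, and it cannot, since the true answer has $\E\langle\RR_{1,2}\rangle^2>c^2$.

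The conclusion of Theorem~\ref{averages_squared} is much weaker than concentration: it only asserts that $\langle\RR_{1,2}\rangle$ is rarely \emph{near zero}, not that it concentrates at $c$. The paper's proof is built so as not to require self-averaging. It places an Ornstein--Uhlenbeck flow $(\vc g_t)_{t\ge0}$ on the disorder and shows two things. First, by applying a variance bound for time-averages of $\LL f(\vc g_t)$ (Lemma~\ref{OU_variance_lemma}, where $\LL$ is the OU generator and $f=F_n$), the time-average $\frac{1}{T/n}\int_0^{T/n}\langle\RR_{1,2}\rangle_t\,\dd t$ is close to $\kappa=1-p'(\beta)/\beta>0$ with high probability once $T$ is large (Proposition~\ref{prep_prop_good}); this \emph{time}-concentration replaces the false disorder-concentration. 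Second, on the event $\{\langle\RR_{1,2}\rangle_0\le\delta\}$, the full trajectory $\langle\RR_{1,2}\rangle_t$ for $t\in[0,T/n]$ stays small with high probability (Proposition~\ref{prep_prop_bad}), because the environment perturbation over that interval is of size $O(\sqrt{T/n})$ and its fluctuating effect on Gibbs averages is controlled by $\langle\RR_{1,2}\rangle$ itself (Lemma~\ref{h_variance_lemma}). The two facts are incompatible for small $\delta$, so the initial event must be rare. Your first paragraph (the mean identity $\E\langle\RR_{1,2}\rangle\to c$, convexity plus $L^1$-convergence of $F_n$) is correct and matches the paper's preliminaries (Corollary~\ref{overlap_identity_cor}, Lemma~\ref{betas_converging}), as does your observation that $F_n$ is $O(n^{-1/2})$-Lipschitz in $\vc g$. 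But the path from there to \eqref{averages_squared_eq} cannot go through a vanishing variance for $\langle\RR_{1,2}\rangle$.
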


%\begin{thm}
%Given any $\eps > 0$, there exists a large enough integer $k = k(\eps)$ and small enough $\delta = \delta(\eps)>0$ such that the following holds for any $n$.
%With $\P$-probability at least $1-\eps$, there are subsets $G_1,\dots,G_k \subset \Sigma_n$ such that
%\eq{
%\langle\one_{\bigcup_{i=1}^k G_i}\rangle \geq 1 - \eps,
%}
%and for every $i = 1,\dots,k$,
%\eq{
%\RR_{1,2} \geq \delta \quad \text{for all }\sigma^1,\sigma^2 \in G_i.
%}
%\end{thm}

Theorem~\ref{averages_squared} is proved in Section~\ref{proof_1}, Theorem~\ref{expected_overlap_thm} in Section~\ref{proof_2}, and the equivalence of Theorems~\ref{easy_cor} and~\ref{expected_overlap_thm} in Section~\ref{cor_proof}.
In Section~\ref{prep_section}, we provide some general facts that are needed in the main arguments. 
A detailed sketch of the proof technique is given  in Section~\ref{sketchsec}. 
We will often simplify notation by writing $\AA_\delta$ and $B_\delta$, where the dependence on $n$ is understood and will not be a source of confusion.
%In fact, Theorem~\ref{averages_squared} is the only part of the argument for Theorem~\ref{expected_overlap_thm} that requires $\beta$ to be large.
%In the next section, we discuss the main ideas of the proof.

\subsection{Applications} \label{applications}
For many applications, it would suffice to consider $\Sigma_n$ which is finite for every $n$.
Other applications, however, such as spherical spin glasses or directed polymers with a reference walk of unbounded support, require $\Sigma_n$ to be infinite.
It is for this reason that we have stated the setting and results in the generality seen above.
Now we discuss specific models of interest.

\subsubsection{Spin glasses} \label{spin_glasses}
%The usual spin glass setting is the following.
Let $\Sigma_n = \{\pm 1\}^n$ (Ising case) or $\Sigma_n = \{\sigma\in\R^n :\|\sigma\|_2 = \sqrt{n}\}$ (spherical case), and take $P_n$ to be uniform measure on $\Sigma_n$. 
In the mean-field models, the Hamiltonian is of the form
\eeq{ \label{sg_hamiltonian}
H_n(\sigma) = \sum_{p\geq2} \frac{\beta_p}{n^{(p-1)/2}}\sum_{i_1,\dots,i_p = 1}^n g_{i_1,\dots,i_p}\sigma_{i_1}\cdots\sigma_{i_p}.
}
We will assume
\eeq{ \label{sg_assumption_1}
\sum_{p\geq2}\beta_p^2(1+\eps)^p < \infty \quad \text{for some $\eps>0$},
}
which is more restrictive than what we require but standard in the literature.
Standard applications of Gaussian concentration show that $|F_n(\beta) - \E F_n(\beta)|\to0$ almost surely and in $L^1$.
Assumption \eqref{free_energy_assumption} then follows from the convergence of $\E F_n(\beta)\to p(\beta)$, where $p(\beta)$ is given by a formula depending on the model.
In the Ising case, there is the celebrated Parisi formula \cite{parisi79,parisi80}, proved by
Talagrand \cite{talagrand06} for even-spin models, building on the seminal work of Guerra \cite{guerra03}.
It was later extended by Panchenko \cite{panchenko14} to general mixed $p$-spins.
For the spherical model, there is a simpler and elegant formula predicted
by Crisanti and Sommers \cite{crisanti-sommers92}, and proved by Talagrand \cite{talagrand06II} and Chen \cite{chen13}.

%In order for the index set to be finite, we assume 
%\eeq{ \label{sg_assumption_1}
%\beta_p\neq0 \quad \text{for only finitely many $p$}.
%}
To accommodate assumptions \eqref{variance_assumption} and \eqref{positive_overlap}, one should assume the function $\xi(q) \coloneqq \sum_{p\geq2}\beta_p^2q^p$ satisfies
\eeq{ \label{sg_assumption_2}
\xi(1) = 1 \quad \text{and} \quad \xi(q) \geq 0 \quad \text{for all $q\in[-1,1]$.}
}
This is because 
%{\color{red} [I was wondering if we should use $\mathcal{R}_{j,k}$ instead of $r_{j,k}$, to make it look a bit more like the usual notation. I think it shouldn't be a problem since we will not use it anywhere else. What do you think?]} 
\eq{
\RR_{j,k} = \xi(R_{j,k}), \quad \text{where} \quad R_{j,k} \coloneqq \frac{1}{n}\sum_{i=1}^n \sigma^j_i\sigma^k_i \in [-1,1].
}
Note that the second assumption in \eqref{sg_assumption_2} is automatic if $\beta_p = 0$ for all odd $p$.
When $\xi(q) = q^2$, \eqref{sg_hamiltonian} is the classical Sherrington--Kirkpatrick (SK) model~\cite{sherrington-kirkpatrick75} if $\Sigma_n = \{\pm1\}^n$, or the spherical SK model~\cite{kosterlitz-thouless-jones76} if $\Sigma_n = \{\sigma\in\R^n : \|\sigma\|_2 = \sqrt{n}\}$.

In the spin glass literature, $R_{1,2}$ is the usual \textit{replica overlap} that is studied as an order parameter for the system \cite{talagrand03}.
Roughly speaking, $R_{1,2}$ converges
to $0$ when $p(\beta) = \beta^2/2$, but converges in law to a non-trivial distribution
when $p(\beta) < \beta^2/2$. 
In the latter case, the model exhibits what is known as
replica symmetry breaking (RSB). 
If the limiting distribution of $R_{1,2}$, called
the Parisi measure, contains $k + 1$ distinct atoms (one of which must be $0$
\cite{auffinger-chen15I}), then $\xi$ is said to be $k$RSB. 
For instance, spherical pure $p$-spin models
are $1$RSB for large $\beta$ \cite{panchenko-talagrand07}, and it was recently shown that some spherical mixed spin models are $2$RSB at zero temperature \cite{auffinger-zeng19}.
In the Ising case, however, the Parisi measure is
expected to have an infinite support throughout the low-temperature phase
(with $0$ in the support but not as an atom; see \cite[Page 15]{bolthausen07}), a behavior
referred to as \textit{full}-RSB (FRSB). Proving such a statement is a problem of
great interest and has been solved at zero temperature \cite{auffinger-chen-zeng20}.
For spherical models, the situation is somewhat clearer; in \cite{chen-sen17}, sufficient conditions were given for both $1$RSB and FRSB, again at zero temperature.

The simplest type of symmetry breaking, $1$RSB, admits the following
heuristic picture. The state space $\Sigma_n$ is (from the perspective of $\mu_{n}^\beta$)
separated into many orthogonal parts called ``pure states", within which the intra-cluster overlap concentrates on some positive value $q > 0$. 
In the $2$RSB picture, the pure states are not necessarily orthogonal, but rather grouped
together into larger clusters which are themselves orthogonal. In this case,
the overlap could be $q$ (same pure state), $q'\in (0, q)$ (same cluster but different pure state), or $0$ (different clusters). 
The complexity increases in the same
fashion for general $k$RSB. 
In FRSB, the clusters become infinitely nested,
yielding a continuous spectrum of possible overlaps while maintaining ``ultrametric" structure \cite{panchenko13II}. 
In any case, though, there should be asymptotically
no part of the state space which is orthogonal to everything; that is, the pure
states exhaust $\mu_{n}^\beta$.

%Therefore, Theorem~\ref{easy_cor} yields the following.
Absent the intricate hierarchical picture described above, the following rephrasing of Theorem~\ref{easy_cor} confirms this idea.

\begin{thm} \label{sg_thm}
Assume \eqref{sg_assumption_1} and \eqref{sg_assumption_2}, and that $\beta\geq0$ is a point of differentiability for $p(\cdot)$ such that $p'(\beta) < \beta$.
Then for every $\eps > 0$, there exist integers $k = k(\beta,\eps)$ and $n_0 = n_0(\beta,\eps)$ and a number $\delta = \delta(\beta,\eps)>0$ such that the following is true for all $n\geq n_0$.
With $\P$-probability at least $1-\eps$, there exist $\sigma^1,\dots,\sigma^k\in\Sigma_n$ such that
\eq{ %\label{sg_eqn}
\mu_{n}^\beta\Big(\bigcup_{j=1}^k \{\sigma^{k+1}\in\Sigma_n : |R_{j,k+1}|\geq\delta\}\Big) \geq 1 - \eps.
}
\end{thm}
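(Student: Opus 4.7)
The plan is to obtain Theorem~\ref{sg_thm} as a direct corollary of Theorem~\ref{easy_cor}, by verifying that the spin glass model satisfies the four abstract assumptions and then translating the covariance overlap $\RR$ back to the raw overlap $R$ via the function $\xi$.

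First I would check the hypotheses. For the Hamiltonian \eqref{sg_hamiltonian}, a direct computation using independence of the Gaussians gives $\Cov(H_n(\sigma^1), H_n(\sigma^2)) = n\,\xi(R_{1,2})$, where $\xi(q) = \sum_{p \geq 2}\beta_p^2 q^p$. Since $R(\sigma,\sigma) = 1$, assumption \eqref{sg_assumption_2} yields $\Var H_n(\sigma) = n\xi(1) = n$, verifying \eqref{variance_assumption}; the same assumption gives $\Cov(H_n(\sigma^1),H_n(\sigma^2)) = n\xi(R_{1,2}) \geq 0$, verifying \eqref{positive_overlap} with $\EEE_n = 0$. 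Assumption \eqref{field_decomposition} is immediate from the explicit form \eqref{sg_hamiltonian}, with convergence in $L^2$ guaranteed by \eqref{sg_assumption_1}. Finally, \eqref{free_energy_assumption} holds: standard Gaussian concentration (using that $\sigma \mapsto H_n(\sigma)/\sqrt{n}$ has Lipschitz constant $O(1)$ thanks to \eqref{sg_assumption_1} and \eqref{sg_assumption_2}) shows $|F_n(\beta) - \E F_n(\beta)| \to 0$ almost surely and in $L^1$, and convergence of $\E F_n(\beta)$ to the Parisi (Ising) or Crisanti--Sommers (spherical) formula is well established.

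With the hypotheses in place, Theorem~\ref{easy_cor} supplies, for a given $\eps > 0$, integers $k = k(\beta,\eps)$ and $n_0$ and a positive number $\delta' = \delta'(\beta,\eps)$ such that for all $n \geq n_0$, with $\P$-probability at least $1-\eps$ there exist $\sigma^1,\dots,\sigma^k \in \Sigma_n$ with $\mu_n^\beta(\bigcup_{j=1}^k \BB(\sigma^j,\delta')) \geq 1 - \eps$, where $\BB(\sigma^j,\delta') = \{\sigma : \RR(\sigma^j,\sigma) \geq \delta'\} = \{\sigma : \xi(R(\sigma^j,\sigma)) \geq \delta'\}$.

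It remains to convert the covariance overlap threshold into a raw overlap threshold. By \eqref{sg_assumption_1} the power series $\xi$ converges uniformly on $[-1,1]$, so $\xi$ is continuous there with $\xi(0)=0$; hence there exists $\delta = \delta(\beta,\eps) > 0$ such that $|q| < \delta$ implies $\xi(q) < \delta'$. Taking the contrapositive, whenever $\sigma$ satisfies $\xi(R(\sigma^j,\sigma)) \geq \delta'$, we have $|R(\sigma^j,\sigma)| \geq \delta$. Consequently
\[
\BB(\sigma^j, \delta') \subseteq \{\sigma^{k+1} \in \Sigma_n : |R_{j,k+1}| \geq \delta\},
\]
and taking the union over $j$ and invoking the bound from Theorem~\ref{easy_cor} gives exactly the desired conclusion. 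No step here is a serious obstacle: the content is entirely absorbed by Theorem~\ref{easy_cor}, and the only care required is verifying the abstract hypotheses (routine) and observing the continuity of $\xi$ at the origin.
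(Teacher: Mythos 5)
Your proposal is correct and follows exactly the route the paper takes: it invokes Theorem~\ref{easy_cor} for the spin glass model and then uses the continuity of $\xi$ at $0$ (guaranteed by \eqref{sg_assumption_1}) to translate the covariance-overlap threshold $\delta'$ on $\RR = \xi(R)$ into a raw-overlap threshold $\delta$ on $|R|$. The paper's proof is literally the one sentence following the theorem statement, with the verification of assumptions \eqref{free_energy_assumption}--\eqref{field_decomposition} given in the preceding paragraphs of Section~\ref{spin_glasses}, so you have simply supplied the details the paper leaves implicit.
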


%
%\begin{proof}
%From Theorem~\ref{easy_cor}, choose an integer $k$ and $\delta_0>0$ such that
%\eeq{ \label{sg_eqn_prep}
%\P\Big(\langle \one_{\bigcup_{j=1}^k\{\RR_{j,k+1}\geq\delta_0\}}\rangle \geq 1-\eps\Big)
%\geq 1-\eps.
%}
%Since $\xi$ is a polynomial that vanishes at $0$, we can choose $\delta>0$ such that $|q| \geq \delta$ whenever $\xi(q)\geq\delta_0$.
%Then \eqref{sg_eqn} is implied by \eqref{sg_eqn_prep}. 
%\end{proof}

The proof of the above Theorem follows simply from Theorem~\ref{easy_cor} and the observation that by \eqref{sg_assumption_1}, $\xi$ is continuous at $0$. 

Under strong assumptions on $\xi$ and the overlap distribution, namely the (extended) Ghirlanda--Guerra identities, much more precise results were proved by \mbox{Talagrand} \cite[Theorem 2.4]{talagrand10} and later Jagannath \cite[Corollary 2.8]{jagannath17}. 
For spherical pure spin models, similar results were
proved by Subag \cite[Theorem 1]{subag17}. 
An advantage of our approach, beyond its
generality, is that our assumptions on $\xi$ are elementary to check and fairly
loose (they include all even spin models), and the temperature condition
$p'(\beta) < \beta$ is explicit and sharp.

While the literature on replica overlaps in spin glasses is vast, the reader
will find much information in \cite{mezard-parisi-virasoro87,talagrand11I,talagrand11II,panchenko13}; see also \cite{jagannath-tobasco17II} and references therein.
%Beyond the standard Ising and spherical cases described above, there is also the Potts spin glass wherein $\Sigma_n = \{1,2,\dots,\kappa\}^n$, and 
%\eq{
%H_n(\sigma) = \frac{1}{\sqrt{n}}\sum_{1\leq i,j\leq n} g_{ij} \one_{\{\sigma_i = \sigma_j\}}$.
%}
%The above Hamiltonian generalizes the SK model, which is equivalent to the $\kappa = 2$ case, and again has a limiting free energy given by a Parisi-type formula \cite{panchenko18I}.

\subsubsection{Directed polymers} \label{directed_polymers}
Given a positive integer $d$, let $\Sigma_n$ be the set of all maps from $\{0,1,\ldots,n\}$ into $\Z^d$, and let $P_n$ be the law, projected onto $\Sigma_n$, of a homogeneous random walk on $\Z^d$ starting at the origin.
That is, there is some probability mass function
$K$ on $\Z^d$ such that
\begin{subequations} \label{walk_assumption}
\begin{align}
P_n(\sigma(0) = 0) &= 1,\\
P_n\givenp{\sigma(i) = y}{\sigma(i-1)=x} &= K(y-x), \quad 1\leq i\leq n.
\end{align}
\end{subequations}
Let $(g(i,x) : i\geq1,x\in\Z^d)$ be i.i.d.~standard normal random variables.
The Hamiltonian for the model of directed polymers in Gaussian environment is then given by
\eq{
H_n(\sigma) = \sum_{i=1}^n g(i,\sigma(i)) = \sum_{i=1}^n\sum_{x\in\Z^d} g(i,x)\one_{\{\sigma(i)=x\}}.
}
In this case, the overlap between two paths is the fraction of time they intersect:
\eeq{ \label{polymer_overlap}
\RR_{1,2} = \frac{1}{n}\sum_{i=1}^n \one_{\{\sigma^1(i)=\sigma^2(i)\}}.
}
The assumption \eqref{free_energy_assumption} holds for any $K$ (see Proposition \ref{free_energy_converges}), although typically
$P_n$ is taken to be standard simple random walk; all the references below
refer to this case. 
Alternatively, one can consider point-to-point polymer measures, meaning the endpoint of the polymer is fixed.
This case is studied in \cite{rassoul-seppalainen14,georgiou-rassoul-seppalainen16} and accommodates the same structure as above, up to changing the reference measure $P_n$.

Notice that the identity \eqref{expected_overlap_formula} immediately implies $\lim_{n\to\infty}\E\langle \RR_{1,2}\rangle > 0$ when $p'(\beta) < \beta$. 
Theorem~\ref{averages_squared} goes a step further, showing that the random variable $\langle \RR_{1,2}\rangle$ is itself stochastically bounded away from 0.
For a certain class of
bounded random environments,  a quantitative version of Theorem~\ref{averages_squared} was proved by Chatterjee~\cite{chatterjee19}, but Theorem~\ref{expected_overlap_thm} is the first of its kind. 
Unlike some other conjectured polymer properties, the statement \eqref{expected_overlap_thm_eq} has not been verified for the so-called exactly solvable models in $d = 1$ \cite{seppalainen12,corwin-seppalainen-shen15,oconnell-ortmann15,barraquand-corwin17,thiery-doussal15}. For heavy-tailed
environments, a stronger notion of localization is considered in \cite{auffinger-louidor11,torri16} and
also discussed in \cite{dey-zygouras16,berger-torri19}. 
Historically, studying pathwise localization has found
somewhat greater success in the context of \textit{continuous} space-time polymer
models \cite{comets-yoshida05,comets-yoshida13,comets-cranston13,comets-cosco??}.

For polymers in Gaussian environment, it is known (see \cite[Proposition 2.1(iii)]{comets17}) that $p'$
is bounded from above by a constant, and so $\E\langle \RR_{1,2}\rangle\to1$ as $\beta\to\infty$ by \eqref{expected_overlap_formula}. 
(While convexity guarantees $p(\cdot)$ is differentiable almost everywhere, it is an open problem to show that $p(\cdot)$ is everywhere differentiable, let
alone analytic away from the critical value separating the high and low temperature phases.) In this sense, the polymer measure becomes completely
localized near the maximizer of $H_n(\cdot)$ as $\beta\to\infty$. A main motivation for the
present study was to formulate a version of ``complete localization" for fixed
$\beta$ in the low-temperature regime.

In Chapter \ref{endpoint}, following \cite{vargas07}, we phrased complete localization in terms of the endpoint
distribution: the law of $\sigma_n$ under $\mu^\beta_{n}$.
Loosely speaking, what was shown
is that if $p(\beta) < \beta^2/2$, then with probability at least $1-\eps$, one can find sufficiently many (independent of $n$) random vertices $x_1,\dots,x_k$ in $\Z^d$ so that
\eeq{ \label{atomic_localization}
\mu^\beta_{n}\big(\big\{\sigma : \sigma(n) \in \{x_1,\dots,x_k\}\big\}\big) \geq 1 - \eps.
}
This behavior is called ``asymptotic pure atomicity", referring to the fact that
even as $n$ grows large, the endpoint distribution remains concentrated on an
$O(1)$ number of sites (rather than diffuse polynomially as in simple random
walk). 
This is analogous to the results of this chapter, except that the endpoint statistic
has been used to reduce the state space to $\Z^d$. 
The pathwise localization in Theorem~\ref{easy_cor} describes a more global phenomenon occurring in the original state space $\Sigma_n$.
%We do not write a separate result here, since one should simply read Theorems~\ref{easy_cor}--\ref{averages_squared} with the overlap given by \eqref{polymer_overlap}. {\color{red} [I still think that an independent statement is important. For example, something similar to Theorem~\ref{sg_thm} may be suitable here, since it is sufficiently different in appearance than Theorem~\ref{easy_cor}. This is important for posterity, since someone may be citing our result for polymers, and will need a specific theorem to refer to.]}
Rephrased below, it says that up to arbitrarily small probabilities, the Gibbs measure is concentrated on paths intersecting  one of a few distinguished paths a positive fraction of the time. 

\begin{thm} \label{polymer_thm}
Assume \eqref{walk_assumption} and that $\beta\geq0$ is a point of differentiability for $p(\cdot)$ such that $p'(\beta) < \beta$.
Then for every $\eps > 0$, there exist integers $k = k(\beta,\eps)$ and $n_0 = n_0(\beta,\eps)$ and a number $\delta = \delta(\beta,\eps)>0$ such that the following is true for all $n\geq n_0$.
With $\P$-probability at least $1-\eps$, there exist paths $\sigma^1,\dots,\sigma^k\in\Sigma_n$ such that
\eq{
\mu_{n}^\beta\bigg(\bigcup_{j=1}^k \Big\{\sigma^{k+1}\in\Sigma_n: \frac{1}{n}\sum_{i=1}^n\one_{\{\sigma^{k+1}(i)=\sigma^j(i)\}}\geq\delta\Big\}\bigg) \geq 1 - \eps.
}
\end{thm}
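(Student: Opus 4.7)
The plan is to show that Theorem \ref{polymer_thm} follows as a direct corollary of Theorem \ref{easy_cor} once we verify that the directed polymer model satisfies the four structural assumptions \eqref{free_energy_assumption}--\eqref{field_decomposition} and that the overlap function \eqref{polymer_overlap} gives precisely the balls appearing in the statement.

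First, I would check the four assumptions in sequence. For \eqref{free_energy_assumption}, the almost sure and $L^1$ convergence of $F_n(\beta)$ to a deterministic limit $p(\beta)$ for every $\beta\in\R$ is exactly Proposition \ref{free_energy_converges} applied in Gaussian environment (the finite-moment hypothesis \eqref{mgf_assumption} holds for all $\beta$ since $\lambda(\beta) = \beta^2/2 < \infty$). For \eqref{variance_assumption}, since the $g(i,x)$ are independent standard normals and for each fixed $\sigma$ the indices $(i,\sigma(i))$ are distinct as $i$ varies,
\[
\Var H_n(\sigma) = \sum_{i=1}^n \Var g(i,\sigma(i)) = n.
\]
For \eqref{positive_overlap}, a similar computation gives $\Cov(H_n(\sigma^1),H_n(\sigma^2)) = \sum_{i=1}^n \one_{\{\sigma^1(i)=\sigma^2(i)\}} \geq 0$, so \eqref{positive_overlap} holds with $\EEE_n = 0$. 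Finally, \eqref{field_decomposition} is automatic: indexing the i.i.d.\ family $(g(i,x))_{i\geq 1, x\in\Z^d}$ by a single parameter and taking $\vphi_{i,x}(\sigma) \coloneqq \one_{\{\sigma(i) = x\}}$, the sum $H_n(\sigma) = \sum_{i,x} g(i,x)\vphi_{i,x}(\sigma)$ is in fact finite (only $n$ terms contribute) and hence trivially converges in $L^2(\P)$.

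Next I would identify the overlap. Dividing the covariance by $n$ yields exactly
\[
\RR(\sigma^1,\sigma^2) = \frac{1}{n}\sum_{i=1}^n \one_{\{\sigma^1(i)=\sigma^2(i)\}},
\]
so the balls \eqref{ball_def} become
\[
\BB(\sigma,\delta) = \Big\{\sigma'\in\Sigma_n : \tfrac{1}{n}\sum_{i=1}^n\one_{\{\sigma'(i)=\sigma(i)\}} \geq \delta\Big\},
\]
which matches the sets appearing in the statement of Theorem \ref{polymer_thm}.

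With these verifications in place, Theorem \ref{easy_cor} applies verbatim at any $\beta\geq 0$ that is a point of differentiability of $p(\cdot)$ with $p'(\beta) < \beta$: for every $\eps > 0$, there exist $k$, $n_0$, and $\delta > 0$ (depending only on $\beta$ and $\eps$) so that for all $n\geq n_0$, with $\P$-probability at least $1 - \eps$ there are paths $\sigma^1,\dots,\sigma^k \in \Sigma_n$ with $\mu_n^\beta\big(\bigcup_j \BB(\sigma^j,\delta)\big) \geq 1-\eps$. Substituting the explicit form of $\BB(\sigma^j,\delta)$ above yields Theorem \ref{polymer_thm}. There is no genuine obstacle here; the only mild point is that in this model the state space $\Sigma_n$ is infinite, which is precisely why the chapter has been careful to formulate \eqref{field_decomposition} in the generality that covers countable reference-walk supports rather than restricting to finite $\Sigma_n$.
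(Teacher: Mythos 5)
Your proposal is correct and matches exactly what the paper does: Theorem \ref{polymer_thm} is stated in the paper as a rephrasing of Theorem \ref{easy_cor} specialized to the directed polymer Hamiltonian, with the same verification of \eqref{free_energy_assumption}--\eqref{field_decomposition} (the paper explicitly references Proposition \ref{free_energy_converges} for \eqref{free_energy_assumption} and the overlap identity \eqref{polymer_overlap} for the ball structure). Your filling in of the remaining assumption checks is accurate, and the observation that \eqref{positive_overlap} holds with $\EEE_n = 0$ and that \eqref{field_decomposition} is trivially a finite sum for each $\sigma$ is exactly the intended reading.
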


In Section~\ref{no_path_atom}, we demonstrate that path localization does not
occur in the atomic sense \eqref{atomic_localization}. 
That is, any bounded number of paths will have a total mass under
$\mu^\beta_{n}$ that decays to $0$ as $n\to\infty$.
For this reason, the definitions from \cite{vargas07} and Chapter \ref{endpoint} of complete localization for the endpoint are inadequate for path
localization, necessitating a statement in terms of overlap. This distinguishes
the lattice polymer model from its mean-field counterpart on
regular trees, which is simply the statistical mechanical version of branching
random walk \cite{derrida-spohn88,comets17}. 
For those models, the endpoint distribution on the
leaves of the tree is obviously equivalent to the Gibbs measure because each
leaf is the termination point of a unique path. Moreover, the results of Chapter \ref{endpoint}
can be interpreted equally well (and improved upon) in that setting (see
\cite{barral-rhodes-vargas12,jagannath16}), and so we will not elaborate on the fact that polymers on trees also fit into the framework of this chapter.

\subsection{Other Gaussian fields} \label{other_fields}
Here we mention several other models to
which our results apply but for which they are not new. Indeed, each model
below is known to exhibit Poisson--Dirichlet statistics for the masses assigned by
$\mu^\beta_{n}$ to the ``peaks" discussed in the motivating Section~\ref{motivation}. 
In particular, asymptotically no mass is given to states having vanishing expected overlap with an independent sample.
\begin{itemize}
\item Derrida's Random Energy Model (REM) \cite{derrida80,derrida81} is set on the hypercube
$\Sigma_n = \{\pm1\}^n$ with uniform measure, and has the simplest possible
covariance structure: $\RR_{j,k} = \delta_{j,k}$.
With $\beta_\mathrm{c} = \sqrt{2\log 2}$, the following formula holds \cite[Theorem 9.1.2]{bovier06}:
\eq{
p(\beta) = \begin{cases}
\beta^2/2 &\beta \leq \beta_\mathrm{c}, \\
\beta_\mathrm{c}^2/2 + (\beta - \beta_\mathrm{c})\beta_\mathrm{c} &\beta>\beta_\mathrm{c}.
\end{cases}
}
See also \cite[Chapter 1]{talagrand03book}, in particular Theorem 1.2.1.
\item The generalized random energy models have non-trivial covariance structure \cite{derrida85}, and can be tuned to have an arbitrary number of phase transitions.
The condition $p'(\beta) < \beta$ is satisfied as soon as the first phase transition occurs.
See also \cite[Chapter 10]{bovier06}.

\item Finally, in \cite{arguin-zindy14} Arguin and Zindy studied a discretization of a log-correlated Gaussian field from \cite{barral-mandelbrot02,bacry-muzy03} 
which has the same free energy as the REM. 
Their particular model had the
technical complication of correlations not following a
tree structure, unlike for instance the discrete Gaussian free field.
\end{itemize}

\section{Proof sketches}\label{sketchsec}
The proofs of Theorems~\ref{expected_overlap_thm} and~\ref{averages_squared} are long, but they contain ideas that may be useful for other problems. 
Therefore, we have included this proof-sketch section which, while still rather lengthy, distills the arguments to their central ideas.
%In this section, we  sketch the proofs of Theorems~\ref{expected_overlap_thm} and~\ref{averages_squared}. Unfortunately, there are so many distinct  steps that the sketch itself has turned out to be rather lengthy. We could not find a way of reducing the size of the sketch without omitting important ideas. 
%Theorem~\ref{easy_cor} follows easily from Theorem~\ref{expected_overlap_thm}, and we include a short description of this deduction at the end.
It introduces some of the notations that will be used later in the chapter; however, these notations will be reintroduced in the later sections, so it is safe to skip directly to Section~\ref{prep_section} should the reader decide to do so.

\subsection{{Proof sketch of Theorem~\ASref}}\label{sketchsec1}
For simplicity, let us assume that the representation \eqref{field_decomposition} consists of only finitely many terms:
\eq{
H_n(\sigma) = \sum_{i=1}^N g_i \vphi_i(\sigma).
}
Following the argument described below, the general case is handled by some routine calculations (made in Section~\ref{gibbs_measure}) to check that sending $N\to\infty$ poses no issues.

Given \eqref{expected_overlap_formula}, it is clear that $p'(\beta) < \beta$ would imply \eqref{averages_squared_eq} if we knew that $\langle\RR_{1,2}\rangle$ concentrates around its mean as $n\to\infty$.
Unfortunately, this may not be true in general.
Therefore, as a way of artificially imposing concentration, we let the environment evolve as an Ornstein--Uhlenbeck (OU) flow, and then eventually take an average over a short time interval.
Formally, this means we consider
\eeq{ \label{OU_def_sketch}
\vc g_t \coloneqq \e^{-t}\vc g + \e^{-t} \vc W({\e^{2t}-1}), \quad t\geq0,
}
where $\vc W(\cdot) = (W_i(\cdot))_{i=1}^N$ are independent Brownian motions that are also independent of $\vc g = \vc g_0$.
Recall the OU generator $\LL \coloneqq \Delta - \vc x \cdot \nabla$, and the fact that $\E\LL f(\vc g) =0$ for any $f$ with suitable regularity.
By expanding $f$ in an  orthonormal basis of eigenfunctions of $\LL$, and expressing both $\LL f(\vc g_t)$ and $\E \|\nabla f(\vc g)\|^2$ using the coefficients from this expansion, one can show that
\eeq{ \label{general_variance_bound}
\Var\bigg(\frac{1}{t} \int_0^{t} \LL f(\vc g_s)\ \dd s\bigg) \le \frac{2}{t} \E\|\nabla f(\vc g)\|^2.  
}
This inequality, established in Lemma~\ref{OU_variance_lemma}, provides the proof's essential estimate when applied to $f(\vc g) = F_n(\beta)$.
For this $f$, it is easy to verify that $\E\|\nabla f(\vc g)\|^2=O(1/n)$, and
\eq{
\LL f(\vc g_t) = \beta^2 - \beta^2 \langle \RR_{1,2}\rangle_t - \beta \frac{\partial}{\partial \beta} F_{n,t}(\beta),
}
where $\langle \RR_{1,2}\rangle_t$ and $F_{n,t}(\beta)$ are the expected overlap and free energy, respectively, in the environment $\vc g_t$.
Moreover, from standard methods (worked out in Section~\ref{derivatives_section}), it follows that $\frac{\partial}{\partial\beta}F_{n,t}(\beta)\approx p'(\beta)$ with high probability. 
Combining these observations about $f$ with the general variance estimate \eqref{general_variance_bound}, we arrive at
\eeq{\label{sketcheq2}
\frac{1}{T/n}\int_0^{T/n} \langle \RR_{1,2}\rangle_t \ \dd t = 1- \frac{p'(\beta)}{\beta} + O(1/T). 
}
In other words, averaging $\langle \RR_{1,2}\rangle_t$ over a long enough interval, but whose size is still $O(1/n)$, results in a value close to the expectation suggested by \eqref{expected_overlap_formula}.
We choose $T=T(\eps)$ large enough depending on $\eps$, which determines the level of precision required in \eqref{sketcheq2}.

Next comes the most crucial step in the proof, where we show that if $\langle \RR_{1,2}\rangle=\langle \RR_{1,2}\rangle_0\le \delta$ for some small $\delta$, then for each $t\in[0,T(\eps)/n]$, the quantity $\langle \RR_{1,2}\rangle_t$ is also small with high probability. 
If $p'(\beta)<\beta$, this leads to a contradiction to \eqref{sketcheq2} if $\delta$ is small enough. 
To avoid this contradiction, the probability of $\langle \RR_{1,2}\rangle\le \delta$ happening in the first place must be small, which is what we want to show.

To demonstrate our crucial claim, we consider any $t = T/n$, where $T\leq T(\eps)$ and $n$ is large.
First, note that
\eeq{\label{rrab}
\langle \RR_{1,2}\rangle_t = \frac{\langle \RR_{1,2}\e^{\beta A_t+ \beta B_t}\rangle}{\langle \e^{\beta A_t+\beta B_t}\rangle},
}
where $B_t$ comes from the Brownian part of \eqref{OU_def_sketch}, and $A_t$ comes from the initial environment:
\eq{
A_t &\coloneqq (\e^{-t} - 1)(H_n(\sigma^1) + H_n(\sigma^2)), \qquad B_t \coloneqq  \e^{-t}\sum_{i} W_i(\e^{2t}-1)(\vphi_i(\sigma^1)+\vphi_i(\sigma^2)).
}
Since $t=T/n \ll 1$, we have
\[
A_t \approx -\frac{T}{n} (H_n(\sigma^1) + H_n(\sigma^2)).
\]
By standard arguments (again presented in Section~\ref{derivatives_section}), $H_n(\sigma^1)/n$ and $H_n(\sigma^2)/n$ are both close to $p'(\beta)$ with high probability under the Gibbs measure. 
Thus, for fixed $t$, the random variable $A_t$ behaves like a constant inside $\langle\cdot\rangle$. 
Consequently, we can reduce \eqref{rrab} to 
\eeq{ \label{gibbs_t_1}
\langle \RR_{1,2}\rangle_t \approx \frac{\langle \RR_{1,2}\e^{\beta B_t}\rangle}{\langle \e^{\beta B_t}\rangle}.
}
Now let $h_i := W_i(\e^{2t}-1)/\sqrt{\e^{2t}-1}$, so that $h_i\sim \NN(0,1)$.  
Again since $t=T/n \ll 1$, we have
\eq{
B_t &= \sqrt{1-\e^{-2t}}\sum_i h_i (\vphi_i(\sigma^1)+\vphi_i(\sigma^2)) \approx \sqrt{\frac{2T}{n}}\sum_i h_i (\vphi_i(\sigma^1)+\vphi_i(\sigma^2)).
}
Thus, if $\E_{\vc h}$ denotes expectation in $\vc h = (h_1,\ldots, h_N)$ only, then 
\eq{
\E_{\vc h} \langle \e^{\beta B_t}\rangle 
%&\stackrel{\phantom{\mbox{\footnotesize\eqref{variance_assumption}}}}{\approx} 
\approx
 \Big\langle\exp\Big(\frac{\beta^2T}{n}\sum_i (\vphi_i(\sigma^1)+\vphi_i(\sigma^2))^2\Big)\Big\rangle \stackrel{\mbox{\footnotesize\eqref{variance_assumption}}}{=} \exp\big(2\beta^2 T(1+\RR_{1,2})\big). 
}
In the event that $\langle \RR_{1,2}\rangle$ is small, the %positivity
assumption \eqref{positive_overlap} implies that $\RR_{1,2}\approx 0$ with high probability under the Gibbs measure. 
Therefore, conditional on this event (which depends only on $\vc g$, not $\vc h$), we have
\[
\E_{\vc h} \langle \e^{\beta B_t}\rangle\approx \e^{2\beta^2T}.
\]
By a similar argument, we also have
\eq{
\E_{\vc h} \langle \e^{\beta B_t}\rangle^2 &\approx \E_{\vc h} \Big\langle \exp\Big(\beta\sqrt{\frac{2T}{n}} \sum_i h_i (\vphi_i(\sigma^1)+\vphi_i(\sigma^2)+ \vphi_i(\sigma^3)+\vphi_i(\sigma^4))\Big)\Big\rangle\\
&= \Big\langle \exp\Big(\frac{\beta^2T}{n} \sum_i (\vphi_i(\sigma^1)+\vphi_i(\sigma^2)+ \vphi_i(\sigma^3)+\vphi_i(\sigma^4))^2\Big)\Big\rangle
\approx \e^{4\beta^2T}.
}
%where in the last step we used \eqref{variance_assumption} and the fact that $\RR_{i,j}\approx 0$ for all $1\le i\ne j\le 4$, which follows from the assumption that $\langle \RR_{1,2}\rangle \approx 0$ and the positivity of overlaps. Thus, if $\langle \RR_{1,2}\rangle \approx 0$, then 
In summary, if $\langle \RR_{1,2}\rangle \approx 0$, then
\[
\Var_{\vc h} \langle \e^{\beta B_t}\rangle = \E_{\vc h} \langle \e^{\beta B_t}\rangle^2 - (\E_{\vc h} \langle \e^{\beta B_t}\rangle)^2 \approx 0,
\]
and thus, with high probability, 
\eeq{ \label{gibbs_t_2}
 \langle \e^{\beta B_t}\rangle\approx \E_{\vc h} \langle \e^{\beta B_t}\rangle \approx \e^{2\beta^2T}.
}
By following exactly the same steps with $\langle \RR_{1,2} \e^{\beta B_t}\rangle$ instead of $\langle \e^{\beta B_t}\rangle$, we show that 
\eeq{ \label{gibbs_t_3}
\langle \RR_{1,2} \e^{\beta B_t}\rangle \approx \langle \RR_{1,2} \rangle \e^{2\beta^2T}.
}
Combining \eqref{gibbs_t_1}--\eqref{gibbs_t_3}, we conclude that if $\langle \RR_{1,2}\rangle \approx 0$, then $\langle \RR_{1,2}\rangle_t \approx \langle \RR_{1,2}\rangle \approx 0$.
%Moreover, if $\langle \RR_{1,2}\rangle \le \eps$, a careful analysis shows that we can take $T=T(\eps)$, where $T(\eps)\to \infty$ as $\eps\to 0$.
%This completes our sketch of the proof of Theorem~\ref{averages_squared}. 

\subsection{{Proof sketch of Theorem~\EOref}}\label{sketchsec2}
We begin this proof sketch where the previous section left off, namely the observation that if the average overlap $\langle\RR_{1,2}\rangle$ in environment $\vc g$ is small, then Gibbs averages of the type in \eqref{gibbs_t_2} and \eqref{gibbs_t_3} are well concentrated.
By the same type of argument --- see Lemma~\ref{h_variance_lemma}(b) and \eqref{upper_X3} --- we can say something more general: no matter the size of $\langle\RR_{1,2}\rangle$, these averages remain concentrated so as long as they are restricted to the set $\AA_{n,\delta}$ defined in \eqref{A_def}, where \textit{conditional} average overlap $\givena{\RR_{1,2} }{\sigma^1}$ is small.
That is, if $\wt H_n$ is an independent Hamiltonian (i.e.~defined with $\vc h$, an independent copy of $\vc g$), then with high probability,
\eeq{ \label{small_fluctuations_restricted}
\langle\one_{\AA_{n,\delta}}\e^{\frac{\beta}{\sqrt{n}}\wt H_n(\sigma)}\rangle \approx \E_{\vc h}  \langle\one_{\AA_{n,\delta}}\e^{\frac{\beta}{\sqrt{n}}\wt H_n(\sigma)}\rangle\stackref{variance_assumption}{=} \e^{\frac{\beta^2}{2}}\langle \one_{\AA_{n,\delta}}\rangle.
}
%More precisely, Lemma~\ref{h_variance_lemma}(b) shows that the fluctuations are $O(\sqrt{\delta})$ as $\delta\to0$ (see \eqref{upper_X3}).
In fact, the opposite is true off of the set $\AA_{n,\delta}$. 
If $\langle \RR_{1,2}\rangle$ is not too small relative to $\delta$, then the fluctuations of $\langle\one_{\AA_{n,\delta}^\cc}\e^{\frac{\beta}{\sqrt{n}}\wt H_n(\sigma)}\rangle$ due to $\vc h$ are $\Omega(1)$ as $n\to\infty$.
This is again an elementary calculation; see \eqref{lower_X4_prep}--\eqref{lower_X4_almost}.

On the other hand, a convenient consequence of Gaussianity is that $H_n + \frac{1}{\sqrt{n}}\wt H_n \stackrel{\dd}{=} \sqrt{1+\frac{1}{n}}H_n$.
That is, an environment perturbation is equivalent in distribution to a temperature perturbation.
(In fact, this simple observation underlies the Aizenman--Contucci identities \cite{aizenman-contucci98}, the predecessor of the Ghirlanda--Guerra identities.)
Therefore, if we keep track of the dependence on $\beta$ by writing $\langle \cdot\rangle_\beta$, and abbreviate $\AA_{n,\delta}$ to $\AA_{\delta}$, we have
\eeq{ \label{sketch_2_1}
\langle\one_{\AA_{\delta}}\rangle_{\beta\sqrt{1+\frac{1}{n}}} 
\stackrel{\dd}{=}\frac{\langle\one_{\AA_{\delta}}\e^{\frac{\beta}{\sqrt{n}}\wt H_n(\sigma)}\rangle_\beta}{\langle \e^{\frac{\beta}{\sqrt{n}}\wt H_n(\sigma)}\rangle_\beta}.
}
By rewriting the denominator in a trivial way and using our observation \eqref{small_fluctuations_restricted}, we see that with high probability, 
\eeq{ \label{sketch_2_2}
\frac{\langle\one_{\AA_{\delta}}\e^{\frac{\beta}{\sqrt{n}}\wt H_n(\sigma)}\rangle_\beta}{\langle \e^{\frac{\beta}{\sqrt{n}}\wt H_n(\sigma)}\rangle_\beta} 
&= \frac{\langle\one_{\AA_{\delta}}\e^{\frac{\beta}{\sqrt{n}}\wt H_n(\sigma)}\rangle_\beta}{\langle\one_{\AA_{\delta}}\e^{\frac{\beta}{\sqrt{n}}\wt H_n(\sigma)}\rangle_\beta+\langle\one_{\AA^\cc_{\delta}}\e^{\frac{\beta}{\sqrt{n}}\wt H_n(\sigma)}\rangle_\beta}  \\
&\approx \frac{\e^{\frac{\beta^2}{2}}\langle\one_{\AA_{\delta}}\rangle_\beta}{\e^{\frac{\beta^2}{2}}\langle\one_{\AA_{\delta}}\rangle_\beta+\langle\one_{\AA^\cc_{\delta}}\e^{\frac{\beta}{\sqrt{n}}\wt H_n(\sigma)}\rangle_\beta}.
}
In the last expression above, the only term depending on $\vc h$ is the second summand in the denominator.
Therefore, Jensen's inequality gives
\eeq{ \label{sketch_2_3}
\E_\vc h\bigg[\frac{\e^{\frac{\beta^2}{2}}\langle\one_{\AA_{\delta}}\rangle_\beta}{\e^{\frac{\beta^2}{2}}\langle\one_{\AA_{\delta}}\rangle_\beta+\langle\one_{\AA^\cc_{\delta}}\e^{\frac{\beta}{\sqrt{n}}\wt H_n(\sigma)}\rangle_\beta}\bigg]
&> \frac{\e^{\frac{\beta^2}{2}}\langle\one_{\AA_{\delta}}\rangle_\beta}{\e^{\frac{\beta^2}{2}}\langle\one_{\AA_{\delta}}\rangle_\beta+\E_{\vc h}\langle\one_{\AA^\cc_{\delta}}\e^{\frac{\beta}{\sqrt{n}}\wt H_n(\sigma)}\rangle_\beta} \\
&=\frac{\e^{\frac{\beta^2}{2}}\langle\one_{\AA_{\delta}}\rangle_\beta}{\e^{\frac{\beta^2}{2}}\langle\one_{\AA_{\delta}}\rangle_\beta+\e^{\frac{\beta^2}{2}}\langle\one_{\AA^\cc_{\delta}}\rangle_\beta} 
= \langle \one_{\AA_{\delta}}\rangle_\beta.
}
A more careful analysis shows that the Jensen gap is large enough that we can replace the lower bound by $(1+\gamma)\langle\one_{\AA_{\delta}}\rangle_\beta - C\sqrt{\delta}$, where $\gamma$ and $C$ are positive constants.
One important caveat is that this stronger lower bound is valid only when $\langle\RR_{1,2}\rangle$ is not too small (so that the fluctuations of $\langle\one_{\AA^\cc_{\delta}}\e^{\frac{\beta}{\sqrt{n}}\wt H_n(\sigma)}\rangle_\beta$ are order $1$), which is why Theorem~\ref{averages_squared} is needed beforehand.
Reading \eqref{sketch_2_1}--\eqref{sketch_2_3} from start to end, we obtain
\eeq{ \label{sketch_2_4}
\E\langle\one_{\AA_\delta}\rangle_{\beta\sqrt{1+\frac{1}{n}}} \geq (1+\gamma)\E\langle\one_{\AA_\delta}\rangle_\beta - C\sqrt{\delta}.
}
While the above inequality is the most important step of the proof, a key shortcoming is that the set $\AA_\delta$ is defined using $\langle\cdot\rangle_\beta$ rather than $\langle\cdot\rangle_{\beta\sqrt{1+\frac{1}{n}}}$.
Since we will want to apply the inequality iteratively, we need to replace $\AA_\delta$ on the left-hand side by $\AA_{\delta,1}$, where
\eq{
\AA_{\delta,k} \coloneqq \Big\{\sigma\in\Sigma_n : \frac{1}{n}\sum_i\vphi_i(\sigma)\langle\vphi_i\rangle_{\beta\sqrt{1+\frac{k}{n}}}\leq\delta\Big\}, \quad k = 0,1,2,\dots
}
To make this replacement, we produce a complementary inequality, again using the equivalence of environment/temperature perturbations.
For simplicity, let us assume $\RR_{1,2}\geq0$, which is essentially realized by \eqref{positive_overlap} for large $n$.
Observe that
\eq{
\givena{\RR_{1,2}}{\sigma^1}_{\beta\sqrt{1+\frac{1}{n}}} 
&\stackrel{\dd}{=} \frac{\givena{\RR_{1,2}\e^{\frac{\beta}{\sqrt{n}}\wt H_n(\sigma^2)}}{\sigma^1}_\beta}{\langle\e^{\frac{\beta}{\sqrt{n}}\wt H_n(\sigma)}\rangle_\beta}  
\leq \sqrt{\givena{\RR_{1,2}}{\sigma^1}_\beta}\underbrace{\sqrt{\langle \e^{\frac{2\beta}{\sqrt{n}}\wt H_n(\sigma)}\rangle_\beta}\langle\e^{\frac{-\beta}{\sqrt{n}}\wt H_n(\sigma)}\rangle_\beta}_{X},
}
where we have applied Cauchy--Schwarz (and then $\RR_{1,2}^2 \leq \RR_{1,2}\leq1$) and Jensen's inequality (using the convexity of $x\mapsto x^{-1}$).
When $\sigma^1 \in \AA_{\delta}=\AA_{\delta,0}$, the final expression is at most $X\sqrt{\delta}$, and so the inequality implies $\AA_{\delta,0} \subset \AA_{X\sqrt{\delta},1}$.
Now, the random variable $X$ has moments of all orders (admitting simple upper bounds), and so it can be essentially regarded as a large constant.
In particular, when $\delta$ is small, we will have $X \leq \delta^{-1/4}$ with high probability, in which case
$\AA_{\delta,0} \subset \AA_{\delta^{1/4},1}$.
Combining these ideas with \eqref{sketch_2_4}, we show
\eq{
\E\langle \one_{\AA_{\delta^{1/4},1}}\rangle_{\beta\sqrt{1+\frac{1}{n}}} \geq (1+\gamma)\E\langle\one_{\AA_\delta}\rangle_\beta - C\sqrt{\delta}.
}
More generally, for any integer $k\geq1$,
\eeq{ \label{ineq_to_be_iterated}
\E\langle \one_{\AA_{\delta^{1/4},k}}\rangle_{\beta\sqrt{1+\frac{k}{n}}} \geq (1+\gamma)\E\langle\one_{\AA_{\delta,k-1}}\rangle_{\beta\sqrt{1+\frac{k-1}{n}}} - C\sqrt{\delta}.
}
This inequality can now be iterated, with $\delta$ being replaced by $\delta^{1/4}$, then $\delta^{1/16}$, and so on, as the expectation on the left is inserted on the right in the next iteration.

Since the left-hand side of \eqref{ineq_to_be_iterated}  is always at most $1$, we clearly obtain a contradiction if $\E\langle\one_{\AA_{\delta,0}}\rangle_\beta$ is larger than $x$, where $x$ is the solution to $x = (1+\gamma)x - C\sqrt{\delta}$. 
This would complete the proof of Theorem~\ref{expected_overlap_thm} if not for the subtlety that $\gamma$ actually depends on $k$ in a non-trivial way.
Nevertheless, \eqref{ineq_to_be_iterated} can still be used to derive a contradiction of the same spirit unless 
$\E\langle\one_{\AA_{\delta^{1/4^k},k}}\rangle$ is small for some $k\leq K$, where $K$ is large and tends to infinity as $\eps\to0$, but crucially does not depend on $n$.
This approach is reminiscent of tower-type arguments in extremal combinatorics.

Replacing $\delta$ by $\delta^{4^k}$, we can then say $\E\langle\one_{\AA_{\delta,k}}\rangle$ is small.
Finally, to deduce the smallness of $\E\langle \one_{\AA_{\delta,0}}\rangle$ from the smallness of $\E\langle\one_{\AA_{\delta,k}}\rangle$, we make use of standard arguments showing that if an event is rare at inverse temperature $\beta$, then it remains rare at inverse temperature $\beta + O(1/n)$.

\subsection{{Proof sketch of Theorem \ECref}}
To deduce Theorem~\ref{easy_cor} from Theorem~\ref{expected_overlap_thm}, simply let $\sigma^1,\ldots,\sigma^k, \sigma^{k+1}$ be i.i.d.~draws from the Gibbs measure. Then by the law of large numbers, when $k$ is large,
\[
\frac{1}{k}\sum_{j=1}^k \RR_{j, k+1} \approx \RR(\sigma^{k+1})
\]
with high probability. 
But by Theorem~\ref{expected_overlap_thm}, we know that with high probability, $\RR(\sigma^{k+1})$ is not close to zero. Therefore, with high probability, there must exist $1\le j\le k$ such that $\RR_{j,k+1}$ is not close to zero.

\section{General preliminaries} \label{prep_section}
In this preliminary section, we record several facts needed in the proofs of Theorems~\ref{expected_overlap_thm} and~\ref{averages_squared}.
These preparatory results are mostly elementary.%, but proofs are provided whenever the statement cannot be immediately read elsewhere.

%{\color{red} [May be you can remind the reader here that from now on, $n$ will be fixed (unless otherwise mentioned) and you will be omitting $n$ from the subscripts of ....]} 

\subsection{The Gibbs measure and partition function} \label{gibbs_measure}
 In order for our results to apply to a broad collection of models, we have allowed the state
space $\Sigma_n$ to be completely general, and the Hamiltonian $H_n$ to consist of countably infinite summands. 
We begin by checking that these assumptions pose no issues to computation. 
So for the remainder of Section~\ref{gibbs_measure}, we fix the value of $n$. 
 
Let $\langle\cdot\rangle_N$ denote expectation with respect to the Gibbs measure when the Hamiltonian is replaced by the finite sum $H_{n,N}\coloneqq\sum_{i=1}^Ng_i\vphi_i$.
That is,
\eeq{ \label{finite_gibbs_expectation}
\langle f(\sigma)\rangle_N = \frac{E_n(f(\sigma)\e^{\beta H_{n,N}(\sigma)})}{E_n(\e^{\beta H_{n,N}(\sigma)})}.
}
So that we can pass from $\langle \cdot\rangle_N$ to $\langle\cdot\rangle$,  we begin with the following lemma. %it suffices to verify the following convergence of exponential moments.

\begin{lemma} \label{exp_moments_lemma}
For all $\beta\in\R$ and any $f\in L^2(\Sigma_n)$, the following limits hold almost surely and in $L^\alpha$ for any $\alpha\in[1,\infty)$:
\begin{subequations}
\begin{align} 
%\lim_{N\to\infty} E_n(f(\sigma) \e^{\beta H_{n,N}(\sigma)}) &= E_n(f(\sigma) \e^{\beta H_n(\sigma)}) < \infty, \label{exp_moments_lemma_a} \\
\lim_{N\to\infty} \langle f(\sigma)\rangle_N &= \langle f(\sigma)\rangle < \infty, \label{exp_moments_lemma_a}\\
\lim_{N\to\infty} \langle H_{n,N}(\sigma)\rangle_N &= \langle H_n(\sigma)\rangle < \infty. \label{exp_moments_lemma_b}
\end{align}
\end{subequations}
\end{lemma}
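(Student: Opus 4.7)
The plan is to exploit the Wick exponential $M_N(\sigma) \coloneqq \exp\bigl(\beta H_{n,N}(\sigma) - \tfrac{\beta^2}{2}s_N(\sigma)\bigr)$, where $s_N(\sigma) \coloneqq \sum_{i\leq N}\vphi_{i,n}(\sigma)^2 \nearrow n$, which for each fixed $\sigma$ is a nonnegative $\FF_N$-martingale with $\FF_N \coloneqq \sigma(g_{1,n},\dots,g_{N,n})$. By Fubini, if $g \in L^2(\Sigma_n)$ is nonnegative, then the integrated process $E_n(g(\sigma)M_N(\sigma))$ is a nonnegative $\FF_N$-martingale under $\P$, whose second moment a routine Gaussian computation bounds by $\e^{\beta^2 n}(E_n g)^2$ (using $|\sum_{i\leq N}\vphi_{i,n}(\sigma^1)\vphi_{i,n}(\sigma^2)| \leq n$ by Cauchy--Schwarz together with \eqref{variance_assumption}). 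An analogous computation shows this process is bounded in $L^p(\P)$ for every $p\geq 1$, with a constant depending only on $n$, $p$, and $\beta$.

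Next I would decompose $f = f^+ - f^-$ and apply Doob's convergence theorem to each of $E_n(f^\pm M_N)$, obtaining a.s.\ and $L^p(\P)$ convergence to $E_n(f^\pm M_\infty)$, where $M_\infty(\sigma) = \e^{\beta H_n(\sigma) - \beta^2 n/2}$. To convert this into a statement about $\NN_N \coloneqq E_n(f\e^{\beta H_{n,N}})$, I would use the identity $\NN_N = E_n(fM_N\e^{\beta^2 s_N(\sigma)/2})$ and split
\eq{
\NN_N - \e^{\beta^2 n/2}E_n(fM_\infty) = E_n\bigl(f(M_N - M_\infty)\e^{\beta^2 s_N/2}\bigr) + E_n\bigl(fM_\infty(\e^{\beta^2 s_N/2} - \e^{\beta^2 n/2})\bigr);
}
the first term vanishes by the martingale convergence just established (with $\e^{\beta^2 s_N/2} \leq \e^{\beta^2 n/2}$ as a bounded factor), and the second by deterministic dominated convergence under $P_n$, with dominator $2\e^{\beta^2 n/2}|f|M_\infty$. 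The choice $f \equiv 1$ then gives $\DD_N \coloneqq Z_{n,N}(\beta) \to Z_n(\beta)$.

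Part (a) then follows by forming the ratio $\langle f\rangle_N = \NN_N/\DD_N$ and observing that the Jensen bound $\DD_N^{-q} \leq E_n(\e^{-q\beta H_{n,N}(\sigma)})$ yields $\E[\DD_N^{-q}] \leq \e^{q^2\beta^2 n/2}$ uniformly in $N$; together with the uniform $L^q(\P)$ bound on $\NN_N$, a Cauchy--Schwarz argument gives uniform $L^\alpha(\P)$ bounds on $\langle f\rangle_N$ for every $\alpha \in [1,\infty)$, and uniform integrability upgrades the a.s.\ limit to $L^\alpha$ convergence. Part~(b) I would handle identically, with $f$ replaced by the random function $H_{n,N}(\sigma)$: the uniform bound $|\vphi_{i,n}(\sigma)| \leq \sqrt{n}$ implied by \eqref{variance_assumption} makes $\E|H_{n,N}(\sigma)|^p$ bounded uniformly in $(\sigma,N)$, so the same $L^p$ machinery applies to $H_{n,N}\e^{\beta H_{n,N}}$ after an analogous decomposition into sign-definite pieces.

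The chief subtlety I expect to encounter is that $\e^{\beta H_{n,N}(\sigma)}$ is not itself an $\FF_N$-martingale -- only the Wick exponential $M_N(\sigma)$ is -- so the compensator $\e^{\beta^2 s_N(\sigma)/2}$ must be peeled off pointwise in $\sigma$ before martingale convergence can be invoked. That this peeling can be carried out uniformly in $\sigma$ rests crucially on the uniform bound $s_N(\sigma) \leq n$ afforded by \eqref{variance_assumption}; without it one would need to approximate by truncating the $\vphi_{i,n}$ and take an extra limit. A secondary but more routine concern is controlling moments of $\DD_N^{-1}$ uniformly in $N$, which is what prevents blow-up of the ratio in $L^\alpha(\P)$ and which the Jensen estimate resolves cleanly.
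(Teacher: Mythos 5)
Your proposal takes a genuinely different route from the paper, and while the central idea is sound, two steps are under-justified in ways that matter.

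The paper works \emph{pointwise in $\sigma$}: it first shows (Claim 1) that for $\P$-a.e.\ disorder, $H_{n,N}(\sigma)\to H_n(\sigma)$ for $P_n$-a.e.\ $\sigma$ (by the $L^2$-martingale convergence theorem plus Fubini), then (Claim 2) uses Doob's $L^2$ maximal inequality to produce a $\sigma$-wise dominator $M^\pm(\sigma)=\sup_N(\pm H_{n,N}(\sigma))$ with $\E E_n(\e^{\beta M^\pm})<\infty$, and finally applies dominated convergence under $P_n$ to get $E_n[f\phi(H_{n,N})]\to E_n[f\phi(H_n)]$ for \emph{any} continuous $\phi$ of at most exponential growth. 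Both \eqref{exp_moments_lemma_a} and \eqref{exp_moments_lemma_b} then drop out by choosing $\phi$ appropriately. You instead integrate over $\sigma$ first, identifying $N\mapsto E_n(f^\pm M_N)$ as a $\P$-martingale via the Wick exponential and controlling its $L^p(\P)$ norms by a clean Gaussian computation. This is a nice observation, but the conversion from $M_N$ back to $\e^{\beta H_{n,N}}$ is where the argument has a gap.

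Concretely, you assert that $E_n\bigl(f(M_N-M_\infty)\e^{\beta^2 s_N/2}\bigr)\to 0$ ``by the martingale convergence just established, with $\e^{\beta^2 s_N/2}\le\e^{\beta^2 n/2}$ as a bounded factor.'' This does not follow: the martingale convergence gives $E_n(f^\pm M_N)\to E_n(f^\pm M_\infty)$, not $E_n(|f|\,|M_N-M_\infty|)\to 0$, and the factor $\e^{\beta^2 s_N(\sigma)/2}$ is $\sigma$-dependent so cannot simply be pulled outside. To close this gap you need a $\sigma$-wise dominated-convergence argument; either supply a dominator for $M_N(\sigma)$ uniformly in $N$ (which is Doob's maximal inequality again, exactly the paper's Claim~2), or invoke Scheff\'e's lemma $\omega$-by-$\omega$ on the nonnegative sequence $|f|M_N$, using both the $P_n$-a.e.\ pointwise convergence $M_N(\sigma)\to M_\infty(\sigma)$ (which itself requires the Fubini step) and the convergence of $P_n$-integrals $E_n(|f|M_N)\to E_n(|f|M_\infty)$. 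Either way, the step is not free, and as written the justification is incorrect.

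Part (b) is a larger problem. You claim ``the same $L^p$ machinery applies to $H_{n,N}\e^{\beta H_{n,N}}$ after an analogous decomposition into sign-definite pieces,'' but $H_{n,N}(\sigma)M_N(\sigma)$ is \emph{not} a $\P$-martingale: a direct computation gives $\E[H_{n,N}M_N\mid\FF_{N-1}] = (H_{n,N-1}+\beta\vphi_N(\sigma)^2)M_{N-1}$, so the Wick-exponential device does not transfer. The paper sidesteps this precisely by keeping the dominator pointwise in $\sigma$ (its Claim 2 dominates $\phi(H_{n,N}(\sigma))\e^{\beta H_{n,N}(\sigma)}$ for any $\phi$ of exponential growth, in particular $\phi(x)=x$), whereas your approach is specialized to $\phi(x)=\e^{\beta x}$. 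Since you would anyway need a Doob-type maximal bound to repair the conversion step above, the pointwise dominator approach of the paper ends up being both more economical and automatically covering part~(b).
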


%Recall that $\sigma^1,\dots,\sigma^k$ denote independent samples from $\Sigma_n$, drawn according to the measure with respect to which the expectation is being taken.
%From \eqref{finite_gibbs_expectation} and \eqref{exp_moments_lemma_eq}, we easily obtain the following corollary.
%In particular, since $E_n(f(\sigma)\e^{\beta H_n(\sigma)}) \leq \sqrt{E_n(f(\sigma)^2)E_n(\e^{2\beta H_n(\sigma)})}$, the random variable $\langle f(\sigma)\rangle$ is well-defined for any $f\in L^2(\Sigma_n)$.

\begin{proof}%[Proof of Lemma~\ref{exp_moments_lemma}]
We organize the proof into a sequence of claims.
%{\color{red} [I have changed the numbering system for Claims, because there may be some confusion due to two claims getting the same number. In particular, my compiler was getting confused when I was trying to click on a reference.]}
\begin{claim} \label{claim_1}
With $\P$-probability equal to $1$,
\eq{
\lim_{N\to\infty} H_{n,N}(\sigma) = H_n(\sigma) \quad \text{for $P_n$-$\mathrm{a.e.}$~$\sigma\in\Sigma_n$}.
}
\end{claim}

\begin{proof} Observe that for fixed $\sigma\in\Sigma_n$, the
sequence $(H_{n,N} (\sigma))_{N\geq0}$ is a martingale with respect to $\P$. 
Since
\eq{
\sup_{N\geq0}\E[H_{n,N}(\sigma)^2] = \sup_{N\geq0}\sum_{i=1}^N \vphi_i(\sigma)^2\stackrel{\mbox{\footnotesize\eqref{variance_assumption},\eqref{field_decomposition}}}{=} n,
}
the martingale convergence theorem guarantees that $H_{n,N}(\sigma)$ converges $\P$-almost surely as $N\to\infty$ to a limit we call $H_n(\sigma)$.
Now Fubini's theorem proves the claim:
\eq{
\E E_n(\one_{\{H_{n,N}(\sigma)\to H_n(\sigma)\}}) = E_n(\E[\one_{\{H_{n,N}(\sigma)\to H_n(\sigma)\}}]) = E_n(1) = 1.
}
\end{proof}

\begin{claim} \label{claim_2}
There exist nonnegative random variables $(M^+(\sigma))_{\sigma\in\Sigma_n}$ and $(M^-(\sigma))_{\sigma\in\Sigma_n}$ such that
\eeq{ \label{every_N}
\pm H_{n,N}(\sigma) \leq M^\pm (\sigma) \quad \text{for all $N\geq0,\, \sigma\in\Sigma_n$},
}
and
\eeq{ \label{good_denominator}
\E E_n(\e^{\beta M^\pm(\sigma)}) < \infty \quad \text{for all $\beta\geq0$}.
}
\end{claim}

\begin{proof}
We simply take
\eq{
M^\pm(\sigma) \coloneqq \sup_{N\geq0} \pm H_{n,N}(\sigma)\geq \pm H_{n,0}(\sigma) = 0,
}
so that \eqref{every_N} is satisfied by definition.
Since $M^+ \stackrel{\text{d}}{=} M^-$, we need only check \eqref{good_denominator} for $M^+$. 
Observe that for any $\beta \geq 0$, $(\e^{\beta H_{n,N}(\sigma)})_{N\geq0}$ is a submartingale.
By Doob's inequality, for any $\lambda > 0$ and any integer $m\geq0$,
\eq{
\P\Big(\max_{0\leq N\leq m} \e^{\beta H_{n,N}(\sigma)}\geq\lambda\Big)
&= \P\Big(\max_{0\leq N\leq m} \e^{2\beta H_{n,N}(\sigma)}\geq\lambda^2\Big)  \\
&\leq \lambda^{-2}\E(\e^{2\beta H_{n,m}(\sigma)})
= \lambda^{-2}\e^{2\beta^2\sum_{i=1}^m\vphi_i^2(\sigma)}\stackrel{\mbox{\footnotesize\eqref{variance_assumption}}}{\leq} \lambda^{-2}\e^{2\beta^2n}.
}
Therefore, for any $0 < \eps<\lambda $, 
%{\color{red} [Why is the first identity true below? The first event is not equal to the union of the events in the second expression.]}
\eq{
\P(\e^{\beta M^+(\sigma)}\geq\lambda) &\leq \P\Big(\e^{\beta M^+(\sigma)}\geq\lambda-\frac{\eps}{2}\Big) 
\leq \lim_{m\to\infty} \P\Big(\max_{0\leq N\leq m} \e^{\beta H_{n,N}(\sigma)}\geq\lambda-\eps\Big) \leq (\lambda-\eps)^{-2}\e^{2\beta^2n},
}
which implies
\eq{ %\label{M_exp_moment}
\E(\e^{\beta M^+(\sigma)}) &= \int_0^\infty \P(\e^{\beta M^+(\sigma)}\geq\lambda)\ \dd\lambda 
\leq 1+\eps + \e^{2\beta^2n}\int_{1+\eps}^\infty (\lambda-\eps)^{-2}\ \dd\lambda < \infty.
}
Since Tonelli's theorem gives $\E E_n(\e^{\beta M^+(\sigma)}) = E_n(\E \e^{\beta M^+(\sigma)})$, \eqref{good_denominator} follows from the above display.
\end{proof}

\begin{claim} \label{claim_3}
For any $f\in L^2(\Sigma_n)$ and any continuous function $\phi : \R \to \R$ such that $|\phi(x)| \leq a\e^{b|x|}$ for all $x\in\R$, for some $a,b\geq0$, we have
\eeq{ \label{claim_3_eq}
\lim_{N\to\infty} E_n[f(\sigma)\phi(H_{n,N}(\sigma))] = E_n[f(\sigma)\phi(H_n(\sigma))] \quad \mathrm{a.s.}
}
\end{claim}

\begin{proof}
By Claim~\ref{claim_1} and the continuity of $\phi$, we almost surely have that $\phi(H_{n,N}(\sigma)) \to \phi(H_{n}(\sigma))$ for $P_n$-a.e.~$\sigma\in\Sigma_n$, as $N\to\infty$.
And by hypothesis,
\eeq{ \label{exponential_domination}
|\phi(H_{n,N}(\sigma))| \leq a(\e^{bM^+(\sigma)} + \e^{bM^-(\sigma)}).
}
Since
\eq{
E_n\big[|f(\sigma)|(\e^{b M^+(\sigma)} + \e^{bM^-(\sigma)})\big] 
&\leq \sqrt{ E_n[f(\sigma)^2] E_n[(\e^{b M^+(\sigma)} + \e^{bM^-(\sigma)})^2]} \\
&\leq \sqrt{ E_n[f(\sigma)^2] E_n[2(\e^{2b M^+(\sigma)} + \e^{2bM^-(\sigma)})]},
}
and Claim~\ref{claim_2} implies that almost surely $E_n(\e^{2b M^\pm(\sigma)}) < \infty$,
\eqref{claim_3_eq} now follows from dominated convergence (with respect to $P_n$).
\end{proof}

\begin{claim} \label{claim_4}
For any $f\in L^2(\Sigma_n)$ and any continuous function $\phi : \R \to \R$ such that $|\phi(x)| \leq a\e^{b|x|}$ for all $x\in\R$, for some $a,b\geq0$, we have
\eeq{ \label{claim_4_eq}
\lim_{N\to\infty} \langle f(\sigma)\phi(H_{n,N}(\sigma))\rangle_N = \langle f(\sigma)\phi(H_n(\sigma))\rangle \quad \mathrm{a.s.}\text{ and in } L^\alpha, \alpha\in[1,\infty).
}
\end{claim}

\begin{proof}
Recall that
\eq{
\langle f(\sigma)\phi(H_{n,N}(\sigma))\rangle_N &= \frac{E_n[f(\sigma)\phi(H_{n,N}(\sigma))\e^{\beta H_{n,N}(\sigma)}]}{E_n(\e^{\beta H_{n,N}(\sigma)})}, \\
\langle f(\sigma)\phi(H_n(\sigma))\rangle &= \frac{E_n[f(\sigma)\phi(H_{n}(\sigma))\e^{\beta H_{n}(\sigma)}]}{E_n(\e^{\beta H_{n}(\sigma)})}.
}
Since $|\phi(x)|\e^{\beta x} \leq  a\e^{(b+\beta)|x|}$, the almost sure part of \eqref{claim_4_eq} is immediate from Claim~\ref{claim_3}.
The convergence in $L^\alpha$ is then a consequence of dominated convergence (with respect to $\P$).
Indeed, by Cauchy--Schwarz and Jensen's inequality, we have the majorization 
%{\color{red} [Can you show the steps here?]}
\eq{
|\langle f(\sigma)\phi(H_{n,N}(\sigma))\rangle_N|
%\leq \langle |f(\sigma)|\cdot a\e^{b|H_{n,N}(\sigma)|}\rangle_N \\
&\stackrefp{exponential_domination}{=} \frac{|E_n(f(\sigma)\phi(H_{n,N}(\sigma))\e^{\beta H_{n,N}(\sigma)})|}{E_n(\e^{\beta H_{n,N}(\sigma)})} \\
&\stackrel{\phantom{\mbox{\footnotesize\eqref{exponential_domination}}}}{\leq} \frac{\sqrt{E_n(f(\sigma)^2)E_n(\phi(H_{n,N}(\sigma))^2\e^{2\beta H_{n,N}(\sigma)})}}{E_n(\e^{-\beta M^-(\sigma)})} \\
%\leq \sqrt{\langle f(\sigma)^2\rangle_N\langle\phi(H_{n,N}(\sigma))^2\rangle_N} \\
%&= \sqrt{\frac{E_n(f(\sigma)^2\e^{\beta H_{n,N}(\sigma)})}{E_n(\e^{\beta H_{n,N}(\sigma)})}} \\
&\stackrel{\mbox{\footnotesize\eqref{exponential_domination}}}{\leq} \sqrt{ E_n(f(\sigma)^2)  E_n[2a^2(\e^{2(b+\beta)M^+(\sigma)} + \e^{2(b+\beta)M^-(\sigma)})]}E_n(\e^{\beta M^-(\sigma)}),
}
where the final expression has moments of all orders by \eqref{good_denominator}.
\end{proof}

We now complete the proof of Lemma~\ref{exp_moments_lemma} by taking $\phi \equiv 1$ for \eqref{exp_moments_lemma_a}, and $f\equiv1$, $\phi(x) = x$ for \eqref{exp_moments_lemma_b}.

\end{proof}

\begin{remark} The essential feature of the above proof was checking in Claim~\ref{claim_2} that \eqref{variance_assumption} is enough
to guarantee the first equality below:
\eeq{ \label{freq_identity}
\E(\e^{\beta\sum_{i=1}^\infty g_i\vphi_i}) = \lim_{N\to\infty} \E(\e^{\beta\sum_{i=1}^Ng_i\vphi_i})
= \lim_{N\to\infty} \e^{\frac{\beta^2}{2}\sum_{i=1}^N\vphi_i^2}
\stackrel{\mbox{\footnotesize\eqref{variance_assumption}}}{=} \e^{\frac{\beta^2}{2}n}.
}
\end{remark}
We will frequently use the above identity, an easy consequence of which is
the following.

\begin{lemma} \label{moments_lemma}
For any $\beta\in\R$, we have
\eeq{ \label{first_moment}
\E Z_n(\beta) = \e^{\frac{\beta^2}{2}n},
}
as well as
\eeq{ \label{negative_first_moment}
\E[Z_n(\beta)^{-1}] \leq \e^{\frac{\beta^2}{2}n}.
}
\end{lemma}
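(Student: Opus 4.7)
The plan is to derive both inequalities directly from the identity \eqref{freq_identity}, which gives $\E(\e^{\beta H_n(\sigma)}) = \e^{\frac{\beta^2}{2}n}$ for each fixed $\sigma\in\Sigma_n$, together with Tonelli's theorem and Jensen's inequality. There is no real obstacle here; the content of the lemma is essentially bookkeeping once \eqref{freq_identity} is in hand.

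For \eqref{first_moment}, since the integrand $\e^{\beta H_n(\sigma)}$ is nonnegative, Tonelli's theorem lets us exchange $\E$ with $E_n$:
\[
\E Z_n(\beta) = \E E_n(\e^{\beta H_n(\sigma)}) = E_n\bigl(\E\, \e^{\beta H_n(\sigma)}\bigr) = E_n\bigl(\e^{\frac{\beta^2}{2}n}\bigr) = \e^{\frac{\beta^2}{2}n},
\]
where the third equality is \eqref{freq_identity}.

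For \eqref{negative_first_moment}, since $P_n$ is a probability measure and $x \mapsto 1/x$ is convex on $(0,\infty)$, Jensen's inequality applied pointwise (in $\omega\in\Omega$) to $E_n(\cdot)$ gives
\[
Z_n(\beta)^{-1} = \frac{1}{E_n(\e^{\beta H_n(\sigma)})} \leq E_n\bigl(\e^{-\beta H_n(\sigma)}\bigr).
\]
The right-hand side is again nonnegative, so another application of Tonelli together with \eqref{freq_identity} (applied with $-\beta$ in place of $\beta$, which is legitimate since $H_n(\sigma)$ is centered Gaussian and \eqref{freq_identity} depends only on $\beta^2$) yields
\[
\E[Z_n(\beta)^{-1}] \leq \E E_n\bigl(\e^{-\beta H_n(\sigma)}\bigr) = E_n\bigl(\E\, \e^{-\beta H_n(\sigma)}\bigr) = E_n\bigl(\e^{\frac{\beta^2}{2}n}\bigr) = \e^{\frac{\beta^2}{2}n},
\]
as claimed. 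The only subtle point worth noting is that Jensen's inequality requires $E_n(\e^{\beta H_n(\sigma)})$ to be finite and strictly positive almost surely, but both facts are guaranteed for each fixed realization of the environment by Claim~2 in the proof of Lemma~\ref{exp_moments_lemma}, which bounds $H_n(\sigma)$ uniformly in $N$ between $-M^-(\sigma)$ and $M^+(\sigma)$ with $E_n(\e^{\pm\beta M^\pm(\sigma)})$ almost surely finite.
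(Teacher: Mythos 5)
Your proof is correct and matches the paper's argument essentially line for line: both use Tonelli together with \eqref{freq_identity} for \eqref{first_moment}, and Jensen's inequality via convexity of $x\mapsto 1/x$ followed by the same Tonelli--\eqref{freq_identity} step for \eqref{negative_first_moment}. The extra remark about finiteness and strict positivity of $E_n(\e^{\beta H_n(\sigma)})$, while not spelled out in the paper, is a harmless and correct clarification.
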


\begin{proof}
By exchanging the order of expectation in the identity $\E Z_n(\beta) = \E[E_n(\e^{\beta H_n(\sigma)})]$
(which we are permitted to do by Tonelli's theorem) and applying \eqref{freq_identity}, we
obtain \eqref{first_moment}. 
For \eqref{negative_first_moment}, we apply Jensen's inequality to obtain
\eq{
Z_n(\beta)^{-1} = [E_n(\e^{\beta H_n(\sigma)})]^{-1} \leq E_n(\e^{-\beta H_n(\sigma)}),
}
then take expectation $\E(\cdot)$ of both sides, and again exchange the order of
expectation. 
\end{proof}

Let us also record two consequences of Lemma~\ref{exp_moments_lemma} that will be needed later in the chapter.

\begin{cor}
For any $\beta\in\R$, the following limits hold almost surely and in $L^\alpha$ for any $\alpha\in[1,\infty)$:
\eeq{ \label{limit_for_later}
\lim_{N\to\infty} \sum_{i=1}^N \langle \vphi_i^2\rangle_N &= n \qquad \text{and} \qquad
\lim_{N\to\infty} \sum_{i=1}^N \langle\vphi_i\rangle^2_N = \sum_{i=1}^\infty \langle\vphi_i\rangle^2.
}
\end{cor}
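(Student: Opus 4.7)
The plan is to derive both limits from Lemma~\ref{exp_moments_lemma}(a), using as the key input the pointwise identity $\sum_{i=1}^\infty \vphi_i(\sigma)^2 = n$ valid for every $\sigma \in \Sigma_n$. This identity follows from \eqref{variance_assumption} and \eqref{field_decomposition}: since $H_n(\sigma) = \sum_i g_i \vphi_i(\sigma)$ is an $L^2(\P)$-convergent sum of independent centered Gaussians with variances $\vphi_i(\sigma)^2$, the variance $n$ of $H_n(\sigma)$ must equal $\sum_i \vphi_i(\sigma)^2$. Write $S_M \coloneqq \sum_{i=1}^M \vphi_i^2$, so $S_M \nearrow n$ pointwise on $\Sigma_n$.

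For the first limit, I would rewrite $\sum_{i=1}^N \langle \vphi_i^2 \rangle_N = \langle S_N \rangle_N$. The upper bound $\langle S_N \rangle_N \leq n$ is immediate from $S_N \leq n$. For a matching lower bound, fix $M$ and note that $\langle S_N \rangle_N \geq \langle S_M \rangle_N$ for every $N \geq M$. Since $S_M$ is bounded (hence in $L^2(\Sigma_n)$), Lemma~\ref{exp_moments_lemma}(a) gives $\langle S_M \rangle_N \to \langle S_M \rangle$ almost surely as $N \to \infty$. Letting $N \to \infty$ yields $\liminf_N \langle S_N \rangle_N \geq \langle S_M \rangle$ almost surely; then letting $M \to \infty$ and using monotone convergence (under $\langle \cdot \rangle$, which is integration against a probability measure) shows $\langle S_M \rangle \nearrow n$. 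The squeeze completes the argument.

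For the second limit, the key step is Jensen's inequality $\langle \vphi_i \rangle_N^2 \leq \langle \vphi_i^2 \rangle_N$, which controls the tail through the first limit:
\[
\sum_{i=M+1}^N \langle \vphi_i \rangle_N^2 \leq \langle S_N - S_M \rangle_N \leq n - \langle S_M \rangle_N.
\]
Sending $N \to \infty$ and then $M \to \infty$ makes this tail vanish almost surely. For the head, Lemma~\ref{exp_moments_lemma}(a) applied separately to each $\vphi_i \in L^2(\Sigma_n)$ gives $\langle \vphi_i \rangle_N \to \langle \vphi_i \rangle$ almost surely for every $i$, so $\sum_{i=1}^M \langle \vphi_i \rangle_N^2 \to \sum_{i=1}^M \langle \vphi_i \rangle^2$ almost surely for each fixed $M$. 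Combining head and tail estimates and letting $M \to \infty$ gives $\sum_{i=1}^N \langle \vphi_i \rangle_N^2 \to \sum_{i=1}^\infty \langle \vphi_i \rangle^2$ almost surely, the limit being finite since Jensen forces $\sum_i \langle \vphi_i \rangle^2 \leq \sum_i \langle \vphi_i^2 \rangle \leq n$.

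Finally, both sequences in the corollary take values in $[0,n]$ by construction, so the dominated convergence theorem immediately upgrades the almost sure convergence to $L^\alpha$ convergence for every $\alpha \in [1,\infty)$. The only care point in the whole argument is the interchange of the $N \to \infty$ and $M \to \infty$ limits, but this is clean thanks to the monotonicity of the truncations $S_M \nearrow n$ together with the uniform bound by $n$; I do not anticipate any genuine obstacle.
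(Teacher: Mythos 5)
Your proof is correct and follows essentially the same approach as the paper: uniform boundedness by $n$ reduces the $L^\alpha$ claims to the almost sure ones, Lemma~\ref{exp_moments_lemma}(a) handles the head of each sum, and Jensen's inequality together with the identity $\sum_i \vphi_i^2 = n$ controls the tail. The only cosmetic difference is that you apply the lemma to the partial sums $S_M$ directly whereas the paper applies it termwise, which is an equivalent reorganization.
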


\begin{proof}
First we argue the almost sure statements.
The $L^\alpha$ statements will then follow from bounded convergence, since \eqref{variance_assumption} gives the uniform bound
\eq{
0\leq\sum_{i=1}^N \langle \vphi_i\rangle_N^2 \leq \sum_{i=1}^N \langle \vphi_i^2\rangle_N \leq n \quad \text{for every $N$.}
}
So we fix the disorder $\vc g$.
By Lemma~\ref{exp_moments_lemma}, it is almost surely the case that for every $i\geq1$, $\langle \vphi_i\rangle_N \to \langle \vphi_i\rangle$ and $\langle \vphi_i^2\rangle_N \to \langle \vphi_i^2\rangle$ as $N\to\infty$.
We also know $\sum_{i=1}^\infty \vphi_i^2 = n$.
In particular, given $\eps>0$, we can choose $M$ so large that
\eq{
%\sum_{i=M+1}^\infty \langle \vphi_i^2\rangle \leq \frac{\eps}{2} \quad \implies \quad
n-\eps \leq \sum_{i=1}^M \langle \vphi_i^2\rangle \leq n \quad \implies \quad
%\sum_{i=M+1}^\infty \langle \vphi_i\rangle^2 \leq 
\sum_{i=M+1}^\infty \langle \vphi_i^2\rangle \leq \eps.
}
Given $M$, there is $N_0$ such that for all $N\geq N_0$,
\eq{
\bigg|\sum_{i=1}^M (\langle \vphi_i^2\rangle_N -  \langle\vphi_i^2\rangle)\bigg| \leq \eps \qquad \text{and} \qquad
\bigg|\sum_{i=1}^M (\langle \vphi_i\rangle_N^2 -  \langle\vphi_i\rangle^2)\bigg| \leq \eps.
}
In particular, for all $N\geq N_0 \vee M$,
\eq{
n-2\eps\leq\sum_{i=1}^M \langle \vphi_i^2\rangle_N \leq n 
\quad &\implies \quad
n-2\eps\leq\sum_{i=1}^N \langle \vphi_i^2\rangle_N \leq n,
}
and also
\eq{
\bigg|\sum_{i=1}^N \langle \vphi_i\rangle_N^2 - \sum_{i=1}^\infty \langle\vphi_i\rangle^2\bigg|
&\leq \bigg|\sum_{i=1}^M (\langle \vphi_i\rangle_N^2 - \langle\vphi_i\rangle^2)\bigg| + \sum_{i=M+1}^\infty (\langle\vphi_i\rangle_N^2 + \langle\vphi_i\rangle^2) \\
&\leq \bigg|\sum_{i=1}^M (\langle \vphi_i\rangle_N^2 - \langle\vphi_i\rangle^2)\bigg| + \sum_{i=M+1}^\infty (\langle\vphi_i^2\rangle_N + \langle\vphi_i^2\rangle) 
\leq 4\eps.
}
\end{proof}

\subsection{Derivative of free energy} \label{derivatives_section}
This section records some important facts regarding convergence of the free energy's derivative.
%As Lemmas~\ref{betas_converging} and~\ref{lemma:step_1_2} are standard, we will omit their proofs.
%Full arguments can be found in the arXiv version of this paper, \href{https://arxiv.org/abs/1906.05502}{1906.05502}, or in the references mentioned below.
%
By Lemma~\ref{exp_moments_lemma},
 it is almost surely the case that the random variable
$H_n(\sigma)$ has exponential moments of all orders with respect to $P_n$.
Standard
calculations then show that the free energy $F_n(\beta) = \frac{1}{n}\log Z_n(\beta)$ satisfies
\eeq{ \label{nrg_2deriv}
F_n'(\beta) = \frac{\langle H_n(\sigma)\rangle}{n} \quad \text{and} \quad
F_n''(\beta) = \frac{\langle H_n(\sigma)^2\rangle - \langle H_n(\sigma)\rangle^2}{n} \quad \mathrm{a.s.}
}
Recall from \eqref{free_energy_assumption} that $F_n(\beta) \to p(\beta)$. 
Since $F_n(\cdot)$ is convex for every $n$, $p(\cdot)$ is necessarily convex.
This assumption implies the following lemma, which is a general fact about the convergence of convex functions.

\begin{lemma} \label{betas_converging}
If $p(\cdot)$ is differentiable at $\beta$, and $\beta_n = \beta + \delta(n)$ with $\delta(n) \to 0$ as $n\to\infty$, then
\eq{
\lim_{n\to\infty} F_n'(\beta_n) = p'(\beta) \quad \mathrm{a.s.}\text{ and in }L^1.
}
\end{lemma}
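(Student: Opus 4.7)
The plan is a standard sandwich argument using the convexity of the $F_n$, executed twice: once to prove the almost-sure convergence and once to extract $L^1$ convergence from $L^1$ bounds on local Lipschitz constants.

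For any $h>0$, convexity of $F_n$ together with the differentiability \eqref{nrg_2deriv} yields the two-sided bound
\eeq{ \label{plan_sandwich}
\frac{F_n(\beta_n)-F_n(\beta_n-h)}{h} \leq F_n'(\beta_n) \leq \frac{F_n(\beta_n+h)-F_n(\beta_n)}{h}.
}
The strategy is to send $n\to\infty$ first, so that \eqref{plan_sandwich} becomes a sandwich between the two secant slopes of $p$ near $\beta$, and then to send $h\to0$, invoking differentiability of $p$ at $\beta$.

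The first main step is to show $F_n(\beta_n+s)\to p(\beta+s)$ almost surely for each fixed $s\in\{-h,0,h\}$ (and more generally for each real $s$). On the full-measure event where $F_n(\beta')\to p(\beta')$ simultaneously for every $\beta'$ in some fixed countable dense subset of $\R$, the classical fact that convex functions converging pointwise to a (necessarily convex) limit converge uniformly on compact subsets of the interior (cf.~Rockafellar, \emph{Convex Analysis}, Theorem~10.8) forces the full convergence $F_n\to p$ locally uniformly almost surely. Because $\beta_n+s\to\beta+s$, this uniform convergence yields $F_n(\beta_n+s)\to p(\beta+s)$ almost surely. Applying this to $s=-h,0,h$ inside \eqref{plan_sandwich} gives
\eq{
\frac{p(\beta)-p(\beta-h)}{h} \leq \liminf_{n\to\infty} F_n'(\beta_n) \leq \limsup_{n\to\infty} F_n'(\beta_n) \leq \frac{p(\beta+h)-p(\beta)}{h} \quad \mathrm{a.s.},
}
and letting $h\downarrow 0$ through a countable sequence together with the hypothesis that $p$ is differentiable at $\beta$ delivers the almost sure limit.

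The second main step is the $L^1$ statement. Using \eqref{plan_sandwich} one obtains the crude but effective bound
\eeq{ \label{plan_L1_bound}
\big|F_n'(\beta_n)-p'(\beta)\big| \leq \bigg|\frac{F_n(\beta_n+h)-F_n(\beta_n)}{h} - p'(\beta)\bigg| + \bigg|\frac{F_n(\beta_n)-F_n(\beta_n-h)}{h} - p'(\beta)\bigg|.
}
If for each fixed $s$ we can prove that $F_n(\beta_n+s)\to p(\beta+s)$ in $L^1$, then both secant slopes on the right of \eqref{plan_L1_bound} converge in $L^1$ to $\pm\big(p(\beta\pm h)-p(\beta)\big)/h$, so taking $\E$ and $\limsup_{n\to\infty}$ yields
\eq{
\limsup_{n\to\infty}\E\big|F_n'(\beta_n)-p'(\beta)\big| \leq \bigg|\frac{p(\beta+h)-p(\beta)}{h}-p'(\beta)\bigg| + \bigg|\frac{p(\beta)-p(\beta-h)}{h}-p'(\beta)\bigg|,
}
and sending $h\downarrow 0$ finishes the proof by differentiability of $p$ at $\beta$.

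The one nontrivial technical point, and the likely main obstacle, is therefore establishing $F_n(\beta_n+s)\to p(\beta+s)$ in $L^1$ (not merely at fixed arguments as given by \eqref{free_energy_assumption}). The plan here is to exploit the convexity of $F_n$ once more to obtain an almost-sure Lipschitz estimate: fix $\eta>0$ small enough that $\beta_n+s\in[\beta+s-\eta,\beta+s+\eta]$ for all large $n$; then monotonicity of one-sided derivatives for convex functions gives
\eq{
|F_n(\beta_n+s)-F_n(\beta+s)| \leq |\delta(n)|\cdot L_n,\qquad
L_n \coloneqq \max\!\left(|F_n(\beta+s+2\eta)-F_n(\beta+s+\eta)|,\,|F_n(\beta+s-\eta)-F_n(\beta+s-2\eta)|\right).
}
By \eqref{free_energy_assumption} applied at the four fixed arguments $\beta+s\pm\eta$ and $\beta+s\pm 2\eta$, the random variable $L_n$ is bounded in $L^1$ uniformly in $n$. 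Hence $\E|F_n(\beta_n+s)-F_n(\beta+s)|\leq |\delta(n)|\,\E L_n\to 0$, and combining this with the hypothesis $F_n(\beta+s)\to p(\beta+s)$ in $L^1$ yields $F_n(\beta_n+s)\to p(\beta+s)$ in $L^1$, as required.
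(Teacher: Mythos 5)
Your proof is correct, and it reaches the same conclusion via a genuinely different decomposition than the paper's. The paper never evaluates $F_n$ at the moving point $\beta_n$: instead it chooses a fixed $\delta>0$ with $|\delta(n)|\leq\delta$ for large $n$ and uses the monotonicity in $t$ of the forward difference quotient $t\mapsto [F_n(t+h)-F_n(t)]/h$ to sandwich $F_n'(\beta_n)$ between secant slopes of $F_n$ evaluated at the \emph{fixed} points $\beta-\delta-h,\beta-\delta$ and $\beta+\delta,\beta+\delta+h$. That reduces everything to the a.s.\ and $L^1$ convergence of $F_n$ at finitely many fixed arguments, which is exactly what \eqref{free_energy_assumption} supplies, so no auxiliary machinery is needed. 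Your route instead sandwiches $F_n'(\beta_n)$ between secant slopes at the moving point $\beta_n$, and so has to establish $F_n(\beta_n+s)\to p(\beta+s)$ separately: a.s.\ via the locally-uniform convergence theorem for pointwise-convergent convex functions, and in $L^1$ via a Lipschitz estimate whose Lipschitz constant $L_n$ you bound in $L^1$ through \eqref{free_energy_assumption} at four fixed points. Both arguments work, and yours is perhaps more conceptually transparent, but it imports the Rockafellar-type uniform-convergence fact and requires a second, somewhat different argument for the $L^1$ mode. The paper's fixed-point trick handles both modes of convergence with the same estimate and is more elementary. One small slip in your write-up: $L_n$ should carry a factor $1/\eta$ to actually be the Lipschitz constant on $[\beta+s-\eta,\beta+s+\eta]$; this does not affect the conclusion.
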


\begin{proof}
Let $\eps > 0$.
By differentiability, we can choose $h > 0$ sufficiently small that
\eeq{ \label{eps_1}
p'(\beta) - \eps \leq \frac{p(\beta)-p(\beta-h)}{h} \leq \frac{p(\beta+h)-p(\beta)}{h} \leq p'(\beta) + \eps,
}
where the middle inequality is due to convexity.
Given $h$, we next choose $\delta > 0$ such that
\begin{subequations}
\begin{align}
0 \leq \frac{p(\beta+\delta+h) - p(\beta+\delta)}{h} - \frac{p(\beta+h) - p(\beta)}{h} \leq \eps \label{eps_2a},
\intertext{as well as} 
0 \leq \frac{p(\beta) - p(\beta-h)}{h} - \frac{p(\beta-\delta)-p(\beta-\delta-h)}{h} \leq \eps, \label{eps_2b}
\end{align}
\end{subequations}
which is possible by the continuity of $p(\cdot)$.
Now, convexity of $F_n$ implies the following for all $n$ such that $\delta(n) \leq \delta$:
\begin{subequations}
\begin{align}
F_n'(\beta_n) \leq \frac{F_n(\beta+\delta(n)+h)-F_n(\beta+\delta(n))}{h} &\leq \frac{F_n(\beta+\delta+h)-F_n(\beta+\delta)}{h}. \label{eps_3a}
\intertext{Similarly, for all $n$ such that $\delta(n) \geq -\delta$,}
F_n'(\beta_n) &\geq\frac{F_n(\beta-\delta)-F_n(\beta-\delta-h)}{h}.%\hspace{1.3in} 
\label{eps_3b} %\raisetag{\baselineskip}
\end{align} 
\end{subequations}
Upon defining
\eeq{ \label{Delta_def}
\Delta_n^-(\beta,h) &\coloneqq \frac{F_n(\beta)-F_n(\beta-h)}{h} - \frac{p(\beta)-p(\beta-h)}{h}, \\
\Delta_n^+(\beta,h) &\coloneqq \frac{F_n(\beta+h)-F_n(\beta)}{h} - \frac{p(\beta+h)-p(\beta)}{h},
}
it follows that for all sufficiently large $n$, 
\eq{
F_n'(\beta_n) - p'(\beta)
&\stackrel{\mbox{\hspace{3ex}\footnotesize\eqref{eps_3a}\hspace{3ex}}}{\leq} \frac{F_n(\beta+\delta+h)-F_n(\beta+\delta)}{h} - p'(\beta) \\
&\stackrel{\mbox{\footnotesize\eqref{eps_1},\eqref{eps_2a}}}{\leq} \Delta_n^+(\beta+\delta,h) + 2\eps.
}
Analogously, \eqref{eps_1}, \eqref{eps_2b}, and \eqref{eps_3b} together yield the lower bound
\eq{
F_n'(\beta_n) - p'(\beta) \geq \Delta_n^-(\beta-\delta,h) - 2\eps.
}
By \eqref{free_energy_assumption},  both $\Delta_n^-(\beta-\delta,h)$ and $\Delta_n^+(\beta+\delta,h)$ tend to $0$ almost surely and in $L^1$ as $n\to\infty$.
As $\eps$ is arbitrary, the desired result follows.
\end{proof}

\begin{cor} \label{overlap_identity_cor}
For every $\beta\geq0$ at which $p(\cdot)$ is differentiable, 
\eeq{ \label{overlap_ito_derivative}
p'(\beta) = \beta\big(1 - \lim_{n\to\infty} \E \langle \RR_{1,2}\rangle\big).
}
In particular, $0 \leq p'(\beta) \leq \beta$, and there is thus some $\beta_\cc \in [0,\infty]$ such that
\eq{
0 \leq \beta \leq \beta_\cc \quad &\implies \quad p(\beta) = \frac{\beta^2}{2}, \\
\beta>\beta_\cc \quad &\implies \quad p(\beta) < \frac{\beta^2}{2}.
}
\end{cor}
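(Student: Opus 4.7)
The plan is to obtain the overlap identity $p'(\beta)=\beta(1-\lim_n\E\langle\RR_{1,2}\rangle)$ by Gaussian integration by parts, and then deduce the bounds and phase transition from general convex-analysis facts. Recall that $F_n'(\beta)=\langle H_n(\sigma)\rangle/n$ almost surely by \eqref{nrg_2deriv}, so the computation reduces to evaluating $\E\langle H_n(\sigma)\rangle$.

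To handle the potentially infinite sum in \eqref{field_decomposition}, I would first truncate, working with $H_{n,N}=\sum_{i=1}^N g_i\vphi_i$ and the associated Gibbs expectation $\langle\cdot\rangle_N$ from \eqref{finite_gibbs_expectation}. A direct quotient-rule computation gives $\partial_{g_i}\langle\vphi_i\rangle_N=\beta(\langle\vphi_i^2\rangle_N-\langle\vphi_i\rangle_N^2)$, so Stein's identity yields
\[
\E\langle H_{n,N}(\sigma)\rangle_N=\sum_{i=1}^N\E[g_i\langle\vphi_i\rangle_N]=\beta\,\E\Big[\sum_{i=1}^N\langle\vphi_i^2\rangle_N-\sum_{i=1}^N\langle\vphi_i\rangle_N^2\Big].
\]
Sending $N\to\infty$ and invoking Lemma \ref{exp_moments_lemma} together with the $L^1$ convergences in \eqref{limit_for_later} produces
\[
\E\langle H_n(\sigma)\rangle=\beta n\,(1-\E\langle\RR_{1,2}\rangle),
\]
where the identity $\langle\RR_{1,2}\rangle=\frac{1}{n}\sum_i\langle\vphi_i\rangle^2$ follows from $\RR(\sigma^1,\sigma^2)=\frac{1}{n}\sum_i\vphi_i(\sigma^1)\vphi_i(\sigma^2)$ combined with the $\mu_n^\beta\otimes\mu_n^\beta$-independence of $\sigma^1$ and $\sigma^2$. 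Dividing by $n$ and applying Lemma \ref{betas_converging} with $\beta_n\equiv\beta$ (legitimate because $p$ is differentiable at $\beta$) gives $p'(\beta)=\beta(1-\lim_n\E\langle\RR_{1,2}\rangle)$; the case $\beta=0$ is degenerate but trivial since $F_n(0)\equiv 0$ forces $p'(0)=0$.

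For the bounds: \eqref{positive_overlap} yields $\RR_{1,2}\geq-\EEE_n$, while Cauchy--Schwarz on the covariance combined with \eqref{variance_assumption} gives $\RR_{1,2}\leq 1$; since $\EEE_n\to 0$, the limiting overlap lies in $[0,1]$, whence $p'(\beta)\in[0,\beta]$. For the phase-transition statement, convexity of $p$ (as a pointwise limit of convex $F_n$) makes $p$ absolutely continuous on compact intervals, so for $0\leq\beta_1<\beta_2$,
\[
p(\beta_2)-p(\beta_1)=\int_{\beta_1}^{\beta_2}p'(\beta)\,\dd\beta\leq\int_{\beta_1}^{\beta_2}\beta\,\dd\beta=\tfrac{1}{2}(\beta_2^2-\beta_1^2).
\]
Together with $p(0)=0$, this shows $\beta\mapsto\beta^2/2-p(\beta)$ is nondecreasing and nonnegative on $[0,\infty)$, so $\beta_\cc\coloneqq\sup\{\beta\geq 0:p(\beta)=\beta^2/2\}$ has the stated property. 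The main technical point requiring care is exchanging the Stein computation with the $N\to\infty$ limit; this is precisely what Section \ref{gibbs_measure} was designed to support, and with the $L^1$ convergences in \eqref{limit_for_later} in hand there is no genuine obstacle. Everything else is a routine consequence of convexity.
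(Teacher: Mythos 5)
Your proof is correct and follows essentially the same route as the paper: Gaussian integration by parts on the truncated Hamiltonian, passing to $N\to\infty$ via Lemma \ref{exp_moments_lemma} and \eqref{limit_for_later}, invoking Lemma \ref{betas_converging} to identify $p'(\beta)=\lim_n\E F_n'(\beta)$, and then deducing the phase transition from absolute continuity of the convex function $p$ together with $p'(\beta)\leq\beta$. The only cosmetic difference is that you handle $\beta=0$ as an explicit degenerate case, whereas the paper lets the identity hold trivially there.
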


\begin{proof}
Using the notation of Lemma~\ref{exp_moments_lemma}, we have
\eq{
\E F_n'(\beta) 
\stackrel{\mbox{\footnotesize\eqref{nrg_2deriv}}}{=} \frac{\E\langle H_n(\sigma)\rangle}{n} 
\stackrel{\mbox{\footnotesize\eqref{exp_moments_lemma_b}}}{=}\lim_{N\to\infty} \frac{\E\langle H_{n,N}(\sigma)\rangle_N}{n} 
%&\stackrel{\phantom{\mbox{\footnotesize\eqref{exp_moments_lemma_b}}}}{=}
&=\lim_{N\to\infty} \E\Big\langle\frac{1}{n} \sum_{i=1}^N g_i\vphi_i\Big\rangle_N \\
%&\stackrel{\phantom{\mbox{\footnotesize\eqref{exp_moments_lemma_b}}}}{=} 
&=\lim_{N\to\infty}  \frac{1}{n}\sum_{i=1}^N \E[g_i\langle\vphi_i\rangle_N].
}
By Gaussian integration by parts,
\eq{
\E[g_i\langle\vphi_i\rangle_N] = \E\Big[\frac{\partial}{\partial g_i}\langle\vphi_i\rangle_N\Big] = \beta\E[\langle\vphi_i^2\rangle_N - \langle\vphi_i\rangle_N^2],
}
and then Lemma~\ref{betas_converging} allows us to write
\eq{
p'(\beta) = \lim_{n\to\infty} \E F_n'(\beta) 
&\stackrel{\phantom{\mbox{\footnotesize\eqref{limit_for_later}}}}{=} \lim_{n\to\infty} \lim_{N\to\infty} \beta\E\bigg[\frac{1}{n}\sum_{i=1}^N (\langle\vphi_i^2\rangle_N - \langle\vphi_i\rangle_N^2)\bigg] \\
&\stackrel{\mbox{\footnotesize\eqref{limit_for_later}}}{=} \lim_{n\to\infty} \beta\E\bigg[1-\frac{1}{n}\sum_{i=1}^\infty \langle\vphi_i\rangle^2\bigg]
%&\stackrel{\phantom{\mbox{\footnotesize\eqref{limit_for_later}}}}{=}
=\lim_{n\to\infty} \beta(1 - \E\langle \RR_{1,2}\rangle),
}
which completes the proof of \eqref{overlap_ito_derivative}.
The inequalities $0\leq p'(\beta)\leq\beta$ now follow from
\eq{
1 \stackref{variance_assumption}{\geq} \lim_{n\to\infty}\E\langle\RR_{1,2}\rangle \stackref{positive_overlap}{\geq} -\lim_{n\to\infty}\EEE_n = 0.
}

For the second part of the claim, we recall that $p(\cdot)$ is convex and thus absolutely continuous.
Since $p(0) = 0$, we then have
\eq{
\frac{\beta^2}{2} - p(\beta) = \int_0^{\beta} [t - p'(t)]\ \dd t.
}
Since the integrand is nonnegative, it follows that $\beta\mapsto \beta^2/2 - p(\beta)$ is non-decreasing for $\beta \geq 0$.
\end{proof}
%
%\begin{proof}
%First we verify $p'(\beta)\geq0$.
%By the FKG inequality,
%\eq{
%E_n[H_n(\sigma)\e^{\beta H_n(\sigma)}] \geq E_n[H_n(\sigma)]E_n[\e^{\beta H_n(\sigma)}],
%}
%and so
%\eq{
%n\E F_n'(\beta) = \E\langle H_n(\sigma)\rangle_\beta = \E\Big(\frac{E_n[H_n(\sigma)\e^{\beta H_n(\sigma)}]}{E_n[\e^{\beta H_n(\sigma)}]}\Big) \geq \E E_n[H_n(\sigma)]. 
%}
%Since $\E|H_n(\sigma)| \leq \sqrt{\E[H_n(\sigma)^2]} = \sqrt{n}$, we can apply Fubini's theorem to conclude
%\eq{
%n\E F_n'(\beta) \geq E_n[\E H_n(\sigma)] = 0.
%}
%Thus $p'(\beta) = \lim_{n\to\infty} \E F_n'(\beta) \geq 0$.
%\end{proof}

So that we can be explicit in the inverse
temperature parameter $\beta$, for the remainder of the section we will write $\langle\cdot\rangle_\beta$
for expectation with respect to $\mu_{n}^\beta$.
In light of \eqref{nrg_2deriv}, Lemma~\ref{betas_converging} implies
\eq{
\lim_{n\to\infty} \Big|\frac{\langle H_n(\sigma)\rangle_\beta}{n} - p'(\beta)\Big| = 0 \quad \mathrm{a.s.}\quad \text{whenever $p'(\beta)$ exists}.
}
%where the convergence is almost sure and in $L^q$, $q\in[1,\infty)$.
%{\color{red} [Did you define the meaning of $\langle \cdot \rangle_\beta$ before?]} 
We will require the following stronger form of this result, which also appears in %\cite[Theorem 1.1]{panchenko10} and 
\cite[Theorem 3]{auffinger-chen18}.
Our proof is adapted from the elegant approach of \cite{panchenko10}, and included for completeness.

\begin{lemma} \label{lemma:step_1_2}
If $\beta$ is a point of differentiability for $p(\cdot)$, then
\eq{
\lim_{n\to\infty} \Big\langle\Big|\frac{H_n(\sigma)}{n} - p'(\beta)\Big|\Big\rangle_\beta = 0 \quad \mathrm{a.s.} \text{ and in }L^1.
}
\end{lemma}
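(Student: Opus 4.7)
The plan is to follow the elegant approach of Panchenko, combining the exponential Chebyshev inequality with the differentiability of $p$ at $\beta$. Write $Y_n \coloneqq H_n(\sigma)/n - p'(\beta)$. The key identity, immediate from the definition of $Z_n$, is
\[
\frac{1}{n}\log \langle \e^{\eps H_n(\sigma)}\rangle_\beta = F_n(\beta+\eps) - F_n(\beta),
\]
which by \eqref{free_energy_assumption} converges almost surely and in $L^1$ to $p(\beta+\eps) - p(\beta)$ for every $\eps$. Since $p$ is differentiable at $\beta$, for any $\delta > 0$ we may choose $\eps > 0$ small enough that $p(\beta+\eps) - p(\beta) - \eps p'(\beta) < \eps\delta/2$. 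Markov's inequality then gives
\[
\langle \one\{Y_n > \delta\}\rangle_\beta \le \e^{-n\eps(p'(\beta)+\delta)}\langle \e^{\eps H_n}\rangle_\beta = \e^{n[F_n(\beta+\eps)-F_n(\beta)-\eps(p'(\beta)+\delta)]},
\]
whose bracketed exponent tends almost surely to a value at most $-\eps\delta/2$. Thus $\langle \one\{Y_n > \delta\}\rangle_\beta \to 0$ exponentially fast a.s., and the analogous argument with $\eps < 0$ handles the left tail, so $\langle \one\{|Y_n|>\delta\}\rangle_\beta \to 0$ a.s.\ for every $\delta > 0$.

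To promote this to a.s.\ convergence of $\langle |Y_n|\rangle_\beta$, I will control the Gibbs second moment via the elementary inequality $Y_n^2 \le (2/\lambda^2)\,\e^{\lambda|Y_n|}$, valid for any $\lambda > 0$, which yields
\[
\langle Y_n^2\rangle_\beta \le \frac{2}{\lambda^2}\bigl(\langle \e^{\lambda Y_n}\rangle_\beta + \langle \e^{-\lambda Y_n}\rangle_\beta\bigr),
\]
and each factor on the right equals $\e^{\mp \lambda p'(\beta)}\,\e^{n[F_n(\beta\pm\lambda/n)-F_n(\beta)]}$. By Lemma~\ref{betas_converging}, $n[F_n(\beta\pm \lambda/n)-F_n(\beta)] \to \pm\lambda p'(\beta)$ a.s., so $\langle Y_n^2\rangle_\beta$ is a.s.\ bounded (for any fixed $\lambda$). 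Cauchy--Schwarz now gives
\[
\langle |Y_n|\rangle_\beta \le \delta + \sqrt{\langle Y_n^2\rangle_\beta}\sqrt{\langle \one\{|Y_n|>\delta\}\rangle_\beta},
\]
and combining with the first paragraph yields $\limsup \langle|Y_n|\rangle_\beta \le \delta$ a.s.\ for every $\delta > 0$, hence the almost sure conclusion.

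For the $L^1(\P)$ statement I will use the layer-cake representation $\E \langle|Y_n|\rangle_\beta = \int_0^\infty \E\langle \one\{|Y_n|>t\}\rangle_\beta\,\dd t$ and split at a threshold $C=C(\beta)$. On $[0,C]$ the integrand is bounded by $1$ and tends pointwise to $0$ by the first paragraph, so the bounded-convergence piece vanishes. For $t \ge C$ I use the crude lower bound $Z_n(\beta) \ge 1$ (which follows from Jensen applied inside $E_n$, since $E_n H_n(\sigma) = 0$) to write
\[
\E \langle \one\{H_n > tn\}\rangle_\beta \le \E E_n\bigl[\one\{H_n(\sigma)>tn\}\,\e^{\beta H_n(\sigma)}\bigr],
\]
and since $H_n(\sigma) \sim \NN(0,n)$ for each fixed $\sigma$ (by \eqref{variance_assumption} and \eqref{field_decomposition}), a one-line Cameron--Martin computation gives the Gaussian tail bound $\e^{-n[(t-\beta)^2-\beta^2]/2}$, uniformly in $\sigma$; symmetric estimate for the left tail. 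Integrating this over $[C,\infty)$ produces a quantity vanishing as $n \to \infty$ once $C$ is chosen sufficiently large in terms of $\beta$.

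The main obstacle is the $L^1$ step: one cannot directly bootstrap from the a.s.\ bound on $\langle Y_n^2\rangle_\beta$ because the natural Cauchy--Schwarz estimate on $\E[Z_n(\beta+\lambda/n)/Z_n(\beta)]$ forces one to bound $\E Z_n(\beta\pm\lambda/n)^2 \le \e^{2(\beta\pm\lambda/n)^2 n}$, which explodes exponentially in $n$. The layer-cake split, combined with the explicit Gaussian tail of $H_n(\sigma)$ for fixed $\sigma$ and the lower bound $Z_n(\beta)\ge 1$, sidesteps the need for a uniform integrability argument under $\P$ and is what makes the $L^1$ conclusion work.
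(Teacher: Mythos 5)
Your route to the almost-sure statement is genuinely different from the paper's and is correct: the paper integrates $F_n'$ over a shrinking interval, applies Cauchy--Schwarz to $\langle|H_n(\sigma^1)-H_n(\sigma^2)|\rangle_\beta$, and controls everything through the second-difference quantities $\Delta_n^\pm$; you instead run exponential Chebyshev off the identity $\tfrac1n\log\langle e^{\eps H_n}\rangle_\beta = F_n(\beta+\eps)-F_n(\beta)$ and a Gibbs second-moment bound. That part is fine and arguably more elementary than the paper's.

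The $L^1$ step, however, contains a genuine gap. You assert $Z_n(\beta)\ge 1$ on the grounds that $E_n H_n(\sigma)=0$, but nothing in (A1)--(A4) gives this. For fixed disorder, $E_n H_n(\sigma)=\sum_i g_{i,n}E_n[\vphi_{i,n}(\sigma)]$, and the $E_n\vphi_{i,n}$ need not vanish: in the polymer model the $\vphi$'s are indicators $\one_{\{\sigma(j)=x\}}$ with $E_n\vphi_{i,n}=P_n(\sigma(j)=x)>0$, and in the mixed $p$-spin model the sum $\sum_{i_1,\dots,i_p}$ includes diagonal monomials (e.g.\ $g_{ii}\sigma_i^2$) with nonzero $P_n$-mean. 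So $E_n H_n(\sigma)$ is a centered Gaussian in the disorder that is not identically zero, and $Z_n(\beta)\ge e^{\beta E_n H_n(\sigma)}$ falls below $1$ with positive probability; dropping the denominator $Z_n(\beta)$ in the layer-cake estimate is therefore unjustified. The step is repairable without the false bound: keep the denominator and use Markov to get $\langle\one_{\{H_n>tn\}}\rangle_\beta\le e^{-\eps tn}\,Z_n(\beta+\eps)/Z_n(\beta)$; then Jensen gives $Z_n(\beta)^{-1}\le E_n(e^{-\beta H_n(\sigma)})$, so by Tonelli
\[
\E\Big[\frac{Z_n(\beta+\eps)}{Z_n(\beta)}\Big]\le E_n^{\otimes2}\,\E\,e^{(\beta+\eps)H_n(\sigma^1)-\beta H_n(\sigma^2)}\le e^{(2\beta+\eps)^2 n/2},
\]
using that the normalized overlap lies in $[-1,1]$. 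With $\eps=1$ and $C>(2\beta+1)^2/2$, the resulting bound $e^{-nt+(2\beta+1)^2n/2}$ integrates over $t\ge C$ to something vanishing in $n$, which is what your argument needs.
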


\begin{proof}
%We follow the elegant approach of \cite{panchenko10}.
By Lemma~\ref{betas_converging}, it suffices to show that if $\beta_0$ is a point of differentiability for $p(\cdot)$, then
\eq{
\lim_{n\to\infty} \Big\langle\Big|\frac{H_n(\sigma)}{n} - F_n'(\beta_0)\Big|\Big\rangle_{\beta_0} = 0 \quad \mathrm{a.s.} \text{ and in }L^1.
}
Fix $\eps > 0$ and choose $h>0$ small enough that
\eeq{ \label{h_choice}
p'(\beta_0)-\eps \leq \frac{p(\beta_0)-p(\beta_0-h)}{h} \leq \frac{p(\beta_0+h)-p(\beta_0)}{h} \leq p'(\beta_0)+\eps.
}
Given $h$, differentiability allows us to take $\beta_1>\beta_0$ sufficiently close to $\beta_0$ to satisfy
\eeq{ \label{beta_prime_choice}
\frac{p(\beta_1+h)-p(\beta_1)}{h} \leq \frac{p(\beta_0+h)-p(\beta_0)}{h} + \eps \leq p'(\beta_0)+2\eps.
}
By adding and subtracting $\langle |H_n(\sigma^1)-H_n(\sigma^2)|\rangle_{\beta_0}$, we have
\eeq{ \label{trivial_integrals}
&\int_{\beta_0}^{\beta_1} \langle |H_n(\sigma^1)-H_n(\sigma^2)|\rangle_\beta\ \dd \beta \\
&= (\beta_1-{\beta_0})\langle |H_n(\sigma^1)-H_n(\sigma^2)|\rangle_{\beta_0}
+ \int_{\beta_0}^{\beta_1} \big[\langle|H_n(\sigma^1)-H_n(\sigma^2)|\rangle_\beta - \langle|H_n(\sigma^1)-H_n(\sigma^2)|\rangle_{{\beta_0}}\big]\ \dd \beta \\
&= (\beta_1-{\beta_0})\langle |H_n(\sigma^1)-H_n(\sigma^2)|\rangle_{\beta_0}
+\int_{\beta_0}^{\beta_1} \int_{\beta_0}^\beta \frac{\partial}{\partial x} \langle |H_n(\sigma^1)-H_n(\sigma^2)|\rangle_x\ \dd x\, \dd \beta.
}
A simple calculation, followed by Cauchy--Schwarz, shows
\eq{ %\label{derivative_calculation}
\Big|\frac{\partial}{\partial x} \langle |H_n(\sigma^1)-H_n(\sigma^2)|\rangle_x\Big|
&= \big|\big\langle |H_n(\sigma^1)-H_n(\sigma^2)| \cdot \big(H_n(\sigma^1)+H_n(\sigma^2)-2H_n(\sigma^3)\big)\big\rangle_x\big| \\
&\leq \sqrt{\big\langle \big(H_n(\sigma^1)-H_n(\sigma^2)\big)^2\rangle_x\big\langle \big(H_n(\sigma^1)+H_n(\sigma^2)-2H_n(\sigma^3)\big)^2\big\rangle_x}.
}
By another application of Cauchy--Schwarz, we have
\eq{
\big\langle \big(H_n(\sigma^1)+H_n(\sigma^2)-2H_n(\sigma^3)\big)^2\big\rangle_x
&= \big\langle \big(H_n(\sigma^1)-H_n(\sigma^3)+H_n(\sigma^2)-H_n(\sigma^3)\big)^2\big\rangle_x \\
&\leq 2\big\langle\big(H_n(\sigma^1)-H_n(\sigma^3)\big)^2\big\rangle_x + 2\big\langle\big(H_n(\sigma^1)-H_n(\sigma^2)\big)^2\big\rangle_x \\
&= 4\big\langle\big(H_n(\sigma^1)-H_n(\sigma^2)\big)^2\big\rangle_x.
}
From the previous two displays, we find
\eq{
\Big|\frac{\partial}{\partial x} \langle |H_n(\sigma^1)-H_n(\sigma^2)|\rangle_x\Big| 
\leq 2\big\langle\big(H_n(\sigma^1)-H_n(\sigma^2)\big)^2\big\rangle_x
&= 4\langle H_n(\sigma)^2\rangle_x-4\langle H_n(\sigma)\rangle_x^2.
}
In light of this inequality, \eqref{trivial_integrals} now shows
\eq{
&\langle |H_n(\sigma^1)-H_n(\sigma^2)|\rangle_{\beta_0}  \\
&\leq \frac{1}{\beta_1-{\beta_0}}\int_{\beta_0}^{\beta_1}\langle|H_n(\sigma^1)-H_n(\sigma^2)|\rangle_\beta\ \dd \beta 
+\frac{4}{\beta_1-{\beta_0}}\int_{\beta_0}^{\beta_1}\int_{\beta_0}^\beta (\langle H_n(\sigma)^2\rangle_x-\langle H_n(\sigma)\rangle_x^2)\ \dd x\, \dd \beta \\
&\leq \frac{2}{\beta_1-{\beta_0}}\int_{\beta_0}^{\beta_1}\big\langle|H_n(\sigma)-\langle H_n(\sigma)\rangle_\beta|\big\rangle_\beta\ \dd \beta 
+4\int_{\beta_0}^{\beta_1} (\langle H_n(\sigma)^2\rangle_x-\langle H_n(\sigma)\rangle_x^2)\ \dd x,
}
where
\eq{
\frac{2}{\beta_1-{\beta_0}}\int_{\beta_0}^{\beta_1}\big\langle|H_n(\sigma)-\langle H_n(\sigma)\rangle_\beta|\big\rangle_\beta\ \dd \beta
&\leq 2\bigg(\frac{1}{\beta_1-{\beta_0}}\int_{\beta_0}^{\beta_1}\big\langle|H_n(\sigma)-\langle H_n(\sigma)\rangle_\beta|\big\rangle_\beta^2\ \dd \beta\bigg)^{1/2} \\
&\leq2\bigg(\frac{1}{\beta_1-{\beta_0}}\int_{\beta_0}^{\beta_1}(\langle H_n(\sigma)^2\rangle_\beta-\langle H_n(\sigma)\rangle_\beta^2)\ \dd \beta\bigg)^{1/2}. \\
}
In summary,
\eeq{ \label{beta_integral_summary}
\Big\langle\Big|\frac{H_n(\sigma)}{n}-F_n'(\sigma)\Big|\Big\rangle_{\beta_0}
&= \Big\langle\Big|\frac{H_n(\sigma)}{n}-\frac{\langle H_n(\sigma)\rangle_{\beta_0}}{n}\Big|\Big\rangle_{\beta_0} \\
&\leq\frac{\langle|H_n(\sigma^1)-H_n(\sigma^2)|\rangle_{\beta_0}}{n}  
\leq 2\sqrt{\frac{I_n(\beta_1)}{n(\beta_1-{\beta_0})}}+4I_n(\beta_1),
}
where
\eq{
I_n(\beta_1) &\coloneqq \frac{1}{n}\int_{\beta_0}^{\beta_1}(\langle H_n(\sigma)^2\rangle_\beta-\langle H_n(\sigma)\rangle_\beta^2)\ \dd \beta 
\stackrel{\mbox{\footnotesize\eqref{nrg_2deriv}}}{=} F_n'(\beta_1) - F_n'(\beta_0).
}
Therefore, convexity of $F_n(\cdot)$ implies
\eq{
I_n(\beta_1) %&\stackrel{\phantom{\mbox{\footnotesize\eqref{h_choice},\eqref{beta_prime_choice}}}}{=} \frac{F_n'(\beta_1)-F_n'({\beta})}{n} \\
&\stackrel{\phantom{\mbox{\footnotesize\eqref{h_choice},\eqref{beta_prime_choice}}}}{\leq} \frac{F_n(\beta_1+h)-F_n(\beta_1)}{h} - \frac{F_n({\beta_0})-F_n({\beta_0}-h)}{h} \\
&\stackrel{{\mbox{\footnotesize\hspace{3.5ex}\eqref{Delta_def}\hspace{3.5ex}}}}{=} \frac{p(\beta_1+h)-p(\beta_1)}{h} - \frac{p({\beta_0})-p({\beta_0}-h)}{h} + \Delta_n^+(\beta_1,h) + \Delta_n^-({\beta_0},h) \\
&\stackrel{\mbox{\footnotesize\eqref{h_choice},\eqref{beta_prime_choice}}}{\leq} 3\eps + \Delta_n^+(\beta_1,h) + \Delta_n^-({\beta_0},h).
}
%\eq{
%&I_n(\beta_1) %&\stackrel{\phantom{\mbox{\footnotesize\eqref{h_choice},\eqref{beta_prime_choice}}}}{=} \frac{F_n'(\beta_1)-F_n'({\beta})}{n} \\
%\leq
% \frac{F_n(\beta_1+h)-F_n(\beta_1)}{h} - \frac{F_n({\beta_0})-F_n({\beta_0}-h)}{h} \\
%&\stackrel{{\mbox{\footnotesize\hspace{2ex}\eqref{Delta_def}\hspace{2ex}}}}{=} \frac{p(\beta_1+h)-p(\beta_1)}{h} - \frac{p({\beta_0})-p({\beta_0}-h)}{h} + \Delta_n^+(\beta_1,h) + \Delta_n^-({\beta_0},h) \\
%&\hspace{-1.3ex}\stackrel{\mbox{\footnotesize\eqref{h_choice},\eqref{beta_prime_choice}}}{\leq} 3\eps + \Delta_n^+(\beta_1,h) + \Delta_n^-({\beta_0},h).
%}
%where $\Delta_n^+(\beta_1,h)$ and $\Delta_n^-({\beta},h)$ are defined in \eqref{Delta_def}.
As $n\to\infty$, \eqref{free_energy_assumption} shows that $\Delta_n^+(\beta_1,h)$ and $\Delta_n^-({\beta_0},h)$ each converge to $0$ almost surely and in $L^1$.
Thus \eqref{beta_integral_summary} and the above display together yield the desired result, as $\eps$ is arbitrary.
\end{proof}

\subsection{Temperature perturbations} \label{temp_perturbations}
Here we derive upper bounds for the effects of temperature perturbations on certain expectations with respect to~$\mu_{n}^\beta$.
%We will consider the function space
%\eq{
%\FFF_n \coloneqq \{f : \Sigma_n\to[-1,1]\}.
%}

\begin{lemma} \label{connecting_betas}
The following statements hold for any $\beta_1\geq\beta_0\geq0$.
\begin{itemize}
\item[(a)] For any measurable $f : \Sigma_n\to[-1,1]$,
\eq{
|\langle f(\sigma)\rangle_{\beta_1} - \langle f(\sigma)\rangle_{\beta_0}| \leq \sqrt{n(\beta_1-\beta_0)(F_n'(\beta_1)-F_n'(\beta_0))}.
}
\item[(b)] For any $\sigma \in \Sigma_n$,
\eeq{\label{connecting_betas_2}
\frac{1}{n}\Big|\sum_i\vphi_i\langle\vphi_i\rangle_{\beta_1} - \sum_i \vphi_i\langle\vphi_i\rangle_{\beta_0}\Big| \leq \sqrt{n(\beta_1-\beta_0)(F_n'(\beta_1)-F_n'(\beta_0))}.
}
\item[(c)] Finally,
\eeq{ \label{connecting_betas_3}
\frac{1}{n}\Big|\sum_i\langle\vphi_i\rangle^2_{\beta_1} -\sum_i\langle\vphi_i\rangle^2_{\beta_0}\Big|
\leq 2\sqrt{n(\beta_1-\beta_0)(F_n'(\beta_1)-F_n'(\beta_0))}.
}
\end{itemize}
\end{lemma}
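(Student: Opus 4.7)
\textbf{Proof proposal for Lemma~\ref{connecting_betas}.} All three bounds share the same skeleton: (i) express the left-hand side as $\int_{\beta_0}^{\beta_1}(\dd/\dd\beta)(\cdots)\,\dd\beta$; (ii) identify each derivative as a Gibbs covariance and apply Cauchy--Schwarz to pull out a factor of $\sqrt{n F_n''(\beta)}$ coming from $\Var_\beta H_n(\sigma)=nF_n''(\beta)$; (iii) apply Cauchy--Schwarz on the $\beta$-integral to obtain $\sqrt{(\beta_1-\beta_0)\int_{\beta_0}^{\beta_1} nF_n''(\beta)\,\dd\beta} = \sqrt{n(\beta_1-\beta_0)(F_n'(\beta_1)-F_n'(\beta_0))}$. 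A preliminary observation that I would emphasize up front is that by Cauchy--Schwarz on the expansion~\eqref{field_decomposition} together with \eqref{variance_assumption}, one has $|\RR(\sigma,\sigma')| \le 1$ for all $\sigma,\sigma'\in\Sigma_n$; this lets me treat $\RR$ as a uniformly bounded kernel and avoid any book-keeping involving~$\EEE_n$.

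For part (a), the differentiation rule for Gibbs averages (justified via the truncation $\langle\cdot\rangle_N$ as in Section~\ref{gibbs_measure} and passage to the limit through \eqref{exp_moments_lemma_a}--\eqref{exp_moments_lemma_b}) gives $(\dd/\dd\beta)\langle f\rangle_\beta = \langle f(H_n-\langle H_n\rangle_\beta)\rangle_\beta$, so Cauchy--Schwarz together with $|f|\le 1$ yields $|(\dd/\dd\beta)\langle f\rangle_\beta| \le \sqrt{nF_n''(\beta)}$. Integrating on $[\beta_0,\beta_1]$ and invoking Cauchy--Schwarz one more time delivers the stated bound.

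For part (b), I would fix $\sigma$ and differentiate $g(\beta)\coloneqq \frac{1}{n}\sum_i \vphi_i(\sigma)\langle \vphi_i\rangle_\beta$ term by term, obtaining (after reorganization, and after justifying the interchange of $\sum_i$ and $\dd/\dd\beta$ via \eqref{limit_for_later})
\[
g'(\beta) = \bigl\langle \RR(\sigma,\sigma')\bigl(H_n(\sigma')-\langle H_n\rangle_\beta\bigr)\bigr\rangle_\beta,
\]
where the inner expectation is over $\sigma'\sim\mu_n^\beta$ with $\sigma$ held fixed. Cauchy--Schwarz and the bound $\RR(\sigma,\sigma')^2\le 1$ give $|g'(\beta)|\le \sqrt{nF_n''(\beta)}$, and then the integration step proceeds exactly as in (a). Part (c) runs along the same lines: differentiating $h(\beta)\coloneqq \frac{1}{n}\sum_i\langle\vphi_i\rangle_\beta^2$ produces a factor of~$2$ from the chain rule, and one finds
\[
h'(\beta) = 2\bigl\langle \RR(\sigma)\bigl(H_n(\sigma)-\langle H_n\rangle_\beta\bigr)\bigr\rangle_\beta,
\]
with $\RR(\sigma)$ as in \eqref{rdef}. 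Jensen's inequality gives $\RR(\sigma)^2\le \givena{\RR_{1,2}^2}{\sigma}$ and hence $\langle \RR(\sigma)^2\rangle_\beta \le \langle \RR_{1,2}^2\rangle_\beta \le 1$, and Cauchy--Schwarz plus integration yield the claimed factor of~$2$.

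The only real obstacle, and the step that requires some care, is the term-by-term differentiation of the infinite sums in (b) and (c). I plan to handle this exactly as in Section~\ref{gibbs_measure}: first prove the bounds for the truncated Hamiltonian $H_{n,N}$ where everything is a finite sum and the calculus is elementary, then pass to the limit $N\to\infty$ using the $L^\alpha$ convergences \eqref{exp_moments_lemma_a}, \eqref{exp_moments_lemma_b}, and \eqref{limit_for_later}, together with the fact that the right-hand sides of (a)--(c) depend on $N$ only through the Gibbs expectations themselves and thus converge to the desired bounds. Apart from this technical approximation, the proof is a routine variance-of-energy calculation and should occupy only a few lines per part.
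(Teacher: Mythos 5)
Your proposal is correct and follows essentially the same approach as the paper: the derivative is recognized as a Gibbs covariance, Cauchy--Schwarz together with $|\RR|\le1$ pulls out $\sqrt{nF_n''(\beta)}$, and Cauchy--Schwarz on the $\beta$-integral yields $\sqrt{n(\beta_1-\beta_0)(F_n'(\beta_1)-F_n'(\beta_0))}$. Your packaging of parts (b) and (c) in terms of the overlap kernel $\RR$ is slightly slicker than the paper's term-by-term handling (which applies Cauchy--Schwarz once per summand and a second time over $i$), and you handle the interchange of derivative and infinite sum by truncation rather than by the paper's domination argument, but these are cosmetic variations on the same proof.
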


\begin{proof}
All three claims follow from two crucial observations.
First, for any $f \in L^2(\Sigma_n)$,
\eeq{ \label{crucial_1}
\Big|\frac{\partial}{\partial \beta}\langle f(\sigma)\rangle_\beta\Big| 
%&\stackrel{\phantom{\mbox{\footnotesize\eqref{nrg_2deriv}}}}{=}
&\stackrefp{nrg_2deriv}{=}  |\langle f(\sigma)H_n(\sigma)\rangle_\beta - \langle f(\sigma) \rangle_\beta\langle H_n(\sigma)\rangle_\beta| \\
&\stackrel{\phantom{\mbox{\footnotesize\eqref{nrg_2deriv}}}}{\leq} \sqrt{\langle H_n(\sigma)^2\rangle_\beta - \langle H_n(\sigma)\rangle_\beta^2}\sqrt{\langle f(\sigma)^2\rangle_\beta - \langle f(\sigma)\rangle_\beta^2} \\
%&\leq  \sqrt{\langle H_n(\sigma)^2\rangle_\beta - \langle H_n(\sigma)\rangle_\beta^2} \\
&\stackrel{\mbox{\footnotesize\eqref{nrg_2deriv}}}{=} \sqrt{nF_n''(\beta)}\sqrt{\langle f(\sigma)^2\rangle_\beta - \langle f(\sigma)\rangle_\beta^2}
\leq \sqrt{nF_n''(\beta)}\sqrt{\langle f(\sigma)^2\rangle_\beta}. %\raisetag{-1\baselineskip}
}
And second,
\eeq{ \label{crucial_2}
%|\langle f(\sigma)\rangle_{\beta_1} - \langle f(\sigma)\rangle_{\beta_0}|
%\leq  
\int_{\beta_0}^{\beta_1} \sqrt{nF_n''(\beta)}\ \dd \beta
&\leq \sqrt{n(\beta_1-\beta_0)\int_{\beta_0}^{\beta_1} F_n''(\beta)\ \dd \beta} 
= \sqrt{n(\beta_1-\beta_0)(F_n'(\beta_1)-F_n'(\beta_0))}.
}
Then part (a) immediately follows, since
\eq{
|f| \leq 1 \quad &\stackrel{\mbox{\footnotesize\eqref{crucial_1}}}{\implies} \quad 
\Big|\frac{\partial}{\partial \beta}\langle f(\sigma)\rangle_\beta\Big| \leq \sqrt{nF_n''(\beta)} \\
&\stackrel{\mbox{\footnotesize\eqref{crucial_2}}}{\implies} \quad
|\langle f(\sigma)\rangle_{\beta_1} - \langle f(\sigma)\rangle_{\beta_0}| \leq \sqrt{n(\beta_1-\beta_0)(F_n'(\beta_1)-F_n'(\beta_0))}.
}
For part (b), we first observe that if $0\leq\beta\leq\beta_1$, then
\eq{
\Big|\frac{\partial}{\partial\beta}\langle\vphi_i\rangle_\beta\Big| &\stackrel{\mbox{\footnotesize\eqref{crucial_1}}}{\leq}\sqrt{nF_n''(\beta)}\sqrt{\langle\vphi_i^2\rangle_\beta} \\
&\stackrel{\phantom{\mbox{\footnotesize\eqref{crucial_1}}}}{=} \sqrt{nF_n''(\beta)}\sqrt{\frac{E_n(\vphi_i^2\e^{\beta H_n(\sigma)})}{Z_n(\beta)}} \\
&\stackrel{\phantom{\mbox{\footnotesize\eqref{crucial_1}}}}{\leq} \sqrt{nF_n''(\beta)}\sqrt{\frac{E_n(\vphi_i^2)}{Z_n(\beta)} + \frac{E_n(\vphi_i^2\e^{\beta_1 H_n(\sigma)})}{Z_n(\beta)}}  \\
&\stackrel{\phantom{\mbox{\footnotesize\eqref{crucial_1}}}}{\leq} \sqrt{n\max_{\beta_0\in[0,\beta_1]}F_n''(\beta_0)}\sqrt{\frac{\max(Z_n(0),Z_n(\beta_1))}{\min_{\beta_0\in[0,\beta_1]} Z_n(\beta_0)}}\sqrt{\langle \vphi_i^2\rangle_0 + \langle\vphi_i^2\rangle_{\beta_1}},
}
where now the right-hand side is independent of $\beta$ and (almost surely) finite.
Moreover, we have the following finiteness condition when summing over $i$:
\eq{
\sum_{i} |\vphi_i|\sqrt{\langle \vphi_i^2\rangle_0 + \langle\vphi_i^2\rangle_{\beta_1}}
&\leq\sqrt{ \sum_i \vphi_i^2\sum_i (\langle\vphi_i^2\rangle_0 + \langle\vphi_i^2\rangle_{\beta_1})} 
\stackrel{\mbox{\footnotesize\eqref{variance_assumption}}}{=} \sqrt{2}n < \infty.
}
It thus follows that
\eq{
\frac{\partial}{\partial\beta} \sum_i \vphi_i\langle\vphi_i\rangle_\beta = \sum_i \vphi_i\frac{\partial}{\partial\beta}\langle\vphi_i\rangle_\beta.
}
In particular,
\eq{
\Big|\frac{\partial}{\partial\beta} \frac{1}{n}\sum_i \vphi_i\langle\vphi_i\rangle_\beta\Big|
&\stackrel{\phantom{\mbox{\footnotesize\eqref{crucial_1}}}}{\leq} \frac{1}{n}\sum_i \Big|\vphi_i\frac{\partial}{\partial\beta}\langle\vphi_i\rangle_\beta\Big| \\
%&\stackrel{\mbox{\footnotesize\eqref{crucial_1}}}{\leq} \sqrt{\frac{F_n''(\beta)}{n}} \sum_i |\vphi_i| \sqrt{\langle\vphi_i^2\rangle_\beta - \langle\vphi_i\rangle_\beta^2} \\
%&\stackrel{\phantom{\mbox{\footnotesize\eqref{crucial_1}}}}{\leq} \sqrt{\frac{F_n''(\beta)}{n}}\sqrt{\sum_i \vphi_i^2\sum_i(\langle \vphi_i^2\rangle_\beta - \langle\vphi_i\rangle_\beta^2)} \\
&\stackrel{\mbox{\footnotesize\eqref{crucial_1}}}{\leq} \sqrt{\frac{F_n''(\beta)}{n}} \sum_i |\vphi_i| \sqrt{\langle\vphi_i^2\rangle_\beta} \\
&\stackrel{\phantom{\mbox{\footnotesize\eqref{crucial_1}}}}{\leq} \sqrt{\frac{F_n''(\beta)}{n}}\sqrt{\sum_i \vphi_i^2\sum_i \langle\vphi_i^2\rangle_\beta}
\stackrel{\mbox{\footnotesize\eqref{variance_assumption}}}{=} \sqrt{nF_n''(\beta)}.
}
As in part (a), \eqref{crucial_2} now proves \eqref{connecting_betas_2}.
For part (c), we can argue similarly in order to obtain
\eq{
\Big|\frac{\partial}{\partial\beta} \frac{1}{n}\sum_i \langle\vphi_i\rangle_\beta^2\Big| 
&\stackrel{\phantom{\mbox{\footnotesize\eqref{crucial_1}}}}{=} \Big|\frac{2}{n}\sum_i \langle\vphi_i\rangle_\beta\frac{\partial}{\partial\beta}\langle\vphi_i\rangle_\beta\Big| \\
%&\stackrel{{\mbox{\footnotesize\eqref{crucial_1}}}}{\leq} 2\sqrt{\frac{F_n''(\beta)}{n}}\sum_i|\langle\vphi_i\rangle_\beta|\sqrt{\langle \vphi_i^2\rangle_\beta - \langle\vphi_i\rangle_\beta^2} \\
%&\stackrel{\phantom{\mbox{\footnotesize\eqref{crucial_1}}}}{\leq} 2\sqrt{\frac{F_n''(\beta)}{n}} \sqrt{\sum_i\langle\vphi_i\rangle_\beta^2\sum_i(\langle \vphi_i^2\rangle_\beta - \langle\vphi_i\rangle_\beta^2)} \\
&\stackrel{{\mbox{\footnotesize\eqref{crucial_1}}}}{\leq} 2\sqrt{\frac{F_n''(\beta)}{n}}\sum_i|\langle\vphi_i\rangle_\beta|\sqrt{\langle \vphi_i^2\rangle_\beta} \\
&\stackrel{\phantom{\mbox{\footnotesize\eqref{crucial_1}}}}{\leq} 2\sqrt{\frac{F_n''(\beta)}{n}}\sum_i \langle \vphi_i^2\rangle_\beta 
\stackrel{\mbox{\footnotesize\eqref{variance_assumption}}}{=} 2\sqrt{nF_n''(\beta)},
}
from which \eqref{crucial_2} proves \eqref{connecting_betas_3}.
\end{proof}

\section{{Proof of Theorem~\ASref}} \label{proof_1}
Recall the event under consideration:
\eq{
B_\delta = \Big\{\frac{1}{n}\sum_i\langle\vphi_i\rangle^2\leq\delta\Big\}.
}
The proof of Theorem~\ref{averages_squared} is a perturbative argument using an Ornstein--Uhlenbeck (OU) flow on the environment,
\eeq{ \label{OU_def}
\vc g_t \coloneqq \e^{-t}\vc g + \e^{-t} \vc W({\e^{2t}-1}), \quad t\geq0,
}
where $\vc W(\cdot) = (W_i(\cdot))_{i=1}^\infty$ is a collection of independent Brownian motions that are also independent of $\vc g = \vc g_0$, and the above definition is understood coordinate-wise.
Within Section~\ref{proof_1}, we denote expectation with respect to $\mu_{n,\vc g_t}^\beta$ by $\langle \cdot \rangle_t$, not to be confused with $\langle\cdot\rangle_\beta$ used in Section~\ref{prep_section}.
We now prove Theorem~\ref{averages_squared} by juxtaposing the following two propositions.
Notice that if $\P(B_\delta) = 0$, then there is nothing to be done; therefore, we may henceforth assume $\P(B_\delta) > 0$ so that conditioning on $B_\delta$ is well-defined.

\begin{prop} \label{prep_prop_good}
If $\beta$ is a point of differentiability for $p(\cdot)$, and $p'(\beta) < \beta$, then there exists $\kappa = \kappa(\beta) > 0$ such that the following holds:
For any $\eps > 0$, there is $T = T(\beta,\eps)$ sufficiently large that
\eeq{ \label{prep_eq_good}
\liminf_{n\to\infty} \P\bigg(\Big|\kappa - \frac{1}{T/n}\int_0^{T/n} \frac{1}{n}\sum_i \langle \vphi_i\rangle_t^2\ \dd t\Big| \leq \eps \bigg) \geq 1 - \eps.
}
More specifically,
\eq{
\kappa(\beta) = \frac{\beta-p'(\beta)}{\beta}.
}
\end{prop}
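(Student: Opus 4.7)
The approach is perturbative: we run an Ornstein--Uhlenbeck flow on the disorder and extract the claimed identity from the fact that $\LL F_n(\beta)$, where $\LL = \Delta - \vc x \cdot \nabla$ is the OU generator, has mean zero under the stationary distribution. The averaged sum $\frac{1}{n}\sum_i \langle\vphi_i\rangle^2$ appears as one of the three explicit terms in $\LL F_n(\beta)$, and the OU average over a short interval of length $T/n$ will be used to tame fluctuations. The main obstacle is the variance bound \eqref{general_variance_bound} promised in the sketch: the fact that $F_n$ depends on countably many Gaussians forces us to be careful, so I would first prove the bound for functionals of finitely many coordinates using an eigenfunction decomposition of $\LL$ (Hermite polynomials), then pass to the limit using Lemma~\ref{exp_moments_lemma} to justify the extension to $F_n$.

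\textbf{Step 1: Compute $\LL F_n(\beta)$.} For $f(\vc g) = F_n(\beta) = \frac{1}{n}\log Z_n(\beta)$, standard differentiation (carried out rigorously in Section~\ref{gibbs_measure}) gives $\partial_{g_i} f = \frac{\beta}{n}\langle\vphi_i\rangle$ and $\partial^2_{g_i} f = \frac{\beta^2}{n}(\langle\vphi_i^2\rangle - \langle\vphi_i\rangle^2)$. Using \eqref{variance_assumption} to sum $\sum_i \vphi_i^2 = n$, and \eqref{nrg_2deriv} to identify $\vc g \cdot \nabla f = \frac{\beta}{n}\langle H_n(\sigma)\rangle = \beta F_n'(\beta)$, we obtain the pointwise identity
\[
\LL F_n(\beta) = \beta^2\Big(1 - \frac{1}{n}\sum_i \langle\vphi_i\rangle^2\Big) - \beta F_n'(\beta).
\]
The associated squared gradient is $\|\nabla f\|^2 = \frac{\beta^2}{n^2}\sum_i\langle\vphi_i\rangle^2 \le \frac{\beta^2}{n^2} \cdot n = \frac{\beta^2}{n}$, so $\E\|\nabla f\|^2 = O(1/n)$.

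\textbf{Step 2: Variance bound for time averages.} Combining the bound from Step~1 with the OU variance estimate \eqref{general_variance_bound} (established in the referenced Lemma~\ref{OU_variance_lemma} via the Hermite basis), for any $T > 0$,
\[
\Var\bigg(\frac{1}{T/n}\int_0^{T/n}\LL F_n(\beta)(\vc g_s)\, \dd s\bigg) \le \frac{2n}{T}\E\|\nabla F_n\|^2 \le \frac{2\beta^2}{T}.
\]
Since OU is stationary for the standard Gaussian measure, $\E\,\LL F_n(\beta)(\vc g_s) = \E\,\LL F_n(\beta)(\vc g_0) = 0$ (Gaussian integration by parts, again justified through finite truncations). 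By Chebyshev, the integral is within $\sqrt{2\beta^2/(\eps T)}$ of zero with probability at least $1-\eps$; choose $T$ so this deviation is at most $\beta^2\eps/4$.

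\textbf{Step 3: Handling the $F_n'(\beta)$ term.} Substituting the identity from Step~1 into the time average gives
\[
\frac{1}{T/n}\int_0^{T/n}\LL F_n(\beta)(\vc g_s)\,\dd s = \beta^2 - \frac{\beta^2}{T/n}\int_0^{T/n}\frac{1}{n}\sum_i\langle\vphi_i\rangle_s^2\,\dd s - \frac{\beta}{T/n}\int_0^{T/n}F_n'(\beta)(\vc g_s)\,\dd s.
\]
Because each $\vc g_s$ has the same law as $\vc g_0$, Fubini and Lemma~\ref{betas_converging} yield
\[
\E\bigg|\frac{1}{T/n}\int_0^{T/n}F_n'(\beta)(\vc g_s)\,\dd s - p'(\beta)\bigg| \le \E|F_n'(\beta) - p'(\beta)| \xrightarrow{n\to\infty} 0,
\]
so this integral is within $\eps\beta/4$ of $p'(\beta)$ with probability at least $1-\eps$ for $n$ large.

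\textbf{Step 4: Conclusion.} On the intersection of the two events above (which has probability at least $1-2\eps$), combining Steps~2 and~3 gives
\[
\Big|\,\beta^2\kappa(\beta) - \beta^2 \cdot \frac{1}{T/n}\int_0^{T/n}\frac{1}{n}\sum_i\langle\vphi_i\rangle_s^2\,\dd s\,\Big| \le \tfrac{\beta^2\eps}{4} + \tfrac{\beta\cdot\eps\beta}{4} \le \beta^2\eps,
\]
where $\kappa(\beta) = 1 - p'(\beta)/\beta = (\beta - p'(\beta))/\beta > 0$ by hypothesis. Dividing by $\beta^2$ and replacing $\eps$ by $\eps/2$ at the outset gives \eqref{prep_eq_good}. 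The strict positivity of $\kappa$ is exactly where the low-temperature assumption $p'(\beta) < \beta$ enters, and it is what makes the proposition nonvacuous once it is paired with Proposition~\ref{prep_prop_bad} in the overall proof of Theorem~\ref{averages_squared}.
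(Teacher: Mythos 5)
Your proposal is correct and follows essentially the same route as the paper's proof: compute $\LL F_n(\beta)$ to identify $\frac{1}{n}\sum_i\langle\vphi_i\rangle^2$ as one of the terms, apply the OU variance estimate (Lemma~\ref{OU_variance_lemma}) to the time average over $[0,T/n]$, pass from finite truncations $H_{n,N}$ to the full Hamiltonian using Lemma~\ref{exp_moments_lemma}, and handle the $F_n'(\beta)$ term via Lemma~\ref{betas_converging} and Chebyshev. The only cosmetic difference is that you defer the finite-$N$ truncation to a remark rather than carrying it through explicitly, but you correctly flag that this is required; the constants differ from the paper's by harmless factors.
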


For the statement of the second result, %recall the event $B_\delta = \big\{\frac{1}{n}\sum_i\langle\vphi_i\rangle^2\leq\delta\big\}$.
let $\FFF_t$ denote the $\sigma$-algebra generated by $(\vc W(s))_{0\leq s\leq \e^{2t}-1}$ and the initial condition $\vc g_0$.

\begin{prop} \label{prep_prop_bad}
Assume $\beta$ is a point of differentiability for $p(\cdot)$.
Then there is a process $(I_t)_{t>0}$ adapted to the filtration $(\FFF_t)_{t>0}$, such that
the following statements hold:
\begin{itemize}
\item[(a)] For any $T,\eps>0$,
\eeq{ \label{prep_prop_bad_eq1}
\lim_{n\to\infty} \P\bigg(\Big|I_{T/n} - \frac{1}{T/n}\int_0^{T/n} \frac{1}{n}\sum_{i}\langle\vphi_i\rangle_t^2\ \dd t\Big| > \eps\bigg) = 0.
} 
\item[(b)] For any $T,\eps_1,\eps_2>0$, there exist $\delta_1 = \delta_1(\beta,T,\eps_1,\eps_2)>0$ sufficiently small and $n_0=n_0(\beta,T,\eps_1,\eps_2)$ sufficiently large, that
\eeq{ \label{prep_prop_bad_eq2}
\P\givenp[\bigg]{\Big|I_{T/n} - \frac{1}{n}\sum_i \langle\vphi_i\rangle^2\Big| \geq \eps_1}{B_{\delta}} \leq \eps_2 \quad \text{for all $0<\delta\leq\delta_1$, $n\geq n_0$}.
}
\end{itemize}
\end{prop}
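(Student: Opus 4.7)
The plan is to construct $I_t$ so that the OU flow is repackaged as a small inverse-temperature shift plus an explicit Gaussian perturbation of the Hamiltonian. Write $\tilde H_n^{(t)}(\sigma)\coloneqq \sum_i W_i(\e^{2t}-1)\vphi_i(\sigma)$ for the Brownian part, and let $\tilde\langle\cdot\rangle_{t,\beta'}$ denote expectation with respect to $\tilde\mu_{n,t}^{\beta'}(\dd\sigma)\propto \e^{\beta'(H_n(\sigma)+\tilde H_n^{(t)}(\sigma))}\,P_n(\dd\sigma)$. Since $H_n^{(t)}=\e^{-t}(H_n+\tilde H_n^{(t)})$, we have $\mu_{n,\vc g_t}^\beta=\tilde\mu_{n,t}^{\beta\e^{-t}}$, so $\langle\cdot\rangle_t$ and $\tilde\langle\cdot\rangle_{t,\beta}$ differ only by an inverse-temperature shift of magnitude $\beta(1-\e^{-t})=O(t)$. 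Set
\[
I_t\coloneqq \frac{1}{t}\int_0^t \frac{1}{n}\sum_i\tilde\langle\vphi_i\rangle_{s,\beta}^2\,\dd s,
\]
which is $\FFF_t$-measurable.

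For part (a), applying Lemma~\ref{connecting_betas}(c) to the augmented Gibbs system $\tilde\mu_{n,s}^{\beta'}$ at inverse temperatures $\beta\e^{-s}$ and $\beta$ gives
\[
\bigg|\frac{1}{n}\sum_i\langle\vphi_i\rangle_s^2-\frac{1}{n}\sum_i\tilde\langle\vphi_i\rangle_{s,\beta}^2\bigg|
\leq 2\sqrt{n\beta(1-\e^{-s})\bigl(\tilde F_{n,s}'(\beta)-\tilde F_{n,s}'(\beta\e^{-s})\bigr)}.
\]
The prefactor $n\beta(1-\e^{-s})$ is $O(T)$ uniformly over $s\in[0,T/n]$. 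The Gaussian perturbation $\tilde H_n^{(s)}$ has total variance $O(T)$, so a standard Gaussian-concentration argument (in the spirit of Lemma~\ref{concentration}) gives $|\tilde F_{n,s}(\beta')-F_n(\beta')|=o(1)$ in probability uniformly over $\beta'$ near $\beta$ and $s\leq T/n$. Combining with Lemma~\ref{betas_converging} forces $\tilde F_{n,s}'(\beta)-\tilde F_{n,s}'(\beta\e^{-s})\to 0$ in probability, and integrating over $s$ yields (a).

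For part (b), fix $s\leq T/n$ and rewrite $\tilde H_n^{(s)}(\sigma)=\sqrt{\e^{2s}-1}\sum_i h_i\vphi_i(\sigma)$ with $h_i\coloneqq W_i(\e^{2s}-1)/\sqrt{\e^{2s}-1}$ i.i.d.\ standard Gaussian, independent of $\vc g_0$. Using $\sum_i\vphi_i(\sigma^j)\vphi_i(\sigma^k)=n\RR_{j,k}$, a direct moment computation gives
\[
\E_{\vc h}\langle \e^{\beta\tilde H_n^{(s)}(\sigma^1)+\beta\tilde H_n^{(s)}(\sigma^2)}\rangle
=\langle \e^{\beta^2(\e^{2s}-1)n(1+\RR_{1,2})}\rangle,
\]
with an analogous four-replica expression for the second $\vc h$-moment. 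On $B_\delta$, \eqref{positive_overlap} forces $\langle|\RR_{1,2}|\rangle\leq \delta+2\EEE_n$, and by Gibbs symmetry the same bound holds for every pairwise overlap $\langle|\RR_{j,k}|\rangle$ among four replicas. Taylor-expanding the exponentials around overlap zero shows both $\vc h$-moments concentrate around $\e^{2\beta^2T}$ and $\e^{4\beta^2T}$, respectively, so $\Var_{\vc h}\langle\e^{\beta\tilde H_n^{(s)}(\sigma^1)+\beta\tilde H_n^{(s)}(\sigma^2)}\rangle=o(1)$ on $B_\delta$, uniformly in $s\in[0,T/n]$. The same argument applied to $\langle \RR_{1,2}\e^{\beta\tilde H_n^{(s)}(\sigma^1)+\beta\tilde H_n^{(s)}(\sigma^2)}\rangle$ yields concentration around $\langle\RR_{1,2}\rangle\e^{2\beta^2T}$. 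Dividing, $\tilde\langle\RR_{1,2}\rangle_{s,\beta}\approx\langle\RR_{1,2}\rangle_0$ on $B_\delta$ with high $\vc h$-probability. Integrating over $s\in[0,T/n]$ yields (b).

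The principal obstacle is the four-replica step: one must show quantitatively that the Gibbs average $\langle \e^{c\sum_{1\leq j<k\leq 4}\RR_{j,k}}\rangle$ is close to $1$ whenever each pairwise $\langle|\RR_{j,k}|\rangle$ is small. This ``pairwise-to-joint'' control follows by expanding the exponential as a power series, bounding each monomial $\langle \prod_{j<k}\RR_{j,k}^{a_{j,k}}\rangle$ via repeated Cauchy--Schwarz by a product of pairwise quantities, and then summing using the deterministic bound $\RR_{j,k}\in[-\EEE_n,1]$ to control the tail.
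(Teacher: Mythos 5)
Your proposal takes a genuinely different route from the paper. The paper defines $I_t$ via the kernel $Q_s(f)$, which replaces the stochastic factor $\e^{\beta(\e^{-s}-1)H_n(\sigma)}$ inside $\langle\cdot\rangle_s$ by the deterministic constant $\e^{-\beta s n p'(\beta)}$, and controls the resulting error through Claim~\ref{claim_var_bound}, which is precisely where Lemma~\ref{lemma:step_1_2} (the a.s.\ convergence of $\langle|H_n/n-p'(\beta)|\rangle$) enters. You instead interpret $\langle\cdot\rangle_s = \tilde\langle\cdot\rangle_{s,\beta\e^{-s}}$ as a small inverse-temperature shift of $\tilde\langle\cdot\rangle_{s,\beta}$ for the augmented Hamiltonian $H_n+\tilde H_n^{(s)}$, and invoke Lemma~\ref{connecting_betas}(c) to control that shift. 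Your definition of $I_t$ is also slightly different: the paper's $Q_s$ keeps the factor $\e^{-s}$ attached to the Brownian perturbation, whereas yours drops it; both choices are $O(1/n)$ apart and lead to working proofs. Your route avoids Lemma~\ref{lemma:step_1_2} entirely and is conceptually close to the inverse-temperature rescaling trick \eqref{temperature_equivalence} that the paper only brings in later for Theorem~\ref{expected_overlap_thm}; that is a nice observation.

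There is, however, a real gap in your argument for part (a). Lemma~\ref{connecting_betas}(c) bounds the error by $2\sqrt{n\beta(1-\e^{-s})(\tilde F_{n,s}'(\beta)-\tilde F_{n,s}'(\beta\e^{-s}))}$, and you need this to vanish in (at least) $L^1$, uniformly in $s\le T/n$. Your proposal asserts it via ``$|\tilde F_{n,s}(\beta')-F_n(\beta')|=o(1)$'' plus ``combining with Lemma~\ref{betas_converging}.'' But Lemma~\ref{betas_converging} is a statement about $F_n'$, not $\tilde F_{n,s}'$, and passing from proximity of convex functions at the level of values to proximity of derivatives requires re-running the squeeze argument of that lemma's proof, which you do not spell out. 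There is a clean fix that you do not use: since $\e^{-s}(g_i + \sqrt{\e^{2s}-1}h_i)$ is standard Gaussian, one has $\tilde F_{n,s}(\cdot)\overset{\mathrm{d}}{=}F_n(\cdot\,\e^s)$ as processes in $\beta$, so $\tilde F_{n,s}'(\beta)-\tilde F_{n,s}'(\beta\e^{-s})\overset{\mathrm{d}}{=}\e^s\bigl(F_n'(\beta\e^s)-F_n'(\beta)\bigr)$, and then Lemma~\ref{betas_converging} applies directly with $\beta_n=\beta\e^s$, $s\le T/n$, giving the required $L^1$ convergence uniformly in $s$. With that substitution, part (a) closes.

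Your ``principal obstacle'' for part (b) is overstated. You do not need a power-series expansion with repeated Cauchy--Schwarz to bound the four-replica average $\langle\e^{c\sum_{j<k}\RR_{j,k}}\rangle$. Since $|\RR_{j,k}|\le 1$ deterministically, one has $\bigl|\e^{c\sum_{j<k}\RR_{j,k}}-1\bigr|\le|c|\e^{6|c|}\sum_{j<k}|\RR_{j,k}|$, and taking the Gibbs average together with the exchangeability of replicas gives $|\langle\e^{c\sum_{j<k}\RR_{j,k}}\rangle-1|\le 6|c|\e^{6|c|}\langle|\RR_{1,2}|\rangle$, which on $B_\delta$ is $O(\delta+\EEE_n)$ by \eqref{positive_overlap}. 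This is precisely the device used in the paper's Lemma~\ref{h_variance_lemma}. With this simplification and the fix to part (a), the remainder of your variance-over-$\vc h$ argument for part (b) is sound, and the overall plan constitutes a valid alternative proof. What the paper's choice of $Q_t$ buys, at the cost of proving Lemma~\ref{lemma:step_1_2} separately, is a family of bounds that are uniform over test functions $f\in L^2(\Sigma_n)$ (Lemma~\ref{prep_prop_bad_lemma}); your argument is tailored to the overlap and gets by without that uniformity.
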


%\begin{prop} \label{prep_prop_bad}
%For any $T,\eps_1,\eps_2>0$, there is $\delta > 0$ sufficiently small that
%\eeq{ \label{prep_eq_bad}
%\liminf_{n\to\infty} \P\givenp[\bigg]{\frac{1}{T/n}\int_0^{T/n} \frac{1}{n}\sum_i \langle \vphi_i\rangle_s^2\ \dd s \leq \eps_2}{E_{\delta,n}} \geq 1 - \eps_1.
%}
%\end{prop}

\begin{proof}[Proof of Theorem~\ref{averages_squared}]
Let $\eps > 0$ be given, and assume the hypotheses of Proposition~\ref{prep_prop_good}.
By that result, there is $\kappa>0$ and $T$ large enough that
\eeq{ \label{first_liminf}
\liminf_{n\to\infty} \P\bigg(\frac{1}{T/n}\int_0^{T/n} \frac{1}{n}\sum_i \langle \vphi_i\rangle_t^2\ \dd t \geq \frac{4\kappa}{5}\bigg) \geq 1-\frac{\eps}{2}.
}
Let $(I_t)_{t\geq0}$ be the process guaranteed by Proposition~\ref{prep_prop_bad}, and define the events
\eq{
G &\coloneqq \bigg\{\frac{1}{T/n}\int_0^{T/n} \frac{1}{n}\sum_i \langle \vphi_i\rangle_t^2\ \dd t \geq \frac{4\kappa}{5}\bigg\}, \\
H &\coloneqq \bigg\{\frac{1}{T/n}\int_0^{T/n} \frac{1}{n}\sum_i \langle \vphi_i\rangle_t^2\ \dd t \leq \frac{3\kappa}{5} \bigg\}, \\
H_1 &\coloneqq \bigg\{\Big|I_{T/n} - \frac{1}{T/n}\int_0^{T/n} \frac{1}{n}\sum_i\langle\vphi_i\rangle_t^2\ \dd t\Big| \leq \frac{\kappa}{5}\Big\}, \\
H_2 &\coloneqq \bigg\{\Big|I_{T/n} - \frac{1}{n}\sum_i \langle\vphi_i\rangle^2\Big| \leq \frac{\kappa}{5}\bigg\}.
}
By Proposition~\ref{prep_prop_bad}(a),
\eeq{ \label{second_liminf}
\lim_{n\to\infty} \P(H_1) = 1.
}
And by Proposition~\ref{prep_prop_bad}(b), we can choose $0 < \delta \leq \kappa/5$ sufficiently small and $n_0$ sufficiently large that
\eeq{ \label{conditional_assumption}
\P\givenp{H_2}{B_{\delta}} \geq \frac{1}{2} \quad \text{for all $n\geq n_0$}.
}
Observe that $B_{\delta} \cap H_1\cap H_{2} \subset H$, and clearly the events $G$ and $H$ are disjoint.
We thus have
\eq{
\P(B_{\delta} \cap H_1 \cap H_2) \leq \P(H) \leq 1 - \P(G).
}
On the other hand,
\eq{
\P(B_{\delta} \cap H_1 \cap H_2)
%&\stackrel{\phantom{\mbox{\footnotesize{\eqref{conditional_assumption}}}}}{\geq}
\geq \P(H_1) + \P(H_2 \cap B_{\delta}) - 1
&\stackrel{\phantom{\mbox{\footnotesize{\eqref{conditional_assumption}}}}}{=} \P(H_1) - 1 + \P\givenp{H_2}{B_{\delta}}\P(B_{\delta}) \\
&\stackrel{\mbox{\footnotesize{\eqref{conditional_assumption}}}}{\geq} \P(H_1) - 1 + \frac{\P(B_{\delta})}{2}.
}
Putting the two previous displays together, we find
\eq{
\P(B_{\delta}) \leq 2\big(2 - \P(G) - \P(H_1)\big),
}
and so
\eq{
\limsup_{n\to\infty} \P(B_{\delta}) \leq 2\big(2 - \liminf_{n\to\infty} \P(G) - \lim_{n\to\infty} \P(H_1)\big) 
\stackrel{\mbox{\footnotesize\eqref{first_liminf},\eqref{second_liminf}}}{\leq}\eps.
}

\end{proof}

\subsection{Proof of Proposition~\ref{prep_prop_good}}
We will need to recall some facts about Ornstein--Uhlenbeck processes.
To avoid technical complications, we restrict ourselves to finite-dimensional OU processes, and then take an appropriate limit at a later stage.

\subsubsection{General OU theory} \label{OU_theory}
Fix a positive integer $N$, and consider a vector $\vc g = (g_1,\dots,g_N)$ of i.i.d.~standard normal random variables.
Let $\vc W = (\vc W(t))_{t\geq0}$ be an independent $N$-dimensional Brownian motion.
The OU flow starting at $\vc g$ is given by 
\eq{
\vc g_t \coloneqq \e^{-t}\vc g + \e^{-t}\vc W(\e^{2t}-1), \quad t\geq0.
}
This is a continuous-time, stationary Markov chain.
Let $(\PP_t)_{t\geq0}$ denote the OU semigroup; that is, for $f: \R^N \to \R$,
\eq{
\PP_tf(\vc x) \coloneqq \E f\big(\e^{-t}\vc x + \e^{-t}\vc W({\e^{2t}-1})\big), \quad \vc x\in\R^N.
}
Denote the OU generator by $\LL \coloneqq \Delta - \vc x \cdot \nabla$.
It is especially useful to consider the spectral decomposition of $\LL$, whose eigenfunctions are the multivariate Hermite polynomials.
For our purposes, it suffices to recall the following well-known facts (see, for instance, \cite[Chapter 6]{chatterjee14}):
\begin{itemize}
\item Let $\gamma_N$ denote the $N$-dimensional standard Gaussian measure.
There is an orthonormal basis $\{\phi_j\}_{j=0}^\infty$ of $L^2(\gamma_N)$ consisting of eigenfunctions of $\LL$, where $\phi_0 \equiv 1$, $\LL \phi_0 = \lambda_0 \phi_0= 0$, and $\LL \phi_j = -\lambda_j\phi_j$ with $\lambda_j > 0$ for $j\geq1$.
Therefore, if $f = \sum_{j=0}^\infty a_j\phi_j \in L^2(\gamma_N)$, then
\begin{align} 
\E f(\vc g) &= a_0, \label{mean_coef} \\
\LL f &= -\sum_{j=1}^\infty \lambda_ja_j\phi_j, \label{apply_L} \\
\implies \E \LL f(\vc g) &= 0 \label{L_zero_mean}.
\end{align}
Furthermore, if $f_1 = \sum_{j=0}^\infty a_j\phi_j, f_2 = \sum_{j=0}^\infty b_j\phi_j \in L^2(\gamma_N)$, then
\eeq{ \label{cov_formula}
\Cov\big(f_1(\vc g),f_2(\vc g)\big) = \sum_{j=1}^\infty a_jb_j.
}
\item The OU semigroup acts on $L^2(\gamma_N)$ by
\eq{
\PP_t\phi_j = \e^{-\lambda_j t}\phi_j, \quad j\geq0.
}
Therefore, if $f = \sum_{j=0}^\infty a_j\phi_j \in L^2(\gamma_N)$, then
\eeq{ \label{apply_PL}
\PP_t\LL f = -\sum_{j=1}^\infty \lambda_ja_j\e^{-\lambda_j t}\phi_j.
}
\item The associated Dirichlet form is given by
\eq{
- \E[f_1(\vc g)\LL f_2(\vc g)] = \E[\nabla f_1(\vc g)\cdot\nabla f_2(\vc g)],
}
whenever $f_1$ and $f_2$ are twice-differentiable functions in $L^2(\gamma_N)$ such that both expectations above are finite.
In particular, if $f_1 = f_2 = \sum_{j=0}^\infty a_j\phi_j\in L^2(\gamma_N)$ is twice-differentiable, then
\eeq{ \label{exp_grad_formula}
\E(\|\nabla f(\vc g)\|^2) = \sum_{j=1}^\infty \lambda_{j}a_{j}^2.
}
\end{itemize}

%The univariate Hermite polynomails are given by
%\eq{
%\phi_k(x) \coloneqq \frac{(-1)^k}{\sqrt{k!}}\e^{\frac{x^2}{2}}\frac{\dd^k}{\dd x^k}\e^{-\frac{x^2}{2}}, \quad x\in\R, k \in \N = \{0,1,2\dots\},
%}
%and then for $\vc k = (k_1,\dots,k_N) \in \N^N$,
%\eq{
%\phi_{\vc k}(\vc x) \coloneqq \prod_{i=1}^N \phi_{k_i}(x_i).
%}
%The following facts are standard:
%\begin{itemize}
%\item $(\phi_{\vc k})_{\vc k \in \N^N}$ is an orthonormal basis for $L^2(\gamma_N)$, where $\gamma_N$ is $N$-dimensional standard Gaussian measure;
%\item $\phi_{\vc k}$ is an eigenfunction of $-\LL$ with eigenvalue $\lambda_{\vc k} = k_1+\dots+k_N$.
%In particular, $\E(\LL f) = 0$ for any $f\in L^2(\gamma_N)$.
%\item We have the identity
%\eq{
%\PP_t \phi_{\vc k} = \e^{-\lambda_{\vc k}t}\phi_{\vc k}
%}
%\item We have the identity
%\eq{
%\E[\nabla f_1(\vc g)\cdot\nabla f_2(\vc g)] = - \E[f_1(\vc g)\LL f_2(\vc g)].
%}
%In particular, if $F = \sum_{\vc k} a_{\vc k}\phi_{\vc k}$, then
%\eq{
%\E(\|\nabla f(\vc g)\|^2) = \sum_{\vc k} \lambda_{\vc k}a_{\vc k}^2.
%}
%\end{itemize}

\begin{lemma} \label{OU_variance_lemma}
For any twice differentiable $f\in L^2(\gamma_N)$ with $\LL f \in L^2(\gamma_N)$, we have
\eq{
\Var\bigg(\frac{1}{t}\int_0^t \LL f(\vc g_s)\ \dd s\bigg) \leq \frac{2}{t}\E(\|\nabla f(\vc g)\|^2).
}
\end{lemma}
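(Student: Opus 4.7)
The plan is to use the spectral decomposition of $\LL$ recalled in Section \ref{OU_theory} to reduce the variance computation to a sum of eigenvalue-weighted coefficients, and then exploit the exponential decay of the OU semigroup to bound the resulting double time integral.

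First I would expand $f = \sum_{j\geq 0} a_j \phi_j$ in the orthonormal Hermite basis, so that by \eqref{apply_L} one has $\LL f = -\sum_{j\geq 1} \lambda_j a_j \phi_j$. Since \eqref{L_zero_mean} gives $\E\LL f(\vc g_s) = 0$ for every $s$ by stationarity, the variance equals the second moment of the time average, which I would unfold as
\begin{equation*}
\Var\bigg(\frac{1}{t}\int_0^t \LL f(\vc g_s)\ \dd s\bigg) = \frac{2}{t^2}\int_0^t\!\!\int_0^s \E\bigl[\LL f(\vc g_u)\,\LL f(\vc g_s)\bigr]\ \dd u\ \dd s.
\end{equation*}
By stationarity of the OU flow and the Markov property, for $u \le s$ the inner expectation equals $\E\bigl[\LL f(\vc g)\cdot (\PP_{s-u}\LL f)(\vc g)\bigr]$.

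Next, using \eqref{apply_PL} together with orthonormality of the $\phi_j$, this inner expectation collapses to $\sum_{j\geq 1} \lambda_j^2 a_j^2\,\e^{-\lambda_j(s-u)}$. Substituting back and computing the elementary integral $\int_0^s \e^{-\lambda_j(s-u)}\ \dd u = (1-\e^{-\lambda_j s})/\lambda_j \le 1/\lambda_j$, followed by integrating in $s$, gives
\begin{equation*}
\Var\bigg(\frac{1}{t}\int_0^t \LL f(\vc g_s)\ \dd s\bigg) \le \frac{2}{t^2}\sum_{j\geq 1} \lambda_j^2 a_j^2 \cdot \frac{t}{\lambda_j} = \frac{2}{t}\sum_{j\geq 1}\lambda_j a_j^2,
\end{equation*}
which is exactly $\frac{2}{t}\E(\|\nabla f(\vc g)\|^2)$ by \eqref{exp_grad_formula}.

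There is no real obstacle here; the only point that requires a small amount of care is justifying the interchange of summation and integration, and the use of Fubini to split the double time integral. This is handled by the hypotheses $f,\LL f\in L^2(\gamma_N)$, which guarantee $\sum_j a_j^2 < \infty$ and $\sum_j \lambda_j^2 a_j^2 < \infty$, so all series converge absolutely and the manipulations are legitimate. The computation makes transparent why the factor $2/t$ arises: it is precisely the spectral gap dividend one obtains from averaging a centered observable along a process with exponentially decaying correlations.
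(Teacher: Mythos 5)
Your proof is correct and follows essentially the same route as the paper's: expand $\LL f$ in the Hermite eigenbasis, use the Markov property and the semigroup action \eqref{apply_PL} to write the time-lagged correlation as $\sum_j \lambda_j^2 a_j^2 \e^{-\lambda_j|s-u|}$, and integrate twice to pick up a factor of $t/\lambda_j$ per term. The only cosmetic difference is the order in which you integrate and bound (you integrate the exponential kernel in $u$ first, the paper bounds the inner $s$-integral and then integrates in $u$), which produces identical estimates.
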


\begin{proof}
%Let $f\in L^2(\gamma_N)$.
Take any $0\leq s\leq t$.
By the law of total variance, we have
\eq{
\Cov\big(f(\vc g_s),f(\vc g_t)\big) &= 
\E\big[\Cov\givenp{f(\vc g_s),f(\vc g_t)}{\vc g_s}\big] + \Cov\big(f(\vc g_s),\E\givenk{f(\vc g_t)}{\vc g_s}\big) \\
&= 0 + \Cov\big(f(\vc g_s),\E\givenk{f(\vc g_t)}{\vc g_s}\big) \\
&= \Cov\big(f(\vc g_s),\PP_{t-s}f(\vc g_s)\big) \\
&= \Cov\big(f(\vc g_0),\PP_{t-s}f(\vc g_0)\big).
}
In particular, if we write $f$ in the form $f = \sum_{j=0}^\infty a_{j}\phi_{j}$, then
\eq{
\Cov\big(\LL f(\vc g_s),\LL f(\vc g_t)\big) 
&\stackrel{\phantom{\mbox{\footnotesize\eqref{apply_L},\eqref{apply_PL},\eqref{cov_formula}}}}{=}  \Cov\big(\LL f(\vc g_0),\PP_{t-s}\LL f(\vc g_0)\big)  \\
&\stackrel{\mbox{\footnotesize\eqref{apply_L},\eqref{apply_PL},\eqref{cov_formula}}}{=} \sum_{j=1}^\infty \lambda_{j}^2a_j^2\e^{-\lambda_j(t-s)}.
}
Therefore,
\eq{
\int_0^t \Cov\big(\LL f(\vc g_s),\LL f(\vc g_t)\big)\ \dd s
&=\int_0^t \sum_{j=1}^\infty  \lambda_j^2a_j^2 \e^{-\lambda_j(t-s)}\ \dd s \\
&= \sum_{j=1}^\infty \lambda_ja_j^2(1-\e^{-\lambda_jt}) 
\leq \sum_{j=1}^\infty	 \lambda_ja_j^2
\stackrel{\mbox{\footnotesize\eqref{exp_grad_formula}}}{=} \E(\|\nabla f(\vc g)\|^2).
}
Hence
\eq{
\Var\bigg(\int_0^t \LL f(\vc g_s)\ \dd s\bigg)
&= \int_0^t\int_0^t \Cov\big(\LL f(\vc g_s),\LL f(\vc g_u)\big)\ \dd s\, \dd u \\
&= 2\int_0^t\int_0^u \Cov\big(\LL f(\vc g_s),\LL f(\vc g_u)\big)\ \dd s\, \dd u 
\leq 2t\E(\|\nabla f(\vc g)\|^2).
}
\end{proof}

\begin{proof}[Proof of Proposition~\ref{prep_prop_good}]
Let $(\vc g_t)_{t\geq0}$ be the OU flow from \eqref{OU_def}, and write
\eq{
g_i(t) \coloneqq \e^{-t}g_i + \e^{-t}W_i(\e^{2t}-1), \quad i\geq1.
}
Recall that $\langle \cdot \rangle_t$ denotes expectation with respect to $\mu_{n,\vc g_t}^\beta$. 
Let $Z_{n,t}(\beta)$ and $F_{n,t}(\beta)$ be the associated partition function and free energy, respectively.
That is, with $H_{n,t} \coloneqq \sum_i g_i(t)\vphi_i$, we have
\eq{
Z_{n,t}(\beta) \coloneqq E_n(\e^{\beta H_{n,t}}), \qquad
F_{n,t}(\beta) \coloneqq \frac{1}{n}\log Z_{n,t}(\beta).
}
So that we can use the finite-dimensional facts discussed before, define $H_{n,t,N} \coloneqq \sum_{i=1}^N g_i(t)\vphi_i$,
as well as
\eq{
Z_{n,t,N}(\beta) \coloneqq E_n(\e^{\beta H_{n,t,N}}), \qquad
F_{n,t,N}(\beta) \coloneqq \frac{1}{n}\log Z_{n,t,N}(\beta), \quad N\geq0.
}
Define $f : \R^{N} \to \R$ by
\eq{
f(\vc x) \coloneqq \frac{1}{n}\log E_n(\e^{\beta \sum_{i=1}^N x_i\vphi_i}),
}
so that $f(\vc g_t) = F_{n,t,N}(\beta)$, where $\vc g_t$ is understood to mean $(g_1(t),\dots,g_N(t))$.
Note that $f\in L^2(\gamma_{N})$, since
$\log^2 x \leq x + x^{-1}$ for $x>0$,
and so using the
same arguments as in Lemma~\ref{moments_lemma} yields
\eq{
\E \log^2 Z_{n,t,N}(\beta) &\leq \E Z_{n,t,N}(\beta)+\E[Z_{n,t,N}(\beta)^{-1}] \\
&\leq E_n(\E \e^{\beta H_{n,t,N}(\sigma)}) + E_n(\E \e^{-\beta H_{n,t,N}(\sigma)})
= 2E_n(\e^{\frac{\beta^2}{2}\sum_{i=1}^N\vphi_i^2}) \stackrel{\mbox{\footnotesize\eqref{variance_assumption}}}{\leq} 2\e^{\frac{\beta^2}{2}n}.
}
Similar to \eqref{finite_gibbs_expectation}, for general $\ff \in L^2(\Sigma_n)$, we define
\eeq{ \label{finite_gibbs_expectation_time}
\langle \ff(\sigma)\rangle_{t,N} = \frac{E_n(\ff(\sigma)\e^{\beta H_{n,t,N}(\sigma)})}{E_n(\e^{\beta H_{n,t,N}(\sigma)})}.
}
Observe that
\eq{
\frac{\partial f}{\partial x_i}(\vc g_t) = \frac{\beta\langle\vphi_i\rangle_{t,N}}{n}, \quad 1\leq i\leq N,
}
which implies
\eeq{ \label{grad_ineq}
\|\nabla f(\vc g_t)\|^2 = \frac{\beta^2}{n^2}\sum_{i=1}^N\langle\vphi_i\rangle_{t,N}^2 \leq 
\frac{\beta^2}{n^2}\sum_{i=1}^N\langle\vphi_i^2\rangle_{t,N} \stackrel{\mbox{\footnotesize\eqref{variance_assumption}}}{\leq} \frac{\beta^2}{n},
}
as well as
\eq{
\vc g_t \cdot \nabla f(\vc g_t) = \frac{\beta}{n}\sum_{i=1}^Ng_i(t)\langle \vphi_i\rangle_{t,N} 
= \frac{\beta}{n}\langle H_{n,t,N}(\sigma)\rangle_{t,N}
\stackrel{\mbox{\footnotesize\eqref{nrg_2deriv}}}{=} \beta F_{n,t,N}'(\beta),
}
where the derivative is with respect to $\beta$.
Note that
\eeq{ \label{Fprime_L2}
\E[F_{n,t,N}'(\beta)^2]
&\stackrel{\phantom{\mbox{\footnotesize\eqref{variance_assumption}}}}{=}  \frac{1}{n^2}\E\bigg[\Big(\sum_{i=1}^N g_i(t)\langle\vphi_i\rangle_{t,N}\Big)^2\bigg] \\
&\stackrel{\phantom{\mbox{\footnotesize\eqref{variance_assumption}}}}{\leq} \frac{1}{n^2}\E\bigg[\Big(\sum_{i=1}^N g_i(t)^2\Big)\Big(\sum_{i=1}^N\langle\vphi_i\rangle_{t,N}^2\Big)\bigg] \\
&\stackrel{\phantom{\mbox{\footnotesize\eqref{variance_assumption}}}}{\leq} \frac{1}{n^2}\E\bigg[\Big(\sum_{i=1}^N g_i(t)^2\Big)\Big(\sum_{i=1}^N\langle\vphi_i^2\rangle_{t,N}\Big)\bigg] \\
&\stackrel{\mbox{\footnotesize\eqref{variance_assumption}}}{\leq} \frac{1}{n}\E\Big(\sum_{i=1}^N g_i(t)^2\Big) 
= \frac{N}{n} < \infty.
}
Furthermore,
\eq{
\frac{\partial^2 f}{\partial x_i^2}(\vc g_t) = \frac{\beta^2}{n}(\langle \vphi_i^2\rangle_{t,N}-\langle\vphi_i\rangle_{t,N}^2), \quad 1\leq i\leq N.
}
We thus have
\eq{
\LL f(\vc g_t) &= \frac{\beta^2}{n} \sum_{i=1}^N (\langle\vphi_i^2\rangle_{t,N}-\langle\vphi_i\rangle_{t,N}^2) - \beta F_{n,t,N}'(\beta).
%&\stackrel{\mbox{\footnotesize\eqref{variance_assumption}}}{=}  \beta^2 - \frac{\beta^2}{n}\sum_i\langle\vphi_i\rangle_{t,N}^2 - \beta F_{n,t}'(\beta).
}
From \eqref{Fprime_L2}, it is clear that $\LL f \in L^2(\gamma_{N})$.
Therefore, by Lemma~\ref{OU_variance_lemma} and \eqref{grad_ineq},
\eq{
\Var\bigg(\frac{1}{t} \int_0^{t} \Big[\frac{\beta^2}{n} \sum_{i=1}^N (\langle\vphi_i^2\rangle_{s,N}-\langle\vphi_i\rangle_{s,N}^2) - \beta F_{n,s,N}'(\beta)\Big]\ \dd s\bigg) \leq \frac{2\beta^2}{tn}.
}
Moreover, from \eqref{L_zero_mean} we know
\eq{
\E\bigg(\frac{1}{t} \int_0^{t} \Big[\frac{\beta^2}{n} \sum_{i=1}^N (\langle\vphi_i^2\rangle_{s,N}-\langle\vphi_i\rangle_{s,N}^2) - \beta F_{n,s,N}'(\beta)\Big]\ \dd s\bigg) = 0.
}
We can now apply \eqref{exp_moments_lemma_a} (together with \eqref{nrg_2deriv}) and \eqref{limit_for_later} to take the limit $N\to\infty$ in the two previous displays and obtain
\eq{
\Var\bigg(\frac{1}{t} \int_0^{t} \Big[\beta^2 - \frac{\beta^2}{n} \sum_{i} \langle\vphi_i\rangle_{s}^2 - \beta F_{n,s}'(\beta)\Big]\ \dd s\bigg) &\leq \frac{2\beta^2}{tn}, \\
\E\bigg(\frac{1}{t} \int_0^{t} \Big[\beta^2 - \frac{\beta^2}{n} \sum_{i} \langle\vphi_i\rangle_{s}^2 - \beta F_{n,s}'(\beta)\Big]\ \dd s\bigg) &= 0.
}
%Notice that by \eqref{variance_assumption}, $\frac{\beta^2}{n}\sum_i\langle\vphi_i^2\rangle_s = \beta^2$. 
Consequently, for any $\eps > 0$, Chebyshev's inequality shows
\eeq{ \label{limsup_prob_1}
\P\bigg(\Big|\frac{1}{t} \int_0^{t} \Big[\beta - \frac{\beta}{n}\sum_i\langle\vphi_i\rangle_s^2 - F_{n,s}'(\beta)\Big]\ \dd s\Big| \geq \frac{\eps}{2}\bigg) \leq \frac{8}{tn\eps^2}.
}
Now consider that
\eq{
\E\Big|p'(\beta)-\frac{1}{t}\int_0^t F_{n,s}'(\beta)\ \dd s\Big|
&\leq \frac{1}{t}\int_0^t \E|p'(\beta) - F_{n,s}'(\beta)|\ \dd s \\
&= \E|p'(\beta) - F_n'(\beta)|.
}
Therefore, if $\beta$ is a point of differentiability for $p(\cdot)$, then for any sequence $(t(n))_{n\geq1}$, Lemma~\ref{betas_converging} guarantees
\eeq{ \label{limsup_prob_2}
\limsup_{n\to\infty} \P\bigg(\Big|p'(\beta)-\frac{1}{t(n)}\int_0^{t(n)} F_{n,s}'(\beta)\ \dd s\Big| \geq \frac{\eps}{2}\bigg) = 0.
}
When $t = t(n) = T/n$ for fixed $T$, \eqref{limsup_prob_1} and \eqref{limsup_prob_2} together show
\eq{
\limsup_{n\to\infty} \P\bigg(\Big|\frac{1}{T/n} \int_0^{T/n} \Big[\beta - \frac{\beta}{n}\sum_i\langle\vphi_i\rangle_s^2 - p'(\beta)\Big]\ \dd s\Big| \geq \eps\bigg) \leq \frac{8}{T\eps^2}.
}
%By Lemma~\ref{derivative_bound_lemma}, we have $p'(\beta) < \beta$ if $\beta$ is sufficiently large.
Assuming $p'(\beta)<\beta$, we let $\kappa = \kappa(\beta) \coloneqq \frac{\beta - p'(\beta)}{\beta} > 0$.
Then the previous display implies
\eq{
\limsup_{n\to\infty} \P\bigg(\Big|\kappa - \frac{1}{T/n} \int_0^{T/n} \frac{1}{n}\sum_i\langle\vphi_i\rangle_s^2\ \dd s\Big| \geq \eps\bigg) \leq \frac{8}{T\beta^2\eps^2}.
}
%Setting $\eps = \kappa/4$, we deduce that
%\eq{
%\liminf_{n\to\infty} \P\bigg(\frac{1}{T/n} \int_0^{T/n} \frac{1}{n}\sum_i\langle\vphi_i\rangle_s^2\ \dd s \geq \kappa\bigg) \geq 1 - \frac{16\beta^8}{T^2\kappa(\beta)^2}.
%}
The proof is completed by taking $T = T(\beta,\eps)$ sufficiently large that
\eq{
\frac{8}{T\beta^2\eps^2} \leq \eps.
}
\end{proof}

\subsection{Proof of Proposition~\ref{prep_prop_bad}}
%Recall the OU flow \eqref{OU_def}.
%As before, $\langle\cdot\rangle_t$ will denote expectation with respect to the polymer measure with environment $\vc g_t$, where $\langle \cdot\rangle_0 = \langle\cdot\rangle$ is with respect to the original polymer measure coming from the environment $\vc g_0 = \vc g$.
Let us rewrite \eqref{OU_def} as
\eq{
\vc g_t = \vc g + \e^{-t} \vc W(\e^{2t}-1) + (\e^{-t}-1)\vc g, \quad t\geq 0.
}
Recall that $\langle \cdot\rangle_0 = \langle\cdot\rangle$.
For any $f \in L^2(\Sigma_n)$, we have
\eq{
\langle f(\sigma)\rangle_t = \frac{\langle f(\sigma)\e^{\beta \e^{-t}\sum_{i} W_i(\e^{2t}-1)\vphi_i} \e^{\beta(\e^{-t} - 1)H_n(\sigma)}\rangle}{\langle\e^{\beta\e^{-t}\sum_{i} W_i(\e^{2t}-1)\vphi_i}\e^{\beta(\e^{-t} - 1)H_n(\sigma)}\rangle}.
}
In light of Lemma~\ref{lemma:step_1_2}, we anticipate that for $t = O(n^{-1})$,
\eeq{ \label{Qt_def}
\langle f(\sigma)\rangle_t  
&\approx \frac{\langle f(\sigma)\e^{\beta \e^{-t}\sum_{i} W_i(\e^{2t}-1)\vphi_i}\e^{-\beta tnp'(\beta)}\rangle }{\langle\e^{\beta\e^{-t}\sum_{i} W_i(\e^{2t}-1)\vphi_i}\e^{-\beta tnp'(\beta)} \rangle} \\
&= \frac{\langle f(\sigma)\e^{\beta \e^{-t}\sum_{i} W_i(\e^{2t}-1)\vphi_i}\rangle }{\langle\e^{\beta\e^{-t}\sum_{i} W_i(\e^{2t}-1)\vphi_i}\rangle}
 \eqqcolon Q_t(f).
}
Indeed,  the process that will satisfy the conclusions of Proposition~\ref{prep_prop_bad} is
\eeq{ \label{I_def}
I_t \coloneqq \frac{1}{t}\int_0^t \frac{1}{n}\sum_i Q_s(\vphi_i)^2\ \dd s, \quad t > 0.
}
To prove so, the following lemma will suffice.
Recall that
\eq{
B_\delta = \Big\{\frac{1}{n}\sum_i\langle\vphi_i\rangle^2\leq\delta\Big\}.
}

\begin{lemma} \label{prep_prop_bad_lemma}
For any $T,\eps>0$, the following statements hold:
\begin{itemize}
\item[(a)] %Fix any $T,\eps>0$.
If $\beta$ is a point of differentiability for $p(\cdot)$, then there is a sequence of nonnegative random variables $(M_n)$ depending only on $\beta$, $T$, and $\eps$, such that
\eeq{ \label{M_condition}
\limsup_{n\to\infty} \E(M_n) \leq \eps,
}
and for every $f \in L^2(\Sigma_n)$, $t\in[0,\frac{T}{n}]$,
\eeq{ \label{prep_prop_bad_lemma_eq1}
\E|Q_t(f)^2-\langle f(\sigma)\rangle_t^2| \leq \E(\langle f(\sigma)^2\rangle M_n).
}
\item[(b)] %Fix any $T,\eps>0$.  
There exist $\delta_1 = \delta_1(\beta,T,\eps)>0$ sufficiently small and $n_0 = n_0(\beta,T,\eps)$ sufficiently large, that for every $n\geq n_0$, $f\in L^2(\Sigma_n)$, $ t\in[0, \frac{T}{n}]$, and $\delta\in(0,\delta_1]$, we have
\eeq{ \label{prep_prop_bad_lemma_eq2}
\E\givenp[\big]{|Q_t(f)^2- \langle f(\sigma)\rangle^2|}{B_{\delta}} \leq \eps \E\langle f(\sigma)^2\rangle.
}
\end{itemize}
\end{lemma}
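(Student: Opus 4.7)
The plan for both parts rests on writing the Brownian component as $B_t(\sigma) = c_t \sum_i h_i \vphi_i(\sigma)$ with $c_t = \sqrt{1-\e^{-2t}}$ and $h_i = W_i(\e^{2t}-1)/\sqrt{\e^{2t}-1}$ i.i.d.\ $\NN(0,1)$ independent of $\vc g$, and introducing the correction factor $w_t(\sigma) = \e^{\beta(\e^{-t}-1)(H_n(\sigma)-n p'(\beta))}$. After absorbing the constant $\e^{\beta(\e^{-t}-1)n p'(\beta)}$ in the ratio, one obtains
\eq{
Q_t(f) = \frac{\langle f\, \e^{\beta B_t}\rangle}{\langle \e^{\beta B_t}\rangle}, \qquad \langle f\rangle_t = \frac{\langle f\, \e^{\beta B_t}\, w_t\rangle}{\langle \e^{\beta B_t}\, w_t\rangle}.
}
On the interval $t \in [0, T/n]$, the quantities $\beta^2 c_t^2 n \leq 2\beta^2 T$ and $|\beta(\e^{-t}-1)|\cdot n \leq \beta T$ remain bounded, while $|\log w_t(\sigma)| \leq \beta T\,|H_n(\sigma)/n - p'(\beta)|$ is controlled by the energy fluctuation per site, which is precisely the quantity to which Lemma~\ref{lemma:step_1_2} applies.

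For part (a), I would start from the covariance-type identity
\eq{
Q_t(f) - \langle f\rangle_t = \frac{\langle (f - \langle f\rangle_t)\, \e^{\beta B_t}(1 - w_t)\rangle}{\langle \e^{\beta B_t}\rangle},
}
obtained by adding and subtracting $\langle f\rangle_t \langle \e^{\beta B_t} w_t\rangle\langle \e^{\beta B_t}\rangle$ in the numerator. Factoring $Q_t(f)^2 - \langle f\rangle_t^2 = (Q_t(f) - \langle f\rangle_t)(Q_t(f) + \langle f\rangle_t)$ and applying Cauchy--Schwarz reduces matters to controlling in $\P$-expectation the three tilted Gibbs averages $\langle f^2 \e^{\beta B_t}\rangle/\langle \e^{\beta B_t}\rangle$, $\langle (1 - w_t)^2 \e^{\beta B_t}\rangle/\langle \e^{\beta B_t}\rangle$, and an analogous average involving $w_t^2$. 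The elementary bound $|1 - w_t(\sigma)|^2 \leq \beta^2 T^2 (H_n(\sigma)/n - p'(\beta))^2 \e^{2\beta T|H_n(\sigma)/n - p'(\beta)|}$, combined with Lemma~\ref{lemma:step_1_2}, makes the second factor vanish in $\P$-expectation. Moment bounds on $\langle \e^{\pm k\beta B_t}\rangle$ uniform in $\vc h$, $t$, and $n$ (Jensen's inequality together with Section~\ref{temp_perturbations}) allow converting tilted Gibbs averages of $f^2$ back to $\langle f^2\rangle$ up to a multiplicative random factor independent of $f$. Gathering everything yields \eqref{prep_prop_bad_lemma_eq1} with $M_n$ essentially of the form $C(\beta, T)\Psi_n\sqrt{\langle(H_n/n - p'(\beta))^2 \e^{2\beta T|H_n/n - p'(\beta)|}\rangle}$ multiplied by the tilt factors, and Lemma~\ref{lemma:step_1_2} supplies $\limsup_n \E M_n \leq \eps$.

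For part (b), I would exploit the fact that $B_\delta$ is a $\vc g$-event and concentrate $Q_t(f)$ in $\vc h$. A direct Gaussian computation gives $\E_{\vc h}\langle \e^{\beta B_t}\rangle = \e^{\beta^2 c_t^2 n/2}$ and $\E_{\vc h}\langle f \e^{\beta B_t}\rangle = \e^{\beta^2 c_t^2 n/2}\langle f\rangle$, so setting $X = \langle f \e^{\beta B_t}\rangle$ and $Y = \langle \e^{\beta B_t}\rangle$ one has $\E_{\vc h}X/\E_{\vc h}Y = \langle f\rangle$. The corresponding $\vc h$-variance (as in Section~\ref{sketchsec1}) is
\eq{
\Var_{\vc h}(Y) = \e^{\beta^2 c_t^2 n}\big\langle \e^{\beta^2 c_t^2 n \RR_{1,2}} - 1\big\rangle,
}
with an analogous expression for $\Var_{\vc h}(X)$ containing a factor $f(\sigma^1)f(\sigma^2)$. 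Using $|\e^x - 1| \leq |x|\e^{|x|}$, the bound $\beta^2 c_t^2 n \leq 2\beta^2 T$, Cauchy--Schwarz on $\langle |f(\sigma^1)f(\sigma^2)|\cdot|\RR_{1,2}|\rangle \leq \langle f^2\rangle\sqrt{\langle \RR_{1,2}^2\rangle}$, and the estimate $\langle|\RR_{1,2}|\rangle \leq \langle \RR_{1,2}\rangle + 2\EEE_n$ coming from \eqref{positive_overlap}, one obtains $\Var_{\vc h}(X) \leq C(\beta,T)\langle f^2\rangle\sqrt{\delta + 2\EEE_n}$ on $B_\delta$ and a similar bound (with $\langle f^2\rangle$ replaced by $1$) for $Y$. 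Passing from variance of numerator and denominator to $|Q_t(f) - \langle f\rangle|$ uses the decomposition
\eq{
Q_t(f) - \langle f\rangle = \frac{(X - \E_{\vc h}X)\E_{\vc h}Y - \E_{\vc h}X(Y - \E_{\vc h}Y)}{Y\cdot \E_{\vc h}Y},
}
the Jensen comparison $Y^{-1} \leq \langle \e^{-\beta B_t}\rangle$ (which makes $\E_{\vc h}Y^{-4}$ uniformly bounded for $t \leq T/n$), and Cauchy--Schwarz in $\vc h$, yielding $\E_{\vc h}(Q_t(f) - \langle f\rangle)^2 \leq C'(\beta, T)\sqrt{\delta + 2\EEE_n}\,\langle f^2\rangle$ on $B_\delta$. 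The desired bound on $|Q_t(f)^2 - \langle f\rangle^2|$ then follows from $|Q_t(f)^2 - \langle f\rangle^2| \leq (|Q_t(f)| + |\langle f\rangle|)|Q_t(f) - \langle f\rangle|$ and a final Cauchy--Schwarz using a uniform $L^2$-bound on $Q_t(f)$ in terms of $\langle f^2\rangle$.

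The main obstacle will be the clean separation of the $\vc g$- and $\vc h$-integrations on the event $B_\delta$: although $B_\delta$ is $\vc g$-measurable, all the error terms contain $\vc h$-dependent tilts by $\e^{\beta B_t}$, so each application of Cauchy--Schwarz in $\vc h$ must be carried out \emph{before} conditioning on $B_\delta$, and the resulting $\vc h$-moment bounds then integrated in $\vc g$ on $B_\delta$. A secondary difficulty in part (a) is keeping $M_n$ independent of $f$, which requires converting tilted Gibbs averages of $f^2$ back to $\langle f^2\rangle$ via Radon--Nikodym factors whose moments can be bounded uniformly in $\vc h$ and $t$; assumption~\eqref{variance_assumption} prevents the exponential tilts from blowing up and makes this feasible.
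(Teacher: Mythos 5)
Your proposal follows the same high-level strategy as the paper's proof: rewrite $\langle\cdot\rangle_t$ and $Q_t$ through the coupling $\vc g_t = \vc g + \text{(Brownian part)} + (\e^{-t}-1)\vc g$, isolate the ``energy drift'' correction $w_t(\sigma)=\e^{\beta(\e^{-t}-1)(H_n(\sigma)-np'(\beta))}$, apply Cauchy--Schwarz in $\vc h$ to separate the $f$-dependence from $\vc g$-measurable error factors, and in part (b) use $\E_{\vc h}$, $\Var_{\vc h}$ concentration of the tilted numerator and denominator. Your covariance identity is a valid alternative bookkeeping to the paper's $X,X',Y,Y'$ split, and the obstacles you flag at the end are the right ones.

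There are, however, two points where your sketch glosses over a genuine technical difficulty, and the fix in both cases is the part of the paper's proof you did not reproduce.

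First, your bound $|1-w_t|^2 \le \beta^2 T^2(H_n/n-p')^2\,\e^{2\beta T|H_n/n-p'|}$ together with Lemma~\ref{lemma:step_1_2} does not make the error vanish, because Lemma~\ref{lemma:step_1_2} controls only $\E\langle|H_n/n-p'|\rangle$, not moments of the form $\E\langle(H_n/n-p')^2\e^{2\beta T|H_n/n-p'|}\rangle$. The exponential factor is not uniformly integrable: a naive bound gives $\langle\e^{2\beta T H_n/n}\rangle = Z_n(\beta+2\beta T/n)/Z_n(\beta)$, and $\E[Z_n(\beta)^{-2}]$ already grows like $\e^{2\beta^2 n}$. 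One needs to split into $\{|\beta(1-\e^{-t})(np'-H_n)|\le L\}$, where $(\e^x-1)^q\le C(L,q)|x|$ lets Lemma~\ref{lemma:step_1_2} apply, and the complementary tail event, where the paper instead absorbs the excess into $Z_n(\beta)^{-2(q+1)T/n}$ (this is Claim~\ref{claim_var_bound}). Similarly, the negative moments $\E_{\vc h}[\langle\e^{\beta B_t}w_t\rangle^{-q}]$ that enter whenever $\langle f\rangle_t$ appears in an estimate (e.g.\ in $\langle(f-\langle f\rangle_t)^2\e^{\beta B_t}\rangle$) require the repeated Cauchy--Schwarz argument of Claim~\ref{claim_denom}, reducing $\langle\e^{\beta q(1-\e^{-t})H_n}\rangle$ to $Z_n(\beta)^{-1/2^k}Z_n(2\beta)^{1/2^k}$ with $k\approx\log_2(n/T)$ so that the $\vc g$-expectation stays bounded; Jensen's inequality alone, as you invoke, does not deliver a uniformly bounded moment.

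Second, in part (b) your decomposition $\bigl[(X-\E_{\vc h}X)\E_{\vc h}Y - \E_{\vc h}X(Y-\E_{\vc h}Y)\bigr]/(Y\,\E_{\vc h}Y)$ followed by a bare Cauchy--Schwarz in $\vc h$ does not preserve the $\sqrt{\delta}$ dependence: bounding $\E_{\vc h}[(X-\E_{\vc h}X)^2 Y^{-2}]$ by $(\E_{\vc h}(X-\E_{\vc h}X)^4)^{1/2}(\E_{\vc h}Y^{-4})^{1/2}$ replaces $\Var_{\vc h}(X)$ (which is $O(\sqrt{\delta})\langle f^2\rangle$ on $B_\delta$) with a fourth central moment, which is only $O(\langle f^2\rangle^2)$ and carries no $\delta$. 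The paper avoids this with the identity $\frac{X''}{Y''}-\langle f\rangle = \frac{X''}{Y''}(1-Y'') + (X''-\langle f\rangle)$ and a truncation $(1-Y'')^2 \le \theta^2 + \one_{\{|Y''-1|\ge\theta\}}(Y''-1)^2$; one can patch your version by a similar event split on $\{Y''\le 1/2\}$, at the cost of a worse $\delta^{1/4}$ rate, but this should be spelled out rather than asserted as a clean Cauchy--Schwarz step.
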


Before checking these facts, let us use them to prove Proposition~\ref{prep_prop_bad}.
The idea is to use the above sequence $M_n$ to control the differences $Q_t(\vphi_i)^2-\langle\vphi_i\rangle^2$ simultaneously across all $i$ and $t\in[0,\frac{T}{n}]$; this will allow us to prove \eqref{prep_prop_bad_eq1}.
On the other hand, \eqref{prep_prop_bad_lemma_eq2} shows that when $\langle\RR_{1,2}\rangle$ is small, $Q_t(\vphi_i)^2$ remains close to $Q_0(\vphi_i)^2=\langle\vphi_i\rangle^2$.
That this approximation holds uniformly over $t\in[0,\frac{T}{n}]$ will lead to \eqref{prep_prop_bad_eq2}.

\begin{proof}[Proof of Proposition~\ref{prep_prop_bad}]
First we prove part (a). 
Let $T,\eps>0$ be fixed.
From Lemma~\ref{prep_prop_bad_lemma}(a), we identify a sequence of random variables $(M_n)$ such that \eqref{prep_prop_bad_lemma_eq1} holds, and
\eeq{ \label{M_condition_applied}
\limsup_{n\to\infty} \E(M_n) \leq \eps^2.
}
Under our definition \eqref{I_def}, we have
\eq{
\E\Big|I_{T/n} - \frac{1}{T/n}\int_0^{T/n} \frac{1}{n}\sum_i \langle \vphi_i\rangle_t^2\ \dd t\Big|
&\stackrel{\phantom{\mbox{\footnotesize{\eqref{prep_prop_bad_lemma_eq1}}}}}{=} \E\bigg|\frac{1}{T/n}\int_0^{T/n} \frac{1}{n}\sum_i [Q_t(\vphi_i)^2 - \langle \vphi_i\rangle_t^2]\ \dd t\bigg| \\
&\stackrel{\phantom{\mbox{\footnotesize{\eqref{prep_prop_bad_lemma_eq1}}}}}{\leq} \frac{1}{T/n}\int_0^{T/n} \frac{1}{n}\sum_i \E|Q_t(\vphi_i)^2 - \langle \vphi_i\rangle_t^2|\ \dd t \\
&\stackrel{\mbox{\footnotesize{\eqref{prep_prop_bad_lemma_eq1}}}}{\leq} \frac{1}{T/n}\int_0^{T/n} \frac{1}{n}\sum_i \E(\langle \vphi_i^2\rangle M_n)\ \dd t
\stackrel{\hspace{0.5ex}{\mbox{\footnotesize{\eqref{variance_assumption}}}}\hspace{0.5ex}}{=}  \E(M_n).
}
Now Markov's inequality and \eqref{M_condition_applied} together imply %and Lemma~\ref{lemma:step_1_2}  together imply 
\eq{
\limsup_{n\to\infty} \P\bigg(\Big|I_{T/n} - \frac{1}{T/n}\int_0^{T/n} \frac{1}{n}\sum_i \langle \vphi_i\rangle_t^2\ \dd t\Big|\geq\eps\bigg) \leq \frac{\eps^2}{\eps} = \eps,
}
which completes the proof of (a).

Next we prove part (b).
Let $\eps_1,\eps_2 > 0$ be given.
Similar to above, for any $\delta>0$ we have
\eq{
\E\givenp[\Big]{\big|I_{T/n} - \frac{1}{n}\sum_i \langle \vphi_i\rangle^2\big|}{B_\delta}
&= \E\givenp[\Big]{\big|I_{T/n} - \frac{1}{T/n}\int_0^{T/n} \frac{1}{n}\sum_i \langle \vphi_i\rangle^2\ \dd t\big|}{B_\delta} \\
&\leq \frac{1}{T/n}\int_0^{T/n} \frac{1}{n}\sum_i \E\givenp[\big]{|Q_t(\vphi_i)^2 - \langle \vphi_i\rangle^2|}{B_\delta}\, \dd t.
} 
From Lemma~\ref{prep_prop_bad_lemma}(b), we choose $\delta_1$ %= \delta(\beta,T,\eps_2/\eps_1)$
sufficiently small that \eqref{prep_prop_bad_lemma_eq2} holds for all $\delta\in(0,\delta_1]$, with $\eps = \eps_1\eps_2$.
We then have, for all $n$ sufficiently large,
\eq{
\E\givenp[\Big]{\big|I_{T/n} - \frac{1}{n}\sum_i \langle \vphi_i\rangle^2\big|}{B_\delta}
\leq \frac{1}{T/n}\int_0^{T/n} \frac{1}{n}\sum_i \eps_1\eps_2\E\langle\vphi_i^2\rangle\ \dd s
\stackrel{\mbox{\footnotesize\eqref{variance_assumption}}}{=} \eps_1\eps_2.
}
%Notice that $B_{\delta'} \subset B_{\delta}$ if $\delta'<\delta$, and so the above display remains true if $\delta$ is replaced by $\delta'<\delta$.
Then applying Markov's inequality yields \eqref{prep_prop_bad_eq2}.
\end{proof}

It now remains to prove Lemma~\ref{prep_prop_bad_lemma}.
To do so, we will make use of the following preparatory result, which in fact is the common thread between the proofs of Theorems~\ref{expected_overlap_thm} and~\ref{averages_squared}.
Let $\vc h = (h_i)_{i=1}^\infty$ be an independent copy of the disorder $\vc g$.
We will use $\E_{\vc h}$ and $\Var_{\vc h}$ to denote expectation and variance with respect to $\vc h$, conditional on $\vc g$.
All statements involving these conditional quantities will be almost sure with respect to $\P$, although we will not repeatedly write this.

%\begin{lemma} \label{pulling_h_out}
%Let $X$ be any random variable independent of the environment $\vc g$.
%Let $f_1 = f_1(\sigma,X)$ and $f_2 = f_2(\sigma,\vc g)$ be nonnegative functions, and assume \eq{
%\E_{X}[f_1(\sigma,X)^2] \leq C \quad \text{for every $\sigma\in\PP_n$}, 
%}
%where $\E_X$ denotes conditional expectation over $X$ given all other variables.
%Then
%\eq{
%\E[\langle f_1(\sigma,X)f_2(\sigma,\vc g)\rangle^2] \leq C\E\langle f_2(\sigma,\vc g)\rangle^2.
%}
%\end{lemma}
%
%\begin{proof}
%In this proof, regard $\sigma$ and $\sigma^2$ as independent samples from $\PP_n$ according to $P$.
%By definition, we have
%\eq{
%\E_{X}[\langle f_1(\sigma,X)f_2(\sigma,\vc g)\rangle^2]
%&= \frac{\E_{X}\, E\big[f_1(\sigma,X)f_1(\sigma^2,X)f_2(\sigma,\vc g)f_2(\sigma^2,\vc g)\e^{H_n(\sigma)+H_n(\sigma^2)}\big]}{E[\e^{H_n(\sigma)}]^2}
%\intertext{By Tonelli's theorem, we can reverse the order of integration.
%Then we apply Cauchy--Schwarz:}
%\E_{X}[\langle f_1(\sigma,X)f_2(\sigma,\vc g)\rangle^2]
%&= \frac{E\big[\E_{X}[f_1(\sigma,X)f_1(\sigma^2,X)]f_2(\sigma,\vc g)f_2(\sigma^2,\vc g)\e^{H_n(\sigma)+H_n(\sigma^2)}\big]}{E[\e^{H_n(\sigma)}]^2} \\
%&\leq C\frac{E\big[f_2(\sigma,\vc g)f_2(\sigma^2,\vc g)\e^{H_n(\sigma)+H_n(\sigma^2)}\big]}{E[\e^{H_n(\sigma)}]^2} \\
%&= C\langle f_2(\sigma,\vc g)\rangle^2.
%}
%Finally, take expectation with respect to $\vc g$.
%\end{proof}

\begin{lemma} \label{h_variance_lemma}
Recall the constant $\EEE_n$ from \eqref{positive_overlap}.
For any $t\geq0$, the following statements hold:
\begin{itemize}
\item[(a)] For any $f\in L^2(\Sigma_n)$,
\eq{
\Var_{\vc h}\langle f(\sigma)\e^{\frac{t}{\sqrt{n}}\sum_i h_i\vphi_i}\rangle \leq \e^{2t^2}\langle f(\sigma)^2\rangle\sqrt{\frac{1}{n}\sum_i \langle\vphi_i\rangle^2+2\EEE_n}.
}
\item[(b)] For any measurable $f : \Sigma_n \to [0,1]$,
\eq{
\Var_{\vc h}\langle f(\sigma)\e^{\frac{t}{\sqrt{n}}\sum_i h_i\vphi_i}\rangle \leq \e^{2t^2}\Big(\Big\langle f(\sigma)\frac{1}{n}\sum_i \vphi_i\langle \vphi_i\rangle\Big\rangle+2\EEE_n\Big).
}
\end{itemize}
\end{lemma}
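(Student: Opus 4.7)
The plan is to reduce both bounds to a single explicit formula for $\Var_{\vc h} F$, where $F = F(\vc h) \coloneqq \langle f(\sigma)\e^{\frac{t}{\sqrt{n}}\sum_i h_i\vphi_i(\sigma)}\rangle$. Using the replica trick---writing $F^2 = \langle f(\sigma^1)f(\sigma^2)\e^{\frac{t}{\sqrt n}\sum_i h_i(\vphi_i(\sigma^1)+\vphi_i(\sigma^2))}\rangle$ with $\sigma^1,\sigma^2$ independent draws from $\mu_n^\beta$---then integrating out $\vc h$ and invoking \eqref{variance_assumption} and \eqref{field_decomposition} in the form $\sum_i(\vphi_i(\sigma^1)+\vphi_i(\sigma^2))^2 = 2n(1+\RR_{1,2})$, I arrive at the identity
\[
\Var_{\vc h} F = \e^{t^2}\langle f(\sigma^1)f(\sigma^2)(\e^{t^2\RR_{1,2}}-1)\rangle.
\]
The infinite sum in the exponential is handled by a truncation argument, first proving the identity with $\sum_{i=1}^N$ in place of $\sum_i$ and then passing to the limit via the dominated convergence results of Section~\ref{gibbs_measure}.

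For part~(a), I apply Cauchy--Schwarz inside the Gibbs bracket to obtain $\Var_{\vc h} F \leq \e^{t^2}\langle f(\sigma)^2\rangle\sqrt{\langle(\e^{t^2\RR_{1,2}}-1)^2\rangle}$, and then establish the pointwise estimate $(\e^{t^2 r}-1)^2 \leq |r|\e^{2t^2}$ for all $r\in[-\EEE_n,1]$. On $[0,1]$ this follows by verifying, via a short calculus computation, that $r\mapsto(\e^{t^2 r}-1)^2/r$ is non-decreasing, giving $(\e^{t^2 r}-1)^2 \leq r(\e^{t^2}-1)^2 \leq r\e^{2t^2}$. On $[-\EEE_n,0)$ one writes $(\e^{t^2 r}-1)^2 \leq 1-\e^{2t^2 r} \leq 2t^2|r|$ and then invokes the elementary inequality $2t^2 \leq \e^{2t^2}$. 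Averaging over $\langle\cdot\rangle$ and applying $|\RR_{1,2}| \leq \RR_{1,2}+2\EEE_n$ (which holds on $[-\EEE_n,1]$) completes the bound.

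For part~(b), the additional hypothesis $f\in[0,1]$ lets me split the Gibbs bracket by the sign of $\RR_{1,2}$. The part with $\{\RR_{1,2}<0\}$ contributes non-positively (since $\e^{t^2 r}-1\leq 0$ there) and can be dropped from the upper bound. On $\{\RR_{1,2}\geq 0\}$, the convexity inequality $\e^{t^2 r}-1 \leq r\e^{t^2}$ for $r\in[0,1]$, combined with $f(\sigma^2)\leq 1$, yields
\[
\Var_{\vc h} F \leq \e^{2t^2}\langle f(\sigma^1)\RR_{1,2}\one_{\{\RR_{1,2}\geq 0\}}\rangle \leq \e^{2t^2}\bigl(\langle f(\sigma^1)\RR_{1,2}\rangle + \EEE_n\langle f\rangle\bigr),
\]
after which I rewrite $\langle f(\sigma^1)\RR_{1,2}\rangle = \bigl\langle f(\sigma)\cdot\tfrac{1}{n}\sum_i\vphi_i(\sigma)\langle\vphi_i\rangle\bigr\rangle$ using the conditioning identity \eqref{rdef} and absorb $\EEE_n\langle f\rangle \leq \EEE_n$ into a $2\EEE_n$ slack.

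The main technical obstacle is the sharp pointwise estimate in part~(a): a naive expansion $|\e^{t^2 r}-1|\leq t^2|r|\e^{t^2}$ produces a superfluous factor of $t^2$ or $t^4$ that would ruin the prefactor $\e^{2t^2}$, and one genuinely needs the monotonicity of $r\mapsto(\e^{t^2 r}-1)^2/r$ on $[0,1]$ to extract the clean $|r|$ on the right. All remaining steps are routine Cauchy--Schwarz together with careful bookkeeping of the $\EEE_n$ slack coming from \eqref{positive_overlap}.
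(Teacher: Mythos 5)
Your proof is correct and follows the same route as the paper: compute $\Var_{\vc h}$ explicitly via Gaussian integration to get $\e^{t^2}\langle f(\sigma^1)f(\sigma^2)(\e^{t^2\RR_{1,2}}-1)\rangle$, then apply Cauchy--Schwarz for part~(a) or drop $f(\sigma^2)\leq 1$ for part~(b), followed by a pointwise linear-growth bound on $\e^{t^2 r}-1$ and the slack $|\RR_{1,2}|\leq\RR_{1,2}+2\EEE_n$. One point to correct: your claim that the monotonicity of $r\mapsto(\e^{t^2 r}-1)^2/r$ is \emph{genuinely needed} to avoid spurious $t^2$ factors is mistaken. The paper's elementary convexity estimate $|\e^{t^2 x}-1|\leq\e^{t^2}|x|$ for $|x|\leq 1$---for $x\in[0,1]$ from $\e^{t^2 x}\leq(1-x)+x\e^{t^2}$, and for $x\in[-1,0]$ from $1-\e^{t^2 x}\leq t^2|x|\leq\e^{t^2}|x|$---carries no extra $t^2$; squaring gives $(\e^{t^2 r}-1)^2\leq\e^{2t^2}r^2\leq\e^{2t^2}|r|$ immediately. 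The factor you worried about only appears if one uses the mean-value-theorem bound $|\e^{t^2 r}-1|\leq t^2|r|\e^{t^2}$, which is not forced on you. Your monotonicity computation is valid but more elaborate than necessary.
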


\begin{proof}
For any $f\in L^2(\Sigma_n)$,
\eeq{ \label{general_f}
\Var_{\vc h}\langle f(\sigma)\e^{\frac{t}{\sqrt{n}}\sum_i h_i\vphi_i}\rangle
&\stackrel{\phantom{\mbox{\footnotesize\eqref{freq_identity}}}}{=} \E_{\vc h}\langle f(\sigma^1)f(\sigma^2)\e^{\frac{t}{\sqrt{n}}\sum_{i} h_i(\vphi_i(\sigma^1)+\vphi_i(\sigma^2))}\rangle - \big(\E_{\vc h}\langle f(\sigma)\e^{\frac{t}{\sqrt{n}}\sum_i h_i\vphi_i}\rangle\big)^2\\
%&\stackrel{{\mbox{\footnotesize\eqref{freq_identity}}}}{=}
%\langle f(\sigma^1)f(\sigma^2)\e^{\frac{t^2}{2n}\sum_i (\vphi_i(\sigma^1)+\vphi_i(\sigma^2))^2}\rangle - \langle f(\sigma)\e^{\frac{t^2}{2}}\rangle^2 \\
&\stackrel{\mbox{\footnotesize\eqref{freq_identity}}}{=} \e^{t^2}\big(\langle f(\sigma^1)f(\sigma^2)\e^{\frac{t^2}{n}\sum_i\vphi_i(\sigma^1)\vphi_i(\sigma^2)}\rangle - \langle f(\sigma)\rangle^2\big) \\
&\stackrel{\phantom{\mbox{\footnotesize\eqref{freq_identity}}}}{=} \e^{t^2}\langle f(\sigma^1)f(\sigma^2)(\e^{\frac{t^2}{n}\sum_i\vphi_i(\sigma^1)\vphi_i(\sigma^2)}-1)\rangle \\
&\stackrel{\phantom{\mbox{\footnotesize\eqref{freq_identity}}}}{\leq} \e^{t^2}\langle f(\sigma)^2\rangle\sqrt{\langle(\e^{\frac{t^2}{n}\sum_i\vphi_i(\sigma^1)\vphi_i(\sigma^2)}-1)^2\rangle}. \raisetag{2.5\baselineskip}
}
Now,  for all $x\in[-1,1]$, we have  $|\e^{t^2x}-1|\leq \e^{t^2}|x|$.
In particular, since
\eeq{ \label{unit_interval}
%0\stackrel{\mbox{\footnotesize\eqref{positive_overlap}}}{\leq}
\Big|\frac{1}{n}\sum_i \vphi_i(\sigma^1)\vphi_i(\sigma^2)\Big|
\leq \frac{1}{n}\sqrt{\sum_i\vphi_i(\sigma^1)^2\sum_i\vphi_i(\sigma^2)^2} \stackrel{\mbox{\footnotesize\eqref{variance_assumption}}}{=} 1,
}
%\eeq{ \label{pre_unit_interval}
%\bigg(\frac{1}{n}\sum_i \vphi_i(\sigma^1)\vphi_i(\sigma^2)\bigg)^2\leq 
%\frac{1}{n^2}\sum_i\vphi_i(\sigma^1)^2\sum_i\vphi_i(\sigma^2)^2
%\stackrel{\mbox{\footnotesize\eqref{variance_assumption}}}{=} 1,
%}
we see from \eqref{general_f} that
%\eq{
%0\leq \e^{\frac{t^2}{n}\sum_i\vphi_i(\sigma^1)\vphi_i(\sigma^2)}-1 &\leq (\e^{t^2}-1)\frac{1}{n}\sum_i\vphi_i(\sigma^1)\vphi_i(\sigma^2).
%}
%Therefore, for any $f:\Sigma_n\to\R$,
\eq{
\Var_{\vc h}\langle f(\sigma)\e^{\frac{t}{\sqrt{n}}\sum_i h_i\vphi_i}\rangle
%&\leq \e^{t^2}\sqrt{\langle f(\sigma^1)^2f(\sigma^2)^2\rangle\langle (\e^{\frac{t^2}{n}\sum_i\vphi_i(\sigma^1)\vphi_i(\sigma^2)}-1)^2\rangle} \\
%&= \e^{t^2}\langle f(\sigma)^2\rangle\sqrt{\langle(\e^{\frac{t^2}{n}\sum_i\vphi_i(\sigma^1)\vphi_i(\sigma^2)}-1)^2\rangle} \\
&\stackrefp{positive_overlap}{\leq} \e^{2t^2}\langle f(\sigma)^2\rangle\sqrt{\Big\langle\Big(\frac{1}{n}\sum_i\vphi_i(\sigma^1)\vphi_i(\sigma^2)\Big)^2\Big\rangle} \\
&\stackref{positive_overlap}{\leq} \e^{2t^2}\langle f(\sigma)^2\rangle\sqrt{\Big\langle\frac{1}{n}\sum_i\vphi_i(\sigma^1)\vphi_i(\sigma^2)\Big\rangle+2\EEE_n} \\
&\stackrefp{positive_overlap}{=} \e^{2t^2}\langle f(\sigma)^2\rangle\sqrt{\frac{1}{n}\sum_i\langle\vphi_i\rangle^2+2\EEE_n}.
}
Alternatively, if $f:\Sigma_n\to[0,1]$, then we can use the equalities in \eqref{general_f} to write
\eq{
\Var_{\vc h}\langle f(\sigma)\e^{\frac{t}{\sqrt{n}}\sum_i h_i\vphi_i}\rangle 
&\stackrefp{positive_overlap}{=} \e^{t^2}\langle f(\sigma^1)f(\sigma^2)(\e^{\frac{t^2}{n}\sum_i\vphi_i(\sigma^1)\vphi_i(\sigma^2)}-1)\rangle \\
&\stackrefp{positive_overlap}{\leq} \e^{2t^2}\Big\langle f(\sigma^1)\Big|\frac{1}{n}\sum_i \vphi_i(\sigma^1)\vphi_i(\sigma^2)\Big|\Big\rangle \\
&\stackref{positive_overlap}{\leq} \e^{2t^2}\Big\langle f(\sigma^1)\Big(\frac{1}{n}\sum_i \vphi_i(\sigma^1)\vphi_i(\sigma^2)+2\EEE_n\Big)\Big\rangle \\
&\stackrefp{positive_overlap}{\leq} \e^{2t^2}\Big(\Big\langle f(\sigma^1)\frac{1}{n}\sum_i \vphi_i(\sigma^1)\langle\vphi_i(\sigma^2)\rangle\Big\rangle+2\EEE_n\Big).
}
\end{proof}

We are now ready to prove Lemma~\ref{prep_prop_bad_lemma}.

\begin{proof}[Proof of Lemma~\ref{prep_prop_bad_lemma}]
Let $f\in L^2(\Sigma_n)$ be arbitrary.
Recall the random variable $Q_t(f)$ defined in \eqref{Qt_def}.
Observe that for fixed $t\geq0$, $\e^{-t}\vc W(\e^{2t}-1)$ is equal in law to $\sqrt{1-\e^{-2t}}\vc h$, where $\vc h$ is an independent copy of $\vc g$.
Therefore, if we define
\eq{
X &\coloneqq  \langle f(\sigma)\e^{\beta\sqrt{1-\e^{-2t}}\sum_{i} h_i\vphi_i}\e^{\beta(\e^{-t}-1) H_n(\sigma)}\rangle, \\
Y &\coloneqq  \langle \e^{\beta\sqrt{1-\e^{-2t}}\sum_{i} h_i\vphi_i}\e^{\beta(\e^{-t}-1) H_n(\sigma)}\rangle, \\
X' &\coloneqq  \langle f(\sigma)\e^{\beta\sqrt{1-\e^{-2t}}\sum_{i} h_i\vphi_i}\rangle\e^{\beta (\e^{-t}-1)np'(\beta)}, \\
Y' &\coloneqq  \langle \e^{\beta\sqrt{1-\e^{-2t}}\sum_{i} h_i\vphi_i}\rangle\e^{\beta (\e^{-t}-1)np'(\beta)},
}
then
\eq{
(\langle f(\sigma)\rangle_t,Q_t(f)) \stackrel{\text{d}}{=} \Big(\frac{X}{Y},\frac{X'}{Y'}\Big).
}
Since the conclusions of Lemma~\ref{prep_prop_bad_lemma} depend only on marginal distributions at fixed $t\leq T/n$, it suffices to prove bounds of the form
\eeq{ \label{prep_prop_bad_lemma_eq1_new}
\E\Big|\Big(\frac{X}{Y}\Big)^2-\Big(\frac{X'}{Y'}\Big)^2\Big| \leq \E(\langle f(\sigma)^2\rangle M_n),
}
where $M_n$ satisfies \eqref{M_condition}, and
\eeq{ \label{prep_prop_bad_lemma_eq2_new}
\E\givenp[\bigg]{\Big|\Big(\frac{X'}{Y'}\Big)^2 - \langle f(\sigma)\rangle^2\Big|}{B_{\delta}} \leq \eps\E\langle f(\sigma)^2\rangle \quad \text{for all large enough $n$.}
}
So henceforth we fix $T,\eps>0$, and $t \in [0,\frac{T}{n}]$.
%To prove \eqref{bad_prep_full_bound} and \eqref{bad_prep_full_bound_cond},
We will need the following four claims.
In checking these claims, we will frequently use the following inequality, which holds for any $c\geq0$:
\eeq{
n(1 - \e^{-ct}) \leq nct \leq cT. \label{frequent_ineq}
}

\begin{claim} \label{claim_denom_prime}
For any $q\in(-\infty,0]\cup[1,\infty)$,
\eeq{ \label{bad_prep_denom_bound_prime}
\E_{\vc h}[(Y')^{q}] \leq C(\beta,T,q).
}
\end{claim}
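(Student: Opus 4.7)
The plan is to bound $\E_{\vc h}[(Y')^q]$ by first pulling the power inside the Gibbs expectation via Jensen's inequality, then swapping the order of the two expectations (both integrands are nonnegative), and finally computing an ordinary Gaussian moment generating function with the help of \eqref{variance_assumption}.

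Write $X \coloneqq \langle \e^{\beta\sqrt{1-\e^{-2t}}\sum_{i} h_i\vphi_i}\rangle$, so that $Y' = X \cdot \e^{\beta(\e^{-t}-1)np'(\beta)}$. The crucial observation is that $x\mapsto x^q$ is convex on $(0,\infty)$ for every $q\in (-\infty,0]\cup[1,\infty)$. Since $\langle\cdot\rangle$ is a probability expectation, Jensen's inequality gives
\eq{
X^q = \langle \e^{\beta\sqrt{1-\e^{-2t}}\sum_{i} h_i\vphi_i}\rangle^{q} \leq \langle \e^{q\beta\sqrt{1-\e^{-2t}}\sum_{i} h_i\vphi_i}\rangle,
}
and therefore
\eq{
(Y')^q \leq \langle \e^{q\beta\sqrt{1-\e^{-2t}}\sum_{i} h_i\vphi_i}\rangle \cdot \e^{q\beta(\e^{-t}-1)np'(\beta)}.
}

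Next, since the integrand is nonnegative, Tonelli's theorem allows us to commute $\E_{\vc h}$ past $\langle\cdot\rangle$. For each fixed $\sigma\in\Sigma_n$, the inner expectation is just a Gaussian MGF, which by the infinite-sum version of \eqref{freq_identity} (exactly as in the proof of Lemma~\ref{moments_lemma}) equals
\eq{
\E_{\vc h}\big[\e^{q\beta\sqrt{1-\e^{-2t}}\sum_{i} h_i\vphi_i(\sigma)}\big] = \exp\!\Big(\tfrac{q^2\beta^2(1-\e^{-2t})}{2}\sum_i \vphi_i(\sigma)^2\Big)
\stackref{variance_assumption}{=} \exp\!\Big(\tfrac{q^2\beta^2(1-\e^{-2t})n}{2}\Big).
}
This is a deterministic constant (independent of $\sigma$), so it comes out of the Gibbs bracket, yielding
\eq{
\E_{\vc h}[(Y')^q] \leq \exp\!\Big(\tfrac{q^2\beta^2(1-\e^{-2t})n}{2} + q\beta(\e^{-t}-1)np'(\beta)\Big).
}

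It remains to bound the exponent uniformly in $t\in[0,T/n]$. Applying \eqref{frequent_ineq} with $c=2$ gives $n(1-\e^{-2t})\leq 2T$, and applying it with $c=1$ gives $|n(\e^{-t}-1)|\leq T$. Recalling from Corollary~\ref{overlap_identity_cor} that $0\leq p'(\beta)\leq\beta$, we conclude
\eq{
\E_{\vc h}[(Y')^q] \leq \exp\!\big(q^2\beta^2 T + |q|\beta^2 T\big) \eqqcolon C(\beta,T,q),
}
as desired. Essentially nothing here is an obstacle: the only point worth double-checking is the direction of Jensen's inequality on the excluded interval $q\in(0,1)$, which is exactly why the hypothesis restricts to $q\in(-\infty,0]\cup[1,\infty)$, i.e.~to convex powers.
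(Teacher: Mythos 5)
Your proof is correct and follows the same route as the paper's: factor out the deterministic $\e^{q\beta(\e^{-t}-1)np'(\beta)}$ prefactor, pull the $q$-th power inside the Gibbs bracket via Jensen (using convexity of $x\mapsto x^q$ on the stated range of $q$), evaluate the Gaussian MGF through \eqref{freq_identity} and \eqref{variance_assumption}, and finish with \eqref{frequent_ineq}. You supply a few details the paper leaves implicit (the Tonelli step, and the bound $0\leq p'(\beta)\leq\beta$ from Corollary~\ref{overlap_identity_cor} to control the linear term in $q$), but the argument is the same.
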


\begin{claim} \label{claim_num_prime}
For any $q\geq2$,
\eeq{ \label{bad_prep_num_bound_prime}
\E_{\vc h}[(X')^q] \leq C(\beta,T,q) \langle f(\sigma)^2\rangle^{q/2}.
}
%as well as
%\eeq{ \label{bad_prep_num_bound}
%\E(X^{q}) \leq C(\beta,T,q).
%}
\end{claim}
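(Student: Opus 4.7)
The plan is to handle the deterministic prefactor trivially, and then use the replica trick to reduce $\E_{\vc h}|X'|^q$ to an expression that can be evaluated exactly via the Gaussian moment generating function identity \eqref{freq_identity}. The crucial feature is that once $|X'|^q$ is written as a $q$-fold product over independent replicas, the $\vc h$-dependence factorizes through a single quadratic form in the $\vphi_i(\sigma^j)$'s, which is controlled uniformly in $n$ precisely because $t\le T/n$.

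First, by \eqref{frequent_ineq} we have $|n(\e^{-t}-1)|\le T$, so the deterministic factor $\e^{\beta(\e^{-t}-1)np'(\beta)}$ is bounded by some constant $C_0(\beta,T)$. It thus suffices to control $\E_{\vc h}\big|\langle f(\sigma)\e^{\beta\sqrt{1-\e^{-2t}}\sum_i h_i\vphi_i(\sigma)}\rangle\big|^q$. Bringing the absolute value inside the Gibbs average and introducing $q$ independent replicas $\sigma^1,\dots,\sigma^q$ yields the upper bound
\eq{
\Big\langle |f(\sigma^1)|\cdots|f(\sigma^q)|\exp\Big(\beta\sqrt{1-\e^{-2t}}\sum_i h_i\sum_{j=1}^q\vphi_i(\sigma^j)\Big)\Big\rangle.
}
Applying $\E_{\vc h}$ and invoking Fubini---justified by the finite-truncation procedure of Section~\ref{gibbs_measure}---the Gaussian expectation is computed as in \eqref{freq_identity} to equal $\exp\big(\tfrac{\beta^2(1-\e^{-2t})}{2}\sum_i\big(\sum_{j=1}^q\vphi_i(\sigma^j)\big)^2\big)$.

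Expanding the square produces $\sum_i\big(\sum_j\vphi_i(\sigma^j)\big)^2=\sum_{j,k=1}^q\sum_i\vphi_i(\sigma^j)\vphi_i(\sigma^k)=n\sum_{j,k=1}^q\RR(\sigma^j,\sigma^k)$, and the Cauchy--Schwarz computation at \eqref{unit_interval} bounds each overlap by $1$. Hence the exponent is at most $\tfrac{\beta^2(1-\e^{-2t})}{2}\cdot nq^2\le\beta^2Tq^2$, since $n(1-\e^{-2t})\le 2nt\le 2T$. The remaining Gibbs average $\langle|f(\sigma^1)|\cdots|f(\sigma^q)|\rangle=\langle|f(\sigma)|\rangle^q$ is then bounded by $\langle f(\sigma)^2\rangle^{q/2}$ via Jensen's inequality, so the constant $C(\beta,T,q)\coloneqq C_0(\beta,T)^q\e^{\beta^2Tq^2}$ works.

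I do not expect a substantive obstacle here: the argument is a clean replica computation, and the only delicate point is the exchange of $\E_{\vc h}$ with the Gibbs average $\langle\cdot\rangle$, which is handled by first truncating to finitely many $\vphi_i$ and passing to the limit as in Lemma~\ref{exp_moments_lemma}. The essential mechanism making the bound $n$-independent is the cancellation between the factor $n$ arising from summing $q^2$ overlaps and the factor $1-\e^{-2t}=O(1/n)$ coming from the time scaling $t\le T/n$; this same mechanism will drive the analogous estimate for $Y'$ in Claim~\ref{claim_denom_prime}.
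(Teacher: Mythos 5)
Your proof is correct and takes a genuinely different route from the paper's. The paper proceeds by first applying Cauchy--Schwarz at the level of the single-replica Gibbs average to pull out $\langle f(\sigma)^2\rangle^{q/2}$, then Jensen's inequality (convexity of $t\mapsto t^{q/2}$ for $q\geq 2$) to bring the exponent $q/2$ inside, and finally computes $\E_{\vc h}\langle\e^{q\beta\sqrt{1-\e^{-2t}}\sum_i h_i\vphi_i}\rangle$ via \eqref{freq_identity}. You instead expand $|X'|^q$ as a $q$-fold replica average, compute the Gaussian expectation directly over the tilted sum, and bound the resulting quadratic form $n\sum_{j,k}\RR(\sigma^j,\sigma^k)\le nq^2$ pointwise before the Gibbs average is ever involved. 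Both arguments use the same two mechanisms --- the Gaussian moment-generating-function identity and the cancellation $n(1-\e^{-2t})=O(T)$ --- so the constants are comparable. What your version buys is a slightly more transparent view of where the $q^2$ in the exponent comes from (it is literally the number of overlap pairs), and it avoids the intermediate Jensen step. What the paper's version buys is that it works verbatim for any real $q\geq 2$, whereas your replica expansion requires $q$ to be a positive integer; this is immaterial here since the claim is only invoked with $q\in\{4,8\}$, but worth noting since the claim is phrased as ``for any $q\geq 2$.'' Your flag about exchanging $\E_{\vc h}$ with $\langle\cdot\rangle$ is the right one, and the same Fubini justification (via the finite-truncation machinery of Section~\ref{gibbs_measure}) underlies the paper's use of \eqref{freq_identity} inside the Gibbs average as well.
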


\begin{claim} \label{claim_denom}
Given any $q>0$, set $k = \floor{\log_2\frac{n}{qT}}$.
For all $n$ large enough that $k\geq1$,
%For all large enough $n$,
\eeq{ \label{bad_prep_denom_bound}
\E_{\vc h}(Y^{-q}) \leq C(\beta,T,q)Z_n(\beta)^{-\frac{1}{2^k}}(Z_n(2\beta)^{\frac{1}{2^k}} +1).
}
\end{claim}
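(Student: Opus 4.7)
The plan is to absorb the $\vc h$-integration using Jensen's inequality on the Gibbs measure combined with a Gaussian moment generating function calculation, and then reduce the problem to estimating a ratio of partition functions via log-convexity of $\beta \mapsto \log Z_n(\beta)$. I write $Y = \langle \psi(\sigma)\rangle$ with $\psi(\sigma) \coloneqq \exp\bigl(\beta\sqrt{1-\e^{-2t}}\sum_i h_i \vphi_i(\sigma) + \beta(\e^{-t}-1)H_n(\sigma)\bigr)$. Since $x\mapsto x^{-q}$ is convex on $(0,\infty)$ and $\mu_n^\beta$ is a probability measure, Jensen's inequality gives $Y^{-q} \leq \langle \psi^{-q}\rangle$. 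Only the factor $\exp\bigl(-q\beta\sqrt{1-\e^{-2t}}\sum_i h_i\vphi_i(\sigma)\bigr)$ in $\psi^{-q}$ depends on $\vc h$, and its $\E_{\vc h}$-expectation equals $\exp\bigl(\tfrac{q^2\beta^2(1-\e^{-2t})}{2}\sum_i \vphi_i(\sigma)^2\bigr) = \exp\bigl(\tfrac{q^2\beta^2(1-\e^{-2t})n}{2}\bigr)$ by \eqref{variance_assumption}, which is at most $\e^{q^2\beta^2 T}$ since $1-\e^{-2t} \leq 2t \leq 2T/n$. After exchanging $\E_{\vc h}$ with the Gibbs average, the problem reduces to
\[
\E_{\vc h}(Y^{-q}) \leq \e^{q^2\beta^2 T}\langle \e^{q\beta(1-\e^{-t})H_n(\sigma)}\rangle = \e^{q^2\beta^2 T}\,\frac{Z_n(\beta+\eta)}{Z_n(\beta)}, \qquad \eta \coloneqq q\beta(1-\e^{-t}).
\]

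Set $s \coloneqq \eta/\beta = q(1-\e^{-t})$, so $s \leq qT/n \leq 1/2^k$ by the definition of $k$. Convexity of $\log Z_n$ in $\beta$ (as a cumulant generating function) gives $Z_n(\beta(1+s)) \leq Z_n(\beta)^{1-s}Z_n(2\beta)^s$, so $Z_n(\beta+\eta)/Z_n(\beta) \leq (Z_n(2\beta)/Z_n(\beta))^s$. Splitting into cases based on whether $Z_n(2\beta) \geq Z_n(\beta)$ and using $s \leq 1/2^k$ yields
\[
(Z_n(2\beta)/Z_n(\beta))^s \leq 1 + Z_n(\beta)^{-1/2^k}Z_n(2\beta)^{1/2^k}.
\]
To recover the precise form in the claim, I invoke the Cauchy--Schwarz bound $Z_n(\beta)^2 = [E_n(\e^{\beta H_n(\sigma)})]^2 \leq E_n(\e^{2\beta H_n(\sigma)}) = Z_n(2\beta)$, which gives $Z_n(\beta)^{1/2^k} \leq Z_n(2\beta)^{1/2^{k+1}} \leq 1 + Z_n(2\beta)^{1/2^k}$. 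Then the identity $1 + Z_n(\beta)^{-1/2^k}Z_n(2\beta)^{1/2^k} = Z_n(\beta)^{-1/2^k}\bigl(Z_n(\beta)^{1/2^k}+Z_n(2\beta)^{1/2^k}\bigr)$, combined with the preceding inequality, yields the stated bound with $C(\beta,T,q) = 2\e^{q^2\beta^2 T}$.

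The argument is conceptually straightforward, so the main technical point is simply bookkeeping: one must massage the more immediate bound $1 + Z_n(\beta)^{-1/2^k}Z_n(2\beta)^{1/2^k}$ into the form $Z_n(\beta)^{-1/2^k}(Z_n(2\beta)^{1/2^k}+1)$, which requires the small extra Cauchy--Schwarz step on $E_n$. This particular form is what will be convenient in the subsequent claims, because for $k = \floor{\log_2\frac{n}{qT}}$ one has $1/2^k$ of order $1/n$, and so both $\E Z_n(\beta)^{-1/2^k}$ and $\E[Z_n(\beta)^{-1/2^k}Z_n(2\beta)^{1/2^k}]$ stay uniformly bounded in $n$ by Jensen-type estimates in the spirit of Lemma~\ref{moments_lemma}.
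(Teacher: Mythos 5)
Your proof is correct, and the first half (Jensen's inequality on the convex map $x\mapsto x^{-q}$, then integrating out $\vc h$ via the Gaussian moment generating function, reducing to $C(\beta,T,q)\langle\e^{q\beta(1-\e^{-t})H_n(\sigma)}\rangle$) matches the paper's argument exactly. The two arguments diverge in how they bound the remaining Gibbs average. The paper uses the pointwise inequality $\e^{\beta a x}\le \e^{\beta x/2^k}+\e^{-\beta x/2^k}$ (valid for $0\le a\le 1/2^k$), splits the average into a ``positive'' and a ``negative'' part, and bounds each by iterated Cauchy--Schwarz; the $+$ part gives $Z_n(\beta)^{-1/2^k}Z_n(2\beta)^{1/2^k}$ and the $-$ part gives $Z_n(\beta)^{-1/2^k}$, so the claimed form $Z_n(\beta)^{-1/2^k}\bigl(Z_n(2\beta)^{1/2^k}+1\bigr)$ appears directly with no extra manipulation. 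You instead write $\langle\e^{\eta H_n}\rangle = Z_n(\beta+\eta)/Z_n(\beta)$, invoke log-convexity of $\beta\mapsto\log Z_n(\beta)$ to get $(Z_n(2\beta)/Z_n(\beta))^s$, and case-split on the sign of $\log(Z_n(2\beta)/Z_n(\beta))$. This is slicker in that it calls log-convexity by name rather than re-deriving it via iterated Cauchy--Schwarz, but it lands on $1+Z_n(\beta)^{-1/2^k}Z_n(2\beta)^{1/2^k}$ rather than the claimed form, which costs you the small extra step ($Z_n(\beta)^2\le Z_n(2\beta)$ and the algebraic rearrangement) and a factor of $2$ in the constant. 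Both approaches are valid; the paper's $\pm$ decomposition is tailored to produce the precise form of the bound, while yours trades that for conceptual brevity at the price of a short massaging step at the end.
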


\begin{claim} \label{claim_var_bound}
For any even $q\geq 2$ and $\eps>0$, the following inequalities hold for all $n\geq(2q+1)T$: 
%{\color{red} [Do you mean even $q$? I think there are some steps in the proof of this claim where you are implicitly assuming that $q$ is even. Please check.]}
\eeq{ \label{bad_prep_var_bound}
\E_{\vc h}[(X-X')^q]
&\leq C(\beta,T,q)\langle f(\sigma)^2\rangle^{q/2}\Big[C(\eps)\Big\langle\Big|p'(\beta)-\frac{H_n(\sigma)}{n} \Big|\Big\rangle + \eps Z_n(\beta)^{-\frac{2(q+1)T}{n}}\Big],
}
and thus
\eeq{ \label{bad_prep_var_bound_Y}
\E_{\vc h}[(Y-Y')^q]
&\leq C(\beta,T,q)\Big[C(\eps)\Big\langle\Big|p'(\beta)-\frac{H_n(\sigma)}{n} \Big|\Big\rangle + \eps Z_n(\beta)^{-\frac{(2q+1)T}{n}}\Big].
}
\end{claim}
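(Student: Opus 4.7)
The starting point is to rewrite $X - X' = \langle f(\sigma) G(\vc h, \sigma) \Delta(\sigma)\rangle$, where $\alpha \coloneqq \beta(1-\e^{-t}) \in [0, \beta T/n]$, $\Delta(\sigma) \coloneqq \e^{-\alpha H_n(\sigma)} - \e^{-\alpha n p'(\beta)}$, and $G(\vc h, \sigma) \coloneqq \e^{\beta\sqrt{1-\e^{-2t}}\sum_i h_i\vphi_i(\sigma)}$. The guiding intuition is that under the Gibbs measure, $H_n/n \approx p'(\beta)$ by Lemma~\ref{lemma:step_1_2}, so $\Delta$ is typically small; the factorization separates the centering of $H_n/n$ from the Gaussian fluctuations coming from $\vc h$.

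My first move is to apply Cauchy--Schwarz on the Gibbs measure to isolate $f$: $(X-X')^2 \leq \langle f^2\rangle\langle G^2\Delta^2\rangle$. Raising to the $q/2$ power (using that $q$ is even so $\Delta^q \geq 0$) and applying Jensen's inequality on the Gibbs measure (valid since $q/2 \geq 1$) yields $(X-X')^q \leq \langle f^2\rangle^{q/2}\langle G^q\Delta^q\rangle$. Since $\Delta$ depends only on $\sigma$, $\E_{\vc h}$ passes inside the Gibbs expectation, and using \eqref{variance_assumption} together with $1-\e^{-2t} \leq 2T/n$ gives $\E_{\vc h} G^q(\vc h,\sigma) \leq \e^{q^2\beta^2T}$ uniformly in $\sigma$. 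The problem thus reduces to a pathwise bound on $\langle|\Delta|^q\rangle$.

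To estimate $\langle|\Delta|^q\rangle$, I partition $\Sigma_n$ into $A_\eta \coloneqq \{\sigma : |H_n(\sigma)/n - p'(\beta)| \leq \eta\}$ and its complement, where $\eta = \eta(\eps)$ is chosen in terms of $\eps$. On $A_\eta$, the mean value theorem together with $\alpha n \leq \beta T$ and $|p'(\beta)| \leq \beta$ (from Corollary~\ref{overlap_identity_cor}) yields the pointwise estimate $|\Delta(\sigma)| \leq C(\beta, T, \eta)|H_n(\sigma)/n - p'(\beta)|$. Using $|H_n/n - p'(\beta)| \leq \eta$ to control the $(q-1)$st moment, this bounds the $A_\eta$-contribution by $C(\eps)\langle|H_n/n - p'(\beta)|\rangle$, producing the first term of the claim. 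On $A_\eta^\cc$, I apply a H\"{o}lder split with suitably chosen exponents so that one factor involves $\langle|\Delta|^r\rangle$ for $r$ large enough to yield the required $Z_n(\beta)^{-2(q+1)T/n}$ factor via the crude bound $|\Delta|^r \leq 2^r(\e^{-r\alpha H_n} + C(\beta,T,r))$ combined with the elementary Jensen estimate $\langle\e^{-r\alpha H_n}\rangle = Z_n(\beta - r\alpha)/Z_n(\beta) \leq Z_n(\beta)^{-r\alpha/\beta}$ (valid since $\beta - r\alpha > 0$ precisely when $n \geq (2q+1)T$, matching the hypothesis). The other factor is a power of $\mu_n^\beta(A_\eta^\cc)$, which Markov's inequality bounds by $\eta^{-1}\langle|H_n/n - p'(\beta)|\rangle$ and which Lemma~\ref{lemma:step_1_2} makes $\leq \eps$ for all sufficiently large $n$.

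The companion inequality \eqref{bad_prep_var_bound_Y} follows from the same scheme with $f \equiv 1$. In that case Cauchy--Schwarz is unnecessary, and a direct Jensen estimate $|Y-Y'|^q \leq \langle G|\Delta|\rangle^q \leq \langle G^q|\Delta|^q\rangle$ saves one degree in the H\"{o}lder argument on $A_\eta^\cc$, producing the slightly sharper exponent $-(2q+1)T/n$ in place of $-2(q+1)T/n$. The principal technical challenge lies in choosing the H\"{o}lder exponents and threshold $\eta$ so that the Markov residual is absorbed cleanly while preserving the precise exponents in the partition-function factor; these constraints, together with the requirement $\beta - r\alpha > 0$, are what dictate the lower bound $n \geq (2q+1)T$.
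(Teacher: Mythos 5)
Your opening moves track the paper closely: factor $X - X' = \langle f\cdot G\cdot\Delta\rangle$ with $\Delta = \e^{-\alpha H_n}-\e^{-\alpha np'(\beta)}$, apply Cauchy--Schwarz and then Jensen to pass the $q$-th power inside the Gibbs bracket, integrate out $\vc h$ through $\E_{\vc h}G^q$, and reduce to estimating $\langle|\Delta|^q\rangle$. Your handling of the good region $A_\eta$ via the mean value theorem is also sound, and is the analogue of the paper's inequality $(\e^x-1)^q\leq C(L,q)|x|$ on $\{x\leq L\}$. The gap is entirely in the bad region, where you do not actually produce the factor $Z_n(\beta)^{-\frac{2(q+1)T}{n}}$. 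First, the exponent: your Jensen step gives $\langle\e^{-r\alpha H_n}\rangle\leq Z_n(\beta)^{-r\alpha/\beta}$ with $r\alpha/\beta = r(1-\e^{-t})$, and after taking the H\"older power (with $r = qp_1$) the surviving factor is $Z_n(\beta)^{-q(1-\e^{-t})}$. Since $q(1-\e^{-t})\leq qT/n < \frac{2(q+1)T}{n}$, this is \emph{larger} than $Z_n(\beta)^{-\frac{2(q+1)T}{n}}$ whenever $Z_n(\beta)>1$, so your bound does not imply the claim; the shortfall does not go away by choosing $r$ larger, because the relevant power is $r/p_1 = q$ regardless. Second, and more fundamentally, you never gain the extra unit that turns $q$ into $q+1$. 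The paper produces it from the indicator itself: by taking $L\geq 2\beta T p'(\beta)$, the bad set forces $H_n<0$, and then $\one_{\{x>L\}}\leq\e^{-L}\e^{-\frac{2\beta T}{n}H_n}$ supplies one extra power of $\e^{-\frac{2\beta T}{n}H_n}$ before Jensen; the sign condition $-H_n>0$ is also what legitimizes upgrading $1-\e^{-t}$ to $T/n$ in the exponent. Your Markov bound on $\mu_n^\beta(A_\eta^\cc)$ yields only a fractional power of a Gibbs probability, not another partition-function factor, so the $q\mapsto q+1$ gain never appears.

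A secondary problem is that you invoke Lemma~\ref{lemma:step_1_2} inside the proof to make $\mu_n^\beta(A_\eta^\cc)\leq\eps$ ``for sufficiently large $n$.'' The claim is a deterministic inequality in the realized disorder $\vc g$, stated for every $n\geq(2q+1)T$; the convergence $\langle|H_n/n-p'(\beta)|\rangle\to 0$ is meant to be carried on the right-hand side as a term and exploited \emph{later}, not used to shrink things in the proof of the claim itself. The fix is to replace your two-sided cut on $|H_n/n-p'(\beta)|$ by the paper's one-sided cut (which, after normalizing with $L\geq 2\beta Tp'(\beta)$, localizes the bad region to $\{H_n<0\}$), and to convert the surviving indicator into the exponential $\e^{-L}\e^{-\frac{2\beta T}{n}H_n}$ before applying Jensen --- that is the step your proposal is missing.
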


Before proving the claims, we use them to obtain the desired statements.

\subsubsection{Proof of Lemma~\ref{prep_prop_bad_lemma}(a)}
First note that for any random variables $W$ and $Z$,
\eeq{ \label{square_inside_outside}
\E|W^2-Z^2| &= \E|(W-Z)^2 + 2Z(W-Z)| \\
&\leq \E[(W-Z)^2] + 2\sqrt{\E(Z^2)\E[(W-Z)^2]}.
}
Therefore,
\eeq{ \label{prep_for_full_bound}
&\E_{\vc h}\Big|\Big(\frac{X}{Y}\Big)^2-\Big(\frac{X'}{Y'}\Big)^2\Big|
%\stackrel{\phantom{\mbox{\footnotesize\eqref{bad_prep_denom_bound_prime},\eqref{bad_prep_num_bound_prime}}}}{\leq}
\leq\E_{\vc h}\Big[\Big(\frac{X}{Y} - \frac{X'}{Y'}\Big)^2\Big] + 2\sqrt{\E_{\vc h}\Big[\Big(\frac{X'}{Y'}\Big)^2\Big]\E_{\vc h}\Big[\Big(\frac{X}{Y} - \frac{X'}{Y'}\Big)^2\Big]} \\
&\stackrel{\phantom{\mbox{\footnotesize\eqref{bad_prep_denom_bound_prime},\eqref{bad_prep_num_bound_prime}}}}{\leq} 
\E_{\vc h}\Big[\Big(\frac{X}{Y} - \frac{X'}{Y'}\Big)^2\Big] + 2\big(\E_{\vc h}[(Y')^{-4}]\E_{\vc h}[(X')^4]\big)^\frac{1}{4}\sqrt{\E_{\vc h}\Big[\Big(\frac{X}{Y} - \frac{X'}{Y'}\Big)^2\Big]} \\
&\stackrel{\mbox{\footnotesize\eqref{bad_prep_denom_bound_prime},\eqref{bad_prep_num_bound_prime}}}{\leq}\E_{\vc h}\Big[\Big(\frac{X}{Y} - \frac{X'}{Y'}\Big)^2\Big]+C(\beta,T)\sqrt{\langle f(\sigma)^2\rangle}\sqrt{\E_{\vc h}\Big[\Big(\frac{X}{Y} - \frac{X'}{Y'}\Big)^2\Big]}.\hspace{0.4in} %\raisetag{2.5\baselineskip}
}
%In order to use \eqref{square_inside_outside}, 
Let $\delta$ be a positive number to be chosen later.
Anticipating the application of Claims~\ref{claim_denom} and~\ref{claim_var_bound}, we condense notation by defining
\eq{
V_n^{(q)} &= \big(Z_n(\beta)^{-\frac{1}{2^k}}(Z_n(2\beta)^{\frac{1}{2^k}} + 1) \big)^{2/q}, \quad \text{where} \quad k = \Big\lfloor\log_2 \frac{n}{qT}\Big\rfloor, \\
W_n^{(q)} &= \Big(C(\delta)\Big\langle\Big|p'(\beta)-\frac{H_n(\sigma)}{n}\Big|\Big\rangle + \delta Z_n(\beta)^{-\frac{2(q+1)T}{n}}\Big)^{2/q}.
}
Because of \eqref{prep_for_full_bound}, we seek a bound of the form
\eq{ %\label{simple_0}
&\E_{\vc h}\Big[\Big(\frac{X}{Y} - \frac{X'}{Y'}\Big)^2\Big]
%&\stackrel{\phantom{\mbox{\footnotesize\eqref{bad_prep_denom_bound_prime}--\eqref{bad_prep_var_bound_Y}}}}{=} 
=\E_{\vc h}\Big[\Big(\frac{X-X'}{Y} - \frac{X'}{Y'}\frac{Y-Y'}{Y}\Big)^2\Big] \\
&\stackrel{\phantom{\mbox{\footnotesize\eqref{bad_prep_denom_bound_prime}--\eqref{bad_prep_var_bound_Y}}}}{\leq} 2\E_{\vc h}\Big[\frac{(X-X')^2}{Y^2}+\frac{(X')^2}{(Y')^2}\frac{(Y-Y')^2}{Y^2}\Big] \\
&\stackrel{\phantom{\mbox{\footnotesize\eqref{bad_prep_denom_bound_prime}--\eqref{bad_prep_var_bound_Y}}}}{\leq} 2\big(\E_{\vc h}[Y^{-4}]\E_{\vc h}[(X-X')^{4}]\big)^{1/2}
%&\phantom{\stackrel{\mbox{\footnotesize\eqref{bad_prep_denom_bound_prime}--\eqref{bad_prep_var_bound_Y}}}{\leq}}
+ 2\big(\E_{\vc h}[(Y')^{-8}]\E_{\vc h}[(X')^{8}]\E_{\vc h}(Y^{-8})\E_{\vc h}[(Y-Y')^{8}]\big)^{1/4} \\
&\stackrel{\mbox{\footnotesize\eqref{bad_prep_denom_bound_prime}--\eqref{bad_prep_var_bound_Y}}}{\leq} C(\beta,T)\langle f(\sigma)^2\rangle(V_n^{(4)}W_n^{(4)}+V_n^{(8)}W_n^{(8)}).
}
Therefore, once we set
\eq{ 
M_n \coloneqq C(\beta,T)[(V_n^{(4)}W_n^{(4)}+V_n^{(8)}W_n^{(8)}) + (V_n^{(4)}W_n^{(4)}+V_n^{(8)}W_n^{(8)})^{1/2}]
}
and take expectation, \eqref{prep_for_full_bound} becomes
\eq{
\E\Big|\Big(\frac{X}{Y}\Big)^2-\Big(\frac{X'}{Y'}\Big)^2\Big| \leq \E(\langle f(\sigma)^2\rangle M_n),
}
which is exactly \eqref{prep_prop_bad_lemma_eq1_new}.
To complete the proof of Lemma~\ref{prep_prop_bad_lemma}(a), we need to show that given any $\eps >0$, we can choose $\delta$ sufficiently small that \eqref{M_condition} holds ($M_n$ depends on $\delta$ through $W_n^{(4)}$ and $W_n^{(8)}$).

Indeed, by Cauchy--Schwarz we have
\eeq{ \label{M_ineq}
\E(M_n) &\leq C(\beta,T)\bigg(\sqrt{\E[(V_n^{(4)})^2]\E[(W_n^{(4)})^2]} + \sqrt{\E[(V_n^{(8)})^2]\E[(W_n^{(8)})^2]} \\
&\phantom{\leq} + \sqrt{\sqrt{\E[(V_n^{(4)})^2]\E[(W_n^{(4)})^2]} + \sqrt{\E[(V_n^{(8)})^2]\E[(W_n^{(8)})^2]}}\,\bigg).
}
Next we observe that for $q \geq 4$ and $n$ sufficiently large such that $k = \floor{\log_2 \frac{n}{qT}}\geq1$,
\eeq{ \label{V_ineq}
\E[(V_n^{(q)})^2] 
&\stackrel{\phantom{\mbox{\footnotesize\eqref{first_moment},\eqref{negative_first_moment}}}}{\leq}  \Big(\E\big[Z_n(\beta)^{-\frac{1}{2^k}}(Z_n(2\beta)^{\frac{1}{2^k}} + 1) \big]\Big)^{4/q} \\
&\stackrel{\phantom{\mbox{\footnotesize\eqref{first_moment},\eqref{negative_first_moment}}}}{\leq}  \Big(\sqrt{\E[Z_n(\beta)^{-\frac{2}{2^{k}}}]\E[Z_n(2\beta)^{\frac{2}{2^k}}]}+\E[Z_n(\beta)^{-\frac{1}{2^{k}}}]\Big)^{4/q} \\
&\stackrel{\phantom{\mbox{\footnotesize\eqref{first_moment},\eqref{negative_first_moment}}}}{\leq}  \Big(\sqrt{\E[Z_n(\beta)^{-1}]^\frac{2}{2^{k}}\E[Z_n(2\beta)]^{\frac{2}{2^k}}}+\E[Z_n(\beta)^{-1}]^\frac{1}{2^{k}}\Big)^{4/q} \\
&\stackrel{\mbox{\footnotesize\eqref{first_moment},\eqref{negative_first_moment}}}{\leq}\Big(\sqrt{\e^\frac{\beta^2n}{2^{k}}\e^{\frac{4\beta^2 n}{2^k}}}+\e^\frac{\beta^2 n}{2^{k+1}}\Big)^{4/q} \\
&\stackrel{\phantom{\mbox{\footnotesize\eqref{first_moment},\eqref{negative_first_moment}}}}{\leq} \Big(\sqrt{\e^{\beta^2qT}\e^{4\beta^2 qT}}+\e^\frac{\beta^2 qT}{2}\Big)^{4/q} = C(\beta,T,q).
}
Meanwhile, if $q\geq4$ and $n\geq2(q+1)T$, then
\eq{
\E[(W_n^{(q)})^2] 
&\stackrel{\phantom{\mbox{\footnotesize\eqref{negative_first_moment}}}}{\leq} \Big(C(\delta)\E\Big\langle\Big|p'(\beta)-\frac{H_n(\sigma)}{n}\Big|\Big\rangle + \delta \E[Z_n(\beta)^{-\frac{2(q+1)T}{n}}]\Big)^{4/q} \\
&\stackrel{\phantom{\mbox{\footnotesize\eqref{negative_first_moment}}}}{\leq} \Big(C(\delta)\E\Big\langle\Big|p'(\beta)-\frac{H_n(\sigma)}{n}\Big|\Big\rangle + \delta \E[Z_n(\beta)^{-1}]^{\frac{2(q+1)T}{n}}\Big)^{4/q} \\
&\stackrel{\mbox{\footnotesize\eqref{negative_first_moment}}}{\leq}  \Big(C(\delta)\E\Big\langle\Big|p'(\beta)-\frac{H_n(\sigma)}{n}\Big|\Big\rangle + \delta \e^{\beta^2(q+1)T}\Big)^{4/q}.
}
By Lemma~\ref{lemma:step_1_2}, the previous display shows
\eq{
\limsup_{n\to\infty} \E[(W_n^{(q)})^2] &\leq \delta^{4/q}\e^\frac{4\beta^2(q+1)T}{q} = C(\beta,T,q)\delta^{4/q}.
}
In light of \eqref{M_ineq} and \eqref{V_ineq}, it is clear from this inequality that $\delta$ can be chosen sufficiently small that \eqref{M_condition} holds.

\subsubsection{Proof of Lemma~\ref{prep_prop_bad_lemma}(b)}
To establish \eqref{prep_prop_bad_lemma_eq2_new}, it will be easier to replace $X'/Y'$ by $X''/Y''$, where
\eq{
X'' &\coloneqq \frac{X'}{\e^{\frac{\beta^2}{2}(1-\e^{-2t})n}\e^{\beta(\e^{-t}-1)np'(\beta)}} = \frac{\langle f(\sigma)\e^{\beta\sqrt{1-\e^{-2t}}\sum_ih_i\vphi_i}\rangle}{\e^{\frac{\beta^2}{2}(1-\e^{-2t})n}}, \\
\qquad Y'' &\coloneqq \frac{Y'}{\e^{\frac{\beta^2}{2}(1-\e^{-2t})n}\e^{\beta(\e^{-t}-1)np'(\beta)}} = \frac{\langle \e^{\beta\sqrt{1-\e^{-2t}}\sum_ih_i\vphi_i}\rangle}{\e^{\frac{\beta^2}{2}(1-\e^{-2t})n}}.
}
By Lemma~\ref{h_variance_lemma}(a),
\eq{
\Var_{\vc h}\langle f(\sigma)\e^{\beta\sqrt{1-\e^{-2t}}\sum_i h_i\vphi_i}\rangle
&\leq \e^{2\beta^2(1-\e^{-2t})n}\langle f(\sigma)^2\rangle\sqrt{\frac{1}{n}\sum_i \langle\vphi_i\rangle^2+2\EEE_n},
%&\leq C(\beta,T)\langle f(\sigma)^2\rangle\sqrt{\frac{1}{n}\sum_i \langle\vphi_i\rangle^2}.
}
and so
\eeq{ \label{observation_1}
\Var_{\vc h}(X'') &\stackrel{\phantom{\mbox{\footnotesize\eqref{frequent_ineq}}}}{\leq} \e^{\beta^2(1-\e^{-2t})n}\langle f(\sigma)^2\rangle\sqrt{\frac{1}{n}\sum_i\langle\vphi_i\rangle^2+2\EEE_n} \\
&\stackrel{{\mbox{\footnotesize\eqref{frequent_ineq}}}}{\leq}
 C(\beta,T)\langle f(\sigma)^2\rangle\sqrt{\frac{1}{n}\sum_i\langle\vphi_i\rangle^2+2\EEE_n},
 }
as well as
\eq{
\Var_{\vc h}(Y'') &\leq C(\beta,T)\sqrt{\frac{1}{n}\sum_i\langle\vphi_i\rangle^2+2\EEE_n}.
}
Because
\eq{
\E_{\vc h} \langle f(\sigma) \e^{\beta\sqrt{1-\e^{-2t}}\sum_i h_i\vphi_i}\rangle
\stackrel{{\mbox{\footnotesize\eqref{freq_identity}}}}{=} \e^{\frac{\beta^2 }{2}(1-\e^{-2t})n}\langle f(\sigma)\rangle,
}
we have $\E_{\vc h}(Y'') = 1$ and can thus apply Chebyshev's inequality to obtain
\eeq{ \label{observation_2}
\P_{\vc h}(|Y'' - 1|\geq\theta) \leq \frac{C(\beta,T)}{\theta^2} \sqrt{\frac{1}{n}\sum_i\langle\vphi_i\rangle^2+2\EEE_n} \quad \text{for any $\theta>0$.}
}
We will use these inequalities in the following bound:
\eeq{ \label{observation_0}
&\E_{\vc h}\Big[\Big(\frac{X'}{Y'} - \langle f(\sigma)\rangle\Big)^2\Big]
= \E_{\vc h}\Big[\Big(\frac{X''}{Y''} - \langle f(\sigma)\rangle\Big)^2\Big] \\
&= \E_{\vc h}\Big[\Big(\frac{X''}{Y''}(1 - Y'') + X'' - \langle f(\sigma)\rangle\Big)^2\Big] \\
&\leq 2\E_{\vc h}\Big[\Big(\frac{X''}{Y''}\Big)^2(Y''-1)^2 + \big(X'' - \langle f(\sigma)\rangle\big)^2\Big] \\
&\leq  2\E_{\vc h}\Big[\Big(\frac{X''}{Y''}\Big)^2(\theta^2+\one_{\{|Y''-1|\geq\theta\}}(Y''-1)^2) + \big(X'' - \langle f(\sigma)\rangle\big)^2\Big] \\
&\leq 2\big(\E_{\vc h}[(Y'')^{-8}]\E_{\vc h}[(X'')^8])^{1/4}\sqrt{\E_{\vc h}\big[\big(\theta^2+\one_{\{|Y''-1|\geq\theta\}}(Y''-1)^2\big)^2\big]} 
+ 2\Var_{\vc h}(X'') \\
&\leq 2\sqrt{2}\big(\E_{\vc h}[(Y'')^{-8}]\E_{\vc h}[(X'')^8])^{1/4}\sqrt{\theta^4 + \sqrt{\P_{\vc h}(|Y''-1|\geq\theta)\E_{\vc h}[(Y''-1)^8]}}
+ 2\Var_{\vc h}(X'').
\raisetag{7\baselineskip}
}
Now,
\eeq{ \label{observation_3}
\E_{\vc h}[(Y'')^{-8}] = \frac{\E_{\vc h}[(Y')^{-8}]}{\e^{-4\beta^2(1-\e^{-2t})n}\e^{-8\beta(\e^{-t}-1)np'(\beta)}} 
\stackrel{\mbox{\footnotesize\eqref{frequent_ineq},\eqref{bad_prep_denom_bound_prime}}}{\leq} C(\beta,T),
}
and
\eeq{ \label{observation_4}
\E_{\vc h}[(X'')^8] = \frac{\E_{\vc h}[(X')^8]}{\e^{4\beta^2(1-\e^{-2t})n}\e^{8\beta(\e^{-t}-1)np'(\beta)}}
\stackrel{\mbox{\footnotesize\eqref{frequent_ineq},\eqref{bad_prep_num_bound_prime}}}{\leq}  C(\beta,T)\langle f(\sigma)^2\rangle^4.
}
In addition,
\eeq{ \label{observation_5}
\E_{\vc h}[(Y''-1)^8] 
&\stackrel{\phantom{\mbox{\footnotesize\eqref{frequent_ineq},\eqref{bad_prep_denom_bound_prime}}}}{\leq} 2^4(\E_{\vc h}[(Y'')^8]+1)  \\
&\stackrel{\phantom{\mbox{\footnotesize\eqref{frequent_ineq},\eqref{bad_prep_denom_bound_prime}}}}{=} 2^4\Big( \frac{\E_{\vc h}[(Y')^{8}]}{\e^{4\beta^2(1-\e^{-2t})n}\e^{8\beta(\e^{-t}-1)np'(\beta)}}+1\Big)  
\stackrel{\mbox{\footnotesize\eqref{frequent_ineq},\eqref{bad_prep_denom_bound_prime}}}{\leq} C(\beta,T).
}
Using \eqref{observation_1}, \eqref{observation_2}, and \eqref{observation_3}--\eqref{observation_5} in \eqref{observation_0}, we find
\eq{
\E_{\vc h}\Big[\Big(\frac{X'}{Y'} - \langle f(\sigma)\rangle\Big)^2\Big]
&\leq C(\beta,T)\langle f(\sigma)^2\rangle\sqrt{\theta^4+\frac{C(\beta,T)}{\theta}\Big(\frac{1}{n}\sum_i\langle\vphi_i^2\rangle+2\EEE_n\Big)^{1/4}} \\
&\phantom{\leq}+C(\beta,T)\langle f(\sigma)^2\rangle \sqrt{\frac{1}{n}\sum_i \langle\vphi_i\rangle^2+2\EEE_n}.
}
In particular, for any $\delta>0$ and $n$ large enough that $\EEE_n\leq\delta/2$,
\eq{
\one_{B_\delta}\E_{\vc h}\Big[\Big(\frac{X'}{Y'} - \langle f(\sigma)\rangle\Big)^2\Big] \leq \one_{B_\delta}C(\beta,T)\langle f(\sigma)^2\rangle\Big(\sqrt{\theta^4+\theta^{-1}(2\delta)^{1/4}}+\sqrt{2\delta}\Big),
}
and so \eqref{square_inside_outside} implies
\eq{
\one_{B_\delta}\E_{\vc h}\Big|\Big(\frac{X'}{Y'}\Big)^2 - \langle f(\sigma)\rangle^2\Big|
&\leq \one_{B_\delta}\E_{\vc h}\Big[\Big(\frac{X'}{Y'} - \langle f(\sigma)\rangle\Big)^2\Big]
+ 2\one_{B_\delta}\sqrt{\langle f(\sigma)\rangle^2\E_{\vc h}\Big[\Big(\frac{X'}{Y'} - \langle f(\sigma)\rangle\Big)^2\Big]} \\
&\leq \one_{B_\delta}C(\beta,T)\langle f(\sigma)^2\rangle\Big(\sqrt{\theta^4+\theta^{-1}\delta^{1/4}}+\sqrt{\delta} %\\
%&\phantom{C(\beta,T)\langle f(\sigma)^2\rangle\bigg(\ }
+ \sqrt{\sqrt{\theta^4+\theta^{-1}\delta^{1/4}}+\sqrt{\delta}}\, \Big).
}
Given $\eps>0$, we choose $\theta$ and $\delta$ small enough (in that order, and depending only on $\beta$, $T$, and $\eps$) so that the rightmost expression above is at most $\one_{B_\delta}\eps\langle f(\sigma)^2\rangle$. 
Moreover, it is clear that once $\theta$ and $\delta$ are chosen, $\one_{B_\delta}$ could be replaced by $\one_{B_{\delta'}}$ for any $\delta'\in(0,\delta)$, and the rightmost expression will be bounded from above by $\one_{B_{\delta'}}\eps\langle f(\sigma)^2\rangle$.
Taking expectations on both sides yields \eqref{prep_prop_bad_lemma_eq2_new}.

\subsubsection{Proof of Claim~\ref{claim_denom_prime}}
Assume $q\leq0$ or $q\geq1$.
Using Jensen's inequality, we have
\eq{ %\label{simple_3}
\E_{\vc h}[(Y')^{q}] &\stackrel{\phantom{\mbox{\footnotesize\eqref{freq_identity}}}}{=} \e^{q\beta (\e^{-t}-1)np'(\beta)}\E_{\vc h}\big[\langle\e^{\beta\sqrt{1-\e^{-2t}}\sum_{i} h_i\vphi_i}\rangle^{q}\big] \\
&\stackrel{\phantom{\mbox{\footnotesize\eqref{freq_identity}}}}{\leq} \e^{q\beta (\e^{-t}-1)np'(\beta)}\E_{\vc h}\langle\e^{q\beta\sqrt{1-\e^{-2t}}\sum_{i} h_i\vphi_i}\rangle \\
&\stackrel{{\mbox{\footnotesize\eqref{freq_identity}}}\hspace{0.5ex}}{=} \e^{q\beta (\e^{-t}-1)np'(\beta)}\e^{\frac{q^2\beta^2}{2}(1-\e^{-2t})n} 
\stackrel{{\mbox{\footnotesize\eqref{frequent_ineq}}}}{\leq} C(\beta,T,q).
}

\subsubsection{Proof of Claim~\ref{claim_num_prime}}
Assume $q\geq2$.
By Cauchy--Schwarz and Jensen's inequality, we have
\eq{
\E_{\vc h}[(X')^q]
&\stackrel{\phantom{\mbox{\footnotesize\eqref{freq_identity}}}}{=} \e^{q\beta(\e^{-t}-1)np'(\beta)}\E_{\vc h}(\langle f(\sigma) \e^{\beta\sqrt{1-\e^{-2t}}\sum_ih_i\vphi_i}\rangle^q) \\
&\stackrel{\phantom{\mbox{\footnotesize\eqref{freq_identity}}}}{\leq} \e^{q\beta(\e^{-t}-1)np'(\beta)}\E_{\vc h}(\langle f(\sigma)^2\rangle^{q/2} \langle\e^{2\beta\sqrt{1-\e^{-2t}}\sum_ih_i\vphi_i}\rangle^{q/2}) \\
&\stackrel{\phantom{\mbox{\footnotesize\eqref{freq_identity}}}}{\leq}\e^{q\beta(\e^{-t}-1)np'(\beta)} \langle f(\sigma)^2\rangle^{q/2}\E_{\vc h} \langle\e^{q\beta\sqrt{1-\e^{-2t}}\sum_ih_i\vphi_i}\rangle \\
&\stackrel{{\mbox{\footnotesize\eqref{freq_identity}}}\hspace{0.5ex}}{=}\e^{q\beta (\e^{-t}-1)np'(\beta)} \langle f(\sigma)^2\rangle^{q/2}\e^{\frac{q^2\beta^2}{2} (1-\e^{-2t})n} 
\stackrel{\mbox{\footnotesize\eqref{frequent_ineq}}}{\leq} C(\beta,T,q)\langle f(\sigma)^2\rangle^{q/2}.
}

\subsubsection{Proof of Claim~\ref{claim_denom}}
Assume $q>0$.
By Jensen's inequality,
\eeq{ \label{pre_squish_expectation}
\E_{\vc h}(Y^{-q}) &\stackrel{\phantom{\mbox{\footnotesize\eqref{freq_identity}}}}{=}  
\E_{\vc h}[\langle \e^{\beta\sqrt{1-\e^{-2t}}\sum_ih_i\vphi_i}\e^{\beta(\e^{-t}-1)H_n(\sigma)}\rangle^{-q}]\\
&\stackrel{\phantom{\mbox{\footnotesize\eqref{freq_identity}}}}{\leq} \E_{\vc h}\langle \e^{-q\beta \sqrt{1-\e^{-2t}}\sum_ih_i\vphi_i}\e^{q\beta (1-\e^{-t})H_n(\sigma)}\rangle \\
&\stackrel{{\mbox{\footnotesize\eqref{freq_identity}}}\hspace{0.5ex}}{=} \e^{\frac{q^2\beta^2}{2}(1-\e^{-2t})n}\langle \e^{\beta q(1-\e^{-t})H_n(\sigma)}\rangle 
\stackrel{\mbox{\footnotesize\eqref{frequent_ineq}}}{\leq} C(\beta,T,q)\langle \e^{\beta q(1-\e^{-t})H_n(\sigma)}\rangle.
}
Recall that $k = \floor{\log_2\frac{n}{qT}}$, and we assume $k\geq1$.
By \eqref{frequent_ineq},
\eq{
q(1-\e^{-t}) \leq \frac{qT}{n} = \frac{1}{2^{\log_2\frac{n}{qT}}}  \leq \frac{1}{2^k},
}
which implies
\eeq{ \label{squish_expectation}
\langle \e^{\beta q(1-\e^{-t})H_n(\sigma)}\rangle \leq \langle \e^{-\beta H_n(\sigma)/2^{k}}\rangle + \langle \e^{\beta H_n(\sigma)/2^{k}}\rangle.
}
Repeated applications of Cauchy--Schwarz yield
\eeq{ \label{repeated_CS}
\langle \e^{\beta H_n(\sigma)/2^k}\rangle 
&= \frac{E_n(\e^{\beta(1+\frac{1}{2^k})H_n(\sigma)})}{E_n(\e^{\beta H_n(\sigma)})} \\
&= \frac{E_n(\e^{\frac{\beta}{2}H_n(\sigma)}\e^{\beta(\frac{1}{2}+\frac{1}{2^k})H_n(\sigma)})}{E_n(\e^{\beta H_n(\sigma)})}  \\
&\leq \frac{\sqrt{E_n(\e^{\beta H_n(\sigma)})E_n(\e^{\beta(1+\frac{1}{2^{k-1}})H_n(\sigma)})}}{E_n(\e^{\beta H_n(\sigma)})} \\
&\leq \frac{\sqrt{E_n(\e^{\beta H_n(\sigma)})\sqrt{E_n(\e^{\beta H_n(\sigma)})E_n(\e^{\beta(1+\frac{1}{2^{k-2}})H_n(\sigma)})}}}{E_n(\e^{\beta H_n(\sigma)})} \\
&\hspace{1.3ex}\vdots \\
&\leq E_n(\e^{\beta H_n(\sigma)})^{-1+\sum_{i=1}^{k} \frac{1}{2^i}}E_n(\e^{2\beta H_n(\sigma)})^{\frac{1}{2^k}} \\
&= Z_n(\beta)^{-\frac{1}{2^k}}Z_n(2\beta)^{\frac{1}{2^k}}.
}
By similar manipulations,
\eeq{ \label{repeated_CS_2}
\langle \e^{-\beta H_n(\sigma)/2^k}\rangle \leq Z_n(\beta)^{-\frac{1}{2^k}}Z_n(0)^{\frac{1}{2^k}} = Z_n(\beta)^{-\frac{1}{2^k}}.
}
Together, \eqref{pre_squish_expectation}--\eqref{repeated_CS_2} yield \eqref{bad_prep_denom_bound}.

\subsubsection{Proof of Claim~\ref{claim_var_bound}}
Assume $q\geq2$ is even. 
%{\color{red} [I think you may be assuming that $q$ is even in this proof.]}
By Cauchy--Schwarz and Jensen's inequality, we have
\eeq{ \label{next_1}
&\E_{\vc h}[(X-X')^q] \\
&\stackrel{\phantom{\mbox{\footnotesize\eqref{frequent_ineq}}}}{=}
%&=
\E_{\vc h}[\langle f(\sigma)\e^{\beta\sqrt{1-\e^{-2t}}\sum_{i} h_i\vphi_i}(\e^{\beta(\e^{-t}-1) H_n(\sigma)}-\e^{\beta (\e^{-t}-1)np'(\beta)})\rangle^q] \\
&\stackrel{\phantom{\mbox{\footnotesize\eqref{frequent_ineq}}}}{\leq}
%&\leq 
\E_{\vc h}\big[\langle f(\sigma)^2\rangle^{q/2}\langle\e^{2\beta\sqrt{1-\e^{-2t}}\sum_{i} h_i\vphi_i}(\e^{\beta(\e^{-t}-1) H_n(\sigma)}-\e^{\beta (\e^{-t}-1)np'(\beta)})^2\rangle^{q/2}\big] \\
&\stackrel{\phantom{\mbox{\footnotesize\eqref{frequent_ineq}}}}{\leq} 
%&\leq
\langle f(\sigma)^2\rangle^{q/2}\e^{q\beta (\e^{-t}-1)np'(\beta)}
\E_{\vc h}\langle\e^{q\beta\sqrt{1-\e^{-2t}}\sum_ih_i\vphi_i}(\e^{\beta(1-\e^{-t})(np'(\beta)- H_n(\sigma))}-1)^q\rangle \\
&\stackrel{\mbox{\footnotesize\eqref{freq_identity}}}{=} \langle f(\sigma)^2\rangle^{q/2}\e^{q\beta (\e^{-t}-1)np'(\beta)}\e^{\frac{q^2\beta^2}{2}(1-\e^{-2t})n}\langle(\e^{\beta(1-\e^{-t})(np'(\beta)- H_n(\sigma))}-1)^q\rangle \\
%&\stackrel{{\mbox{\footnotesize\eqref{frequent_ineq}}}}{\leq} \langle f(\sigma)^2\rangle^{q/2}\e^{{q^2\beta^2T}} \langle(\e^{\beta(\e^{-t}-1) H_n(\sigma)}-\e^{\beta n(\e^{-t}-1)p'(\beta)})^q\rangle \\
%&\stackrel{\phantom{\mbox{\footnotesize\eqref{frequent_ineq}}}}{=}\langle f(\sigma)^2\rangle^{q/2}\e^{{q^2\beta^2T}}\e^{q\beta n(\e^{-t}-1)p'(\beta)}\langle(\e^{\beta(1-\e^{-t})(np'(\beta)- H_n(\sigma))}-1)^q\rangle \\
&\stackrel{\mbox{\footnotesize\eqref{frequent_ineq}}}{\leq} C(\beta,T,q)\langle f(\sigma)^2\rangle^{q/2} \langle(\e^{\beta(1-\e^{-t})(np'(\beta)- H_n(\sigma))}-1)^q\rangle. \raisetag{4.5\baselineskip}
}
For any $L>0$, we have the inequality $(\e^x - 1)^q \leq C(L,q)|x|$ %\frac{e^L-1}{L}|x|$ 
for all $x\leq L$.
Hence
\eeq{ \label{next_2}
&\langle(\e^{\beta(1-\e^{-t})(np'(\beta)- H_n(\sigma))}-1)^q\rangle \\
&\stackrel{\phantom{\mbox{\footnotesize\eqref{frequent_ineq}}}}{\leq} C(L,q)\beta (1-\e^{-t})n\Big\langle\Big|p'(\beta)-\frac{H_n(\sigma)}{n}\Big|\Big\rangle \\
&\phantom{\stackrel{\mbox{\footnotesize\eqref{frequent_ineq}}}{\leq}} + \langle(\e^{\beta(1-\e^{-t})(np'(\beta)- H_n(\sigma))}-1)^q\one_{\{\beta(1-\e^{-t})(np'(\beta)- H_n(\sigma)) > L\}}\rangle \\
&\stackrel{{\mbox{\footnotesize\eqref{frequent_ineq}}}}{\leq} C(\beta,T,L,q) \Big\langle\Big|p'(\beta)-\frac{H_n(\sigma)}{n}\Big|\Big\rangle \\
&\phantom{\stackrel{\mbox{\footnotesize\eqref{frequent_ineq}}}{\leq}} + \langle(\e^{\beta(1-\e^{-t})(np'(\beta)- H_n(\sigma))}-1)^q\one_{\{\beta(1-\e^{-t})(np'(\beta)- H_n(\sigma)) > L\}}\rangle.
}
Assume $L\geq 2\beta T p'(\beta)$ so that whenever
\eq{
\beta(1-\e^{-t})\big(np'(\beta)- H_n(\sigma)\big) &> L \geq 2\beta Tp'(\beta) \stackrel{{\mbox{\footnotesize\eqref{frequent_ineq}}}}{\geq}2\beta (1-\e^{-t})n p'(\beta),
}
it follows that
\eq{
-\beta(1-\e^{-t}) H_n(\sigma) &> \beta (1-\e^{-t})np'(\beta) \\
\stackrefp{frequent_ineq}{\implies} \quad -2\beta(1-\e^{-t}) H_n(\sigma)&> \beta(1-\e^{-t})\big(np'(\beta)- H_n(\sigma)\big) > L\geq 0 \\
\stackrel{\mbox{\footnotesize\eqref{frequent_ineq}}}{\implies} \quad \phantom{{-2\beta(1-\e^{-t}) H_n(\sigma)}}\llap{$\displaystyle-\frac{2\beta T}{n} H_n(\sigma)$} &> \beta(1-\e^{-t})\big(np'(\beta)- H_n(\sigma)
\big) > L \geq 0.
}
We thus have
\eeq{ \label{next_3}
&\langle(\e^{\beta(1-\e^{-t})(np'(\beta)-H_n(\sigma))}-1)^q\one_{\{\beta(1-\e^{-t})(np'(\beta)- H_n(\sigma)) > L\}}\rangle \\
&\leq \langle \e^{-\frac{2q\beta T}{n} H_n(\sigma)}\one_{\{-\frac{2\beta T}{n} H_n(\sigma) > L\}}\rangle \\
&\leq \e^{-L}\langle \e^{-\frac{2(q+1)\beta T}{n} H_n(\sigma)}\rangle \\
&= \e^{-L}\frac{E_n[\e^{\beta(1-\frac{2(q+1)T}{n}) H_n(\sigma)}]}{E_n[\e^{\beta H_n(\sigma)}]} \\
&\leq \e^{-L} \frac{(E_n[\e^{\beta H_n(\sigma)}])^{1-\frac{2(q+1)T}{n}}}{E_n[\e^{\beta H_n(\sigma)}]} \\
&= \e^{-L} Z_n(\beta)^{-\frac{2(q+1)T}{n}}.
}
Combining \eqref{next_1}--\eqref{next_3}, we have now shown that 
\eq{ %\label{simple_4}
\E_{\vc h}[(X-X')^q]&\leq \langle f(\sigma)^2\rangle^{q/2}\Big[C(\beta,T,L,q)\Big\langle\Big|p'(\beta)-\frac{H_n(\sigma)}{n} \Big|\Big\rangle
%&\phantom{\leq f(\sigma)^2\rangle^{q/2}\Big[C}
+ C(\beta,T,q)\e^{-L}Z_n(\beta)^{-\frac{2(q+1)T}{n}}\Big].
}
Finally, given $\eps>0$, we choose $L$ large enough that $\e^{-L} \leq \eps$, thereby producing \eqref{bad_prep_var_bound}.
Then \eqref{bad_prep_var_bound_Y} is the special case when $f\equiv1$.
\end{proof}

%\begin{proof}[Proof of Claim~\ref{claim_num}]
%Assume $q$ is a positive integer.
%We have
%\eq{
%Y^q &= \big(Y'+(Y-Y')\big)^q = \sum_{i=0}^q {q \choose i} (Y')^i(Y-Y')^{q-i},
%}
%and so
%\eq{
%\E(Y^q) &\leq \sum_{i=0}^q {q \choose i} \sqrt{\E[(Y')^{2i}]\E[(Y-Y')^{2(q-i)}]}.
%}
%Using \eqref{Y_result} (say with $\eps=1$), Lemma~\ref{lemma:step_1_2}, and \eqref{simple_3}, we deduce that 
%\eeq{ \label{pre_num_bound}
%\E(Y^q) \leq C(\beta,T,q).
%}

%In the same way, \eqref{bad_prep_num_bound} follows from \eqref{bad_prep_denom_bound_prime}.
%\eq{
%\E(X^q) 
%&\stackrel{\phantom{\mbox{\footnotesize\eqref{pre_num_bound}}}}{=} \E(\langle f(\sigma) \e^{\beta\sqrt{1-\e^{-2t}}\sum_ih_i\vphi_i}\e^{\beta(\e^{-t}-1)H_n(\sigma)}\rangle^q) \\
%&\stackrel{\phantom{\mbox{\footnotesize\eqref{pre_num_bound}}}}{\leq} \E(\langle f(\sigma)^2\rangle^{q/2} \langle\e^{2\beta\sqrt{1-\e^{-2t}}\sum_ih_i\vphi_i}\e^{\beta(\e^{-t}-1)H_n(\sigma)}\rangle^{q/2}) \\
%&\stackrel{\phantom{\mbox{\footnotesize\eqref{pre_num_bound}}}}{=} \E(\langle\e^{2\beta\sqrt{1-\e^{-2t}}\sum_ih_i\vphi_i}\e^{\beta(\e^{-t}-1)H_n(\sigma)}\rangle^{q/2}) \\
%&\stackrel{\mbox{\footnotesize\eqref{pre_num_bound}}}{\leq} \sqrt{C(2\beta,T,q/2)}.
%}
%\end{proof}

\section{{Proof of Theorem~\EOref}} \label{proof_2}

In this section, we consider perturbations to the environment of the form
\eq{
\vc g^{(k)} \coloneqq \vc g + \frac{1}{\sqrt{n}}\sum_{j=1}^k \vc h^{(j)}, \quad k\geq0,
}
where the $\vc h^{(j)}$'s are independent copies of $\vc g$.
An important observation is that
\eeq{ \label{temperature_equivalence}
 \vc g^{(k)}\stackrel{\text{d}}{=} \sqrt{1+\frac{k}{n}}\,\vc g \quad \implies \quad
 \mu_{n, \vc g^{(k)}}^\beta \stackrel{\text{d}}{=} \mu_{n,\vc g}^{\beta\sqrt{1+\frac{k}{n}}}.
}
We will continue to use $\E$ to denote expectation with respect to $\vc g$ and the $\vc h^{(k)}$'s jointly, whereas $\E_{\vc h^{(k)}}$ will denote expectation with respect to $\vc h^{(k)}$ conditional on $\vc g$ and $\vc h^{(j)}$, $1 \leq j \leq k-1$.
As before, all statements involving $\E_{\vc h^{(k)}}$ and $\Var_{\vc h^{(k)}}$ are to be interpreted as almost sure statements.

As in Section~\ref{prep_section}, $\langle\cdot\rangle_\beta$ will denote expectation with respect to $\mu_{n,\vc g}^\beta$.
On the other hand, we will write $\llangle\cdot\rrangle_k$ to denote expectation under the measure $\mu_{n,\vc g^{(k)}}^\beta$, where the dependence on $\beta$ is understood.
That is,
\eeq{ \label{k_induction}
{\llangle f(\sigma)\rrangle}_{k} 
&\coloneqq \frac{E_n(f(\sigma)\e^{\beta(H_n(\sigma)+\frac{1}{\sqrt{n}}\sum_{j=1}^k\sum_ih_i^{(j)}\vphi_i)})}{E_n(\e^{\beta(H_n(\sigma)+\frac{1}{\sqrt{n}}\sum_{j=1}^k\sum_ih_i^{(j)}\vphi_i)})}  = \frac{{\llangle f(\sigma)\e^{\frac{\beta}{\sqrt{n}}\sum_ih_i^{(k)}\vphi_i}\rrangle}_{k-1}}{{\llangle \e^{\frac{\beta}{\sqrt{n}}\sum_ih_i^{(k)}\vphi_i}\rrangle}_{k-1}}.
%= \frac{\llangle f(\sigma)\e^{\frac{\beta}{\sqrt{n}}\sum_{j=1}^k\sum_ih_i^{(j)}\vphi_i}\rrangle_\beta}{\llangle \e^{\frac{\beta}{\sqrt{n}}\sum_{j=1}^k\sum_ih_i^{(j)}\vphi_i}\rrangle_\beta}
}
For $\delta >0$, %to be chosen later, 
define the set
\eq{
\AA_{\delta,k} \coloneqq \Big\{\sigma^1\in\Sigma_n : \frac{1}{n}\sum_{i}\vphi_i(\sigma^1)\llangle\vphi_i(\sigma^2)\rrangle_k\leq\delta\Big\},
}
where $\AA_{\delta,0} = \AA_\delta$ is the set under consideration in Theorem~\ref{expected_overlap_thm}, whose proof will rely on Propositions~\ref{pre_iteration_1} and~\ref{pre_iteration_2} below.

\begin{prop} \label{pre_iteration_1}
For any $\delta_0>0$, there exists $n_0 = n_0(\delta_0)$ such that for all $n\geq n_0$, $k\geq 1$, and $\delta\geq\delta_0$,
\eq{
\E\llangle \one_{\AA_{\delta,k-1}}\rrangle_k
\leq \E\llangle\one_{\AA_{\delta^{1/4},k}}\rrangle_k + C(\beta)\delta.
}
\end{prop}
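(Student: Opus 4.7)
The plan is to establish a pointwise comparison between $\AA_{\delta,k-1}$ and $\AA_{\delta^{1/4},k}$ that holds up to a rare event depending only on the newly introduced disorder $\vc h^{(k)}$. Writing $\wt H(\sigma) \coloneqq \sum_i h^{(k)}_i\vphi_i(\sigma)$, the change-of-measure identity
\[
\RR_k(\sigma^1) = \frac{\givena{\RR_{1,2}\,\e^{\beta\wt H(\sigma^2)/\sqrt{n}}}{\sigma^1}_{k-1}}{\llangle \e^{\beta\wt H(\sigma)/\sqrt{n}}\rrangle_{k-1}}
\]
follows from \eqref{k_induction}, where I abbreviate $\RR_k(\sigma^1) \coloneqq \frac{1}{n}\sum_i\vphi_i(\sigma^1)\llangle\vphi_i\rrangle_k$. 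Since \eqref{positive_overlap} gives $\RR_{1,2}+\EEE_n\geq 0$, I would rewrite the numerator as $\givena{(\RR_{1,2}+\EEE_n)\e^{\beta\wt H/\sqrt{n}}}{\sigma^1}_{k-1}-\EEE_n\llangle\e^{\beta\wt H/\sqrt{n}}\rrangle_{k-1}$ and then apply Cauchy--Schwarz to the positive quantity, combined with the trivial bound $(\RR_{1,2}+\EEE_n)^2\leq (1+\EEE_n)(\RR_{1,2}+\EEE_n)$ and the Jensen inequality $\llangle\e^{\beta\wt H/\sqrt{n}}\rrangle_{k-1}^{-1}\leq \llangle\e^{-\beta\wt H/\sqrt{n}}\rrangle_{k-1}$, to obtain
\[
\RR_k(\sigma^1)+\EEE_n \leq X\sqrt{(1+\EEE_n)(\RR_{k-1}(\sigma^1)+\EEE_n)},
\]
where
\[
X \coloneqq \sqrt{\llangle \e^{2\beta\wt H(\sigma)/\sqrt{n}}\rrangle_{k-1}}\,\llangle \e^{-\beta\wt H(\sigma)/\sqrt{n}}\rrangle_{k-1}
\]
is a functional that does not depend on $\sigma^1$.

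Choosing $n_0$ large enough that $\EEE_n\leq\delta_0\leq\delta$ for all $n\geq n_0$, I then deduce $\RR_k(\sigma^1)\leq 2X\sqrt{\delta}$ whenever $\sigma^1\in\AA_{\delta,k-1}$ (restricting to bounded $\delta$; the case of large $\delta$ is trivial since both sides of the target inequality are at most $1$ and $C(\beta)\delta$ can be made $\geq 1$). In particular, if additionally $X\leq\delta^{-1/4}/2$, then $\RR_k(\sigma^1)\leq\delta^{1/4}$, giving the pointwise bound
\[
\one_{\AA_{\delta,k-1}}(\sigma) \leq \one_{\AA_{\delta^{1/4},k}}(\sigma) + \one_{\{X>\delta^{-1/4}/2\}}.
\]
Since the second indicator is independent of $\sigma$, integrating against $\llangle\cdot\rrangle_k$ and then taking $\P$-expectation reduces the proof to the tail estimate $\P(X>\delta^{-1/4}/2)\leq C(\beta)\delta$.

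This last step rests on a uniform fourth-moment bound on $X$. The key computation is that for any $c\in\R$ and positive integer $p$, expanding the $p$th power into an integral over $p$ independent replicas and invoking the Gaussian moment generating function for the $h_i^{(k)}$ gives, using \eqref{variance_assumption} and the trivial bound $\sum_i(\sum_j\vphi_i(\sigma^j))^2\leq p^2n$,
\[
\E_{\vc h^{(k)}}\llangle \e^{c\wt H(\sigma)/\sqrt{n}}\rrangle_{k-1}^p = \Big\llangle \exp\Big(\frac{c^2}{2n}\sum_i\Big(\sum_{j=1}^p\vphi_i(\sigma^j)\Big)^2\Big)\Big\rrangle_{k-1} \leq \e^{c^2p^2/2}.
\]
A Cauchy--Schwarz split of $\E_{\vc h^{(k)}}(X^4)$ followed by application of this bound with $(c,p)=(2\beta,4)$ and $(-\beta,8)$ yields $\E(X^4)\leq C(\beta)$; Markov's inequality then supplies $\P(X>\delta^{-1/4}/2)\leq 16\delta\,\E(X^4)$, as desired.

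The main obstacle is the careful book-keeping of the $\EEE_n$ correction, since Cauchy--Schwarz must be applied to the positive quantity $\RR_{1,2}+\EEE_n$ rather than $\RR_{1,2}$ itself; each shift needs to be transferred back at the end without spoiling the inclusion $\AA_{\delta,k-1}\subset \AA_{\delta^{1/4},k}$. The hypothesis $\delta\geq\delta_0$ and the freedom to take $n_0=n_0(\delta_0)$ large are precisely what allow these corrections to be absorbed into a single constant depending only on $\beta$.
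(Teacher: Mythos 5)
Your proof is correct and follows essentially the same route as the paper's. Both arguments use \eqref{k_induction} together with Cauchy--Schwarz and Jensen's inequality to derive a pointwise bound of the form $\RR_k(\sigma^1) \lesssim X\sqrt{\RR_{k-1}(\sigma^1)+O(\EEE_n)}$ with the same auxiliary random variable $X$ built from $\llangle\e^{\pm\beta\wt H/\sqrt{n}}\rrangle_{k-1}$, yielding (for $n$ large enough that $\EEE_n\leq\delta$) the set inclusion $\AA_{\delta,k-1}\subset\AA_{cX\sqrt{\delta},k}$, and then conclude via a Markov bound on $\P(X>t)$ with $t\sim\delta^{-1/4}$ and $p=4$ using the Gaussian moment-generating-function calculation; the only difference is cosmetic (you shift $\RR_{1,2}$ by $\EEE_n$ and absorb a factor of $2$ at the end, whereas the paper works with $\max(0,\RR_{1,2})$ and hides the factor $\sqrt{2}$ inside the definition of $X$).
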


\begin{proof}
For any measurable $f:\Sigma_n\to[0,1]$, an application of \eqref{k_induction}, followed by Cauchy--Schwarz and Jensen's inequality, gives
\eq{
\llangle f(\sigma) \rrangle_k %= \frac{\llangle f(\sigma)\e^{\frac{\beta}{\sqrt{n}}\sum_vh_v^{(k)}\sigma_v}\rrangle_{k-1}}{\llangle\e^{\frac{\beta}{\sqrt{n}}\sum_vh_v^{(k)}\sigma_v}\rrangle_{k-1}}
&\leq \frac{\sqrt{\llangle f(\sigma)^2\rrangle_{k-1}}\sqrt{\llangle\e^{\frac{2\beta}{\sqrt{n}}\sum_ih_i^{(k)}\vphi_i}\rrangle_{k-1}}}{\llangle\e^{\frac{\beta}{\sqrt{n}}\sum_ih_i^{(k)}\vphi_i}\rrangle_{k-1}} \\
&\leq \sqrt{\llangle f(\sigma)\rrangle_{k-1}}\sqrt{\llangle\e^{\frac{2\beta}{\sqrt{n}}\sum_ih_i^{(k)}\vphi_i}\rrangle_{k-1}}\llangle\e^{\frac{-\beta}{\sqrt{n}}\sum_ih_i^{(k)}\vphi_i}\rrangle_{k-1}.
}
So we define the random variable
\eq{
X \coloneqq \sqrt{2\llangle\e^{\frac{2\beta}{\sqrt{n}}\sum_ih_i^{(k)}\vphi_i}\rrangle_{k-1}}\llangle\e^{\frac{-\beta}{\sqrt{n}}\sum_ih_i^{(k)}\vphi_i}\rrangle_{k-1},
}
and consider, for fixed $\sigma^1$, the function $f_{\sigma^1}(\sigma^2) = 0\vee\frac{1}{n}\sum_i \vphi_i(\sigma^1)\vphi_i(\sigma^2)$.
By \eqref{unit_interval}, $f_{\sigma^1}$ is $[0,1]$-valued, and \eqref{positive_overlap} implies
\eq{
f_{\sigma^1}(\sigma^2) \leq \EEE_n + \frac{1}{n}\sum_i \vphi_i(\sigma^1)\vphi_i(\sigma^2).
}
So the above estimate shows
%the assumptions \eqref{variance_assumption} and \eqref{positive_overlap} (see \eqref{unit_interval}) allow us to write 
%{\color{red} [Can you please elaborate? It is not completely obvious how \eqref{positive_overlap} is used, since the individual $\varphi_i$'s are not nonnegative.]} 
\eq{
\frac{1}{n}\sum_i \vphi_i(\sigma^1)\llangle\vphi_i(\sigma^2)\rrangle_k
\leq \llangle f_{\sigma^1}(\sigma^2)\rrangle_k
&\leq \frac{X}{\sqrt{2}}\sqrt{\EEE_n+\frac{1}{n}\sum_i \vphi_i(\sigma^1)\llangle\vphi_i(\sigma^2)\rrangle_{k-1}}.
}
In particular, when $n$ is sufficiently large that $\EEE_n\leq\delta$, %when $\sigma \in\AA_{\delta,k-1}$ we have 
%{\color{red} [Can't you remove the indicator in the following expression and simply write $\varphi_i(\sigma)$ instead of $\varphi_i$?]} 
\eq{
\one_{\AA_{\delta,k-1}}(\sigma^1)\frac{1}{n}\sum_i \vphi_i(\sigma^1)\llangle\vphi_i(\sigma^2)\rrangle_k\leq X\sqrt{\delta}.
}
We have thus shown
$\AA_{\delta,k-1} \subset \AA_{X\sqrt{\delta},k}$, which implies
\eq{
\E\llangle \one_{\AA_{\delta,k-1}}\rrangle_k \leq \E\llangle \one_{\AA_{X\sqrt{\delta},k}}\rrangle_k
\leq \E\llangle\one_{\AA_{t\sqrt{\delta},k}}\rrangle_k + \P(X > t) \quad \text{for any $t>0$},
}
where in the second inequality we have used the fact that if $\delta_1\leq\delta_2$, then $\AA_{\delta_1,k}\subset \AA_{\delta_2,k}$.
To handle the last term in the above display, we note that for any $p\geq1$,
\eq{
\P(X > t) 
= \P(X^p > t^p)
\leq t^{-p}\E(X^p)
&= t^{-p}2^{p/2}\E\big[\llangle\e^{\frac{2\beta}{\sqrt{n}}\sum_ih_i^{(k)}\vphi_i}\rrangle_{k-1}^{p/2}\llangle\e^{\frac{-\beta}{\sqrt{n}}\sum_ih_i^{(k)}\vphi_i}\rrangle_{k-1}^p] \\
&\leq t^{-p}2^{p/2}\sqrt{\E[\llangle\e^{\frac{2\beta}{\sqrt{n}}\sum_ih_i^{(k)}\vphi_i}\rrangle_{k-1}^p]\cdot\E[\llangle\e^{\frac{-\beta}{\sqrt{n}}\sum_ih_i^{(k)}\vphi_i}\rrangle_{k-1}^{2p}]} \\
&\leq t^{-p}2^{p/2}\sqrt{\E\llangle\e^{\frac{2\beta p}{\sqrt{n}}\sum_ih_i^{(k)}\vphi_i}\rrangle_{k-1}\cdot\E\llangle\e^{\frac{-2\beta p}{\sqrt{n}}\sum_ih_i^{(k)}\vphi_i}\rrangle_{k-1}}.
}
Now, for any $\theta\in\R$ and any $k\geq1$,
\eq{
\E\llangle \e^{\frac{\theta}{\sqrt{n}}\sum_ih_i^{(k)}\vphi_i}\rrangle _{k-1}
= \E\big[ \E_{\vc h^{(k)}}\llangle\e^{\frac{\theta}{\sqrt{n}}\sum_ih_i^{(k)}\vphi_i}\rrangle_{k-1}\big]
%= \e^{\frac{\theta^2}{2n}\sum_i \vphi_i^2}
\stackrel{\mbox{\footnotesize\eqref{freq_identity}}}{=} \e^{\frac{\theta^2}{2}}.
}
Hence
\eq{
\P(X>t) \leq t^{-p}2^{p/2}\e^{2\beta^2p^2}.
}
Choosing $t=\delta^{-1/4}$ and $p = 4$, we arrive at
\eq{
\E\llangle \one_{\AA_{\delta,k-1}}\rrangle_k
\leq \E\llangle\one_{\AA_{\delta^{1/4},k}}\rrangle_k + C(\beta)\delta,
}
which holds for all $n$ such that $\EEE_n\leq\delta$.
\end{proof}

Next we consider the event
\eq{
B_{\delta,k} \coloneqq \Big\{\frac{1}{n}\sum_{i} \llangle\vphi_i\rrangle_{k}^2 \leq \delta\Big\},
}
where $B_{\delta,0} = B_\delta$ is the event under consideration in Theorem~\ref{averages_squared}.

\begin{lemma} \label{alpha_lemma}
Assume $\beta$ is a point of differentiability for $p(\cdot)$, and $p'(\beta) < \beta$.
%, so that Theorem~\ref{averages_squared} applies.
For any $\eps>0$, there is $\delta = \delta(\beta,\eps) >0$ sufficiently small that for any positive constant $K$, the following is true.
If $k(n) \in \{0,1,\dots,K\}$ for all $n$, then
\eeq{ \label{alpha_choice}
\limsup_{n\to\infty} \P(B_{\delta,k(n)}) \leq \eps.
}
\end{lemma}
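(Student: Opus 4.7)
My plan is to reduce Lemma~\ref{alpha_lemma} directly to Theorem~\ref{averages_squared} via the temperature/environment equivalence recorded in~\eqref{temperature_equivalence}. Since $\vc g^{(k)} \stackrel{\mathrm{d}}{=} \sqrt{1+k/n}\,\vc g$, we have $\mu_{n,\vc g^{(k)}}^\beta \stackrel{\mathrm{d}}{=} \mu_{n,\vc g}^{\beta_n}$ with $\beta_n \coloneqq \beta\sqrt{1+k(n)/n}$, so
\[
\P(B_{\delta,k(n)}) = \P\Big(\tfrac{1}{n}\sum_i \langle\vphi_i\rangle_{\beta_n}^2 \leq \delta\Big).
\]
Since $0\le k(n)\le K$, the perturbation satisfies $0 \le \beta_n - \beta \le \beta K/(2n)$, so $\beta_n \to \beta$ while $n(\beta_n-\beta) \le \beta K/2$ stays bounded uniformly in $n$.

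Next, given $\eps>0$, I would apply Theorem~\ref{averages_squared} at $\beta$ (whose hypothesis $p'(\beta)<\beta$ is assumed) to pick $\delta' = \delta'(\beta,\eps)>0$ with
\[
\limsup_{n\to\infty}\P\Big(\tfrac{1}{n}\sum_i\langle\vphi_i\rangle_\beta^2 \le \delta'\Big) \le \eps/2,
\]
and set $\delta\coloneqq \delta'/2$. Writing $\Delta_n \coloneqq \tfrac{1}{n}\big|\sum_i\langle\vphi_i\rangle_\beta^2 - \sum_i\langle\vphi_i\rangle_{\beta_n}^2\big|$, a union bound gives
\[
\P\Big(\tfrac{1}{n}\sum_i\langle\vphi_i\rangle_{\beta_n}^2 \le \delta\Big) \le \P\Big(\tfrac{1}{n}\sum_i\langle\vphi_i\rangle_\beta^2 \le \delta'\Big) + \P(\Delta_n > \delta),
\]
so it remains only to show $\P(\Delta_n>\delta)\to 0$.

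For this I would invoke Lemma~\ref{connecting_betas}(c), which furnishes the pathwise estimate
\[
\Delta_n \le 2\sqrt{n(\beta_n-\beta)\big(F_n'(\beta_n) - F_n'(\beta)\big)}.
\]
Convexity of $F_n$ makes $F_n'(\beta_n) - F_n'(\beta)\ge 0$, and since $n(\beta_n-\beta)$ is a deterministic quantity bounded by $\beta K/2$, Jensen's inequality for $\sqrt{\cdot}$ yields
\[
\E\Delta_n \le 2\sqrt{(\beta K/2)\big(\E F_n'(\beta_n) - \E F_n'(\beta)\big)}.
\]
Because $\beta$ is a point of differentiability for $p(\cdot)$ and $\beta_n\to\beta$, Lemma~\ref{betas_converging} gives $\E F_n'(\beta_n) \to p'(\beta)$ and $\E F_n'(\beta)\to p'(\beta)$; hence $\E\Delta_n\to 0$, and Markov's inequality delivers $\P(\Delta_n>\delta)\to 0$. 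Combining this with the union bound above produces $\limsup_n \P(B_{\delta,k(n)}) \le \eps/2 < \eps$, as required.

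The only subtlety is the order of quantifiers --- $\delta$ must be chosen before $K$ --- but $K$ enters only through the multiplicative constant $\beta K/2$ in front of the vanishing factor $\E F_n'(\beta_n) - \E F_n'(\beta)$, so the $\limsup$ is unaffected and the argument goes through for arbitrary fixed $K$. I do not anticipate any substantive obstacle beyond assembling these standard estimates.
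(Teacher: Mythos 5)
Your proof is correct and takes essentially the same route as the paper's: reduce to Theorem~\ref{averages_squared} via the temperature/environment equivalence~\eqref{temperature_equivalence}, split with a union bound, bound the deviation $\Delta_n$ using Lemma~\ref{connecting_betas}(c), and kill it with Lemma~\ref{betas_converging}. The only cosmetic difference is that the paper passes from a.s.~convergence of the bound directly to convergence in probability, whereas you route through $L^1$ convergence, Jensen, and Markov; both are valid and of equal weight.
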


\begin{proof}
By Theorem~\ref{averages_squared}, there is $\delta>0$ sufficiently small that
\eeq{ \label{alpha_choice_2}
\limsup_{n\to\infty} \P(B_{2\delta,0}) \leq \eps.
}
Let us write $\beta_n \coloneqq \beta\sqrt{1+\frac{k(n)}{n}}$, and
then observe that %{\color{red} [Typo. What's the equation you are referring to?]}  %From , it follows that 
%{\color{red} [Why don't you give a name to $\beta\sqrt{1+\frac{k(n)}{n}}$, so that you don't have write it so many times below, especially in the subscripts?]}
\eeq{ \label{alpha_choice_3}
\P(B_{\delta,k(n)}) 
&\stackrel{\phantom{\mbox{\footnotesize\eqref{temperature_equivalence}}}}{=} \P\Big(\frac{1}{n}\sum_i \llangle\vphi_i\rrangle_{k(n)}^2 \leq \delta\Big) \\
&\stackrel{\mbox{\footnotesize\eqref{temperature_equivalence}}}{=} \P\Big(\frac{1}{n}\sum_i \langle\vphi_i\rangle_{\beta_n}^2 \leq \delta\Big)
%&\stackrel{\phantom{\mbox{\footnotesize\eqref{temperature_equivalence}}}}{\leq} 
\leq \P(B_{2\delta,0})+\P\bigg(\Big|\frac{1}{n}\sum_i \langle\vphi_i\rangle_{\beta_n}^2 -\frac{1}{n}\sum_i\langle\vphi_i\rangle_{\beta}^2\Big|\geq \delta\bigg). \raisetag{3.5\baselineskip}
}
Since $\sqrt{1+\frac{k(n)}{n}} \leq 1 + \frac{k(n)}{n} \leq 1 + \frac{K}{n}$, we have $0\leq \beta_n - \beta \leq \frac{\beta K}{n}$, and thus Lemma~\ref{connecting_betas}(c) gives
\eq{
\Big|\frac{1}{n}\sum_i \langle\vphi_i\rangle_{\beta_n}^2 -\frac{1}{n}\sum_i\langle\vphi_i\rangle_{\beta}^2\Big| \leq 2\sqrt{\beta K}\sqrt{F_n'(\beta_n) - F_n'(\beta)}.
}
By Lemma~\ref{betas_converging}, the right-hand side above converges to $0$ almost surely as $n\to\infty$.
In particular,
\eq{
\lim_{n\to\infty} \P\bigg(\Big|\frac{1}{n}\sum_i \langle\vphi_i\rangle_{\beta_n}^2 -\frac{1}{n}\sum_i\langle\vphi_i\rangle_{\beta}^2\Big|\geq \delta\bigg) = 0,
}
and so \eqref{alpha_choice} follows from \eqref{alpha_choice_2} and \eqref{alpha_choice_3}.
\end{proof}

\begin{prop} \label{pre_iteration_2}
Given any $\alpha>0$, there are positive constants $C_1(\alpha,\beta)$ and $C_2(\beta)$ 
such that the following holds for any $\delta_0\in(0,1)$.
There exists $n_0 = n_0(\delta_0)$ so that for every $n\geq n_0$, $k\geq1$, and $\delta\in[\delta_0,1)$,
\eq{
\E_{\vc h^{(k)}}\llangle \one_{\AA_{\delta,k-1}}\rrangle_k \geq \llangle \one_{\AA_{\delta,k-1}}\rrangle_{k-1} + C_1(\alpha,\beta)\llangle\one_{\AA_{\delta,k-1}}\rrangle_{k-1}\one_{B_{\alpha,k-1}^\mathrm{c}} - C_2(\beta)\sqrt{\delta}.
}
\end{prop}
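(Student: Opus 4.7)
\medskip

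The plan is to use the cavity identity \eqref{k_induction} to write $\llangle\one_{\AA_{\delta,k-1}}\rrangle_k$ as a ratio of two random variables (with randomness in $\vc h^{(k)}$), and then decompose the expectation into an approximation error plus a Jensen gap, only the latter of which carries useful information about $B_{\alpha,k-1}^\cc$. Set $V \coloneqq \sum_i h_i^{(k)}\vphi_i$ and
\eq{
X_1 \coloneqq \llangle\one_{\AA_{\delta,k-1}}\e^{\beta V/\sqrt{n}}\rrangle_{k-1}, \qquad X_2 \coloneqq \llangle\one_{\AA_{\delta,k-1}^\cc}\e^{\beta V/\sqrt{n}}\rrangle_{k-1},
}
so that $\llangle\one_{\AA_{\delta,k-1}}\rrangle_k = X_1/(X_1+X_2)$. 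By \eqref{freq_identity}, $\bar X_j \coloneqq \E_{\vc h^{(k)}}X_j = \e^{\beta^2/2}\llangle\one_{\AA_{\delta,k-1}^{(\cc)}}\rrangle_{k-1}$ and in particular $\bar X_1 + \bar X_2 = \e^{\beta^2/2}$. Write
\eq{
\E_{\vc h^{(k)}}\Big[\frac{X_1}{X_1+X_2}\Big] - \llangle\one_{\AA_{\delta,k-1}}\rrangle_{k-1}
= \underbrace{\E_{\vc h^{(k)}}\Big[\frac{X_1}{X_1+X_2} - \frac{\bar X_1}{\bar X_1+X_2}\Big]}_{\text{(I)}}
+ \underbrace{\E_{\vc h^{(k)}}\Big[\frac{\bar X_1}{\bar X_1+X_2}\Big] - \frac{\bar X_1}{\bar X_1+\bar X_2}}_{\text{(II)}}.
}

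Step (I) is the approximation error and will be bounded by $C_2(\beta)\sqrt{\delta}$. Apply Lemma~\ref{h_variance_lemma}(b) with $f = \one_{\AA_{\delta,k-1}}$: on this set $\langle\RR_{1,2}\mid\sigma^1\rangle_{k-1} \leq \delta$ by definition, so
$\Var_{\vc h^{(k)}}(X_1) \leq \e^{2\beta^2}(\delta + 2\EEE_n) \leq 2\e^{2\beta^2}\delta$
provided $n \geq n_0(\delta_0)$ is chosen so that $\EEE_n \leq \delta_0/2$. The algebraic identity
$\frac{X_1}{X_1+X_2} - \frac{\bar X_1}{\bar X_1+X_2} = \frac{(X_1-\bar X_1)X_2}{(X_1+X_2)(\bar X_1+X_2)}$
combined with $X_2/(\bar X_1+X_2)\le 1$ gives $|(I)|\le\E_{\vc h^{(k)}}[|X_1-\bar X_1|/(X_1+X_2)]$. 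By Cauchy--Schwarz and the Jensen bound $\E_{\vc h^{(k)}}[(X_1+X_2)^{-2}] \leq \E_{\vc h^{(k)}}\llangle\e^{-2\beta V/\sqrt{n}}\rrangle_{k-1} = \e^{2\beta^2}$ (computed as in Lemma~\ref{moments_lemma}), we conclude $|(I)| \leq \sqrt{2}\,\e^{2\beta^2}\sqrt{\delta} \eqqcolon C_2(\beta)\sqrt{\delta}$.

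Step (II) is nonnegative by Jensen's inequality and admits the exact formula
\eq{
\text{(II)} = \frac{\bar X_1}{\e^{\beta^2}}\,\E_{\vc h^{(k)}}\Big[\frac{(X_2-\bar X_2)^2}{\bar X_1+X_2}\Big],
}
obtained by elementary algebra using $\bar X_1+\bar X_2 = \e^{\beta^2/2}$ and $\E_{\vc h^{(k)}}[X_2-\bar X_2]=0$. To lower bound the inner expectation on $B_{\alpha,k-1}^\cc$, first use the exact variance formula from the proof of Lemma~\ref{h_variance_lemma}, together with $\e^x - 1 \geq x$, to get
\eq{
\Var_{\vc h^{(k)}}(X_2) \geq \e^{\beta^2}\beta^2\,\llangle\one_{\AA_{\delta,k-1}^\cc}(\sigma^1)\one_{\AA_{\delta,k-1}^\cc}(\sigma^2)\RR_{1,2}\rrangle_{k-1}.
}
On $B_{\alpha,k-1}^\cc$ we have $\llangle\RR(\sigma)\rrangle_{k-1} > \alpha$, and splitting according to whether $\sigma^1,\sigma^2$ lie in $\AA_{\delta,k-1}$ or its complement (using $\RR \leq \delta$ on $\AA_{\delta,k-1}$, the symmetry of $\RR_{1,2}$ under relabeling, and \eqref{positive_overlap} to control the cross term $\llangle\one_{\AA}(\sigma^1)\one_{\AA^\cc}(\sigma^2)\RR_{1,2}\rrangle_{k-1}$) yields
$\llangle\one_{\AA_{\delta,k-1}^\cc}(\sigma^1)\one_{\AA_{\delta,k-1}^\cc}(\sigma^2)\RR_{1,2}\rrangle_{k-1} \geq \alpha - 2\delta - \EEE_n \geq \alpha/2$,
valid for all $\delta \leq \delta^*(\alpha) \coloneqq \alpha/8$ and $n$ large. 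Truncating $X_2$ at a level $M=M(\alpha,\beta)$ large enough that the contribution $\E_{\vc h^{(k)}}[(X_2-\bar X_2)^2\one_{X_2>M}]$ (controlled by the uniform moment bound $\E_{\vc h^{(k)}}[X_2^q]\le C(\beta,q)$ proved exactly as in Claim~\ref{claim_num_prime}) is at most $\Var_{\vc h^{(k)}}(X_2)/2$ and bounding $\bar X_1 + X_2 \leq \e^{\beta^2/2}+M$ on the truncated event produces
$\text{(II)} \geq C_1(\alpha,\beta)\,\llangle\one_{\AA_{\delta,k-1}}\rrangle_{k-1}\,\one_{B_{\alpha,k-1}^\cc}$
for some explicit $C_1(\alpha,\beta) > 0$.

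The case $\delta \in (\delta^*(\alpha), 1)$ is handled trivially: plain Jensen already gives $\E_{\vc h^{(k)}}\llangle\one_{\AA_{\delta,k-1}}\rrangle_k \geq \llangle\one_{\AA_{\delta,k-1}}\rrangle_{k-1}$ (step (II) is nonnegative), and since $C_2(\beta)\sqrt{\delta} \geq C_2(\beta)\sqrt{\delta^*}$ for such $\delta$, we may enlarge $C_2(\beta)$ if needed so that $C_2(\beta)\sqrt{\delta^*} \geq C_1(\alpha,\beta)$; this forces the claimed inequality to hold vacuously in this range. The main obstacle is the variance lower bound on $B_{\alpha,k-1}^\cc$: the content of step (II) is really that a positive fraction of the average overlap $\llangle\RR(\sigma)\rrangle_{k-1}$ comes from a pair of replicas \emph{both} lying in $\AA_{\delta,k-1}^\cc$, which is what permits the $e^x-1\geq x$ linearization to produce a nontrivial bound. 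Once this is established, the assembly of the two steps is routine.
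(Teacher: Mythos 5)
Your decomposition into an approximation error (I) plus a Jensen gap (II), with $\bar X_j = \E_{\vc h^{(k)}}X_j$ playing the role the paper assigns to $Y_j$, is exactly the paper's decomposition, so the overall architecture is the same. Two of your local choices, however, are genuinely different and arguably cleaner. First, your identity
\[
\text{(II)} = \frac{\bar X_1}{\e^{\beta^2}}\,\E_{\vc h^{(k)}}\Big[\frac{(X_2-\bar X_2)^2}{\bar X_1+X_2}\Big]
\]
is an \emph{exact} formula for the Jensen gap, whereas the paper Taylor-expands $f(x)=\bar X_1/(\bar X_1+x)$ around $\bar X_2$ and monotonizes $f''$ at the truncation level. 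The exact formula saves one approximation. Second, you lower-bound $\Var_{\vc h^{(k)}}(X_2)$ directly from the exact variance identity via $\e^x-1\ge x$, then split $\llangle\RR_{1,2}\rrangle_{k-1}$ over $\AA_{\delta,k-1}\times\AA_{\delta,k-1}$, its two cross terms, and $\AA_{\delta,k-1}^\cc\times\AA_{\delta,k-1}^\cc$; the paper instead writes $\Var(X_2)=\Var(X-X_1)\ge\Var(X)-2\sqrt{\Var(X)\Var(X_1)}$ and lower-bounds $\Var(X)$ from the full overlap. Your route produces an error term $-C(\beta)\delta$, which is stronger than the paper's $-C(\beta)\sqrt{\delta}$; the paper's route is what keeps the negative term's coefficient clean of $\alpha$-dependence without extra bookkeeping.

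The one real problem is your final case split, on two counts. The assertion that ``plain Jensen already gives $\E_{\vc h^{(k)}}\llangle\one_{\AA_{\delta,k-1}}\rrangle_k \ge \llangle\one_{\AA_{\delta,k-1}}\rrangle_{k-1}$'' does not follow: Jensen yields (II)$\ge 0$, but (I) can still be negative, so all you actually know is $\text{(I)}+\text{(II)} \ge -C_2(\beta)\sqrt{\delta}$. More importantly, the requirement $C_2(\beta)\sqrt{\delta^*(\alpha)} \ge C_1(\alpha,\beta)$ ties $C_2$ to $\alpha$ through $\delta^*(\alpha)=\alpha/8$, whereas the proposition insists $C_2=C_2(\beta)$. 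To land the statement as written you should drop the case split altogether: carry the $-C(\beta)\delta$ term from your variance bound through the truncation step for \emph{all} $\delta\in[\delta_0,1)$ (the variance lower bound may then be negative, but so be it, since $\E_{\vc h^{(k)}}[(X_2-\bar X_2)^2\one_{\{X_2\le M\}}]\ge 0$ makes the resulting inequality vacuous rather than false), and then observe that the prefactor $\bar X_1 / [\e^{\beta^2}(\e^{\beta^2/2}+M)]$ multiplying the negative term satisfies a uniform bound such as $\bar X_1/[\e^{\beta^2}(\e^{\beta^2/2}+M)] \le \e^{-\beta^2}$, independently of $\alpha$ and $M$. This is precisely the paper's observation that $Y_1/(8t^3)\le 1$, and it makes the final $C_2$ depend only on $\beta$ with no need to separate the range of $\delta$.
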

%{\color{red} [Would it be a good idea to break up the proof of this proposition into lemmas/claims, as you did before? And prove the proposition before proving the claims?]}
\begin{proof}
%Throughout the proof, the values of $C_1(\beta)$ and $C_2(\beta)$ may change from line to line.
Let $\delta_0\in(0,1)$ be given, and take $n_0$ such that $\EEE_n\leq\delta_0/2$ for all $n\geq n_0$.
Consider any $\delta\in[\delta_0,1)$,
and define the random variables
\eq{
X &\coloneqq \llangle \e^{\frac{\beta}{\sqrt{n}}\sum_ih_i^{(k)}\vphi_i}\rrangle_{k-1}, \\
X_1 &\coloneqq \llangle \one_{\AA_{\delta,k-1}}\e^{\frac{\beta}{\sqrt{n}}\sum_i h_i^{(k)}\vphi_i}\rrangle_{k-1}, \\
X_2 &\coloneqq \llangle \one_{\AA_{\delta,k-1}^\mathrm{c}}\e^{\frac{\beta}{\sqrt{n}}\sum_i h_i^{(k)}\vphi_i}\rrangle_{k-1}, \\
Y_1 &\coloneqq \E_{\vc h^{(k)}} X_1 \stackrel{\mbox{\footnotesize\eqref{freq_identity}}}{=} \e^{\frac{\beta^2}{2}}\llangle \one_{\AA_{\delta,k-1}}\rrangle_{k-1}, \\
Y_2 &\coloneqq \E_{\vc h^{(k)}} X_2 \stackrel{\mbox{\footnotesize\eqref{freq_identity}}}{=} \e^{\frac{\beta^2}{2}}\llangle \one_{\AA_{\delta,k-1}^\mathrm{c}}\rrangle_{k-1}.
}

\noindent {\bf Step 1.} \textit{Show that $X_1$ is concentrated at $Y_1$, but $X_2$ is not concentrated at $Y_2$ when $B_{\alpha,k-1}^\cc$ occurs.}

First observe that for any $\theta \in (-\infty,0] \cup [1,\infty)$, Jensen's inequality implies
\eeq{ \label{gaussian_bound}
\E_{\vc h^{(k)}} X^\theta 
\leq \E_{\vc h^{(k)}}\llangle \e^{\frac{\theta\beta}{\sqrt{n}}\sum_i h_i^{(k)}\vphi_i}\rrangle_{k-1} 
\stackrel{\mbox{\footnotesize\eqref{freq_identity}}}{=} \e^{\frac{(\theta\beta)^2}{2}}.
}
In particular, for any $t > \e^{\frac{\beta^2}{2}}\geq Y_2$,
\eeq{ \label{upper_X4_truncated}
\E_{\vc h^{(k)}}[(X_2 - Y_2)^2\one_{\{X_2>t\}}]
&\leq \frac{\E_{\vc h^{(k)}}[(X_2 - Y_2)^4\one_{\{X_2>t\}}]}{(t-\e^{\frac{\beta^2}{2}})^2} \\
&\leq \frac{\E_{\vc h^{(k)}}(X_2^4)}{(t-\e^{\frac{\beta^2}{2}})^2}
\leq \frac{\E_{\vc h^{(k)}}(X^4)}{(t-\e^{\frac{\beta^2}{2}})^2}
\stackrel{\mbox{\footnotesize\eqref{gaussian_bound}}}{\leq} \frac{\e^{8\beta^2}}{(t-\e^{\frac{\beta^2}{2}})^2}.
}
On the other hand,
\eeq{ \label{lower_X4_prep}
\Var_{\vc h^{(k)}}(X_2)
= \Var_{\vc h^{(k)}}(X-X_1)
&= \Var_{\vc h^{(k)}}(X) - 2\Cov_{\vc h^{(k)}}(X,X_1) + \Var_{\vc h^{(k)}}(X_1) \\
&\geq \Var_{\vc h^{(k)}}(X) - 2\sqrt{\Var_{\vc h^{(k)}}(X)\Var_{\vc h^{(k)}}(X_1)}.
}
We have the upper bound
\eeq{
\Var_{\vc h^{(k)}}(X) \leq \E_{\vc h^{(k)}}(X^2) \stackrel{\mbox{\footnotesize\eqref{gaussian_bound}}}{\leq} \e^{2\beta^2}, \label{upper_X2}
}
as well as the lower bound
\eeq{ \label{lower_X2}
\Var_{\vc h^{(k)}}(X)
%&\stackrel{\phantom{\mbox{\footnotesize\eqref{freq_identity}}}}{=}  
&\stackrel{\phantom{\mbox{\footnotesize\eqref{freq_identity}}}}{=}
\E_{\vc h^{(k)}}\llangle \e^{\frac{\beta}{\sqrt{n}}\sum_i h_i^{(k)}(\vphi_i(\sigma^1) + \vphi_i(\sigma^2))}\rrangle_{k-1} - \big(\E_{\vc h^{(k)}} \llangle \e^{\frac{\beta}{\sqrt{n}}\sum_i h_i^{(k)}\vphi_i}\rrangle_{k-1}\big)^2 \\
%&= \e^{\frac{\beta^2}{2n}\sum_i (\vphi_i+\phi_i)^2} - \e^{\beta^2} \\
&\stackrel{\mbox{\footnotesize\eqref{freq_identity}}}{=}\e^{\beta^2}\big(\llangle\e^{\frac{\beta^2}{n}\sum_i \vphi_i(\sigma^1)\vphi_i(\sigma^2)}\rrangle_{k-1} - 1\big) \\
&\stackrel{\phantom{\mbox{\footnotesize\eqref{freq_identity}}}}{\geq} \e^{\beta^2}\big(\e^{\frac{\beta^2}{n}\sum_i \llangle \vphi_i\rrangle_{k-1}^2}-1\big) \\
&\stackrel{\phantom{\mbox{\footnotesize\eqref{freq_identity}}}}{\geq} \e^{\beta^2}\frac{\beta^2}{n}\sum_i\llangle \vphi_i\rrangle_{k-1}^2. %\raisetag{3.5\baselineskip}
%= C_1(\beta)\frac{1}{n}\sum_i \llangle \vphi_i\rrangle_{k-1}^2.
\raisetag{4\baselineskip}
}
Meanwhile, we have $\EEE_n\leq\delta_0/2\leq\delta/2$ for all $n\geq n_0$.
Hence Lemma~\ref{h_variance_lemma}(b) implies
\eeq{ \label{upper_X3}
\Var_{\vc h^{(k)}}(X_1) 
&\leq \e^{2\beta^2}\Big(\Big\llangle \one_{\AA_{\delta,k-1}}(\sigma)\frac{1}{n}\sum_i\vphi_i\llangle\vphi_i\rrangle_{k-1}\Big\rrangle_{k-1}+2\EEE_n\Big) \\ 
&\leq 2\e^{2\beta^2}\delta \quad \text{for all $n\geq n_0$.}
}
Using \eqref{upper_X2}--\eqref{upper_X3} in \eqref{lower_X4_prep} yields
\eeq{ \label{lower_X4_almost}
\Var_{\vc h^{(k)}}(X_2) \geq \beta^2\e^{\beta^2}\frac{1}{n}\sum_i\llangle \vphi_i\rrangle_{k-1}^2- 2\e^{2\beta^2}\sqrt{2\delta} \quad \text{for all $n\geq n_0$.}
}
So on the event $B_{\alpha,k-1}^\mathrm{c} = \{\frac{1}{n}\sum_{i} \llangle\vphi_i\rrangle_{k-1}^2 > \alpha\}$,
 \eqref{lower_X4_almost} shows
\eeq{ \label{lower_X4}
\Var_{\vc h^{(k)}}(X_2)\one_{B_{\alpha,k-1}^\mathrm{c}} \geq (\beta^2\e^{\beta^2}\alpha - 2\e^{2\beta^2}\sqrt{2\delta})\one_{B_{\alpha,k-1}^\mathrm{c}}
}
for all $n\geq n_0$.
Given $\alpha$ and $\beta$, we fix $t = t(\alpha,\beta)$ large enough such that 
\begin{subequations} \label{t_choices}
\begin{align}
\label{t_lower}
t > \e^{\frac{\beta^2}{2}} &\geq \max(Y_1,Y_2) \hspace{0.4in}
\intertext{and}
\label{t_upper}
\frac{\e^{8\beta^2}}{(t-\e^{\frac{\beta^2}{2}})^2} &\leq \frac{1}{2}\beta^2\e^{\beta^2}\alpha.
\end{align}
\end{subequations}
Because of \eqref{t_upper}, the inequalities \eqref{upper_X4_truncated} and \eqref{lower_X4} together yield
\eeq{ \label{lower_X4_truncated}
\E_{\vc h^{(k)}}[(X_2-Y_2)^2\one_{\{X_2\leq t\}}]\one_{B_{\alpha,k-1}^\mathrm{c}}
&= \big(\Var_{\vc h^{(k)}}(X_2) - \E[(X_2-Y_2)^2\one_{\{X_2>t\}}]\big)\one_{B_{\alpha,k-1}^\mathrm{c}} \\
&\geq \Big(\frac{1}{2}\beta^2\e^{2\beta^2}\alpha - 2\e^{2\beta^2}\sqrt{2\delta}\Big)\one_{B_{\alpha,k-1}^\mathrm{c}}\\
&=(C_1(\alpha,\beta) - C_2(\beta)\sqrt{\delta})\one_{B_{\alpha,k-1}^\mathrm{c}} \quad \text{for all $n\geq n_0$.}%\raisetag{2.25\baselineskip}
}

\noindent {\bf Step 2.} \textit{Since $X_1 \approx Y_1$, obtain an upper bound on the error in the following approximation:}
\eq{
%\E_{\vc h^{(k)}}\llangle \one_{\AA_{\delta,k-1}}\rrangle_k =
\E_{\vc h^{(k)}}\Big(\frac{X_1}{X_1+X_2}\Big)
\approx \E_{\vc h^{(k)}}\Big(\frac{Y_1}{Y_1+X_2}\Big).
}

Simple algebra gives
\eq{
\frac{X_1}{X_1+X_2} - \frac{Y_1}{Y_1+X_2} 
&= \frac{X_2(X_1-Y_1)}{(X_1+X_2)(Y_1+X_2)} 
= \frac{X_2(X_1-Y_1)}{X(Y_1+X_2)},
}
and
\eeq{ \label{approximation_error_bound}
\Big|\E_{\vc h^{(k)}}\Big(\frac{X_2(X_1-Y_1)}{X(Y_1+X_2)}\Big)\Big|
&\stackrel{\phantom{\mbox{\footnotesize\eqref{upper_X3},\eqref{gaussian_bound}}}}{\leq} \E_{\vc h^{(k)}} \Big(\frac{|X_1-Y_1|}{X}\Big) \\
&\stackrel{\phantom{\mbox{\footnotesize\eqref{upper_X3},\eqref{gaussian_bound}}}}{\leq} \E_{\vc h^{(k)}}(X^{-2})\sqrt{\Var_{\vc h^{(k)}}(X_1)} \\
&\stackrel{\mbox{\footnotesize\eqref{gaussian_bound},\eqref{upper_X3}}}{\leq} C(\beta)\sqrt{\delta} \quad \text{for all $n\geq n_0$.}\\
}

\noindent {\bf Step 3.} \textit{Since $X_2$ is not concentrated at $Y_2$ when $B_{\alpha,k-1}^\cc$ occurs,}
%\eq{
%\llangle \one_{\AA_{\delta,k-1}}\rrangle_{k-1} = \frac{Y_1}{Y_1+Y_2},
%}
\textit{obtain a lower bound on the gap in the following application of Jensen's inequality:}
\eq{
\E_{\vc h^{(k)}}\Big(\frac{Y_1}{Y_1+X_2}\Big) = \frac{Y_1}{Y_1+Y_2} + (\text{Jensen gap}).
}

%Having made the above preparations, 
We consider the function $f : (-Y_1,\infty) \to [0,1]$ given by
\eq{
f(x) \coloneqq \frac{Y_1}{Y_1+x}, \quad \text{for which} \quad f''(x) = \frac{2Y_1}{(Y_1+x)^3} \geq 0.
}
In particular, we consider its Taylor series approximation about $Y_2$,
\eq{
f(x) = f(Y_2) + (x-Y_2)f'(Y_2) + \frac{(x-Y_2)^2}{2}f''(\xi_x),
}
where $\xi_x$ belongs to the interval between $x$ and $Y_2$.
We note that such an expansion exists because the identity $Y_1 + Y_2 = \e^{\frac{\beta^2}{2}}$ shows $Y_2 > -Y_1$.
Jensen's inequality implies
\eq{
\E_{\vc h^{(k)}}f(X_2) \geq f(\E_{\vc h^{(k)}}X_2) = f(Y_2)
= \frac{Y_1}{Y_1+Y_2} = \llangle \one_{\AA_{\delta,k-1}}\rrangle_{k-1}.
}
We will now produce a lower bound on the Jensen gap.

First observe that $f''$ is decreasing on $(-Y_1,\infty)$.
Consequently, if $x \in [Y_2,t]$, then $f''(\xi_{x}) \geq f''(x) \geq f''(t)$.
Similarly, if $x\leq Y_2$, then $f''(\xi_{x}) \geq f''(Y_2) \geq f''(t)$.
Therefore, for all $n\geq n_0$, we have
\eeq{ \label{jensen_gap}
\E_{\vc h^{(k)}}f(X_2) - \llangle \one_{\AA_{\delta,k-1}}\rrangle_{k-1}
&\stackrel{\phantom{\mbox{\footnotesize\eqref{t_lower},\eqref{t_choices}}}}{=} \E_{\vc h^{(k)}}f(X_2) - f(Y_2) \\
&\stackrel{\phantom{\mbox{\footnotesize\eqref{t_lower},\eqref{t_choices}}}}{=} \frac{\E_{\vc h^{(k)}}[(X_2-Y_2)^2f''(\xi_{X_2})]}{2} \\
&\stackrel{\phantom{\mbox{\footnotesize\eqref{t_lower},\eqref{t_choices}}}}{\geq} \frac{f''(t)}{2}\E_{\vc h^{(k)}}[(X_2-Y_2)^2\one_{\{X_2\leq t\}}] \\
&\stackrel{\phantom{\mbox{\footnotesize\eqref{t_lower},\eqref{t_choices}}}}{\geq} \frac{Y_1}{(Y_1+t)^3} \E_{\vc h^{(k)}}[(X_2-Y_2)^2\one_{\{X_2\leq t\}}]\one_{B_{\alpha,k-1}^\mathrm{c}} \\
&\stackrel{\mbox{\footnotesize\eqref{t_lower},\eqref{lower_X4_truncated}}}{\geq} \frac{Y_1}{8t^3}(C_1(\alpha,\beta) - C_2(\beta)\sqrt{\delta})\one_{B_{\alpha,k-1}^\mathrm{c}}\\
&\stackrel{\phantom{\mbox{\footnotesize\eqref{t_lower},\eqref{t_choices}}}}{\geq}C_1(\alpha,\beta)\llangle\one_{\AA_{\delta,k-1}}\rrangle_{k-1}\one_{B_{\alpha,k-1}^\mathrm{c}} - C_2(\beta)\sqrt{\delta}, \raisetag{5.5\baselineskip}
}
where the second term in the final expression need not depend on $\alpha$ since $Y_1/(8t^3) \leq 1$. \\

\noindent {\bf Step 4.} \textit{Reckon the final bound.}

In summary, for all $n\geq n_0$,
\eq{
\E_{\vc h^{(k)}}\llangle \one_{\AA_{\delta,k-1}}\rrangle_k 
&\stackrel{\mbox{\footnotesize\eqref{k_induction}}}{=} \E_{\vc h^{(k)}}\Big(\frac{X_1}{X_1+X_2}\Big) \\
&\stackrel{\mbox{\footnotesize\eqref{approximation_error_bound}}}{\geq} \E_{\vc h^{(k)}}\Big(\frac{Y_1}{Y_1+X_2}\Big) - C(\beta)\sqrt{\delta} \\
&\stackrel{\phantom{\mbox{\footnotesize\eqref{approximation_error_bound}}}}{=} \E_{\vc h^{(k)}} f(X_2) - C(\beta)\sqrt{\delta} \\
&\stackrel{\mbox{\footnotesize\eqref{jensen_gap}}}{\geq} \llangle \one_{\AA_{\delta,k-1}}\rrangle_{k-1} + C_1(\alpha,\beta)\llangle\one_{\AA_{\delta,k-1}}\rrangle_{k-1}\one_{B_{\alpha,k-1}^\mathrm{c}} - C_2(\beta)\sqrt{\delta}.
}
\end{proof}

\begin{proof}[Proof of Theorem~\ref{expected_overlap_thm}]
Let $\eps> 0$ be given.
From Lemma~\ref{alpha_lemma}, we fix $\alpha = \alpha(\beta,\eps)>0$ so that for any bounded sequence $(k(n))_{n\geq1}$ of nonnegative integers, we have 
\eeq{ \label{alpha_choice_real}
\limsup_{n\to\infty} \P(B_{\alpha,k(n)}) \leq \frac{\eps}{2}.
}
We wish to find $\delta_* > 0$, depending only on $\beta$ and $\eps$, such that $\E\llangle \one_{\AA_{\delta_*}}\rrangle \leq \eps$.

Let $\delta_0 \in (0,1)$, its exact value to be decided later.
From Proposition~\ref{pre_iteration_2}, we know that for all $n \geq n_0 = n_0(\delta_0)$ and $\delta\in[\delta_0,1)$,
\eq{
\E\llangle \one_{\AA_{\delta,k-1}}\rrangle_k
&\geq \E\llangle \one_{\AA_{\delta,k-1}}\rrangle_{k-1} + C_1(\beta,\eps)\E(\llangle\one_{\AA_{\delta,k-1}}\rrangle_{k-1}\one_{B_{\alpha,k-1}^\mathrm{c}}) - C_2(\beta)\sqrt{\delta}.
}
And from Proposition~\ref{pre_iteration_1}, we can assume
\eq{
\E\llangle \one_{\AA_{\delta,k-1}}\rrangle_k
\leq \E\llangle\one_{\AA_{\delta^{1/4},k}}\rrangle_k + C(\beta)\delta \quad \text{for all $n\geq n_0$, $\delta\in[\delta_0,1)$.}
}
Linking the two inequalities, we find that
\eq{
\E\llangle\one_{\AA_{\delta^{1/4},k}}\rrangle_k 
&\geq \E\llangle \one_{\AA_{\delta,k-1}}\rrangle_{k-1} + \mathbf C_1(\beta,\eps)\E(\llangle\one_{\AA_{\delta,k-1}}\rrangle_{k-1}\one_{B_{\alpha,k-1}^\mathrm{c}}) - \mathbf C_2(\beta)\sqrt{\delta},
} 
where now we fix the constants $\mathbf C_1(\beta,\eps)$ and $\mathbf C_2(\beta)$.
%{\color{red} [Until now, $C_1$ and $C_2$ were changing from line to line. May be clarify that they will be fixed from now on, because their specific values will be used below.]} 
Note that $\delta_0\leq\delta\leq\delta^{1/4}<1$, and so this reasoning can be iterated.
Iterating $K$ times produces the estimate
\eq{ %\label{K_iterations}
1\geq\E\llangle \one_{\AA_{\delta^{1/4^K},K}}\rrangle_{K}
&\geq \sum_{k=0}^{K-1}\Big[\mathbf C_1(\beta,\eps)\E(\llangle \one_{\AA_{\delta^{1/4^k},k}}\rrangle_k\one_{B_{\alpha,k}^\mathrm{c}}) - \mathbf C_2(\beta)\sqrt{\delta^{1/4^k}}\ \Big]
+ \E\llangle \one_{\AA_{\delta,0}}\rrangle_0,
}
which implies the existence of some $k=k(n)\in\{0,1,\dots,K-1\}$ such that 
%{\color{red} [may be mention which equation implies this]}
\eeq{ \label{k_existence}
\mathbf C_1(\beta,\eps)\E(\llangle \one_{\AA_{\delta^{1/4^k},k}}\rrangle_k\one_{B_{\alpha,k}^\mathrm{c}}) - \mathbf C_2(\beta)\sqrt{\delta^{1/4^k}} &\leq \frac{1}{K}.
}
So we take $K = K(\beta,\eps)$ large enough that 
\eeq{ \label{K_choice}
\frac{1}{\mathbf C_1(\beta,\eps)K} \leq \frac{\eps}{6},
}
and then choose $\delta_0 = \delta_0(\beta,K)$ small enough that 
\eeq{ \label{eps_choice}
\mathbf C_2(\beta)\sqrt{\delta_0^{1/4^K}} \leq \frac{1}{K}.
}
%If $\E\llangle \one_{\AA_\delta}\rrangle < 1/k$, we are are.
%Otherwise, 
%Since $\E\llangle \one_{\AA_{\delta^{1/4^K},K}}\rrangle_{K} \leq 1$, \eqref{K_iterations} implies the existence of some $k=k(n)\in\{0,1,\dots,K-1\}$ such that 
%{\color{red} [may be mention which equation implies this]}
%\eq{
%\mathbf C_1(\beta,\eps)\E(\llangle \one_{\AA_{\delta^{1/4^k},k}}\rrangle_k\one_{B_{\alpha,k}^\mathrm{c}}) - \mathbf C_2(\beta)\sqrt{\delta^{1/4^k}} &\leq \frac{1}{K},
%}
We now have, for all $n\geq n_0$,
\eq{
\E(\llangle \one_{\AA_{\delta_0^{1/4^k},k}}\rrangle_k\one_{B_{\alpha,k}^\mathrm{c}}) &\stackrel{\mbox{\footnotesize\eqref{k_existence}}}{\leq} \frac{1}{\mathbf C_1(\beta,\eps)}\Big(\frac{1}{K} + \mathbf C_2(\beta)\sqrt{\delta_0^{1/4^K}}\Big)\\
&\stackrel{\mbox{\footnotesize\eqref{eps_choice}}}{\leq} \frac{2}{\mathbf C_1(\beta,\eps)K} 
\stackrel{\mbox{\footnotesize\eqref{K_choice}}}{\leq}\frac{\eps}{3}.
}
Combining this bound with \eqref{alpha_choice_real}, we see that
\eeq{ \label{eps_0_bound}
\E \llangle \one_{\AA_{\delta_0^{1/4^k},k}}\rrangle_k 
\leq \E(\llangle \one_{\AA_{\delta_0^{1/4^k},k}}\rrangle_k\one_{B_{\alpha,k}^\mathrm{c}}) + \P(B_{\alpha,k}) \leq \eps \quad \forall\text{ large $n$.}
}
To now complete the proof, we must obtain from this result an analogous one with $k=0$. 
%{\color{red} [Again, may be you can give a name to $\beta\sqrt{1+\frac{k}{n}}$ so that you don't have to write it so many times below.]} 

As in the proof of Lemma~\ref{alpha_lemma}, we will write $\beta_n \coloneqq \beta\sqrt{1+\frac{k}{n}}$.
For $\eta>0$, define the set
\eq{
\wt{\AA}_{\eta,k} \coloneqq \Big\{\sigma^1\in\Sigma_n : \sum_i \vphi_i(\sigma^1)\langle \vphi_i(\sigma^2)\rangle_{\beta_n}\leq \eta \Big\}.
}
It follows from \eqref{temperature_equivalence} that
\eeq{ \label{expectations_same}
\llangle\one_{\AA_{\eta,k}}\rrangle_k \stackrel{\text{d}}{=} \langle\one_{\wt{\AA}_{\eta,k}}\rangle_{\beta_n} \quad \text{for any $\eta>0$,}
}
Since $0\leq \beta_n-\beta \leq \frac{\beta K}{n}$, %\sqrt{1+\frac{k}{n}} \leq 1 + \frac{k}{n} \leq 1 + \frac{K}{n}$,
Lemma~\ref{connecting_betas}(b) implies
\eq{
\Big|\frac{1}{n}\sum_{i} \vphi_i\langle\vphi_i\rangle_{\beta_n} - \frac{1}{n}\sum_i\vphi_i\langle\vphi_i\rangle_\beta\Big|
&\leq \sqrt{\beta K}\sqrt{F_n'(\beta_n) - F_n'(\beta)}.
}
Denote the right-hand side above by $\Delta_n$.
Take $\delta_* \coloneqq \frac{1}{2}{\delta_0} \leq \frac{1}{2}\delta_0^{1/4^k}$.
From the above display,
$\AA_{\delta_*,0} \subset \wt{\AA}_{\delta_*+\Delta_n,k}.$
Hence
\eq{
\E\langle\one_{\AA_{\delta_*,0}}\rangle_\beta
&\stackrel{\phantom{\mbox{\footnotesize\eqref{expectations_same}}}}{\leq}  \E\langle\one_{\wt{\AA}_{\delta_*+\Delta_n,k}}\rangle_\beta \\
& \stackrel{\phantom{\mbox{\footnotesize\eqref{expectations_same}}}}{\leq}   \P(\Delta_n > \delta_*) + \E \langle\one_{\wt{\AA}_{2\delta_*,k}} \rangle_\beta \\
&\stackrel{\mbox{\footnotesize\eqref{expectations_same}}}{=}  \P(\Delta_n > \delta_*) + \E \langle\one_{\wt{\AA}_{2\delta_*,k}} \rangle_\beta -
\E \langle\one_{\wt{\AA}_{2\delta_*,k}} \rangle_{\beta_n} +\E\llangle\one_{\AA_{2\delta_*,k}}\rrangle_k \\
& \stackrel{\phantom{\mbox{\footnotesize\eqref{expectations_same}}}}{\leq}  \P(\Delta_n > \delta_*) + \E \langle\one_{\wt{\AA}_{2\delta_*,k}} \rangle_\beta -
\E \langle\one_{\wt{\AA}_{2\delta_*,k}} \rangle_{\beta_n} +\E\llangle\one_{\AA_{\delta_0^{1/4^k},k}}\rrangle_k.
}
And by Lemma~\ref{connecting_betas}(a),
\eq{
 | \langle\one_{\wt{\AA}_{2\delta_*,k}} \rangle_{\beta_n}
 -  \langle\one_{\wt{\AA}_{2\delta_*,k}} \rangle_\beta |
  \leq \Delta_n.
}
From the previous two displays and \eqref{eps_0_bound}, we have
\eq{
\E\langle\one_{\AA_{\delta_*,0}}\rangle_\beta \leq \P(\Delta_n > \delta_*) + \E(\Delta_n) + \eps \quad \text{for all large $n$}.
}
Finally, Lemma~\ref{betas_converging} shows that $\Delta_n \to 0$ almost surely and in $L^1$ as $n\to\infty$.
Consequently,
$\limsup_{n\to\infty} \E\langle\one_{\AA_{\delta_*,0}}\rangle_\beta \leq \eps$.
\end{proof}

\section{{Proof of equivalence of Theorems \ECref~and \EOref}} \label{cor_proof}
Theorem~\ref{easy_cor} is implied by Theorem~\ref{expected_overlap_thm} once we establish the following result.
Recall the definitions \eqref{ball_def} and \eqref{A_def}.

\begin{prop}
Suppose $H_n$ is defined by \eqref{field_decomposition},  where $(g_i)_{i=1}^\infty$ are i.i.d. random variables with zero mean and unit variance (not necessarily Gaussian).
Assume \eqref{free_energy_assumption}, \eqref{variance_assumption}, and \eqref{positive_overlap}. %where $H_n$ is given by \eqref{field_decomposition}, and $(g_i)_{i=1}^\infty$ are i.i.d.~random variables with zero mean and unit variance (not necessarily Gaussian). %{\color{red} [Do we need mean zero also?]} 
Then the following two statements are equivalent:
\begin{itemize}
\item[$\mathrm{(S1)}$] For every $\eps > 0$, there exist integers $k = k(\beta,\eps)$ and $n_0 = n_0(\beta,\eps)$ and a number $\delta = \delta(\beta,\eps)>0$ such that the following is true for all $n\geq n_0$.
With $\P$-probability at least $1-\eps$, there exist $\sigma^1,\dots,\sigma^k\in\Sigma_n$ such that 
\eq{
\mu_{n}^\beta\Big(\bigcup_{j=1}^k \BB(\sigma^j, \delta)\Big) \geq 1 - \eps.
}
%where $\BB(\sigma,\delta) = \{\sigma'\in\Sigma_n : \RR(\sigma,\sigma') \geq \delta\}$.
\item[$\mathrm{(S2)}$] For every $\eps>0$, there exists $\delta = \delta(\beta,\eps) > 0$ sufficiently small that
\eq{ %\label{expected_overlap_thm_eq}
\limsup_{n\to\infty} \E\langle\one_{\AA_{n,\delta}}\rangle \leq \eps.
}
%where $\AA_{n,\delta} = \{\sigma\in\Sigma_n : \RR(\sigma) \leq \delta\}$.
\end{itemize}
\end{prop}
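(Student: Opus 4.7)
The plan is to prove the two implications separately: (S2) $\Rightarrow$ (S1) by a law-of-large-numbers argument on i.i.d.\ Gibbs samples, and (S1) $\Rightarrow$ (S2) by a more delicate argument exploiting positive-semidefiniteness of the overlap kernel together with the near-nonnegativity of $\RR$ provided by \eqref{positive_overlap}. Throughout I will write $u_\sigma \coloneqq (\vphi_{i,n}(\sigma)/\sqrt{n})_{i \geq 1} \in \ell^2(\N)$, so that $\RR(\sigma,\tau) = \langle u_\sigma, u_\tau\rangle$, $\|u_\sigma\|^2 = 1$ by \eqref{variance_assumption}, and $\RR(\sigma) = \langle u_\sigma, m\rangle$ with $m \coloneqq \langle u_\sigma\rangle$.

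For (S2) $\Rightarrow$ (S1): Fix $\eps > 0$ and, by (S2), choose $\delta_0 > 0$ such that $\limsup_n \E\langle \one_{\AA_{n,2\delta_0}}\rangle \leq \eps^3/4$. Draw $\sigma^1,\ldots,\sigma^{k+1}$ i.i.d.\ from $\mu_n^\beta$ and set $Y(\sigma^{k+1}) \coloneqq k^{-1}\sum_{j=1}^k \RR(\sigma^j,\sigma^{k+1})$. Conditional on $\sigma^{k+1}$, the summands are i.i.d., bounded by $1$ in absolute value (by Cauchy--Schwarz), and have mean $\RR(\sigma^{k+1})$, so Chebyshev gives $\E\langle \one_{|Y - \RR(\sigma^{k+1})| \geq \delta_0}\rangle \leq 1/(k\delta_0^2)$, which is $\leq \eps^3/4$ once $k = k(\eps,\delta_0)$ is large enough. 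Off both this deviation event and $\{\sigma^{k+1} \in \AA_{n,2\delta_0}\}$, we have $Y(\sigma^{k+1}) > \delta_0$, forcing $\max_j \RR(\sigma^j,\sigma^{k+1}) > \delta_0$ and hence $\sigma^{k+1} \in \cup_j \BB(\sigma^j,\delta_0)$. Two applications of Markov's inequality---first averaging out $\sigma^1,\ldots,\sigma^k$ for fixed $\vc g$ to locate a deterministic realization of centers, then discarding the bad environments---yield (S1) with $\delta = \delta_0$ and $\P$-probability $\geq 1-\eps$.

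For (S1) $\Rightarrow$ (S2): Given (S1), partition $\cup_j \BB(\sigma^j,\delta)$ into disjoint pieces $A_j$ of Gibbs mass $q_j$, $\sum_j q_j \geq 1-\eps$, and set $v_j \coloneqq \int_{A_j} u_\sigma\, \mu_n^\beta(\dd\sigma)$. Since $\langle v_j,u_{\sigma^j}\rangle = \int_{A_j}\RR(\sigma,\sigma^j)\mu_n^\beta(\dd\sigma) \geq \delta q_j$, Cauchy--Schwarz gives $\|v_j\| \geq \delta q_j$. Decomposing $m = \sum_j v_j + r$ with $\|r\| \leq \eps$, for $\sigma \in A_{j^*}$ we have $\RR(\sigma) = \langle u_\sigma, v_{j^*}\rangle + \sum_{j \neq j^*}\langle u_\sigma,v_j\rangle + \langle u_\sigma, r\rangle$; by \eqref{positive_overlap}, the off-diagonal and residual terms are each $\geq -\EEE_n$ times the relevant mass, reducing the task to lower-bounding $\langle u_\sigma, v_{j^*}\rangle$ for most $\sigma \in A_{j^*}$. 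The conditional mean over $A_{j^*}$ of this quantity is $\|v_{j^*}\|^2/q_{j^*} \geq \delta^2 q_{j^*}$, while the Gram-matrix identity $\int \langle u_\sigma, v_{j^*}\rangle^2 \mu_n^\beta(\dd\sigma) \leq \|v_{j^*}\|^2$ (using $\mathrm{tr}\langle u_\sigma u_\sigma^T\rangle = 1$) feeds into a reverse-Markov / Paley--Zygmund inequality giving $\langle u_\sigma, v_{j^*}\rangle \geq \delta^2 q_{j^*}/2$ on a positive fraction of $A_{j^*}$. Aggregating over $j^*$ weighted by $q_{j^*}$, and bootstrapping the good fraction toward $1-\eps$ by applying (S1) at successively smaller error, produces a strictly positive $\delta' = \delta'(\delta,\eps,k)$ with $\E\langle \one_{\AA_{n,\delta'}}\rangle \leq \eps$ for $n$ large enough that $\EEE_n \ll \delta'$.

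The principal obstacle is geometric: $\rho \coloneqq 1-\RR$ is only a pseudometric and merely $\sqrt{\rho}$ obeys the triangle inequality, so two configurations in the same overlap ball $\BB(\sigma^{j^*},\delta)$ satisfy only $\RR(\sigma,\tau) \geq 2\delta^2 - 1$, which is typically negative. This precludes naive ``diameter'' arguments and is precisely what forces the positive-semidefinite / Gram-matrix strategy in the (S1) $\Rightarrow$ (S2) direction. The most delicate technical point, where the bulk of the work will lie, is arranging the reverse-Markov bootstrap so that the good fraction of $A_{j^*}$ approaches $1$ rather than remaining at the constant order $\delta^2$ produced by one pass, while simultaneously tracking how $\delta'$ depends on $\delta,\eps,k$ through the cascade and keeping it uniformly bounded away from zero as $n \to \infty$.
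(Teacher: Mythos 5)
Your $\mathrm{(S2)}\Rightarrow\mathrm{(S1)}$ direction is sound and is essentially the paper's argument: you concentrate $k^{-1}\sum_j\RR_{j,k+1}$ around $\RR(\sigma^{k+1})$ via Chebyshev, where the paper instead applies Paley--Zygmund pointwise to each $\RR_{j,k+1}$ to get $\langle\one_{\cap_j\{\RR_{j,k+1}<\delta\}}\rangle\one_{\{\RR(\sigma^{k+1})>2\delta\}}\leq(1-\delta^2)^k$; these are interchangeable second-moment arguments and your version is fine.

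The $\mathrm{(S1)}\Rightarrow\mathrm{(S2)}$ direction has a genuine gap, and you have already put your finger on where it is without closing it. Your Paley--Zygmund step produces, for each piece $A_{j^*}$, a set of relative Gibbs mass $\gtrsim\delta^2 q_{j^*}$ on which $\langle u_\sigma,v_{j^*}\rangle\geq\delta^2 q_{j^*}/2$. That fraction can be as small as $\delta^2/(4k)$, and the second-moment bound $\int\langle u_\sigma,v_{j^*}\rangle^2\,\mu_n^\beta(\dd\sigma)\leq\|v_{j^*}\|^2$ is tight precisely when the overlap with $v_{j^*}$ concentrates on a thin slice, so a single pass cannot do better than a constant-order fraction. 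The proposed fix---``bootstrap by applying $\mathrm{(S1)}$ at successively smaller error $\eps$''---does not work: shrinking $\eps$ in $\mathrm{(S1)}$ typically forces $\delta$ to shrink and $k$ to grow, and your fraction lower bound $\delta^2 q_{j^*}$ only degrades. There is no mechanism here driving the good fraction toward $1-\eps$; the argument stalls at showing $\langle\one_{\AA_{n,\delta'}}\rangle\leq 1-c\delta^2$ rather than $\leq\eps$.

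What the paper does instead is a proof by contradiction built on two cleaner geometric lemmas that jointly avoid the ``fraction'' issue entirely. First (their Lemma \ref{one_to_pair}), if $\sigma^1,\dots,\sigma^N\in\BB(\sigma^0,\delta)$ with $N$ large, then positive-semidefiniteness of the Gram matrix $(\RR_{j,k})_{0\leq j,k\leq N}$ forces $\max_{j<k}\RR_{j,k}\geq\delta^2/2$: one tests against the vector $(1,-x,\dots,-x)$ and optimizes $x$, so a ball cannot hold arbitrarily many pairwise nearly-orthogonal configurations. Second (their Lemma \ref{many_orthogonal}), if $\AA\subset\AA_{n,\delta'}$ has Gibbs mass $\geq\eps_1$ and $\delta'$ is small enough, then one can greedily extract $N$ elements of $\AA$ that are pairwise nearly orthogonal, because $\sigma\in\AA_{n,\delta'}$ forces $\langle\one_{\BB(\sigma,\eps_2)}\rangle\leq\delta'/\eps_2$---a small-overlap configuration cannot attract much Gibbs mass near itself. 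If $\mathrm{(S2)}$ failed, these two lemmas applied to the set $\AA_{n,\delta'}\cap\BB(\sigma^j,\delta)$ (which has Gibbs mass $\geq\eps/k$ for some $j$, with probability $\geq\eps$) give a direct contradiction, with no amplification step needed. Your intuition about positive-semidefiniteness as the engine is correct, but the argument needs to be run via an extraction/packing contradiction rather than a per-piece lower bound on $\RR(\sigma)$; the latter cannot be pushed past a small constant fraction.
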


\subsection{Proof of {$\mathrm{(S2)} \implies \mathrm{(S1)}$}}
Let  $\eps > 0$ be given.
By $\mathrm{(S2)}$, we can choose $\delta > 0$ small enough and $n_0$ large enough  so that
\eq{
\E\langle\one_{\AA_{n,2\delta}}\rangle \leq \frac{\eps^2}{2} \quad \text{for all $n\geq n_0$}.
}
It follows from Markov's inequality that
\eeq{ \label{Markov_for_cor}
\P\Big(\langle\one_{\AA_{n,2\delta}}\rangle > \frac{\eps}{2}\Big) \leq \eps.
}
%Now assume $\RR(\sigma^{k+1}) \geq 2\delta$.
Now, by the Paley--Zygmund inequality, for any $j\neq k+1$,
\eq{
\givena{\one_{\{\RR_{j,k+1}\geq\delta\}}}{\sigma^{k+1}}\one_{\{\RR(\sigma^{k+1}) > 2\delta\}} 
\geq\frac{1}{4}\frac{\RR(\sigma^{k+1})^2}{\givena{\RR_{j,k+1}^2}{\sigma^{k+1}}}\one_{\{\RR(\sigma^{k+1}) > 2\delta\}}
&\geq\delta^2\one_{\{\RR(\sigma^{k+1})>2\delta\}}.
}
Therefore,
\eq{
\givena{\one_{\bigcap_{j=1}^k\{\RR_{j,k+1}<\delta\}}}{\sigma^{k+1}}\one_{\{\RR(\sigma^{k+1}) > 2\delta\}}  \leq (1 - {\delta^2})^k \leq \e^{-\delta^2 k}.
}
Choosing $k = \ceil{-\delta^{-2}\log(\eps/2)} \vee 0$, we have
\eq{
\langle\one_{\bigcap_{j=1}^k\{\RR_{j,k+1}<\delta\}}\rangle 
\leq \frac{\eps}{2} + \langle\one_{\{\RR(\sigma^{k+1})\leq2\delta\}}\rangle
= \frac{\eps}{2} + \langle \one_{\AA_{n,2\delta}}\rangle.
}
Therefore, 
\eq{
\P\Big(\langle \one_{\bigcup_{j=1}^k\{\RR_{j,k+1}\geq\delta\}}\rangle \geq 1-\eps\Big)
&= \P\Big(\langle \one_{\bigcap_{j=1}^k\{\RR_{j,k+1}<\delta\}}\rangle \leq \eps\Big) 
\geq \P\Big(\langle \one_{\AA_{n,2\delta}}\rangle \leq \frac{\eps}{2}\Big) 
\stackrel{\mbox{\footnotesize\eqref{Markov_for_cor}}}{\geq} 1 - \eps.
}
This completes the proof, since
\eq{
\mu_{n}^\beta\Big(\bigcup_{j=1}^k \BB(\sigma^j,\delta)\Big) = \langle \one_{\bigcup_{j=1}^k\{\RR_{j,k+1}\geq\delta\}}\rangle.
}

\subsection{Proof of {$\mathrm{(S1)} \implies \mathrm{(S2)}$}}
%We argue the contrapositive.
%So we assume there is some $\eps > 0$ such that for every $\delta > 0$,
%\eq{
%\limsup_{n\to\infty} \E\langle \one_{\AA_{n,\delta}}\rangle > \eps.
%}
%Let $k$ be any positive integer.

We begin with a lemma that roughly states the following. 
If many %non-negatively correlated 
random variables each have non-negligible positive correlation with a distinguished variable, then at least one pair of these variables has non-negligible positive correlation.

\begin{lemma} \label{one_to_pair}
For any $\delta\in(0,1]$, there exists $N_0 = N_0(\delta)$ such that the following holds for any integer $N\geq N_0$ and any $\sigma^0\in\Sigma_n$.
If $\sigma^1,\dots,\sigma^N\in\BB(\sigma^0,\delta)\subset\Sigma_n$, then
\eeq{ \label{one_to_pair_ineq}
\RR_{j,k} \geq \frac{\delta^2}{2} \quad \text{for some $1\leq j<k\leq N$}.
}
\end{lemma}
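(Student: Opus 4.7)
The plan is to view the functions $\vphi(\sigma) \coloneqq (\vphi_{i,n}(\sigma))_{i=1}^\infty$ as vectors in $\ell^2(\N)$. By \eqref{field_decomposition} together with \eqref{variance_assumption}, one has $\|\vphi(\sigma)\|^2 = n$ for every $\sigma \in \Sigma_n$, and $\langle \vphi(\sigma^j),\vphi(\sigma^k)\rangle_{\ell^2} = \Cov(H_n(\sigma^j),H_n(\sigma^k)) = n\RR_{j,k}$. In this language, the hypothesis $\sigma^j \in \BB(\sigma^0,\delta)$ for $j = 1,\dots,N$ becomes the geometric statement that every $\vphi(\sigma^j)$ has inner product at least $n\delta$ with the distinguished vector $\vphi(\sigma^0)$.

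The key step is then a Cauchy--Schwarz argument applied to the sum $S \coloneqq \sum_{j=1}^N \vphi(\sigma^j)$. On one hand,
\[
\langle S, \vphi(\sigma^0)\rangle_{\ell^2} = \sum_{j=1}^N n\RR(\sigma^0,\sigma^j) \geq nN\delta,
\]
so Cauchy--Schwarz combined with $\|\vphi(\sigma^0)\| = \sqrt{n}$ gives $\|S\|^2 \geq n N^2 \delta^2$. On the other hand, expanding the norm directly,
\[
\|S\|^2 = \sum_{j,k=1}^N \langle \vphi(\sigma^j),\vphi(\sigma^k)\rangle_{\ell^2} = n\Big(N + 2\sum_{1\le j<k\le N}\RR_{j,k}\Big).
\]
Combining the two bounds and dividing by $n$ yields
\[
\sum_{1\le j<k\le N}\RR_{j,k} \geq \frac{N^2\delta^2 - N}{2}.
\]

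The conclusion now follows from pigeonhole: the maximum of the $\binom{N}{2}$ overlaps $\RR_{j,k}$ is at least their average, which by the previous inequality is at least $(N\delta^2 - 1)/(N-1)$. A direct computation shows that this quantity exceeds $\delta^2/2$ precisely when $N \geq 2/\delta^2 - 1$, so one may take $N_0(\delta) \coloneqq \lceil 2/\delta^2 \rceil$. There is no real obstacle here; the only point worth noting is that the argument makes no appeal to \eqref{positive_overlap}, since bounding the maximum by the average sidesteps the possibility of individual $\RR_{j,k}$ being negative, and therefore $N_0$ can be chosen depending only on $\delta$ (not on $n$).
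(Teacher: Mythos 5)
Your proof is correct and rests on the same underlying fact as the paper's: the positive semi-definiteness of the Gram matrix $(\RR_{j,k})_{0\le j,k\le N}$. The paper exhibits this by testing the quadratic form against the vector $(1,-x,\dots,-x)$ and optimizing over the free parameter $x>0$ before bounding the off-diagonal terms by $\eta = 0\vee\max_{j<k}\RR_{j,k}$; your Cauchy--Schwarz step (projecting $S$ onto $\vphi(\sigma^0)$) performs that same optimization implicitly, and your pigeonhole on the average plays the role of the paper's $\eta$, so the two arguments are essentially identical---your phrasing is if anything a bit cleaner, and your observation that \eqref{positive_overlap} is never used, and hence $N_0$ depends only on $\delta$, is correct and mirrors the paper.
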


\begin{proof}
Consider the $(N+1)\times(N+1)$ matrix $\RR = (\RR_{j,k})_{0\leq i,j\leq N}$, where
\eq{
\RR_{j,k} = \RR(\sigma^j,\sigma^k) = \frac{1}{n}\sum_i \vphi_i(\sigma^j)\vphi_i(\sigma^k).
}
Observe that $\RR$ is positive semi-definite: for any $\vc x \in \R^{N+1}$,
\eq{
\langle \vc x,\RR \vc x\rangle = \sum_{0\leq j,k\leq N} \RR_{j,k}x_jx_k
%&= \frac{1}{n}\sum_{0\leq j,k\leq N} x_jx_k\sum_i \vphi_{i}(\sigma^j)\vphi_i(\sigma^k) \\
&= \frac{1}{n}\sum_i \sum_{0\leq j,k\leq N} x_j\vphi_{i}(\sigma^j)x_k\vphi_{i}(\sigma^k) 
= \frac{1}{n}\sum_i \bigg(\sum_{j=0}^N x_j\vphi_i(\sigma^j)\bigg)^2 \geq 0.
}
Now let $\eta \coloneqq 0\vee\max_{1\leq j < k\leq N} \RR_{j,k}$. %, which is nonnegative by \eqref{positive_overlap}.
For $\vc x = (1,-x,\dots,-x) \in\R^{1+N}$ with $x\geq0$, our assumptions give % {\color{red} [Is $x\ge 0$?]}
\eq{
0 \leq \iprod{\vc x}{\RR \vc x} \leq 1 + Nx^2 - 2\delta N x + \eta N^2 x^2.
}
%If $\eta = 0$, then taking $x = 1/\sqrt{N}$ yields
%\eq{
%0 \leq 2 - 2\delta\sqrt{N},
%}
%which yields a contradiction for $N$ large enough that $\delta\sqrt{N} > 1$.
%Otherwise, we have $\eta > 0$. In this case, we take 
We now take $x = \delta/(1+\eta N)$ to obtain
\eq{
0 &\leq 1 + N\Big(\frac{\delta}{1+\eta N}\Big)^2 - 2\delta N\frac{\delta}{1+\eta N} + \eta N^2\Big(\frac{\delta}{1+\eta N}\Big)^2 \\
&= 1 + \frac{\delta^2}{1+\eta N}\Big[\frac{N}{1+\eta N} - 2N + \frac{\eta N^2}{1+\eta N}\Big] = 1-\frac{\delta^2N}{1+\eta N}.
} 
Supposing that $\eta < \delta^2/2$, we further see
\eq{
0 \leq 1 - \frac{\delta^2N}{1+\eta N} \leq 1 - \frac{\delta^2N}{1+\delta^2 N/2},
}
which yields a contradiction as soon as $\frac{\delta^2N}{1+\delta^2 N/2} > 1$.
\end{proof}

We will contrast Lemma~\ref{one_to_pair} with the one below, which says that if $\delta$ is small enough, then any non-negligible subset of $\AA_{n,\delta}$ has many nearly orthogonal elements.

\begin{lemma} \label{many_orthogonal}
For any $\eps_1,\eps_2>0$ and positive integer $N$, there is $\delta = \delta(\eps_1,\eps_2,N) > 0$ such that the following holds. %for all $\delta\in(0,\delta_0]$. 
If $\AA\subset\AA_{n,\delta}$ with $\langle \one_{\AA}\rangle \geq \eps_1$, then there are $\sigma^1,\dots,\sigma^N\in \AA$ such that
\eq{
\RR_{j,k} < \eps_2 \quad \text{for all $1\leq j<k\leq N$.}
}
\end{lemma}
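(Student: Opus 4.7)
The strategy is a greedy/inductive extraction: I will pull states $\sigma^1,\sigma^2,\ldots$ out of $\AA$ one at a time, using Markov's inequality at each step to ensure that each previously chosen state rules out only a small amount of the Gibbs mass of $\AA$, so that $\AA$ retains enough mass after $N-1$ deletions to provide the next pick. The key observation is that $\sigma \in \AA_{n,\delta}$ is literally the statement $\langle \RR(\sigma,\sigma')\rangle \leq \delta$, which, combined with the near-nonnegativity \eqref{positive_overlap}, directly bounds the Gibbs mass of the ``bad'' ball $\BB(\sigma,\eps_2)$ centered at $\sigma$.

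First I choose $\delta > 0$ small enough that $(N-1)\delta/\eps_2 < \eps_1/2$, and restrict to $n$ large enough that $\EEE_n < \eps_1\eps_2/(2N)$; this is harmless since every application of the lemma in the sequel will send $n\to\infty$. Suppose inductively that I have selected $\sigma^1,\ldots,\sigma^j \in \AA$ with $\RR_{k,l} < \eps_2$ for all $1 \leq k < l \leq j$, and set
\[
\AA^{(j)} \coloneqq \AA \setminus \bigcup_{i=1}^j \BB(\sigma^i,\eps_2).
\]
If I can show $\langle \one_{\AA^{(j)}}\rangle > 0$ whenever $j < N$, then $\AA^{(j)}$ is nonempty and contains a point $\sigma^{j+1} \in \AA$ with $\RR(\sigma^i,\sigma^{j+1}) < \eps_2$ for every $i \leq j$, advancing the induction until $j = N$.

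To bound the Gibbs mass of each excluded ball, I use that $\sigma^i \in \AA_{n,\delta}$ means $\langle \RR(\sigma^i,\sigma')\rangle \leq \delta$, while \eqref{positive_overlap} gives $\RR(\sigma^i,\sigma') \geq -\EEE_n$. Thus the shifted quantity $\RR(\sigma^i,\sigma') + \EEE_n$ is nonnegative with Gibbs mean at most $\delta + \EEE_n$, and Markov's inequality yields
\[
\mu_n^\beta\bigl(\BB(\sigma^i,\eps_2)\bigr)
\;=\; \mu_n^\beta\bigl(\RR(\sigma^i,\sigma')+\EEE_n \geq \eps_2+\EEE_n\bigr)
\;\leq\; \frac{\delta + \EEE_n}{\eps_2+\EEE_n}
\;\leq\; \frac{\delta + \EEE_n}{\eps_2}.
\]
A union bound over $i=1,\ldots,j \leq N-1$ then gives
\[
\langle \one_{\AA^{(j)}}\rangle \;\geq\; \eps_1 - \frac{(N-1)(\delta+\EEE_n)}{\eps_2} \;>\; 0
\]
by the choice of $\delta$ and $n$, closing the induction.

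I do not anticipate any serious obstacle; the argument is essentially a union-bound exercise once one notices that the definition of $\AA_{n,\delta}$ translates directly, via Markov, into an upper bound on the Gibbs mass of $\BB(\sigma^i,\eps_2)$. The only bookkeeping point is tracking the $\EEE_n$ correction, and this is handled uniformly by taking $n$ sufficiently large so that $\EEE_n$ is negligible relative to $\eps_1\eps_2/N$, after which the resulting threshold $\delta$ depends only on $(\eps_1,\eps_2,N)$ as required.
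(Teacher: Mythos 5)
Your proof is correct and follows essentially the same strategy as the paper's: apply Markov's inequality to bound the Gibbs mass of each ball $\BB(\sigma^i,\eps_2)$ by roughly $\delta/\eps_2 \leq \eps_1/N$, then greedily extract $N$ nearly-orthogonal states. The only difference is that you explicitly track the $\EEE_n$ correction coming from \eqref{positive_overlap}, whereas the paper's proof writes $\RR(\sigma)\geq\eps_2\langle\one_{\BB(\sigma,\eps_2)}\rangle$ directly, which implicitly assumes $\RR\geq 0$ rather than $\RR\geq -\EEE_n$; your version needs the (harmless, since this lemma is only invoked inside a $\limsup_{n\to\infty}$) assumption that $n$ is large, but it is more honest about the hypotheses as actually stated. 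Either way, this is the same Markov-plus-greedy argument with $\delta$ on the order of $\eps_1\eps_2/N$.
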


\begin{proof}
Set $\delta \coloneqq \eps_1\eps_2/N$. %, and consider any $\delta\in(0,\delta_0]$.
%One implication of this choice is that
%\eq{
%\sigma\in\AA\subset\AA_{n,\delta} \quad &\implies \quad \delta \geq R(\sigma) \geq \mu_{n,\vc g}^\beta(\{\sigma\})^2 \quad \implies \quad \mu_{n,\vc g}^\beta(\{\sigma\}) \leq \frac{\eps_1}{N}.
%}
Observe that for any $\sigma\in\AA$, we have the following implication:%, and set
%\eq{
%\AA^1 \coloneqq \{\sigma \in \AA : \RR(\sigma,\sigma^1) > \eps_2\}.
%}
%Using $\sigma$ to denote a random element of $\Sigma_n$ sampled according to $\mu_{n,\vc g}^\beta$, we have
\eeq{\label{high_overlap_set}
\delta \geq \RR(\sigma) \geq \eps_2\langle \one_{\BB(\sigma,\eps_2)}\rangle \quad \implies \quad \langle\one_{\BB(\sigma,\eps_2)}\rangle \leq \frac{\delta}{\eps_2} = \frac{\eps_1}{N}.
}
Therefore, one can inductively choose
\eq{
\sigma^1 \in \AA, \quad \sigma^2 \in \AA\setminus\BB(\sigma^1,\eps_2), \quad \sigma^3\in\AA\setminus(\BB(\sigma^1,\eps_2)\cup\BB(\sigma^2,\eps_2)), \dots
}
where \eqref{high_overlap_set} guarantees that
\eq{
\mu_n^\beta\big(\AA\setminus(\BB(\sigma^1,\eps_2)\cup\cdots\BB(\sigma^{k-1},\eps_2))\big) \geq \eps_1 - (k-1)\frac{\eps_1}{N}.
}
Hence $\sigma^{k}\in\AA\setminus(\BB(\sigma^1,\eps_2)\cup\cdots\BB(\sigma^{k-1},\eps_2))$ can be found so long as $k\leq N$.

\end{proof}

We can now complete the proof.
Assume that $\mathrm{(S1)}$ holds.
Suppose, contrary to $\mathrm{(S2)}$, that there is some $\eps \in (0,1)$ such that for every $\delta > 0$,
\eeq{ \label{contradiction_setup}
\limsup_{n\to\infty} \E\langle \one_{\AA_{n,\delta}}\rangle > 4\eps.
}
Note that for any $n$ such that $\E\langle \one_{\AA_{n,\delta}}\rangle\geq4\eps$, we have
\eq{
4\eps \leq \E\langle\one_{\AA_{n,\delta}}\rangle &\leq \P(\langle\one_{\AA_{n,\delta}}\rangle \geq 2\eps) +2\eps\P(\langle\one_{\AA_{n,\delta}}\rangle<2\eps) 
= (1-2\eps)\P(\langle\one_{\AA_{n,\delta}}\rangle\geq2\eps)+2\eps,
}
and thus $\P(\langle\one_{\AA_{n,\delta}}\rangle \geq2\eps) \geq 2\eps$. 

%Let $\eps_0\coloneqq\eps_1/2$.
From $\mathrm{(S1)}$, we choose $k$ and $\delta$ so that for all $n$ large enough (depending on $\eps$ on $\beta$), the following is true with $\P$-probability at least $1-\eps$:
There exist $\sigma^1,\dots,\sigma^k \in \Sigma_n$ such that
\eeq{ \label{balls_covering}
\mu_{n}^\beta\Big(\bigcup_{j=1}^k \BB(\sigma^j, \delta)\Big) \geq 1-\eps.
}
Once $\delta$ has been determined, choose $N$ so that the conclusion of Lemma~\ref{one_to_pair} holds.
Then, given the values of $k$ and $N$, choose $\delta'$ so that the conclusion of Lemma~\ref{many_orthogonal} holds with $\eps_1 = \eps/k$ and $\eps_2 = \delta^2/2$.

%Since \eqref{balls_covering} remains true if $\delta$ is replaced by a smaller number, we may assume that $\delta$ is so small that the conclusion of Lemma~\ref{many_orthogonal} holds with $\eps_0 = \eps_1$.

In summary, if $n$ is large enough, and $\E\langle \one_{\AA_{n,\delta'}}\rangle\geq4\eps$ %{\color{red} [Did you mean $\eps_1$ or $4\eps$? You appear to be using the latter below.]} 
(by \eqref{contradiction_setup}, there are infinitely many $n$ for which this is the case), the following is true. 
With $\P$-probability at least $2\eps-\eps=\eps$, we have both $\langle \one_{\AA_{n,\delta'}}\rangle \geq 2\eps$ and \eqref{balls_covering} for some $\sigma^1,\dots,\sigma^k\in\Sigma_n$.
In this case, we have
\eq{
\mu_{n}^\beta\bigg(\AA_{n,\delta'} \cap \Big(\bigcup_{j=1}^k \BB(\sigma^j, \delta)\Big)\bigg) \geq 2\eps - \eps = \eps.
}
Therefore, there is some $j$ such that
\eq{
\mu_{n}^\beta\big(\AA_{n,\delta'} \cap \BB(\sigma^j, \delta)\big) \geq \frac{\eps}{k}.
}
By our choice of $\delta'$, we can find $\sigma^1,\dots,\sigma^{N}\in\AA_{n,\delta'} \cap \BB(\sigma^j, \delta)$ satisfying
\eq{
\RR_{j,k} < \frac{\delta^2}{2} \quad \text{for all $1\leq j<k\leq N$.}
}
But $\sigma^1,\dots,\sigma^{N}\in\BB(\sigma^j, \delta)$, and so the above display contradicts \eqref{one_to_pair_ineq}.

\section{Polymer measures are asymptotically non-atomic} \label{no_path_atom}
In this section we prove that directed polymers on the lattice are asymptotically non-atomic.
It is a striking phenomenon that at sufficiently small temperatures, the polymer endpoint distribution places a non-vanishing mass on a single element of $\Z^d$ (which is random and varies with $n$) \cite{comets-shiga-yoshida03}.
The fact that the polymer measures themselves do not share this property, stated below as Theorem~\ref{no_path_atom_thm}, justifies the investigation of replica overlap as an order parameter for path localization.
To emphasize the fact that the Gaussian environment can be replaced by a general one, we reintroduce notation for directed polymers.

Let $(\omega(i,x) : i\geq1, x \in\Z^d)$ be a collection of i.i.d.~random variables.
We will assume that 
\eeq{
\E(\e^{t\omega(i,x)}) < \infty \quad \text{for some $t > 0$}, \label{polymer_exp_moment}
} 
and also that 
\eeq{
\Var(\omega(i,x))> 0 \label{polymer_var}
} 
in order to avoid trivialities. 
%{\color{red} [Did you mean to say that the variance is positive, to avoid trivialities? Or did you mean to assume that the mean is zero?]}
Let $\PP_n$ denote the set of nearest-neighbor paths of length $n$ in $\Z^d$ starting at the origin.
Note that $|\PP_n| = (2d)^n$.
To each $\vc x = (0,x_1,\dots,x_n)$ in $\PP_n$ we associate the Hamiltonian energy
\eq{
H_n(\vc x) \coloneqq \sum_{i=1}^n \omega(i,x_i).
}
The polymer measure is then defined by
\eq{
\mu_{n}^\beta(\vc x) \coloneqq \frac{\e^{\beta H_n(\vc x)}}{\sum_{\vc y}\e^{\beta H_n(\vc y)}}, \quad \vc x \in \PP_n.
}

\begin{thm} \label{no_path_atom_thm}
Assume \eqref{polymer_exp_moment}.
Then for any $d\geq1$ and any $\beta\in[0,\infty)$,
\eeq{ \label{no_atom}
\max_{\vc x\in\PP_n} \mu_{n}^\beta(\vc x) = O(n^{-1}) \quad \mathrm{a.s.}\text{ as $n\to\infty$}.
}
\end{thm}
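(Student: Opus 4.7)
The plan is to prove $\max_{\vc x \in \PP_n} \mu_n^\beta(\vc x) = O(n^{-1})$ almost surely by exploiting local perturbations of the maximizing path $\vc x^\star := \arg\max_{\vc x} H_n(\vc x)$. Since $\mu_n^\beta(\vc x) \propto \e^{\beta H_n(\vc x)}$, this path is also $\arg\max \mu_n^\beta$, so it suffices to bound $\mu_n^\beta(\vc x^\star)$. For any $\vc x \in \PP_n$, call $i \in \{2,\dots,n-1\}$ a \emph{turn} if $x_{i+1} - x_i \neq x_i - x_{i-1}$; then $x_i' := x_{i-1} + (x_{i+1} - x_i)$ is distinct from $x_i$ and is adjacent to both $x_{i-1}$ and $x_{i+1}$, so replacing $x_i$ by $x_i'$ produces a valid alternative path. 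Together with the $2d-1$ last-step perturbations, these give pairwise distinct paths, so
\eq{
Z_n(\beta) \geq \e^{\beta H_n(\vc x)}\biggl(1 + \sum_{i \in T(\vc x)} \e^{-\beta D_i(\vc x)} + \sum_{\substack{y \sim x_{n-1}\\ y \neq x_n}} \e^{-\beta(\omega(n,x_n) - \omega(n, y))} \biggr),
}
where $T(\vc x)$ is the set of turns and $D_i(\vc x) := \omega(i, x_i) - \omega(i, x_i')$. Evaluating at $\vc x = \vc x^\star$, the argmax property forces $D_i(\vc x^\star) \geq 0$, giving
\eq{
\mu_n^\beta(\vc x^\star) \leq \biggl(1 + \sum_{i \in T(\vc x^\star)} \e^{-\beta D_i(\vc x^\star)}\biggr)^{-1}.
}

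It therefore suffices to show that this sum is of order $n$ almost surely. I would decompose this into two claims: (i) the argmax path has $|T(\vc x^\star)| \geq c_1 n$, and (ii) a law of large numbers of the form $\tfrac{1}{|T(\vc x^\star)|}\sum_{i \in T(\vc x^\star)} \e^{-\beta D_i(\vc x^\star)} \geq c_2 > 0$. For (ii), observe that each $D_i(\vc x^\star)$ depends only on $\omega(i, \cdot)$ and is the difference of two i.i.d.~copies of $\omega$; the argmax constraint imposes only the one-sided condition $D_i \geq 0$, so $D_i$ is almost surely finite and hence $\E[\e^{-\beta D}] > 0$. Because distinct turns involve different time indices, the summands are independent after conditioning on $\vc x^\star$, and concentration via Chebyshev / Borel--Cantelli under \eqref{polymer_exp_moment} delivers the LLN.

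The main obstacle is (i). The heuristic is that a path with only $o(n)$ turns consists of $O(1)$ straight-line segments, sampling a vanishing fraction of the environment; by LLN its energy concentrates around $n\E[\omega]$, strictly below the typical maximum $n\kappa$, where $\kappa := \lim_n n^{-1}\max_{\vc x} H_n(\vc x) > \E[\omega]$ is the last-passage time constant. To make this rigorous, the number of paths in $\PP_n$ with at most $\epsilon n$ turns is bounded by $\binom{n}{\epsilon n}(2d)^{\epsilon n + 1}$, which grows as $\e^{O(\epsilon \log(1/\epsilon))n}$, sub-exponentially slower than $(2d)^n$. A standard large-deviation estimate under the exponential moment assumption \eqref{polymer_exp_moment} then yields $\max\{H_n(\vc x)/n : |T(\vc x)| \leq \epsilon n\} \leq \E[\omega] + \delta(\epsilon)$ a.s.~with $\delta(\epsilon) \to 0$, so choosing $\epsilon$ small enough that $\E[\omega] + \delta(\epsilon) < \kappa$ forces $|T(\vc x^\star)| > \epsilon n$ for large $n$ almost surely.
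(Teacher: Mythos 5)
Your overall strategy --- perturbing the maximizing path at its turns, showing the maximizer has linearly many turns, and then lower-bounding $Z_n(\beta)$ by the total Gibbs weight of the perturbed paths --- is exactly the strategy the paper takes, and your claim~(i) (that $t(\vc x^\star) \geq c_1 n$ a.s.~for large $n$) is proved essentially as you describe, via the path-count bound of Lemma~\ref{many_turns} together with a Chernoff bound and the strict inequality $\lambda > \E(\omega)$ of Lemma~\ref{trivial_lemma}.

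The gap is in claim~(ii). You assert that ``the summands are independent after conditioning on $\vc x^\star$'' and that the only effect of the argmax constraint is the one-sided condition $D_i \geq 0$. Neither is true. The event $\{\vc x^\star = \vc x\}$ is determined by the entire environment $\vc\omega$ --- it compares $H_n(\vc x)$ against $H_n$ of \emph{every} other path, involving all of the $\omega(i, y)$'s, including many sites off the path --- so conditioning on it destroys both the independence across time slices and the marginal law of each $\omega(i, x_i) - \omega(i, x_i')$. In particular, you cannot conclude $\E[\e^{-\beta D_i} \mid \vc x^\star] \geq c_2 > 0$ from the unconditional finiteness of $D$; the conditional distribution could in principle concentrate on very large values, and the $D_i$'s could be badly correlated. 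Chebyshev plus Borel--Cantelli on the (supposedly conditionally i.i.d.) sum therefore does not go through. The paper sidesteps this by never conditioning on $\vc x^\star$: it proves the needed statement \emph{uniformly over all paths}. Lemma~\ref{not_many_gaps} gives an exponential bound $\eps^n$ on the probability that a \emph{fixed} path has more than $\nu n$ turns with gap exceeding a constant $D$, and with $\eps = 1/(4d)$ a union bound over all $(2d)^n$ paths (not just the maximizer) yields probability $\leq 2^{-n}$, hence by Borel--Cantelli almost surely every path, and in particular the argmax path, has at least $\tfrac{2\delta}{3}n$ turns with gap $\leq D$. Note that an exponential tail (not merely Chebyshev) is essential here to beat the $(2d)^n$ paths in the union bound. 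With that uniform control, the bound $Z_n(\beta) \geq \tfrac{2\delta}{3}n\,\e^{\beta H_n(\vc x^\star) - \beta D}$ holds deterministically on an a.s.~eventual event, and no law of large numbers for the conditional $D_i$'s is needed. To repair your argument, replace the conditional-LLN step with this quantitative union-bound lemma.
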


The remainder of Section~\ref{no_path_atom} is to prove Theorem~\ref{no_path_atom_thm}.
We begin by defining the \textit{passage time},
\eq{
L_n \coloneqq \max_{\vc x\in \PP_n} H_n(\vc x).
}
We will denote the set of maximizing paths by
\eeq{
\MM_n \coloneqq \{\vc x \in \PP_n : H_n(\vc x) = L_n\}. \label{maximizing_paths}
}
It is well-known (for instance, see \cite{georgiou-rassoul-seppalainen16}) that there is a finite constant $\lambda$ such that
\eeq{ \label{time_constant}
\lim_{n\to\infty} \frac{L_n}{n} = \sup_{n\geq1} \frac{\E(L_n)}{n} = \lambda \quad \mathrm{a.s.}
} 
The first equality above is a consequence of the superadditivity of $L_n$, and the second equality leads to a short proof of the following standard fact.

\begin{lemma} \label{trivial_lemma}
$\lambda > \E(\omega(i,x))$.
\end{lemma}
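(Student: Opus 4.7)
The plan is to exploit the supremum characterization in \eqref{time_constant}, which gives $\lambda \geq \E(L_n)/n$ for every $n \geq 1$, and to obtain the strict inequality already at $n=1$. Specifically, I would lower bound $\lambda$ by $\E(L_1)$ and then argue that $\E(L_1) > \E(\omega(1,0))$ because $L_1$ is the maximum of at least two i.i.d.\ copies of a genuinely random variable.

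Concretely, the first step is to write
\[
L_1 = \max_{x \in \Z^d,\ \|x\|_1 = 1} \omega(1,x),
\]
which is the maximum of $2d \geq 2$ i.i.d.\ copies of $\omega(1,0)$. The exponential moment assumption \eqref{polymer_exp_moment} guarantees in particular that $\omega(1,0)$ is integrable, so $L_1$ is also integrable and $\E(L_1)$ is well-defined. This gives $\lambda \geq \E(L_1)$ via \eqref{time_constant}.

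The second step is the elementary observation that for any two i.i.d.\ random variables $X_1,X_2$ whose common distribution is non-degenerate,
\[
\E[\max(X_1,X_2)] - \E(X_1) = \E[(X_2 - X_1)^+] > 0,
\]
since by symmetry $\P(X_2 > X_1) = \P(X_1 > X_2)$, and the non-degeneracy assumption \eqref{polymer_var} forces $\P(X_1 = X_2) < 1$, so both probabilities are strictly positive. Picking out any two coordinate directions (say $e_1$ and $-e_1$) and using the fact that $L_1$ is at least as large as the max over just those two directions, we conclude $\E(L_1) > \E(\omega(1,0))$. Chaining this with $\lambda \geq \E(L_1)$ yields the claim.

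There is no real obstacle here; the only thing to be careful about is ensuring the integrability needed to make sense of $\E(L_1)$ (supplied by \eqref{polymer_exp_moment}) and ensuring that $\P(X_1 = X_2) < 1$ (supplied by \eqref{polymer_var}, since a random variable with strictly positive variance cannot be almost surely equal to an independent copy of itself). The argument does not require any path-counting, percolation estimates, or large deviation input, because the strict gain is already visible at the single time step $n=1$, where the polymer has a genuine choice between $2d$ distinct environment values.
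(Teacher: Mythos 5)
Your proof is correct and takes essentially the same route as the paper: both start from the superadditive supremum identity $\lambda = \sup_{n\geq 1}\E(L_n)/n$ and then extract a strict gain from the fact that the expected maximum of two i.i.d.\ non-degenerate copies of $\omega$ strictly exceeds $\E(\omega)$. The only difference is that you evaluate at $n=1$, whereas the paper evaluates at $n=2$ using the two paths $(0,\vc a,\vc 0)$ and $(0,-\vc a,\vc 0)$, whose shared weight $\omega(2,\vc 0)$ simply cancels out of the comparison, so your version is a touch leaner; the one small imprecision — that \eqref{polymer_exp_moment} only guarantees $\E(\omega^+)<\infty$, not full integrability of $\omega$ — is harmless, since if $\E(\omega)=-\infty$ the claimed inequality holds vacuously.
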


\begin{proof}
Let $\vc a = (1,0,\dots,0) \in \Z^d$ and $\vc 0 = (0,\dots,0) \in \Z^d$.
Observe that $L_2 \geq \max\{\omega(1,\vc a) + \omega(2,\vc 0),\, \omega(1,-\vc a) + \omega(2,\vc 0)\}$, and so
\eq{
2\lambda \geq \E(L_2) \geq \E\max\{\omega(1,\vc a) + \omega(2,\vc 0),\omega(1,-\vc a) + \omega(2,\vc 0)\} > 2\E(\omega(i,x)),
}
where the final equality is strict because $\Var(\omega(i,x)^2) > 0$. 
%{\color{red} [Here you need the variance to be positive, not the second moment --- unless you are assuming the mean to be zero.]} 
\end{proof}

\begin{defn}
For a nearest-neighbor path $\vc x = (x_0,x_1,\dots,x_n)$ of length $n$ in $\Z^d$, define the \textit{turns} of $\vc x$ to be the following set of indices:
\eeq{ \label{set_of_turns}
T(\vc x) \coloneqq \{1 \leq i \leq n-1 : x_{i+1} - x_i \neq x_i - x_{i-1}\}.
}
The number of turns of $\vc x$ will be denoted $t(\vc x) \coloneqq |T(\vc x)|$.

\end{defn}

\begin{lemma} \label{many_turns}
For any $\eps > 0$, there is $\delta = \delta(\eps,d) > 0$ small enough that
\eq{
|\{\vc x \in \PP_n : t(\vc x) < \delta n\}| \leq C(\eps,d)(1+\eps)^n \quad \text{for all $n\geq1$}.
}
\end{lemma}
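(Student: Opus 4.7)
The plan is to reduce this to a straightforward combinatorial count of paths parametrized by their ``turn structure,'' then optimize the entropy of that count.

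First I would rewrite a path $\vc x = (x_0,x_1,\dots,x_n) \in \PP_n$ in terms of its step directions $d_i \coloneqq x_i - x_{i-1} \in \{\pm e_1,\dots,\pm e_d\}$, so a path is determined by $(d_1,\dots,d_n) \in \{\pm e_1,\dots,\pm e_d\}^n$, and the set of turns \eqref{set_of_turns} is exactly $\{1 \leq i \leq n-1 : d_{i+1} \neq d_i\}$. Grouping consecutive equal $d_i$'s into maximal constant-direction runs, a path with exactly $k$ turns corresponds to a choice of $k$ turn-positions from $\{1,\dots,n-1\}$ (giving $\binom{n-1}{k}$ options) together with an assignment of one of $2d$ directions to each of the resulting $k+1$ runs such that consecutive runs carry distinct directions ($2d$ choices for the first run and $2d-1$ for each subsequent run). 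Thus
\eq{
|\{\vc x \in \PP_n : t(\vc x) = k\}| = \binom{n-1}{k}(2d)(2d-1)^k.
}

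Next I would bound the cumulative count. Using the standard estimate $\binom{n-1}{k} \leq (en/k)^k$, one gets
\eq{
|\{\vc x \in \PP_n : t(\vc x) < \delta n\}| \leq 2d \sum_{k=0}^{\lfloor \delta n\rfloor} \binom{n-1}{k}(2d-1)^k \leq 2d \sum_{k=0}^{\lfloor \delta n\rfloor} \Big(\frac{e(2d-1)n}{k}\Big)^k.
}
Setting $t = k/n$, the function $g(t) \coloneqq t \log(e(2d-1)/t)$ is increasing on $(0, 2d-1)$, so for $\delta < 2d-1$ and $k \leq \delta n$ each summand is at most $\exp(g(\delta) n) = (e(2d-1)/\delta)^{\delta n}$. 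Hence
\eq{
|\{\vc x \in \PP_n : t(\vc x) < \delta n\}| \leq 2d(\delta n+1)\Big(\frac{e(2d-1)}{\delta}\Big)^{\delta n}.
}

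Finally, since $\delta \log(e(2d-1)/\delta) = \delta(1 + \log(2d-1)) - \delta \log \delta \to 0$ as $\delta \to 0$, for any $\eps > 0$ we may pick $\delta = \delta(\eps,d) > 0$ small enough that $(e(2d-1)/\delta)^{\delta} \leq \sqrt{1+\eps}$. The remaining polynomial prefactor $2d(\delta n+1)$ is then absorbed into the gap between $(\sqrt{1+\eps})^n$ and $(1+\eps)^n$, up to a constant $C(\eps,d)$, yielding the claim.

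There is no genuine obstacle here --- the entire argument is an entropy estimate for sparse binary sequences with a bounded alphabet for each run. The only minor care needed is in counting: turns correspond to switches in the direction sequence rather than in the vertex sequence, which is why the $2d-1$ (not $2d$) appears for each run after the first.
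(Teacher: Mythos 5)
Your proposal is correct and follows essentially the same approach as the paper: both count paths with exactly $k$ turns as $2d\binom{n-1}{k}(2d-1)^k$, sum over $k < \delta n$, and conclude by showing the entropy rate of the binomial factor tends to $0$ as $\delta \to 0$. The only cosmetic difference is that the paper uses Stirling's approximation to get the sharp exponential rate $-\delta\log\delta - (1-\delta)\log(1-\delta) + \delta\log(2d-1)$, while you use the cruder elementary bound $\binom{n-1}{k}\leq(en/k)^k$; both suffice because $\delta\log(1/\delta)\to 0$.
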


\begin{proof}
Given an integer $j$, $0 \leq j \leq n-1$, we count the elements of $\{\vc x\in\PP_n : t(\vc x) = j\}$ as follows.
First, the number of choices for $x_1$ is $2d$.
Next, a turn should occur at exactly $j$ of the coordinates $x_1,\dots,x_{n-1}$.
Moreover, if a turn occurs at $x_i$, then there are $2d-1$ choices for $x_{i+1}-x_i$ (so as to avoid $x_i - x_{i-1}$).
Finally, if a turn does not occur at $x_i$, then there is only one choice for $x_{i+1}-x_i$, namely $x_i - x_{i-1}$.
Therefore, for any positive integer $k \leq \frac{n-1}{2}$, 
%{\color{red} [May be explain the first identity?]}
\eq{
|\{\vc x \in \PP_n : t(\vc x) < k\}| &= \sum_{j=0}^{k-1} 2d{n-1 \choose j}(2d-1)^j 
\leq 2dk{n-1\choose k}(2d-1)^{k-1}.
}
If $k = \ceil{\delta n}$ for $\delta\in(0,\frac{1}{2})$, then Stirling's approximation gives
\eq{
\lim_{n\to\infty} \frac{1}{n}\log {n - 1 \choose k} = -\delta \log \delta - (1-\delta)\log (1-\delta).
}
Therefore,
\eq{
\limsup_{n\to\infty}\frac{\log |\{\vc x \in \PP_n : t(\vc x) < \delta n\}|}{n}
&\leq -\delta \log \delta - (1-\delta)\log (1-\delta) + \delta \log(2d-1).
}
Now choose $\delta$ sufficiently small that the right-hand side above is strictly less than $\log(1+\eps)$.
Inverting the logarithm and choosing $C$ large enough now yields the desired result.
\end{proof}

\begin{lemma} \label{not_many_gaps}
Let $\{(\omega_i,\omega_i')\}_{i=1}^\infty$ denote a sequence of i.i.d.~pairs of independent random variables.
For any $\eps > 0$ and $\nu > 0$, there exists $D>0$ large enough that
\eq{
\P(|\{1\leq i\leq n-1 : \omega_i > \omega_i' + D\}| > \nu n) \leq \eps^n \quad \text{for all $n\geq1$}.
}
\end{lemma}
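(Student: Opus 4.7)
The plan is to reduce the statement to a standard Chernoff bound for a sum of i.i.d.\ Bernoulli random variables. Define
\[
X_i \coloneqq \one_{\{\omega_i > \omega_i' + D\}}, \quad i \geq 1,
\]
so that $(X_i)_{i \geq 1}$ is an i.i.d.\ sequence of Bernoulli random variables with parameter $p_D \coloneqq \P(\omega_1 > \omega_1' + D)$. The crucial observation is that since $\omega_1 - \omega_1'$ is almost surely finite, $p_D \to 0$ as $D \to \infty$, so by taking $D$ large we can make $p_D$ as small as we like. The event of interest is $\{S_{n-1} > \nu n\}$ with $S_{n-1} = \sum_{i=1}^{n-1} X_i$, and the goal is the uniform-in-$n$ tail bound $\eps^n$.

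Next I would apply the exponential Markov (Chernoff) inequality. For any $\lambda > 0$,
\[
\P(S_{n-1} > \nu n) \leq \e^{-\lambda \nu n}\, \E[\e^{\lambda S_{n-1}}] = \e^{-\lambda \nu n}(1 - p_D + p_D \e^\lambda)^{n-1}.
\]
I would choose the two parameters in the following order. First, assuming without loss of generality that $\eps \in (0,1)$ (the statement is trivial otherwise), fix $\lambda$ large enough that $\e^{-\lambda \nu} \leq \eps^2/2$. Second, with $\lambda$ fixed, choose $D$ large enough that $p_D(\e^\lambda - 1) \leq 1$, which is possible since $p_D \to 0$. Then $1 - p_D + p_D \e^\lambda \leq 2$, and the Chernoff bound becomes
\[
\P(S_{n-1} > \nu n) \leq (\e^{-\lambda\nu})^{n} \cdot 2^{n-1} \leq (\eps^2/2)^n \cdot 2^n = \eps^{2n} \leq \eps^n,
\]
valid for every $n \geq 1$.

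There is essentially no substantive obstacle: the argument is a textbook large deviation estimate for Bernoulli variables, and the two-step parameter choice (fix $\lambda$ using $\nu$ and $\eps$, then drive $p_D$ down using $D$) cleanly produces the desired exponential rate $\eps^n$ uniformly in $n$. The only mild subtlety is ensuring the bound holds for all $n \geq 1$ rather than just asymptotically, which is why the exponential Markov approach is preferable to a direct binomial/Stirling estimate --- the former gives a geometric bound that is valid from $n=1$ onward. Note also that no moment assumptions on $\omega_i$ are needed beyond those implicit in the statement, since the argument uses only the finiteness of $\omega_1 - \omega_1'$.
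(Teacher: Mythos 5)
Your proof is correct, but it follows a genuinely different route from the paper's. The paper first encloses the event $\{\omega_i > \omega_i' + D\}$ in the cruder event $\{|\omega_i|\geq D/2\}\cup\{|\omega_i'|\geq D/2\}$, chooses $D$ so that this has probability $p$ with $p^\nu \leq \eps/2$, and then uses a direct binomial-tail count, bounding the sum of binomial probabilities by $p^{\nu n}2^{n-1} \leq \eps^n$. You instead work with $p_D = \P(\omega_1 > \omega_1'+D)$ itself, note that $p_D \to 0$ by continuity from above, and run a textbook Chernoff argument, fixing $\lambda$ against $\nu$ and $\eps$ first and then driving $p_D$ down so the moment-generating-function factor stays bounded. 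Your version avoids the detour through the $D/2$-tail union bound and produces a cleaner parameter dependence; the paper's version avoids the MGF machinery at the price of a slightly cruder binomial estimate. Both are elementary, non-asymptotic in $n$, and equally rigorous.

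One small aside you should drop: the remark that Chernoff is ``preferable to a direct binomial/Stirling estimate'' because the latter is only asymptotic. A direct binomial bound of the form $\sum_{j\geq \nu n}\binom{n-1}{j}p^j(1-p)^{n-1-j} \leq 2^{n-1}p^{\nu n}$ is itself valid for all $n\geq 1$ with no Stirling approximation required (this is, in essence, what the paper does), so the claimed advantage is not real. The rest of your argument is sound.
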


\begin{proof}
Choose $D>0$ large enough that $p \coloneqq \P(\{|\omega_i|\geq D/2\}\cup\{|\omega_i'|\geq D/2\})$ satisfies $p^{\nu} \leq \eps/2$.
We then have
\eq{
\P(|\{1\leq i\leq n : \omega_i > \omega_i'+D\}| > \nu n)
&\leq \P(|\{1\leq i\leq n-1: |\omega_i| \geq D/2 \text{ or } |\omega_i'| \geq D/2\}| > \nu n) \\
&\leq \sum_{j = \ceil{\nu n}}^{n-1} {n \choose j}p^j(1-p)^j 
\leq p^{\nu n}2^{n-1} \leq \eps^n.
}
\end{proof}

\begin{proof}[Proof of Theorem~\ref{no_path_atom_thm}]
Let $\omega$ denote a generic copy of $\omega(i,x)$, and $\bar\omega \coloneqq \E(\omega)$.
Set $\kappa \coloneqq (\lambda - \bar\omega)/2$, which is positive by Lemma~\ref{trivial_lemma}.
By assumption, there is $t > 0$ such that $\E(\e^{t \omega}) < \infty$.
Take any $s\in(0,t)$ and observe that for any given $\vc x\in\PP_n$,
\eq{
\P(H_n(\vc x) \geq (\bar\omega +\kappa) n)
&\leq \P(\e^{s(H_n(\vc x)-\bar\omega n)}\geq \e^{s\kappa n})
\leq \e^{-s \kappa n}\E(\e^{s(\omega-\bar\omega)})^n.
}
%Now, because
%\eq{
%|\omega \e^{s\omega}| \leq |\omega|\one_{\{\omega < 0\}} + \omega\e^{t\omega}\one_{\{\omega\geq0\}},
%}
%and the right-hand side is integrable, we can apply dominated convergence to conclude
Using dominated convergence, it is easy to show that
\eq{
\lim_{s\searrow0} \frac{\E(\e^{s(\omega-\bar\omega)})-1}{\e^{s\kappa}-1}
= \lim_{s\searrow0} \frac{\E((\omega-\bar\omega)\e^{s(\omega-\bar\omega)})}{\kappa\e^{s\kappa}}
= 0,
}
and so we may choose $s$ sufficiently small that $\e^{-s\kappa}\E(\e^{s(\omega-\bar\omega)}) < 1$.
Set $\eta \coloneqq 1- \e^{-s\kappa}\E(\e^{s(\omega-\bar\omega)})$, and then choose $\eps > 0$ sufficiently small that $(1+\eps)(1-\eta) < 1$.
With $\delta$ as in Lemma~\ref{many_turns}, we have the union bound
\eq{
\P(\exists\, \vc x\in\PP_n : t(\vc x) < \delta n, H_n(\vc x) \geq (\bar\omega+\kappa)n) \leq C(1+\eps)^n(1-\eta)^n.
}
By our choice of $\eps$, Borel--Cantelli implies that the following statement holds almost surely:
\eq{
\exists\, n_0 : \forall\,  n\geq n_0,\,  \forall\, \vc x\in\PP_n, \quad
t(\vc x) < \delta n \implies H_n(\vc x) < (\bar\omega + \kappa)n.
}
On the other hand, it is apparent from \eqref{time_constant} and our choice of $\kappa$ that almost surely, we have $L_n > (\bar\omega+\kappa)n$ for all large $n$. 
For any such $n$, we then have $H_n(\vc x) > (\bar\omega+\kappa)n$ for every $\vc x\in\MM_n$, the set of maximizing paths defined in \eqref{maximizing_paths}. 
%{\color{red}[May be you can refer to the definition of $\MM_n$.]}
That is, almost surely:
\eq{
\exists\, n_1 : \forall\,  n\geq n_1,\,  \forall\, \vc x\in\MM_n, \quad
H_n(\vc x) \geq (\bar\omega + \kappa)n.
}
Together, the two previous displays show that almost surely,
\eeq{ \label{statement_1}
\exists\, n_2 : \forall\,  n\geq n_2,\,  \forall\, \vc x\in\MM_n, \quad
t(\vc x) \geq \delta n.
}
Recall from \eqref{set_of_turns} that $T(\vc x)$ denotes the set of turns in the path $\vc x\in\PP_n$.
For a given $\vc x\in\PP_n$ and $i \in T(\vc x)$, %{\color{red}[Remind the definition of $T(\vc x)$]} 
let $\vc x^{(i)}$ denote the unique element of $\PP_n$ such that $x^{(i)}_i \neq x_i$ but $x^{(i)}_j = x_j$ for all $j\neq i$. 
%{\color{red} [Isn't $\vc x^{(i)}$ uniquely defined?]}
That is, $x^{(i)}_i - x^{(i)}_{i-1} = x_{i+1}-x_i$ while $x^{(i)}_{i+1} - x^{(i)}_{i} = x_{i}-x_{i-1}$.
%Such a path exists precisely because $i\in T(\vc x)$.
Upon taking $\eps = 1/(4d)$ and $\nu = \delta/3$ in Lemma~\ref{not_many_gaps}, a union bound gives
\eq{
\P\Big(\exists\, \vc x\in\PP_n : |\{i \in T(\vc x) : H_n(\vc x) > H_n(\vc x^{(i)}) + D\}| > \frac{\delta}{3}n\Big) \leq 2^{-n}.
}
Therefore, we can again apply Borel--Cantelli to see that almost surely,
\eq{
\exists\, n_3 : \forall\,  n\geq n_3,\,  \forall\, \vc x\in\PP_n, \quad
|\{i \in T(\vc x) : H_n(\vc x) > H_n(\vc x^{(i)}) + D\}| \leq \frac{\delta}{3}n.
}
Now combining this statement with \eqref{statement_1}, we arrive at the following almost sure event:
\eq{   
\exists\, n_4 : \forall\,  n\geq n_4,\,  \forall\, \vc x\in\MM_n, \quad
|\{i \in T(\vc x) : H_n(\vc x) \leq H_n(\vc x^{(i)}) + D\}| \geq \frac{2\delta}{3}n.
}
In particular, since $\MM_n$ has at least one element (call it $\vc y$), we have the following for all $n\geq n_4$:
\eq{
\max_{\vc x\in\PP_n} \mu_n^\beta(\vc x) = \frac{\e^{\beta H_n(\vc y)}}{\sum_{x\in\PP_n}\e^{\beta H_n(\vc x)}}
&\leq \frac{\e^{\beta H_n(\vc y)}}{\sum_{i\in T(\vc y)}\e^{\beta H_n(\vc y^{(i)})}} \\
&\leq \frac{\e^{\beta H_n(\vc y)}}{\frac{2\delta}{3}n \e^{\beta H_n(\vc y)}\e^{-\beta D}} = \frac{3\e^{\beta D}}{2\delta n}.
}
Since $D$ and $\delta$ do not depend on $n$, \eqref{no_atom} follows.
\end{proof}

\section{Open problems}
There are a number of open questions which, if solved, would enhance the theory presented in this chapter. 
A partial list is the following.
\begin{enumerate}
\item Understand conditions under which the number of localizing regions is exactly one. As mentioned before, this requires more conditions than \eqref{free_energy_assumption}--\eqref{field_decomposition}, because it does not hold for some models (such as REM), whereas it is supposed to hold for many others. 
\item A close cousin of the above problem is to understand conditions under which $\RR_{1,2}$ is itself guaranteed to be away from zero with high probability. This would have important implications about the FRSB picture in mean-field spin glasses and path localization in directed polymers. 
\item Obtain a good quantitative bound on $\delta$ in terms of $\eps$ in Theorem \ref{expected_overlap_thm}. 
Our proof gives a very poor bound, since it is based on an iterative argument similar to those used in extremal combinatorics (see the proof sketch in Section \ref{sketchsec2}).
\item For directed polymers, prove a stronger theorem about path localization that says a typical path localizes within a narrow neighborhood of one or more fixed paths, rather than saying that a typical path has nonzero intersection with one or more fixed paths. 
\item Prove more general versions of Theorems \ref{easy_cor},  \ref{expected_overlap_thm} and \ref{averages_squared} that do not require the positive overlap condition \eqref{positive_overlap}. This would allow the theory to include other models of interest, such as the Edwards--Anderson model \cite{edwards-anderson75} of lattice spin glasses. 
\item For any finite $\beta$, prove estimates that stochastically bound $\langle \RR_{1,2}\rangle$ away from $1$.
More ambitiously, determine conditions which guarantee that $\langle \RR_{1,2}\rangle$ concentrates around its expectation as $n\to\infty$.
\end{enumerate}
    
%: RSB IN MSK   
    
    \chapter{Replica symmetry breaking in spin glasses} \label{rsb}
    
    \section{Introduction}
Spin glass theory, originally developed to study disordered magnets \cite{edwards-anderson75}, now includes applications in biology \cite{parisi90,barra-agliari10,agliari-barra-guerra-moauro11}, computer science \cite{nishimori01,mezard-montanari09}, neuroscience \cite{hopfield82,amit-gutfreund-sompolinsky85,bovier-picco98,barra-guerra08,barra-genovese-guerra10,agliari-barra-galluzzi-guerra-moauro12,barra-genovese-guerra-tantari12} and econometrics/quantitative sociology \cite{krapivsky-redner03,contucci-ghirlanda07,contucci-gallo-menconi08,barra-contucci10,barra-agliari12}, primarily due to interest in large-scale networks.
However, its prototypical mathematical model, namely that of Sherrington and Kirkpatrick (SK) \cite{sherrington-kirkpatrick75}, is fully mean-field and thus fails to capture the effect of global inhomogeneities and communities.
In order to examine models more faithful to real-world networks, physicists and mathematicians have in recent years advanced the study of bipartite or more general ``multi-species" spin systems, e.g.~\cite{gallo-contucci08,barra-genovese-guerra11,fedele-contucci11,fedele-unguendoli12,barra-galluzzi-guerra-pizzoferrato-tantari14,barra-contucci-mingione-tantari15}.
An ongoing task is to adapt results from classical spin glasses, such as SK, to their multi-species extensions, especially those regarding the so-called glassy phase observed at low temperatures.

%The mathematical study of spin glasses, or disordered magnets, can be traced back to Edwards and Anderson (EA) \cite{edwards-anderson75}, who introduced a random spin system on a lattice, governed by nearest neighbor interactions.
%Rigorous analysis of their model remains largely open, especially regarding the so-called glassy phase observed at low temperatures.
%Substantial progress, however, has been made on the mean-field system proposed by Sherrington and Kirkpatrick (SK) \cite{sherrington-kirkpatrick75}, which replaces the lattice with a complete graph.
%For the SK model, there are precise descriptions of the phase transition between high and low temperature regimes; these will be reviewed in Section REF.
%Similarly fine results for the lattice model remain yet out of reach, but in recent years very modest relaxations of the mean-field property have offered some steps in this direction.

This chapter focuses on the multi-species SK (MSK) model, which allows arbitrary interactions between sets of binary spins we call ``species" but remains mean-field in the sense that the number of species is fixed even as the population of each species grows to infinity.
This spin system was introduced by Barra, Contucci, Mingione, and Tantari \cite{barra-contucci-mingione-tantari15}, who also proposed a Parisi formula for the limiting free energy when the interaction parameters satisfy a convexity condition.
This formula was proved by Panchenko \cite{panchenko15I}, which allows us to proceed rigorously in the present work.
By entropic considerations, it is known that there does indeed exist a low temperature phase, i.e.~where the disorder is said to be ``symmetry breaking" \cite[Proposition 4.2]{barra-contucci-mingione-tantari15}.
Our main purpose is to prove a quantitative version of this fact.
We are able to do so in the two-species model, for which we find the analog of the de Almeida--Thouless (AT) line \cite{almeida-thouless78} from the classical SK model.
This is the content of Theorem \ref{2speciessymmetrybreaking}.
For three or more species, our calculations still predict an AT condition given in Corollary \ref{general_RSB_condition}; however, we have been unable to translate this condition into an explicit temperature threshold outside the two-species case.
Section \ref{3plus_difficulties} outlines the relevant difficulties.

The models under consideration are defined below in Section \ref{definitions}.
Our main results for the two-species SK model, namely Theorems \ref{uniquenessof2speciesrRSsol} and \ref{2speciessymmetrybreaking}, are stated in Section \ref{main_results}.
Their proofs are given in Sections \ref{sec:uniqueness} and \ref{sec:hessian}, respectively, and related results from the literature on single-species models are discussed in Section \ref{subsec:related_results}.

\subsection{The SK and MSK models} \label{definitions}
We consider a collection of Ising spins $\sigma = (\sigma_1,\dots,\sigma_N) \in \{\pm 1\}^N$, subject to the Hamiltonian
\eeq{ \label{sk_hamiltonian}
H_N(\sigma) = \frac{\beta}{\sqrt{N}}\sum_{i,j=1}^N g_{ij}\sigma_i\sigma_j + h\sum_{i=1}^N \sigma_i,
}
where $\beta > 0$ is the inverse temperature, $h \geq 0$ is the external field, and the disorder parameters $g_{ij}$ are independent, centered Gaussian random variables.
In the SK model, these parameters all have unit variance.
In the MSK model, their variances depend on $i$ and $j$ in the following way.
The spins are partitioned into $M \geq 2$ sets as $\{1,\dots,N\} = \bigcup_{s=1}^M I_s$, and then
\eq{
\E(g_{ij}^2) = \Delta_{st}^2 \quad \text{whenever $i \in I_s$ and $j \in I_t$}.
}
Thus $\Delta^2 = (\Delta_{st}^2)_{1\leq s,t\leq M}$ is a symmetric $M\times M$ matrix.
Considering the infinite volume limit, we assume that
\eq{
\lim_{N\to\infty} \frac{|I_s|}{N} = \lambda_s \in (0,1) \quad \text{for each $s = 1,\dots,M$.}
}
Of central interest is the free energy of the system,
\eq{
F_N(\beta) = \frac{\E \log Z_N(\beta)}{N}, \quad \text{where} \quad Z_N(\beta) = \sum_{\sigma \in \{\pm1\}^N} \e^{\beta H_N(\sigma)}.
}
Under the assumption that $\Delta^2$ is nonnegative definite, Barra \textit{et al.}~\cite[Theorem 1.2]{barra-contucci-mingione-tantari15} prove that $F_N(\beta)$ converges as $N\to\infty$, and Panchenko \cite[Theorem 1]{panchenko15I} verifies their prediction that the limit is given by the variational formula
\eeq{ \label{variational_pre}
\lim_{N\to\infty} F_N(\beta) = \inf \mathscr{P},
}
where $\mathscr{P}$ generalizes the famous Parisi formula \cite{parisi79,parisi80} proved by Talagrand \cite{talagrand06} for the SK model.
In fact, Panchenko shows $\lim_{N\to\infty} F_N(\beta) \geq \inf \mathscr{P}$ for general $\Delta^2$.
It is the upper bound $F_N(\beta) \leq \inf \mathscr{P}$, proved in \cite[Theorem 1.3]{barra-contucci-mingione-tantari15} using Guerra's interpolation method \cite{guerra03}, that requires the nonnegative definiteness assumption.

Let us now define $\mathscr{P}$ precisely.
Given an integer $k \geq 0$, consider a sequence
\begin{subequations} \label{parameters}
\begin{align}
%0 = \zeta_{-1} \leq \zeta_0 < \zeta_1 < \cdots < \zeta_{r-1} \leq \zeta_{r} = 1,
0 = \zeta_0 < \zeta_1 < \cdots < \zeta_{k} < \zeta_{k+1} = 1, \label{parameters_zeta}
\intertext{and for each species $s = 1,\dots,M$, a corresponding sequence}
0 = q_0^s \leq q_1^s \leq \cdots \leq q_{k+1}^s \leq q_{k+2}^s = 1. \label{parameters_q}
\end{align}
\end{subequations}
With these parameters, for each $0 \leq \ell \leq k+2$ we define
\eeq{ \label{Qdef}
Q_{\ell} \coloneqq \sum_{s,t = 1}^M \Delta_{st}^2 \lambda_s \lambda_t q_{\ell}^s q_{\ell}^t, \qquad Q_{\ell}^s \coloneqq 2 \sum_{t =1}^M \Delta_{st}^2 \lambda_t q_{\ell}^t, \quad 1 \leq s \leq M,
}
and then
\eq{
X_{k+2}^s \coloneqq \log \cosh\bigg(h+\beta\sum_{\ell=0}^{k+1} \eta_{\ell+1}\sqrt{Q_{\ell+1}^s-Q_{\ell}^s}\bigg).
}
where $\eta_1,\dots,\eta_{k+2}$ are i.i.d.~standard normal random variables.
Next we inductively define
\eeq{ \label{X_def}
X_\ell^s \coloneqq \frac{1}{\zeta_\ell} \log \E_{\ell+1} \exp(\zeta_\ell X_{\ell+1}^s), \qquad 0 \leq \ell \leq k+1,
}
where $\E_{\ell+1}$ denotes expectation with respect to $\eta_{\ell+1}$. 
When $\ell=0$, \eqref{X_def} is understood to mean
\eq{
X_0^s = \lim_{\zeta\searrow0}\frac{1}{\zeta}\log \E_{1}\exp(\zeta X_1^s) = \E_1(X_1^s).
}
Finally, we can write
\eeq{ \label{parisi_expression}
\mathscr{P}(\zeta,q) \coloneqq \log{2} + \sum_{s = 1}^M \lambda_s X_0^s - \frac{\beta^2}{2} \sum_{\ell=1}^{k+1} \zeta_{\ell}(Q_{\ell+1} - Q_{\ell}),
}
so that \eqref{variational_pre} reads as
\eeq{ \label{variational}
\lim_{N\to\infty} F_N(\beta) = \inf_{\zeta,q} \mathscr{P}(\zeta,q).
}

The physical interpretation is as follows. 
Each species has an order parameter $\mu_s$, which is the limiting distribution of the overlap
\eeq{ \label{overlap_def}
R_s(\sigma^1,\sigma^2) = \frac{1}{|I_s|}\Big|\sum_{i\in I_s} \sigma_i^1\sigma_i^2\Big|,
}
where $\sigma^1,\sigma^2$ are independent samples from the Gibbs measure associated to \eqref{sk_hamiltonian}.
If the infimum in \eqref{variational} is achieved at $(\zeta,q)$, then $\mu_s = \sum_{\ell=1}^{k+1} (\zeta_{\ell}-\zeta_{\ell-1})\delta_{q_\ell^s}$. 
When $k=0$ and $\mu_s$ is a single atom for every $s$, we will say the system is in the ``replica symmetric" (RS) phase.
Otherwise, we will say the system has ``replica symmetry breaking" (RSB).
In the SK model (i.e.~$M=1$), there is a single order parameter $\mu$, and so it makes sense to discuss the \textit{level} of symmetry breaking.
That is, if $\mu$ consists of $k+1$ distinct atoms, then the model is said to exhibit ``$k$-step replica symmetry breaking" ($k$RSB); alternatively, if $\mu$ has infinite support---so the infimum in \eqref{variational} is not achieved---then there is ``full replica symmetry breaking" (FRSB).
In the MSK model, it may be the case that if $s\neq t$, then $\mu_s$ and $\mu_t$ can have a different number of atoms in their support.
That is, it is possible that for some $\ell$, one has $q^s_{\ell-1} < q^s_{\ell}$ but $q^t_{\ell-1} = q^t_{\ell}$.
Part of what is shown in \cite{panchenko15I}, however, is that $\zeta_\ell-\zeta_{\ell-1}$ is fixed across species.
It is thus reasonable to say that the MSK model exhibits $k$RSB if \eqref{variational} has a minimizer of the form \eqref{parameters}.
%there are exactly $k+1$ indices $\ell$ such that $\zeta_{\ell}-\zeta_{\ell-1} > 0$.

For more on the relationship between replica overlaps and the Parisi minimizer, we refer the reader to \cite{auffinger-chen15I,auffinger-chen15II,auffinger-chen-zeng20}, or to \cite{mezard-parisi-virasoro87,talagrand11I,talagrand11II,panchenko13} for extended treatment of the subject.

\subsection{Statements of main results} \label{main_results}
Consider the replica symmetric expression for the free energy, which involves a single parameter $q^s \in [0,1]$ for each species $s$.
By this we mean that in \eqref{parameters}, we set $k=0$ and $q_1^s = q^s$.
%\eq{
%\zeta_0 = 0, \quad \zeta_1 = \zeta_2 = 1, \quad q_0 = 0, \quad q_1^s = q^s, \quad q_2^s = 1.
%}
Using the formula for the moment generating function of the Gaussian distribution, one finds that
\eeq{ \label{RS_formula}
\mathscr{P}_\mathrm{RS}(q) = \log 2 + \sum_{s=1}^M \lambda_s\Big[\E_1\log\cosh(\beta\eta_1\sqrt{Q_1^s}+h) + \frac{\beta^2}{2}(Q_2^s-Q_1^s)\Big] - \frac{\beta^2}{2}(Q_2-Q_1).
}
%where
%\eq{
%Q_1^s &= 2\sum_{t=1}^M \Delta_{st}^2\lambda_tq^t &
%Q_1 &= \sum_{s,t=1}^M\Delta_{st}^2\lambda_s\lambda_tq^sq^t \\
%Q_2^s &= 2\sum_{t=1}^M\Delta_{st}^2\lambda_t &
%Q_2 &= \sum_{s,t=1}^M\Delta_{st}^2\lambda_s\lambda_t.
%}
By differentiating this expression with respect to each $q^t$, and then applying Gaussian integration to write
\eq{
\E_1[\eta_1\tanh(\beta\eta_1\sqrt{Q_1^s}+h)] 
&= \beta\sqrt{Q_1^s}\E_1\sech^2(\beta\eta_1\sqrt{Q_1^s}+h) \\
&= \beta\sqrt{Q_1^s}\big(1-\E_1\tanh^2(\beta\eta_1\sqrt{Q_1^s}+h)\big),
}
it follows that any critical point $q$ must satisfy
\eeq{ \label{P_deriv}
\frac{\partial\mathscr{P}_\mathrm{RS}}{\partial q^t} = \beta^2\lambda_t\sum_{s=1}^M\Delta_{st}^2\lambda_s\big[q^s-\E_1\tanh^2(\beta\eta_1\sqrt{Q_1^s}+h)\big] = 0,
\quad t = 1,\dots,M.
}
If $\Delta^2$ is invertible, then this system implies
\eeq{ \label{RS_condition}
q^s = \E_1\tanh^2(\beta\eta_1\sqrt{Q_1^s}+h), \quad s = 1,\dots,M.
}
Therefore, we define the set
\eq{
\CC(\beta,h) \coloneqq \{q \in [0,1]^M : \E \tanh^2(\beta\eta\sqrt{Q_1^s}+h) = q_s \text{ for each $s = 1,\dots,M$}\}.
}
As for the SK model, it is not difficult to show that for small $\beta$, $\CC(\beta,0)$ is a singleton; obtaining a sharp estimate requires more care. 
We attempt to do so in the two-species case as part of Theorem \ref{uniquenessof2speciesrRSsol} below.
We also expect $\CC(\beta,h)$ is a singleton whenever $h > 0$, %(SIMULATIONS), 
and can prove such a statement when $M=2$.

To simplify notation and standardize temperature scale, we henceforth assume $M=2$ and
\eeq{ \label{delta_assumptions}
\Delta^2 = \begin{pmatrix}
\Delta_{11}^2 & 1 \\
1 & \Delta_{22}^2,
\end{pmatrix}
\quad
\text{where}
\quad
\Delta_{11}^2\Delta_{22}^2 > 1
\quad
\text{and} 
\quad
\lambda_1\Delta_{11}^2 \geq \lambda_2\Delta_{22}^2,
}
in particular ensuring \eqref{variational}.
The second inequality above is made without loss of generality, simply by relabeling the species if necessary.
With these assumptions, we can now state our first result.

\begin{thm} \label{uniquenessof2speciesrRSsol}
%Assume $M=2$ and $\Delta^2$ is positive definite.
Assume \eqref{delta_assumptions}.
If either $h > 0$ or
\eeq{ \label{uniqueness_condition}
\beta^2 < \frac{1}{\lambda_1\Delta_{11}^2+\lambda_2\Delta_{22}^2 + \sqrt{(\lambda_1\Delta_{11}^2-\lambda_2\Delta_{22}^2)^2+4\lambda_1\lambda_2}},
}
then $\CC(\beta,h) = \{q_*\}$ is a singleton.
In this case,
\eeq{ \label{RS_minimizing}
\mathrm{RS}(\beta,h) \coloneqq \min_{q\in[0,1]^2}\mathscr{P}_\mathrm{RS}(q) = \mathscr{P}_\mathrm{RS}(q_*).
}
\end{thm}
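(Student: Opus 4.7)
The plan is to recast $\CC(\beta,h)$ as the fixed-point set of the self-map $\Phi\colon[0,1]^2\to[0,1]^2$ with $\Phi_s(q)\coloneqq\E\tanh^2(\beta\eta\sqrt{Q_1^s(q)}+h)$, and then show $\Phi$ admits a unique fixed point under each of the two hypotheses. Once that is done, the minimization statement \eqref{RS_minimizing} follows at once: $\mathscr{P}_\mathrm{RS}$ is continuous on the compact cube $[0,1]^2$ and therefore attains a minimum, and because $\Delta^2$ is invertible under \eqref{delta_assumptions}, the critical-point equation \eqref{P_deriv} reduces to \eqref{RS_condition}, putting every minimizer in $\CC(\beta,h)$.

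The computational core is the Jacobian of $\Phi$. Writing $X_s=\beta\eta\sqrt{Q_1^s(q)}+h$ and applying Gaussian integration by parts to $\E[\eta\tanh(X_s)\sech^2(X_s)]$ gives
\[
\frac{\partial\Phi_s}{\partial q^t}(q) \;=\; 2\beta^2\,\Delta_{st}^2\,\lambda_t\,V_s(q), \qquad V_s(q)\coloneqq \E\bigl[\sech^2(X_s)\bigl(1-3\tanh^2(X_s)\bigr)\bigr],
\]
so that $D\Phi(q)=2\beta^2\,D_V(q)\,\Delta^2\Lambda$ with $\Lambda=\mathrm{diag}(\lambda_1,\lambda_2)$ and $D_V(q)=\mathrm{diag}(V_1(q),V_2(q))$. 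Setting $y=\tanh^2(X_s)$ shows $(1-y)(1-3y)\in[-1/3,1]$ on $[0,1]$, giving $|V_s(q)|\le 1$ pointwise. A direct diagonalization of the symmetric matrix $\Lambda^{1/2}\Delta^2\Lambda^{1/2}$, which is similar to $\Delta^2\Lambda$, yields
\[
\rho(\Delta^2\Lambda)\;=\;\tfrac12\Bigl(\lambda_1\Delta_{11}^2+\lambda_2\Delta_{22}^2+\sqrt{(\lambda_1\Delta_{11}^2-\lambda_2\Delta_{22}^2)^2+4\lambda_1\lambda_2}\,\Bigr),
\]
so that hypothesis \eqref{uniqueness_condition} is precisely $2\beta^2\rho(\Delta^2\Lambda)<1$.

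Under \eqref{uniqueness_condition} I would conclude by setting up a contraction in a weighted sup-norm. Let $v^*>0$ be the Perron eigenvector of the entrywise-positive matrix $\Delta^2\Lambda$, and define $\|u\|_*\coloneqq\max_s|u_s|/v^*_s$ on $\R^2$. Nonnegativity of the entries of $\Delta^2\Lambda$ together with $|V_s|\le 1$ yields the uniform operator-norm bound $\|D\Phi(q)\|_*\le 2\beta^2\rho(\Delta^2\Lambda)<1$ on $[0,1]^2$. Given two fixed points $q,q'$, the mean-value identity $q-q'=\int_0^1 D\Phi(q'+t(q-q'))(q-q')\,\dd t$ then forces $\|q-q'\|_*<\|q-q'\|_*$ unless $q=q'$.

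The main obstacle will be the case $h>0$ with $\beta$ unrestricted, where the blunt bound $|V_s|\le 1$ need not give a contraction. The approach I would pursue exploits that $h>0$ forces any fixed point into the open cube $(0,1)^2$, together with the sharper inequality $V_s(q)\le\E\sech^2(X_s)=1-\Phi_s(q)$, a consequence of $\sech^4\le\sech^2$. At a fixed point this reads $V_s(q^*)\le 1-q^{*s}<1$, and I would try to close the argument by running the mean-value comparison along the segment between two hypothetical fixed points with this improved bound. Should the direct route stall, a fallback is continuation in $h$: for $h$ large, $\sech^4$ is exponentially small and uniform contraction returns, and one can then descend in $h$ via the implicit function theorem, provided $I-D\Phi$ stays invertible along the path—which the refined bound on $V_s$ is designed to ensure throughout $h>0$.
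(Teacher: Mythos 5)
Your treatment of the small-$\beta$ case is correct and in fact takes a genuinely different route from the paper. Both you and the paper begin from the Jacobian bound coming from $|f''|\le 2$ (equivalently your $|V_s|\le 1$), but where you diagonalize $\Lambda^{1/2}\Delta^2\Lambda^{1/2}$ and set up a contraction in the Perron-weighted sup-norm $\|u\|_* = \max_s |u_s|/v^*_s$, the paper instead writes the fixed-point difference as a path integral, bounds it coordinatewise by $2\beta^2(\lambda_1\Delta_{11}^2|p^1-q^1|+\lambda_2|p^2-q^2|)$ and its symmetric counterpart, and then optimizes over the free ratio $t=|p^2-q^2|/|p^1-q^1|$ to extract the threshold. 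Your spectral-radius computation and the paper's optimization over $t$ produce the same constant, as they must; but your formulation is more systematic and is the one more likely to extend to $M\ge 3$ species, where the paper's hand-optimization over a single ratio no longer applies.

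The $h>0$ case, however, is not actually proved: you sketch two possible routes and explicitly flag that both might stall. They do. The refined bound $V_s(q)\le 1-\Phi_s(q)$ holds everywhere, but $\Phi_s(q)$ has no useful uniform lower bound away from fixed points ($\E\tanh^2(\beta\eta\sqrt{x}+h)$ is not monotone in $x$, and $\tanh^2$ is not convex, so Jensen does not give $\Phi_s\ge\tanh^2 h$), so the mean-value comparison along a segment joining two hypothetical fixed points does not become a strict contraction. The continuation-in-$h$ fallback begs the question, since invertibility of $I-D\Phi$ along the path is precisely what one would need to establish. The ingredient you are missing is the Lata\l{}a--Guerra lemma (Lemma~\ref{LatalaGuerralemma}): for a suitable odd increasing $\phi$, the map $x\mapsto \E(\phi(z\sqrt{x}+h)^2)/x$ is strictly decreasing on $(0,\infty)$. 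The paper changes variables to $(Q^1,Q^2)=Aq$, uses that the $2\times2$ inverse $A^{-1}$ has a fixed sign pattern $\begin{pmatrix}a&-b\\-c&d\end{pmatrix}$ with $a,b,c,d>0$, and then writes the two RS equations so that one side is strictly increasing and the other strictly decreasing in the relevant $Q$-variable; the coupled monotone system then has at most one solution. Without this structural input, I do not see the $h>0$ case going through by contraction alone.

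Finally, your derivation of \eqref{RS_minimizing} has a gap: you argue only that an interior minimizer of $\mathscr{P}_\mathrm{RS}$ must satisfy \eqref{RS_condition}, but you never rule out a minimizer on $\partial[0,1]^2$, where the critical-point equations need not hold. The paper closes this by inverting $B=\beta^2\Lambda\Delta^2\Lambda$, using convexity of $\Delta^2$ to get the sign pattern of $B^{-1}$, and deriving a contradiction from the signs of $(x^1,x^2)$ and the gradient on each boundary face.
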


The proof of Theorem \ref{uniquenessof2speciesrRSsol} is provided in Section \ref{sec:uniqueness}.
Based on analogy with the SK model (see Remark \ref{rmk:analogy} below), one might suspect that \eqref{uniqueness_condition} defines the RS phase of the MSK model.
This suspicion is supported by the striking similarity with \eqref{1RSB_condition}, which at least in the SK model is believed to define the RSB phase (see Section \ref{subsec:related_results}).
Notice that when $h=0$ and $q_s = 0$ for each $s$, the quantity $\gamma_s$ defined in Theorem \ref{2speciessymmetrybreaking} reduces to $\lambda_s$, and \eqref{1RSB_condition} becomes \eqref{uniqueness_condition}.

\begin{thm}\label{2speciessymmetrybreaking}
Assume \eqref{delta_assumptions} and $h>0$.
Let $q = q_*$ be the critical point from Theorem \ref{uniquenessof2speciesrRSsol}.
%Consider the 2-species model in the positive definite regime, i.e. satisfying \eqref{2speciespsdmodel}. 
Define
\eq{
\gamma_s\coloneqq\lambda_s \E\, \mathrm{sech}^4\,(\beta\eta\sqrt{Q_1^s}+h), \quad s=1,2.
}
If
\eeq{
\beta^2>\frac{1}{\gamma_1 \Delta_{11}^2 +\gamma_2 \Delta_{22}^2 +\sqrt{(\gamma_1 \Delta_{11}^2 -\gamma_2 \Delta_{22}^2)^2 +4 \gamma_1 \gamma_2}}, \label{1RSB_condition}
}
then
 \eeq{\label{1RSB}
\lim_{N\to\infty} F_N(\beta) < \mathrm{RS}(\beta,h).
}
\end{thm}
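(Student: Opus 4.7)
The plan is to apply the Parisi formula \eqref{variational} and exhibit a point in the $k=1$ parameter family at which $\mathscr{P}_{\mathrm{1RSB}} < \mathrm{RS}(\beta,h)$; since $\mathrm{RS}(\beta,h) = \mathscr{P}_\mathrm{RS}(q_*) \ge \inf \mathscr{P}$, this forces $\lim_{N\to\infty} F_N(\beta) < \mathrm{RS}(\beta,h)$ and establishes \eqref{1RSB}. Concretely, I would perturb $\mathscr{P}_{\mathrm{1RSB}}$ around the RS critical point $q_*$ of Theorem \ref{uniquenessof2speciesrRSsol} and show that the Hessian in the RSB direction acquires a negative eigenvalue precisely when \eqref{1RSB_condition} holds. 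The first derivatives vanish because the $1$RSB functional collapses to the RS functional at the degenerate point $q_1^s = q_2^s = q_*^s$ and $q_*$ is an interior critical point of $\mathscr{P}_\mathrm{RS}$ by \eqref{RS_condition}; thus a single negative direction in the Hessian is sufficient.

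To execute this, first write out $\mathscr{P}_{\mathrm{1RSB}}(\zeta_1,q_1,q_2)$ explicitly by integrating out the innermost Gaussian $\eta_3$ as in the derivation of \eqref{RS_formula}, yielding an expression that depends on $(q_1,q_2)$ through the composite quantities $(Q_1^s, Q_2^s, Q_1, Q_2)$ from \eqref{Qdef}; verify that it reduces to $\mathrm{RS}(\beta,h)$ when $q_1^s = q_2^s = q_*^s$, independently of $\zeta_1 \in (0,1)$. Then parameterize a perturbation by
\[
q_1^s = q_*^s - (1-\zeta_1)\epsilon u_s, \qquad q_2^s = q_*^s + \zeta_1\epsilon u_s, \qquad u=(u_1,u_2)\in\R^2, \quad \epsilon>0 \text{ small},
\]
so that the $\zeta_1$-weighted mean $\zeta_1 q_1^s + (1-\zeta_1)q_2^s$ remains at $q_*^s$. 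This isolates the direction transverse to the RS critical manifold; the linear term in $\epsilon$ vanishes by \eqref{RS_condition}, so $\mathscr{P}_{\mathrm{1RSB}}(\epsilon) - \mathrm{RS}(\beta,h) = \tfrac{\epsilon^2}{2}\, u^{\top} H(\zeta_1)\, u + O(\epsilon^3)$ for a matrix $H(\zeta_1)$ that carries an overall factor $\zeta_1(1-\zeta_1)$ (it must vanish at the endpoints since $1$RSB collapses to RS there).

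The Hessian computation is the technical heart of the proof. Using Gaussian integration by parts in $\eta_1,\eta_2$ together with $\partial_y \tanh y = \mathrm{sech}^2 y$, every second derivative at the RS point can be reduced to an integral of either $\mathrm{sech}^2 Y_s$ (which equals $1-q_*^s$) or $\mathrm{sech}^4 Y_s$ (which equals $\gamma_s/\lambda_s$), where $Y_s = h + \beta\eta\sqrt{Q_1^s}$ evaluated at $q = q_*$. Assembling the chain-rule contributions through $(Q_1^s,Q_2^s,Q_1,Q_2)$, whose derivatives in $q$ introduce factors of $\lambda_t$ and entries of $\Delta^2$, and using \eqref{RS_condition} to cancel the diagonal $\mathrm{sech}^2$ pieces, I expect $H(\zeta_1)$ to reduce to $\zeta_1(1-\zeta_1)\beta^4$ times a symmetric matrix conjugate to
\[
A = I - 2\beta^2\, \Delta^2\, \mathrm{diag}(\gamma_1,\gamma_2).
\]
The condition that $A$ have a negative eigenvalue is that the largest eigenvalue of $2\beta^2\Delta^2\mathrm{diag}(\gamma_1,\gamma_2)$ exceed $1$. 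A direct $2\times 2$ computation using $\Delta_{12}^2 = 1$ from \eqref{delta_assumptions} shows this largest eigenvalue equals $\beta^2\bigl(\gamma_1\Delta_{11}^2 + \gamma_2\Delta_{22}^2 + \sqrt{(\gamma_1\Delta_{11}^2 - \gamma_2\Delta_{22}^2)^2 + 4\gamma_1\gamma_2}\bigr)$, so the instability criterion is exactly \eqref{1RSB_condition}. Taking $u$ to be the corresponding eigenvector and $\epsilon > 0$ small yields $\mathscr{P}_{\mathrm{1RSB}} < \mathrm{RS}(\beta,h)$; admissibility $0 \le q_1^s \le q_2^s \le 1$ is automatic since $q_*^s \in (0,1)$ when $h > 0$.

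The main obstacle will be the bookkeeping in the Hessian computation, specifically the combinatorics of the chain rule through $(Q_1^s,Q_2^s,Q_1,Q_2)$. The cross-species coupling provided by the off-diagonal entries of $\Delta^2$ enters both in the ``single-species'' pieces $\sum_s \lambda_s X_0^s$ (through $Q_1^s$ and $Q_2^s - Q_1^s$) and in the ``interaction'' piece $\zeta_1(Q_2-Q_1)$, and these must be combined consistently. One must also distinguish derivatives at the outer level ($q_1^s$, which controls $Q_1^s$) from those at the inner level ($q_2^s$, which controls $Q_2^s - Q_1^s$), since both enter the same $\log\cosh$ structure. Once the Hessian has been correctly assembled and its spectrum identified with that of $A$ above, matching to \eqref{1RSB_condition} is a routine $2\times 2$ eigenvalue calculation using \eqref{delta_assumptions}.
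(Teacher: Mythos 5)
Your overall strategy mirrors the paper's: perturb the $1$RSB functional around the RS critical point $q_*$, show the second-order variation is indefinite, and conclude $\inf\mathscr{P}_\mathrm{1RSB} < \mathrm{RS}(\beta,h)$. The parameterizations differ --- the paper fixes $q_1 = q_*$, perturbs only $q_2$, and studies $V(p) = \partial_\zeta\mathscr{P}_\mathrm{1RSB}(q_*,p,\zeta)|_{\zeta=1}$ via Lemma \RefVLemma{}; you fix $\zeta_1$ and move both levels keeping the $\zeta_1$-weighted mean at $q_*$ --- but this is a cosmetic difference. Your $2\times 2$ eigenvalue computation for $\Delta^2\mathrm{diag}(\gamma_1,\gamma_2)$ is correct and matches the threshold in \eqref{1RSB_condition}.

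The proposal has a genuine gap at the admissibility step. You write that taking $u$ to be the eigenvector for the largest eigenvalue suffices and that admissibility is ``automatic since $q_*^s\in(0,1)$.'' But your parameterization requires $q_1^s \le q_2^s$, which forces $u_s \ge 0$ for both $s$; interiority of $q_*$ only gives you room to move, not the orientation. In general, a symmetric $2\times 2$ matrix can have a strictly positive eigenvalue while the quadratic form is negative on the entire nonnegative orthant (take $\begin{pmatrix}-1&-3\\-3&-1\end{pmatrix}$, with eigenvalues $2,-4$ but $x^\intercal K x < 0$ for all nonzero $x\ge 0$). So ``negative Hessian eigenvalue'' does not by itself produce an admissible descent direction. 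You need to check the sign of the off-diagonal entry of the Hessian, and, when both diagonal entries are nonpositive, a further inequality. This is exactly what the paper's case analysis \eqref{2speciescondition}--\eqref{2speciescondition_new} does; the non-obvious reason it works is that the convexity assumption $\Delta_{11}^2\Delta_{22}^2>1$ forces $\beta_v^2 < \beta_m^2$ (see the displayed inequality after \eqref{2speciescondition_new}), so the off-diagonal entry $K_{12} = 2\beta^2(\gamma_1\Delta_{11}^2+\gamma_2\Delta_{22}^2)-1$ is automatically positive whenever $\beta^2 > \beta_m^2$, at which point a nonnegative near-eigenvector does work. Without this verification your argument does not establish an admissible $1$RSB point. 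Relatedly, your ``conjugate to $A = I - 2\beta^2\Delta^2\Gamma$'' step is stated in terms of similarity, which preserves spectrum but destroys the sign pattern of entries that the cone condition depends on; the paper instead works directly with the explicit symmetric Hessian $\beta^2\Lambda(2\beta^2\Delta^2\Gamma\Delta^2-\Delta^2)\Lambda$, where $\Lambda$ is diagonal positive so the cone condition is preserved. Finally, your Hessian derivation is aspirational (``I expect $H(\zeta_1)$ to reduce to\ldots''); the paper's Lemma \RefVLemma{} does the actual Gaussian integration-by-parts calculation and would need to be redone for your alternative parameterization.
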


\begin{remark} \label{rmk:analogy}
If $\Delta_{11}^2=\Delta_{22}^2=1$, then \eqref{uniqueness_condition} recovers the analogous result for the SK model, proven independently in \cite{guerra01} and \cite{latala02}.
Similarly, \eqref{1RSB_condition} recovers the AT condition proven in \cite{toninelli02}.
Notational choices in the SK model, however, sometimes replace $\beta^2$ with $\beta^2/2$.
\end{remark}

The proof of Theorem \ref{2speciessymmetrybreaking} is given in Section \ref{sec:hessian}.
In terms of the overlap order parameter \eqref{overlap_def}, Theorem \ref{2speciessymmetrybreaking} says that if \eqref{1RSB_condition} holds, then $R_s(\sigma^1,\sigma^2)$ has a limiting distribution that is nontrivial (i.e.~at least two points in the support) for some species $s$.
Moreover, our analysis in the two-species case suggests that \eqref{1RSB} is realized at smaller $\beta$ when both species break symmetry as opposed to just one; see \eqref{2speciescondition_new}, which is equivalent to \eqref{1RSB_condition}.
Therefore, it is plausible that under convexity \eqref{delta_assumptions}, symmetry breaking of one species implies symmetry breaking of all species.
Verifying this statement rigorously, however, requires a global analysis that goes beyond the perturbative approach of this chapter.

In the context of the classical SK model, the inequality \eqref{1RSB_condition} is obtained in \cite{toninelli02} as the necessary and sufficient condition for a certain second derivative to be positive.
The analogous object in the MSK model is an $M\times M$ Hessian matrix, and the relevant condition is, at least intuitively, the positivity of its top eigenvalue.
A difficulty posed by multi-dimensionality, however, is that the associated eigenvector need not have all positive entries, which is ultimately needed to conclude symmetry breaking because of the ordering in \eqref{parameters_q}.
When $M=2$, we are able to overcome this difficulty and prove the relevant eigenvector \textit{does} have positive coordinates, by direct analysis of matrix entries.
%SIMULATIONS

\subsection{Related results for Ising spin glasses} \label{subsec:related_results}

In the single-species case, \eqref{sk_hamiltonian} is often replaced by a more general Hamiltonian that considers interactions not just between pairs of spins, but also between $p$-tuples of spins for any $p \geq 2$.
More precisely, the mixed $p$-spin model with mixture $\xi(t) = \sum_{p\geq2} \beta_p^2t^p$, inverse temperature $\beta > 0$, and external field $h \geq 0$, has the Hamiltonian
\eq{
H_N(\sigma) = \beta\sum_{p=2}^\infty \frac{\beta_p}{N^{(p-1)/2}}\sum_{i_1,\dots,i_p=1}^N g_{i_1\cdots i_p}\sigma_{i_1}\cdots\sigma_{i_p} + h\sum_{i=1}^N\sigma_i, \quad \sigma \in \{\pm 1\}^N,
}
where the disorder variables $g_{i_1\cdots i_p}$ all i.i.d.~standard normals.
For such models (with suitable decay conditions on the $\beta_p$), the Parisi formula has been proved by Talagrand \cite{talagrand06} when $\beta_p = 0$ for all odd $p$, and by Panchenko \cite{panchenko14} in the general case.

In this more general setting, the set of RS critical points is
\eq{
\CC(\beta,h) \coloneqq \{q \in [0,1] : \E \tanh^2(\beta \eta \sqrt{\xi'(q)} + h) = q\},
}
As discussed in \cite{jagannath-tobasco17I}, the size of $\CC(\beta,h)$ is very difficult to determine in general.
Nevertheless, we are more generally concerned with the quantity
\eq{
\alpha(\beta,h) \coloneqq \min_{q \in \CC(\beta,h)} \beta^2 \xi''(q)\E \sech^4(\beta \eta \sqrt{\xi'(q)} + h).
}
The result of Toninelli \cite{toninelli02} for the SK model, which Theorem \ref{2speciessymmetrybreaking} generalizes,  can then be written as
\eq{
\alpha(\beta,h) > 1 \quad \implies \quad \lim_{N\to\infty} F_N(\beta)< \inf_{q \in \CC(\beta,h)} \mathscr{P}(q),
}
where $\mathscr{P}$ is the Parisi functional restricted to Dirac delta measures;
see \cite{jagannath-tobasco17I} by Jagannath and Tobasco, who extend this result to mixed $p$-spin models.
It is conjectured that the converse is also true, at least when $\beta_2 > 0$, and some partial results are given in \cite{aizenman-lebowitz-ruelle87,guerra-toninelli02II,talagrand02,jagannath-tobasco17I}.
However, for the Ghatak--Sherrington model (in which spin $0$ is allowed), the converse is known to be false by work of Panchenko \cite{panchenko05II}.

\subsection{Challenges with three or more species} \label{3plus_difficulties}
As discussed in Section \ref{main_results}, the relationship between \eqref{uniqueness_condition} and \eqref{1RSB_condition} generalizes the one between the analogous thresholds in the classical SK model.
It would be interesting to have a similar result that applies to the $M$-species model for any $M$.
Unfortunately, it is not clear how the techniques used in Sections \ref{sec:uniqueness} and \ref{sec:hessian} could be adapted to handle the case $M\geq3$.

The challenges are mainly linear algebraic.
For instance, if $A$ is an $M\times M$ positive definite matrix, then the signs of the off-diagonal entries of $A^{-1}$ cannot be determined from $\det(A)$, unless $M\leq2$.
This information is crucial in the proof of Theorem \ref{uniquenessof2speciesrRSsol} (see \eqref{inverse_A} and \eqref{B_inverse}), which establishes the uniqueness of the RS critical point.
Without this uniqueness, Theorem \ref{2speciessymmetrybreaking} can only be stated as Corollary \ref{general_RSB_condition}, which is useful in determining symmetry breaking only if one can identify a critical point with minimal energy.  This task is non-trivial even when $M=1$; for $M\geq 2$, it is not actually clear \textit{a priori} why there should even be only finitely many critical points.

Another difficulty is analyzing the matrix $K = K(\beta)$ appearing in the proof of Theorem \ref{2speciessymmetrybreaking}.
Determining the exact set of $\beta$ for which $K$ has a positive eigenvalue, as well as an eigenvector with all nonnegative entries, requires one to consider all the relationships among the $M(M+1)/2$ distinct entries of $K$ that yield this property.
Any argument that works for general $M$ will not be able to proceed directly as we do, and may have to first address the following question: If $K$ has a positive eigenvalue, is there necessarily an associated eigenvector with all nonnegative entries?

\subsection{Non-convex cases}
The validity of the Parisi variational formula \eqref{variational} is known only when $\Delta^2$ is nonnegative definite.
It seems likely that this formula fails in general, for instance in the bipartite model $\Delta^2 = \begin{pmatrix} 0 & 1 \\ 1 & 0 \end{pmatrix}$.
Theorem \ref{2speciessymmetrybreaking} would prove this to be the case if the second assumption in Corollary \ref{general_RSB_condition} were true.
This is because \cite[Theorem 3]{barra-genovese-guerra11} proves an RS regime at high temperatures, whereas the existence of the $x\in\R^M$ required in Corollary \ref{general_RSB_condition} holds trivially at all temperatures if $\Delta^2_{ss} = 0$ for some $s$.
Unfortunately, our arguments for \eqref{RS_minimizing} rely on convexity (again, see \eqref{B_inverse}).
%Furthermore, the fact that \eqref{variational} holds in the ``critical" case in which $\Delta^2$ is nonnegative definite and $\det\Delta^2 = 0$, but our arguments do not, suggests that a conceptually different approach is needed.

\section{Uniqueness of critical point} \label{sec:uniqueness}

In this section we prove Theorem \ref{uniquenessof2speciesrRSsol}, which asserts the uniqueness of the replica symmetric critical point in the parameter regime $\{h>0\} \cup \{\beta < \beta_0\}$, where $\beta_0$ is identified from \eqref{uniqueness_condition}.
For the SK model, the argument to identify $\beta_0$ is straightforward and can be found in \cite[Section 1.3]{talagrand11I}. 
The two-dimensional nature of the problem here is handled by introducing a pair of inequalities and then optimizing over $\beta$.

Addressing the case $h>0$ is also straightforward in the SK model, at least once the clever Lemma \ref{LatalaGuerralemma} is realized.
In fact, we are able to make use of this lemma once more in the two-species case, since the signs of the entries in a $2\times2$ inverse matrix are easy to determine.

\begin{lemma}[{\cite{guerra01} and \cite{latala02}, see also \cite[Appendix A.14]{talagrand11II}}]\label{LatalaGuerralemma}
Let $\phi$ be an odd, twice-differentiable, increasing, and bounded function, that is strictly concave when $y>0$. 
Then the function $\Phi(x)\coloneqq\E(\phi(z\sqrt{x}+h)^{2})/x$ is strictly decreasing on $\mathbb{R}^{+}$ and vanishes as $x \implies \infty$.
\end{lemma}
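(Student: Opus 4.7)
The plan separates into the two assertions. Vanishing at infinity is immediate from boundedness: if $|\phi|\le M$, then $\Phi(x)\le M^{2}/x\to 0$. The rest of the proof addresses strict monotonicity on $(0,\infty)$.

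For monotonicity, I would differentiate under the expectation and apply Gaussian integration by parts to reduce the problem to a single inequality. Set $Y=h+\sqrt{x}\,z$ and $g(x)=\E\phi(Y)^{2}$, so that $\Phi'(x)=(xg'(x)-g(x))/x^{2}$. A direct computation gives $g'(x)=\E[(\phi'(Y))^{2}+\phi(Y)\phi''(Y)]$, and the identity $\E[zf(z)]=\E[f'(z)]$ applied to $f(z)=\phi(Y)\phi'(Y)$ yields $xg'(x)=\sqrt{x}\,\E[z\phi(Y)\phi'(Y)]=\E[(Y-h)\phi(Y)\phi'(Y)]$. Consequently, $\Phi'(x)<0$ is equivalent to the single inequality
\[
\E[G(Y)]>0,\qquad G(y)\coloneqq\phi(y)\bigl[\phi(y)-(y-h)\phi'(y)\bigr].
\]
Since $\phi$ is odd, $\Phi(x)$ is invariant under $h\mapsto -h$, so I may assume $h\ge 0$.

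To establish $\E[G(Y)]>0$, I would combine a pointwise sign control on $[0,\infty)$ with a symmetrization across the Gaussian density. For $y\ge 0$, the auxiliary function $F(y)\coloneqq \phi(y)-(y-h)\phi'(y)$ satisfies $F(h)=\phi(h)\ge 0$ and $F'(y)=-(y-h)\phi''(y)$; strict concavity of $\phi$ on $(0,\infty)$ then forces $F$ to decrease on $[0,h]$ and increase on $[h,\infty)$, so $F(y)\ge \phi(h)\ge 0$ and hence $G(y)\ge 0$ on $[0,\infty)$, strictly for $y>h$. Next, a direct expansion using the oddness of $\phi$ and the evenness of $\phi'$ produces the clean identity
\[
G(y)+G(-y)=2\phi(y)\bigl[\phi(y)-y\phi'(y)\bigr],
\]
which is nonnegative for all $y$ and strictly positive for $y\ne 0$ (by strict concavity with $\phi(0)=0$, one has $\phi(y)>y\phi'(y)$ on $(0,\infty)$). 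The Gaussian density satisfies $\rho(-y)/\rho(y)=e^{-2yh/x}\in(0,1]$ for $y,h\ge 0$, so folding the integral over $\mathbb{R}$ onto $[0,\infty)$ yields
\[
\E[G(Y)]=\int_{0}^{\infty}\rho(y)\Bigl[(1-e^{-2yh/x})G(y)+e^{-2yh/x}\bigl(G(y)+G(-y)\bigr)\Bigr]\,dy,
\]
where each bracketed summand is nonnegative, and strict positivity on $\{y>h\}$ (a set of positive Gaussian measure) delivers the claimed strict inequality.

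The main obstacle is the contribution to $\E[G(Y)]$ from $y<0$, where $G$ genuinely changes sign and direct pointwise control is unavailable. The decisive input making the plan work is the exact identity for $G(y)+G(-y)$, which absorbs the negative values of $G$ on $(-\infty,0)$ by pairing them with strictly larger positive values on $(0,\infty)$, using the explicit ratio of the Gaussian density at $y$ and $-y$. The case $h=0$ becomes transparent (the first bracketed term vanishes identically, and the second reduces to $G(y)+G(-y)$), while $h>0$ gains additional strict positivity from the factor $1-e^{-2yh/x}>0$.
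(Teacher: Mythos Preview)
The paper does not prove this lemma; it is stated with citations to Guerra, Lata{\l}a, and Talagrand's book, and used as a black box. Your argument is correct and is essentially the standard proof found in those references: reduce $\Phi'(x)<0$ via Gaussian integration by parts to $\E[G(Y)]>0$ with $G(y)=\phi(y)[\phi(y)-(y-h)\phi'(y)]$, then control the sign by folding onto $[0,\infty)$. The key identity $G(y)+G(-y)=2\phi(y)[\phi(y)-y\phi'(y)]$ and the tangent-line inequality $\phi(y)>y\phi'(y)$ from strict concavity are exactly what drives the classical argument. One small remark: your analysis of $F$ actually shows $G(y)>0$ for all $y>0$ (not just $y>h$), since $F(y)\ge\phi(h)>0$ when $h>0$ and $F(y)>F(0)=0$ when $h=0$; either way the strict positivity of the second bracketed term on $\{y>0\}$ already suffices, so the conclusion is unaffected. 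Differentiation under the expectation is justified because $\phi$ is bounded and $\phi'$ is bounded (being even, nonnegative, and decreasing on $(0,\infty)$ by concavity, hence $\le\phi'(0)$), so the dominated convergence hypothesis holds on compact subsets of $(0,\infty)$.
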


\begin{proof}[Proof of Theorem \ref{uniquenessof2speciesrRSsol}]
Since $\mathscr{P}_\mathrm{RS}(q)$ is a continuous function of $q\in[0,1]^2$, it must attain a minimum.
Since $\mathscr{P}_\mathrm{RS}$ is differentiable on $(0,1)^2$, if this minimum is achieved at some $q \in (0,1)^2$, then $q$ must belong to $\CC(\beta,h)$.
Therefore, we will first show that if $h>0$, then \eqref{RS_condition} has at most one solution $q_* \in (0,1)^2$, and that $\mathscr{P}_\mathrm{RS}$ achieves its minimum in $(0,1)^2$.
In order to handle the case $h=0$, we will separately show that if \eqref{uniqueness_condition} holds, then $\CC(\beta,h)$ contains at most one point.
In particular, when $h=0$, it is clear that the single element must be $(0,0)$.

First assume $h>0$.
For ease of notation, we will write $Q^s = Q^s_1$ for $s = 1,2$, where $Q^s_1$ is defined in \eqref{Qdef}.
Considering $Q = \begin{pmatrix} Q^1 \\ Q^2 \end{pmatrix}$ and $q = \begin{pmatrix} q^1 \\ q^2\end{pmatrix}$ as vectors, we have
%Denote $Q$ and $q$ by the vector with coordinates $(Q^{s})_{s\in \ss}$ and $(q^{s})_{s\in \ss}$ respectively. Then, 
%By definition \eqref{Qdef}, we have
\eeq{\label{matrixrepofQandq}
Q=Aq,\quad\text{where}\quad A=\begin{pmatrix} 
2\lambda_1\Delta_{11}^2 & 2\lambda_2 \\
2\lambda_1 & 2\lambda_2\Delta_{22}^2
\end{pmatrix}.
}
Since $\Delta^2$ is positive definite, we have $\det A > 0$ and hence
\eeq{\label{inverse_A}
A^{-1}=\begin{pmatrix}
\phantom{-}a&-b\\-c&\phantom{-}d
\end{pmatrix},\quad\text{where}\quad a,b,c,d >0.
}
Assuming $q$ satisfies \eqref{RS_condition}, inversion of $A$ in \eqref{matrixrepofQandq} gives
\begin{subequations} \label{2speciesRSeq}
\begin{align}
aQ^1-bQ^2&=\E\tanh^{2}(\beta z \sqrt{Q^{1}}+h), \label{2speciesRSeq_1} \\
-cQ^1+dQ^2&=\E\tanh^{2}(\beta z \sqrt{Q^{2}}+h). \label{2speciesRSeq_2}
\end{align}
\end{subequations}
To show that $\CC(\beta,h)$ is a singleton, it suffices (by invertibility of $A$) to show that the system \eqref{2speciesRSeq} admits at most one solution $Q$.

Since $h > 0$, it is clear from \eqref{RS_condition} that $q^1,q^2 > 0$ and thus $Q^1,Q^2 > 0$.
Therefore, we can rewrite \eqref{2speciesRSeq} as
\begin{subequations}
\begin{align}
a-b\frac{Q^2}{Q^1}&=\frac{\E\tanh^{2}(\beta z \sqrt{Q^{1}}+h)}{Q^1}, \label{rewrite_1}\\
-c\frac{Q^1}{Q^2}+d&=\frac{\E\tanh^{2}(\beta z \sqrt{Q^{2}}+h)}{Q^2}. \label{rewrite_2}
\end{align}
\end{subequations}
%First note that if $b = 0$, then Lemma \ref{LatalaGuerralemma} shows that $Q^1$ is uniquely determined by \eqref{rewrite_1}.
%Similarly, if $c=0$, then $Q^2$ is uniquely determined by \eqref{rewrite_2}.
%If one of $b$ and $c$ is nonzero---assume $b>0$, without loss of generality---then 
Since $b>0$, the left-hand side of \eqref{rewrite_1} is strictly increasing in $Q^1$ (and approaches $a>0$ as $Q^1\to\infty$), whereas the right-hand side is strictly decreasing by Lemma \ref{LatalaGuerralemma} (and approaches $0$ as $Q^1\to\infty$).
Therefore, for each fixed value of $Q_2$, there is exactly one value of $Q_1$ satisfying \eqref{rewrite_1}.
%If $c=0$, then we know $Q^2$ is unique, and so uniqueness of $Q^1$ follows.
Since $c>0$, %then instead of using \eqref{2speciesRSeq_1} 
we can also define $Q^1$ from $Q^2$ using \eqref{rewrite_2} instead of \eqref{rewrite_1}.
That is, given $x>0$, define $Q^1(x)$ by
\eq{
-c\frac{Q^1(x)}{x}+d&=\frac{\E\tanh^{2}(\beta z \sqrt{x}+h)}{x}.
}
Since the right-hand side above is strictly decreasing in $x$, it follows that $Q^1(x)/x$ is strictly increasing in $x$.
In particular, $Q^1(x)$ is strictly increasing in $x$.
Therefore, if we replace \eqref{rewrite_1} by
\eeq{
a-b\frac{x}{Q^1(x)}&=\frac{\E\tanh^{2}(\beta z \sqrt{Q^{1}(x)}+h)}{Q^1(x)} \label{rerewrite_1},
}
then the right-hand side is strictly decreasing in $x$ by Lemma \ref{LatalaGuerralemma}, while the left-hand side is strictly increasing in $x$.
Consequently, there is at most one value of $x$ such that \eqref{rerewrite_1} holds.

To complete the proof in the case $h > 0$, we must check that the minimum of the RS expression $\mathscr{P}_\mathrm{RS}$ is not obtained on the boundary.
Recall from \eqref{P_deriv} that
\eq{
\begin{pmatrix}
y^1 \\ y^2
\end{pmatrix}
\coloneqq
\begin{pmatrix} 
\frac{\partial \mathscr{P}_\mathrm{RS}(q)}{\partial q^1}\vspace{3pt} \\ \frac{\partial \mathscr{P}_\mathrm{RS}(q)}{\partial q^2}
\end{pmatrix}
= B\begin{pmatrix}
x^1 \\ x^2
\end{pmatrix},
}
where
\eq{
B = \beta^2\begin{pmatrix}
\lambda_1^2\Delta_{11}^2 & \lambda_1\lambda_2 \\
\lambda_1\lambda_2 & \lambda_2^2\Delta_{22}^2
\end{pmatrix},
\qquad
x^s = q^s - \E\tanh^2(\beta\eta\sqrt{Q^s}+h).
}
Because of \eqref{delta_assumptions}, it follows that
\eeq{ \label{B_inverse}
\begin{pmatrix}
x^1 \\ x^2
\end{pmatrix}
=
\begin{pmatrix}
\phantom{-}e & -f \\
-g & \phantom{-}h
\end{pmatrix}
\begin{pmatrix}
y^1 \\ y^2
\end{pmatrix},
\quad \text{where} \quad
e,f,g,h>0.
}
Suppose toward a contradiction that $\mathscr{P}_\mathrm{RS}$ is minimized as $\mathscr{P}(q^1,0)$ for some $q^1 \in (0,1]$.
We then have $y^1 \leq 0$ and $x_2 < 0$.
It follows that $y^2$ is negative, since $y^2 \geq 0$ would imply
\eq{
0 > x^2 = -gy^1 + hy^2 \geq hy^2 \geq 0.
}
So now $y^2 < 0$, meaning $\mathscr{P}(q^1,q^2) < \mathscr{P}(q^1,0)$ for small enough $q^2 > 0$, giving the desired contradiction.
By similar reasoning, $\mathscr{P}_\mathrm{RS}$ cannot be minimized along $\{0\} \times (0,1]$, $[0,1) \times \{1\}$, or $\{1\} \times [0,1)$.
Finally, it is clear from the bounds on $\tanh^2(\cdot)$ that $y^1$ and $y^2$ are both negative if $q^1=q^2=0$, and both positive if $q^1=q^2=1$, eliminating the possibility that $\mathscr{P}_\mathrm{RS}$ is minimized at either $(0,0)$ or $(1,1)$.
This completes the proof in the case $h > 0$.

For the next part, we assume \eqref{uniqueness_condition} holds, for $h$ possibly equal to $0$.
Let $F_1(q^1,q^2) \coloneqq \psi(Q^1)$ and $F_2(q^1,q^2) \coloneqq \psi(Q^2)$, where
\eq{
\psi(x) = \E f(\beta\eta\sqrt{x}+h), \quad \eta \sim \NN(0,1), \quad \text{and} \quad f(y) = \tanh^2(y).
}
Using Gaussian integration by parts, we find
\eq{
\psi'(x) = \frac{\beta}{2\sqrt{x}} \E[\eta f'(\beta\eta\sqrt{x}+h)]
= \frac{\beta^2}{2}\E f''(\beta\eta\sqrt{x}+h).
}
Also observe that
\eq{
f'(y) = 2\frac{\tanh y}{\cosh^2 y}, \qquad
f''(y) = 2\frac{1-2\sinh^2 y}{\cosh^4 y}.
}
One can check that $f''(y) \in \big[-\frac{2}{3},2\big]$ for all $y\in\R$.
Consequently,
\begin{align} \label{derivative_bounds1}
\Big|\frac{\partial F_1}{\partial q^1}\Big| &= 2\lambda_1\Delta_{11}^2|\psi'(Q^1)| \leq 2\beta^2\lambda_1\Delta_{11}^2,  &
\Big|\frac{\partial F_1}{\partial q^2}\Big| &= 2\lambda_2|\psi'(Q^1)| \leq 2\beta^2\lambda_2, \\
\Big|\frac{\partial F_2}{\partial q^1}\Big| &= 2\lambda_1|\psi'(Q^2)| \leq 2\beta^2\lambda_1,  &
\Big|\frac{\partial F_2}{\partial q^2}\Big| &= 2\lambda_2\Delta_{22}^2|\psi'(Q^2)| \leq 2\beta^2\lambda_2\Delta_{22}^2. \label{derivative_bounds2}
\end{align}
%In order for the map $(F_1,F_2) : [0,1]^2 \to [0,1]^2$ to have at most one fixed point, it suffices that for every $x,y\geq0$ (with at least one of $x$ and $y$ nonzero), one has
%\eeq{ \label{min_condition}
%\min\{2\beta^2(x\lambda_1\Delta_{11}^2+y\lambda_2\Delta_{12}^2)-x,2\beta^2(x\lambda_1\Delta_{12}^2+y\lambda_2\Delta_{22}^2)-y\} < 0
%}
%The reason for this is as follows.
Suppose, toward a contradiction, that $(q^1,q^2)$ and $(p^1,p^2)$ are distinct elements of $\CC(\beta,h)$.
That is, each is a fixed point of $(F_1,F_2) : [0,1]^1\to[0,1]^2$.
Let $\gamma(t) = ((1-t)q^1+tp^1,(1-t)q^2+tp^2)$, $0\leq t\leq1$, be the line segment connecting these two points.
We must have
\eeq{ \label{path_integral}
\int_{0}^1 \nabla F_s(\gamma(t)) \cdot (p^1-q^1,p^2-q^2)\ \dd t = p^s-q^s,\quad s=1,2.
}
On the other hand, the bounds in \eqref{derivative_bounds1} reveal that
\eq{
&\int_0^1 |\nabla F_1(\gamma(t)) \cdot (p^1-q^1,p^2-q^2)|\ \dd t \leq 2\beta^2(\lambda_1\Delta_{11}^2|p^1-q^1| + \lambda_2|p^2-q^2|),
}
while \eqref{derivative_bounds2} gives
\eq{
&\int_0^1 |\nabla F_2(\gamma(t)) \cdot (p^1-q^1,p^2-q^2)|\ \dd t \leq 2\beta^2(\lambda_1|p^1-q^1| + \lambda_2\Delta_{22}^2|p^2-q^2|).
}
To derive a contradiction to \eqref{path_integral}, it suffices to show that either
\eq{
2\beta^2(\lambda_1\Delta_{11}^2|p^1-q^1| + \lambda_2|p^2-q^2|) < |p^1-q^1|
}
or
\eq{
2\beta^2(\lambda_1|p^1-q^1| + \lambda_2\Delta_{22}^2|p^2-q^2|) < |p^2-q^2|.
}
By scaling, this is equivalent to showing that for any $t\in[0,\infty]$, either
\eeq{ \label{first_curve}
%2\beta^2(\lambda_1\Delta_{11}^2t + \lambda_2(1-t)) < t \quad \iff \quad
L_1(t) \coloneqq 2\beta^2\Big(\lambda_1\Delta_{11}^2 + \lambda_2t\Big) < 1
}
or
\eq{
%2\beta^2(\lambda_1t + \lambda_2\Delta_{22}^2(1-t)) < 1-t \quad \iff \quad
L_2(t) \coloneqq 2\beta^2\Big(\lambda_1\frac{1}{t} + \lambda_2\Delta_{22}^2\Big) < 1.
}
Since $L_1'(t) > 0 > L_2'(t)$ with $L_1(t) \to \infty$ as $t\to\infty$ and $L_2(t) \to \infty$ as $t\to0$,
the maximum value of $\min(L_1(t),L_2(t))$ will be achieved at the unique $t > 0$ such that $L_1(t)=L_2(t)$.
For this value of $t$ we have
\eq{
\lambda_1\Delta_{11}^2 + \lambda_2 t &= \lambda_1\frac{1}{t}+\lambda_2\Delta_{22}^2
\quad \implies \quad t = \frac{-\lambda_1\Delta_{11}^2+\lambda_2\Delta_{22}^2 + \sqrt{(\lambda_1\Delta_{11}^2-\lambda_2\Delta_{22}^2)^2+4\lambda_1\lambda_2}}{2\lambda_2}.
}
Using this value of $t$ in \eqref{first_curve}, we conclude that a contradiction is realized as soon as
\eq{
2\beta^2\frac{\lambda_1\Delta_{11}^2+\lambda_2\Delta_{22}^2 + \sqrt{(\lambda_1\Delta_{11}^2-\lambda_2\Delta_{22}^2)^2+4\lambda_1\lambda_2}}{2} < 1,
}
which is exactly \eqref{uniqueness_condition}.
\end{proof}

\section{Hessian condition for symmetry breaking} \label{sec:hessian}
In this section we prove Theorem \ref{2speciessymmetrybreaking}, which generalizes the de Almeida--Thouless condition for symmetry breaking in the SK model \cite{almeida-thouless78}.
The proof is a perturbative argument following the strategy of \cite{toninelli02}, in which a symmetry breaking parameter is introduced alongside the RS critical point---whose uniqueness was established in Section \ref{sec:uniqueness}---by bringing the latter's atomic weight $\zeta$ just slightly away from $1$.
A nice exposition of the original argument in the single-species case can by found in \cite[Section 13.3]{talagrand11II}.

Assume $h > 0$ and fix an RS critical point $q_*\in\CC(\beta,h)$.
For any $p\in[0,1]^M$ such that $p^s\geq q_*^s$ for each $s$, we can define
\eq{
V(p) = \frac{\partial \mathscr{P}_\mathrm{1RSB}(q_*,p,\zeta)}{\partial \zeta}\Big|_{\zeta = 1}.
}
Here $\mathscr{P}_\mathrm{1RSB}$ is the Parisi functional restricted to level-$1$ symmetry breaking.
An explicit expression \eqref{parisi_1level} is computed in Section \ref{sec:appendix}.
The quantity $V(p)$ is useful because of the following observations.

\begin{lemma} \label{V_calculations}
Fix any $q = q_*\in\CC(\beta,h)$, and recall $Q_1^s$ defined by \eqref{Qdef}.
With the notation
\eq{
\gamma_s = \lambda_s\E\sech^4(\beta\eta\sqrt{Q_1^s}+h), \quad s = 1,\dots,M,
}
the following equalities hold:
\begin{itemize}
\item[(a)] $V(q_*) = 0$
\item[(b)] $\nabla V(q_*) = 0$
\item[(c)] $HV(q_*) = \beta^{2}\Lambda(2\beta^{2}\Delta^{2}\Gamma\Delta^{2}-\Delta^{2})\Lambda$, where $\Lambda$ and $\Gamma$ are the $M\times M$ diagonal matrices with diagonal entries $(\lambda_s)_{s=1}^M$ and $(\gamma_s)_{s=1}^M$, respectively.
\end{itemize}
\end{lemma}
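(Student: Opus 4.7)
The plan is to derive an explicit expression for $V(p)$ and then perform a careful Taylor expansion around the degenerate point $p=q_*$. Writing $k=1$ in the general Parisi formula and using $\zeta_2=1$, the innermost layer collapses via the identity $\exp(\log\cosh(y))=\cosh(y)$, giving $X_2^s = \tfrac{\beta^2}{2}(Q_3^s-Q_2^s) + \log\cosh(h+\beta\eta_1\sqrt{Q_1^s}+\beta\eta_2\sqrt{Q_2^s-Q_1^s})$. This yields $X_1^s = \tfrac{\beta^2}{2}(Q_3^s-Q_2^s) + \tfrac{1}{\zeta}\log\E_2\cosh^{\zeta}(\cdots)$, and on differentiating $\mathscr{P}_\mathrm{1RSB}$ in $\zeta$ and evaluating at $\zeta=1$ (so $\tfrac{d}{d\zeta}[\tfrac{1}{\zeta}\log\E_2 e^{\zeta f}]|_{\zeta=1}=\tfrac{\E_2[fe^f]}{\E_2 e^f}-\log\E_2 e^f$), I obtain the working expression
\eq{
V(p) = \sum_{s=1}^2 \lambda_s\, \E_1\!\left[\frac{\E_2[f_s\cosh(h_s^*+\beta\eta_2\sqrt{\epsilon_s})]}{\cosh(h_s^*)\,e^{\beta^2\epsilon_s/2}} - \log\cosh(h_s^*) - \frac{\beta^2\epsilon_s}{2}\right] - \frac{\beta^2}{2}(Q_2-Q_1),
}
where $h_s^*=h+\beta\eta_1\sqrt{Q_1^s}$, $\epsilon_s=Q_2^s-Q_1^s$, $f_s=\log\cosh(h_s^*+\beta\eta_2\sqrt{\epsilon_s})$, and I have used $\E_2\cosh(a+\sigma Z)=\cosh(a)\,e^{\sigma^2/2}$ to compute the denominator.

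Part (a) is then immediate, since at $p=q_*$ we have $\epsilon_s=0$ (so $f_s$ is deterministic in $\eta_2$ and the bracket vanishes) and $Q_2=Q_1$. For parts (b) and (c) I would Taylor-expand $H_s(\epsilon_s)$, the bracketed quantity, in the single parameter $\epsilon_s$, using the Gaussian tilting trick: the tilted measure $\cosh(y)\,d\NN(h_s^*,\beta^2\epsilon_s)$ is a mixture of two Gaussians with means $h_s^*\pm\beta^2\epsilon_s$, variance $\beta^2\epsilon_s$, and weights $\tfrac{1\pm B_s}{2}$, where $B_s=\tanh h_s^*$. Writing $C_s=\sech^2 h_s^*$ and applying $\E[g(\mu+\sigma Z)] = \sum_{k\ge 0}\sigma^{2k}g^{(2k)}(\mu)/(2^k k!)$ to $g=\log\cosh$, the linear-in-$\epsilon_s$ coefficient of the weighted mixture is $\tfrac{\beta^2\epsilon_s}{2}(2B_s^2+C_s)$; subtracting $\tfrac{\beta^2\epsilon_s}{2}$ and using $B_s^2+C_s=1$ collapses this to $\tfrac{\beta^2\epsilon_s}{2}B_s^2$. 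Taking $\E_1$ and invoking the RS critical point condition $\E_1[B_s^2]=\E[\tanh^2(h+\beta\eta\sqrt{Q_1^s})]=q_*^s$, the linear part of $\sum_s\lambda_s H_s(\epsilon_s)$ becomes $\tfrac{\beta^2}{2}\sum_s\lambda_s q_*^s\epsilon_s$, which, after substituting $\epsilon_s=2\sum_t\Delta_{st}^2\lambda_t(p^t-q_*^t)$ and swapping summation indices, cancels exactly the linear part of $\tfrac{\beta^2}{2}(Q_2-Q_1)$. This gives (b).

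For part (c), I would collect all order-$\epsilon_s^2$ contributions: those from $g(\mu_\pm)$ through third order, the Laplace correction $\tfrac{\sigma^2}{2}g''(\mu_\pm)$ through first order, and the leading $g^{(4)}$ term $\tfrac{\sigma^4}{8}g^{(4)}(h_s^*)$. The weighted average gives a quadratic coefficient of $\tfrac{\beta^4\epsilon_s^2}{2}\bigl(C_s(1-2B_s^2)+E_s/4\bigr)$ with $E_s=g^{(4)}(h_s^*)=4C_sB_s^2-2C_s^2$. The key algebraic cancellation is
\eq{
C_s(1-2B_s^2)+\tfrac14 E_s = C_s - 2C_sB_s^2 + C_sB_s^2 - \tfrac12 C_s^2 = C_s(1-B_s^2) - \tfrac12 C_s^2 = \tfrac12 C_s^2,
}
so that $H_s(\epsilon_s) = \tfrac{\beta^2 q_*^s\epsilon_s}{2} + \tfrac{\beta^4\epsilon_s^2\gamma_s}{4\lambda_s} + O(\epsilon_s^3)$, using $\gamma_s=\lambda_s\E_1[C_s^2]$. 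The quadratic part of $V(p)$ is therefore $\tfrac{\beta^4}{4}\sum_s\gamma_s\epsilon_s^2 - \tfrac{\beta^2}{2}\sum_{s,t}\Delta_{st}^2\lambda_s\lambda_t\delta^s\delta^t$ with $\delta=p-q_*$; writing $\epsilon=2\Delta^2\Lambda\delta$ converts this into $\tfrac12 \delta^T\cdot\beta^2\Lambda(2\beta^2\Delta^2\Gamma\Delta^2-\Delta^2)\Lambda\cdot\delta$, which identifies the Hessian.

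The main obstacle will be the algebraic bookkeeping in the order-$\epsilon_s^2$ term: three separately-generated pieces (the direct expansion of $g(\mu_\pm)$, the Laplace correction at its own first order, and the $g^{(4)}$ contribution) must conspire to produce the clean $C_s^2/2$ via two successive applications of $B_s^2+C_s=1$. A minor subtlety is that the tilting weights $\tfrac{1\pm B_s}{2}$ also depend on $h_s^*$, so the asymmetric first-order shift in the means $h_s^*\pm\beta^2\epsilon_s$ enters into the quadratic coefficient and must be tracked; the payoff is that $\E_1$-expectation against $\eta_1$ is not performed until after these identities have simplified the integrand.
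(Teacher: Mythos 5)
Your proposal is correct, and it is a genuinely different route to the same algebra. The paper derives a single closed-form differentiation identity (\eqref{expectation_formula}, obtained by Gaussian integration by parts on the $\eta_2$ variable), in which the operator $f \mapsto f'' - f$ appears; applying it once with $f(x)=\log(\cosh x)\cosh x$ and again with $f(x)=\sinh x \tanh x$ yields $\nabla V$ and $HV$ as explicit functions of general $p$, and setting $p=q_*$ then makes $Y_2^s=Y_1^s$ and everything collapses. You instead work perturbatively in $\epsilon_s = Q_2^s - Q_1^s$ from the outset: you (i) recognize that the $\cosh$-tilted Gaussian in $\eta_2$ is a two-point mixture of Gaussians with means $h_s^*\pm\beta^2\epsilon_s$, variance $\beta^2\epsilon_s$, and weights $(1\pm\tanh h_s^*)/2$; (ii) apply the heat-semigroup expansion $\E\,g(\mu+\sigma Z)=\sum_k \sigma^{2k}g^{(2k)}(\mu)/(2^kk!)$; and (iii) read the Hessian off the quadratic coefficient after pushing $\epsilon = 2\Delta^2\Lambda\delta$. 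I checked your linear and quadratic coefficients and they are right: the linear cancellation uses $B_s^2 + C_s = 1$ once, and the quadratic collapse $C_s(1-2B_s^2)+E_s/4 = C_s^2/2$ uses it twice, as you say. The advantage of your approach is that it never computes the full $\nabla V(p)$ or $HV(p)$, only the jet at $q_*$, and the Gaussian-mixture picture makes the source of the asymmetric (odd in $\pm\beta^2\epsilon_s$) terms transparent. The cost is a more scattered bookkeeping: the paper's single integration-by-parts identity generates each derivative uniformly, whereas you must track three separate sources of the $\epsilon_s^2$ coefficient (even part of the direct expansion, first-order part of the Laplace correction, leading $g^{(4)}$ term) and verify they conspire. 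Both methods are sound; the paper's is more economical to state and iterate, yours is more self-contained and direct once the mixture representation is written down.
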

The calculations in Lemma \ref{V_calculations} are straightforward generalizations of the (elegant but somewhat tricky) procedure found in \cite[Section 13.3]{talagrand11II}, and thus postponed to Section \ref{sec:appendix}.
Most important, part (c) identifies a condition for symmetry breaking once we note the following result.

\begin{cor} \label{general_RSB_condition}
Assume \eqref{variational} and that $\mathrm{RS}(\beta,h) = \mathscr{P}_\mathrm{RS}(q_*)$ for some $q_*\in\CC(\beta,h)$.
If there exists a $x\in\R^M$ with all nonnegative entries such that $x^\intercal HV(q_*)x>0$, then
\eeq{
\lim_{N\to\infty} F_N(\beta) < \mathrm{RS}(\beta,h). \label{RSB_conclusion}
}
\end{cor}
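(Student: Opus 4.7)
The plan is to use the $1$RSB ansatz as a perturbation of the RS ansatz at $q_*$, and to exploit the fact that by \eqref{variational}, the limiting free energy is bounded above by $\mathscr{P}_\mathrm{1RSB}$ evaluated at any admissible choice of parameters. Concretely, I will consider the family of $1$RSB parameters with $q_1 = q_*$, $q_2 = p$, and $\zeta_1 = \zeta \in (0,1)$, where $p$ is chosen to satisfy $p^s \geq q_*^s$ for each species $s$; the resulting Parisi functional $\mathscr{P}_\mathrm{1RSB}(q_*, p, \zeta)$ is then smooth in $(p, \zeta)$ in a neighborhood of $(q_*, 1)$. Observing that when $\zeta = 1$ the second atom of the Parisi measure carries zero weight, the $1$RSB functional collapses onto the RS functional at $q_*$, so $\mathscr{P}_\mathrm{1RSB}(q_*, p, 1) = \mathscr{P}_\mathrm{RS}(q_*) = \mathrm{RS}(\beta,h)$ regardless of $p$. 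Hence any $(p,\zeta)$ with $\zeta$ slightly less than $1$ that yields a strictly smaller value of $\mathscr{P}_\mathrm{1RSB}$ will do the job.

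The core of the argument is a two-step Taylor expansion. First, I will Taylor expand $V(p)$ around $p = q_*$. Lemma \RefVLemma(a)--(b) give $V(q_*) = 0$ and $\nabla V(q_*) = 0$, so writing $p = q_* + \epsilon x$ with $x \in \R^M$ having all nonnegative entries yields
\[
V(q_* + \epsilon x) = \frac{\epsilon^2}{2}\, x^\intercal HV(q_*)\, x + o(\epsilon^2).
\]
By hypothesis, $x$ can be chosen so that $x^\intercal HV(q_*) x > 0$; fixing such an $x$ and taking $\epsilon > 0$ small enough that $q_*^s \leq q_*^s + \epsilon x^s \leq 1$ for every $s$, I obtain an admissible $p$ for which $V(p) > 0$. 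Second, I will Taylor expand $\zeta \mapsto \mathscr{P}_\mathrm{1RSB}(q_*, p, \zeta)$ at $\zeta = 1$, using that $V(p)$ is by definition the derivative at $\zeta = 1$:
\[
\mathscr{P}_\mathrm{1RSB}(q_*, p, 1 - \delta) = \mathrm{RS}(\beta, h) - \delta\, V(p) + O(\delta^2).
\]
Choosing $\delta > 0$ small enough that the $O(\delta^2)$ remainder is dominated by $\delta V(p)$ gives $\mathscr{P}_\mathrm{1RSB}(q_*, p, 1-\delta) < \mathrm{RS}(\beta,h)$, and the variational bound \eqref{variational} then yields the conclusion \eqref{RSB_conclusion}.

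The main obstacle I expect is the admissibility of the perturbation, which explains the role of the nonnegativity assumption on $x$. The $1$RSB Parisi functional is only defined for parameters satisfying the ordering \eqref{parameters_q}, so I need $p^s \geq q_*^s$ for every species $s$ simultaneously; this is exactly what nonnegative entries of $x$ provide. A second technical point will be ensuring that the $O(\delta^2)$ error term in the Taylor expansion in $\zeta$ is controlled uniformly enough to be absorbed by the leading term $\delta V(p)$, which reduces to checking that the second derivative $\partial_\zeta^2 \mathscr{P}_\mathrm{1RSB}(q_*, p, \zeta)$ is bounded on a neighborhood of $\zeta = 1$. This follows from the explicit form of $\mathscr{P}_\mathrm{1RSB}$ recorded in the appendix referenced by Lemma \RefVLemma, together with the fact that all random variables entering the formula have bounded exponential moments after the $\tanh$/$\cosh$ cutoffs. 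Once these two points are in place, the chain of inequalities closes in the order described and no further estimates are needed beyond those that underlie Lemma \RefVLemma.
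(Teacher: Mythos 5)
Your proposal is correct and matches the paper's proof almost exactly: expand $V$ around $q_*$ via Lemma \ref{V_calculations}(a)--(b) and the Hessian condition to get $V(q_*+\eps x)>0$ for small $\eps>0$, then use that $V$ is the $\zeta$-derivative of $\mathscr{P}_\mathrm{1RSB}$ at $\zeta=1$ together with $\mathscr{P}_\mathrm{1RSB}(q_*,p,1)=\mathscr{P}_\mathrm{RS}(q_*)$ and the variational bound \eqref{variational}. Two minor points you could streamline: once $V(p)>0$ is known, the existence of $\zeta<1$ with strictly smaller $\mathscr{P}_\mathrm{1RSB}$ follows directly from the definition of a strictly positive one-sided derivative, so no $O(\delta^2)$ remainder estimate is required; and the feasibility $q_*^s+\eps x^s\le 1$ you invoke is justified because \eqref{RS_condition} forces $q_*\in[0,1)^M$, a point worth stating explicitly.
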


\begin{proof}
%Let $x$ be the vector from the hypothesis.
First note that by \eqref{RS_condition}, we must have $q_*\in[0,1)^M$.
Since $\nabla V(q_*) = 0$ and $x^\intercal HV(q_*)x > 0$, there exists $\eps > 0$ small enough that $V(q_*+\eps x) > V(q_*) = 0$.
That is,
\eq{
\frac{\partial \mathscr{P}_\mathrm{1RSB}(q_*,q_*+\eps x,\zeta)}{\partial \zeta}\Big|_{\zeta=1} > 0,
}
implying there is $\zeta < 1$ such that
\eq{
\mathscr{P}_\mathrm{1RSB}(q_*,q_*+\eps x,\zeta) < \mathscr{P}_\mathrm{1RSB}(q_*,q_*+\eps x,1) \stackrel{{\mbox{\footnotesize\eqref{RS_equivalence}}}}{=} \mathscr{P}_\mathrm{RS}(q_*)
= \mathrm{RS}(\beta,h).
}
Because of \eqref{variational}, \eqref{RSB_conclusion} follows.
\end{proof}

It is apparent from Corollary \ref{general_RSB_condition} that in order to obtain the correct AT condition for multi-species models, one must have good understanding of the set $\CC(\beta,h)$.
Thanks to Theorem \ref{uniquenessof2speciesrRSsol}, this has been accomplished in the two-species case for $h>0$, allowing us to proceed with the following argument.

\begin{proof}[Proof of Theorem \ref{2speciessymmetrybreaking}]
We return to the case $M=2$.
We know from Theorem \ref{uniquenessof2speciesrRSsol} that $q_* \in \CC(\beta,h)$ is unique, and moreover that the hypothesis of Corollary \ref{general_RSB_condition} holds.
Therefore, it suffices to show that \eqref{1RSB_condition} implies the existence of $x\in\R^2$ with nonnegative entries such that $x^\intercal H x > 0$, where $H = HV(q_*)$.
To simplify the task, we let $K \coloneqq 2\beta^{2}\Delta^{2}\Gamma\Delta^{2}-\Delta^{2}$ and note that
\eq{
x^\intercal K x > 0 \quad \implies \quad (\Lambda^{-1}x)^\intercal H (\Lambda^{-1}x)
= \beta^2 x^\intercal K x > 0.
}
Therefore, we can replace $H$ by $K$, since multiplication by $\Lambda^{-1}$ preserves nonnegativity of coordinates.

More specifically, we have $K=\begin{pmatrix}u &v \\ v &t\end{pmatrix}$, where
\eq{
u&=2\beta^{2}(\gamma_{1}(\Delta_{11}^{2})^2+\gamma_{2})-\Delta_{11}^2,\\
t&=2\beta^{2}(\gamma_{1}+\gamma_{2}(\Delta_{22}^{2})^{2})-\Delta_{22}^{2}, \\
v&=2\beta^{2}(\gamma_{1}\Delta_{11}^2+\gamma_{2}\Delta_{22}^2)-1.
}
Now, $x^\intercal K x > 0$ for some $x$ with nonnegative entries if and only if at least one of the following three inequalities is true:
\begin{subequations}
\label{2speciescondition}
\begin{align}
u&>0\quad\text{or} \label{2speciescondition_1}\\
t&>0\quad\text{or} \label{2speciescondition_2}\\
u,t &\leq 0 \quad\text{and}\quad \sqrt{ut} <v \label{2speciescondition_3}. 
\end{align}
\end{subequations}
Direct computation shows that \eqref{2speciescondition} is equivalent to
\begin{subequations} \label{2speciescondition_new}
\begin{align}
2\beta^2 &> 2\beta^{2}_{u}\coloneqq\frac{\Delta_{11}^2}{\gamma_{1}(\Delta_{11}^2)^{2}+\gamma_{2}} \quad \text{or} \\
2\beta^2 &>2\beta^{2}_{t}\coloneqq\frac{\Delta_{22}^2}{\gamma_{1}+\gamma_{2}(\Delta_{22}^2)^{2}} \quad \text{or} \\
\min(2\beta^2_u,2\beta^2_t)\geq2\beta^2&>2\beta^{2}_{v}\coloneqq\frac{1}{\gamma_{1}\Delta_{11}^2+\gamma_{2}\Delta_{22}^2} 
\quad \text{and} \quad 2\beta^{2}_{m}<2\beta^{2} <2\beta^{2}_{M},
\end{align}
\end{subequations}
where
\eq{
2\beta^{2}_{m}&=\frac{2}{\gamma_1 \Delta_{11}^2 +\gamma_2 \Delta_{22}^2 +\sqrt{(\gamma_1 \Delta_{11}^2 -\gamma_2 \Delta_{22}^2)^2 +4 \gamma_1 \gamma_2}}, \\
2\beta^{2}_{M}&=\frac{2}{\gamma_1 \Delta_{11}^2 +\gamma_2 \Delta_{22}^2 -\sqrt{(\gamma_1 \Delta_{11}^2 -\gamma_2 \Delta_{22}^2)^2 +4 \gamma_1 \gamma_2}}.
}
Note that
\eq{
(\gamma_1\Delta_{11}^2-\gamma_2\Delta_{22}^2)^2+4\gamma_1\gamma_2
&< (\gamma_1\Delta_{11}^2-\gamma_2\Delta_{22}^2)^2+4\gamma_1\gamma_2\Delta_{11}^2\Delta_{22}^2 
= (\gamma_1\Delta_{11}^2+\gamma_2\Delta_{22}^2)^2,
}
which ensures $0<\beta^2_v < \beta^2_m < \beta^2_M$.
%Furthermore, by the first inequality in \eqref{delta_assumptions} we have
%\eq{
%2\beta_m^2 < 
%2\beta_u^2 = \frac{1}{\gamma_1\Delta_{11}^2+\gamma_2/\Delta_{11}^2} > \frac{1}{\gamma_1\Delta_{11}^2+\gamma_2\Delta_{22}^2}
%}
We also claim that
\eeq{ \label{order_claim}
\beta^2_m < \min(\beta_u^2,\beta_t^2) \leq \max(\beta_u^2,\beta_t^2) < \beta^2_M.
}
For instance, suppose $\gamma_1\Delta_{11}^2 \leq \gamma_2\Delta_{22}^2$.
Then
\eq{
\gamma_1\Delta_{11}^2(\Delta_{11}^2\Delta_{22}^2-1) &\leq \gamma_2\Delta_{22}^2(\Delta_{11}^2\Delta_{22}^2-1) \\
\implies \quad (\gamma_1(\Delta_{11}^2)^2+\gamma_2)\Delta_{22}^2 &\leq (\gamma_1+\gamma_2(\Delta_{22}^2)^2)\Delta_{11}^2 \\
\implies \quad
2\beta_t^2 = \frac{\Delta_{22}^2}{\gamma_1 + \gamma_2(\Delta_{22}^2)^2} &\leq \frac{\Delta_{11}^2}{\gamma_1(\Delta_{11}^2)^2+\gamma_2} = 2\beta_u^2.
}
To establish the first bound in \eqref{order_claim}, we observe that $\beta_t^2 > \beta_m^2$ if and only if
%\eq{
%\eqref{delta_assumptions} \quad &\implies \quad 0 > 4\gamma_1^2(1-\Delta_{11}^2\Delta_{22}^2) \\
%&\implies \quad \Delta_{22}^2\sqrt{(\gamma_1\Delta_{11}^2-\gamma_2\Delta_{22}^2)^2+4\gamma_1\gamma_2} > \gamma_1(2-\Delta_{11}^2\Delta_{22}^2)+\gamma_2(\Delta_{22}^2)^2 \\
%&\implies \quad \Delta_{22}^2\Big(\gamma_1\Delta_{11}^2+\gamma_2\Delta_{22}^2+\sqrt{(\gamma_1\Delta_{11}^2-\gamma_2\Delta_{22}^2)^2+4\gamma_1\gamma_2}\, \Big)> 2\big(\gamma_1+\gamma_2(\Delta_{22}^2)^2\big).
%}
\eq{
\Delta_{22}^2\Big(\gamma_1\Delta_{11}^2+\gamma_2\Delta_{22}^2+\sqrt{(\gamma_1\Delta_{11}^2-\gamma_2\Delta_{22}^2)^2+4\gamma_1\gamma_2}\, \Big)&> 2\big(\gamma_1+\gamma_2(\Delta_{22}^2)^2\big) \\
\iff \quad \Delta_{22}^2\sqrt{(\gamma_1\Delta_{11}^2-\gamma_2\Delta_{22}^2)^2+4\gamma_1\gamma_2} &> \gamma_1(2-\Delta_{11}^2\Delta_{22}^2)+\gamma_2(\Delta_{22}^2)^2 \\
\iff \quad 0 &> 4\gamma_1^2(1-\Delta_{11}^2\Delta_{22}^2),
}
which is true by \eqref{delta_assumptions}.
To establish the last bound in \eqref{order_claim}, we drop the term $4\gamma_1\gamma_2$ from the denominator of $\beta_M^2$:
\eq{
2\beta_M^2 > \frac{2}{\gamma_1\Delta_{11}^2+\gamma_2\Delta_{22}^2-\sqrt{(\gamma_1\Delta_{11}^2-\gamma_2\Delta_{22}^2)^2}}
= \frac{1}{\gamma_1\Delta_{11}^2} > 2\beta_u^2.
}
We have thus proved \eqref{order_claim} under the assumption $\gamma_1\Delta_{11}^2\leq\gamma_2\Delta_{22}^2$, but the proof is analogous in the reverse case.
Finally, because of \eqref{order_claim}, we see that \eqref{2speciescondition_new} is equivalent to the single condition $\beta^{2}>\beta^{2}_{m}$.
\end{proof}

%To conclude, we make the following observation about the exceptional case when $\Delta^2$ is not positive definite.

%\begin{cor} \label{bipartite_cor}
%For any $M\geq2$, if $\Delta_{ss}^2=0$ for some $s\in\{1,\dots,M\}$, then for all sufficiently small $\beta$,
%\eq{
%\lim_{N\to\infty} F_N(\beta) > \inf \mathscr{P}.
%}
%\end{cor}
%
%\begin{proof}
%
%\end{proof}

\section{Proof of Lemma \RefVLemma} \label{sec:appendix}
%Henceforth let $q = (q^s)_{1\leq s\leq M}$ denote the RS solution, assuming it exists and is unique.
%\begin{proof}[Proof of Lemma \ref{V_calculations}]
Here we consider \eqref{parisi_expression} when $k = 1$, and
\eq{
\zeta_1 = \zeta \in (0,1), \qquad
q_1^s = q^s, \qquad 
q_2^s = p^s.
}
Recall that with these choices, we have $Q_0 = Q_0^s = 0$ and
\eq{
Q_1^s &= 2\sum_t \Delta_{st}^2\lambda_tq^t, &
Q_1 &= \sum_{s,t}\Delta_{st}^2\lambda_s\lambda_tq^sq^t, \\
Q_2^s &= 2\sum_{t}\Delta_{st}^2\lambda_tp^t, &
Q_2 &= \sum_{s,t}\Delta_{st}^2\lambda_s\lambda_tp^sp^t, \\
Q_3^s &= 2\sum_{t}\Delta_{st}^2\lambda_t, &
Q_3 &= \sum_{s,t}\Delta_{st}^2\lambda_s\lambda_t.
}
We have
\eq{
\mathscr{P}_\mathrm{1RSB}(q,p,\zeta) &\coloneqq \log 2 + \sum_s \lambda_s X_0^s - \frac{\beta^2}{2}\sum_{\ell=1}^2 \zeta_\ell(Q_{\ell+1}-Q_{\ell}) \\
&= \log 2 + \sum_s \lambda_s X_0^s - \frac{\beta^2}{2}(Q_3 - Q_2+\zeta(Q_2-Q_1)),
}
where
\eq{
X_3^s &= %\log \cosh \sum_{\ell=1}^3 \eta_\ell(Q_\ell^s-Q_{\ell-1})^{1/2} = 
\log \cosh(\beta\eta_3\sqrt{Q_3^s-Q_2^s} + \beta\eta_2\sqrt{Q_2^s-Q_1^s}+\beta\eta_1\sqrt{Q_1^s}+h) \\
\implies X_2^s &= \log \E_3 \cosh(\beta\eta_3\sqrt{Q_3^s-Q_2^s} + \beta\eta_2\sqrt{Q_2^s-Q_1^s}+\beta\eta_1\sqrt{Q_1^s}+h) \\
&= \frac{\beta^2}{2}(Q_3^s-Q_2^s) + \log\cosh(\beta\eta_2\sqrt{Q_2^s-Q_1^s}+\beta\eta_1\sqrt{Q_1^s}+h) \\
\implies X_1^s &= \frac{1}{\zeta}\log\E_2 \exp(\zeta_2 X_2^s) \\
&= \frac{\beta^2}{2}(Q_3^s-Q_2^s) + \frac{1}{\zeta}\log \E_2 \cosh^\zeta(\beta\eta_2\sqrt{Q_2^s-Q_1^s} + \beta\eta_1\sqrt{Q_1^s}+h) \\
\implies X_0^s &= \E X_1^s = \frac{\beta^2}{2}(Q_3^s-Q_2^s) + \frac{1}{\zeta} \E_1\log\E_2 \cosh^\zeta(\beta\eta_2\sqrt{Q_2^s-Q_1^s} + \beta\eta_1\sqrt{Q_1^s}+h).
}
In simplifying $X_2^s$, we have used the fact that for $\eta \sim \NN(0,1)$ and $\sigma>0$,
\eeq{ \label{gaussian_calculation}
\E \cosh(\sigma \eta +h) &= \frac{1}{2}\E(\e^{\sigma \eta+h} + \e^{-\sigma \eta-h}) 
= \frac{1}{2}(\e^{\sigma^2/2 + h} + \e^{\sigma^2/2 - h}) = \e^{\sigma^2/2}\cosh(h).
}
In summary,
\eeq{ \label{parisi_1level}
\mathscr{P}_\mathrm{1RSB}(q,p,\zeta) &= \log 2+ \sum_s \lambda_s \frac{1}{\zeta} \E_1\log\E_2 \cosh^\zeta(\beta\eta_2\sqrt{Q_2^s-Q_1^s} + \beta\eta_1\sqrt{Q_1^s}+h) \\ 
&\phantom{=} +\frac{\beta^2}{2}\sum_s \lambda_s (Q_3^s - Q_2^s) - \frac{\beta^2}{2}(Q_3-Q_2+\zeta(Q_2-Q_1)).
}
Notice that when $\zeta = 1$, we recover the replica symmetric expression \eqref{RS_formula}:
\eeq{ \label{RS_equivalence}
\mathscr{P}_\mathrm{1RSB}(q,p,1) &= \log 2 + \sum_s\lambda_s \E_1\log\E_2\cosh(\beta\eta_2\sqrt{Q_2^s-Q_1^s} + \beta\eta_1\sqrt{Q_1^s}+h) \\
&\phantom{=}+ \frac{\beta^2}{2} \sum_s \lambda_s(Q_3^s - Q_2^s)- \frac{\beta^2}{2}(Q_3-Q_2+Q_2-Q_1) \\
&= \log 2 + \sum_s \lambda_s \E_1\bigg[\frac{\beta^2}{2}(Q_2^s-Q_1^s) + \log \cosh(\beta\eta_1\sqrt{Q_1^s}+h)\bigg] \\
&\phantom{=}+ \frac{\beta^2}{2} \sum_s \lambda_s(Q_3^s - Q_2^s) - \frac{\beta^2}{2}(Q_3-Q_1) \\
&= \log 2 + \sum_s \lambda_s \E_1\log \cosh(\beta\eta_1\sqrt{Q_1^s}+h) \\
&\phantom{=}+ \frac{\beta^2}{2}\sum_s \lambda_s(Q_3^s-Q_1^s) - \frac{\beta^2}{2}(Q_3-Q_1) = \mathscr{P}_\mathrm{RS}(q).
}
%We are thus motivated to define
%\eq{
%V(p) = \frac{\partial \mathscr{P}(q_*,p,\zeta)}{\partial \zeta}\Big|_{\zeta = 1}.
%}
Henceforth fix an RS critical point $q = q_*\in\CC(\beta,h)$.
For ease of notation, let us write 
\eq{
Y_1^s &\coloneqq \beta\eta_1\sqrt{Q_1^s}+h, \qquad Y_2^s \coloneqq \beta\eta_2\sqrt{Q_2^s-Q_1^s} +Y_1^s,
}
so that
\eq{
\mathscr{P}_\mathrm{1RSB}(q,p,\zeta) &= \log 2+ \sum_s \lambda_s \frac{1}{\zeta} \E_1\log\E_2 \cosh^\zeta Y_2^s \\
&\phantom{=}+\frac{\beta^2}{2}\sum_s \lambda_s (Q_3^s - Q_2^s) -\frac{\beta^2}{2}(Q_3-Q_2+\zeta(Q_2-Q_1)).
}
We can then calculate
\eq{
\frac{\partial \mathscr{P}_\mathrm{1RSB}(q_*,p,\zeta)}{\partial \zeta}
&= \sum_s \lambda_s\bigg[\frac{-\E_1\log\E_2 \cosh^\zeta Y_2^s}{\zeta^2} +\E_1\bigg(\frac{\E_2\log(\cosh Y_2^s)\cosh^\zeta Y_2^s}{\zeta\E_2 \cosh^\zeta Y_2^s}\bigg)\bigg] - \frac{\beta^2}{2}(Q_2-Q_1),
}
which gives
\eeq{ \label{pre_deriv}
V(p) &= \sum_s\lambda_s\bigg[-\E_1\log\E_2 \cosh Y_2^s + \E_1\bigg(\frac{\E_2\log(\cosh Y_2^s)\cosh Y_2^s}{\E_2\cosh Y_2^s}\bigg)\bigg]  - \frac{\beta^2}{2}(Q_2-Q_1).
}
When $p = q_*$, we have $Y_2^s = Y_1^s$ and $Q_2 = Q_1$.
In particular, $Y_2^s$ has no dependence on $\eta_2$, and so the above expression reduces to $V(q_*) = 0$.
This proves claim (a).
%Otherwise, we recall that
%\eeq{ \label{Y_2_to_1}
%\E_2 \cosh Y_2^s = \exp\Big(\frac{\beta^2}{2}(Q_2^s-Q_1^s)\Big)\cosh Y_1^s,
%}
%with which we can rewrite $V(p)$ as
%\eeq{
%V(p) &= \sum_s \lambda_s\bigg[-\frac{\beta^2}{2}(Q_2^s-Q_1^s) - \E_1\log\cosh Y_1^s \\
%&\phantom{=\sum_s\lambda_s\bigg[}+\exp\Big(-\frac{\beta^2}{2}(Q_2^s-Q_1^s)\Big)\E_1\bigg(\frac{\E_2 \log(\cosh Y_2^s) \cosh Y_2^s}{\cosh Y_1^s}\bigg)\bigg] - \frac{\beta^2}{2}(Q_2-Q_1). \label{pre_deriv}
%}

The next step is to take partial derivatives with respect to the $p^t$.
First, we use \eqref{gaussian_calculation} to make the simple calculation
\eeq{ 
\E_2 \cosh Y_2^s = \exp\Big(\frac{\beta^2}{2}(Q_2^s-Q_1^s)\Big)\cosh Y_1^s. \label{Y_switch}
}
Since $Y_1^s$ has no dependence on $p$, and
\eeq{ \label{Q_deriv}
\frac{\partial}{\partial p^t}(Q_2^s-Q_1^s) = \frac{\partial}{\partial p^t}Q_2^s = 2\Delta_{st}^2\lambda_t,
}
we find
\eeq{ \label{deriv_1}
\frac{\partial}{\partial p^t} \E_1\log\E_2\cosh Y_2^s 
=\frac{\partial}{\partial p^t}\Big[\frac{\beta^2}{2}(Q_2^s-Q_1^s)+\E_1\log\cosh Y_1^s\Big]
= \beta^2\Delta_{st}^2\lambda_t.
}
%since $Y_1^s$ has no dependence on $p$, we have
%\eeq{ \label{deriv_2}
%\frac{\partial}{\partial p^t} \E_1 \log \cosh Y_1^s = 0.
%}
Next we observe that for any twice differentiable function whose derivatives have at most exponential growth at infinity, \eqref{Q_deriv} and Gaussian integration by parts together give
\eeq{ \label{f_deriv}
\frac{\partial}{\partial p^t} \E_2 f(Y_2^s) 
=  \beta\Delta_{st}^2\lambda_t\E_2\bigg(f'(Y_2^s)\frac{\eta_2}{\sqrt{Q_2^s-Q_1^s}}\bigg)
%&= \E_2(f'(Y_2^s)\frac{\partial}{\partial p^t} Y_2^s) \\
&= \beta^2\Delta_{st}^2\lambda_t\E_2 f''(Y_2^s).
}
Hence
\eeq{ \label{expectation_formula}
&\frac{\partial}{\partial p^t} \E_1\bigg(\frac{\E_2f(Y_2^s)}{\E_2\cosh Y_2^s}\bigg)
\stackrel{\hspace{2.7ex}\mbox{\footnotesize\eqref{Y_switch}}\hspace{2.7ex}}{=} \frac{\partial}{\partial p^t}\bigg[ \exp\Big(-\frac{\beta^2}{2}(Q_2^s-Q_1^s)\Big)\E_1\bigg(\frac{\E_2 f(Y_2^s)}{\cosh Y_1^s}\bigg)\bigg] \\
%&= \exp\Big(-\frac{\beta^2}{2}(Q_2^s-Q_1^s)\Big)\bigg[-\beta^2\Delta_{st}^2\lambda_t \E_1\bigg(\frac{\E_2 f(Y_2^s)}{\cosh Y_1^s}\bigg) + \E_1\bigg(\frac{\beta^2\Delta_{st}^2\lambda_t\E_2f''(Y_2^s)}{\cosh Y_1^s}\bigg)\bigg] \\
&\stackrel{\mbox{\footnotesize\eqref{Q_deriv},\eqref{f_deriv}}}{=} \beta^2\Delta_{st}^2\lambda_t\exp\Big(-\frac{\beta^2}{2}(Q_2^s-Q_1^s)\Big)\bigg[- \E_1\bigg(\frac{\E_2 f(Y_2^s)}{\cosh Y_1^s}\bigg) + \E_1\bigg(\frac{\E_2f''(Y_2^s)}{\cosh Y_1^s}\bigg)\bigg] \\
&\stackrel{\phantom{\mbox{\footnotesize\eqref{Q_deriv},\eqref{f_deriv}}}}{=} \beta^2\Delta_{st}^2\lambda_t\exp\Big(-\frac{\beta^2}{2}(Q_2^s-Q_1^s)\Big)\E_1\bigg(\frac{\E_2[f''(Y_2^s)-f(Y_2^s)]}{\cosh Y_1^s}\bigg) \\
&\stackrel{\hspace{2.7ex}\mbox{\footnotesize\eqref{Y_switch}}\hspace{2.7ex}}{=} \beta^2\Delta_{st}^2\lambda_t\E_1\bigg(\frac{\E_2[f''(Y_2^s)-f(Y_2^s)]}{\E_2\cosh Y_2^s}\bigg).
}
We now apply \eqref{expectation_formula} to $f(x) = \log(\cosh x)\cosh x$, for which
\eq{
%f'(x) &= (1+\log \cosh x)\sinh x \\ f''(x) &= (1+\log \cosh x)\cosh x + \sinh x \tanh x. \\
%\implies 
f''(x)-f(x) &= \cosh x + \sinh x \tanh x,
}
to obtain
\eeq{ \label{deriv_2}
\frac{\partial}{\partial p^t}\E_1\bigg(\frac{\E_2 \log(\cosh Y_2^s) \cosh Y_2^s}{\E_2\cosh Y_2^s}\bigg)\bigg] 
= \beta^2\Delta_{st}^2\lambda_t\bigg[1 + \E_1\bigg(\frac{\E_2 \sinh Y_2^s \tanh Y_2^s}{\E_2 \cosh Y_2^s}\bigg)\bigg].
}
Finally, we have
\eeq{ \label{deriv_3}
\frac{\partial}{\partial p^t}(Q_2 - Q_1) = \frac{\partial}{\partial p^t}Q_2 = 2\sum_s\Delta_{st}^2\lambda_s\lambda_t p^s. %= \lambda_t Q_2^t.
}
Using \eqref{deriv_1}, \eqref{deriv_2}, and \eqref{deriv_3} in \eqref{pre_deriv}, we arrive at
\eq{
\frac{\partial}{\partial p^t}V(p) = \beta^2\lambda_t\sum_s \Delta_{st}^2\lambda_s\bigg[\E_1\bigg(\frac{\E_2 \sinh Y_2^s \tanh Y_2^s}{\E_2 \cosh Y_2^s}\bigg) - p^s\bigg].
}
Once more, if $p = q_*$, then $Y_2^s = Y_1^s$ has no dependence on $\eta_2$, in which case
\eq{
\E_1\bigg(\frac{\E_2 \sinh Y_2^s \tanh Y_2^s}{\cosh Y_1^s}\bigg) - p^s
= \E_1(\tanh^2 Y_1^s) - q_*^s = q_*^s - q_*^s = 0 \quad \text{for all $s$.}
}
Consequently, claim (b) holds: $\nabla V(q_* ) = 0$.

Our final step is to compute the Hessian of $V$.
We have
\eq{
\frac{\partial^2}{\partial p^{t'}\partial p^t}V(p) = \beta^2\lambda_t\sum_{s}\Delta^2_{st}\lambda_s\bigg[\frac{\partial}{\partial p^{t'}} \E_1\bigg(\frac{\E_2\sinh Y_2^s \tanh Y_2^s}{\E_2\cosh Y_2^s}\bigg) - \delta_{st'}\bigg],
}
where $\delta_{st'} = 1$ if $s = t'$ and $0$ zero otherwise.
To determine the derivative of the expectation, we apply \eqref{expectation_formula} with $f(x) = \sinh x \tanh x$, for which 
\eq{
%f(x) &= \sinh x \tanh x = \sinh^2 x\, \mathrm{sech}\,x = (\cosh^2 x - 1) \mathrm{sech}\,x = \cosh x - \mathrm{sech}\,x \\
%f'(x) &= \sinh x + \mathrm{sech}\,x \tanh x \\
%f''(x) &= \cosh x + \mathrm{sech}^3\,x - \mathrm{sech}\,x(1-\mathrm{sech}^2\, x) \\
%\implies 
f''(x)-f(x) &= 2\mathrm{sech}^3\,x.
}
After doing so, we arrive at
\eq{
\frac{\partial^2}{\partial p^{t'}\partial p^{t}}V(p) &= \beta^2\lambda_t\sum_s \Delta^2_{st}\lambda_s\bigg[2\beta^2\Delta_{st'}^2\lambda_{t'}\E_1\bigg(\frac{\E_2\sech^3 Y_2^s}{\E_2\cosh Y_2^s}\bigg) - \delta_{st'}\bigg].
\intertext{As before, the expression simplifies when $p=q^*$, since then $Y_2^s=Y_1^s$ has no dependence on $\eta_2$.
Namely,}
\frac{\partial^2}{\partial p^{t'}\partial p^{t}}V(q_*)
&= \beta^2\lambda_t\sum_{s}\Delta_{st}^2\lambda_s\big[2\beta^2\Delta_{st'}^2\lambda_{t'}\E_1\, \mathrm{sech}^4\, Y_1^s - \delta_{st'}\big] \\
&= \bigg(2\beta^4\lambda_t\lambda_{t'}\sum_s \Delta_{st}^2\Delta_{st'}^2\lambda_s \E_1\, \mathrm{sech}^4\, Y_1^s\bigg) - \beta^2\lambda_t\lambda_{t'}\Delta_{tt'}^2.
}
Rewriting the expression in terms of matrices yields claim (c).
%\end{proof}

%: FLUCTUATIONS
    
    \chapter{Fluctuations in planar random growth models} \label{fluctuations}
    
    \section{Introduction}
Even after years of study on random growth models, such as first- and last-passage percolation and directed polymers, much remains mysterious or out of reach technically.
For instance, beyond the fundamental shape theorems guaranteeing linear growth rates for the passage times/free energy, there are sublinear fluctuations whose asymptotics are not established.
Even in the planar setting, for which the conjectural picture is clear, general tools are far from making it rigorous.
This is in stark contrast with integrable models, for which fluctuation exponents are only a fraction of what has been proved.
In this chapter we consider three widely studied random growth models: first-passage percolation (FPP), last-passage percolation (LPP), and directed polymers in random environment.
While the models differ in how growth is measured, they each possess a law of large numbers that says the rate of growth is asymptotically linear.
More mysterious, however, are the sublinear fluctuations.
In their two-dimensional versions, these models are believed to belong to the Kardar--Parisi--Zhang universality class \cite{corwin12}, and in particular that growth fluctuations are of order $n^{1/3}$.
Except in exceptional cases of LPP and directed polymers having exact solvability properties, rigorous results are far from this goal, or in some cases non-existent.

The goal of this chapter is two-fold.
First, we describe a general strategy for proving lower bounds on the order of fluctuations for a sequence of random variables (defined precisely in Definition \ref{fluctuations_def}). The approach is an adaptation of techniques developed recently by the second author in \cite{chatterjee19II}. It is general in that it can be used in a wide variety of problems consisting of i.i.d.~random variables, where no assumptions are made on the common distribution of these variables.
Second, we apply the method to study fluctuations in the growth of planar FPP, LPP, and directed polymers.
In all three cases, we are able to prove a lower bound of order $\sqrt{\log n}$ fluctuations.
In addition, for FPP we extend the shape fluctuation lower bound of $n^{1/8-\delta}$ to almost all distributions for which it should be true.
Although still far from $n^{1/3}$, which by all accounts is the correct order (e.g.~see \cite{sosoe18} and references therein), our results require almost no assumptions on the underlying weight distribution.

The chapter is structured as follows.
The general method mentioned above for establishing fluctuation lower bounds is outlined in Section \ref{fluctuations_sec}, and some necessary lemmas are proved.
The random growth models under consideration are introduced in Section \ref{fluctuations_results}, where the main results are also stated.
Finally, Section \ref{proofs} sees the method put into action to prove these results.

\section{General method for lower bounds on fluctuations} \label{fluctuations_sec}

\subsection{Definitions}

Let us begin by precisely stating what is meant by a lower bound on fluctuations.

\begin{defn} \label{fluctuations_def}
Let $(X_n)_{n\geq1}$ be a sequence of random variables, and let $(\delta_n)_{n\geq1}$ be a sequence of positive real numbers.
We will say that $X_n$ has fluctuations of order at least $\delta_n$ if there are positive constants $c_1$ and $c_2$ such that for all large $n$, and for all $-\infty < a \leq b < \infty$ with $b-a \leq c_1\delta_n$, one has $\P(a\leq X_n\leq b) \leq 1 - c_2$.
\end{defn}

In other words, fluctuations are of order at least $\delta_n$ if no sequence of intervals $I_n$ of length $o(\delta_n)$ satisfies $\P(X_n\in I_n) \to 1$.
Note that if fluctuations are at least of order $\delta_n$, then so is $\sqrt{\Var(X_n)}$.
The converse, however, is not true in general, necessitating alternative approaches even when a lower bound on variance is known.
On the other hand, if a variance lower bound is accompanied by an upper bound of the same order, then fluctuations must be of that order.
One can see this from a second moment argument, for instance using the Paley--Zygmund inequality.
In the absence of matching variance bounds, one must work with Definition \ref{fluctuations_def} directly.
For this reason, the following simple lemma is useful.
Here $d_{\tv}(\nu_1,\nu_2)$ is the \textit{total variation distance} between probability measures $\nu_1,\nu_2$ on the same measurable space $(\Omega,\FF)$, defined as
\eq{
d_{\tv}(\nu_1,\nu_2) \coloneqq \sup_{A \in \FF} |\nu_1(A) - \nu_2(A)|.
}

\begin{lemma}[{\cite[Lemma 1.2]{chatterjee19II}}] \label{fluctuation_lemma}
Let $X$ and $Y$ be random variables defined on the same probability space.
For any $-\infty < a \leq b < \infty$,
\eq{
\P(a \leq X \leq b) \leq \frac{1}{2}\Big(1 + \P(|X-Y| \leq b-a) + d_{\tv}(\LL_X,\LL_{Y})\Big),
} 
where $\LL_X$ and $\LL_Y$ denote the laws of $X$ and $Y$, respectively.
\end{lemma}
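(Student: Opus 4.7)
The plan would be to compare $\P(a\le X\le b)$ with the analogous probability for $Y$, using that total variation controls differences in probabilities of events, and to exploit the elementary geometric fact that two points in an interval of length $b-a$ are within $b-a$ of one another. Concretely, I would set $p\coloneqq\P(a\le X\le b)$ and $q\coloneqq\P(a\le Y\le b)$, and aim for the inequality $2p\le 1+\P(|X-Y|\le b-a)+d_{\tv}(\LL_X,\LL_Y)$, which is the lemma after dividing by $2$.

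The first ingredient is that by definition of total variation, applied to the Borel set $[a,b]$, one has $|p-q|\le d_{\tv}(\LL_X,\LL_Y)$, so in particular $q\ge p - d_{\tv}(\LL_X,\LL_Y)$. The second ingredient is the trivial inclusion $\{a\le X\le b\}\cap\{a\le Y\le b\}\subseteq\{|X-Y|\le b-a\}$. Applying inclusion--exclusion,
\[
p+q = \P(\{X\in[a,b]\}\cup\{Y\in[a,b]\}) + \P(X\in[a,b],\,Y\in[a,b]),
\]
and bounding the first summand by $1$ and the second by $\P(|X-Y|\le b-a)$ yields $p+q\le 1+\P(|X-Y|\le b-a)$.

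Combining these two estimates gives $2p - d_{\tv}(\LL_X,\LL_Y)\le 1+\P(|X-Y|\le b-a)$, which rearranges to the stated bound. There is no substantive obstacle in this proof; the argument is short and mechanical. The real content of the lemma lies in how it will be used downstream, where one chooses $Y$ to be a well-coupled perturbation of $X$ whose law stays close to that of $X$ in total variation, while $|X-Y|$ is forced to be large with substantial probability --- turning the innocuous identity above into a genuine fluctuation lower bound of the form in Definition \ref{fluctuations_def}.
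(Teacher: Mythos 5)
Your proof is correct and uses the same two ingredients as the paper's: the inclusion $\{X\in[a,b]\}\cap\{Y\in[a,b]\}\subseteq\{|X-Y|\leq b-a\}$ and the total-variation bound $|\P(X\in[a,b])-\P(Y\in[a,b])|\leq d_{\tv}(\LL_X,\LL_Y)$. The paper arrives at the same inequality by decomposing $\P(a\leq X\leq b)$ directly into the disjoint pieces $\{Y\in[a,b]\}$ and $\{Y\notin[a,b]\}$ rather than via inclusion--exclusion, but this is only a cosmetic rearrangement of the identical argument.
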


\begin{proof}
We have
\eq{
\P(a\leq X \leq b) &= \P(a\leq X,Y\leq b) + \P(a\leq X\leq b, Y\notin[a,b]) \\
&\leq \P(|X-Y| \leq b - a) + \P(Y\notin[a,b]) \\
&\leq \P(|X-Y|\leq b-a) + \P(X\notin[a,b]) + d_{\tv}(\LL_X,\LL_Y) \\
&= \P(|X-Y|\leq b-a) + 1 - \P(X\in[a,b]) + d_{\tv}(\LL_X,\LL_Y).
}
Now rearranging terms yields the claim.
\end{proof}

A simple but important fact is that total variation distance can be related to the \textit{Hellinger affinity} between $\mu$ and $\wt\mu$,
\eeq{ \label{hellinger_def}
\rho(\nu_1,\nu_2) \coloneqq \int_\Omega \sqrt{fg}\ \dd\nu_0,
}
where $\nu_0$ is any probability measure on $(\Omega,\FF)$ with respect to which both $\nu_1$ and $\nu_2$ are absolutely continuous, and $f$ and $g$ are their respective densities.
Since
\eq{
d_{\tv}(\nu_1,\nu_2) = \frac{1}{2}\int_\Omega |f-g|\ \dd\nu_0,
}
the following upper bound follows from the Cauchy--Schwarz inequality:
\eeq{ \label{tv_hellinger}
d_{\tv}(\nu_1,\nu_2) \leq \sqrt{1 - \rho(\nu_1,\nu_2)^2}.
}

\subsection{The general method}
To produce a lower bound on the order of fluctuations using Lemma \ref{fluctuation_lemma}, the basic idea is to introduce a coupling $(X,Y)$ such that $|X-Y|$ is large with substantial probability while $d_{\tv}(\LL_X,\LL_Y)$ is small.
A general approach formalizing this idea was initiated in \cite{chatterjee19II}, in which the couplings are obtained from multiplicative perturbations inspired by the Mermin--Wagner theorem of statistical mechanics~\cite{mermin-wagner66}.
Such couplings only work, however, for a certain class of random variables, namely those with \eeq{ \label{previous_assumption}
&\text{density proportional to $\e^{-V}$, where $V \in C^\infty(\R)$, such that} \\
&\text{$V$ and its derivatives of all orders have at most polynomial growth, and} \\
&\text{$\e^V$ grows faster than any polynomial.}
}
We now propose a different type of coupling that allows for the approach of \cite{chatterjee19II} to be extended to any distribution.
Although the couplings we will use to prove the main theorems of this chapter are more specific, we present here the most general setup in hopes that the method might be useful in other settings.

Consider a real-valued random variable $X$ defined on some probability space $(\Omega,\FF,\P)$.
Let $\LL_X$ denote the law of $X$.
Suppose $X'$ is another random variable defined on the same probability space, such that
$\LL_{X'}$ is absolutely continuous with respect to $\LL_X$ and has bounded density.
Given $\eps \in (0,1)$, let $Y$ be a Bernoulli($\eps$) random variable independent of $X$ and $X'$.
Finally, set
\eeq{ \label{new_X}
\wt X = \begin{cases}
X' &\text{if }Y=1, \\
X &\text{if }Y=0.
\end{cases}
}

\begin{lemma} \label{hellinger_lemma}
The Hellinger affinity between $\LL_X$ and $\LL_{\wt X}$ satisfies the lower bound
\eq{
\rho(\LL_X,\LL_{\wt X}) \geq 1 - C\eps^2,
}
where $C$ is a constant depending only on $\LL_X$ and $\LL_{X'}$. 
%{\color{red} [To use this lemma, it is important to verify that the density of $\LL_{X'}$ with respect to $\LL_X$ is bounded. Did you verify that condition for the particular $X'$ that we are using? I don't remember seeing that, but I may be mistaken. The $C$ will then depend on the number $m$ of independent copies, but I guess that will not matter.]}
% see Lemma 2.5
\end{lemma}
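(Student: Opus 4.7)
The plan is to exploit the explicit mixture structure of $\wt X$ and reduce the claim to a pointwise inequality for $\sqrt{1+t}$ that is then integrated against $\LL_X$. Since $\wt X$ equals $X'$ with probability $\eps$ and equals $X$ otherwise, and $X, X', Y$ are independent, the law of $\wt X$ is exactly
\eq{
\LL_{\wt X} = (1-\eps)\LL_X + \eps \LL_{X'}.
}
Writing $h \coloneqq \dd\LL_{X'}/\dd\LL_X$ (which exists and is bounded by hypothesis), this gives $\LL_{\wt X}$ a density $1 + \eps(h-1)$ with respect to $\LL_X$. Taking $\nu_0 = \LL_X$ in \eqref{hellinger_def}, the Hellinger affinity then has the clean expression
\eq{
\rho(\LL_X, \LL_{\wt X}) = \int \sqrt{1+\eps(h-1)}\ \dd\LL_X.
}

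Next I would apply the scalar inequality $\sqrt{1+t} \geq 1 + \tfrac{t}{2} - t^2$, valid for every $t \geq -1$. (This can be verified by rearranging to $2\sqrt{1+t} + t^2 \geq 1/4$ after dividing through by $t^2$, which reduces the claim to a single easy case split; alternatively one checks that the difference vanishes to first order at $t=0$ and is globally nonnegative by convexity on $[-3/4,\infty)$ with direct verification on $[-1,-3/4]$.) Applied pointwise with $t = \eps(h-1) \geq -\eps \geq -1$ and integrated against $\LL_X$, this yields
\eq{
\rho(\LL_X, \LL_{\wt X}) \geq 1 + \frac{\eps}{2}\int(h-1)\ \dd\LL_X - \eps^2 \int(h-1)^2\ \dd\LL_X.
}

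Two elementary moment identities then finish the proof. The linear term vanishes because $h$ is a probability density with respect to $\LL_X$, so $\int(h-1)\ \dd\LL_X = 0$. The quadratic term is controlled using the boundedness of $h$: $\int(h-1)^2\ \dd\LL_X = \int h^2\ \dd\LL_X - 1 \leq \|h\|_\infty - 1$, a constant depending only on $\LL_X$ and $\LL_{X'}$. Setting $C \coloneqq \|h\|_\infty - 1$ (or, more conservatively, any upper bound for it) yields the claimed inequality $\rho(\LL_X, \LL_{\wt X}) \geq 1 - C\eps^2$. There is no substantive obstacle here: the proof is essentially one line once the mixture structure is unwrapped, and the only place where care is required is in choosing a quadratic lower bound for $\sqrt{1+t}$ that is valid on the full range of $\eps(h-1)$, rather than a local Taylor expansion whose remainder depends on $\eps$.
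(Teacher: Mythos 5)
Your proof is correct and follows the same basic route as the paper: express the Hellinger affinity via the mixture density $1 + \eps(h-1)$, lower-bound $\sqrt{1+t}$ by a quadratic, and use $\int h\,\dd\LL_X = 1$ to kill the linear term. The one genuine difference is worth noting. The paper uses the Taylor expansion $\sqrt{1-\eps[1-f(t)]} = 1 - \tfrac{\eps}{2}[1-f(t)] - \tfrac{\eps^2}{8}[1-f(t)]^2 + \eps^3 r(t)$ with a uniformly bounded $r$, which requires the restriction $\eps < 1/M$ to guarantee the argument stays bounded away from the singularity, and then handles $\eps \geq 1/M$ by the trivial bound $\rho \geq 0 \geq 1 - M^2\eps^2$ with the final constant $\max(C, M^2)$. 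Your choice of the \emph{global} inequality $\sqrt{1+t} \geq 1 + \tfrac{t}{2} - t^2$ on all of $[-1,\infty)$ absorbs both regimes at once — since $\eps(h-1) \geq -\eps \geq -1$ always holds — and so dispenses with the case split entirely. Both approaches yield a constant that is, up to a fixed factor, $\int (h-1)^2\,\dd\LL_X$ (which you in turn bound by $\|h\|_\infty - 1$). One small correction: your parenthetical verification of the scalar inequality is garbled. After rationalizing, $\sqrt{1+t} \geq 1 + \tfrac{t}{2} - t^2$ reduces (for $t\neq 0$) to $\sqrt{1+t} + 1 + \tfrac{t}{2} \geq \tfrac14$, not to $2\sqrt{1+t} + t^2 \geq \tfrac14$; but this corrected inequality is obvious since the left side is increasing on $[-1,\infty)$ and equals $\tfrac12$ at $t = -1$. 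Your alternative convexity argument (convex on $[-\tfrac34,\infty)$, direct check on $[-1,-\tfrac34]$) is also fine.
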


\begin{proof}
Let us denote the density of $\LL_{X'}$ with respect to $\LL_{X}$ by $f(t)$, which we assume to be bounded;
say $f(t) \leq M$.
It is easy to see that $\eps f(t) + 1-\eps$ is the density of $\LL_{\wt X}$ with respect to $\LL_{X}$, and so
\eq{
\rho(\LL_X,\LL_{\wt X}) = \int_\R \sqrt{\eps f(t) + 1-\eps}\ \LL_X(\dd t).
}
For $\eps < 1/M$, we can write the Taylor expansion
\eq{
\sqrt{1-\eps[1-f(t)]} = 1 - \frac{\eps}{2}[1-f(t)] - \frac{\eps^2}{8}[1-f(t)]^2 + \eps^3r(t),
}
where $r(t)$ is bounded.
In fact, the entire right-hand side above is bounded, and so there is no problem in writing
\eq{
\rho(\LL_X,\LL_{\wt X}) &= \int_\R\Big(1 - \frac{\eps}{2}[1-f(t)] - \frac{\eps^2}{8}[1-f(t)]^2 + \eps^3r(t)\Big)\ \LL_X(\dd t).
}
Using the fact that $\int_\R f(t)\, \LL_X(\dd t) = 1$, we find
\eq{
\rho(\LL_X,\LL_{\wt X}) &= 1 - \frac{\eps^2}{8}\int_\R[1-f(t)]^2\ \LL_X(\dd t) + O(\eps^3) \geq 1 - C\eps^2,
}
where $C$ depends only on $\LL_X$ and $\LL_{X'}$.
Replacing $C$ by $\max(C,M^2)$ allows the statement to also hold trivially for $\eps \geq 1/M$.
\end{proof}

When the same type of coupling is applied to several i.i.d.~variables, we get the following bound which can be used in Lemma \ref{fluctuation_lemma}.

\begin{lemma} \label{tv_lemma}
Let $X_1,\dots,X_n$ be i.i.d.~random variables with law $\LL_X$, and $X_1',\dots,X_n'$ be $i.i.d.$~random variables with law $\LL_{X'}$. 
Assume $\LL_{X'}$ is absolutely continuous with respect to $\LL_X$ with bounded density.
For each $i = 1,\dots,n$, let $Y_i$ be a $\mathrm{Bernoulli}(\eps_i)$ random variable independent of everything else, and define $\wt X_i$ as in \eqref{new_X} with $\eps = \eps_i$.
Then
\eq{
d_{\tv}(\LL_{(X_1,\dots,X_n)},\LL_{(\wt X_1,\dots,\wt X_n)}) \leq C\bigg(\sum_{i=1}^n \eps_i^2\bigg)^{1/2},
}
where $C$ is a constant depending only on $\LL_X$ and $\LL_{X'}$.
\end{lemma}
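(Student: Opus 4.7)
\medskip

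\noindent\textbf{Proof plan for Lemma \ref{tv_lemma}.} The plan is to bound the total variation distance by bounding the Hellinger affinity from below, using the fact that the Hellinger affinity tensorizes across product measures. Since $X_1,\dots,X_n$ are independent, $X_1',\dots,X_n'$ are independent, and the auxiliary Bernoulli variables $Y_1,\dots,Y_n$ are independent of each other and of everything else, the vectors $(X_1,\dots,X_n)$ and $(\wt X_1,\dots,\wt X_n)$ are both distributed as product measures on $\R^n$. Therefore
\[
\rho\bigl(\LL_{(X_1,\dots,X_n)},\LL_{(\wt X_1,\dots,\wt X_n)}\bigr) \;=\; \prod_{i=1}^n \rho\bigl(\LL_{X_i},\LL_{\wt X_i}\bigr),
\]
which is the standard tensorization identity obtained by writing out the integral defining $\rho$ in \eqref{hellinger_def} as an iterated integral over coordinates.

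Next I would apply Lemma \ref{hellinger_lemma} to each individual factor. Since each $\wt X_i$ is produced from the same $\LL_X, \LL_{X'}$ but with parameter $\eps_i$, that lemma supplies a single constant $C = C(\LL_X,\LL_{X'})$ such that $\rho(\LL_{X_i},\LL_{\wt X_i}) \geq 1 - C\eps_i^2$ for every $i$. Combining the two previous observations with the elementary inequality $\prod_{i=1}^n(1-a_i) \geq 1 - \sum_{i=1}^n a_i$ (valid whenever each $a_i \in [0,1]$), we obtain
\[
\rho\bigl(\LL_{(X_1,\dots,X_n)},\LL_{(\wt X_1,\dots,\wt X_n)}\bigr) \;\geq\; 1 - C\sum_{i=1}^n \eps_i^2,
\]
under the provision that $C\eps_i^2 \leq 1$ for each $i$, which certainly holds once $\eps_i$ is small; the trivial case where some $\eps_i$ is not small, or where $C\sum_i \eps_i^2 \geq 1$, is handled separately at the end by noting that the total variation distance is at most $1$ and the bound $C'(\sum_i \eps_i^2)^{1/2} \geq 1$ is automatic after enlarging the constant.

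Finally, I would convert the Hellinger lower bound back into a total variation upper bound using \eqref{tv_hellinger}. Writing $\rho$ for the affinity between the two joint laws, that inequality gives
\[
d_{\tv} \;\leq\; \sqrt{1-\rho^2} \;=\; \sqrt{(1-\rho)(1+\rho)} \;\leq\; \sqrt{2(1-\rho)} \;\leq\; \sqrt{2C\sum_{i=1}^n \eps_i^2},
\]
which is the desired bound with the constant $\sqrt{2C}$ in place of $C$.

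I do not expect any serious obstacle here; the only small subtlety is that \eqref{tv_hellinger} is most cleanly applied when $\rho \in [0,1]$, which forces one to carve out the regime where $C\sum_i \eps_i^2$ is not small as a trivial case, as noted above. Everything else is a direct combination of Lemma \ref{hellinger_lemma}, the tensorization of $\rho$, and the Hellinger--to--total-variation inequality \eqref{tv_hellinger}.
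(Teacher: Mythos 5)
Your proof is correct and follows essentially the same route as the paper's: tensorize the Hellinger affinity across coordinates, apply Lemma \ref{hellinger_lemma} factorwise, and convert back to total variation via \eqref{tv_hellinger}. The only (cosmetic) difference is your final algebraic step, $1-\rho^2=(1-\rho)(1+\rho)\le 2(1-\rho)$, whereas the paper writes $d_{\tv}\le(1-\prod_i(1-C_0\eps_i^2)^2)^{1/2}$ and then iterates $(1-x)(1-y)\ge 1-x-y$ directly on the squared product; both deliver the same bound with the same constant. You were also slightly more careful than the paper in explicitly carving out the trivial regime where some $C\eps_i^2>1$ (so that the factor $1-C\eps_i^2$ could be negative), which is a reasonable bit of bookkeeping to make the product inequality airtight.
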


\begin{proof}
By properties of product measures, it is clear from the definition \eqref{hellinger_def} that
\eeq{ \label{product_hellinger}
\rho(\LL_{(X_1,\dots,X_n)},\LL_{(\wt X_1,\dots,\wt X_n)}) = \prod_{i=1}^n \rho(\LL_{X_i},\LL_{\wt X_i}).
}
Now let $C_0$ be the constant from Lemma \ref{hellinger_lemma}.
From \eqref{tv_hellinger}, \eqref{product_hellinger}, and Lemma \ref{hellinger_lemma}, we deduce
\eq{
d_{\tv}(\LL_{(X_1,\dots,X_n)},\LL_{(\wt X_1,\dots,\wt X_n)}) \leq \bigg(1 - \prod_{i=1}^n (1-C_0\eps_i^2)^2\bigg)^{1/2}.
}
The desired bound is now obtained by iteratively applying the inequality $(1-x)(1-y) \geq 1-x-y$ for $x,y\geq0$.
\end{proof}

\subsection{Choice of coupling}
Naturally there are many measures $\LL_{X'}$ that are absolutely continuous to $\LL_X$, but we look for one which can be naturally coupled to $\LL_X$ in such a way that $X'$ deviates from $X$ by as much as possible.
Without further assumptions on $\LL_X$, the possibilities can be rather limited.
Two choices that are always available, however, are
\eeq{ \label{our_coupling}
X' = \min(X,X^{(1)},\dots,X^{(m)}) \quad \text{or} \quad X' = \max(X,X^{(1)},\dots,X^{(m)}),
}
where $X^{(1)},\dots,X^{(m)}$ are independent copies of $X$.
Indeed, these are the two couplings we will use to prove results on fluctuations in planar random growth models.
It is easy to check that the bounded density condition from Lemma \ref{tv_lemma} is satisfied.

\begin{lemma}
For any law $\LL_X$ and any $m\geq1$, the law $\LL_{X'}$ of $X'$ given by \eqref{our_coupling} is absolutely continuous with respect to $\LL_X$, and has bounded density.
\end{lemma}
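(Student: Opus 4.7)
The plan is to derive, for every Borel set $A \subseteq \R$, the single pointwise-measure inequality $\LL_{X'}(A) \le (m+1)\,\LL_X(A)$. This inequality already encodes both conclusions: absolute continuity follows because $\LL_X(A)=0$ forces $\LL_{X'}(A)=0$, and once the Radon--Nikodym derivative $f = \dd\LL_{X'}/\dd\LL_X$ exists, the same inequality will force $f \le m+1$ almost everywhere. So there is no need to compute densities or CDFs explicitly, which is what makes the argument work for arbitrary $\LL_X$ (including ones with atoms, singular continuous parts, or unbounded support).

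To establish the inequality, treat first the maximum case $X' = \max(X,X^{(1)},\dots,X^{(m)})$; the minimum case is symmetric, or can be reduced to the max case by replacing $X$ with $-X$. The key observation is entirely elementary: on the event $\{X' \in A\}$, the value $X'$ is realized as one of the $m+1$ random variables $X, X^{(1)},\dots,X^{(m)}$, and that particular variable then lies in $A$. Hence
\[
\{X' \in A\} \;\subseteq\; \{X \in A\} \;\cup\; \bigcup_{i=1}^{m} \{X^{(i)} \in A\},
\]
and a union bound together with the fact that each $X^{(i)}$ is distributed as $X$ yields $\LL_{X'}(A) \le (m+1)\,\LL_X(A)$.

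Finally, to extract the density bound, suppose the Radon--Nikodym derivative $f$ satisfied $\LL_X(B) > 0$ for the set $B \coloneqq \{f > m+1\}$. Then taking $A = B$ in the inequality would give
\[
(m+1)\,\LL_X(B) \;\ge\; \LL_{X'}(B) \;=\; \int_B f \,\dd\LL_X \;>\; (m+1)\,\LL_X(B),
\]
a contradiction; thus $f \le m+1$ almost everywhere $[\LL_X]$. Honestly, there is no serious obstacle here — the only thing to appreciate is that the problem of handling a completely general distribution is finessed by passing from densities to the set inequality, which is distribution-free. This contrasts with the previously used couplings under assumption \eqref{previous_assumption}, where bounded densities had to be verified by explicit calculation.
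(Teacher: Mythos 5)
Your proof is correct and uses the same central observation as the paper: the inclusion $\{X'\in A\}\subseteq\{X\in A\}\cup\bigcup_{i=1}^m\{X^{(i)}\in A\}$, followed by a union bound to get $\LL_{X'}(A)\leq(m+1)\LL_X(A)$, from which both absolute continuity and the density bound follow. The paper simply states the density bound as an immediate consequence, whereas you spell out the contradiction argument, but the substance is identical.
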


\begin{proof}
For any Borel set $A\subset\R$,
\eq{
\P(X'\in A) &\leq\P\bigg(\{X\in A\} \cup \bigcup_{j=1}^m \{X^{(j)}\in A\}\bigg) 
\leq \P(X\in A) + \sum_{j=1}^m \P(X^{(j)}\in A)
= (m+1)\P(X\in A).
}
It follows that $\P(X'\in A) = 0$ whenever $\P(X\in A)=0$, and that the density of $\LL_{X'}$ with respect to $\LL_{X}$ is bounded by $m+1$.
\end{proof}

For a specific distribution $\LL_X$, other couplings might also be useful and easier to work with.
For instance, if $X$ is a uniform random variable on $[0,1]$, one could take $X' = aX$ for any $a\in(0,1)$.
If %$X$ is integer-valued with 
$\P(X=0) > 0$, one could simply take $X' = 0$.
For $X$ that is geometrically distributed, $X' = X + a$ is also valid for any positive integer $a$.

\section[Random growth models: definitions, background, and results]{Planar random growth models: definitions, background, and results} \label{fluctuations_results}

\subsection{Two-dimensional first-passage percolation} \label{fpp_sec}

Let $ E(\Z^2)$ denote the edge set of $\Z^2$.
Let $(X_e)_{e\in E(\Z^2)}$ be an i.i.d.~family of nonnegative, non-degenerate random variables.
Along a nearest-neighbor path $\gamma = (\gamma_0,\gamma_1,\dots,\gamma_n)$, the \textit{passage time} is 
\eq{
T(\gamma) \coloneqq \sum_{i=1}^n X_{(\gamma_{i-1},\gamma_i)},
}
where $(\gamma_{i-1},\gamma_i)$ denotes the (undirected) edge between $\gamma_{i-1}$ and $\gamma_i$.
For $x,y\in\Z^2$, denote by $T(x,y)$ the minimum passage time of a path connecting $x$ and $y$; that is,
\eq{
T(x,y) \coloneqq \inf\{T(\gamma)\ :\ \gamma_0 = x, \gamma_n = y\}.
}
The quantity $T(x,y)$ is called the \textit{(first) passage time} between $x$ and $y$, and any path achieving this time will be called a \textit{(finite) geodesic}. 
For a recent survey on first-passage percolation, we refer the reader to \cite{auffinger-damron-hanson17}.

We are interested in the fluctuations of $T(x,y)$ when $x$ and $y$ are separated by a distance of order $n$.
In dimensions three and higher, there is actually no known lower bound other than the trivial observation that fluctuations are at least of order $1$.
In the planar setting considered here, order $\sqrt{\log n}$ fluctuations (in the sense of Definition \ref{fluctuations_def}) were established by Pemantle and Peres \cite{pemantle-peres94} when $X_e$ is exponentially distributed.
In \cite[Theorem 2.6]{chatterjee19II}, this lower bound was extended to the family of passage time distributions described in Section \ref{fluctuations_sec}, satisfying \eqref{previous_assumption}.
Our result below expands the result to optimal generality (cf.~Remark \ref{assumption_remark}).

Let $p_\cc(\Z^d)$ and $\vec{p}_\cc(\Z^d)$ denote the critical values for undirected and directed bond percolation on $\Z^d$.
When $d = 2$, we have $p_\cc(\Z^2) = 1/2$ and $\vec{p}_\cc(\Z^2) \approx 0.6445$ \cite[Chapter 6]{bollobas-riordan06}.
In order to have a rigorous upper bound, we cite the result of \cite{balister-bollobas-stacey94} which guarantees
\eeq{ \label{oriented_bond_upper}
\vec{p}_\cc(\Z^2) \leq 0.6735.
}

\begin{thm} \label{fpp_thm}
With $s \coloneqq \essinf X_e \in [0,\infty)$, assume
\eeq{
\P(X_e=s) < p_\cc(\Z^2). \label{fpp_assumption_1}
}
Let $y_n$ be any sequence in $\Z^2$ such that $\|y_n\|_1 \geq n$ for every $n$.
Then the fluctuations of $T(0,y_n)$ are at least of order $\sqrt{\log n}$.
\end{thm}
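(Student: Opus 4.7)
The plan is to apply Lemma \ref{fluctuation_lemma} with $X = T(0, y_n)$ and $Y = \wt T(0, y_n)$, the passage time in a perturbed edge environment built via the coupling construction of Section \ref{fluctuations_sec}. For each edge $e$ in a region $A_n$ to be chosen, independently with probability $\eps_e$, I would replace $X_e$ by $X'_e = \min(X_e, X_e^{(1)})$, with $X_e^{(1)}$ an independent copy of $X_e$. Then $\wt X_e \leq X_e$ everywhere, so $\wt T \leq T$ and
\[
T(0,y_n) - \wt T(0,y_n) \geq \sum_{e \in \gamma^*}(X_e - \wt X_e)
\]
for any geodesic $\gamma^*$ achieving $T(0,y_n)$. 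Lemma \ref{tv_lemma} gives $d_{\tv}(\LL_T, \LL_{\wt T}) \leq C\bigl(\sum_{e} \eps_e^2\bigr)^{1/2}$, reducing matters to controlling $\sum_e \eps_e^2$ on the one hand, and to showing $T - \wt T \geq c\sqrt{\log n}$ with probability bounded away from zero on the other.

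The hypothesis \eqref{fpp_assumption_1} enters decisively at this point. Pick $\delta > 0$ with $\P(X_e \geq s + \delta) > 1 - p_\cc(\Z^2)$, and call such edges \emph{flexible}. The flexible edges form a supercritical bond percolation on $\Z^2$, so by a standard Peierls-type dual-path argument, with overwhelming probability every self-avoiding path from $0$ to $y_n$ contains $\Omega(n)$ flexible edges; more precisely, a positive fraction of its edges lie in any macroscopic sub-region it traverses. On each flexible edge, the conditional expected value of $X_e - \min(X_e, X_e^{(1)})$ is bounded below by a positive constant $\mu_0 = \mu_0(\delta)$. Conditional on the unperturbed environment, the Bernoulli flips are independent, so each flexible edge of $\gamma^*$ contributes an independent nonnegative increment to $T - \wt T$ with conditional mean $\geq \mu_0 \eps_e$ and of comparable conditional second moment.

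The main obstacle is that a single-scale uniform choice $\eps_e \equiv \eps$ over a box of side $\Theta(n)$ cannot simultaneously keep $\eps^2 n^2 = O(1)$ and push the decrease accumulated on a length-$n$ geodesic above $O(1)$. To reach $\sqrt{\log n}$, I would distribute the perturbation across $k = \lceil \log n \rceil$ disjoint slabs $A_n^{(1)}, \ldots, A_n^{(k)}$ transverse to the segment $[0, y_n]$, tuning $\eps_e$ inside each slab so that each slab contributes $O(1/k)$ to $\sum_e \eps_e^2$ but has a conditional random decrease of order $1$ with positive probability (using the flexible-edge lower bound to guarantee $\Omega(n/k)$ flexible edges of $\gamma^*$ inside each slab). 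Contributions from distinct slabs are conditionally independent, so the total $T - \wt T$ is a sum of $k$ independent nonnegative random variables, each of order $1$. A Paley--Zygmund/CLT lower-tail estimate then yields $T - \wt T \geq c\sqrt{k} = c\sqrt{\log n}$ with probability bounded below uniformly in $n$, while $\sum_e \eps_e^2 = O(1)$ keeps $d_{\tv} \leq 1 - c'$ via Lemma \ref{tv_lemma}. Inserting both bounds into Lemma \ref{fluctuation_lemma} with $b - a = c_1 \sqrt{\log n}$ gives $\P(a \leq T(0,y_n) \leq b) \leq 1 - c_2$ uniformly in $a$, which is exactly Definition \ref{fluctuations_def}.

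The hardest ingredient is implementing this slab decomposition rigorously: one must uniformly control, over the random and a priori transversally unbounded geodesic $\gamma^*$, both the number of its edges inside each slab and the fraction of those which are flexible. This is where the supercritical flexible-edge percolation afforded by \eqref{fpp_assumption_1} is used most heavily, since it replaces geodesic-dependent counts by deterministic comparison estimates against a percolation model, reducing the problem to a standard concentration inequality for a sum of approximately $\log n$ independent sub-Gaussian summands.
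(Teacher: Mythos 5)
Your toolkit---Lemma~\ref{fluctuation_lemma}, the coupling $X_e' = \min(X_e, X_e^{(1)}, \ldots)$ fed into Lemma~\ref{tv_lemma}, and a Kesten-type lower bound on the density of flexible edges along every long path under \eqref{fpp_assumption_1}---matches the paper's ingredients, and the high-level strategy is right. But the multi-scale allocation of the $\eps_e$'s is where the factor $\sqrt{\log n}$ has to come from, and your slab scheme cannot produce it.

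You ask each of $k\approx\log n$ transverse slabs $A_j$ to contribute $O(1/k)$ to $\sum_e\eps_e^2$ and simultaneously produce an order-$1$ conditional decrease. These cannot both hold. The $\eps_e$'s must be deterministic (the coupling is fixed before the environment is revealed, so $\eps_e$ cannot track the random geodesic $\gamma^*$), and Cauchy--Schwarz then gives
\[
\sum_{e\in\gamma^*\cap A_j}\eps_e \ \le\ \sqrt{m_j}\,\Big(\sum_{e\in A_j}\eps_e^2\Big)^{1/2} \ \le\ \sqrt{m_j/k},
\]
with $m_j = |\gamma^*\cap A_j|$. For equal-width transverse slabs of transversal extent of order $n$---and the theorem supplies no geodesic-tube estimate that would let you narrow this---one has $m_j\sim n/k$ while $|A_j|\sim n^2/k$, and the optimal uniform $\eps$ on $A_j$ yields per-slab decrease $m_j\eps=(n/k)/\sqrt{(n^2/k)\,k}=1/k$, for a grand total of $O(1)$. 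Moreover, if each slab really gave an independent order-$1$ decrease, their sum would concentrate at $\Theta(k)=\Theta(\log n)$, not $\Theta(\sqrt{k})$, and this is impossible under the constraint $\sum_e\eps_e^2=O(1)$.

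The paper resolves this with \emph{concentric annuli} around the origin and a radially decaying choice $\eps_e = \alpha/((\|e\|+1)\sqrt{\log n})$, $\|e\|$ being the distance of $e$ from $0$. The radial geometry is what unlocks the $\sqrt{\log n}$: a geodesic from $0$ to $y_n$ with $\|y_n\|_1\geq n$ must cross the annulus $\{2^j\le\|e\|\le 2^{j+1}\}$ using $\gtrsim 2^j$ edges (it has to travel from radius $2^j$ to $2^{j+1}$), while the annulus contains only $\sim 4^j$ edges, so the ratio (annulus size)/(geodesic footprint)$^2$ is $O(1)$ at every scale rather than $O(k)$ as for transverse slabs. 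Each annulus then contributes $\sim 1/\log n$ to $\sum\eps_e^2$ but $\sim 1/\sqrt{\log n}$ to the conditional decrease, and summing over the $\sim\log n$ dyadic scales gives $\sqrt{\log n}$; this is optimal, since the Cauchy--Schwarz bound above with $|A_j|\sim m_j^2$ caps the total at $\sqrt{k}$. Centering the decomposition at the origin also resolves automatically the worry you flagged about the transversally unbounded geodesic: the annuli trap any path from $0$ past radius $n$, and Lemma~\ref{fpp_lemma} enforces the flexible-edge density at every scale uniformly over paths from $x\in\partial B_{n_{2k}}(0)$ to $\partial B_{n_{2k}}(x)$.

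Two smaller points. The paper takes $X_e'=\min(X_e,X_e^{(1)},\dots,X_e^{(m)})$ with $m$ large enough that $Z_e\ge\delta$ holds on a constant fraction of flexible edges with overwhelming probability, which feeds a clean one-sided Chernoff bound on $D=\sum\one_{\{Y_{e_i}=1\}}Z_{e_i}$ given the environment, avoiding the Paley--Zygmund second-moment estimates you invoke; with only one auxiliary copy you would have to track conditional second moments of $Z_e$ on flexible edges, which is doable but messier. And the final assembly is exactly as you describe: Lemma~\ref{fluctuation_lemma} combined with the total-variation bound $C\alpha$ from Lemma~\ref{tv_lemma}, after tuning $\alpha$.
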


\begin{remark} \label{assumption_remark}
The above result is optimal in the following sense.
If $s = 0$ and $\P(X_e = 0) > p_\cc(\Z^d)$, then $T(0,y_n)$ is tight because there is an infinite cluster of zero-weight edges extending in every direction \cite{zhang-zhang84,zhang95}.
\end{remark}

When $s > 0$, we can relax \eqref{fpp_assumption_1} upon adding a weak moment condition \eqref{fpp_assumption_2b}.
This condition is standard in planar FPP and is equivalent to the limit shape having nonempty interior (see \eqref{limit_shape} and the discussion that follows).

\begin{thm} \label{fpp_thm_1}
With $s \coloneqq \essinf X_e \in [0,\infty)$, assume
\begin{subequations} \label{fpp_assumption_2}
\begin{align}
s > 0, \quad \P(X_e=s) < \vec{p}_\cc(\Z^2), \label{fpp_assumption_2a}
\intertext{and}
\E\min(X^{(1)},X^{(2)},X^{(3)},X^{(4)})^2 < \infty,
\label{fpp_assumption_2b}
\end{align}
\end{subequations}
where the $X^{(i)}$'s are independent copies of $X_e$.
Let $y_n$ be any sequence in $\Z^2$ such that $\|y_n\|_1 \geq n$ for every $n$.
Then the fluctuations of $T(0,y_n)$ are at least of order $\sqrt{\log n}$.
\end{thm}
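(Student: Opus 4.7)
The approach is to apply the general fluctuation lower bound scheme developed in Section 5.2, with the coupling \eqref{our_coupling} based on minima, just as one would do for Theorem \ref{fpp_thm}; the modifications needed for the weaker assumption \eqref{fpp_assumption_2a} are what distinguish the two proofs. Concretely, I would set $X = T(0,y_n)$ and construct $Y = \wt T(0,y_n)$ as the passage time in a perturbed environment $(\wt X_e)$, where independently at each edge $e$ in a box of side $\asymp n$, with probability $\eps_e$ we replace $X_e$ by $\min(X_e,X_e^{(1)},X_e^{(2)},X_e^{(3)})$ for three independent copies, and otherwise keep $X_e$. By Lemma \ref{tv_lemma} the total variation between the two environments is at most $C(\sum_e \eps_e^2)^{1/2}$, so provided we keep $\sum_e\eps_e^2=O(1)$, the quantity $d_{\tv}(\LL_X,\LL_Y)$ stays bounded away from $1$. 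Lemma \ref{fluctuation_lemma} then reduces the theorem to showing that $\P(|T-\wt T|\le c\sqrt{\log n})$ is bounded away from $1$ for a suitable small $c$.

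To obtain the $\sqrt{\log n}$ separation between $T$ and $\wt T$, I would use a multiscale choice of perturbation probabilities: for $k=1,\dots, K$ with $K\asymp \log n$, pick disjoint edge sets $E_k$ with $|E_k|\asymp N_k$ and perturbation probability $\eps_k \asymp (N_k \log n)^{-1/2}$, so that $\sum_k |E_k|\eps_k^2 = O(1)$ while each scale contributes an independent, bounded, positive-variance change to $T-\wt T$. Summing $K\asymp \log n$ independent contributions and invoking the Paley--Zygmund inequality produces $|T-\wt T|\ge c\sqrt{\log n}$ with probability bounded away from zero. The role of the hypotheses is as follows. The moment bound \eqref{fpp_assumption_2b} is the classical Cox--Durrett condition ensuring that the time constant is finite in every direction and that the shape theorem yields $\E T(0,y_n) = \Theta(n)$, so that typical geodesics have length $\Theta(n)$ and stay in a box of side $\Theta(n)$ with high probability. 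The condition \eqref{fpp_assumption_2a} on the directed percolation threshold is used to argue that the set of edges attaining the essential infimum $s$ does not percolate in an \emph{oriented} sense along the axis of $y_n$, so that any geodesic must traverse $\Theta(n)$ edges at which some alternative copy $X_e^{(i)}$ has a strictly positive chance of being noticeably smaller.

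The main obstacle is carrying out this lower bound on per-scale savings under the genuinely weaker assumption \eqref{fpp_assumption_2a}. The analogous step for Theorem \ref{fpp_thm} would use the \emph{undirected} percolation threshold, under which the $X_e=s$ cluster has only finite components, making it easy to produce detours of finite length with strictly reduced weight. Under \eqref{fpp_assumption_2a}, a positive density (as much as $\vec p_{\mathrm c}(\Z^2)\approx 0.67$) of edges may take the minimum value, and the $X_e=s$ cluster may have infinite undirected components; the detour argument must be implemented one edge at a time using only the fact that the minimum cluster is subcritical with respect to oriented paths in a suitable lattice direction adapted to $y_n$. Concretely, for each candidate perturbation edge $e$ one restricts to an oriented exploration around $e$ of bounded depth, in which \eqref{fpp_assumption_2a} guarantees with positive probability the existence of a short directed detour on which the perturbation \eqref{our_coupling} strictly decreases the minimum path weight. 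The moment condition \eqref{fpp_assumption_2b} enters a second time here to bound the $L^2$-cost of such detours and thereby translate the per-edge gain into a uniform lower bound on the fluctuation of $T-\wt T$.

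Once the per-scale positive-probability lower bound is in place, combining independence across scales with Lemma \ref{fluctuation_lemma} shows $\P(a\le T(0,y_n)\le a+c'\sqrt{\log n})\le 1-c''$ for constants $c',c''>0$ independent of $n$ and of $a$, which is exactly the conclusion in Definition \ref{fluctuations_def}. The delicate point throughout is the directed percolation argument replacing the undirected one; everything else is a fairly direct transcription of the scheme already used for Theorem \ref{fpp_thm}.
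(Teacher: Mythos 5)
The general scaffolding of your proposal — the min-based coupling from \eqref{our_coupling}, the total-variation bound from Lemma \ref{tv_lemma}, the multiscale choice $\sum_e \eps_e^2 = O(1)$, and the reduction via Lemma \ref{fluctuation_lemma} — matches the paper's Part 2 in spirit and is essentially correct. You have also correctly identified the real difficulty: under \eqref{fpp_assumption_2a} the set $\{e : X_e = s\}$ can be undirectedly supercritical (density up to $\approx 0.64 > p_\cc = 1/2$), so one cannot simply invoke Kesten's theorem (as in the proof under \eqref{fpp_assumption_1}) to conclude that self-avoiding paths meet $\Theta(n)$ edges above $s + 2\delta$.

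However, the proposed replacement — a ``local oriented exploration of bounded depth'' around each candidate edge, producing a ``short directed detour'' on which the min-perturbation strictly decreases the path weight — has a genuine gap. First, geodesics in FPP are \emph{not} directed paths, so the hypothesis $\P(X_e = s) < \vec p_\cc(\Z^2)$ does not directly control the fraction of minimal-weight edges on them; the supercritical undirected minimal cluster can in principle dominate. Second, the min-coupling can never push a weight below $s = \essinf X_e$, so on edges that already sit at the essential infimum the perturbation has no effect; the quantity you actually need to lower-bound is the number of geodesic edges with weight $\geq s + 2\delta$, and a local detour argument does not give you that. The paper instead attacks this globally: in Lemma \ref{fpp_lemma}, Case 2, it replaces weights in $[s+2\delta, s+2\delta+M)$ by $s+2\delta+M$ and invokes Marchand's theorem (Theorem \ref{marchand_thm}), which uses exactly the condition $\P(X_e = s) < \vec p_\cc(\Z^2)$ to conclude that this bounded stochastic increase \emph{strictly} increases the time constant $\nu(y) < \hat\nu(y)$ in every direction. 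If a geodesic had fewer than $\rho n$ edges above $s + 2\delta$, the perturbed passage time would differ by at most $\rho n M < \eps_2 n$, contradicting the strict gap between $\nu$ and $\hat\nu$ at scale $n$. Crucially, this comparison needs a \emph{quantitative} shape theorem to control the fluctuations of both $T$ and $\hat T$ summably over $\partial B_n(0)$, and that is where \eqref{fpp_assumption_2b} enters — it is the condition for Ahlberg's theorem (Theorem \ref{ahlberg_thm}), giving summable large deviations of $|T(0,y) - \nu(y)|$. Your proposal mislocates the role of \eqref{fpp_assumption_2b}, describing it as bounding ``the $L^2$-cost of detours,'' and also conflates the ``4'' in the moment condition with the number of copies in the coupling, whereas the paper takes $m$ large depending on the weight distribution (see \eqref{m_choice}) and the 4-fold minimum in \eqref{fpp_assumption_2b} has an entirely separate origin (the Cox--Durrett condition). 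To close your argument you would need to either carry out the local-detour construction rigorously — and it is unclear this is possible when the minimal cluster percolates undirectedly — or import Marchand's global comparison of time constants as the paper does.
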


\begin{remark}
As similarly mentioned in Remark \ref{assumption_remark}, the above result is optimal in the following sense.
If $s > 0$ and $\P(X_e = s) > \vec{p}_\cc(\Z^d)$, then $T(0,y_n) - n\|y_n\|_1$ is tight so long as $y_n$ is in or at the edge of the oriented percolation cone \cite[Remark 7]{zhang08} (c.f.~\cite{durrett84} for a description of this cone).
An independent work of Damron, Hanson, Houdr{\'e}, and Xu \cite{damron-hanson-houdre-xu20}, which uses different methods and was posted shortly after \cite{bates-chatterjee20III}, shows that Theorem \ref{fpp_thm_1} holds even if one assumes \eqref{fpp_assumption_2a} without \eqref{fpp_assumption_2b};  their Lemma 6 is the key innovation needed to remove this moment condition.
They also prove a statement equivalent to Theorem \ref{fpp_thm}.
\end{remark}

One should compare Theorems  \ref{fpp_thm} and \ref{fpp_thm_1} with the results of Newman and Piza \cite{newman-piza95}.
Under \eqref{fpp_assumption_1} or \eqref{fpp_assumption_2a}, and the additional assumption that $\E(X_e^2)$ is finite --- which is slightly stronger than \eqref{fpp_assumption_2b} --- they show $\Var(T(0,y_n))\geq C\log n$.
Zhang \cite[Theorem 2]{zhang08} shows the same for $y_n = (n,0)$ assuming only $\P(X_e = 0) < p_\cc(\Z^2)$, and Auffinger and Damron \cite[Corollary 2]{auffinger-damron13} extend this result to any direction outside the percolation cone (see also \cite[Corollary 1.3]{kubota15}).
Unfortunately, these lower bounds on variance give no information on the true size of fluctuations, hence the need for Theorems \ref{fpp_thm} and \ref{fpp_thm_1}.
Indeed, one cannot expect a matching upper bound since $\Var(T(0,y_n))$ should be of order $n^{2/3}$ in the standard cases.

The best known variance upper bound is $Cn/\log n$, proved in general dimensions for progressively more general distributions by Benjamini, Kalai, and Schramm \cite{benjamini-kalai-schramm03}, Bena\"\i m and Rossignol \cite{benaim-rossignol08}, and Damron, Hanson, and Sosoe \cite{damron-hanson-sosoe15,damron-hanson-sosoe14}.
One notable exception to the $n/\log n$ barrier comes from a simplified FPP model introduced by Sepp{\"a}l{\"a}inen \cite{seppalainen98}, for which Johansson \cite[Theorem 5.3]{johansson01} proves that the passage time fluctuations, when rescaled by a suitable factor of $n^{1/3}$, converge to the GUE Tracy--Widom distribution \cite{tracy-widom94}. 
%describing the limiting fluctuations of the top eigenvalue of a random matrix from the Gaussian Unitary Ensemble (GUE) \cite{tracy-widom94}.

Interestingly, in the critical case $\P(X_e=0)=1/2$ with $\P(0<X_e<\eps) = 0$, fluctuations are of order exactly $\sqrt{\log n}$.
Kesten and Zhang \cite{kesten-zhang97} prove a central limit theorem on this scale, and in the binary case $\P(X_e=1) = 1/2$, Chayes, Chayes, and Durrett \cite[Theorem 3.3]{chayes-chayes-durrett86} establish the expected asymptotic $\E(T(0,n\mathrm{e}_1)) = \Theta(\log n)$.
More delicate critical cases are examined in \cite{zhang99,damron-lam-wang17}.

Next we turn our attention to the related shape fluctuations.
For $x\in\R^2$, let $[x]$ be the unique element of $\Z^2$ such that $x\in[x]+[0,1)^2$.
For each $t>0$, define
\eeq{ \label{limit_shape}
B(t) \coloneqq \{x \in \R^2 : T(0,[x]) \leq t\},
}
which encodes the set of points reachable by a path of length at most $t$.
Sharpened from a result of Richardson \cite{richardson73}, the Cox--Durrett shape theorem  \cite[Theorem 3]{cox-durrett81} says that if (and only if) $\P(X_e = 0) < p_\cc(\Z^2)$ and \eqref{fpp_assumption_2b} holds,
%{\color{red} [Is \eqref{fpp_assumption_2b} really the condition given in \cite{cox-durrett81}? If so, may be we can mention that earlier, so that the reader knows that \eqref{fpp_assumption_2b} has appeared previously in the literature.]}, 
% see the text before theorem 3.3
then there
exists a deterministic, convex, compact set $\BB\subset\R^2$, having the symmetries of $\Z^2$ and nonempty interior, such that for any $\eps > 0$, almost surely
\eq{
(1 - \eps)\BB \subset \frac{1}{t}B(t) \subset (1 + \eps)\BB \quad \text{for all large $t$}.
}
More specifically, for every $x\in\R^2$, there is a positive, finite constant $\nu(x)$ such that
\eeq{ \label{mu_def}
\lim_{n\to\infty} \frac{T(0,[nx])}{n} = \nu(x) \quad \mathrm{a.s.},
}
and
\eq{
\BB = \{x \in \R^2 : \nu(x) \leq 1\}.
}
Moreover, $\nu$ is a norm on $\R^2$, and so $\BB$ is the unit ball under this norm.

The question remains as to how far $B(t)$ typically is from $t \BB$.
One way to pose this problem precisely is to ask for the value of
\eeq{ \label{chi_prime_def}
\chi' \coloneqq \inf\Big\{\nu : \P\big((t-t^\nu)\BB \subset B(t) \subset (t+t^\nu)\BB \text{ for all large $t$}\big) = 1\Big\}.
}
Another possible quantity to consider is $\chi \coloneqq \sup_{\|x\|_2=1} \chi_x$, where
\eq{
\chi_x \coloneqq \sup\{\gamma\geq0 : \exists\, C>0,\Var T(0,[nx]) \geq Cn^{2\gamma} \text{ for all $n$}\}.
}
Although it is conjectured that $\chi_x = \chi = \chi' = \frac{1}{3}$, even relating $\chi$ and $\chi'$ is  challenging because a variance lower bound does not by itself guarantee anything about fluctuations.
Assuming $\E(X_e^2) < \infty$ and either \eqref{fpp_assumption_1} or \eqref{fpp_assumption_2a}, Newman and Piza \cite[Theorem 7]{newman-piza95} prove $\max(\chi,\chi') \geq 1/5$.
Furthermore, they show $\chi_x \geq 1/8$ if $x$ is a direction of curvature for $\BB$, a notion defined in \cite{newman-piza95} and recalled here.

\begin{defn} \label{curvature_def}
Let $x\in\R^2$ be a unit vector, and $z\in\partial \BB$ the boundary point of $\BB$ in the direction $x$.
We say $x$ is a direction of curvature for $\BB$ if there exists a Euclidean ball $\SS$ (with any center and positive radius) such that $\SS\supset\BB$ and $z\in\partial\SS$.
\end{defn}

Since $\BB$ must have at least one direction of curvature (e.g.~take a large ball $\SS$ containing $\BB$, and then translate $\SS$ until it first intersects $\partial\BB$), one has $\chi \geq 1/8$ in the setting of \cite{newman-piza95}.
Unfortunately, this result does not imply order $n^{1/8}$ fluctuations without a matching upper bound on the variance.

The first work addressing typical shape fluctuations is due to Zhang \cite{zhang06}, who shows they are at least of order $\sqrt{\log n}$ in a certain sense for Bernoulli weights and general dimension.
Nakajima \cite{nakajima20} extends this result to general distributions.
In the first result proving $\chi' > 0$, Chatterjee \cite[Theorem 2.8]{chatterjee19II} shows that if for some direction of curvature $x$, $T(0,[nx])$ has fluctuations of order $n^{1/8-\delta}$ for any $\delta > 0$ in the sense of Definition \ref{fluctuations_def}, then $\chi' \geq 1/8$.
It is then shown in \cite[Theorem 2.7]{chatterjee19II} that the hypothesis of the previous sentence is true if the weight distribution satisfies \eqref{previous_assumption}.
Here we are able to replace that assumption with a small moment condition needed to use Alexander's shape theorem \cite{alexander97}, as refined by Damron and Kubota \cite{damron-kubota16}.

\begin{thm} \label{fpp_thm_2}
Assume $\P(X_e=0) < p_\cc(\Z^2)$ and $\E(X_e^\lambda)<\infty$ for some $\lambda > 3/2$.
If $x$ is a direction of curvature for $\BB$, then $T(0,[nx])$ has fluctuations of order at least $n^{1/8-\delta}$ for any $\delta > 0$.
\end{thm}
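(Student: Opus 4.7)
The plan is to mirror the argument that Chatterjee used in \cite{chatterjee19II} to establish Theorem \ref{fpp_thm_2} under the restrictive density hypothesis \eqref{previous_assumption}, replacing the multiplicative perturbation scheme used there by the min-coupling from \eqref{new_X}--\eqref{our_coupling}, which is designed precisely to work for arbitrary edge-weight distributions. The Hellinger/total-variation bookkeeping from Lemmas \ref{hellinger_lemma} and \ref{tv_lemma} then substitutes verbatim for the corresponding estimates in \cite{chatterjee19II}, and the rest of the geometric argument needs only the Newman--Piza curvature estimates \cite{newman-piza95} together with the Alexander--Damron--Kubota non-random shape inequality \cite{alexander97, damron-kubota16}, which is precisely where the moment condition $\lambda > 3/2$ is used.

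Concretely, fix $\delta > 0$ and a direction of curvature $x$, and let $z = \nu(x)^{-1} x \in \partial \BB$, with a Euclidean ball $\SS \supset \BB$ witnessing curvature at $z$. Following \cite{newman-piza95}, this curvature implies that any path from $0$ to $[nx]$ passing through a point at transverse displacement $\ell$ from the segment $\overline{0\,[nx]}$ incurs extra passage time of order $\ell^2/n$. Let $A$ be the set of edges contained in a slab of transverse width $L = n^{5/8+\eta}$ around that segment; then $|A| \asymp nL$. For each $e \in A$, independently let $Y_e \sim \mathrm{Bernoulli}(\eps)$ with $\eps = n^{-3/4 - \eta/2 - \delta/2}$, and set
\[
\wt X_e = \begin{cases}\min(X_e, X_e^{(1)}, \ldots, X_e^{(m)}) &\text{if } Y_e = 1,\\ X_e &\text{if } Y_e = 0,\end{cases}
\]
for a suitable constant $m$ (with $\wt X_e = X_e$ off $A$). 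Let $\wt T$ denote the passage time from $0$ to $[nx]$ in the perturbed environment. Then $\wt T \leq T$ almost surely, and Lemma \ref{tv_lemma} gives
\[
d_{\tv}(\LL_T, \LL_{\wt T}) \leq C\sqrt{|A|\eps^2} = Cn^{-\delta/2} \to 0.
\]

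The heart of the proof is a lower bound of the form $\P(T - \wt T \geq c n^{1/8 - \delta}) \geq c' > 0$, and this is where the hypotheses on $\P(X_e=0)$ and the moment $\lambda > 3/2$ are consumed. Under these hypotheses, the Damron--Kubota refinement of Alexander's theorem yields
\[
|T(0,[nx]) - n\nu(x)| = O\!\big(\sqrt{n}\, (\log n)^C\big)\quad \text{with high probability},
\]
so by the Newman--Piza curvature estimate every near-geodesic is confined to a transverse window of width $O(\sqrt{n}\,(\log n)^C)$. The strip $A$ has width $L \gg \sqrt{n}$, so it contains the relevant transverse window, and the number of edges in $A$ whose weight is reduced by an order-$1$ amount is of order $|A|\eps \asymp n^{7/8 - \delta/2}$. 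A second-moment/anti-concentration computation, carried out essentially as in \cite[Sections 3--4]{chatterjee19II}, then transfers the expected per-edge savings into a passage-time gain of order $n^{1/8 - \delta}$ with positive probability. The exponents $5/8$ and $3/4$ in the choices of $L$ and $\eps$ are matched to the Newman--Piza transverse scaling so as to produce the $1/8$ in the final exponent.

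Once this is in hand, applying Lemma \ref{fluctuation_lemma} with $X = T$ and $Y = \wt T$ gives, for any interval $[a,b]$ of length at most $\tfrac{1}{2}c n^{1/8 - \delta}$,
\[
\P(a \leq T \leq b) \leq \tfrac{1}{2}\big(1 + (1 - c') + o(1)\big) = 1 - \tfrac{c'}{2} + o(1),
\]
which is exactly the assertion of Theorem \ref{fpp_thm_2}. The main obstacle is the middle step: verifying that the distribution-free min-coupling, which perturbs a random subset of edges by distribution-dependent and heterogeneous amounts, still produces the expected order-$n^{1/8-\delta}$ passage-time gain without any smoothness of $\LL_{X_e}$. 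This requires reworking the anti-concentration step of \cite{chatterjee19II}, whose perturbation was deterministic in size, using only the facts that (i) each reduced edge saves a positive random amount with expectation bounded below by a constant depending on $m$ and $\LL_{X_e}$, and (ii) the second moment of the resulting sum can be controlled via the Newman--Piza transverse confinement.
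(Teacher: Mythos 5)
Your plan has the right architecture — min-coupling, Lemma \ref{tv_lemma}, Newman--Piza confinement, Damron--Kubota for the moment hypothesis, then Lemma \ref{fluctuation_lemma} to finish — and this is indeed how the paper proceeds. But the quantitative core of the argument is off in ways that do not cancel.

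First, the transverse confinement. Damron--Kubota gives $\chi'\le 1/2$, i.e.~the shape fluctuates by at most $t^{1/2+\delta}$. Plugging this into the Newman--Piza curvature bound ``extra passage time $\asymp \ell^2/n$'' gives $\ell \lesssim n^{3/4+\delta}$, not $\ell\lesssim \sqrt n\,(\log n)^C$ as you claim. The paper's cylinder $\Lambda_n$ (and its finite truncation $V_n$) has half-width $n^{3/4+\delta}$ for exactly this reason. Your slab of width $L = n^{5/8+\eta}$ is genuinely too narrow: you cannot assert that geodesics stay inside it, and perturbations outside the slab are untracked by your coupling, so the whole comparison between $T$ and $\wt T$ collapses.

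Second, the total-variation estimate as written is an arithmetic error, and the error propagates. With $|A|\asymp nL = n^{13/8+\eta}$ and $\eps = n^{-3/4-\eta/2-\delta/2}$ one gets
\[
|A|\eps^2 \;\asymp\; n^{13/8+\eta}\cdot n^{-3/2-\eta-\delta} \;=\; n^{1/8-\delta},
\]
so $\sqrt{|A|\eps^2}\asymp n^{1/16-\delta/2}\to\infty$ for small $\delta$ — the total-variation distance you are supposed to control actually blows up. Separately, the relevant count for the passage-time gain is the number of perturbed edges \emph{on the geodesic}, namely $n\eps$, not $|A|\eps$. With your $\eps$ that is $n^{1/4-\eta/2-\delta/2}$, which neither matches your claimed $n^{1/8-\delta}$ nor reconciles with a finite TV bound. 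The exponents that make everything consistent are the paper's: slab width $n^{3/4+\delta}$, $\eps = \alpha\, n^{-7/8-\delta}$, giving $|A|\eps^2\asymp 1$ (tunable to $\le q_0/8$ by taking $\alpha$ small) and a gain $n\eps\asymp n^{1/8-\delta}$.

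Third, your step ``each reduced edge saves a positive random amount with expectation bounded below'' hides the genuine obstacle. If the geodesic concentrates on edges with $X_e$ near the essential infimum $s$, then $\min(X_e,X_e^{(1)},\dots,X_e^{(m)})$ is also near $s$ and the saving is negligible. The paper handles this via Lemma \ref{fpp_lemma}: under $\P(X_e=0)<p_\cc(\Z^2)$ (and the moment condition so that the shape theorem applies), geodesics almost surely have a positive fraction $\rho$ of edges with $X_e\ge s+2\delta$, and on those edges the min-coupling has a uniformly positive saving with probability close to $1$ once $m$ is large enough. Without an explicit statement and proof of this ``positive density of good edges on geodesics'' fact, the lower bound on $T-\wt T$ has a gap that the general min-coupling machinery does not close on its own.
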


By the argument of \cite[Theorem 2.8]{chatterjee19II}, we obtain the following lower bound on the shape fluctuation exponent.

\begin{cor}
Assume the setting of Theorem \ref{fpp_thm_2}.
Then the shape fluctuation exponent defined by \eqref{chi_prime_def} satisfies $\chi' \geq \frac{1}{8}$.
\end{cor}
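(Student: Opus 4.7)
The plan is a proof by contradiction using Theorem \ref{fpp_thm_2}. Suppose $\chi' < 1/8$ and fix $\alpha$ with $\chi' < \alpha < 1/8$; I use $\alpha$ to avoid clashing with the limit shape norm $\nu$ in \eqref{chi_prime_def}. By the definition of $\chi'$, the containment
\[
(t - t^\alpha)\BB \;\subset\; B(t) \;\subset\; (t + t^\alpha)\BB
\]
holds almost surely for all sufficiently large $t$. The idea is to convert this geometric statement into a two-sided $n^\alpha$-localization of $T(0,[nx])$ along a direction of curvature and then contradict the fluctuation lower bound of Theorem \ref{fpp_thm_2}.

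Let $x$ be a unit vector that is a direction of curvature for $\BB$ (which exists by the remarks following Definition \ref{curvature_def}), and write $m_n \coloneqq \nu([nx]) = n\nu(x) + O(1)$, so that $m_n \asymp n$. Put $t_\pm(n) \coloneqq m_n \pm 2 m_n^\alpha$. Since $\alpha < 1$ and $m_n \to \infty$, a short expansion gives $t_\pm(n)^\alpha = m_n^\alpha(1+o(1))$, and hence for all large $n$,
\[
t_+(n) - t_+(n)^\alpha \;\geq\; m_n \;>\; t_-(n) + t_-(n)^\alpha.
\]
On the almost sure event that the shape containment holds for every $t \geq T_0$, the first inequality forces $[nx] \in (t_+ - t_+^\alpha)\BB \subset B(t_+)$, i.e., $T(0,[nx]) \leq t_+$, while the second forces $T(0,[nx]) > t_-$ (otherwise $[nx] \in B(t_-) \subset (t_- + t_-^\alpha)\BB$ would contradict $\nu([nx]) = m_n > t_- + t_-^\alpha$). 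Thus on this event, $T(0,[nx])$ lies in an interval $I_n$ of length $4 m_n^\alpha = O(n^\alpha)$.

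The remaining step is a routine upgrade from ``eventually almost sure'' to a high-probability bound at each large $n$. Given $\eps > 0$, pick $T_0$ large enough that the shape containment fails at some $t \geq T_0$ with probability at most $\eps$, and then pick $n$ large enough that $t_-(n) \geq T_0$. For all such $n$ we obtain $\P(T(0,[nx]) \in I_n) \geq 1 - \eps$, so $\P(T(0,[nx]) \in I_n) \to 1$ as $n \to \infty$.

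Finally, choose $\delta > 0$ with $\alpha < 1/8 - \delta$. Since $|I_n| = O(n^\alpha) = o(n^{1/8-\delta})$, Definition \ref{fluctuations_def} rules out fluctuations of $T(0,[nx])$ of order $n^{1/8-\delta}$, contradicting Theorem \ref{fpp_thm_2}. I expect no serious obstacle in this argument; it is essentially a quantitative translation between the shape fluctuation exponent and the scalar passage-time fluctuation exponent, and the only delicate point is the propagation from an ``almost sure for large $t$'' event to a probability-tending-to-one bound for each fixed $n$, which is handled by the standard tail estimate above.
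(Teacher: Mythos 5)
Your proof is correct and supplies in full the argument that the paper delegates to the citation \cite[Theorem 2.8]{chatterjee19II}: assuming $\chi'<1/8$, the a.s.\ shape containment at exponent $\alpha \in (\chi', 1/8)$ traps the deterministic quantity $T(0,[nx])$ in a deterministic interval of length $O(n^\alpha) = o(n^{1/8-\delta})$ with probability tending to $1$, which directly violates Definition \ref{fluctuations_def} applied to the conclusion of Theorem \ref{fpp_thm_2}. The passage from the ``almost surely for all large $t$'' event to a bound holding for each fixed large $n$ via continuity of measure is handled correctly, as is the asymptotic comparison $t_+-t_+^\alpha>m_n>t_-+t_-^\alpha$.
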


\subsection{Corner growth model} \label{lpp_sec}

In its planar form, LPP is often called the \textit{corner growth model}.
It is similar to FPP, the main differences being that only directed paths are considered (i.e.~coordinates never decrease), and the passage time $T$ is defined by time-maximizing paths rather than minimizing ones.
Furthermore, by convention we place the weights on the vertices instead of the edges, but this difference is more technical than conceptual.
We will now make this setup precise.

Let $\Z^2_+$ denote the first quadrant of the square lattice, that is the set of all $v = (a,b) \in \Z^2$ with $a,b\geq 0$.
We will write the standard basis vectors as $\mathbf{e}_1 = (1,0)$ and $\mathbf{e}_2 = (0,1)$.
Let $(X_v)_{v\in\Z^2_+}$ be an i.i.d.~family of non-degenerate random variables; because of the directedness, no assumption of nonnegativity is needed.
A \textit{directed} path $\vec\gamma = (\gamma_0,\gamma_1,\dots,\gamma_n)$ is one in which each increment $\gamma_i - \gamma_{i-1}$ is equal to $\mathbf{e}_1$ or $\mathbf{e}_2$.
The \textit{passage time} of such a path is
\eq{
T(\vec\gamma) \coloneqq \sum_{i=1}^n X_{\gamma_i}.
}
Let $T(u,v)$ be the maximum passage time of a directed path from $u$ to $v$, called the \textit{(last) passage time},
\eq{
T(u,v) \coloneqq \sup\{T(\vec\gamma)\ |\ \gamma_0 = u, \gamma_n = v\}.
}
We will again refer to any path achieving this time as a \textit{(finite) geodesic}.
Once more $T$ satisfies a shape theorem under mild assumptions on $\LL_{X}$, which we will not discuss. 
For further background, the reader is directed to \cite{martin06,quastel-remenik14,rassoul18}.

The directed structure advantages this model because of correspondences with problems in queueing networks, interacting particle systems, combinatorics, and random matrices.
Remarkable progress has been made by leveraging these connections in specific cases, leading to rigorous proofs of order $n^{1/3}$ passage time fluctuations converging to Tracy--Widom distributions upon rescaling.
This has been successfully carried out by Johansson \cite{johansson00} when the $X_v$'s are geometrically or exponentially distributed, building on work of Baik, Deift, and Johansson \cite{baik-deift-johansson99} connected to a continuum version of LPP.
The results extend to point-to-line passage times \cite{borodin-ferrari-prahofer-sasamoto07}.
Purely probabilistic techniques for accessing fluctuation exponents appear in \cite{cator-groeneboom06, balazs-cator-seppalainen06}.
The fluctuation exponent of $1/3$ is also present in a model known as Brownian LPP, for which the connection to Tracy--Widom laws is more explicit \cite{oconnell03}.

Away from exactly solvable settings, Chatterjee \cite[Theorem 8.1]{chatterjee08} proves that when the vertex weights are Gaussian, the point-to-line passage time has variance at most $Cn/\log n$.
Graham \cite{graham12} extends this result to general dimensions, also discussing uniform and gamma distributions.
To our knowledge, no general lower bound on fluctuations has been written for LPP.
It is worth mentioning, however, that the results in \cite{newman-piza95} are also stated for \textit{directed} FPP.
It is natural to suspect that many of results mentioned for FPP could be naturally translated to the LPP setting.
Indeed, as we now discuss, Theorem \ref{fpp_thm} carries over with little modification.

Let $\vec{p}_{c,\,\mathrm{site}}(\Z^2)$ be the critical value of directed site percolation on $\Z^2$.
It is clear that $\vec{p}_{c,\,\mathrm{site}}(\Z^2)$ is at least as large as its undirected counterpart ${p}_{c,\,\mathrm{site}}(\Z^2)$, which in turn satisfies ${p}_{c,\,\mathrm{site}}(\Z^2) > p_\cc(\Z^2) = 1/2$ \cite{grimmett-stacey98}.
In the way of upper bounds, it is known from \cite{balister-bollobas-stacey94,liggett95} that $\vec{p}_{c,\,\mathrm{site}}(\Z^2) \leq 3/4$. 
%{\color{red} [Is any lower bound known? At least, positive?]}
% comment added
Let $S \coloneqq \esssup X_v \in (-\infty,\infty]$.
The assumption analogous to \eqref{fpp_assumption_1} or \eqref{fpp_assumption_2a} is
\eeq{ \label{lpp_assumption}
\P(X_v = S) < \vec{p}_{c,\,\mathrm{site}}(\Z^2).
}

\begin{thm} \label{lpp_thm}
Assume \eqref{lpp_assumption}.
Let $v_n$ be any sequence in $\Z^2_+$ such that $\|v_n\|_1 \geq n$ for every $n$.
Then the fluctuations of $T(0,v_n)$ are at least of order $\sqrt{\log n}$.
\end{thm}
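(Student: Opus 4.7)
The plan is to mirror the proof of Theorem \ref{fpp_thm}, performing the natural substitutions required to pass from undirected, edge-weighted FPP with minimization to directed, site-weighted LPP with maximization. The general machinery of Section \ref{fluctuations_sec} applies directly: I would construct a coupling of the vertex weights $(X_v)_{v \in \Z_+^2}$ with a perturbed configuration $(\wt X_v)$ using an independent $\mathrm{Bernoulli}(\eps)$ indicator $Y_v$ at each vertex in a suitable box $B_n$, replacing $X_v$ by $\max(X_v, X_v^{(1)}, \dots, X_v^{(m)})$ when $Y_v = 1$, for i.i.d.\ copies $X_v^{(j)}$ of $X_v$. This is the maximum branch of \eqref{our_coupling}, and it is the correct choice because $T(0,v_n)$ is monotone nondecreasing in each vertex weight; consequently $\wt T(0, v_n) \geq T(0, v_n)$ deterministically, and for any fixed directed geodesic $\vec\gamma^{*}$ of the original environment,
\eq{
\wt T(0, v_n) - T(0, v_n) \geq \sum_{v \in \vec\gamma^{*}} Y_v \max\bigl(0,\, \max_{1 \leq j \leq m} X_v^{(j)} - X_v\bigr).
}

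I would take $B_n$ to be a box of side $O(n)$ that, with high probability, contains every directed geodesic from $0$ to $v_n$; assumption \eqref{lpp_assumption} is what guarantees (via a corner-growth shape theorem together with standard transversal fluctuation bounds) that the limit shape has nonempty interior and that geodesics concentrate inside such a box. With this choice, Lemma \ref{tv_lemma} gives $d_{\tv}(\LL_X, \LL_{\wt X}) \leq C \eps n$. The essence of the proof is then a calibration of $\eps$ in Lemma \ref{fluctuation_lemma}: $\eps$ is chosen so that $d_{\tv}$ is bounded away from $1$, while the geodesic--conditional sum above admits, with positive probability, a lower bound of order $c_1 \sqrt{\log n}$. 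The combinatorial input needed for the second part of the calibration is that a uniformly positive density of vertices on any directed path from $0$ to $v_n$ satisfy $X_v < S$; at each such vertex the increment $\max(0,\, \max_{j} X_v^{(j)} - X_v)$ is strictly positive with a uniformly positive probability, so the right-hand side above dominates a sum of $\Omega(n)$ i.i.d.\ nonnegative contributions with non-degenerate law, to which standard concentration and anti-concentration estimates apply.

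The main obstacle, and the principal adaptation from the proof of Theorem \ref{fpp_thm}, is the switch from the undirected bond-percolation threshold $p_{\cc}(\Z^2) = 1/2$ to the directed site-percolation threshold $\vec{p}_{c,\,\mathrm{site}}(\Z^2)$. One must prove a directed-site analog of the Peierls/renormalization input used in FPP: under $\P(X_v = S) < \vec{p}_{c,\,\mathrm{site}}(\Z^2)$, and invoking the rigorous upper bound $\vec{p}_{c,\,\mathrm{site}}(\Z^2) \leq 3/4$, the set $\{v : X_v < S\}$ has a positive density along every directed crossing of a macroscopic block of $B_n$. Once this combinatorial statement is in hand, the probabilistic argument of Theorem \ref{fpp_thm} transfers with only cosmetic changes, and applying Lemma \ref{fluctuation_lemma} to the pair $(T(0,v_n), \wt T(0,v_n))$ with $b - a \leq c_1\sqrt{\log n}$ yields fluctuations of order at least $\sqrt{\log n}$ in the sense of Definition \ref{fluctuations_def}.
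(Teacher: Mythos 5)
Your overall architecture is right—the maximum coupling, the directed-site percolation lemma, Lemma \ref{tv_lemma}, and Lemma \ref{fluctuation_lemma} are all the correct ingredients, and your observation that directed paths to $v_n$ are automatically confined (so you don't really need transversal fluctuation bounds) is fine. But there is a genuine gap at the ``calibration'' step, which is where the $\sqrt{\log n}$ actually comes from, and your proposal as written cannot produce it.

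You take the perturbation probability $\eps$ to be \emph{constant} across the box $B_n$, which has $\Theta(n^2)$ vertices. Lemma \ref{tv_lemma} then gives $d_{\tv}(\LL_X,\LL_{\wt X}) \le C\eps n$, so to keep the total variation distance bounded away from $1$ you are forced to take $\eps = O(1/n)$. But then along a geodesic of $\Theta(n)$ vertices, the number of Bernoulli flips is $\mathrm{Binomial}(\Theta(n), O(1/n))$, with $O(1)$ mean. Each flip contributes an $O(1)$ increment, so the gap $\wt T - T$ you produce is $O(1)$ with positive probability, not $\Omega(\sqrt{\log n})$. The ``standard concentration and anti-concentration estimates'' you invoke cannot amplify an $O(1)$-mean sum to $\sqrt{\log n}$; the two sides of the calibration are incompatible at constant $\eps$.

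The fix, which is the actual content of the paper's Part~2, is to use a \emph{radially decaying} perturbation probability
\[
\eps_v = \frac{\alpha}{\|v\|_1 \sqrt{\log n}}.
\]
Then $\sum_v \eps_v^2 \asymp \frac{\alpha^2}{\log n}\sum_{i=1}^n \frac{O(i)}{i^2} = O(\alpha^2)$, so $d_{\tv} \le C\alpha$ is bounded uniformly in $n$; meanwhile, along a directed path of length $n$ starting at the origin, a dyadic decomposition over annuli $2^k \le \|v\|_1 < 2^{k+1}$ (using the positive-density lemma on each annulus) yields $\sum_i \eps_{v_i} W_{v_i} \gtrsim \frac{\alpha}{\sqrt{\log n}}\sum_{k}\frac{2^k}{2^{k+1}} = \Theta(\alpha\sqrt{\log n})$. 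This is precisely the balance that produces a $\sqrt{\log n}$-sized gap at $O(1)$ total variation cost. Without the $1/\|v\|_1$ decay and the dyadic accounting, the argument collapses to an $O(1)$ fluctuation bound.
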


In the case $v_n = n\mathbf{e}_1$, the passage time $T(0,n\mathbf{e}_1)$ is just the sum of $n$ i.i.d.~random variables and thus fluctuates on the scale of $n^{1/2}$.
The $n^{1/3}$ scaling should manifest when the two coordinates of $v_n$ are both of order $n$.
Interpolating between these two regimes, it is expected that if $v_n = (n,\floor{n^{a}})$ for $a \in (0,1)$, then $T(0,v_n)$ has fluctuations of order $n^{1/2-a/6}$. 
Such a result is proved, along with rescaled convergence to the GUE Tracy--Widom distribution, for $a < 3/7$ \cite{baik-suidan05,bodineau-martin05}.

\setcounter{footnote}{21}
\subsection{Directed polymers in $1+1$ dimensions} \label{dp_sec}

The model of directed polymers in random environment is a positive-temperature version of LPP.
That is, instead of examining only maximal paths, we consider the softer model of defining a Gibbs measure on paths, with those of greater passage time receiving a higher probability.
This is the same model as in Chapters \ref{endpoint} and \ref{replica}, although here specialized to the $1+1$-dimensional case.\footnote{In this $1+1$-dimensional case, instead of defining the model on $\{0,1,\dots,\}\times \Z$, it is customary to ``rotate by 45 degrees" to obtain the equivalent model on $\Z^2_+$.}

With $\Z^2_+$ as before, we again take $(X_v)_{v\in\Z^2_+}$ to be an i.i.d.~family of non-degenerate random variables, called the \textit{random environment}.
Let $\vec\Gamma_n$ denote the set of directed paths $\vec\gamma = (v_0,v_1,\dots,v_n)$ of length $n$ starting at the origin $v_0 = 0$.
Given an inverse temperature $\beta > 0$, define a Gibbs measure $\mu_n^\beta$ on $\vec\Gamma_n$ by
\eq{
\mu_n^\beta(\vec\gamma) \coloneqq \frac{\e^{\beta H_n(\vec\gamma)}}{Z_n(\beta)}, \qquad H_n(\vec\gamma) \coloneqq \sum_{i=1}^n X_{v_i}, \quad \vec\gamma \in \vec\Gamma_n, 
}
where now the object of interest is the \textit{partition function},
\eq{
Z_n(\beta) \coloneqq \sum_{\vec\gamma\in\vec\Gamma_n} \e^{\beta H_n(\vec\gamma)}.
}
Since $Z_n(\beta)$ grows exponentially in $n$, the proper linear quantity to consider is the \textit{free energy}, $\log Z_n(\beta)$.
Strictly speaking, the following result is not the exact analog of Theorems \ref{fpp_thm} and \ref{lpp_thm}, since we have not fixed the endpoint.
Nevertheless, the same argument goes through for point-to-point free energies (see \eqref{p2p}).

\begin{thm} \label{dp_thm}
Assume \eqref{lpp_assumption}.
Then the fluctuations of $\log Z_n(\beta)$ are at least of order $\sqrt{\log n}$ for any $\beta > 0$.
\end{thm}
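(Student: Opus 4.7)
The plan is to apply Lemma \ref{fluctuation_lemma} with $X = \log Z_n(\beta)$ and a carefully constructed coupled variable $Y = \log \wt Z_n(\beta)$, where $\wt Z_n(\beta)$ is the partition function computed in a perturbed environment $(\wt X_v)$. In direct parallel with the intended proof of Theorem \ref{lpp_thm}, the goal is to arrange the coupling so that $d_{\tv}(\LL_X,\LL_Y) \leq 1/4$ while $|X - Y| > c\sqrt{\log n}$ with probability at least $3/4$; Lemma \ref{fluctuation_lemma} then forces $\P(a \leq X \leq b) \leq 1 - c'$ for every interval $[a,b]$ of length $o(\sqrt{\log n})$, which is exactly what Definition \ref{fluctuations_def} demands.

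For the coupling, I would follow \eqref{our_coupling}. Fix an integer $m \geq 1$ and, for each vertex $v$ in the relevant region $R_n \coloneqq \{(a,b) \in \Z^2_+ : a+b \leq n\}$, let $X_v^{(1)},\ldots,X_v^{(m)}$ be independent copies of $X_v$ and set $X_v' \coloneqq \max(X_v, X_v^{(1)},\ldots,X_v^{(m)})$, so that the boost $X_v' - X_v$ is nonnegative and strictly positive with probability bounded below by a fixed constant (by non-degeneracy of the marginal). Introduce independent Bernoulli$(\eps_v)$ switches $U_v$, and let $\wt X_v = X_v'$ if $U_v = 1$ and $\wt X_v = X_v$ otherwise. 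Since $\log \wt Z_n(\beta)$ is a deterministic function of the environment $(\wt X_v)_{v \in R_n}$, the data-processing inequality for total variation together with Lemma \ref{tv_lemma} gives
\[
d_{\tv}(\LL_{\log Z_n},\LL_{\log \wt Z_n}) \leq d_{\tv}(\LL_{(X_v)},\LL_{(\wt X_v)}) \leq C\Big(\sum_{v \in R_n} \eps_v^2\Big)^{1/2},
\]
so the $\eps_v$ will be chosen so that this total variation is at most $1/4$.

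The decisive step is to produce the quantitative lower bound on $\log \wt Z_n - \log Z_n$. Because $\wt X_v \geq X_v$ pointwise, Jensen's inequality applied under the Gibbs measure $\mu_n^\beta$ yields
\[
\log \wt Z_n(\beta) - \log Z_n(\beta) \geq \beta \sum_{v \in R_n} (\wt X_v - X_v)\, \mu_n^\beta(\vec\gamma \ni v),
\]
so the problem reduces to showing that this nonnegative sum exceeds $c\sqrt{\log n}$ with probability at least $3/4$. The role of the assumption $\P(X_v = S) < \vec{p}_{c,\mathrm{site}}(\Z^2)$ is exactly as in Theorem \ref{lpp_thm}: it forces the saturating sites $\{v : X_v = S\}$ to form a subcritical oriented site percolation, which prevents the polymer from being pinned on a deterministic set and guarantees that many sites have genuine room to be boosted. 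To reach the $\sqrt{\log n}$ scale while keeping the TV budget bounded, I would allocate the $\eps_v$ non-uniformly across roughly $\log n$ dyadic shells of $R_n$, and then estimate the sum shell-by-shell using the identity $\sum_v \mu_n^\beta(\vec\gamma \ni v) = n+1$ together with a standard second-moment or FKG-type argument to promote an expectation bound into a tail bound.

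The principal obstacle is precisely this last step. Unlike in LPP, where a single geodesic carries all of the relevant information and one can argue deterministically along it, here $\log Z_n(\beta)$ is a genuinely soft maximum and the hitting probabilities $\mu_n^\beta(\vec\gamma \ni v)$ are themselves random and depend on the full environment. Producing a tail lower bound on $\sum_v (\wt X_v - X_v)\, \mu_n^\beta(\vec\gamma \ni v)$, without any \emph{a priori} regularity for these hitting probabilities, is the main technical difficulty; the subcriticality of the saturating set under \eqref{lpp_assumption}, combined with standard polymer concentration estimates for $\log Z_n$, should suffice to close this step, but this is where the bulk of the care is required.
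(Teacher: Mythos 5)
Your setup — the same coupling as Theorem~\ref{lpp_thm}, the total variation bound via Lemma~\ref{tv_lemma}, and Jensen's inequality under the Gibbs measure — matches the paper. You correctly arrive (in the site-wise form $\log\wt Z_n - \log Z_n \geq \beta\sum_v(\wt X_v - X_v)\mu_n^\beta(\vec\gamma\ni v)$, which by Fubini equals the path-wise form $\sum_{\vec\gamma}\frac{\e^{H_n(\vec\gamma)}}{Z_n}(\wt H_n(\vec\gamma)-H_n(\vec\gamma))$) at the right lower bound, and you correctly identify the sticking point: turning an expectation lower bound for this Gibbs average into a \emph{tail} lower bound. But your proposed fix (a second-moment or FKG argument on the random hitting probabilities $\mu_n^\beta(\vec\gamma\ni v)$) is not what closes the gap, and the step is left genuinely open in your writeup.

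The missing idea is a truncation, not a second moment. Define $D_{\vec\gamma} \coloneqq \frac{\theta}{2}\sqrt{\log n}$ if $\wt H_n(\vec\gamma)-H_n(\vec\gamma)\geq\frac{\theta}{2}\sqrt{\log n}$ and $D_{\vec\gamma}\coloneqq 0$ otherwise, and apply Jensen to $\e^{D_{\vec\gamma}}$ rather than to $\e^{\wt H_n(\vec\gamma)-H_n(\vec\gamma)}$ (the monotonicity $\wt H_n - H_n \geq D_{\vec\gamma}$ makes this legitimate). The crucial payoff is that
\[
W \coloneqq \sum_{\vec\gamma\in\vec\Gamma_n}\frac{\e^{H_n(\vec\gamma)}}{Z_n}\,D_{\vec\gamma}
\]
is now \emph{deterministically} bounded by $\frac{\theta}{2}\sqrt{\log n}$. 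The Gibbs weights $\e^{H_n(\vec\gamma)}/Z_n$ are $\sigma(X)$-measurable, while the conditional estimate from Part~1/Part~2 of the Theorem~\ref{lpp_thm} argument (namely \eqref{good_pathwise}: for \emph{every} $\vec\gamma\in\vec\Gamma_n$, on the event $G$, $\P(\wt H_n(\vec\gamma)-H_n(\vec\gamma)\geq\frac{\theta}{2}\sqrt{\log n}\mid\sigma(X,X^{(1)},\ldots,X^{(m)}))\geq\frac34$) bounds $\E(D_{\vec\gamma}\mid\sigma(X,X^{(1)},\ldots,X^{(m)}))$ below uniformly in $\vec\gamma$ on $G$. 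Taking expectations and using $\P(G)\geq3/4$ gives $\E W \geq \frac{9\theta}{32}\sqrt{\log n}$. A bounded nonnegative random variable with a mean this close to its maximum must exceed $\frac{\theta}{16}\sqrt{\log n}$ with probability at least $1/2$ (a reverse-Markov computation), and Lemma~\ref{fluctuation_lemma} finishes. Your non-truncated quantity $\sum_{\vec\gamma}\frac{\e^{H_n(\vec\gamma)}}{Z_n}(\wt H_n(\vec\gamma)-H_n(\vec\gamma))$ is unbounded, so no such elementary reverse-Markov step is available, and there is no need to control the random hitting probabilities $\mu_n^\beta(\vec\gamma\ni v)$ at all: the uniform-in-$\vec\gamma$ conditional probability bound on the event $G$ does all the work.
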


As in LPP, there are several exactly solvable models of $(1+1)$-dimensional directed polymers for which free energy fluctuations on the order of $n^{1/3}$ can be calculated, beginning with the inverse-gamma (or log-gamma) polymer introduced by Sepp{\"a}l{\"a}inen \cite{seppalainen12}.
There are now three other solvable models: the strict-weak polymer \cite{corwin-seppalainen-shen15,oconnell-ortmann15}, the Beta RWRE \cite{barraquand-corwin17}, and the inverse-beta polymer \cite{thiery-doussal15}.
Chaumont and Noack show in \cite{chaumont-noack18I} that these are the only possible models possessing a certain stationarity property, and in \cite{chaumont-noack18II} provide a unified approach to calculating their fluctuation exponents.
We also mention the positive temperature version of Brownian LPP, introduced by O'Connell and Yor \cite{oconnell-yor01}, for which order $n^{1/3}$ energy fluctuations have been established \cite{seppalainen-valko10,borodin-corwin14,borodin-corwin-ferrari14}.

For the general model considered here, the situation is much the same as for FPP.
In the way of upper bounds, Alexander and Zygouras \cite{alexander-zygouras13} prove exponential concentration of $\log Z_n(\beta) - \E \log Z_n(\beta)$ on the scale of $\sqrt{n/\log n}$, in analogy with works mentioned earlier \cite{benjamini-kalai-schramm03,benaim-rossignol08,damron-hanson-sosoe15,damron-hanson-sosoe14,chatterjee08,graham12}.
Their results hold in general dimensions and for a wide range of distributions.
As for lower bounds, Piza \cite{piza97} proves $\Var[\log Z_n(\beta)] \geq C\log n$ for non-positive weights with finite variance, as well as weaker versions of the shape theorem results from \cite{newman-piza95}.

Although Theorem \ref{dp_thm} does not even prove a positive fluctuation exponent, simply knowing that free energy fluctuations diverge may be significant in understanding the phenomenon of polymer localization.
One way of defining this phenomenon is to say the polymer measure is \textit{localized} if its endpoint distribution has atoms:
\eeq{ \label{localization}
\limsup_{n\to\infty} \max_{\|v\|_1=n} \mu_n^\beta(\gamma_n = v) > 0 \quad \mathrm{a.s.}
}
%{\color{red} [What is $p_n$ here? Did you mean to write $\gamma_n$?]}
% yes, thanks for catching
It is known \cite[Proposition 2.4]{comets-shiga-yoshida03} that \eqref{localization} occurs for any $\beta > 0$ in $1+1$ and $1+2$ dimensions, and for sufficiently large $\beta$ in higher dimensions, depending on the law of the $X_v$'s.
What is unclear, however, is whether the atoms or ``favorite endpoints" are typically close to one another or far apart.
From the solvable case \cite{seppalainen12}, there is evidence suggesting the former is true at least in $1+1$ dimensions \cite{comets-nguyen16}.
In general dimensions, the same was shown along random subsequences in Chapter~\ref{endpoint}. %\cite{bates18,bates-chatterjee20}.
These subsequences also exist for polymers on trees, but in that setting, the favorite sites more frequently appear far apart \cite{barral-rhodes-vargas12}; this behavior is thus difficult to rule out in high-dimensional lattices.
It is interesting, then, that for both polymers on trees and for high-temperature lattice polymers in dimensions $1+3$ and higher, the fluctuations of $\log Z_n^\beta$ are order $1$.
On the lattice, this fact is easy to deduce from a martingale argument; see \cite[Chapter 5]{comets17}.
For the tree case, see \cite[Section 5]{derrida-spohn88}.

\section{Proofs of main results} \label{proofs}

The proofs follow a general strategy, which we outline below.
For clarity, we will break each proof into two parts:
\begin{itemize}[leftmargin=0.75in]
\item[{\textbf{Part 1.}}] Use the coupling \eqref{our_coupling} with large enough $m$ to show that in all relevant paths, there is a high frequency of weights where $X'$ is far away from $X$.
\item[{\textbf{Part 2.}}] Show the same is true when $X'$ is replaced by $\wt X$ defined by \eqref{new_X}, provided we make good choices for $\eps$.
This step uses Part 1, as well as the independence of $Y$ from $X$ and $X'$.
Conclude that the passage time (or free energy) has, with positive probability independent of $n$, changed by an amount of the desired order.
\end{itemize}

\subsection{Proof of Theorems \ref{fpp_thm} and \ref{fpp_thm_1}}

Recall the notation
\eq{
s = \essinf X_e.
}
Before proceeding with the main argument, we begin with a lemma meant to guarantee that geodesics contain many edges with weights far from $s$.
Preempting a technical concern, we note that with probability $1$, geodesics do exist between all pairs of points in $\Z^2$ without any assumptions on the distribution of $X_e$ \cite{wierman-reh78}.
We will use the notation $B_n(x) \coloneqq \{y\in\Z^2 : \|x-y\|_1\leq n$\} and $\partial B_n(x) \coloneqq \{y \in \Z^2 : \|x-y\|_1 = n\}$ for $n\geq1$.

\begin{lemma} \label{fpp_lemma}
Given $\delta>0$ and $\rho \in (0,1)$, let $E_n^x$ be the event that there exists a geodesic $\gamma = (\gamma_0,\gamma_1,\dots,\gamma_N)$ from $x\in\Z^2$ to some $y\in \partial B_n(x)$ such that 
%{\color{red} [Is it important that $\gamma$ is a geodesic here? I feel that at least in case 1, the very existence of any such path should have low probability. If it's needed only for case 2, may be that should be clarified in a remark following the lemma.]}
% see Remark 4.2
\eeq{ \label{bad_proportion}
\#\{1 \leq i \leq N : X_{(\gamma_{i-1},\gamma_i)} \geq s + 2\delta\} < \rho n.
}
If \eqref{fpp_assumption_1} or \eqref{fpp_assumption_2} holds, then there are $\delta$ and $\rho$ sufficiently small that
\eeq{ \label{sum_n_bd_1}
\sum_{n = 1}^\infty \P(E_n^0) < \infty.
}
Furthermore, for some sequence $(n_k)_{k=1}^\infty$ satisfying $2^{k-1}< n_k\leq 2^{k}$,
\eeq{ \label{sum_n_bd_2}
\sum_{k=1}^\infty\sum_{\|x\|_1 = n_k} \P(E_{n_k}^x) < \infty.
}
\end{lemma}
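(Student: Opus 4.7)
The plan is to reduce everything to an auxiliary Bernoulli FPP in which each edge has weight $0$ when $X_e < s+2\delta$ (``cheap'') and weight $1$ otherwise (``expensive''). Writing $T_1$ for its passage time, the count in \eqref{bad_proportion} along any original-FPP geodesic $\gamma$ from $x$ to $y \in \partial B_n(x)$ equals $T_1(\gamma) \geq T_1(x, \partial B_n(x))$. Thus it suffices to show $\P(T_1(0, \partial B_n) < \rho n) \leq C e^{-c n}$ for suitable $\delta, \rho > 0$; \eqref{sum_n_bd_1} then follows immediately, and \eqref{sum_n_bd_2} follows from translation invariance combined with $|\partial B_{n_k}| \leq 4 n_k \leq 4 \cdot 2^k$, yielding $\sum_k 4 \cdot 2^k \cdot C e^{-c\cdot 2^{k-1}} < \infty$ upon choosing $n_k \in (2^{k-1}, 2^k]$.

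Under \eqref{fpp_assumption_1}, I would choose $\delta$ small enough that $\P(X_e < s+2\delta) < p_\cc(\Z^2) = 1/2$; this is possible because $\P(X_e < s) = 0$ together with $\P(X_e = s) < p_\cc(\Z^2)$ implies, by monotone convergence, that $\P(X_e < s+2\delta) \searrow \P(X_e=s)$ as $\delta\searrow 0$. The zero-weight edges of the auxiliary model then form subcritical undirected bond percolation, the auxiliary Bernoulli FPP has a positive time constant $\alpha > 0$, and the classical Kesten-type large-deviation estimate gives $\P(T_1(0,\partial B_n) < \alpha n) \leq C e^{-c n}$. Any $\rho < \alpha$ then suffices.

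Under \eqref{fpp_assumption_2}, the analogous choice of $\delta$ only yields $\P(X_e < s+2\delta) < \vec p_\cc(\Z^2)$, which may exceed $p_\cc(\Z^2)$, so the cheap edges need not be subcritical as undirected percolation and the above argument breaks down. Here I would use the additional hypotheses crucially: $s>0$ together with \eqref{fpp_assumption_2b} and the Cox--Durrett shape theorem in the Damron--Kubota form furnish a constant $K$ with $\P(T(0,\partial B_n) \leq K n) \geq 1 - C e^{-c n}$, so with high probability every geodesic has length $N \leq K n / s$. The plan from here is a block renormalization at a fixed large scale $L$: partition $\Z^2$ into $L\times L$ boxes, declare a box ``bad'' if it supports a long cheap subpath (with ``long'' defined via oriented spanning properties inside a suitably enlarged box), apply Liggett--Schonmann--Stacey stochastic domination to replace the $k$-dependent bad-box process by a subcritical independent site percolation --- possible because $\P(\text{cheap}) < \vec p_\cc$ forces oriented cheap-cluster diameters to have exponentially small tails --- and observe that along any geodesic of length $\leq K n/s$, successive expensive edges must be separated by $\ell^1$-displacement at most $O(L)$. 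Since the geodesic must achieve total displacement $n$, at least $\rho n$ expensive edges are required, with $\rho = \rho(L) > 0$ fixed independently of $n$.

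The hard part will be precisely this third paragraph --- translating oriented subcriticality of cheap edges into a constraint on long \emph{undirected} cheap subpaths within good blocks. When $\P(\text{cheap}) \in (p_\cc, \vec p_\cc)$, undirected cheap clusters can be supercritical, so merely forbidding oriented spanning is not enough: the ``bad box'' event must capture every way in which an undirected cheap path could cross a block while using only oriented-subcritical structures. Controlling this interplay between the oriented and undirected worlds, while keeping the $k$-dependence parameter for Liggett--Schonmann--Stacey small enough to reach a subcritical regime, is where the technical heart of the argument lies.
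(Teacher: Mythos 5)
Your Case 1 argument under \eqref{fpp_assumption_1} is correct and matches the paper's: reduce to the auxiliary Bernoulli FPP in which edges with $X_e < s+2\delta$ are cheap, note $\P(\text{cheap}) < p_\cc(\Z^2)$, and invoke Kesten's large-deviation estimate. (The paper phrases it via all self-avoiding paths of length $\geq n$ rather than the first passage time to $\partial B_n$, but this is cosmetic, since any geodesic to $\partial B_n$ has length $\geq n$.)

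Case 2 under \eqref{fpp_assumption_2} is where your proposal departs from the paper's and where it has genuine gaps. First, your claim $\P(T(0,\partial B_n) \leq Kn) \geq 1 - Ce^{-cn}$ is not available under \eqref{fpp_assumption_2b}: that moment condition only delivers a second-moment-type deviation bound (Ahlberg's theorem with $\alpha=2$), not exponential concentration. This matters downstream: with only polynomial decay of $\P(E_n^0)$, your naive derivation of \eqref{sum_n_bd_2} --- bounding $\sum_k |\partial B_{n_k}| \, \P(E_{n_k}^0)$ by $\sum_k 4 \cdot 2^k \cdot C e^{-c2^{k-1}}$ --- collapses. The paper instead uses a pigeonhole trick, taking $n_k = \argmin_{2^{k-1}<n\leq 2^k} \P(E_n^0)$ so that $n_k\P(E_{n_k}^0) \leq 2\sum_{n=2^{k-1}+1}^{2^k}\P(E_n^0)$; this yields \eqref{sum_n_bd_2} from \eqref{sum_n_bd_1} alone, with no need for exponential decay.

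Second, and more fundamentally, the block renormalization you propose is not clearly viable, and you yourself identify its unsolved core. When $\P(\text{cheap}) \in (p_\cc,\vec p_\cc)$, undirected cheap clusters are supercritical, so a bad box cannot simply be ``one supporting an oriented cheap crossing'': an undirected geodesic segment can traverse a block via a long winding cheap subpath that never spans it in the oriented sense, and the length constraint $N \leq Kn/s$ alone does not forbid cheap subpaths of length $\Omega(n)$ with small net displacement. The paper bypasses the oriented/undirected mismatch entirely. Define $\hat X_e \geq X_e$ by lifting weights in $[s+2\delta, s+2\delta+M)$ to $s+2\delta+M$, with $M$ chosen so a positive fraction of edges change; then Marchand's theorem gives $\nu(y) < \hat\nu(y)$ strictly, uniformly on $\|y\|_1=1$ by compactness. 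If a geodesic $\gamma$ from $0$ to $y\in\partial B_n(0)$ has fewer than $\rho n$ expensive edges, then $\hat T(0,y) \leq T(\gamma) + \rho n M = T(0,y) + \rho n M$, which for small $\rho$ forces one of $T(0,y)$, $\hat T(0,y)$ to deviate from its time constant by a positive fraction of $n$; Ahlberg's theorem makes those deviation probabilities summable. No renormalization, no Liggett--Schonmann--Stacey domination, and no exponential concentration are needed.
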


\begin{remark}
As will be seen in the proof, the restriction of Lemma \ref{fpp_lemma} to geodesics is only necessary when assuming \eqref{fpp_assumption_2} without \eqref{fpp_assumption_1}.
\end{remark}

We will need two results from the literature.
The first theorem below was originally established by van den Berg and Kesten \cite{vandenberg-kesten93} when $y = (1,0)$, and later generalized by Marchand \cite{marchand02}.

\begin{thm}[{Marchand \cite[Theorem 1.5(ii)]{marchand02}}] \label{marchand_thm}
Let $(X_e)_{e\in E(\Z^2)}$ and $(\hat X_e)_{e\in E(\Z^2)}$ be two i.i.d.\linebreak families of nonnegative random variables, such that $\hat X_e$ stochastically dominates $X_e$.
Let $\nu$ and $\hat \nu$ be the respective limiting norms, given by \eqref{mu_def}.
If $\P(X_e=s) < \vec{p}_\cc(\Z^2)$, then $\nu(y) < \hat \nu(y)$ for all $y \neq 0$.
\end{thm}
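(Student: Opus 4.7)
The plan is to follow the classical van den Berg--Kesten approach, with the sharpening due to Marchand that allows the critical parameter to be $\vec p_\cc(\Z^2)$ rather than $p_\cc(\Z^2)$. First I would introduce the monotone coupling $X_e = F^{-1}(U_e)$ and $\hat X_e = \hat F^{-1}(U_e)$, where $F$ and $\hat F$ are the distribution functions of $X_e$ and $\hat X_e$ respectively, and $(U_e)_{e\in E(\Z^2)}$ is an i.i.d.~family of $\mathrm{Uniform}[0,1]$ random variables. Stochastic domination yields $X_e \leq \hat X_e$ almost surely, hence $T \leq \hat T$ pointwise on every path and $\nu(y) \leq \hat\nu(y)$ by the subadditive ergodic theorem. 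The task is thus to upgrade the weak inequality to a strict one.

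The heart of the argument is to insert a third environment $\tilde X_e$ sandwiched between $X_e$ and $\hat X_e$, with the property that $\tilde X_e$ is obtained from $X_e$ by adding a deterministic positive increment $\eta$ on a random subset of edges whose density is uniformly controlled. Concretely, since $\hat X_e$ is not equal to $X_e$ almost surely, one can pick $\eta>0$, $\delta>0$, and a Borel set $B\subset[0,1]$ of positive Lebesgue measure such that $U_e \in B$ forces both $X_e \leq s + \delta$ and $\hat X_e \geq X_e + \eta$. Setting $\tilde X_e \coloneqq X_e + \eta\,\one_{\{U_e \in B\}}$ gives $\tilde X_e \leq \hat X_e$ almost surely, so $\tilde \nu(y) \leq \hat\nu(y)$ and it suffices to prove $\tilde\nu(y) > \nu(y)$. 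This reduction replaces the abstract comparison $X \leq \hat X$ by a concrete one in which the increment $\eta$ is fixed and the only randomness is the location of the improved edges.

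The remaining step, which is also the main obstacle, is to show that any $X$-geodesic $\gamma_n$ from $0$ to $[ny]$ must traverse at least $\rho n$ edges with $U_e\in B$, for some deterministic $\rho>0$, with probability tending to $1$. Naively one expects density $|B|$, but the geodesic is correlated with the $U_e$'s: it prefers edges with small $X_e$, and the $B$-edges have small $X_e$ as well, so the correlation runs in a favorable direction; nevertheless it must be quantified. The standard device is a renormalization argument: tile $\Z^2$ into disjoint $L\times L$ blocks and show that, with probability bounded below uniformly in $L$ large, every block encountered by the geodesic contains a ``forced crossing'' edge with $U_e \in B$ regardless of where the geodesic enters and exits. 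For the sharper critical value $\vec p_\cc(\Z^2)$ to appear, one exploits that the geodesic, while not directed, deviates from a straight-line macroscopic direction only by a sublinear amount; hence for $\|y\|$ large the geodesic locally behaves like a directed path, and the subcriticality $\P(X_e = s) < \vec p_\cc(\Z^2)$ rules out the existence of long $s$-valued oriented crossings on which the geodesic could avoid $B$-edges. The hard part is turning this heuristic into a quantitative block estimate: one has to simultaneously control the length of $\gamma_n$ (bounded above with high probability using a moment assumption or the BK inequality), ensure that crossings of blocks are in a ``nearly directed'' regime outside a prescribed cone, and exclude the scenario in which $\gamma_n$ concentrates in an atypical zero-density region. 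Executing this block analysis as in \cite{marchand02} then yields $\tilde \nu(y) - \nu(y) \geq c\eta \|y\|$ for some $c = c(\rho) > 0$, completing the proof.
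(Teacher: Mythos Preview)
The paper does not prove this theorem. It is quoted from the literature (attributed to Marchand, extending van den Berg--Kesten) and used as a black-box input in the proof of Lemma~\ref{fpp_lemma}; no argument for it appears anywhere in the text. So there is no paper proof to compare your proposal against.

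That said, your outline is a fair high-level description of the van den Berg--Kesten/Marchand strategy: couple monotonically, sandwich an intermediate environment that adds a fixed increment on a positive-density set of edges, and show that geodesics must traverse linearly many such edges. One caution: the statement as written only says ``stochastically dominates,'' and your reduction step (``since $\hat X_e$ is not equal to $X_e$ almost surely'') tacitly assumes the domination is strict, which is of course required for the strict conclusion $\nu(y) < \hat\nu(y)$; you should make that explicit. Also, your heuristic for why $\vec p_\cc(\Z^2)$ rather than $p_\cc(\Z^2)$ is the relevant threshold---that geodesics are ``locally directed''---is not quite how Marchand's argument proceeds; the directed percolation threshold enters through a more delicate analysis of flat pieces of the limit shape rather than through local directedness of geodesics. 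But since the paper simply cites the result, none of this is at issue here.
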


The next theorem demonstrates why \eqref{fpp_assumption_2b} is necessary when \eqref{fpp_assumption_1} is not assumed.
The version stated in \cite{ahlberg15} uses $\|\cdot\|_2$ in place of $\|\cdot\|_1$, but this makes no difference because all norms on $\R^2$ are equivalent.

\begin{thm}[{Ahlberg \cite[Theorem 1]{ahlberg15}}] \label{ahlberg_thm}
For every $\alpha, \eps>0$,
\eq{
\E\min(X^{(1)},X^{(2)},X^{(3)},X^{(4)})^\alpha<\infty \quad \iff \quad
\sum_{y\in\Z^2}\|y\|_1^{\alpha-2}\P(|T(0,y) - \nu(y)| > \eps\|y\|_1) < \infty,
}
where the $X^{(i)}$'s are independent copies of $X_e$.

\end{thm}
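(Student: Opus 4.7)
The plan is to establish the equivalence by proving the two implications separately. The reverse direction (summability implies finite $\alpha$-th moment of the four-copy minimum) is fairly direct and uses only the obvious geometric lower bound for $T(0,y)$. The forward direction (moment condition implies summability of deviation probabilities) is the technical core and requires quantitative concentration of $T(0,y)$ around $\nu(y)$ with tails governed only by polynomial moments; this is handled by a truncation argument whose truncation level is tuned to the exponent $\alpha$.

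For the easier direction, set $M \coloneqq \min(X^{(1)},X^{(2)},X^{(3)},X^{(4)})$ and, for each $y \in \Z^2$, let $M_y$ be the minimum weight among the four edges of $\Z^2$ incident to $y$; then $M_y$ has the law of $M$ and $T(0,y) \geq M_y$ for every $y \neq 0$, since any path into $y$ must traverse one of those four edges. Consequently
\[
\P(|T(0,y) - \nu(y)| > \eps \|y\|_1) \;\geq\; \P(M_y > \nu(y) + \eps \|y\|_1) \;\geq\; \P(M > C\|y\|_1),
\]
where $C = \sup_{y} (\nu(y)+\eps\|y\|_1)/\|y\|_1 < \infty$ by sublinearity of $\nu$. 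Grouping $y$ by $\|y\|_1 = n$ (which contributes $\Theta(n)$ lattice points) and summing,
\[
\sum_{y \in \Z^2} \|y\|_1^{\alpha-2} \P(|T(0,y) - \nu(y)| > \eps \|y\|_1) \;\geq\; c \sum_{n=1}^\infty n^{\alpha-1} \P(M > Cn),
\]
and the right-hand side is comparable to $\E M^\alpha$ by the layer-cake formula. Thus summability forces $\E M^\alpha < \infty$.

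For the forward direction, assume $\E M^\alpha < \infty$. Since $\{\|y\|_1 = n\}$ contains $O(n)$ vertices, it suffices to prove
\[
\sum_{n=1}^\infty n^{\alpha-1} \max_{\|y\|_1 = n} \P\big(|T(0,y) - \nu(y)| > \eps n\big) < \infty.
\]
Fix a truncation level $K_n$ (to be optimized) and define the truncated weights $\hat X_e \coloneqq X_e \wedge K_n$, with associated passage time $\hat T$ and time constant $\hat\nu$. I would then decompose the deviation into: (a) concentration of $\hat T(0,y)$ about its mean via Kesten-style martingale bounds, which yields stretched-exponential tails $\exp(-cn/K_n^2)$; (b) control of $\E \hat T(0,y) - \hat\nu(y)$ via Alexander's method, giving a polynomial error; (c) comparison between $T$ and $\hat T$ accomplished by noting that on a geodesic $\gamma$ for $T(0,y)$, replacing each heavy edge at a vertex $v$ by a short detour costs at most $M_v$ (four neighbors available), so that $T - \hat T \leq \sum_{v \in \gamma} M_v \mathbf{1}_{\{M_v > K_n\}}$, and hence $\E |T - \hat T|$ is bounded by $n\cdot \E[M \mathbf{1}_{M > K_n}]$, with an $L^\alpha$ variant via Markov.

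The main obstacle is the case $\alpha \leq 2$, where the polynomial moment alone is too weak to feed directly into Kesten's concentration without losing summability. The resolution, and the reason the minimum of \emph{four} copies rather than a single $X_e$ appears in the hypothesis, is that a geodesic's cost near a heavy edge is controlled by the local four-edge minimum at its endpoint, not by $X_e$ itself; balancing $K_n$ so that the truncated concentration beats $n^{-\alpha}$ while simultaneously $\E[M^\alpha \mathbf{1}_{M > K_n}]$ is summable against $n^{\alpha-1}$ is the delicate calculation. Once $K_n$ is chosen (roughly a slowly varying power of $n$), a Borel--Cantelli/union bound over $y$ at distance $n$ then delivers the claimed summability. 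This places the argument within the Hsu--Robbins--Erd\H{o}s framework of equivalences between moment conditions and complete convergence, now transferred from sums of i.i.d.\ variables to passage times on $\Z^2$.
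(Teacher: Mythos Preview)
The paper does not prove this statement; it is quoted verbatim as a result of Ahlberg \cite[Theorem 1]{ahlberg15} and used as a black box inside the proof of Lemma~\ref{fpp_lemma}. There is therefore no ``paper's own proof'' to compare against.

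As for your sketch itself: the reverse implication is correct and cleanly argued. For the forward implication you have identified the right architecture (truncation, Kesten-type concentration for the truncated model, Alexander-type nonrandom fluctuation bound, and a comparison step), and you correctly isolate the reason the four-copy minimum appears. But what you have written is a plan, not a proof: the comparison step as stated is imprecise (detouring around a heavy edge in $\Z^2$ uses three surrounding edges, not the four edges incident to a single vertex, so the bound $T-\hat T \le \sum_{v\in\gamma} M_v\mathbf{1}_{\{M_v>K_n\}}$ does not follow from the mechanism you describe), and the balancing of $K_n$ in the regime $\alpha\le 2$ is exactly where Ahlberg's argument does real work that you have only gestured at. If you intend to actually reprove the theorem rather than cite it, those two points need to be made rigorous.
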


\begin{proof}[Proof of Lemma \ref{fpp_lemma}]
We handle the cases of \eqref{fpp_assumption_1} and \eqref{fpp_assumption_2} separately.

\textbf{Case 1: Assuming \eqref{fpp_assumption_1}.}
Choose $\delta > 0$ small enough that $\P(X_e < s+ 2\delta) < p_\cc(\Z^2) = 1/2$.
Consider the first-passage percolation when each $X_e$ is replaced by
\eq{
\hat X_e \coloneqq 
\begin{cases}
0 &\text{if $X_{e} < s+2\delta$,} \\
1 &\text{ otherwise.}
\end{cases}
}
Let $\hat T$ be the associated passage time, so that $\hat T(x,y)$ is simply the minimum number of edges $e$ satisfying $X_e \geq s+2\delta$ in a path from $x$ to $y$.
By \cite[Theorem 1]{kesten80}, there exists $\rho$ small enough that with probability tending to $1$ exponentially quickly in $n$, every self-avoiding path $\gamma$ starting at the origin that has length at least $n$ --- not just those terminating at $\partial B_n(0)$ --- has $\hat T(\gamma) \geq \rho n$.
That is, $\P(E_n^0) \leq a\e^{-bn}$ for some $a,b>0$, which easily gives
\eq{
\sum_{n=1}^\infty n\P(E_n^0) < \infty.
}
In particular, \eqref{sum_n_bd_1} is true, and  \eqref{sum_n_bd_2} holds for any increasing sequence $n_k\to\infty$, since $|\partial B_n(0)| = 4n$ for every $n\geq1$.

\textbf{Case 2: Assuming \eqref{fpp_assumption_2}.}
Recall that \eqref{fpp_assumption_2b} implies the existence of the finite limit \eqref{mu_def} for every $x\in\R^2$.
By \eqref{fpp_assumption_2a}, we can choose $\delta > 0$ small enough that $\P(X_e < s+2\delta) < \vec{p}_\cc(\Z^2)$.
Next we choose $M$ large enough that $\P(s+2\delta \leq X_e < s+2\delta+M) \geq 1/4$, which is possible because of \eqref{oriented_bond_upper}.
%{\color{red} [This can be done only if $\vec{p}_\cc(\Z^2) < 1/4$, since $X_e$ can have a point mass at $s$ that is arbitrarily close to $\vec{p}_\cc(\Z^2)$. Is that true?]}.
% I don't quite understand your concern, but I have added (3.1) to address any concerns of rigor
Consider the first-passage percolation model where each $X_e$ is replaced by
\eq{
\hat X_e \coloneqq \begin{cases}
s+2\delta + M &\text{if } s+2\delta\leq X_e< s+2\delta+M, \\
X_e &\text{otherwise.}
\end{cases}
}
Let $\hat T$ and $\hat \nu$ be the associated passage time and limiting norm.
We also define
\eq{
\nu_\mathrm{min} &\coloneqq \min\{\nu(y) : y\in\R^2, \|y\|_1=1\},
}
which is positive because $s>0$, %$\P(X_e=0) = 0$ 
%{\color{red} [Did you mean to say $s>0$?]}, 
% essentially yes, we just need $\P(X_e) = 0 < 1/2$
and finite because of \eqref{fpp_assumption_2b}.
Because of our choice of $\delta$ and $M$, Theorem \ref{marchand_thm}
%{\color{red} [Is it possible to state the theorem somewhere?]}
% see Theorem 4.3
guarantees $\nu(y) < \hat \nu(y)$ for every nonzero $y \in \R^2$.
By compactness and continuity of $\nu$ and $\hat\nu$,
%{\color{red} [and continuity of $\mu$?]},
% indeed, I had just taken this as self-evident because \mu is a norm, but you are right: we should include it 
there is $\eps_1 > 0$ such that $\nu(y)(1 + \eps_1) < \hat \nu(y)(1-2\eps_1)$ for every $y$ with $\|y\|_1 = 1$.
By scaling, the same inequality holds for all $y\neq 0$.
Therefore, if we set $\eps_2 \coloneqq \eps_1\min(\nu_\mathrm{min},1)$,
then for all $y\in \partial B_n(0)$,
\eeq{ \label{scaling_ineq}
\nu(y)(1+\eps_1) + \eps_2 n
&< \hat\nu(y)(1-2\eps_1) + \eps_2 \|y\|_1 \\
&\leq \hat\nu(y)(1-2\eps_1) + \eps_1\nu(y) \\
&< \hat\nu(y)(1-2\eps_1) + \eps_1\hat \nu(y)
= \hat\nu(y)(1-\eps_1).
}
Finally, choose $\rho \in (0,1)$ such that $\rho M < \eps_2$.

Now consider any $y\in \partial B_n(0)$.
If there exists a geodesic $\gamma$ (with respect to $T$) from $0$ to $y$ such that \eqref{bad_proportion} holds, then $\gamma$ contains fewer than $\rho n$ edges $e$ such that $\hat X_e \neq X_e$.
Moreover, for each such edge, we have $\hat X_e\leq X_e + M$.
Therefore,
%{\color{red} [May be elaborate how you got the second inequality below]}
% does the above make it clear?
\eq{
\hat T(0,y) \leq \sum_{i=1}^N \hat X_{(\gamma_{i-1},\gamma_i)} \leq T(0,y) + n\rho M < T(0,y)+\eps_2 n.
}
But in light of \eqref{scaling_ineq},
\eq{
\{T(0,y) \leq \nu(y)(1+\eps_1)\}\cap\{\hat\nu(y)(1-\eps_1) \leq \hat T(0,y)\} \subset \{T(0,y) + \eps_2n \leq \hat T(0,y)\}.
}
From these observations, we see
\eeq{ \label{complement_containment}
E_n^0 \subset
\bigcup_{\|y\|_1=n} \{T(0,y) > \nu(y)(1+\eps_1)\}\cup\{\hat T(0,y) < \hat\nu(y)(1-\eps_1)\},
}
and hence 
%{\color{red} [How did you replace $\eps_1$ by $\eps_2$ below? Is $\mu_{\min}$ guaranteed to be $\le 1$?]}
% no, good catch.  I have changed the definition of $\eps_2$ to ensure $\eps_2 \leq \eps_1$
\eq{
\P(E_n^0) &\leq \sum_{\|y\|_1=n} \big[\P\big(T(0,y) - \nu(y) > \eps_1\nu(y)\big) + \P\big(\hat T(0,y) - \hat\nu(y) < -\eps_1\hat\nu(y)\big)\big] \\
&\leq \sum_{\|y\|_1=n} \big[\P\big(T(0,y) - \nu(y) > \eps_2\|y\|_1\big) + \P\big(\hat T(0,y) - \hat\nu(y) < -\eps_2\|y\|_1\big)\big].
}
By Theorem \ref{ahlberg_thm} with $\alpha=2$, 
%{\color{red} [Can the theorem be stated somewhere?]} 
% see Theorem 4.4
\eqref{fpp_assumption_2b} gives
\eq{
\sum_{y\in\Z^2} \big[\P\big(T(0,y) - \nu(y) > \eps_2\|y\|_1\big) + \P\big(\hat T(0,y) - \hat\nu(y) < -\eps_2\|y\|_1\big)\big] < \infty.
}
Now \eqref{sum_n_bd_1} follows from the previous two displays.
To conclude \eqref{sum_n_bd_2}, we take
\eq{
n_k \coloneqq \argmin_{2^{k-1}<n\leq2^k} \P(E_n^0).
}
Note that by translation invariance, $\P(E_n^x) = \P(E_n^0)$ for all $x\in\Z^2$.
Again using the fact that $|\partial B_n(0)| = 4n$ for all $n\geq1$, we have 
%{\color{red} [May be mention somewhere that $\P(E_n^x)$ is the same for all $x$ by translation invariance.]}
% now included
\eq{
\sum_{k=1}^\infty\sum_{\|x\|_1 = n_k} \P(E_{n_k}^x) = 4\sum_{k=1}^\infty n_k\P(E_{n_k}^0)
&\leq 8\sum_{k=1}^\infty2^{k-1}\P(E_{n_k}^0) \\
&\leq 8\sum_{k=1}^\infty \sum_{n=2^{k-1}+1}^{2^{k}} \P(E_n^0)
= 8\sum_{n=2}^\infty \P(E_n^0) < \infty.
}
\end{proof}

\begin{proof}[Proof of Theorems \ref{fpp_thm} and \ref{fpp_thm_1}] \hspace{1in}\\[-0.5\baselineskip]

\textbf{Part 1.} Let $T_n = T(0,y_n)$.
From Lemma \ref{fpp_lemma}, take $\delta > 0$, $\rho \in (0,1)$, and $(n_k)_{k=1}^\infty$ satisfying $2^{k-1}<n_k\leq2^k$, such that \eqref{sum_n_bd_2} holds.
Then choose $k_0$ large enough that
\eq{
\sum_{k=k_0}^{\infty} \sum_{\|x\|_1=n_k} \P(E_{n_k}^x) \leq \frac{1}{7}.
}
Define the event 
\eeq{ \label{G_0_def}
G_0 \coloneqq \bigcap_{k=k_0}^{\infty} \bigcap_{\|x\|_1=n_k} (E^x_{n_k})^\mathrm{c},
}
so that
\eeq{
\P(G_0) \geq \frac{6}{7}. \label{G_bd}
}
Finally, choose $m$ large enough that if $X_e^{(1)},\dots,X_e^{(m)}$ are independent copies of $X_e$, then 
\eeq{
\P(\min(X_e^{(1)},\dots,X_e^{(m)})\leq s+\delta) > 1 - \Big(\frac{1}{3}\Big)^{1/\rho}. \label{m_choice}
}
Throughout the rest of the proof, $C$ will denote a constant that may depend on $m$ and $\LL_{X}$, but nothing else.
Its value may change from line to line or within the same line.
To condense notation, we will also define 
\eeq{ \label{prime_definitions}
X_e' \coloneqq \min(X_e,X_e^{(1)},\dots,X_e^{(m)}), \qquad Z_e \coloneqq X_e - X_e', \qquad W_e \coloneqq 1 - \e^{-Z_e},
}
where $(X_e^{(j)})_{e\in E(\Z^2)}$, $1\leq j\leq m$, are independent copies of the i.i.d.~edge weights.

Given any realization of the percolation, the subgraph of $\Z^2$ induced by the geodesics between all pairs of points in $B_{2n}(0)$ is finite and connected.
Therefore, we can choose one of its spanning trees according to some arbitrary, deterministic rule.
From that tree we have a distinguished geodesic for each $x,y\in B_{2n}(0)$.
Moreover, if $x'$ and $y'$ lie along the geodesic from $x$ to $y$, then the distinguished geodesic from $x'$ to $y'$ is the relevant subpath. 
%{\color{red} [The claims made in this paragraph are not immediately obvious to me. Has anyone used this before, so that we can give a reference?]}
% I'm not sure what exactly to change

Given $c>0$ to be chosen later, consider the event $F_n$ that there exist $x\in  \partial B_n(0)$ and $y\in\partial B_{2n}(0)$ whose distinguished geodesic --- which we denote by its edges $(e_1,\dots,e_N)$ in a slight abuse of notation --- satisfies
\eeq{ \label{defining_F}
\sum_{i=1}^N W_{e_i} \leq cn.
}
For a given $x\in B_n(0)$, if $E_n^x$ does not occur, then any geodesic from $x$ to any $y\in\partial B_n(x)$ contains at least $\rho n$ edges satisfying $X_{e_i} \geq s + 2\delta$.
Furthermore, because $\|x\|_1=n$, every geodesic from $x$ to $\partial B_{2n}(0)$ must pass through $\partial B_n(x)$.
Therefore, if $E_n^x$ does not occur, then any geodesic from $x$ to $\partial B_{2n}(0)$ contains at least $\rho n$ edges satisfying $X_{e_i} \geq s + 2\delta$.

It will be convenient to define
\eq{
U_{e} \coloneqq \begin{cases}
1 &\text{if } \min(X_e^{(1)},\dots,X_e^{(m)}) \leq s + \delta, \\
0 &\text{otherwise.}
\end{cases}
}
The reason for doing so is that now the $U_e$'s are mutually independent and independent of $\sigma(X)$, the $\sigma$-algebra generated by the $X_e$'s.
In addition, if $(e_1,\dots,e_N)$ is the distinguished geodesic between some fixed $x\in\partial B_n(0)$ and $y\in\partial B_{2n}(0)$, then from the observation
\eq{
X_{e}\geq s+2\delta,\ U_{e} = 1 \quad \implies \quad Z_{e} \geq \delta \quad \implies \quad W_{e} \geq 1-\e^{-\delta},
}
we see
\eq{
\sum_{i=1}^N \one_{\{X_{e_i}\geq s+2\delta\}}\one_{\{U_{e_i}=1\}}(1-\e^{-\delta}) \leq \sum_{i=1}^N W_{e_i}.
}
By the discussion of the previous paragraph, if $E_n^x$ does not occur, then there is a subsequence $1\leq i_1<i_2<\cdots<i_{\ceil{\rho n}}\leq N$ such that $X_{e_{i_\ell}}\geq s+2\delta$ for each $\ell=1,\dots,\ceil{\rho n}$.
With this notation, we have
\eq{
\P\givenp[\bigg]{\sum_{i=1}^N W_{e_i} \leq cn}{\sigma(X)}\one_{(E_n^x)^\mathrm{c}}
&\leq \P\givenp[\bigg]{\sum_{i=1}^N \one_{\{X_{e_i}\geq s+2\delta\}}\one_{\{U_{e_i}=1\}}(1-\e^{-\delta})\leq cn}{\sigma(X)}\one_{(E_n^x)^\mathrm{c}} \\
&\leq \P\givenp[\bigg]{\sum_{\ell=1}^{\ceil{\rho n}} \one_{\{U_{e_{i_\ell}}=1\}}(1-\e^{-\delta})\leq cn}{\sigma(X)}\one_{(E_n^x)^\mathrm{c}} \\
&\leq \phi(t)^{\ceil{\rho n}}\exp\Big\{\frac{cnt}{1-\e^{-\delta}}\Big\} \quad \text{for any $t > 0$,}
}
where
\eq{
\phi(t) \coloneqq \E(\e^{-t U_e}) = \P(U_e=0) + \e^{-t}\P(U_e=1) 
\stackrel{\mbox{\footnotesize\eqref{m_choice}}}{<} \Big(\frac{1}{3}\Big)^{1/\rho} +  \e^{-t}\Big(1-\frac{1}{3^{1/\rho}}\Big). 
}
We can choose $t$ sufficiently large that $\phi(t)^\rho \leq 1/3$.
Then setting $c = (1-\e^{-\delta})t^{-1}$, we have 
\eq{
\P\givenp[\bigg]{\sum_{i=1}^N W_{e_i} \leq cn}{\sigma(X)}\one_{(E_n^x)^\mathrm{c}} \leq \frac{\e^n}{3^n}.
}
We now use this estimate to bound the conditional probability of the event $F_n$ defined via \eqref{defining_F}.
Since $|\partial B_n(0)| = 4n$ and $|\partial B_{2n}(0)| = 8n$, a union bound gives
\eeq{ \label{conditional_F_bd}
\P\givenp{F_n}{\sigma(X)}\one_{\{\bigcap_{\|x\|_1= n}(E_n^x)^\mathrm{c}\}} \leq \frac{32n^2\e^n}{3^n} \quad \text{for all $n\geq1$.}
}
Now we choose an even integer $k_1\geq k_0$ sufficiently large that
\eeq{ \label{sum_with_n_k}
32\sum_{k=k_1}^\infty \frac{n_k^2\e^{n_k}}{3^{n_k}} \leq \frac{1}{8},
}
and define the event
\eeq{ \label{Gdef}
G \coloneqq \bigcap_{k=k_1}^{\infty} F_{n_k}^\mathrm{c}.
} 
Recall the event $G_0 \in \sigma(X)$ defined in \eqref{G_0_def}.
The above discussion yields
\eq{ 
\P\givenp{G}{\sigma(X)}\one_{G_0} 
&\stackrel{\phantom{{\mbox{\footnotesize{\eqref{conditional_F_bd}}}}}}{\geq} \bigg(1-\sum_{k=k_1}^{\infty} \P\givenp{F_{n_k}}{\sigma(X)} \bigg)\one_{G_0} \\
&\stackrel{\phantom{{\mbox{\footnotesize{\eqref{conditional_F_bd}}}}}}{=} \bigg(1-\sum_{k=k_1}^{\infty} \P\givenp{F_{n_k}}{\sigma(X)} \bigg)\prod_{k=k_0}\one_{\{\bigcap_{\|x\|_1=n_k} (E_{n_k}^x)^\mathrm{c}\}} \\
&\stackrel{{\mbox{\footnotesize{\eqref{conditional_F_bd}}}}}{\geq} \bigg(1-32\sum_{k=k_1}^\infty \frac{n_k^{2}\e^{n_k}}{3^{n_k}}\bigg)\prod_{k=k_0}\one_{\{\bigcap_{\|x\|_1=n_k} (E_{n_k}^x)^\mathrm{c}\}} 
\stackrel{{\mbox{\footnotesize{\eqref{sum_with_n_k}}}}}{\geq} \frac{7}{8}\one_{G_0}.
}
It now follows from \eqref{G_bd} that
\eeq{ \label{Gprime_bd}
\P(G) \geq \frac{7}{8}\P(G_0) \geq \frac{3}{4}.
}
Having chosen $k_1$, we will assume $n$ satisfies
\eeq{ \label{n_vs_k}
\floor{(\log_2 n)/2} \geq k_1+1.
}

\textbf{Part 2.} For each edge $e$, let $\|e\|$ denote its distance from the origin, i.e.~the graph distance from $0$ to the closest endpoint of $e$.
For each $e$ with $\|e\| \leq n$, set
\eeq{ \label{edge_eps_def}
\eps_e \coloneqq \frac{\alpha}{(\|e\|+1)\sqrt{\log n}},
}
where $\alpha$ is a constant to be chosen below.
For each such $e$, define $\wt X_e$ as in \eqref{new_X} with $\eps = \eps_e$ and
$X_e'$ given in \eqref{prime_definitions}.
Let $\wt T_n = \wt T(0,y_n)$ be the passage time if $X_e$ is replaced by $\wt X_e$ whenever $\|e\|\leq n$.
Because there are at most $C(i+1)$ edges $e$ with $\|e\| = i$, we have
\eq{
\sum_{\|e\| \leq n} \eps_e^2 = \frac{\alpha^2}{\log n}\sum_{i=1}^n\sum_{\|e\|=i} \frac{1}{(i+1)^2}
\leq \frac{\alpha^2}{\log n} \sum_{i=1}^n \frac{C}{i+1} \leq C\alpha^2.
}
Hence, by Lemma \ref{tv_lemma},
\eq{
d_{\tv}(\LL_{T_n},\LL_{\wt T_n}) \leq C\alpha.
}
Choose $\alpha$ so that
\eeq{
d_{\tv}(\LL_{T_n},\LL_{\wt T_n}) \leq \frac{1}{4}.\label{FPP_final_1}
}
Now we aim to show that with sufficiently large probability,
%{\color{red} [non-vanishing is not enough, it has to be greater than $1/4$ for Lemma \ref{fluctuation_lemma} to be useful, since you have chosen $1/4$ above]}, 
% indeed, ``non-vanishing" was misleading, although the actual bound shown is sufficient
$T_n-\wt T_n$ is of order $\sqrt{\log n}$.
Let $e_1,\dots,e_N$ be a geodesic from $0$ to $y_n$, chosen according to same deterministic rule as before. 
Note that necessarily $N\geq \|y\|_1 \geq n$.
We will use the notation $e_i = (x_{i-1},x_i)$ to denote endpoints of $e_i$ in the order traversed by the geodesic.
For each $k = 1,\dots,\floor{(\log_2 n)/2}$, let $i_k$ be the first index such that $\|x_{i_k}\|_1 = n_{2k}$, where the $n_k$'s were chosen in Part 1 and satisfy $2^{k-1}<n_k\leq2^k$.
%{\color{red} [Remind what is $n_{2k}$.]} 
% now included
Observe that
\eeq{ \label{edge_distance_bd}
\|e_i\| \leq n_{2k}-1\leq 4^k-1 \quad \text{for every $i\leq i_{k}$}.
}
Furthermore, $(e_{{i_k}+1},\dots,e_{i_{k+1}})$ is a geodesic from $x_{i_k} \in B_{n_{2k}}(0)$ to $x_{i_{k+1}} \in B_{n_{2k+2}}(0)$, where $n_{2k+2} > 2^{2k+1} \geq 2n_k$.
Therefore, on the event $G$ defined in \eqref{Gdef},
%{\color{red} [Give reference to definition of $G$] [How do you get the first equality below? Probably it should be deleted.]} 
% the equality holds because subpaths of geodesics are geodesics, but yes, the equality is not directly needed and has thus been commented out
\eq{
\sum_{i=i_k+1}^{i_{k+1}} W_{e_i} %= T(x_{i_k},x_{i_{k+1}}) 
\geq cn_{2k} > c2^{2k-1} \quad \text{for all } k = k_1/2,\dots,\floor{(\log_2 n)/2}-1.
}
which implies
\eeq{ \label{G_consequence}
\sum_{i=1}^N \eps_{e_i}W_{e_i} 
\geq \sum_{i=i_{k_1/2}+1}^N \eps_{e_i}W_{e_i} 
&\stackrel{\phantom{\mbox{\footnotesize\eqref{edge_distance_bd}}}}{\geq} \sum_{k=k_1/2}^{\floor{(\log_2 n)/2}-1} \sum_{i=i_k+1}^{i_{k+1}} \eps_{e_i}W_{e_i} \\
&\stackrel{\mbox{\footnotesize\eqref{edge_distance_bd}}}{\geq} \sum_{k=k_1/2}^{\floor{(\log_2 n)/2}-1} \frac{\alpha}{4^{k+1}\sqrt{\log n}} \sum_{i=i_k+1}^{i_{k+1}} W_{e_i} \\
&\stackrel{\phantom{\mbox{\footnotesize\eqref{edge_distance_bd}}}}{\geq} \frac{\alpha}{\sqrt{\log n}}\sum_{k=k_1/2}^{\floor{(\log_2 n)/2}-1}\frac{c2^{2k-1}}{4^{k+1}} \\
&\stackrel{\phantom{\mbox{\footnotesize\eqref{edge_distance_bd}}}}{=} \frac{\alpha c(\floor{(\log_2 n)/2}-k_1/2)}{8\sqrt{\log n}}\\
&\stackrel{\mbox{\footnotesize\eqref{n_vs_k}}}{\geq} \frac{\alpha c\sqrt{\log n}}{16\log 2} \eqqcolon \theta\sqrt{\log n}. \raisetag{5\baselineskip}
}
Denote by $\sigma(X,X^{(1)},\dots,X^{(m)})$ the $\sigma$-algebra generated by the $X_e$'s and $X_e^{(j)}$'s, $1\leq j\leq m$.
Recall that each $\wt X_{e_i}$ is equal to $\min(X_{e_i},X^{(1)}_{e_i},\dots,X^{(m)}_{e_i}) = X_{e_i}-Z_{e_i}$ independently with probability $\eps_{e_i}$, and equal to $X_{e_i}$ otherwise.
In the former case, the value of $\wt T_n$ is lowered relative to $T_n$ by at least $Z_{e_i}$; in the latter case, no change occurs.
Therefore,
\eq{
T_n - \wt T_n\geq \sum_{i=1}^N \one_{\{Y_{e_i}=1\}}Z_{e_i} \eqqcolon D,
}
where the $Y_{e_i}$'s are Bernoulli($\eps_{e_i}$) variables independent of each other and of $X,X^{(1)},\dots, X^{(m)}$.
It follows that for any $t\geq0$,
\eq{
\P\givenp[\big]{D\leq t}{\sigma(X,X^{(1)},\dots,X^{(m)})}
&\leq \e^{t}\E\givenp[\big]{\e^{-D}}{\sigma(X,X^{(1)},\dots,X^{(m)})} \\
&= \e^{t}\prod_{i=1}^N (1-\eps_{e_i} + \eps_{e_i}\e^{-Z_{e_i}}) \\
&\leq \e^{t}\prod_{i=1}^N \exp\{-\eps_{e_i}(1-\e^{-Z_{e_i}})\} 
= \e^{t}\exp\bigg\{-\sum_{i=1}^N \eps_{e_i}W_{e_i}\bigg\}.
}
Therefore, on the event $G$, \eqref{G_consequence} shows
\eq{
\P\givenp[\Big]{D\leq\frac{\theta}{2}\sqrt{\log n}}{\sigma(X,X^{(1)},\dots,X^{(m)})}\one_G \leq \e^{-\frac{\theta}{2}\sqrt{\log n}}.
}
Assuming $n$ is large enough that
\eeq{ \label{n_easy}
\e^{-\frac{\theta}{2}\sqrt{\log n}} \leq \frac{1}{2},
}
we have
\eeq{ \label{single_path}
\P\givenp[\Big]{D>\frac{\theta}{2}\sqrt{\log n}}{\sigma(X,X^{(1)},\dots,X^{(m)})} \geq \frac{1}{2}\one_G,
}
and thus
\eeq{ \label{FPP_final_2}
\P\Big(T_n - \wt T_n > \frac{\theta}{2}\sqrt{\log n}\Big) \geq \P\Big(D>\frac{\theta}{2}\sqrt{\log n}\Big)\geq \frac{1}{2}\P(G)\stackrel{\mbox{\footnotesize{\eqref{Gprime_bd}}}}{\geq}\frac{3}{8}.
}
Using \eqref{FPP_final_1} and \eqref{FPP_final_2} in Lemma \ref{fluctuation_lemma}, we see that $T_n$ has fluctuations of order at least $\sqrt{\log n}$.
\end{proof}

\subsection{Proof of Theorem \ref{fpp_thm_2}}
Recall Definition \ref{curvature_def} for a direction of curvature, as well as the exponent $\chi'$ from \eqref{chi_prime_def}.

\textbf{Part 1.}
Fix any unit vector $x$ that is a direction of curvature for $\BB$, and fix any $\delta > 0$.
We will write $T_n = T(0,[nx])$, where $[y]$ denotes the unique element of $\Z^2$ such that $y\in[y]+[0,1)^d$.
%{\color{red} [Since you used $\lceil nx\rceil$ before, it's better to write either that or $\lfloor nx\rfloor$. Also applies to several instances below.]}
% the floor and ceil are for real numbers, whereas [ \cdot ] is for vectors.  I've added a clarification  above
Let $L$ be the line passing through $0$ and $x$, and let $\Lambda_n$ be the cylinder of width $n^{3/4+\delta}$ centered about~$L$:
\eq{
\Lambda_n \coloneqq \{z \in \Z^2: d(z,L) \leq n^{3/4+\delta}\},
}
where $d(z,L) = \inf\{\|z-y\|_2 : y\in L\}$.
Under the given assumptions, \cite[Theorem 1.2]{damron-kubota16} guarantees $\chi' \leq 1/2$.
It then follows from \cite[Theorem 6 and (2.21)]{newman-piza95} that there exists $q_0 \in (0,1]$ such that with probability at least $q_0$, the following event, which we call $G_1$, is true:
For all large $n$, all geodesics from the origin to $[nx]$ lie entirely inside~$\Lambda_n$.

We would like to replace $\Lambda_n$ with a finite set.
To do so, we let $L_n$ be the line segment connecting $0$ and $nx$, and then introduce
\eq{
V_n \coloneqq \{z \in \Z^2 : d(z,L_n) \leq n^{3/4+2\delta}\}.
}
Suppose toward a contradiction that $G_1$ occurs but there exists a geodesic from $0$ to $[nx]$ that remains inside $\Lambda_n$ but not $V_n$.
Observe that from any $z \in \Lambda_n \setminus V_n$, the closest point on $L_n$ is either $0$ or $[nx]$.
Consequently, it follows from our supposition that from one of the endpoints of $L_n$ (say $0$, for concreteness), there are points $z_1$ within distance $n^{3/4+\delta}$ and $z_2$ at distance at least $n^{3/4+2\delta}$, such that $T(0,z_1) \geq T(0,z_2)$; see Figure \ref{fpp_fig}.
%Furthermore, all points in $\Lambda_n$ have limiting direction $x$.
By the shape theorem, this inequality can only happen for finitely many $n$.
From this argument we conclude that with probability at least $q_0$, the following event, which we call $G_2$, is true:
For all large $n$, all geodesics from the origin to $[nx]$ lie entirely inside $V_n$.

\begin{figure}[h!]
\centering
\includegraphics[width=0.8\textwidth]{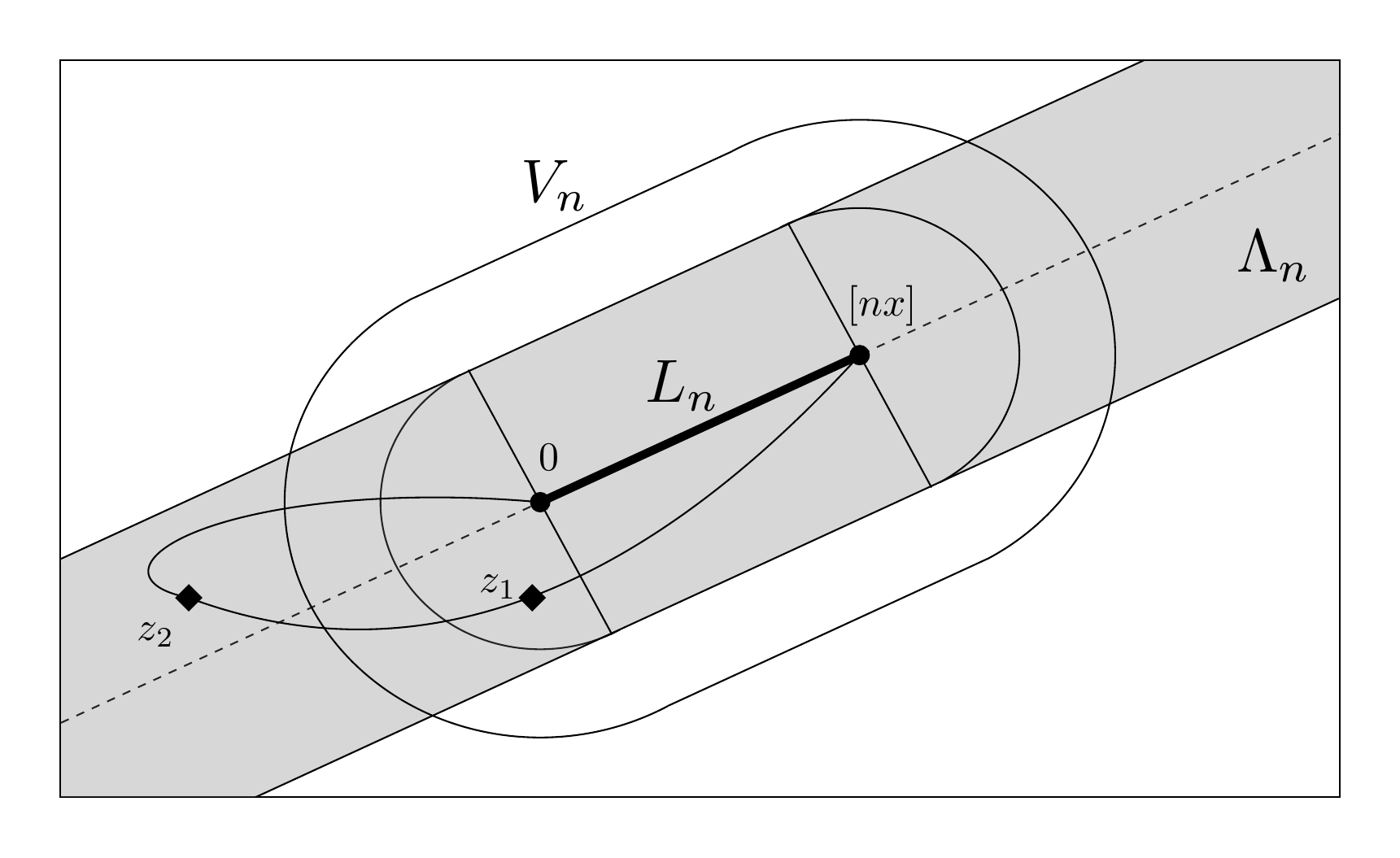}
\caption[Graphic accompanying the proof of Theorem \ref{fpp_thm_2}]{The geodesic connecting $0$ and $[nx]$ remains inside $\Lambda_n$ but exits and re-enters $V_n$.
The point $z_2$ is outside $V_n$ but has a shorter passage time to $0$ than does $z_1$, which is within distance $n^{3/4+\delta}$ of $0$.} \label{fpp_fig}
\end{figure}

Note that \eqref{fpp_assumption_2b} is implied by $\E(X_e^{1/2}) < \infty$ and thus also by $\E(X_e^\lambda)<\infty$ for $\lambda>3/2$.
%{\color{red} [But didn't you only assume $3/2$ moment in this theorem, whereas for \eqref{fpp_assumption_2b} you need something like second moment (I may be mistaken)?]} 
% check the argument below
%{\color{blue}
%To see this, we write
%\eq{
%\E\min(X_e^{(1)},X_e^{(2)},X_e^{(3)},X_e^{(4)})^2
%&= \int_0^\infty \P(\min(X_e^{(1)},X_e^{(2)},X_e^{(3)},X_e^{(4)})^2 > t)\ dt \\
%&= \int_0^\infty \P(X_e^2>t)^4\ dt \\
%&= \int_0^\infty (1 - F(\sqrt{t}))^4\ dt \\
%&= \int_0^\infty 2u(1-F(u))^4\ du,
%}
%where $F(t) \coloneqq \P(X_e\leq t)$ is the distribution function of $X_e$.
%Since $G(t) \coloneqq 1 - F(t)$ is nonnegative and non-increasing in $t$, the integral above converges if and only if
%\eq{
%\sum_{n=1}^\infty nG(n)^4 < \infty.
%}
%Assuming $\E(X_e^{1/2}) < \infty$, we have
%\eq{
%\int_0^\infty (1-F(t^2))\ dt = \int_0^\infty \frac{1}{2\sqrt{u}}(1-F(u))\ du < \infty,
%}
%which is equivalent to
%\eq{
%\sum_{n=1}^\infty \frac{1}{\sqrt{n}}G(n) < \infty.
%}
%Observe that if $(a_n)$ is a nonnegative and non-increasing sequence, then
%\eq{
%\sum_{n=1}^\infty na_n^2 = \sum_{k=1}^\infty \sum_{n=k}^\infty a_n^2 \leq \sum_{k=1}^\infty a_k \sum_{n=k}^\infty a_n \leq \sum_{k=1}^\infty a_k \sum_{n=1}^\infty a_n = \bigg(\sum_{n=1}^\infty a_n\bigg)^2.
%}
%Hence
%\eq{
%\sum_{n=1}^\infty \frac{1}{\sqrt{n}}G(n) < \infty \quad \implies \quad \sum_{n=1}^\infty G(n)^2 < \infty \quad \implies \quad \sum_{n=1}^\infty nG(n)^4 < \infty.
%}}
From Lemma \ref{fpp_lemma} we can find $\delta > 0$ and $\rho \in (0,1)$ such that \eqref{sum_n_bd_1} holds.
As in the proof of Theorem \ref{fpp_thm}, for each edge $e \in  E(\Z^2)$ we define 
\eq{
X_e' \coloneqq \min(X_e,X_e^{(1)},\dots,X_e^{(m)}), \qquad Z_e \coloneqq X_e-X_e', \qquad W_e \coloneqq 1-\e^{-Z_e}.
}
When considering geodesics between $0$ and $[nx]$, we always choose a distinguished geodesic \linebreak $(e_1,\dots,e_N)$ according some deterministic rule.
As in the proof of Theorem~\ref{fpp_thm}, we take $m$ large enough and $c>0$ small enough that 
\eq{
\P\givenp[\bigg]{\sum_{i=1}^N W_{e_i} \leq cn}{\sigma(X)}\one_{(E_n^0)^\mathrm{c}} \leq \frac{\e^n}{3^n} \quad \text{for all $n\geq1$.}
}
Let $F_n$ be the event that $\sum_{i=1}^N W_{e_i} \leq cn$ (here we have fixed the endpoints, and so this event is different from the $F_n$ considered in the proof of Theorem \ref{fpp_thm}). 
%{\color{red} [Wasn't $F_n$ a little more complicated earlier --- ``there exists $x$ and $y$'' etc? Since the endpoints are now fixed, may be you can tell the reader that this $F_n$ is different from the earlier $F_n$.]} 
% yes, indeed; comment added
By the above display and \eqref{sum_n_bd_1}, there is $n_0$ such that
\eeq{ \label{n_assumption_1}
\P(F_n) \leq \frac{q_0}{2} \quad \text{for all $n\geq n_0$}.
}

\textbf{Part 2.}
Now we set
\eq{
\eps \coloneqq \alpha n^{-7/8-\delta},
}
where $\alpha$ will be chosen below, and define the perturbed edge weights as in \eqref{new_X}:
For each edge $e$ with both endpoints in $V_n$, we let
\eq{
\wt X_e = \begin{cases}
X_e' &\text{if } Y_e = 1, \\
X_e &\text{if } Y_e = 0,
\end{cases}
\quad \text{where} \quad
Y_e \stackrel{\text{i.i.d.}}{\sim} \mathrm{Bernoulli}(\eps).
}
Denote by $\wt T_n$ be the passage time from $0$ to $[nx]$ if $X_e$ is replaced by $\wt X_e$ whenever $e$ has both endpoints in $V_n$.
Before proceeding, let us note that by Lemma \ref{tv_lemma},
\eq{
d_{\tv}(\LL_T,\LL_{\wt T}) \leq C\alpha n^{-7/8-\delta}\sqrt{\#(\text{edges in $V_n$})} \leq C\alpha n^{-7/8-\delta}\sqrt{Cn^{7/4+2\delta}} = C\alpha,
}
where $C$ depends only on $\LL_X$ and $m$.
We can then take $\alpha$ sufficiently small that
\eeq{ \label{alpha_choice_fluc}
d_{\tv}(\LL_T,\LL_{T'}) \leq \frac{q_0}{8}.
}
We will also assume
\eeq{ \label{n_assumption_2}
\frac{\alpha c}{2}n^{1/8-\delta} \geq -\log\Big(\frac{q_0}{4}\Big).
}
Let $(e_1,\dots,e_N)$ be the distinguished geodesic from $0$ to $[nx]$, which lies entirely inside $V_n$ for all large $n$ provided $G_2$ occurs.
In this case, as in the proof of Theorem~\ref{fpp_thm},
\eq{
T_n - \wt T_n\geq \sum_{i=1}^N \one_{\{Y_{e_i}=1\}}Z_{e_i} \eqqcolon D,
}
where the $Y_{e_i}$'s are i.i.d.~Bernoulli($\eps$) random variables that are independent of $\sigma(X,X^{(1)},\dots,X^{(m)})$.
So on the event $F_n^\mathrm{c} \cap G_2$, for any $t>0$,
\eq{
\P\givenp{D\leq tn^{1/8-\delta}}{\sigma(X,X^{(1)},\dots,X^{(m)})}\one_{F_n^\mathrm{c} \cap G_2}
&\leq \e^{tn^{1/8-\delta}}\E\givenp{\e^{-D}}{\sigma(X,X^{(1)},\dots,X^{(m)})}\one_{F_n^\mathrm{c} \cap G_2} \\
&= \one_{F_n^\mathrm{c} \cap G_2}\e^{tn^{1/8-\delta}}\prod_{i=1}^N (1-\eps + \eps \e^{-Z_{e_i}}) \\
&\leq \one_{F_n^\mathrm{c} \cap G_2}\e^{tn^{1/8-\delta}}\prod_{i=1}^N \exp\{-\eps(1-\e^{-Z_{e_i}})\} \\
&=\one_{F_n^\mathrm{c} \cap G_2}\e^{tn^{1/8-\delta}}\exp\bigg\{-\eps\sum_{i=1}^N W_{e_i}\bigg\} \\
&\leq \e^{tn^{1/8-\delta}-\alpha c n^{1/8-\delta}}.
}
Choosing $t = \alpha c/2$, we find that
\eq{
\P\Big(T_n - \wt T_n \leq \frac{\alpha c}{2} n^{1/8-\delta}\Big) &\stackrel{\phantom{\mbox{\footnotesize{\eqref{n_assumption_1},\eqref{n_assumption_2}}}}}{\leq} \P(F_n \cup G_2^\mathrm{c})+\e^{-\frac{\alpha c}{2} n^{1/8-\delta}} \\
&\stackrel{\mbox{\footnotesize{\eqref{n_assumption_1},\eqref{n_assumption_2}}}}{\leq} \frac{q_0}{2} + 1 - q_0 + \frac{q_0}{4}
= 1 - \frac{q_0}{4}.
}
Together with \eqref{alpha_choice_fluc} and Lemma \ref{fluctuation_lemma}, this completes the proof.

\subsection{Proof of Theorem \ref{lpp_thm}}  We begin with a lemma that will serve a similar purpose as Lemma \ref{fpp_lemma} did in the proof of Theorem \ref{fpp_thm}.

\begin{lemma} \label{lpp_lemma}
Consider directed site percolation on $\Z^2_+$ in which each site is open independently with probability $p < \vec p_{\cc,\,\mathrm{site}}(\Z^2)$.
Given $\rho > 0$, let $E_n$ be the event that exists a directed path $(v_0,v_1,\dots,v_n)$ with $\|v_0\|_1 \leq n$, such that
\eq{
\#\{1 \leq i \leq n : v_i \mathrm{\ closed}\} < \rho n.
}
Then there is $\rho$ sufficiently small that for some $a,b>0$,
\eq{
\P(E_n) \leq a\e^{-bn} \quad \text{for all $n\geq1$.}
}
\end{lemma}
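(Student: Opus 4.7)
The plan is to adapt Kesten's classical argument for subcritical bond percolation (the same theorem cited in Case 1 of Lemma \ref{fpp_lemma}) to the directed site setting, exploiting the fact that $p < \vec p_{\cc,\mathrm{site}}(\Z^2)$ places us in the subcritical regime. The first step is a union bound over the $O(n^2)$ possible starting points $v_0$ with $\|v_0\|_1 \leq n$: by translation invariance,
\begin{align*}
\P(E_n) \leq Cn^2\, \P(E_n^{(0)}),
\end{align*}
where $E_n^{(0)}$ denotes the analogous event restricted to paths starting at the origin. The polynomial prefactor $Cn^2$ will be absorbed into $a$ at the cost of slightly decreasing $b$, so it suffices to prove $\P(E_n^{(0)}) \leq a' \e^{-b'n}$ for constants $a', b' > 0$.

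For the main estimate, I would argue as follows. A directed path of length $n$ starting at the origin with fewer than $\rho n$ closed sites can be encoded by (i) the positions $S \subset \{1,\ldots,n\}$ of the closed sites (at most $\e^{nH(\rho)}$ choices via the binary entropy bound $H$), (ii) the incremental directions at the closed sites (at most $2^{\rho n}$ choices, since at non-closed sites the direction is forced once the endpoints of the adjacent open run are fixed), and (iii) the maximal open runs connecting successive closed sites, each of which must correspond to a genuine open directed path in the percolation. The van den Berg--Kesten inequality lets us bound the probability of the disjoint open-run events in (iii) by the product of their individual probabilities. Here the key analytic input is exponential decay of connectivity: for $p < \vec p_{\cc,\mathrm{site}}(\Z^2)$, there exist $C', c' > 0$ so that the probability of an open directed path of length $k$ from a fixed vertex is at most $C'\e^{-c'k}$. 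Since the open runs have total length at least $(1-\rho)n$, their product of probabilities is at most $\e^{-c'(1-\rho)n}$ up to polynomial corrections. Combining, one obtains
\begin{align*}
\P(E_n^{(0)}) \leq \exp\big(n[H(\rho) + \rho\log 2 - c'(1-\rho) + o(1)]\big),
\end{align*}
which decays exponentially provided $\rho$ is chosen small enough that $H(\rho) + \rho\log 2 < c'(1-\rho)$; since both $H(\rho)$ and $\rho\log 2$ tend to $0$ as $\rho \to 0$ while $c'(1-\rho) \to c' > 0$, this is achievable.

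The expected main obstacle is ensuring the correct invocation of exponential decay of connectivity in subcritical directed site percolation on $\Z^2_+$. This is the directed-site analog of the sharpness of the phase transition (Menshikov, Aizenman--Barsky, Aizenman--Newman) adapted to oriented models, as discussed in Durrett's survey \cite{durrett84}. While well-established in the directed bond setting and in the continuum, careful verification is needed for the specific formulation used here: directed site percolation on $\Z^2_+$ with exponential rate throughout the subcritical phase, up to the true critical threshold $\vec p_{\cc,\mathrm{site}}(\Z^2)$ rather than some strictly smaller quantity. This technical bookkeeping, rather than any genuinely new probabilistic content, constitutes the bulk of the work; once the exponential decay input is in hand, the rest of the argument proceeds by the routine entropy-versus-energy balance sketched above.
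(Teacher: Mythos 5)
Your argument takes a genuinely different route from the paper's. The paper fixes waypoints at every $k$-th step of the path (for a well-chosen integer $k$), encodes a path by its sequence of waypoints --- of which there are $(k+1)^n$ --- and for each fixed waypoint sequence compares to a binomial: the event that fewer than $n/2$ of the $n$ length-$k$ blocks contain a closed site. This entropy is then beaten by choosing $k$ large enough that the probability of a fully open length-$k$ directed path from a given vertex is small. Your argument instead encodes the path by the positions and exit directions of the closed sites and extracts the decay from the intervening open runs directly.

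There is, however, a gap in your encoding. You write that "at non-closed sites the direction is forced once the endpoints of the adjacent open run are fixed," yet those endpoints are not part of your combinatorial data and are not determined by it: a directed path of length $\ell$ from a fixed start can end at any of $\ell + 1$ vertices. Without pinning the endpoints, the event that each run corresponds to an open directed path does not factorize --- the start of run $j+1$ depends on where run $j$ ended --- and in fact only the first run's exponential decay is directly usable, which is far too weak. To repair the argument you must add the intermediate vertices (the closed sites themselves) to the encoding, incurring an extra entropy factor $\prod_j (\ell_j + 1) \leq (1 + n/m)^m$, roughly $\e^{\rho n \log(1/\rho)}$ when $m \approx \rho n$. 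Your final exponent formula omits this term. The conclusion still closes, since $\rho \log(1/\rho) \to 0$ as $\rho \to 0$, but the term is of the same order as $H(\rho)$ and cannot be ignored.

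Two smaller remarks. First, once the intermediate vertices are fixed, the open runs lie in disjoint annuli of $\Z^2_+$, so the run events are genuinely independent; the van den Berg--Kesten inequality is heavier machinery than needed. Second, the exponential decay of connectivity throughout the subcritical phase of directed site percolation on $\Z^2_+$, which you correctly flag as the key external input, is exactly what the paper imports from Griffeath and from Durrett--Liggett, so that concern is resolved rather than open.
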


\begin{proof}
First observe that by a union bound,
\eq{
\P(E_n) \leq \frac{(n+1)(n+2)}{2}\P(E_n^0),
}
where $E_n^0$ is the event that there exists a directed path of length $n$ starting at the origin and passing through fewer than $\rho n$ closed sites.
If we can prove $\P(E_n^0) \leq a\e^{-bn}$ for some $a,b>0$, then it will follow that $\P(E_n) \leq a'\e^{-b'n}$ for some $a',b'>0$.
Therefore, we henceforth concern ourselves only with the event $E_n^0$.

For a directed path $\vec\gamma = (\gamma(0),\gamma(1),\dots,\gamma(\ell))$, let $|\vec\gamma| = \ell$ denotes its length.
Let $A_k$ be the event that there exists an open directed path of length $k$ starting at the origin.
Since $p < \vec p_{\cc,\,\mathrm{site}}(\Z^2)$, \cite[Theorem 7]{griffeath80} (see also \cite[Theorem 14]{durrett-liggett81}) guarantees the existence of $c_1,c_2>0$ such that
\eq{
\P(A_k) \leq c_1\e^{-c_2k} \quad \text{for all $k\geq1$.}
}
Choose $k$ large enough that
\eeq{ \label{choosing_k}
\P(A_k) \leq \frac{1}{36(k+1)^2},
}
and then set $\rho \coloneqq 1/(4k)$. 
Let $F_n$ be the event that some directed path of length $nk$ starting at the origin passes through fewer than $n/2$ closed sites.
Since $\rho(n+1)k = (n+1)/4 \leq n/2$ for any $n\geq 1$, we have the following containments for $n\geq1$ and $0\leq j<k$:
\eq{
E_{nk+j}^0 &= \{\exists\ \vec\gamma,\ \vec\gamma(0)=0,\ |\vec\gamma| = nk+j, \text{ with fewer than $\rho(nk+j)$ closed sites}\} \\
&\subset \{\exists\ \vec\gamma,\ \vec\gamma(0)=0,\ |\vec\gamma| = nk, \text{ with fewer than $\rho(n+1)k$ closed sites}\} \\
&\subset \{\exists\ \vec\gamma,\ \vec\gamma(0)=0,\ |\vec\gamma| = nk, \text{ with fewer than $n/2$ closed sites}\} = F_n.
}
It suffices, then, to obtain a bound of the form $\P(F_n) \leq a\e^{-bn}$.
The remainder of the proof is to achieve such an estimate.

Consider the set
\eq{
\Lambda_n \coloneqq \{\vc w = (w_0=0,w_1,\cdots,w_n)\ |\ \forall\ i=1,\dots,n,\ \exists\ \vec \gamma : w_{i-1}\to w_i \text{ with } |\vec\gamma| = k\}.
}
In words, $\Lambda_n$ is the set of all $(n+1)$-tuples whose $i^\text{th}$ coordinate is $ik$ steps from the origin, and for which there exists a directed path passing through all its coordinates.
Since a directed path of length $\ell$ starting at a fixed position must terminate at one of exactly $\ell+1$ vertices, the cardinality of $\Lambda_n$ is
\eeq{ \label{counting_Lambda}
|\Lambda_n| = (k+1)^n.
}
Recall that $\vec\Gamma_{nk}$ denotes the set of directed paths of length $nk$ starting at the origin.
For each $\vc w \in \Lambda_n$, let $\vec\Gamma_{\vc w}$ denote the subset of those paths traversing the coordinates of $\vc w$:
\eq{
\vec\Gamma_{\vc w} \coloneqq \{\vec\gamma \in \vec\Gamma_{nk} : \vec\gamma(ik) = w_i, 1\leq i\leq n\}.
}
From the definitions, we have $\vec\Gamma_{nk} = \bigcup_{\vc w \in \Lambda_n} \vec\Gamma_{\vc w}$.
Moreover, if we define $F_{\vc w}$ to be the event that some $\vec\gamma\in\vec\Gamma_{\vc w}$ has fewer than $n/2$ closed sites, then
\eeq{ \label{event_partition}
F_n = \bigcup_{\vc w \in\Lambda_n} F_{\vc w}.
}
Fix any $\vc w \in \Gamma_n$.
For $1\leq i\leq n$, let $X_i$ denote the minimum number of closed sites in a directed path of length $k$ starting at $w_{i-1}$.
It is immediate from translation invariance that $\P(X_i \geq 1) = 1-\P(A_k)$.
We thus have the estimate
\eq{
\P(F_{\vc w}) &\leq \P(X_1 + \cdots + X_n \leq n/2) \\
&\leq \P(\one_{\{X_1 \geq 1\}} + \cdots + \one_{\{X_n \geq 1\}} \leq n/2) \\
&= \sum_{i = 0}^{\floor{n/2}} {n \choose i} \big(1-\P(A_k)\big)^{i}\P(A_k)^{n-i} \\
&\leq \frac{n}{2} {n \choose \floor{n/2}} \P(A_k)^{n/2} \leq C\sqrt{\frac{n}{2\pi}}\Big(2\sqrt{\P(A_k)}\Big)^n,
}
where the final inequality holds for some $C>0$ by Stirling's approximation.
It now follows from \eqref{counting_Lambda}, \eqref{event_partition}, and \eqref{choosing_k} that
\eq{
\P(F_n) \leq C\sqrt{\frac{n}{2\pi}}\Big(2(k+1)\sqrt{\P(A_k)}\Big)^n \leq C\sqrt{\frac{n}{2\pi}}3^{-n} \leq a2^{-n}
}
for some $a>0$. 
%{\color{red} [This is a nice argument!]}
% thanks!  I liked it too.
\end{proof}

\begin{proof}[Proof of Theorem \ref{lpp_thm}] \hspace{1in}\\[-0.5\baselineskip]

\textbf{Part 1.}
For each $v \in \Z^2_+\setminus\{0\}$, define
\eq{
X_v' \coloneqq \max(X_v,X_v^{(1)},\dots,X_v^{(m)}), \qquad Z_v \coloneqq X_v' - X_v, \qquad W_v \coloneqq 1 - \e^{-Z_v},
}
where $m$ is chosen below, and $(X_v^{(j)})_{v\in\Z^2_+}$, $1\leq j\leq m$ are independent copies of the i.i.d.~vertex weights.
Recall that $S = \esssup X_v$.
If $S = \infty$, take $\delta = 1$ and choose $S'$ sufficiently large that
\eq{ 
\P(X_v \geq S' - 2\delta) < \vec p_{\cc,\,\mathrm{site}}(\Z^2).
}
If $S < \infty$, set $S' = S$ and choose $\delta > 0$ sufficiently small that the above display holds.
In either case, we can find $m$ sufficiently large that
\eq{
\P(\max(X_v^{(1)},\dots,X_v^{(m)}) < S'-\delta) < \vec p_{\cc,\,\mathrm{site}}(\Z^2) - \P(X_v \geq S' - 2\delta),
}
so that
\eq{
\P(Z_v < \delta)
&\leq \P(\max(X_v^{(1)},\dots,X_v^{(m)})< S'-\delta) + \P(X_v \geq S-2\delta) < \vec p_{\cc,\,\mathrm{site}}(\Z^2).
}
By Lemma \ref{lpp_lemma}, there is $\rho \in (0,1)$ and $a,b>0$ so that with probability at least $1 - a\e^{-b2^k}$, every directed path $(v_0,v_1,\dots,v_{2^k})$ of length $2^k$ with $\|v_0\|_1 = 2^k$ satisfies
\eq{
\sum_{i=1}^{2^k} W_{v_i} \geq \rho (1-\e^{-\delta})2^k.
}
Let $G$ be the event that this is the case for every $k \geq k_1$, where $k_1$ is chosen large enough that
\eeq{ \label{Gprime_bd_lpp}
\P(G) \geq 3/4.
}
We will assume $n$ is large enough to satisfy \eqref{n_vs_k}.

\textbf{Part 2.}
Similarly to \eqref{edge_eps_def}, we will take
\eq{
\eps_v \coloneqq \frac{\alpha}{\|v\|_1\sqrt{\log n}}, \quad v \in \Z^2_+ \setminus \{0\},
}
and define $\wt X_v$ as in \eqref{new_X} with $\eps = \eps_v$.
Let $T_n = T(0,y_n)$ be the passage time with the $X_v$'s as the vertex weights, and let $\wt T_n = \wt T(0,y_n)$ be the passage time with the $\wt X_v$'s.
The constant $\alpha > 0$ is taken small enough that \eqref{FPP_final_1} holds.

On the event $G$, every directed path $(0=v_0,v_1,\dots,v_n)$ of length $n$ satisfies
\eeq{ \label{Gprime_consequence}
\sum_{i=1}^n \eps_{v_i}W_{v_i} 
\geq \sum_{i=2^{k_1}}^n \eps_{v_i}W_{v_i} 
&\stackrel{\phantom{\mbox{\footnotesize\eqref{n_vs_k}}}}{\geq} \sum_{k=k_1}^{\floor{\log_2 n}-1} \sum_{i=2^{k}+1}^{2^{k+1}} \eps_{v_i}W_{v_i} \\
&\stackrel{\phantom{\mbox{\footnotesize\eqref{n_vs_k}}}}{\geq} \sum_{k=k_1}^{\floor{\log_2 n}-1} \frac{\alpha}{2^{k+1}\sqrt{\log n}} \sum_{i=2^{k}+1}^{2^{k+1}} W_{v_i} \\
&\stackrel{\phantom{\mbox{\footnotesize\eqref{n_vs_k}}}}{\geq} \frac{\alpha}{\sqrt{\log n}}\sum_{k=k_1}^{\floor{\log_2 n}-1}\frac{\rho(1-\e^{-\delta})2^k}{2^{k+1}} \\
&\stackrel{\phantom{\mbox{\footnotesize\eqref{n_vs_k}}}}{=} \frac{\alpha \rho(1-\e^{-\delta})(\floor{\log_2 n}-k_1)}{2\sqrt{\log n}} \\
&\stackrel{\mbox{\footnotesize\eqref{n_vs_k}}}{\geq} \frac{\alpha \rho(1-\e^{-\delta})\sqrt{\log n}}{4\log 2} \eqqcolon \theta\sqrt{\log n}. \raisetag{5\baselineskip}
}
The argument is now completed by proceeding exactly as in the proof of Theorem \ref{fpp_thm} following \eqref{G_consequence}, where \eqref{Gprime_bd} and \eqref{G_consequence} are replaced by \eqref{Gprime_bd_lpp} and \eqref{Gprime_consequence}, respectively.
\end{proof}

\subsection{Proof of Theorem \ref{dp_thm}}
We will absorb the inverse temperature $\beta$ into the $X_v$'s and then work in the case $\beta = 1$.
Let the notation be as in the proof of Theorem \ref{lpp_thm}.
In addition, let $\wt H_n$ and $\wt Z_n$ be the Hamiltonian and partition function, respectively, in the environment formed by the $\wt X_v$'s.
Now \eqref{FPP_final_1} reads as
\eeq{
d_{\tv}(\LL_{\log Z_n},\LL_{\log \wt Z_n}) \leq \frac{1}{4}.\label{dp_final_1}
}
We repeat all steps of the proof of Theorem \ref{lpp_thm} and take $n$ sufficiently large that on the event $G$ defined therein, 
%{\color{red} [remind what is $G$ or refer to the definition]} 
% is "defined therein" sufficient?
\eeq{ \label{good_pathwise}
\P\givenp[\Big]{\wt H_n(\vec\gamma)-H_n(\vec\gamma) \geq \frac{\theta}{2}\sqrt{\log n}}{\sigma(X,X^{(1)},\dots,X^{(m)})} \geq \frac{3}{4}\one_G
}
for \textit{every} $\vec\gamma\in\vec\Gamma_n$.
(This is in analogy with \eqref{single_path}, but for $n$ satisfying a more restrictive lower bound than \eqref{n_easy}.)
The remainder of the argument must be slightly modified to account for the fact that all paths contribute to the free energy, not just those with maximum weight.

For each $\vec\gamma\in\Gamma_n$, define
\eq{
D_{\vec\gamma} \coloneqq \begin{cases} 
\frac{\theta}{2}\sqrt{\log n} &\text{if } \wt H_n(\vec\gamma)-H_n(\vec\gamma) \geq \frac{\theta}{2}\sqrt{\log n}, \\
0 &\text{otherwise}.
\end{cases}
}
From Jensen's inequality, It is immediate that
\eq{
\log \wt Z_n - \log Z_n = \log \sum_{\vec\gamma\in\vec\Gamma_n} \frac{\e^{H_n(\vec\gamma)}}{Z_n} \e^{\wt H_n(\vec\gamma)-H_n(\vec\gamma)}
&\geq \log \sum_{\vec\gamma\in\vec\Gamma_n} \frac{\e^{H_n(\vec\gamma)}}{Z_n} \e^{D_{\vec\gamma}} 
\geq \sum_{\vec\gamma\in\vec\Gamma_n} \frac{\e^{H_n(\vec\gamma)}}{Z_n} D_{\vec\gamma}.
}
On one hand,
\eeq{ \label{lower_bound_expectation}
\E\bigg[\sum_{\vec\gamma\in\vec\Gamma_n} \frac{\e^{H_n(\vec\gamma)}}{Z_n} D_{\vec\gamma}\bigg] 
&\stackrel{\phantom{\mbox{\footnotesize\eqref{good_pathwise}}}}{=} \E\bigg[\sum_{\vec\gamma\in\vec\Gamma_n} \frac{\e^{H_n(\vec\gamma)}}{Z_n} \E\givenp[\big]{D_{\vec\gamma}}{\sigma(X,X^{(1)},\dots,X^{(m)})}\bigg] \\
&\stackrel{\mbox{\footnotesize\eqref{good_pathwise}}}{\geq} \E\bigg[\sum_{\vec\gamma\in\vec\Gamma_n} \frac{\e^{H_n(\vec\gamma)}}{Z_n} \one_G\frac{3\theta}{8}\sqrt{\log n}\bigg] \\
&\stackrel{\phantom{\mbox{\footnotesize\eqref{good_pathwise}}}}{=} \frac{3\theta}{8}\sqrt{\log n}\, \P(G) \stackrel{\mbox{\footnotesize{\eqref{Gprime_bd_lpp}}}}{\geq} \frac{9\theta}{32}\sqrt{\log n}.
}
On the other hand, we have the deterministic upper bound
\eq{
\sum_{\vec\gamma\in\vec\Gamma_n} \frac{\e^{H_n(\vec\gamma)}}{Z_n} D_{\vec\gamma} \leq \frac{\theta}{2}\sqrt{\log n}.
}
Therefore, the lower bound \eqref{lower_bound_expectation} can only hold if
\eq{
\P\Big(\log \wt Z_n - \log Z_n \geq \frac{\theta}{16}\sqrt{\log n}\Big) 
&\geq \P\bigg(\sum_{\vec\gamma\in\vec\Gamma_n} \frac{\e^{H_n(\vec\gamma)}}{Z_n} D_{\vec\gamma} \geq \frac{\theta}{16}\sqrt{\log n}\bigg) 
\geq \frac{1}{2}.
}
Together with \eqref{dp_final_1} and Lemma \ref{fluctuation_lemma}, this completes the proof.

    \appendix
    \chapter[MATLAB code for directed polymers]{MATLAB code for directed polymers in random environment}
    
The following MATLAB script will perform the following (for the relevant notation, see Chapters \ref{endpoint} and \ref{replica}):

\begin{enumerate}

\item Sample a $(1+2)$-dimensional random environment up to a specified time $N$; the code below will create an environment of i.i.d.~standard normal random variables, while options for Bernoulli and uniform environments are commented out.

\item Generate the $i^\text{th}$-point distribution $\mu_N^\beta(\sigma_i = \cdot)$ for all $i=1,2,\dots,N$, for a specified value of $\beta$ and reference walk $P = $ SRW.

\item Compute the expected overlap $\langle R_{1,2}\rangle_\beta$ of two paths sampled independently from $\mu_N^\beta$.

\item Create a AVI video file---currently titled \texttt{endpoint\_video}---showing the evolution of the endpoint distribution $\mu_i^\beta(\sigma_i = \cdot)$ as $i$ varies over $1,2,\dots,N$ (a Markov chain).
For comparison purposes, the evolution is displayed alongside the (deterministic) distribution for the location of a simple random walk after $i$ steps.

\item Create a AVI video file---currently titled \texttt{ipoint\_video}---showing the evolution of the $i^\text{th}$-point distribution $\mu_N^\beta(\sigma_i = \cdot)$ as $i$ varies over $1,2,\dots,N$.
For comparison purposes, the evolution is again displayed alongside the distribution for the location of a simple random walk after $i$ steps.

\end{enumerate}

Note that the simulation can be rerun using the same environment but a different inverse temperature by setting \texttt{beta} equal to a different value, and then running the script from the third section onward (beginning at ``Determine the effect of the past/present...").

\noindent \hrulefill

\begin{verbatim}
%% Set parameters

N = 100; % length of polymer
beta = 1; % inverse temperature

%% Random environment
% Coordinates are (time,space1,space2)

% Gaussian
X = randn(N,2*N+1,2*N+1);

% Bernoulli
%X = randi([0,1],[N,2*N+1,2*N+1]);

% uniform
%X = rand(N,2*N+1,2*N+1);

% Create environment
E = exp(beta*X);

%% Determine the effect of the past/present on the i-th point distribution

% preallocate
P = zeros(N,2*N+1,2*N+1);

% initialization at time 1
P(1,N,N+1) = E(1,N,N+1);
P(1,N+2,N+1) = E(1,N+2,N+1);
P(1,N+1,N) = E(1,N+1,N);
P(1,N+1,N+2) = E(1,N+1,N+2);
P(1,:,:) = P(1,:,:)/sum(sum(P(1,:,:)));

% effect on i-th point is calculated from effect on (i-1)-th point
% neighbors together with the environment at time i at current location
for i = 2:N
    left = cat(2,P(i-1,2:end,:),zeros(1,1,2*N+1));
    right = cat(2,zeros(1,1,2*N+1),P(i-1,1:end-1,:));
    down = cat(3,P(i-1,:,2:end),zeros(1,2*N+1,1));
    up = cat(3,zeros(1,2*N+1,1),P(i-1,:,1:end-1));
    P(i,:,:) = (left + right + up + down) .* E(i,:,:);
    
    % normalize for numerical stability
    P(i,:,:) = P(i,:,:)/sum(sum(P(i,:,:)));
end

%% Determine the effect of the future on the i-th point distribution

% preallocate
F = zeros(N,2*N+1,2*N+1);

% initialization at time N
F(end,:,:) = ones(1,2*N+1,2*N+1);

% zero out the sites that cannot be accessed at last step
x = 1:2*N+1;
[X,Y] = meshgrid(x,x);
F(end,mod(X+Y,2)~=mod(N,2)) = 0;

for l = 1:2*N+1
    for k = 1:2*N+1
        if  abs(l-N-1)+abs(k-N-1) > N
            F(end,l,k) = 0;
        end
    end
end

% effect on i-th point is calculated from effect on (i+1)-th point
% neighbors together with the environment at time i+1 at those neighbors
for j = 1:N-1
    i = N-j;
    left = cat(2,E(i+1,2:end,:).*F(i+1,2:end,:),zeros(1,1,2*N+1));
    right = cat(2,zeros(1,1,2*N+1),E(i+1,1:end-1,:).*F(i+1,1:end-1,:));
    down = cat(3,E(i+1,:,2:end).*F(i+1,:,2:end),zeros(1,2*N+1,1));
    up = cat(3,zeros(1,2*N+1,1),E(i+1,:,1:end-1).*F(i+1,:,1:end-1));
    F(i,:,:) = (left + right + up + down);
    
    % zero out sites not accessible at time i
    for l = 1:2*N+1
        for k = 1:2*N+1
            if  abs(l-N-1)+abs(k-N-1) > i
                F(i,l,k) = 0;
            end
        end
    end
    
    % normalize for numerical stability
    F(i,:,:) = F(i,:,:)/sum(sum(F(i,:,:)));
end
    
Z = P.*F; % put past/present and future together

%% Simple random walk (for comparison purposes)

% preallocate
S = zeros(N,2*N+1,2*N+1);

% initialization at time 1
S(1,N,N+1) = 1/4;
S(1,N+2,N+1) = 1/4;
S(1,N+1,N) = 1/4;
S(1,N+1,N+2) = 1/4;

% at each step, equal weight distributed to nearest neighbors
for i = 2:N
    left = cat(2,S(i-1,2:end,:),zeros(1,1,2*N+1));
    right = cat(2,zeros(1,1,2*N+1),S(i-1,1:end-1,:));
    down = cat(3,S(i-1,:,2:end),zeros(1,2*N+1,1));
    up = cat(3,zeros(1,2*N+1,1),S(i-1,:,1:end-1));
    S(i,:,:) = (left + right + up + down);
    
    % normalize for numerical stability
    S(i,:,:) = S(i,:,:)/sum(sum(S(i,:,:)));
end

%% Include time 0 as first coordinate (point mass at origin)

point_mass = zeros(1,2*N+1,2*N+1);
point_mass(1,N+1,N+1) = 1;

P = cat(1,point_mass,P); 
% P(i+1,:,:) is the endpoint distribution at time i

Z = cat(1,point_mass,Z); 
% Z(i+1,:,:)/sum(sum(Z(i+1,:,:)) is the i-th point distribution at time N

S = cat(1,point_mass,S); 
% S(i+1,:,:) is the distribution of SRW at time i

%% Endpoint video

% create file for movie to be recorded
v = VideoWriter('endpoint_video');
v.FrameRate = 11;
open(v)

% prepare figure
close all
figure(1)
colormap(bone)
screen_size = get(0,'ScreenSize');
set(gcf,'Position',[0 0 screen_size(3) screen_size(4)]);

% axis coordinates
x = -N:N;
[x,y] = meshgrid(x,x);

for i = 1:N+1
    % left plot
    subplot(1,2,1)
    mesh(x,y,squeeze(P(i,:,:)))
    zlabel(['$i = $ ' num2str(i-1) '\phantom{xxxxxxx}'],...
        'interpreter','latex','FontSize',24)
    set(get(gca,'ZLabel'),'Rotation',0)
    axis([-N/4 N/4 -N/4 N/4 0 0.2])
    title(['Endpoint distribution, $\beta = $ ' num2str(beta)],...
        'interpreter','latex','FontSize',24)
    
    % right plot
    subplot(1,2,2)
    mesh(x,y,squeeze(S(i,:,:)))
    axis([-N/4 N/4 -N/4 N/4 0 0.2])
    title('SRW','interpreter','latex','FontSize',24)
    
    % include delay in order to watch video being created
    %pause(0.05)
    
    % capture movie frame
    writeVideo(v,getframe(gcf))

end

close(v)

%% ith-point video

% create file for movie to be recorded
v = VideoWriter('ipoint_video');
v.FrameRate = 11;
open(v)

% prepare figure
figure(2)
colormap(bone)
screen_size = get(0,'ScreenSize');
set(gcf,'Position',[0 0 screen_size(3) screen_size(4)]);

% axis coordinates
x = -N:N;
[x,y] = meshgrid(x,x);

for i = 1:N+1
    % left plot
    Z(i,:,:) = Z(i,:,:)/sum(sum(Z(i,:,:)));
    subplot(1,2,1)
    mesh(x,y,squeeze(Z(i,:,:)))
    zlabel(['$i = $ ' num2str(i-1) '\phantom{xxxxxxx}'],...
        'interpreter','latex','FontSize',24)
    set(get(gca,'ZLabel'),'Rotation',0)
    axis([-N/4 N/4 -N/4 N/4 0 0.2])
    title(['$i$-th point distribution, $\beta = $ ' num2str(beta) ...
        ', $N =$ ' num2str(N)],'interpreter','latex','FontSize',24)
    
    % right plot
    subplot(1,2,2)
    mesh(x,y,squeeze(S(i,:,:)))
    axis([-N/4 N/4 -N/4 N/4 0 0.2])
    title('SRW','interpreter','latex','FontSize',24)
    
    % include delay in order to watch video being created
    %pause(0.05)
    
    % capture movie frame
    writeVideo(v,getframe(gcf))

end

close(v)
\end{verbatim}

    \bibliographystyle{acm}
    \bibliography{thesis}

\begin{thebibliography}{100}

\bibitem{adler-taylor07}
{\sc Adler, R.~J., and Taylor, J.~E.}
\newblock {\em Random fields and geometry}.
\newblock Springer Monographs in Mathematics. Springer, New York, 2007.

\bibitem{agliari-barra-galluzzi-guerra-moauro12}
{\sc Agliari, E., Barra, A., Galluzzi, A., Guerra, F., and Moauro, F.}
\newblock Multitasking associative networks.
\newblock {\em Phys. Rev. Lett. 109\/} (Dec 2012), 1--5.

\bibitem{agliari-barra-guerra-moauro11}
{\sc Agliari, E., Barra, A., Guerra, F., Moauro, F., Agliari, E., Barra, A.,
  Guerra, F., and Moauro, F.}
\newblock A thermodynamic perspective of immune capabilities.
\newblock {\em J. Theor. Biol. 287\/} (2011), 48--63.

\bibitem{ahlberg15}
{\sc Ahlberg, D.}
\newblock A {H}su-{R}obbins-{E}rd{\H o}s strong law in first-passage
  percolation.
\newblock {\em Ann. Probab. 43}, 4 (2015), 1992--2025.

\bibitem{aizenman-contucci98}
{\sc Aizenman, M., and Contucci, P.}
\newblock On the stability of the quenched state in mean-field spin-glass
  models.
\newblock {\em J. Statist. Phys. 92}, 5-6 (1998), 765--783.

\bibitem{aizenman-lebowitz-ruelle87}
{\sc Aizenman, M., Lebowitz, J.~L., and Ruelle, D.}
\newblock Some rigorous results on the {S}herrington-{K}irkpatrick spin glass
  model.
\newblock {\em Comm. Math. Phys. 112}, 1 (1987), 3--20.

\bibitem{ass07}
{\sc Aizenman, M., Sims, R., and Starr, S.~L.}
\newblock Mean-field spin glass models from the cavity-{ROS}t perspective.
\newblock {\em Contemp. Math. 437\/} (2007), 1--30.

\bibitem{alberts-khanin-quastel14II}
{\sc Alberts, T., Khanin, K., and Quastel, J.}
\newblock The continuum directed random polymer.
\newblock {\em J. Stat. Phys. 154}, 1-2 (2014), 305--326.

\bibitem{alberts-khanin-quastel14I}
{\sc Alberts, T., Khanin, K., and Quastel, J.}
\newblock The intermediate disorder regime for directed polymers in dimension
  {$1+1$}.
\newblock {\em Ann. Probab. 42}, 3 (2014), 1212--1256.

\bibitem{albeverio-zhou96}
{\sc Albeverio, S., and Zhou, X.~Y.}
\newblock A martingale approach to directed polymers in a random environment.
\newblock {\em J. Theoret. Probab. 9}, 1 (1996), 171--189.

\bibitem{aldous-bandyopadhyay05}
{\sc Aldous, D.~J., and Bandyopadhyay, A.}
\newblock A survey of max-type recursive distributional equations.
\newblock {\em Ann. Appl. Probab. 15}, 2 (2005), 1047--1110.

\bibitem{alexander97}
{\sc Alexander, K.~S.}
\newblock Approximation of subadditive functions and convergence rates in
  limiting-shape results.
\newblock {\em Ann. Probab. 25}, 1 (1997), 30--55.

\bibitem{alexander-yildirim15}
{\sc Alexander, K.~S., and Y{\i}ld{\i}r{\i}m, G.}
\newblock Directed polymers in a random environment with a defect line.
\newblock {\em Electron. J. Probab. 20\/} (2015), no. 6, 20.

\bibitem{alexander-zygouras13}
{\sc Alexander, K.~S., and Zygouras, N.}
\newblock Subgaussian concentration and rates of convergence in directed
  polymers.
\newblock {\em Electron. J. Probab. 18\/} (2013), no. 5, 28.

\bibitem{amit-gutfreund-sompolinsky85}
{\sc Amit, D.~J., Gutfreund, H., and Sompolinsky, H.}
\newblock Spin-glass models of neural networks.
\newblock {\em Phys. Rev. A (3) 32}, 2 (1985), 1007--1018.

\bibitem{arguin-chatterjee13}
{\sc Arguin, L.-P., and Chatterjee, S.}
\newblock Random overlap structures: properties and applications to spin
  glasses.
\newblock {\em Probab. Theory Related Fields 156}, 1-2 (2013), 375--413.

\bibitem{arguin-zindy14}
{\sc Arguin, L.-P., and Zindy, O.}
\newblock Poisson-{D}irichlet statistics for the extremes of a log-correlated
  {G}aussian field.
\newblock {\em Ann. Appl. Probab. 24}, 4 (2014), 1446--1481.

\bibitem{auffinger-chen15I}
{\sc Auffinger, A., and Chen, W.-K.}
\newblock On properties of {P}arisi measures.
\newblock {\em Probab. Theory Related Fields 161}, 3-4 (2015), 817--850.

\bibitem{auffinger-chen15II}
{\sc Auffinger, A., and Chen, W.-K.}
\newblock The {P}arisi formula has a unique minimizer.
\newblock {\em Comm. Math. Phys. 335}, 3 (2015), 1429--1444.

\bibitem{auffinger-chen18}
{\sc Auffinger, A., and Chen, W.-K.}
\newblock On concentration properties of disordered {H}amiltonians.
\newblock {\em Proc. Amer. Math. Soc. 146}, 4 (2018), 1807--1815.

\bibitem{auffinger-chen-zeng20}
{\sc Auffinger, A., Chen, W.-K., and Zeng, Q.}
\newblock The {SK} model is infinite step replica symmetry breaking at zero
  temperature.
\newblock {\em Comm. Pure Appl. Math. 73}, 5 (2020), 921--943.

\bibitem{auffinger-damron13}
{\sc Auffinger, A., and Damron, M.}
\newblock Differentiability at the edge of the percolation cone and related
  results in first-passage percolation.
\newblock {\em Probab. Theory Related Fields 156}, 1-2 (2013), 193--227.

\bibitem{auffinger-damron-hanson17}
{\sc Auffinger, A., Damron, M., and Hanson, J.}
\newblock {\em 50 years of first-passage percolation}, vol.~68 of {\em
  University Lecture Series}.
\newblock American Mathematical Society, Providence, RI, 2017.

\bibitem{auffinger-louidor11}
{\sc Auffinger, A., and Louidor, O.}
\newblock Directed polymers in a random environment with heavy tails.
\newblock {\em Comm. Pure Appl. Math. 64}, 2 (2011), 183--204.

\bibitem{auffinger-zeng19}
{\sc Auffinger, A., and Zeng, Q.}
\newblock Existence of two-step replica symmetry breaking for the spherical
  mixed {$p$}-spin glass at zero temperature.
\newblock {\em Comm. Math. Phys. 370}, 1 (2019), 377--402.

\bibitem{bacry-muzy03}
{\sc Bacry, E., and Muzy, J.~F.}
\newblock Log-infinitely divisible multifractal processes.
\newblock {\em Comm. Math. Phys. 236}, 3 (2003), 449--475.

\bibitem{baik-deift-johansson99}
{\sc Baik, J., Deift, P., and Johansson, K.}
\newblock On the distribution of the length of the longest increasing
  subsequence of random permutations.
\newblock {\em J. Amer. Math. Soc. 12}, 4 (1999), 1119--1178.

\bibitem{baik-liechty-schehr12}
{\sc Baik, J., Liechty, K., and Schehr, G.}
\newblock On the joint distribution of the maximum and its position of the
  {$\rm Airy\sb 2$} process minus a parabola.
\newblock {\em J. Math. Phys. 53}, 8 (2012), 083303, 13.

\bibitem{baik-suidan05}
{\sc Baik, J., and Suidan, T.~M.}
\newblock A {GUE} central limit theorem and universality of directed first and
  last passage site percolation.
\newblock {\em Int. Math. Res. Not.}, 6 (2005), 325--337.

\bibitem{bakhtin-seo20}
{\sc Bakhtin, Y., and Seo, D.}
\newblock Localization of directed polymers in continuous space.
\newblock {\em Electron. J. Probab. 25\/} (2020), Paper No. 142, 56.

\bibitem{balazs-cator-seppalainen06}
{\sc Bal{\'a}zs, M., Cator, E., and Sepp{\"a}l{\"a}inen, T.}
\newblock Cube root fluctuations for the corner growth model associated to the
  exclusion process.
\newblock {\em Electron. J. Probab. 11\/} (2006), no. 42, 1094--1132.

\bibitem{balister-bollobas-stacey94}
{\sc Balister, P., Bollob\'{a}s, B., and Stacey, A.}
\newblock Improved upper bounds for the critical probability of oriented
  percolation in two dimensions.
\newblock {\em Random Structures Algorithms 5}, 4 (1994), 573--589.

\bibitem{nielsen-mikosch-resnick01}
{\sc Barndorff-Nielsen, O.~E., Mikosch, T., and Resnick, S.~I.}
\newblock {\em L{\'e}vy {P}rocesses: {T}heory and {A}pplications}.
\newblock Birkh{\"a}user, Boston, 2001.

\bibitem{barra-agliari10}
{\sc Barra, A., and Agliari, E.}
\newblock A statistical mechanics approach to autopoietic immune networks.
\newblock {\em J. Stat Mech. Theory E. 2010}, 07 (2010), 1--24.

\bibitem{barra-agliari12}
{\sc Barra, A., and Agliari, E.}
\newblock A statistical mechanics approach to {G}ranovetter theory.
\newblock {\em Physica A 391}, 10 (2012), 3017--3026.

\bibitem{barra-contucci10}
{\sc Barra, A., and Contucci, P.}
\newblock Toward a quantitative approach to migrants integration.
\newblock {\em Europhys. Lett. 89}, 6 (2010), 1--6.

\bibitem{barra-contucci-mingione-tantari15}
{\sc Barra, A., Contucci, P., Mingione, E., and Tantari, D.}
\newblock Multi-species mean field spin glasses. {R}igorous results.
\newblock {\em Ann. Henri Poincar\'e 16}, 3 (2015), 691--708.

\bibitem{barra-galluzzi-guerra-pizzoferrato-tantari14}
{\sc Barra, A., Galluzzi, A., Guerra, F., Pizzoferrato, A., and Tantari, D.}
\newblock Mean field bipartite spin models treated with mechanical techniques.
\newblock {\em Eur. Phys. J. B 87}, 3 (2014), Art. 74, 13.

\bibitem{barra-genovese-guerra10}
{\sc Barra, A., Genovese, G., and Guerra, F.}
\newblock The replica symmetric approximation of the analogical neural network.
\newblock {\em J. Stat. Phys. 140}, 4 (2010), 784--796.

\bibitem{barra-genovese-guerra11}
{\sc Barra, A., Genovese, G., and Guerra, F.}
\newblock Equilibrium statistical mechanics of bipartite spin systems.
\newblock {\em J. Phys. A 44}, 24 (2011), 245002, 22.

\bibitem{barra-genovese-guerra-tantari12}
{\sc Barra, A., Genovese, G., Guerra, F., and Tantari, D.}
\newblock How glassy are neural networks?
\newblock {\em J. Stat. Mech.}, 07 (2012), 1--16.

\bibitem{barra-guerra08}
{\sc Barra, A., and Guerra, F.}
\newblock About the ergodic regime in the analogical {H}opfield neural
  networks: moments of the partition function.
\newblock {\em J. Math. Phys. 49}, 12 (2008), 1--18.

\bibitem{barral-mandelbrot02}
{\sc Barral, J., and Mandelbrot, B.~B.}
\newblock Multifractal products of cylindrical pulses.
\newblock {\em Probab. Theory Related Fields 124}, 3 (2002), 409--430.

\bibitem{barral-rhodes-vargas12}
{\sc Barral, J., Rhodes, R., and Vargas, V.}
\newblock Limiting laws of supercritical branching random walks.
\newblock {\em C. R. Math. Acad. Sci. Paris 350}, 9-10 (2012), 535--538.

\bibitem{barraquand-corwin17}
{\sc Barraquand, G., and Corwin, I.}
\newblock Random-walk in {B}eta-distributed random environment.
\newblock {\em Probab. Theory Related Fields 167}, 3-4 (2017), 1057--1116.

\bibitem{bates18}
{\sc Bates, E.}
\newblock Localization of directed polymers with general reference walk.
\newblock {\em Electron. J. Probab. 23\/} (2018), Paper No. 30, 45.

\bibitem{bates-chatterjee20I}
{\sc Bates, E., and Chatterjee, S.}
\newblock The endpoint distribution of directed polymers.
\newblock {\em Ann. Probab. 48}, 2 (2020), 817--871.

\bibitem{bates-chatterjee20III}
{\sc Bates, E., and Chatterjee, S.}
\newblock Fluctuation lower bounds in planar random growth models.
\newblock {\em Ann. Inst. Henri Poincar\'{e} Probab. Stat. 56}, 4 (2020),
  2406--2427.

\bibitem{bates-chatterjee20II}
{\sc Bates, E., and Chatterjee, S.}
\newblock Localization in {G}aussian disordered systems at low temperature.
\newblock {\em Ann. Probab. 48}, 6 (2020), 2755--2806.

\bibitem{bates-sloman-sohn19}
{\sc Bates, E., Sloman, L., and Sohn, Y.}
\newblock Replica symmetry breaking in multi-species
  {S}herrington-{K}irkpatrick model.
\newblock {\em J. Stat. Phys. 174}, 2 (2019), 333--350.

\bibitem{benaim-rossignol08}
{\sc Bena{\"i}m, M., and Rossignol, R.}
\newblock Exponential concentration for first passage percolation through
  modified {P}oincar\'e inequalities.
\newblock {\em Ann. Inst. Henri Poincar\'e Probab. Stat. 44}, 3 (2008),
  544--573.

\bibitem{benjamini-kalai-schramm03}
{\sc Benjamini, I., Kalai, G., and Schramm, O.}
\newblock First passage percolation has sublinear distance variance.
\newblock {\em Ann. Probab. 31}, 4 (2003), 1970--1978.

\bibitem{berger-lacoin17}
{\sc Berger, Q., and Lacoin, H.}
\newblock The high-temperature behavior for the directed polymer in dimension
  {$1+2$}.
\newblock {\em Ann. Inst. Henri Poincar\'e Probab. Stat. 53}, 1 (2017),
  430--450.

\bibitem{berger-torri19}
{\sc Berger, Q., and Torri, N.}
\newblock Directed polymers in heavy-tail random environment.
\newblock {\em Ann. Probab. 47}, 6 (2019), 4024--4076.

\bibitem{bezerra-tindel-viens08}
{\sc Bezerra, S., Tindel, S., and Viens, F.}
\newblock Superdiffusivity for a {B}rownian polymer in a continuous {G}aussian
  environment.
\newblock {\em Ann. Probab. 36}, 5 (2008), 1642--1675.

\bibitem{billingsley99}
{\sc Billingsley, P.}
\newblock {\em Convergence of probability measures}, second~ed.
\newblock Wiley Series in Probability and Statistics: Probability and
  Statistics. John Wiley \& Sons, Inc., New York, 1999.
\newblock A Wiley-Interscience Publication.

\bibitem{bodineau-martin05}
{\sc Bodineau, T., and Martin, J.}
\newblock A universality property for last-passage percolation paths close to
  the axis.
\newblock {\em Electron. Comm. Probab. 10\/} (2005), 105--112.

\bibitem{bollobas-riordan06}
{\sc Bollob{\'a}s, B., and Riordan, O.}
\newblock {\em Percolation}.
\newblock Cambridge University Press, New York, 2006.

\bibitem{bolthausen89}
{\sc Bolthausen, E.}
\newblock A note on the diffusion of directed polymers in a random environment.
\newblock {\em Comm. Math. Phys. 123}, 4 (1989), 529--534.

\bibitem{bolthausen07}
{\sc Bolthausen, E.}
\newblock Random media and spin glasses: an introduction into some mathematical
  results and problems.
\newblock In {\em Spin glasses}, vol.~1900 of {\em Lecture Notes in Math.}
  Springer, Berlin, 2007, pp.~1--44.

\bibitem{borodin-corwin14}
{\sc Borodin, A., and Corwin, I.}
\newblock Macdonald processes.
\newblock {\em Probab. Theory Related Fields 158}, 1-2 (2014), 225--400.

\bibitem{borodin-corwin-ferrari14}
{\sc Borodin, A., Corwin, I., and Ferrari, P.}
\newblock Free energy fluctuations for directed polymers in random media in
  {$1+1$} dimension.
\newblock {\em Comm. Pure Appl. Math. 67}, 7 (2014), 1129--1214.

\bibitem{borodin-corwin-remenik13}
{\sc Borodin, A., Corwin, I., and Remenik, D.}
\newblock Log-gamma polymer free energy fluctuations via a {F}redholm
  determinant identity.
\newblock {\em Comm. Math. Phys. 324}, 1 (2013), 215--232.

\bibitem{borodin-ferrari-prahofer-sasamoto07}
{\sc Borodin, A., Ferrari, P.~L., Pr\"ahofer, M., and Sasamoto, T.}
\newblock Fluctuation properties of the {TASEP} with periodic initial
  configuration.
\newblock {\em J. Stat. Phys. 129}, 5-6 (2007), 1055--1080.

\bibitem{borodin-petrov14}
{\sc Borodin, A., and Petrov, L.}
\newblock Integrable probability: from representation theory to {M}acdonald
  processes.
\newblock {\em Probab. Surv. 11\/} (2014), 1--58.

\bibitem{bothner-liechty13}
{\sc Bothner, T., and Liechty, K.}
\newblock Tail decay for the distribution of the endpoint of a directed
  polymer.
\newblock {\em Nonlinearity 26}, 5 (2013), 1449--1472.

\bibitem{boucheron-lugosi-massart13}
{\sc Boucheron, S., Lugosi, G., and Massart, P.}
\newblock {\em Concentration inequalities}.
\newblock Oxford University Press, Oxford, 2013.
\newblock A nonasymptotic theory of independence, With a foreword by Michel
  Ledoux.

\bibitem{bovier06}
{\sc Bovier, A.}
\newblock {\em Statistical mechanics of disordered systems}, vol.~18 of {\em
  Cambridge Series in Statistical and Probabilistic Mathematics}.
\newblock Cambridge University Press, Cambridge, 2006.
\newblock A mathematical perspective.

\bibitem{bovier-picco98}
{\sc Bovier, A., and Picco, P.}, Eds.
\newblock {\em Mathematical aspects of spin glasses and neural networks},
  vol.~41 of {\em Progress in Probability}.
\newblock Birkh\"auser Boston, Inc., Boston, MA, 1998.

\bibitem{brockmann-geisel03}
{\sc Brockmann, D., and Geisel, T.}
\newblock Particle dispersion on rapidly folding random heteropolymers.
\newblock {\em Phys. Rev. Lett. 91\/} (Jul 2003), 048303--048306.

\bibitem{broker-mukherjee19II}
{\sc Br\"{o}ker, Y., and Mukherjee, C.}
\newblock Localization of the {G}aussian multiplicative chaos in the {W}iener
  space and the stochastic heat equation in strong disorder.
\newblock {\em Ann. Appl. Probab. 29}, 6 (2019), 3745--3785.

\bibitem{burkholder-davis-gundy72}
{\sc Burkholder, D.~L., Davis, B.~J., and Gundy, R.~F.}
\newblock Integral inequalities for convex functions of operators on
  martingales.
\newblock In {\em Proceedings of the {S}ixth {B}erkeley {S}ymposium on
  {M}athematical {S}tatistics and {P}robability ({U}niv. {C}alifornia,
  {B}erkeley, {C}alif., 1970/1971), {V}ol. {II}: {P}robability theory\/}
  (1972), Univ. California Press, Berkeley, Calif., pp.~223--240.

\bibitem{caravenna-sun-zygouras17I}
{\sc Caravenna, F., Sun, R., and Zygouras, N.}
\newblock Polynomial chaos and scaling limits of disordered systems.
\newblock {\em J. Eur. Math. Soc. (JEMS) 19}, 1 (2017), 1--65.

\bibitem{caravenna-sun-zygouras17II}
{\sc Caravenna, F., Sun, R., and Zygouras, N.}
\newblock Universality in marginally relevant disordered systems.
\newblock {\em Ann. Appl. Probab. 27}, 5 (2017), 3050--3112.

\bibitem{carmona-hu02}
{\sc Carmona, P., and Hu, Y.}
\newblock On the partition function of a directed polymer in a {G}aussian
  random environment.
\newblock {\em Probab. Theory Related Fields 124}, 3 (2002), 431--457.

\bibitem{carmona-hu06}
{\sc Carmona, P., and Hu, Y.}
\newblock Strong disorder implies strong localization for directed polymers in
  a random environment.
\newblock {\em ALEA Lat. Am. J. Probab. Math. Stat. 2\/} (2006), 217--229.

\bibitem{cator-groeneboom06}
{\sc Cator, E., and Groeneboom, P.}
\newblock Second class particles and cube root asymptotics for {H}ammersley's
  process.
\newblock {\em Ann. Probab. 34}, 4 (2006), 1273--1295.

\bibitem{chatterjee08}
{\sc Chatterjee, S.}
\newblock Chaos, concentration, and multiple valleys.
\newblock Preprint, available at
  \href{https://arxiv.org/abs/0810.4221}{arXiv:0810.4221}.

\bibitem{chatterjee14}
{\sc Chatterjee, S.}
\newblock {\em Superconcentration and related topics}.
\newblock Springer Monographs in Mathematics. Springer, Cham, 2014.

\bibitem{chatterjee19II}
{\sc Chatterjee, S.}
\newblock A general method for lower bounds on fluctuations of random
  variables.
\newblock {\em Ann. Probab. 47}, 4 (2019), 2140--2171.

\bibitem{chatterjee19}
{\sc Chatterjee, S.}
\newblock Proof of the {P}ath {L}ocalization {C}onjecture for {D}irected
  {P}olymers.
\newblock {\em Comm. Math. Phys. 370}, 2 (2019), 703--717.

\bibitem{chaumont-noack18I}
{\sc Chaumont, H., and Noack, C.}
\newblock Characterizing stationary {$1+1$} dimensional lattice polymer models.
\newblock {\em Electron. J. Probab. 23\/} (2018), Paper No. 38, 19.

\bibitem{chaumont-noack18II}
{\sc Chaumont, H., and Noack, C.}
\newblock Fluctuation exponents for stationary exactly solvable lattice polymer
  models via a {M}ellin transform framework.
\newblock {\em ALEA Lat. Am. J. Probab. Math. Stat. 15}, 1 (2018), 509--547.

\bibitem{chayes-chayes-durrett86}
{\sc Chayes, J.~T., Chayes, L., and Durrett, R.}
\newblock Critical behavior of the two-dimensional first passage time.
\newblock {\em J. Statist. Phys. 45}, 5-6 (1986), 933--951.

\bibitem{chen-deem01}
{\sc Chen, L., and Deem, M.~W.}
\newblock Reaction, {L}{\'e}vy flights, and quenched disorder.
\newblock {\em Phys. Rev. E 65\/} (Dec 2001), 011109--011114.

\bibitem{chen13}
{\sc Chen, W.-K.}
\newblock The {A}izenman-{S}ims-{S}tarr scheme and {P}arisi formula for mixed
  {$p$}-spin spherical models.
\newblock {\em Electron. J. Probab. 18\/} (2013), no. 94, 14.

\bibitem{chen-sen17}
{\sc Chen, W.-K., and Sen, A.}
\newblock Parisi formula, disorder chaos and fluctuation for the ground state
  energy in the spherical mixed {$p$}-spin models.
\newblock {\em Comm. Math. Phys. 350}, 1 (2017), 129--173.

\bibitem{comets07}
{\sc Comets, F.}
\newblock Weak disorder for low dimensional polymers: the model of stable laws.
\newblock {\em Markov Process. Related Fields 13}, 4 (2007), 681--696.

\bibitem{comets17}
{\sc Comets, F.}
\newblock {\em Directed polymers in random environments}, vol.~2175 of {\em
  Lecture Notes in Mathematics}.
\newblock Springer, Cham, 2017.
\newblock Lecture notes from the 46th Probability Summer School held in
  Saint-Flour, 2016.

\bibitem{comets-cosco??}
{\sc Comets, F., and Cosco, C.}
\newblock Brownian polymers in {P}oissonian environment: a survey.
\newblock Preprint, available at
  \href{https://arxiv.org/abs/1805.10899}{arXiv:1805.10899}.

\bibitem{comets-cranston13}
{\sc Comets, F., and Cranston, M.}
\newblock Overlaps and pathwise localization in the {A}nderson polymer model.
\newblock {\em Stochastic Process. Appl. 123}, 6 (2013), 2446--2471.

\bibitem{comets-dembo01}
{\sc Comets, F., and Dembo, A.}
\newblock Ordered overlaps in disordered mean-field models.
\newblock {\em Probab. Theory Related Fields 121}, 1 (2001), 1--29.

\bibitem{comets-fukushima-nakajima-yoshida15}
{\sc Comets, F., Fukushima, R., Nakajima, S., and Yoshida, N.}
\newblock Limiting results for the free energy of directed polymers in random
  environment with unbounded jumps.
\newblock {\em J. Stat. Phys. 161}, 3 (2015), 577--597.

\bibitem{comets-neveu95}
{\sc Comets, F., and Neveu, J.}
\newblock The {S}herrington-{K}irkpatrick model of spin glasses and stochastic
  calculus: the high temperature case.
\newblock {\em Comm. Math. Phys. 166}, 3 (1995), 549--564.

\bibitem{comets-nguyen16}
{\sc Comets, F., and Nguyen, V.-L.}
\newblock Localization in log-gamma polymers with boundaries.
\newblock {\em Probab. Theory Related Fields 166}, 1-2 (2016), 429--461.

\bibitem{comets-shiga-yoshida03}
{\sc Comets, F., Shiga, T., and Yoshida, N.}
\newblock Directed polymers in a random environment: path localization and
  strong disorder.
\newblock {\em Bernoulli 9}, 4 (2003), 705--723.

\bibitem{comets-shiga-yoshida04}
{\sc Comets, F., Shiga, T., and Yoshida, N.}
\newblock Probabilistic analysis of directed polymers in a random environment:
  a review.
\newblock In {\em Stochastic analysis on large scale interacting systems},
  vol.~39 of {\em Adv. Stud. Pure Math.} Math. Soc. Japan, Tokyo, 2004,
  pp.~115--142.

\bibitem{comets-vargas06}
{\sc Comets, F., and Vargas, V.}
\newblock Majorizing multiplicative cascades for directed polymers in random
  media.
\newblock {\em ALEA Lat. Am. J. Probab. Math. Stat. 2\/} (2006), 267--277.

\bibitem{comets-yoshida04}
{\sc Comets, F., and Yoshida, N.}
\newblock Some new results on {B}rownian directed polymers in random
  environment.
\newblock {\em RIMS Kokyuroku 1386\/} (2004), 50--66.

\bibitem{comets-yoshida05}
{\sc Comets, F., and Yoshida, N.}
\newblock Brownian directed polymers in random environment.
\newblock {\em Comm. Math. Phys. 254}, 2 (2005), 257--287.

\bibitem{comets-yoshida06}
{\sc Comets, F., and Yoshida, N.}
\newblock Directed polymers in random environment are diffusive at weak
  disorder.
\newblock {\em Ann. Probab. 34}, 5 (2006), 1746--1770.

\bibitem{comets-yoshida13}
{\sc Comets, F., and Yoshida, N.}
\newblock Localization transition for polymers in {P}oissonian medium.
\newblock {\em Comm. Math. Phys. 323}, 1 (2013), 417--447.

\bibitem{contucci-gallo-menconi08}
{\sc Contucci, P., Gallo, I., and Menconi, G.}
\newblock Phase transitions in social sciences: Two-population mean field
  theory.
\newblock {\em Int. J. Mod. Phys. B 22}, 14 (2008), 2199--2212.

\bibitem{contucci-ghirlanda07}
{\sc Contucci, P., and Ghirlanda, S.}
\newblock Modeling society with statistical mechanics: an application to
  cultural contact and immigration.
\newblock {\em Qual. Quant. 41}, 4 (Aug 2007), 569--578.

\bibitem{corwin12}
{\sc Corwin, I.}
\newblock The {K}ardar-{P}arisi-{Z}hang equation and universality class.
\newblock {\em Random Matrices Theory Appl. 1}, 1 (2012), 1130001, 76.

\bibitem{corwin-oconnell-seppalainen-zygouras14}
{\sc Corwin, I., O'Connell, N., Sepp{\"a}l{\"a}inen, T., and Zygouras, N.}
\newblock Tropical combinatorics and {W}hittaker functions.
\newblock {\em Duke Math. J. 163}, 3 (2014), 513--563.

\bibitem{corwin-seppalainen-shen15}
{\sc Corwin, I., Sepp{\"a}l{\"a}inen, T., and Shen, H.}
\newblock The strict-weak lattice polymer.
\newblock {\em J. Stat. Phys. 160}, 4 (2015), 1027--1053.

\bibitem{cox-durrett81}
{\sc Cox, J.~T., and Durrett, R.}
\newblock Some limit theorems for percolation processes with necessary and
  sufficient conditions.
\newblock {\em Ann. Probab. 9}, 4 (1981), 583--603.

\bibitem{crisanti-sommers92}
{\sc Crisanti, A., and Sommers, H.-J.}
\newblock The spherical {$p$}-spin interaction spin glass model: the statics.
\newblock {\em Z. Phys. B Con. Mat. 87}, 3 (Oct 1992), 341--354.

\bibitem{damron-hanson-houdre-xu20}
{\sc Damron, M., Hanson, J., Houdr\'{e}, C., and Xu, C.}
\newblock Lower bounds for fluctuations in first-passage percolation for
  general distributions.
\newblock {\em Ann. Inst. Henri Poincar\'{e} Probab. Stat. 56}, 2 (2020),
  1336--1357.

\bibitem{damron-hanson-sosoe14}
{\sc Damron, M., Hanson, J., and Sosoe, P.}
\newblock Subdiffusive concentration in first-passage percolation.
\newblock {\em Electron. J. Probab. 19\/} (2014), no. 109, 27.

\bibitem{damron-hanson-sosoe15}
{\sc Damron, M., Hanson, J., and Sosoe, P.}
\newblock Sublinear variance in first-passage percolation for general
  distributions.
\newblock {\em Probab. Theory Related Fields 163}, 1-2 (2015), 223--258.

\bibitem{damron-kubota16}
{\sc Damron, M., and Kubota, N.}
\newblock Rate of convergence in first-passage percolation under low moments.
\newblock {\em Stochastic Process. Appl. 126}, 10 (2016), 3065--3076.

\bibitem{damron-lam-wang17}
{\sc Damron, M., Lam, W.-K., and Wang, X.}
\newblock Asymptotics for {$2D$} critical first passage percolation.
\newblock {\em Ann. Probab. 45}, 5 (2017), 2941--2970.

\bibitem{almeida-thouless78}
{\sc de~Almeida, J. R.~L., and Thouless, D.~J.}
\newblock Stability of the {S}herrington-{K}irkpatrick solution of a spin glass
  model.
\newblock {\em J. Phys. A: Math. Gen. 11}, 5 (1978), 983--990.

\bibitem{denHollander09}
{\sc den Hollander, F.}
\newblock {\em Random polymers}, vol.~1974 of {\em Lecture Notes in
  Mathematics}.
\newblock Springer-Verlag, Berlin, 2009.
\newblock Lectures from the 37th Probability Summer School held in Saint-Flour,
  2007.

\bibitem{derrida80}
{\sc Derrida, B.}
\newblock Random-energy model: limit of a family of disordered models.
\newblock {\em Phys. Rev. Lett. 45}, 2 (1980), 79--82.

\bibitem{derrida81}
{\sc Derrida, B.}
\newblock Random-energy model: an exactly solvable model of disordered systems.
\newblock {\em Phys. Rev. B (3) 24}, 5 (1981), 2613--2626.

\bibitem{derrida85}
{\sc Derrida, B.}
\newblock A generalization of the random energy model which includes
  correlations between energies.
\newblock {\em J. Phys. Lett-Paris 46}, 9 (1985), 401--407.

\bibitem{derrida-spohn88}
{\sc Derrida, B., and Spohn, H.}
\newblock Polymers on disordered trees, spin glasses, and traveling waves.
\newblock {\em J. Statist. Phys. 51}, 5-6 (1988), 817--840.
\newblock New directions in statistical mechanics (Santa Barbara, CA, 1987).

\bibitem{dey-zygouras16}
{\sc Dey, P.~S., and Zygouras, N.}
\newblock High temperature limits for {$(1+1)$}-dimensional directed polymer
  with heavy-tailed disorder.
\newblock {\em Ann. Probab. 44}, 6 (2016), 4006--4048.

\bibitem{durrett84}
{\sc Durrett, R.}
\newblock Oriented percolation in two dimensions.
\newblock {\em Ann. Probab. 12}, 4 (1984), 999--1040.

\bibitem{durrett-liggett81}
{\sc Durrett, R., and Liggett, T.~M.}
\newblock The shape of the limit set in {R}ichardson's growth model.
\newblock {\em Ann. Probab. 9}, 2 (1981), 186--193.

\bibitem{edwards-anderson75}
{\sc Edwards, S.~F., and Anderson, P.~W.}
\newblock Theory of spin glasses.
\newblock {\em J. Phys. F: Met. Phys. 5}, 5 (1975), 965--974.

\bibitem{fedele-contucci11}
{\sc Fedele, M., and Contucci, P.}
\newblock Scaling limits for multi-species statistical mechanics mean-field
  models.
\newblock {\em J. Stat. Phys. 144}, 6 (2011), 1186--1205.

\bibitem{fedele-unguendoli12}
{\sc Fedele, M., and Unguendoli, F.}
\newblock Rigorous results on the bipartite mean-field model.
\newblock {\em J. Phys. A 45}, 38 (2012), 385001, 18.

\bibitem{fukushima-junk19}
{\sc Fukushima, R., and Junk, S.}
\newblock Zero temperature limit for the {B}rownian directed polymer among
  {P}oissonian disasters.
\newblock {\em Ann. Appl. Probab. 29}, 6 (2019), 3821--3860.

\bibitem{gallo-contucci08}
{\sc Gallo, I., and Contucci, P.}
\newblock Bipartite mean field spin systems. {E}xistence and solution.
\newblock {\em Math. Phys. Electron. J. 14\/} (2008), Paper 1, 21.

\bibitem{gantert-peres-shi10}
{\sc Gantert, N., Peres, Y., and Shi, Z.}
\newblock The infinite valley for a recurrent random walk in random
  environment.
\newblock {\em Ann. Inst. Henri Poincar\'e Probab. Stat. 46}, 2 (2010),
  525--536.

\bibitem{georgiou-rassoul-seppalainen16}
{\sc Georgiou, N., Rassoul-Agha, F., and Sepp{\"a}l{\"a}inen, T.}
\newblock Variational formulas and cocycle solutions for directed polymer and
  percolation models.
\newblock {\em Comm. Math. Phys. 346}, 2 (2016), 741--779.

\bibitem{georgiou-rassoul-seppalainen-yilmaz15}
{\sc Georgiou, N., Rassoul-Agha, F., Sepp{\"a}l{\"a}inen, T., and Yilmaz, A.}
\newblock Ratios of partition functions for the log-gamma polymer.
\newblock {\em Ann. Probab. 43}, 5 (2015), 2282--2331.

\bibitem{georgiou-seppalainen13}
{\sc Georgiou, N., and Sepp{\"a}l{\"a}inen, T.}
\newblock Large deviation rate functions for the partition function in a
  log-gamma distributed random potential.
\newblock {\em Ann. Probab. 41}, 6 (2013), 4248--4286.

\bibitem{gnedenko-kolmogorov54}
{\sc Gnedenko, B.~V., and Kolmogorov, A.~N.}
\newblock {\em Limit distributions for sums of independent random variables}.
\newblock Addison-Wesley Publishing Company, Inc., Cambridge, Mass., 1954.
\newblock Translated and annotated by K. L. Chung. With an Appendix by J. L.
  Doob.

\bibitem{golosov84}
{\sc Golosov, A.~O.}
\newblock Localization of random walks in one-dimensional random environments.
\newblock {\em Comm. Math. Phys. 92}, 4 (1984), 491--506.

\bibitem{graham12}
{\sc Graham, B.~T.}
\newblock Sublinear variance for directed last-passage percolation.
\newblock {\em J. Theoret. Probab. 25}, 3 (2012), 687--702.

\bibitem{griffeath80}
{\sc Griffeath, D.}
\newblock The basic contact processes.
\newblock {\em Stochastic Process. Appl. 11}, 2 (1981), 151--185.

\bibitem{grimmett99}
{\sc Grimmett, G.}
\newblock {\em Percolation}, second~ed., vol.~321 of {\em Grundlehren der
  Mathematischen Wissenschaften [Fundamental Principles of Mathematical
  Sciences]}.
\newblock Springer-Verlag, Berlin, 1999.

\bibitem{grimmett-stacey98}
{\sc Grimmett, G.~R., and Stacey, A.~M.}
\newblock Critical probabilities for site and bond percolation models.
\newblock {\em Ann. Probab. 26}, 4 (1998), 1788--1812.

\bibitem{guerra01}
{\sc Guerra, F.}
\newblock Sum rules for the free energy in the mean field spin glass model.
\newblock In {\em Mathematical physics in mathematics and physics ({S}iena,
  2000)}, vol.~30 of {\em Fields Inst. Commun.} Amer. Math. Soc., Providence,
  RI, 2001, pp.~161--170.

\bibitem{guerra03}
{\sc Guerra, F.}
\newblock Broken replica symmetry bounds in the mean field spin glass model.
\newblock {\em Comm. Math. Phys. 233}, 1 (2003), 1--12.

\bibitem{guerra-toninelli02II}
{\sc Guerra, F., and Toninelli, F.~L.}
\newblock Quadratic replica coupling in the {S}herrington-{K}irkpatrick mean
  field spin glass model.
\newblock {\em J. Math. Phys. 43}, 7 (2002), 3704--3716.

\bibitem{guerra-toninelli02I}
{\sc Guerra, F., and Toninelli, F.~L.}
\newblock The thermodynamic limit in mean field spin glass models.
\newblock {\em Comm. Math. Phys. 230}, 1 (2002), 71--79.

\bibitem{hopfield82}
{\sc Hopfield, J.~J.}
\newblock Neural networks and physical systems with emergent collective
  computational abilities.
\newblock {\em Proc. Nat. Acad. Sci. U.S.A. 79}, 8 (1982), 2554--2558.

\bibitem{huse-henley85}
{\sc Huse, D.~A., and Henley, C.~L.}
\newblock Pinning and roughening of domain walls in {I}sing systems due to
  random impurities.
\newblock {\em Phys. Rev. Lett. 54}, 25 (1985), 2708--2711.

\bibitem{huse-henley-fisher85}
{\sc Huse, D.~A., Henley, C.~L., and Fisher, D.~S.}
\newblock Huse, {H}enley, and {F}isher respond.
\newblock {\em Phys. Rev. Lett. 55}, 26 (1985), 2924.

\bibitem{imbrie-spencer88}
{\sc Imbrie, J.~Z., and Spencer, T.}
\newblock Diffusion of directed polymers in a random environment.
\newblock {\em J. Statist. Phys. 52}, 3-4 (1988), 609--626.

\bibitem{jagannath16}
{\sc Jagannath, A.}
\newblock On the overlap distribution of branching random walks.
\newblock {\em Electron. J. Probab. 21\/} (2016), Paper No. 50 16.

\bibitem{jagannath17}
{\sc Jagannath, A.}
\newblock Approximate ultrametricity for random measures and applications to
  spin glasses.
\newblock {\em Comm. Pure Appl. Math. 70}, 4 (2017), 611--664.

\bibitem{jagannath-tobasco17II}
{\sc Jagannath, A., and Tobasco, I.}
\newblock Low temperature asymptotics of spherical mean field spin glasses.
\newblock {\em Comm. Math. Phys. 352}, 3 (2017), 979--1017.

\bibitem{jagannath-tobasco17I}
{\sc Jagannath, A., and Tobasco, I.}
\newblock Some properties of the phase diagram for mixed {$p$}-spin glasses.
\newblock {\em Probab. Theory Related Fields 167}, 3-4 (2017), 615--672.

\bibitem{johansson00}
{\sc Johansson, K.}
\newblock Shape fluctuations and random matrices.
\newblock {\em Comm. Math. Phys. 209}, 2 (2000), 437--476.

\bibitem{johansson01}
{\sc Johansson, K.}
\newblock Discrete orthogonal polynomial ensembles and the {P}lancherel
  measure.
\newblock {\em Ann. of Math. (2) 153}, 1 (2001), 259--296.

\bibitem{johansson03}
{\sc Johansson, K.}
\newblock Discrete polynuclear growth and determinantal processes.
\newblock {\em Comm. Math. Phys. 242}, 1-2 (2003), 277--329.

\bibitem{kahane-peyriere76}
{\sc Kahane, J.-P., and Peyri{\`e}re, J.}
\newblock Sur certaines martingales de {B}enoit {M}andelbrot.
\newblock {\em Advances in Math. 22}, 2 (1976), 131--145.

\bibitem{kantorovich42}
{\sc Kantorovitch, L.}
\newblock On the translocation of masses.
\newblock {\em C. R. (Doklady) Acad. Sci. URSS (N.S.) 37\/} (1942), 199--201.

\bibitem{kardar85}
{\sc Kardar, M.}
\newblock Roughening by impurities at finite temperatures.
\newblock {\em Phys. Rev. Lett. 55}, 26 (1985), 2923.

\bibitem{kardar-nelson85}
{\sc Kardar, M., and Nelson, D.~R.}
\newblock Commensurate-incommensurate transitions with quenched random
  impurities.
\newblock {\em Phys. Rev. Lett. 55}, 11 (1985), 1157--1160.

\bibitem{kardar-zhang87}
{\sc Kardar, M., and Zhang, Y.-C.}
\newblock Scaling of directed polymers in random media.
\newblock {\em Phys. Rev. Lett. 58}, 20 (1987), 2087--2090.

\bibitem{kesten80}
{\sc Kesten, H.}
\newblock On the time constant and path length of first-passage percolation.
\newblock {\em Adv. in Appl. Probab. 12}, 4 (1980), 848--863.

\bibitem{kesten-zhang97}
{\sc Kesten, H., and Zhang, Y.}
\newblock A central limit theorem for ``critical'' first-passage percolation in
  two dimensions.
\newblock {\em Probab. Theory Related Fields 107}, 2 (1997), 137--160.

\bibitem{kifer97}
{\sc Kifer, Y.}
\newblock The {B}urgers equation with a random force and a general model for
  directed polymers in random environments.
\newblock {\em Probab. Theory Related Fields 108}, 1 (1997), 29--65.

\bibitem{kosterlitz-thouless-jones76}
{\sc Kosterlitz, J.~M., Thouless, D.~J., and Jones, R.~C.}
\newblock Spherical model of a spin-glass.
\newblock {\em Phys. Rev. Lett. 36\/} (May 1976), 1217--1220.

\bibitem{krapivsky-redner03}
{\sc Krapivsky, P.~L., and Redner, S.}
\newblock Dynamics of majority rule in two-state interacting spin systems.
\newblock {\em Phys. Rev. Lett. 90\/} (Jun 2003), 1--4.

\bibitem{kubota15}
{\sc Kubota, N.}
\newblock Upper bounds on the non-random fluctuations in first passage
  percolation with low moment conditions.
\newblock {\em Yokohama Math. J. 61\/} (2015), 41--55.

\bibitem{lacoin10}
{\sc Lacoin, H.}
\newblock New bounds for the free energy of directed polymers in dimension
  {$1+1$} and {$1+2$}.
\newblock {\em Comm. Math. Phys. 294}, 2 (2010), 471--503.

\bibitem{lacoin11}
{\sc Lacoin, H.}
\newblock Influence of spatial correlation for directed polymers.
\newblock {\em Ann. Probab. 39}, 1 (2011), 139--175.

\bibitem{latala02}
{\sc Lata{\l{}}a, R.}
\newblock Exponential inequalities for the {SK} model of spin glasses,
  extending {G}uerra’s method.
\newblock Unpublished manuscript, 2002.

\bibitem{levy54}
{\sc L{\'e}vy, P.}
\newblock {\em Th{\'e}orie de {L}'addition des {V}ariables {A}l{\'e}atoires}.
\newblock Gauthier-Villars, Paris, 1954.

\bibitem{liggett95}
{\sc Liggett, T.~M.}
\newblock Survival of discrete time growth models, with applications to
  oriented percolation.
\newblock {\em Ann. Appl. Probab. 5}, 3 (1995), 613--636.

\bibitem{lions84I}
{\sc Lions, P.-L.}
\newblock The concentration-compactness principle in the calculus of
  variations. {T}he locally compact case. {I}.
\newblock {\em Ann. Inst. H. Poincar\'e Anal. Non Lin\'eaire 1}, 2 (1984),
  109--145.

\bibitem{lions84II}
{\sc Lions, P.-L.}
\newblock The concentration-compactness principle in the calculus of
  variations. {T}he locally compact case. {II}.
\newblock {\em Ann. Inst. H. Poincar\'e Anal. Non Lin\'eaire 1}, 4 (1984),
  223--283.

\bibitem{lions85I}
{\sc Lions, P.-L.}
\newblock The concentration-compactness principle in the calculus of
  variations. {T}he limit case. {I}.
\newblock {\em Rev. Mat. Iberoamericana 1}, 1 (1985), 145--201.

\bibitem{lions85II}
{\sc Lions, P.-L.}
\newblock The concentration-compactness principle in the calculus of
  variations. {T}he limit case. {II}.
\newblock {\em Rev. Mat. Iberoamericana 1}, 2 (1985), 45--121.

\bibitem{liu-watbled09}
{\sc Liu, Q., and Watbled, F.}
\newblock Exponential inequalities for martingales and asymptotic properties of
  the free energy of directed polymers in a random environment.
\newblock {\em Stochastic Process. Appl. 119}, 10 (2009), 3101--3132.

\bibitem{marchand02}
{\sc Marchand, R.}
\newblock Strict inequalities for the time constant in first passage
  percolation.
\newblock {\em Ann. Appl. Probab. 12}, 3 (2002), 1001--1038.

\bibitem{martin06}
{\sc Martin, J.~B.}
\newblock Last-passage percolation with general weight distribution.
\newblock {\em Markov Process. Related Fields 12}, 2 (2006), 273--299.

\bibitem{mejane04}
{\sc Mejane, O.}
\newblock Upper bound of a volume exponent for directed polymers in a random
  environment.
\newblock {\em Ann. Inst. H. Poincar\'{e} Probab. Statist. 40}, 3 (2004),
  299--308.

\bibitem{mermin-wagner66}
{\sc Mermin, N.~D., and Wagner, H.}
\newblock Absence of ferromagnetism or antiferromagnetism in one- or
  two-dimensional isotropic {H}eisenberg models.
\newblock {\em Phys. Rev. Lett. 17\/} (Nov 1966), 1133--1136.

\bibitem{mezard-montanari09}
{\sc M\'ezard, M., and Montanari, A.}
\newblock {\em Information, physics, and computation}.
\newblock Oxford Graduate Texts. Oxford University Press, Oxford, 2009.

\bibitem{mezard-parisi-virasoro87}
{\sc M\'ezard, M., Parisi, G., and Virasoro, M.~A.}
\newblock {\em Spin glass theory and beyond}, vol.~9 of {\em World Scientific
  Lecture Notes in Physics}.
\newblock World Scientific Publishing Co., Inc., Teaneck, NJ, 1987.

\bibitem{miura-tawara-tsuchida08}
{\sc Miura, M., Tawara, Y., and Tsuchida, K.}
\newblock Strong and weak disorder for {L}\'evy directed polymers in random
  environment.
\newblock {\em Stoch. Anal. Appl. 26}, 5 (2008), 1000--1012.

\bibitem{flores-quastel-remenik13}
{\sc Moreno~Flores, G., Quastel, J., and Remenik, D.}
\newblock Endpoint distribution of directed polymers in {$1+1$} dimensions.
\newblock {\em Comm. Math. Phys. 317}, 2 (2013), 363--380.

\bibitem{mukherjee-shamov-zeitouni16}
{\sc Mukherjee, C., Shamov, A., and Zeitouni, O.}
\newblock Weak and strong disorder for the stochastic heat equation and
  continuous directed polymers in {$d\geq 3$}.
\newblock {\em Electron. Commun. Probab. 21\/} (2016), Paper No. 61, 12.

\bibitem{mukherjee-varadhan16}
{\sc Mukherjee, C., and Varadhan, S. R.~S.}
\newblock Brownian occupation measures, compactness and large deviations.
\newblock {\em Ann. Probab. 44}, 6 (2016), 3934--3964.

\bibitem{munkres00}
{\sc Munkres, J.~R.}
\newblock {\em Topology}.
\newblock Prentice Hall, 2000.

\bibitem{nakajima20}
{\sc Nakajima, S.}
\newblock Divergence of shape fluctuation for general distributions in
  first-passage percolation.
\newblock {\em Ann. Inst. H. Poincaré Probab. Statist. 56}, 2 (05 2020),
  782--791.

\bibitem{nakashima14}
{\sc Nakashima, M.}
\newblock A remark on the bound for the free energy of directed polymers in
  random environment in {$1+2$} dimension.
\newblock {\em J. Math. Phys. 55}, 9 (2014), 093304, 14.

\bibitem{newman-piza95}
{\sc Newman, C.~M., and Piza, M. S.~T.}
\newblock Divergence of shape fluctuations in two dimensions.
\newblock {\em Ann. Probab. 23}, 3 (1995), 977--1005.

\bibitem{nishimori01}
{\sc Nishimori, H.}
\newblock {\em Statistical physics of spin glasses and information processing},
  vol.~111 of {\em International Series of Monographs on Physics}.
\newblock Oxford University Press, New York, 2001.
\newblock An introduction, Translated from the 1999 Japanese original.

\bibitem{oconnell03}
{\sc O'Connell, N.}
\newblock Random matrices, non-colliding processes and queues.
\newblock In {\em S{\'e}minaire de {P}robabilit{\'e}s {XXXVI}}, J.~Az{\'e}ma,
  M.~{\'E}mery, M.~Ledoux, and M.~Yor, Eds., vol.~1801 of {\em Lecture Notes in
  Mathematics}. Springer-Verlag, Berlin, 2003, pp.~165--182.

\bibitem{oconnell-ortmann15}
{\sc O'Connell, N., and Ortmann, J.}
\newblock Tracy-{W}idom asymptotics for a random polymer model with
  gamma-distributed weights.
\newblock {\em Electron. J. Probab. 20\/} (2015), no. 25, 18.

\bibitem{oconnell-yor01}
{\sc O'Connell, N., and Yor, M.}
\newblock Brownian analogues of {B}urke's theorem.
\newblock {\em Stochastic Process. Appl. 96}, 2 (2001), 285--304.

\bibitem{panchenko05II}
{\sc Panchenko, D.}
\newblock Free energy in the generalized {S}herrington-{K}irkpatrick mean field
  model.
\newblock {\em Rev. Math. Phys. 17}, 7 (2005), 793--857.

\bibitem{panchenko08}
{\sc Panchenko, D.}
\newblock On differentiability of the {P}arisi formula.
\newblock {\em Electron. Commun. Probab. 13\/} (2008), 241--247.

\bibitem{panchenko10}
{\sc Panchenko, D.}
\newblock The {G}hirlanda-{G}uerra identities for mixed {$p$}-spin model.
\newblock {\em C. R. Math. Acad. Sci. Paris 348}, 3-4 (2010), 189--192.

\bibitem{panchenko13II}
{\sc Panchenko, D.}
\newblock The {P}arisi ultrametricity conjecture.
\newblock {\em Ann. of Math. (2) 177}, 1 (2013), 383--393.

\bibitem{panchenko13}
{\sc Panchenko, D.}
\newblock {\em The {S}herrington-{K}irkpatrick model}.
\newblock Springer Monographs in Mathematics. Springer, New York, 2013.

\bibitem{panchenko14}
{\sc Panchenko, D.}
\newblock The {P}arisi formula for mixed {$p$}-spin models.
\newblock {\em Ann. Probab. 42}, 3 (2014), 946--958.

\bibitem{panchenko15I}
{\sc Panchenko, D.}
\newblock The free energy in a multi-species {S}herrington-{K}irkpatrick model.
\newblock {\em Ann. Probab. 43}, 6 (2015), 3494--3513.

\bibitem{panchenko-talagrand07}
{\sc Panchenko, D., and Talagrand, M.}
\newblock On the overlap in the multiple spherical {SK} models.
\newblock {\em Ann. Probab. 35}, 6 (2007), 2321--2355.

\bibitem{parisi79}
{\sc Parisi, G.}
\newblock Infinite number of order parameters for spin-glasses.
\newblock {\em Phys. Rev. Lett. 43\/} (Dec 1979), 1754--1756.

\bibitem{parisi80}
{\sc Parisi, G.}
\newblock A sequence of approximated solutions to the {S}-{K} model for spin
  glasses.
\newblock {\em J. Phys. A: Math. Gen. 13}, 4 (1980), L115--L121.

\bibitem{parisi90}
{\sc Parisi, G.}
\newblock A simple model for the immune network.
\newblock {\em P. Natl. Acad. Sci. USA 87}, 1 (Jan 1990), 429–433.

\bibitem{parthasarathy-rangarao-varadhan62}
{\sc Parthasarathy, K.~R., Ranga~Rao, R., and Varadhan, S.~R.~S.}
\newblock On the category of indecomposable distributions on topological
  groups.
\newblock {\em Trans. Amer. Math. Soc. 102\/} (1962), 200--217.

\bibitem{pemantle-peres94}
{\sc Pemantle, R., and Peres, Y.}
\newblock Planar first-passage percolation times are not tight.
\newblock In {\em Probability and phase transition ({C}ambridge, 1993)},
  vol.~420 of {\em NATO Adv. Sci. Inst. Ser. C Math. Phys. Sci.} Kluwer Acad.
  Publ., Dordrecht, 1994, pp.~261--264.

\bibitem{petermann00}
{\sc Petermann, M.}
\newblock {\em Superdiffusivity of polymers in random environment}.
\newblock PhD thesis, Universit{\"a}t Z{\"u}rich, 2000.

\bibitem{piza97}
{\sc Piza, M. S.~T.}
\newblock Directed polymers in a random environment: some results on
  fluctuations.
\newblock {\em J. Statist. Phys. 89}, 3-4 (1997), 581--603.

\bibitem{quastel-remenik14}
{\sc Quastel, J., and Remenik, D.}
\newblock Airy processes and variational problems.
\newblock In {\em Topics in percolative and disordered systems}, vol.~69 of
  {\em Springer Proc. Math. Stat.} Springer, New York, 2014, pp.~121--171.

\bibitem{quastel-remenik15}
{\sc Quastel, J., and Remenik, D.}
\newblock Tails of the endpoint distribution of directed polymers.
\newblock {\em Ann. Inst. Henri Poincar\'e Probab. Stat. 51}, 1 (2015), 1--17.

\bibitem{rassoul18}
{\sc Rassoul-Agha, F.}
\newblock Busemann functions, geodesics, and the competition interface for
  directed last-passage percolation.
\newblock In {\em Random growth models}, vol.~75 of {\em Proc. Sympos. Appl.
  Math.} Amer. Math. Soc., Providence, RI, 2018, pp.~95--132.

\bibitem{rassoul-seppalainen14}
{\sc Rassoul-Agha, F., and Sepp\"{a}l\"{a}inen, T.}
\newblock Quenched point-to-point free energy for random walks in random
  potentials.
\newblock {\em Probab. Theory Related Fields 158}, 3-4 (2014), 711--750.

\bibitem{rassoul-seppalainen-yilmaz13}
{\sc Rassoul-Agha, F., Sepp{\"a}l{\"a}inen, T., and Yilmaz, A.}
\newblock Quenched free energy and large deviations for random walks in random
  potentials.
\newblock {\em Comm. Pure Appl. Math. 66}, 2 (2013), 202--244.

\bibitem{rassoul-seppalainen-yilmaz17}
{\sc Rassoul-Agha, F., Sepp\"al\"ainen, T., and Yilmaz, A.}
\newblock Variational formulas and disorder regimes of random walks in random
  potentials.
\newblock {\em Bernoulli 23}, 1 (2017), 405--431.

\bibitem{richardson73}
{\sc Richardson, D.}
\newblock Random growth in a tessellation.
\newblock {\em Proc. Cambridge Philos. Soc. 74\/} (1973), 515--528.

\bibitem{ruzmaikina-aizenman05}
{\sc Ruzmaikina, A., and Aizenman, M.}
\newblock Characterization of invariant measures at the leading edge for
  competing particle systems.
\newblock {\em Ann. Probab. 33}, 1 (2005), 82--113.

\bibitem{schehr12}
{\sc Schehr, G.}
\newblock Extremes of {$N$} vicious walkers for large {$N$}: application to the
  directed polymer and {KPZ} interfaces.
\newblock {\em J. Stat. Phys. 149}, 3 (2012), 385--410.

\bibitem{seppalainen98}
{\sc Sepp{\"a}l{\"a}inen, T.}
\newblock Exact limiting shape for a simplified model of first-passage
  percolation on the plane.
\newblock {\em Ann. Probab. 26}, 3 (1998), 1232--1250.

\bibitem{seppalainen12}
{\sc Sepp{\"a}l{\"a}inen, T.}
\newblock Scaling for a one-dimensional directed polymer with boundary
  conditions.
\newblock {\em Ann. Probab. 40}, 1 (2012), 19--73.

\bibitem{seppalainen-valko10}
{\sc Sepp{\"a}l{\"a}inen, T., and Valk{\'o}, B.}
\newblock Bounds for scaling exponents for a {$1+1$} dimensional directed
  polymer in a {B}rownian environment.
\newblock {\em ALEA Lat. Am. J. Probab. Math. Stat. 7\/} (2010), 451--476.

\bibitem{sherrington-kirkpatrick75}
{\sc Sherrington, D., and Kirkpatrick, S.}
\newblock Solvable model of a spin-glass.
\newblock {\em Phys. Rev. Lett. 35}, 26 (1975), 1792--1796.

\bibitem{sinai82}
{\sc Sinai, Y.~G.}
\newblock The limit behavior of a one-dimensional random walk in a random
  environment.
\newblock {\em Teor. Veroyatnost. i Primenen. 27}, 2 (1982), 247--258.

\bibitem{sinai95}
{\sc Sinai, Y.~G.}
\newblock A remark concerning random walks with random potentials.
\newblock {\em Fund. Math. 147}, 2 (1995), 173--180.

\bibitem{sokolov-mai-blumen97}
{\sc Sokolov, I.~M., Mai, J., and Blumen, A.}
\newblock Paradoxal diffusion in chemical space for nearest-neighbor walks over
  polymer chains.
\newblock {\em Phys. Rev. Lett. 79\/} (Aug 1997), 857--860.

\bibitem{song-zhou96}
{\sc Song, R., and Zhou, X.~Y.}
\newblock A remark on diffusion of directed polymers in random environments.
\newblock {\em J. Statist. Phys. 85}, 1-2 (1996), 277--289.

\bibitem{sosoe18}
{\sc Sosoe, P.}
\newblock Fluctuations in first-passage percolation.
\newblock In {\em Random growth models}, vol.~75 of {\em Proc. Sympos. Appl.
  Math.} Amer. Math. Soc., Providence, RI, 2018, pp.~69--93.

\bibitem{srivastava98}
{\sc Srivastava, S.~M.}
\newblock {\em A course on {B}orel sets}, vol.~180 of {\em Graduate Texts in
  Mathematics}.
\newblock Springer-Verlag, New York, 1998.

\bibitem{seebach-steen78}
{\sc Steen, L.~A., and Seebach, Jr., J.~A.}
\newblock {\em Counterexamples in topology}, second~ed.
\newblock Springer-Verlag, New York-Heidelberg, 1978.

\bibitem{subag17}
{\sc Subag, E.}
\newblock The geometry of the {G}ibbs measure of pure spherical spin glasses.
\newblock {\em Invent. Math. 210}, 1 (2017), 135--209.

\bibitem{talagrand02}
{\sc Talagrand, M.}
\newblock On the high temperature phase of the {S}herrington-{K}irkpatrick
  model.
\newblock {\em Ann. Probab. 30}, 1 (2002), 364--381.

\bibitem{talagrand03}
{\sc Talagrand, M.}
\newblock On the meaning of {P}arisi's functional order parameter.
\newblock {\em C. R. Math. Acad. Sci. Paris 337}, 9 (2003), 625--628.

\bibitem{talagrand03book}
{\sc Talagrand, M.}
\newblock {\em Spin glasses: a challenge for mathematicians}, vol.~46 of {\em
  Ergebnisse der Mathematik und ihrer Grenzgebiete. 3. Folge. A Series of
  Modern Surveys in Mathematics [Results in Mathematics and Related Areas. 3rd
  Series. A Series of Modern Surveys in Mathematics]}.
\newblock Springer-Verlag, Berlin, 2003.
\newblock Cavity and mean field models.

\bibitem{talagrand06II}
{\sc Talagrand, M.}
\newblock Free energy of the spherical mean field model.
\newblock {\em Probab. Theory Related Fields 134}, 3 (2006), 339--382.

\bibitem{talagrand06}
{\sc Talagrand, M.}
\newblock The {P}arisi formula.
\newblock {\em Ann. of Math. (2) 163}, 1 (2006), 221--263.

\bibitem{talagrand06III}
{\sc Talagrand, M.}
\newblock Parisi measures.
\newblock {\em J. Funct. Anal. 231}, 2 (2006), 269--286.

\bibitem{talagrand10}
{\sc Talagrand, M.}
\newblock Construction of pure states in mean field models for spin glasses.
\newblock {\em Probab. Theory Related Fields 148}, 3-4 (2010), 601--643.

\bibitem{talagrand11I}
{\sc Talagrand, M.}
\newblock {\em Mean field models for spin glasses. {V}olume {I}}, vol.~54 of
  {\em Ergebnisse der Mathematik und ihrer Grenzgebiete. 3. Folge. A Series of
  Modern Surveys in Mathematics [Results in Mathematics and Related Areas. 3rd
  Series. A Series of Modern Surveys in Mathematics]}.
\newblock Springer-Verlag, Berlin, 2011.
\newblock Basic examples.

\bibitem{talagrand11II}
{\sc Talagrand, M.}
\newblock {\em Mean field models for spin glasses. {V}olume {II}}, vol.~55 of
  {\em Ergebnisse der Mathematik und ihrer Grenzgebiete. 3. Folge. A Series of
  Modern Surveys in Mathematics [Results in Mathematics and Related Areas. 3rd
  Series. A Series of Modern Surveys in Mathematics]}.
\newblock Springer, Heidelberg, 2011.
\newblock Advanced replica-symmetry and low temperature.

\bibitem{talyigas-veto19}
{\sc Talyig{\'a}s, Z., and Vet{\H{o}}, B.}
\newblock Borodin--p{\'e}ch{\'e} fluctuations of the free energy in directed
  random polymer models.
\newblock {\em J. Theoret. Probab.\/} (2019), 1--19.

\bibitem{tao08}
{\sc Tao, T.}
\newblock Concentration compactness and the profile decomposition.
\newblock {\em What's new (blog entry)\/} (2008).
\newblock URL:
  https://terrytao.wordpress.com/2008/11/05/concentration-compactness-and-the-profile-decomposition/.

\bibitem{tao10}
{\sc Tao, T.}
\newblock Concentration compactness via nonstandard analysis.
\newblock {\em What's new (blog entry)\/} (2010).
\newblock URL:
  https://terrytao.wordpress.com/2010/11/29/concentration-compactness-via-nonstandard-analysis/.

\bibitem{thiery-doussal15}
{\sc Thiery, T., and Le~Doussal, P.}
\newblock On integrable directed polymer models on the square lattice.
\newblock {\em J. Phys. A 48}, 46 (2015), 465001, 41.

\bibitem{toninelli02}
{\sc Toninelli, F.~L.}
\newblock About the {A}lmeida-{T}houless transition line in the
  {S}herrington-{K}irkpatrick mean-field spin glass model.
\newblock {\em Europhysics Letters 60}, 5 (2002), 764--767.

\bibitem{torri16}
{\sc Torri, N.}
\newblock Pinning model with heavy tailed disorder.
\newblock {\em Stochastic Process. Appl. 126}, 2 (2016), 542--571.

\bibitem{tracy-widom94}
{\sc Tracy, C.~A., and Widom, H.}
\newblock Level-spacing distributions and the {A}iry kernel.
\newblock {\em Comm. Math. Phys. 159}, 1 (1994), 151--174.

\bibitem{vandenberg-kesten93}
{\sc van~den Berg, J., and Kesten, H.}
\newblock Inequalities for the time constant in first-passage percolation.
\newblock {\em Ann. Appl. Probab. 3}, 1 (1993), 56--80.

\bibitem{vandervaart-wellner96}
{\sc van~der Vaart, A.~W., and Wellner, J.~A.}
\newblock {\em Weak convergence and empirical processes}.
\newblock Springer Series in Statistics. Springer-Verlag, New York, 1996.
\newblock With applications to statistics.

\bibitem{vargas07}
{\sc Vargas, V.}
\newblock Strong localization and macroscopic atoms for directed polymers.
\newblock {\em Probab. Theory Related Fields 138}, 3-4 (2007), 391--410.

\bibitem{villani09}
{\sc Villani, C.}
\newblock {\em Optimal transport. Old and new.}, vol.~338 of {\em Grundlehren
  der Mathematischen Wissenschaften [Fundamental Principles of Mathematical
  Sciences]}.
\newblock Springer-Verlag, Berlin, 2009.

\bibitem{watbled12}
{\sc Watbled, F.}
\newblock Sharp asymptotics for the free energy of {$1+1$} dimensional directed
  polymers in an infinitely divisible environment.
\newblock {\em Electron. Commun. Probab. 17\/} (2012), no. 53, 9.

\bibitem{wei16}
{\sc Wei, R.}
\newblock On the long-range directed polymer model.
\newblock {\em J. Stat. Phys. 165}, 2 (2016), 320--350.

\bibitem{wei18}
{\sc Wei, R.}
\newblock Free {E}nergy of the {C}auchy {D}irected {P}olymer {M}odel at {H}igh
  {T}emperature.
\newblock {\em J. Stat. Phys. 172}, 4 (2018), 1057--1085.

\bibitem{wierman-reh78}
{\sc Wierman, J.~C., and Reh, W.}
\newblock On conjectures in first passage percolation theory.
\newblock {\em Ann. Probability 6}, 3 (1978), 388--397.

\bibitem{wuthrich98}
{\sc W\"{u}thrich, M.~V.}
\newblock Superdiffusive behavior of two-dimensional {B}rownian motion in a
  {P}oissonian potential.
\newblock {\em Ann. Probab. 26}, 3 (1998), 1000--1015.

\bibitem{yoshida08}
{\sc Yoshida, N.}
\newblock Phase transitions for the growth rate of linear stochastic
  evolutions.
\newblock {\em J. Stat. Phys. 133}, 6 (2008), 1033--1058.

\bibitem{zhang95}
{\sc Zhang, Y.}
\newblock Supercritical behaviors in first-passage percolation.
\newblock {\em Stochastic Process. Appl. 59}, 2 (1995), 251--266.

\bibitem{zhang99}
{\sc Zhang, Y.}
\newblock Double behavior of critical first-passage percolation.
\newblock In {\em Perplexing problems in probability}, vol.~44 of {\em Progr.
  Probab.} Birkh\"auser Boston, Boston, MA, 1999, pp.~143--158.

\bibitem{zhang06}
{\sc Zhang, Y.}
\newblock The divergence of fluctuations for shape in first passage
  percolation.
\newblock {\em Probab. Theory Related Fields 136}, 2 (2006), 298--320.

\bibitem{zhang08}
{\sc Zhang, Y.}
\newblock Shape fluctuations are different in different directions.
\newblock {\em Ann. Probab. 36}, 1 (2008), 331--362.

\bibitem{zhang-zhang84}
{\sc Zhang, Y., and Zhang, Y.~C.}
\newblock A limit theorem for {$N_{0n}/n$} in first-passage percolation.
\newblock {\em Ann. Probab. 12}, 4 (1984), 1068--1076.

\end{thebibliography}
    %\onlinesignature
    \end{document}